\newcommand{\area}{\mathsf{area}}
\newcommand{\dinv}{\mathsf{dinv}}
\newcommand{\bounce}{\mathsf{bounce}}
\newcommand{\pbounce}{\mathsf{pbounce}}
\newcommand{\pmaj}{\mathsf{pmaj}}
\newcommand{\Val}{\mathsf{Val}}
\newcommand{\Rise}{\mathsf{Rise}}
\newcommand{\Peak}{\mathsf{Peak}}
\newcommand{\DVal}{\mathsf{DVal}}
\newcommand{\DPeak}{\mathsf{DPeak}}
\newcommand{\D}{\mathsf{D}} % Dyck paths (new)
\newcommand{\LD}{\mathsf{LD}} % Labelled Dyck paths
\newcommand{\DD}{\mathsf{DD}} % Decorated Dyck paths
\newcommand{\LDD}{\mathsf{LDD}} % Labelled decorated Dyck paths
\newcommand{\DDd}{\mathsf{DDd}} % Decorated Dyck paths dinv set
\newcommand{\DDb}{\mathsf{DDb}} % Decorated Dyck paths bounce set
\newcommand{\DDp}{\mathsf{DDp}} % Decorated Dyck paths pbounce set
\newcommand{\SQE}{\mathsf{SQ^{E}}} % square paths ending east
\newcommand{\SQN}{\mathsf{SQ^{N}}} % square paths ending north
\newcommand{\PSQE}{\mathsf{PSQ^{E}}} 
\newcommand{\PSQN}{\mathsf{PSQ^{N}}} 
\newcommand{\PF}{\mathsf{PF}} % Parking functions
\newcommand{\PLD}{\mathsf{PLD}} % Partially labelled Dyck paths
\newcommand{\PP}{\mathsf{PP}} % Parallelogram polyominoes
\newcommand{\RP}{\mathsf{RP}} % Reduced parallelogram polyominoes
\newcommand{\LPP}{\mathsf{LPP}} % Labelled parallelogram polyominoes
\newcommand{\qbinom}[2]{\genfrac{[}{]}{0pt}{}{#1}{#2}}
\newdimen\qrr@tikz@sharp@z@
	\edef\pgf@marshal{\noexpand\pgfutil@in@{and}{\pgfgetarrowoptions{sharp >}}}%
	\edef\pgf@tempa{\pgfgetarrowoptions{sharp >}}
	\qrr@tikz@sharp@parse\pgfgetarrowoptions{sharp >}and-\pgfgetarrowoptions{sharp >}\@qrr@tikz@sharp@parse
	\let\qrr@tikz@sharp@max\pgfmathresult
	\pgfmathsetlength\pgf@xa{.5*\pgflinewidth * tan(\qrr@tikz@sharp@max)}%
	\edef\pgf@marshal{\noexpand\pgfutil@in@{and}{\pgfgetarrowoptions{sharp >}}}%
	\edef\pgf@tempa{\pgfgetarrowoptions{sharp >}}
	\qrr@tikz@sharp@parse\pgfgetarrowoptions{sharp >}and-\pgfgetarrowoptions{sharp >}\@qrr@tikz@sharp@parse
	\pgfmathsetlength\pgf@ya{.5*\pgflinewidth * tan(max(\pgf@tempa,\pgf@tempb,0))}%
	\pgfmathsetlength\pgf@xa{-.5*\pgflinewidth * tan(\pgf@tempa)}%
	\pgfmathsetlength\pgf@xb{-.5*\pgflinewidth * tan(\pgf@tempb)}%
	\edef\pgf@marshal{\noexpand\pgfutil@in@{and}{\pgfgetarrowoptions{sharp <}}}%
	\edef\pgf@tempa{\pgfgetarrowoptions{sharp <}}
\qrr@tikz@sharp@parse\pgfgetarrowoptions{sharp <}and-\pgfgetarrowoptions{sharp <}\@qrr@tikz@sharp@parse
	\let\qrr@tikz@sharp@max\pgfmathresult
	\pgfmathsetlength\pgf@xa{.5*\pgflinewidth * tan(\qrr@tikz@sharp@max)}%
	\edef\pgf@marshal{\noexpand\pgfutil@in@{and}{\pgfgetarrowoptions{sharp <}}}%
	\edef\pgf@tempa{\pgfgetarrowoptions{sharp <}}
\qrr@tikz@sharp@parse\pgfgetarrowoptions{sharp <}and-\pgfgetarrowoptions{sharp <}\@qrr@tikz@sharp@parse
	\pgfmathsetlength\pgf@ya{.5*\pgflinewidth * tan(max(\pgf@tempa,\pgf@tempb,0))}%
	\pgfmathsetlength\pgf@xa{-.5*\pgflinewidth * tan(\pgf@tempa)}%
	\pgfmathsetlength\pgf@xb{-.5*\pgflinewidth * tan(\pgf@tempb)}%
\def\qrr@tikz@sharp@parse#1and#2\@qrr@tikz@sharp@parse{\def\pgf@tempa{#1}\def\pgf@tempb{#2}}
\newcommand\multiset[2]%
\let\existstemp\exists \renewcommand*{\exists}{\mathop \existstemp}
\let\foralltemp\forall \renewcommand*{\forall}{\mathop \foralltemp}
\def\quotient#1#2{\raise1ex\hbox{$#1$}\Big/\lower1ex\hbox{$#2$}}
\newcommand{\<}{\langle}
\renewcommand{\>}{\rangle}
\newtheorem{theorem}{Theorem}[chapter]
\newtheorem{lemma}[theorem]{Lemma}
\newtheorem{proposition}[theorem]{Proposition}
\newtheorem{corollary}[theorem]{Corollary}
\newtheorem{conjecture}[theorem]{Conjecture}
\theoremstyle{definition}
\newtheorem{definition}[theorem]{Definition}
\newtheorem{example}[theorem]{Example}
\theoremstyle{remark}
\newtheorem{remark}[theorem]{Remark}
\numberwithin{section}{chapter}
\numberwithin{equation}{chapter}
\begin{document}

\frontmatter

\title{Decorated Dyck paths, polyominoes, and the Delta conjecture}

%    Remove any unused author tags.

%    author one information
\author{Michele D'Adderio}
\address{Universit\'e Libre de Bruxelles (ULB)\\D\'epartement de Math\'ematique\\ Boulevard du Triomphe, B-1050 Bruxelles\\ Belgium}\email{mdadderi@ulb.ac.be}
\thanks{}

%    author two information
\author{Alessandro Iraci}
\address{Universit\'a di Pisa and Universit\'e Libre de Bruxelles (ULB)\\Dipartimento di Matematica\\ Largo Bruno Pontecorvo 5, 56127 Pisa\\ Italia}\email{iraci@student.dm.unipi.it}

%    author three information
\author{Anna Vanden Wyngaerd}
\address{Universit\'e Libre de Bruxelles (ULB)\\D\'epartement de Math\'ematique\\ Boulevard du Triomphe, B-1050 Bruxelles\\ Belgium}\email{anvdwyng@ulb.ac.be}

%    \date is required; it is the date received by the editor.
\date{\today}

\subjclass[2010]{05E05}

%    Recognition of the 2010 edition of the Mathematics Subject
%    Classification requires a version of amsbook.cls from July 2009
%    or later.  If "2010" is not recognised, please upgrade.

\keywords{Decorated Dyck path, parallelogram polyomino, Delta conjecture, Macdonald polynomials}

\dedicatory{To the memory of Jeff Remmel}

\begin{abstract}
We discuss the combinatorics of decorated Dyck paths and decorated parallelogram polyominoes, extending to the decorated case the main results of both \cite{Haglund-Schroeder-2004} and \cite{Aval-DAdderio-Dukes-Hicks-LeBorgne-2014}. This settles in particular the cases $\<\cdot,e_{n-d}h_d\>$ and $\<\cdot,h_{n-d}h_d\>$ of the Delta conjecture of Haglund, Remmel and Wilson \cite{Haglund-Remmel-Wilson-2015}. Along the way, we introduce some new statistics, formulate some new conjectures, prove some new identities of symmetric functions, and answer a few open problems in the literature (e.g. from \cites{Haglund-Remmel-Wilson-2015,Zabrocki-4Catalan-2016,Aval-Bergeron-Garsia-2015}). The main technical tool is a new identity in the theory of Macdonald polynomials that extends a theorem of Haglund in \cite{Haglund-Schroeder-2004}.
\end{abstract}

\maketitle

\tableofcontents

%    Include unnumbered chapters (preface, acknowledgments, etc.) here.
\chapter*{Introduction}

Motivated by the problem of proving the Schur positivity of the modified Macdonald polynomials, Garsia and Haiman introduced the $\mathfrak{S}_n$-module of diagonal harmonics, and they conjectured that its Frobenius characteristic is $\nabla e_n$, where $\nabla$ is the so called nabla operator introduced by Bergeron and Garsia \cite{Bergeron-Garsia-ScienceFiction-1999}. For any symmetric function $f$, a so called Delta operator $\Delta_f$ (and its sibling $\Delta_f'$) is defined. These operators have been introduced by Bergeron, Garsia, Haiman and Tesler in \cite{Bergeron-Garsia-Haiman-Tesler-Positivity-1999}, and it turns out that the nabla operator restricted to homogeneous symmetric functions of degree $n$ coincides with the Delta operator $\Delta_{e_n}$ (and with $\Delta_{e_{n-1}}'$).

In \cite{HHLRU-2005} a combinatorial formula for $\nabla e_n$ has been conjectured, in terms of labelled Dyck paths (i.e. parking functions). This formula has been known as the \emph{shuffle conjecture}, and it has been recently proved by Carlsson and Mellit in \cite{Carlsson-Mellit-ShuffleConj-2015}, where they actually proved a refinement of it, known as the \emph{compositional shuffle conjecture}, proposed in \cite{Haglund-Morse-Zabrocki-2012}.

A generalization of the shuffle conjecture, known as the \emph{Delta conjecture}, has been stated in \cite{Haglund-Remmel-Wilson-2015}. This predicts a combinatorial interpretation for $\Delta_{e_{n-k-1}}'e_n$ in terms of decorated labelled Dyck paths (for $k=0$ it reduces to the case of no decorations, i.e.\ to the shuffle conjecture). More recently in \cite{Zabrocki_Delta_Module} Zabrocki constructed bigraded $\mathfrak{S}_n$-modules which extend the diagonal harmonics, and he conjectured that their Frobenius characteristic is given by $\Delta_{e_{n-k-1}}'e_n$, completing in this way the extension of the all framework of the shuffle conjecture.

Some special cases and consequences of the Delta conjecture have been proved: see Section~\ref{sec:Delta_Conj} for the precise statement and the state of the art of this conjecture. To this date, the general problem remains open.

In the same paper \cite{Haglund-Remmel-Wilson-2015}, Haglund, Remmel and Wilson proposed a generalization of the Delta conjecture, which predicts a combinatorial interpretation of $\Delta_{h_m}\Delta_{e_{n-k-1}}'e_n$ in terms of decorated partially labelled Dyck paths: see Section~\ref{sec:gen_Delta} for the precise statement and the state of the art of this conjecture.

We mention here that in \cite{Zabrocki-4Catalan-2016}, Zabrocki established a consequence of the Delta conjecture, the so called \emph{$4$-variable Catalan conjecture}. In fact, this conjecture, stated in \cite{Haglund-Remmel-Wilson-2015}, makes other predictions that have not been explained in \cite{Zabrocki-4Catalan-2016}. One of them is the symmetry 
\begin{equation} \label{eq:HRW_symmetry}
	\langle\Delta_{h_{\ell}}\nabla e_{n-\ell},s_{k+1,1^{n-\ell-k-1}}\rangle=\langle\Delta_{h_{k}}\nabla e_{n-k},s_{\ell+1,1^{n-k-\ell-1}}\rangle\qquad \text{for }n>k+\ell.
\end{equation} 

In \cite{Haglund-Remmel-Wilson-2015} and \cite{Wilson-PhD-2015}, the authors asked for a proof of a decorated version of both the famous $q,t$-Schr\"{o}der theorem of Haglund \cite{Haglund-Schroeder-2004}, i.e. a combinatorial interpretation of the formula
\begin{equation} %\label{eq:qtSchroederLHS}
	\langle \Delta_{e_{a+b-k-1}}'e_{a+b},e_a h_{b}\rangle
\end{equation}
as a sum of weights $q^{\mathsf{dinv}(D)}t^{\mathsf{area}(D)}$ of decorated Dyck paths, and the related interpretation of the formula
\begin{equation} 
	\langle \Delta_{e_{a+b-k-1}}'e_{a+b},h_a h_{b}\rangle.
\end{equation}

\medskip

In \cite{Aval-DAdderio-Dukes-Hicks-LeBorgne-2014}, statistics $\area$, $\bounce$ and $\dinv$ have been defined on parallelogram polyominoes, and some of their $q,t$-enumerators have been shown to equal 
\begin{equation}
	\< \Delta_{e_{m+n}} e_{m+n}, h_m h_n \> = \< \Delta_{h_m} e_{n+1}, s_{1^{n+1}} \>.
\end{equation}
In particular, the first author proposed a combinatorial interpretation of the full symmetric function $\Delta_{h_m} e_{n+1}$ at $q=1$ in terms of labelled parallelogram polyominoes, and asked for a $\dinv$ statistic that could give the more general $\Delta_{h_m} e_{n+1}$ (cf. \cite{Aval-Bergeron-Garsia-2015}*{Equations (8.1) and (8.14)}). 

\medskip

The present work can be thought of as a continuation of both \cite{Haglund-Schroeder-2004} (cf. also \cite{Haglund-Book-2008}) and \cite{Aval-Bergeron-Garsia-2015}. 

In particular we extend the results in \cite{Haglund-Schroeder-2004} by proving a decorated version of the $q,t$-Schr\"{o}der theorem, which gives the case $\<\cdot , e_{n-d}h_d\>$ of the Delta conjecture. In fact our proof will parallel the one in \cite{Haglund-Schroeder-2004}, as we will give a recursion for these polynomials, and prove that both the symmetric function side and the combinatorial side satisfy this recursion.

Also, following a similar strategy, we extend the results in \cite{Aval-Bergeron-Garsia-2015} by providing a combinatorial interpretation of the more general $\< \Delta_{h_m} e_{n+1}, s_{k+1,1^{n-k}} \>$ in terms of decorated parallelogram polyominoes.

Surprisingly, this last polynomial turns out to equal $\< \Delta_{e_{m+n-k-1}}' e_{m+n}, h_m h_n \>$, and this will allow us to prove the case $\<\cdot , h_{n-d}h_d\>$ of the Delta conjecture, completing the extension of the results in \cite{Haglund-Schroeder-2004} to the decorated case.

Along the way, we will solve all the aforementioned problems. Moreover, we will introduce some new statistics on our combinatorial objects, state some conjectures and prove some results about them.

\medskip

We realize that this work contains a lot of definitions and results, hence we prefer not to detail them in this introduction; moreover, several theorems turn out to have quite technical and long proofs. Instead, we decided to organize the rest of our work in the following way.

In Part~1 we outline most of the basic definitions (Chapter~1), we state the conjectures and explain the state of the art of them (Chapter~2), we state our main results and indicate some further open problems (Chapter~3).

In Part~2 we will provide all the proofs, both on the symmetric function side (mainly Chapter~4) and the combinatorial side (Chapter~5-8), leaving only the proof of some elementary lemmas for the Appendix.

Two remarks are in order:
\begin{itemize}
	\item Not all of our results are mentioned in Part~1: e.g. some of the recursions to prove the cases $\<\cdot , e_{n-d}h_d\>$ and $\<\cdot , h_{n-d}h_d\>$ of the Delta conjecture appear only in Part~2 (cf. Theorem~\ref{thm:reco_first_bounce_S_R}, Theorem~\ref{thm:reco_first_bounce_R_S}, Theorem~\ref{thm:reco1_first_bounce}, Theorem~\ref{thm:reco_old_bounce}, Theorem~\ref{th:dinvrecursion}, Theorem~\ref{th:bouncerecursion}, Theorem~\ref{th:redbouncerecursion}, Theorem~\ref{th:recursion2cpf}). In Part~1 we mainly focused on ``global'' results.
	
	\item We decided to include a full statement of the (Delta) conjectures in \cite{Haglund-Remmel-Wilson-2015}, and a brief discussion of shuffles taken from \cite{HHLRU-2005} (our Section~\ref{sec:link_delta}), in order to make at least Part~1 of our text more self-contained.
\end{itemize}
%Just notice that not all of our results are mentioned in Part~1 (e.g. the recursions to prove the cases $\<\cdot , e_{n-d}h_d\>$ and $\<\cdot , h_{n-d}h_d\>$ of the Delta conjecture appear only in Part~2): we tried to include only the ``global'' ones, leaving the more refined ones to Part~2, as they require further (hence heavier) definitions and notations. Also, we decided to include a full statement of the (delta) conjectures in \cite{Haglund-Remmel-Wilson-2015}, and a brief discussion of shuffles taken from \cite{HHLRU-2005} (our Section~\ref{sec:link_delta}), in order to make at least Part~1 of our text more self-contained.

With this structure, we aimed at giving to the interested reader the option to jump on Chapters 2 and 3 to look at the conjectures and (most of) our results, while referring to Chapter~1 for definitions and notations; if an appetite for more details kicks in, references to Part~2 with the relevant details are timely provided.

We hope that in this way the reader will find our work more accessible.

%
%Moreover, we also
%\begin{enumerate}
%	\item introduce a $\pmaj$ statistic on our labelled objects;
%	\item formulate $(\area,\pmaj)$ conjectures for both Dyck paths and polyominoes;
%	\item introduce a new $\bounce$ statistic on decorated Dyck paths and decorated parallelogram polyominoes;
%	\item formulate a new square conjecture (similar to the one in \cite{Loehr-Warrington-square-2007}), which turns out to be equivalent to the case $k=1$ of the Delta conjecture;
%	\item prove some special cases of the previous conjectures, and some related results.
%\end{enumerate}

\section*{Acknowledgments}

The first author is pleased to thank Adriano Garsia, who, after countless hours of explanations along several years, finally converted him to this fascinating subject.

All the authors are pleased to thank the anonymous referee for the patient and careful work of pinpointing countless mistakes and providing useful observations and suggestions on a first draft: all this greatly improved the readability of the present work.

\mainmatter

%    Include main chapters here.
\part{Definitions and results}

\chapter{Background and definitions}

%\section{Symmetric and quasisymmetric functions} 
In this chapter we introduce some of the basic definitions and notations for symmetric functions, and most of the basic combinatorial definitions and notations. It can be read immediately but also used as a reference.

\section{Symmetric and quasisymmetric functions} \label{sec:symmetric_fcts_intro}

The main references that we will use for symmetric functions
are \cite{Macdonald-Book-1995} and \cite{Stanley-Book-1999}. 

The standard bases of the symmetric functions that will appear in our
calculations are the monomial $\{m_{\lambda}\}_{\lambda}$, complete homogeneous $\{h_{\lambda}\}_{\lambda}$, elementary $\{e_{\lambda}\}_{\lambda}$, power $\{p_{\lambda}\}_{\lambda}$ and Schur $\{s_{\lambda}\}_{\lambda}$ bases.

\emph{We will use the usual convention that $e_0 = h_0 = 1$ and $e_k = h_k = 0$ for $k < 0$.}

The ring $\Lambda$ of symmetric functions can be thought of as the polynomial ring in the power sum generators $p_1, p_2, p_3,\dots$. This ring has a grading $\Lambda=\bigoplus_{n\geq 0}\Lambda^{(n)}$ given by assigning degree $i$ to $p_i$ for all $i\geq 1$. As we are working with Macdonald symmetric functions involving two parameters $q$ and $t$, we will consider this polynomial ring over the field $\mathbb{Q}(q,t)$. We will make extensive use of the \emph{plethystic notation} (see \cite{Loehr-Remmel-plethystic-2011} for a nice exposition).

With this notation we will be able to add and subtract alphabets, which will be represented as sums of monomials $X = x_1 + x_2 + x_3+\cdots $. Then, given a symmetric function $f$, and thinking of it as an element of $\Lambda$, we denote by $f[X]$ the expression $f$ with $p_k$ replaced by $x_{1}^{k}+x_{2}^{k}+x_{3}^{k}+\cdots$, for all $k$. More generally, given any expression $Q(z_1,z_2,\dots)$, we define the plethystic substitution $f[Q(z_1,z_2,\dots)]$ to be $f$ with $p_k$ replaced by $Q(z_1^k,z_2^k,\dots)$.

We denote by $\<\, , \>$ the \emph{Hall scalar product} on symmetric functions, which can be defined by saying that the Schur functions form an orthonormal basis. We denote by $\omega$ the fundamental algebraic involution which sends $e_k$ to $h_k$, $s_{\lambda}$ to $s_{\lambda'}$ and $p_k$ to $(-1)^{k-1}p_k$.

For a partition $\mu\vdash n$, we denote by
\begin{align}
	\widetilde{H}_{\mu} \coloneqq \widetilde{H}_{\mu}[X]=\widetilde{H}_{\mu}[X;q,t]=\sum_{\lambda\vdash n}\widetilde{K}_{\lambda \mu}(q,t)s_{\lambda}
\end{align}
the \emph{(modified) Macdonald polynomials}, where
\begin{align}
	\widetilde{K}_{\lambda \mu} \coloneqq \widetilde{K}_{\lambda \mu}(q,t)=K_{\lambda \mu}(q,1/t)t^{n(\mu)}\quad \text{ with }\quad n(\mu)=\sum_{i\geq 1}\mu_i(i-1)
\end{align}
are the \emph{(modified) Kostka coefficients} (see \cite{Haglund-Book-2008}*{Chapter~2} for more details). 

The set $\{\widetilde{H}_{\mu}[X;q,t]\}_{\mu}$ is a basis of the ring of symmetric functions $\Lambda$ with coefficients in $\mathbb{Q}(q,t)$. This is a modification of the basis introduced by Macdonald \cite{Macdonald-Book-1995}, and they are the Frobenius characteristic of the so called Garsia-Haiman modules (see \cite{Garsia-Haiman-PNAS-1993}).

If we identify the partition $\mu$ with its Ferrers diagram, i.e. with the collection of cells $\{(i,j)\mid 1\leq i\leq \mu_j, 1\leq j\leq \ell(\mu)\}$, then for each cell $c\in \mu$ we refer to the \emph{arm}, \emph{leg}, \emph{co-arm} and \emph{co-leg} (denoted respectively as $a_\mu(c), l_\mu(c), a_\mu'(c), l_\mu'(c)$) as the number of cells in $\mu$ that are strictly to the right, above, to the left and below $c$ in $\mu$, respectively (see Figure~\ref{fig:notation}).

\begin{figure}[h]
	\centering
	\begin{tikzpicture}[scale=0.4]
	\draw[gray,opacity=.4](0,0) grid (15,10);
	\fill[white] (1,10)|-(3,9)|- (5,7)|-(9,5)|-(13,2)--(15.2,2)|-(1,10.2);
	\draw[gray]  (1,10)|-(3,9)|- (5,7)|-(9,5)|-(13,2)--(15,2)--(15,0)-|(0,10)--(1,10);
	\fill[blue, opacity=.2] (0,3) rectangle (9,4) (4,0) rectangle (5,7); 
	\fill[blue, opacity=.5] (4,3) rectangle (5,4);
	\draw (7,4.5) node {\tiny{Arm}} (3.25,5.5) node {\tiny{Leg}} (6.25, 1.5) node {\tiny{Co-leg}} (2,2.5) node {\tiny{Co-arm}} ;
	\end{tikzpicture}
	\caption{}
	\label{fig:notation}
\end{figure}

We define for every partition $\mu$
\begin{align}
	B_{\mu} & \coloneqq B_{\mu}(q,t)=\sum_{c\in \mu}q^{a_{\mu}'(c)}t^{l_{\mu}'(c)},\text{ and } \\
	T_{\mu} & \coloneqq T_{\mu}(q,t)=\prod_{c\in \mu}q^{a_{\mu}'(c)}t^{l_{\mu}'(c)} .
\end{align}
The following linear operators were introduced in \cites{Bergeron-Garsia-ScienceFiction-1999,Bergeron-Garsia-Haiman-Tesler-Positivity-1999}, and they are at the basis of the conjectures relating symmetric function coefficients and $q,t$-combinatorics in this area. 

We define the \emph{nabla} operator on $\Lambda$ by
\begin{align} 
	\nabla \widetilde{H}_{\mu} \coloneqq T_{\mu} \widetilde{H}_{\mu} \quad \text{ for all } \mu,
\end{align}
and we define the \emph{Delta} operators $\Delta_f$ and $\Delta_f'$ on $\Lambda$ by
\begin{align} \label{eq:deltaop_def}
	\Delta_f \widetilde{H}_{\mu} \coloneqq f[B_{\mu}(q,t)] \widetilde{H}_{\mu} \quad \text{ and } \quad 
	\Delta_f' \widetilde{H}_{\mu}  \coloneqq f[B_{\mu}(q,t)-1] \widetilde{H}_{\mu}, \quad \text{ for all } \mu.
\end{align}
Observe that on the vector space of symmetric functions homogeneous of degree $n$, denoted by $\Lambda^{(n)}$, the operator $\nabla$ equals $\Delta_{e_n}$. Moreover, for every $1\leq k\leq n$,
\begin{align}
	\label{eq:deltaprime}
	\Delta_{e_k} = \Delta_{e_k}' + \Delta_{e_{k-1}}' \quad \text{ on } \Lambda^{(n)},
\end{align}
and for any $k > n$, $\Delta_{e_k} = \Delta_{e_{k-1}}' = 0$ on $\Lambda^{(n)}$, so that $\Delta_{e_n}=\Delta_{e_{n-1}}'$ on $\Lambda^{(n)}$.

For $n, k\in \mathbb{N}$, we set
\begin{align}
	[0]_q \coloneqq 0, \quad \text{ and } \quad [n]_q \coloneqq \frac{1-q^n}{1-q} = 1+q+q^2+\cdots+q^{n-1} \quad \text{ for } n \geq 1,
\end{align}
\begin{align}
	[0]_q! \coloneqq 1 \quad \text{ and }\quad [n]_q! \coloneqq [n]_q[n-1]_q \cdots [2]_q [1]_q \quad \text{ for } n \geq 1,
\end{align}
and
\begin{align}
	\qbinom{n}{k}_q  \coloneqq \frac{[n]_q!}{[k]_q![n-k]_q!} \quad \text{ for } n \geq k \geq 0, \quad \text{ and } \quad \qbinom{n}{k}_q \coloneqq 0 \quad \text{ for } n < k.
\end{align}
For every $S\subseteq \{1,2,\dots,n-1\}$, let $Q_{S,n}$ denote the \emph{Gessel fundamental quasisymmetric function} of degree $n$ indexed by $S$ , i.e.
\begin{equation}
	Q_{S,n} \coloneqq \mathop{\sum_{i_1\leq i_2\leq \cdots \leq i_n}}_{i_j<i_{j+1}\text{ if }j\in S}x_{i_1} x_{i_2}  \cdots   x_{i_n}.
\end{equation}

%\section{Combinatorial definitions}
\section{Combinatorial definitions}
\label{section:combinatorial definitions}

In this section we define several classes of combinatorial objects that will appear in the results and conjectures of the next chapters. We tried to indicate at the beginning of each section the origin of the relevant definitions. 

\subsection{Labelled Dyck paths}
\label{section: Labelled Dyck paths}

The bounce statistic on Dyck paths was introduced by Haglund in \cite{Haglund-Catalan-stats}, while Haiman introduced the dinv statistic shortly after. The dinv statistic for labelled Dyck paths appears on paper for the first time probably in \cite{HHLRU-2005}. We refer to \cite{Haglund-history-statistics} for an account on the origins of these statistics. The pmaj statistic was first introduced in \cite{Loehr-Remmel-2004} and \cite{Loehr-2005}. The area statistic is classical.

\begin{definition}
	A \emph{Dyck path} of size $n$ is a lattice path going from $(0,0)$ to $(n,n)$, using only north and east unit steps and staying weakly above the line $x=y$, also called the \emph{main diagonal}. A \emph{labelled Dyck path} is a Dyck path whose vertical steps are labelled with (not necessarily distinct) positive integers such that the labels appearing in each column are strictly increasing from bottom to top.
\end{definition}

The set of all Dyck paths of size $n$ is denoted by $\D(n)$, while the set of labelled Dyck paths of size $n$ is denoted by $\LD(n)$. For $D \in \LD(n)$ we set $l_i(D)$ to be the label of the $i$-th vertical step (the first step being the lowest one).

\begin{figure}[!ht]
	\centering
	
	\begin{tikzpicture}[scale=0.6]
	\draw[gray!60, thin] (0,0) grid (8,8) (0,0) -- (8,8);
	\draw[blue!60, line width = 1.6pt] (0,0)|-(2,3)|-(5,5)|-(6,7)|-(8,8);
	
	\draw
	(0.5,0.5) circle(0.4 cm) node {$2$}
	(0.5,1.5) circle(0.4 cm) node {$4$}
	(0.5,2.5) circle(0.4 cm) node {$5$}
	(2.5,3.5) circle(0.4 cm) node {$1$}
	(2.5,4.5) circle(0.4 cm) node {$3$}
	(5.5,5.5) circle(0.4 cm) node {$2$}
	(5.5,6.5) circle(0.4 cm) node {$6$}
	(6.5,7.5) circle(0.4 cm) node {$1$};
	\end{tikzpicture}
	
	\caption{A labelled Dyck path.}
	\label{fig:LDP}
\end{figure}
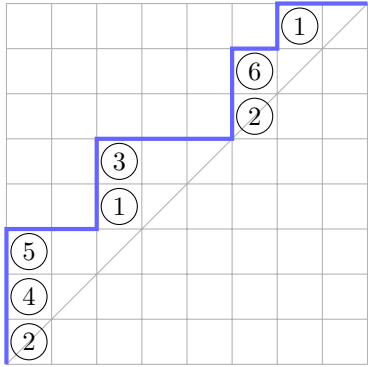

\begin{definition}
	A parking function of size $n$ is a function $f \colon [n] \rightarrow [n]$ such that $ \# \{ 1 \leq j \leq n \mid f(j) \geq i \} \leq n+1-i$.
\end{definition}

We denote by $\PF(n)$ the set of all the parking functions of size $n$. These are in bijective correspondence with the subset of labelled Dyck paths of size $n$ whose labels are exactly the numbers from $1$ to $n$. In particular, a parking function $f$ corresponds to the labelled Dyck path with label $i$ in column $f(i)$ for $1 \leq i \leq n$. We will often identify parking functions with the corresponding labelled Dyck paths.

\begin{definition}
	An \emph{area word} is a (finite) string of symbols $a_1 a_2 \cdots a_n$ in a well-ordered alphabet such that if $a_i < a_{i+1}$ then $a_{i+1}$ is the successor of $a_i$ in the alphabet.
\end{definition}

Other than the usual alphabet $\mathbb{N} = 0 < 1 < 2 < \dots$ it will be useful for our purposes to introduce the alphabet $\overline{\mathbb{N}} \coloneqq 0 < \bar{0} < 1 < \bar{1} < 2 < \dots$.

\begin{definition}
	Let $D$ be a (labelled) Dyck path of size $n$. We define its \emph{area word} to be the string of integers $a(D) = a_1(D) \cdots a_n(D)$ where $a_i(D)$ is the number of whole squares in the $i$-th row (from the bottom) between the path and the main diagonal.
\end{definition}

\begin{definition}
	We define the statistic \emph{area} on $\D(n)$ and $\LD(n)$ as \[ \area(D) \coloneqq \sum_{i=1}^n a_i(D). \]
\end{definition}

For example, the area word of the path in Figure \ref{fig:LDP} is $01212011$ and its area is $8$. Notice that the area word of a Dyck path is an area word in the alphabet $\mathbb{N}$.

\begin{definition}
	Let $D \in \D(n)$. We define its \emph{bounce path} as a lattice path from $(0,0)$ to $(n,n)$ computed in the following way: it starts in $(0,0)$ and travels north until it encounters the beginning of an east step of $D$, then it turns east until it hits the main diagonal, then it turns north again, and so on; thus it continues until it reaches $(n,n)$. 
\end{definition}

We label the vertical steps of the bounce path starting from $0$ and increasing the labels by $1$ every time the path hits the main diagonal (so the steps in the first vertical segment of the path are labelled with $0$, the ones in the next vertical segment are labelled with $1$, and so on). We define the \emph{bounce word} of $D$ to be the string $b(D) = b_1(D) \cdots b_n(D)$ where $b_i(D)$ is the label attached to the $i$-th vertical step of the bounce path.

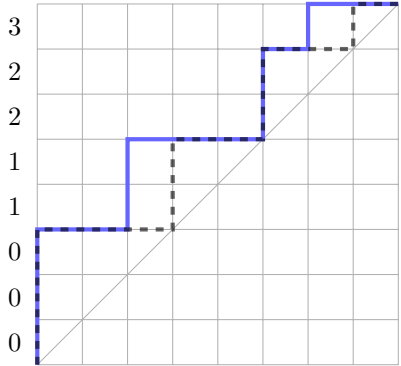
\begin{figure}[!ht]
	\centering
	
	\begin{tikzpicture}[scale=0.6]
	\draw[gray!60, thin] (0,0) grid (8,8) (0,0) -- (8,8);
	\draw[blue!60, line width = 1.6pt] (0,0)|-(2,3)|-(5,5)|-(6,7)|-(8,8);
	
	\draw[dashed, ultra thick, opacity=0.6] (0,0) |- (3,3) |- (5,5) |- (7,7) |- (8,8);
	
	\draw
	(-0.5,0.5) node {$0$}
	(-0.5,1.5) node {$0$}
	(-0.5,2.5) node {$0$}
	(-0.5,3.5) node {$1$}
	(-0.5,4.5) node {$1$}
	(-0.5,5.5) node {$2$}
	(-0.5,6.5) node {$2$}
	(-0.5,7.5) node {$3$};
	\end{tikzpicture}
	
	\caption{Construction of the bounce path and the bounce word (left).}
	\label{fig:bouncepath-dyck}
\end{figure}

\begin{definition}
	We define the statistic \emph{bounce} on $\D(n)$ and $\LD(n)$ as \[ \bounce(D) \coloneqq \sum_{i=1}^n b_i(D). \]
\end{definition}

\begin{definition}\label{def: inversion LD}
	Let $D \in \LD(n)$. For $1 \leq i < j \leq n$, we say that the pair $(i,j)$ is a \emph{diagonal inversion} if
	\begin{itemize}
		\item either $a_i(D) = a_j(D)$ and $l_i(D) < l_j(D)$ (\emph{primary inversion}),
		\item or $a_i(D) = a_j(D) + 1$ and $l_i(D) > l_j(D)$ (\emph{secondary inversion}).
	\end{itemize}
	Then we define $d_i(D) \coloneqq \# \{ i < j \leq n \mid (i,j) \; \text{is a diagonal inversion} \}$.
\end{definition}

\begin{definition}\ \label{def: dinv LD}
	We define the statistic \emph{dinv} on $\LD(n)$ as \[ \dinv(D) \coloneqq \sum_{i=1}^n d_i(D), \] and on $\D(n)$ by assuming that the inequality condition on the labels always holds.
\end{definition}

The number of primary and secondary diagonal inversions are referred to as \emph{primary} and \emph{secondary dinv} respectively. The labelled Dyck path in Figure~\ref{fig:LDP} has dinv equal to $6$. Its diagonal inversions are $(2,7), (4,7)$ (primary), and $(2,6), (3,4), (3,8), (5,8)$ (secondary).

\begin{definition} \label{def: dinv reading word LD}
	Let $D \in \LD(n)$. We define its \emph{dinv reading word} as the sequence of the labels read starting from the ones in the main diagonal from bottom to top; next the ones in the diagonal $y=x+1$ from bottom to top; then the ones in the diagonal $y=x+2$ from bottom to top, and so on.
\end{definition}

For example, the dinv reading word of the labelled Dyck path in Figure~\ref{fig:LDP} is $22416153$.

To introduce the fourth statistic, we need the following definitions.

\begin{definition}
	Let $a_1 a_2 \cdots a_k$ be a string of integers. We define its \emph{descent set} as \[ \mathsf{Des}(a_1 a_2 \cdots a_k) \coloneqq \{ 1 \leq i \leq k-1 \mid a_i > a_{i+1} \} \] and its \emph{major index} $\mathsf{maj}(a_1 a_2 \cdots a_k)$ as the sum of the elements of the descent set.
\end{definition}

\begin{definition}
	Let $D \in \LD(n)$. We define its \emph{parking word} $p(D)$ as follows.
	
	Let $C_1$ be the multiset containing the labels appearing in the first column of $D$, and let $p_1(D) \coloneqq \max C_1$. At step $i$, let $C_i$ be the multiset obtained from $C_{i-1}$ by removing $p_{i-1}(D)$ and adding all the labels in the $i$-th column of the $D$; let \[ p_i(D) \coloneqq \max \, \{x \in C_i \mid x \leq p_{i-1}(D) \} \] if this last set is non-empty, and $p_i(D) \coloneqq \max \, C_i$ otherwise. We finally define the parking word of $D$ as $p(D) \coloneqq p_1(D) \cdots p_n(D)$.
\end{definition}

\begin{definition}
	We define the statistic \emph{pmaj} on $\LD(n)$ as \[ \pmaj(D) \coloneqq \mathsf{maj}(p_n(D) \cdots p_1(D)). \]
\end{definition}

For example, the parking word of the labelled Dyck path in Figure~\ref{fig:LDP} is $54321621$. In fact, we have $C_1 = \{2,4,5\}$, $C_2 = \{2,4\}$, $C_3 = \{1,2,3\}$, and so on. The descent set of the reverse is $\{3\}$, so $\pmaj(D) = 3$. Notice that $\pmaj(D) \leq \bounce(D)$, and if $l_i(D) = i$ (or, more generally, if all the labels are in strictly increasing order from bottom to top), then the equality holds. The converse is not true in general.

\begin{definition}
	Let $D \in \LD(n)$. We define its \emph{pmaj reading word} as the sequence $l_1(D) \cdots l_n(D)$, i.e. the sequence of the labels read bottom to top.
\end{definition}

For example, the pmaj reading word of the labelled Dyck path in Figure~\ref{fig:LDP} is $24513261$. 

Finally, we associate to each labelled Dyck path a monomial in the variables $x_1,x_2,\dots$: for $D\in \LD(n)$ we set \[ x^D \coloneqq \prod_{i=1}^n x_{l_i(D)}. \]

\subsection{Decorated Dyck paths}
The idea of decorating peaks and rises (or falls) first appeared in \cite{Haglund-Remmel-Wilson-2015}, together with the corresponding statistics area and dinv (cf. also \cite{Zabrocki-4Catalan-2016}). Our pbounce statistic is essentially the bounce statistic in \cite{Egge-Haglund-Killpatrick-Kremer-2003} (cf. Remark~\ref{rem:pbounce_origin}). Our statistic bounce is new.

\medskip

We want to extend the definitions on labelled Dyck paths to more general objects, namely decorated Dyck paths. Let us give some definitions.

\begin{definition}
	The \emph{rises} of a Dyck path $D$ are the indices \[ \Rise(D) \coloneqq \{2\leq i \leq n\mid a_{i}(D)>a_{i-1}(D)\},\] or the vertical steps that are directly preceded by another vertical step.
\end{definition}

\begin{definition}
	The \emph{peaks} of a Dyck path $D$ are the indices \[ \Peak(D) \coloneqq \{1\leq i\leq n-1 \mid a_{i+1}(D) \leq a_i(D)\}\cup \{n\}, \] or the vertical steps that are followed by a horizontal step.
\end{definition}

\begin{definition}
	A \emph{decorated Dyck path} is a Dyck path where certain peaks are decorated with a symbol ${\color{blue!60} \bullet}$ (on the point joining the peak to the horizontal step following it) and certain rises are decorated with a symbol $\ast$. By $\DD(n)^{\circ a, \ast b}$ we denote the set of Dyck paths of size $n$ with $a$ decorated peaks and $b$ decorated rises.
\end{definition}

See Figure \ref{fig:example_deco_Dyck} for an example. 

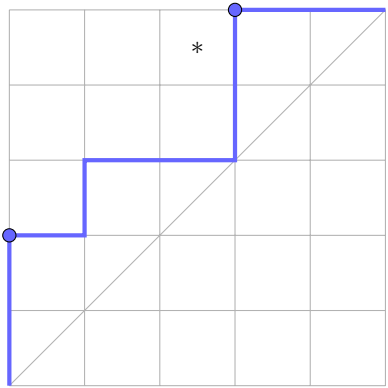
\begin{figure}[!ht]
	\centering
	\begin{tikzpicture}[scale=1]
	\draw[gray!60, thin] (0,0) grid (5,5) (0,0)--(5,5);
	\draw[blue!60, line width = 1.6pt] (0,0) -- (0,2) -- (1,2) -- (1,3) -- (3,3) -- (3,5) -- (5,5);
	
	\filldraw[fill=blue!60]
	(0,2) circle (2.5 pt)
	(3,5) circle (2.5 pt);
	
	\draw (2.5,4.5) node {$\ast$};
	\end{tikzpicture}
	\caption{Example of an element in $\DD(5)^{\circ 2, \ast 1} $.}
	\label{fig:example_deco_Dyck}
\end{figure}

\begin{remark}
	\label{rmk:rises-falls-correspondence}
	Note that if we call \emph{fall} a horizontal step that is followed by another horizontal step, then there is a natural way of mapping rises into falls bijectively. Indeed, take any rise.  The point joining the two vertical steps of this rise is a point where $D$ vertically crosses a certain diagonal parallel to the main diagonal. Since the path must end at the main diagonal, it must cross the same diagonal horizontally with a fall at least once. We map this rise to the first of such falls: this clearly yields a bijective map (see Figure~\ref{fig:rises-falls-correspondence}). Therefore we might equivalently decorate falls instead of the corresponding rises. 
\end{remark}

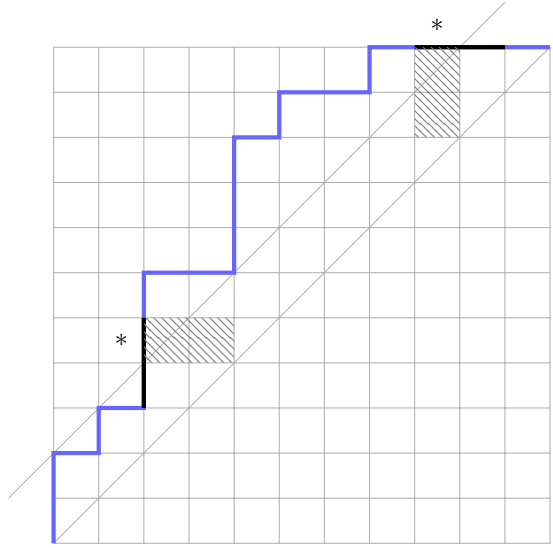
\begin{figure}[!ht]
	\centering
	\begin{tikzpicture}[scale=0.6]
	\draw[gray!60, thin] (0,0) grid (11,11) (0,0) -- (11,11) (-1,1) -- (10,12);
	\draw[blue!60, line width = 1.6pt] (0,0) -- (0,2) -- (1,2) -- (1,3) -- (2,3) -- (2,6) -- (4,6) -- (4,9) -- (5,9) -- (5,10) -- (7,10) -- (7,11) -- (11,11);
	\draw[line width = 1.6pt] (2,3) -- (2,5) (8,11) -- (10,11);
	\fill[pattern=north west lines, pattern color=gray] (2,4) rectangle (4,5) (8,11) rectangle (9,9);
	\draw  (1.5,4.5) node {$\ast$};
	\draw  (8.5,11.5) node {$\ast$};
	\end{tikzpicture} 
	\caption{Correspondence between rises and falls.}
	\label{fig:rises-falls-correspondence}
\end{figure}

We introduce here four statistics on the set $\DD(n)^{\circ a, \ast b}$ (area, dinv, bounce, and pbounce), all of which are generalizations of the usual statistics on undecorated Dyck paths. The area and dinv statistics are the ``unlabelled decorated'' versions of the statistics defined on labelled Dyck paths in Section~\ref{section: Labelled Dyck paths}. The bounce and pbounce are new.

\begin{definition}
	Let $D \in \DD(n)^{\circ a, \ast b}$, and let $a_1(D) \cdots a_n(D)$ be the area word of its underlying Dyck path. Now, let $\mathsf{DRise}(D) \subseteq \Rise(D)$ be the set of indices such that $i \in \mathsf{DRise}(D)$ if the $i$-th vertical step of $D$ is a decorated rise. We define the area of $D$ as \[ \area(D) \coloneqq \sum_{i \not \in \mathsf{DRise(D)}} a_{i}(D). \]
\end{definition}

For a more visual definition, the area is the number of whole squares that lie between the path and the main diagonal, except for the ones in the rows containing a decorated rise. Equivalently, we could define the area of $D$ as the number of whole squares that lie between the path and the main diagonal, except the ones in the columns containing falls corresponding to decorated rises. For example, the path in Figure~\ref{fig:statdef} has area equal to $6$ (gray in the picture). 

\begin{figure}[!ht]
	\centering
	
	\begin{tikzpicture}[scale=0.6]
	\draw
	(8.5,0.5) node {$0$}
	(8.5,1.5) node {$1$}
	(8.5,2.5) node {$2$}
	(8.5,3.5) node {$2$}
	(8.5,4.5) node {$3$}
	(8.5,5.5) node {$1$}
	(8.5,6.5) node {$0$}
	(8.5,7.5) node {$1$};
	
	\draw
	(-0.5,0.5) node {$0$}
	(-0.5,1.5) node {$0$}
	(-0.5,2.5) node {$0$}
	(-0.5,3.5) node {$1$}
	(-0.5,4.5) node {$1$}
	(-0.5,5.5) node {$2$}
	(-0.5,6.5) node {$3$}
	(-0.5,7.5) node {$3$};
	
	\draw[gray!60, thin] (0,0) grid (8,8) (0,0)--(8,8);
	
	\fill[gray, opacity=0.4]
	(0,1) rectangle (1,3)
	(2,3) rectangle (3,5)
	(3,5) rectangle (4,4)
	(4,5) rectangle (5,6);
	
	\draw[blue!60, line width = 1.6 pt] (0,0) -- (0,3) -- (1,3) -- (1,5) -- (4,5) -- (4,6) -- (6,6) -- (6,8) --(8,8);
	
	\draw[dashed, ultra thick, opacity=0.6] (0,0) |- (3,3) |- (5,5) |- (6,6) |- (8,8);
	
	\draw  
	(1.5,5.5) node {$\ast$}
	(6.5,8.5) node {$\ast$};			
	
	\filldraw[fill=blue!60]
	(4,6) circle (3 pt)
	(6,8) circle (3 pt);
	
	\end{tikzpicture}
	
	\caption{$D\in \DD(8)^{\circ 2, \ast 2}$, with the decorations on rises drawn above the corresponding falls. Its bounce word (left) and its area word (right) are shown.}
	\label{fig:statdef}
\end{figure}
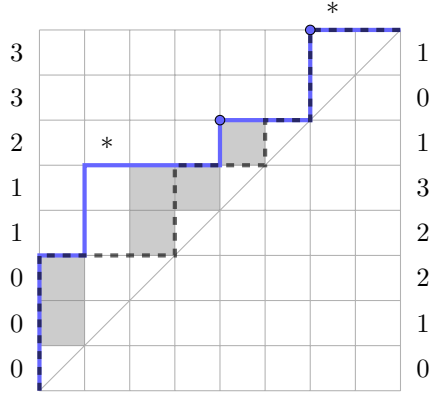

Similarly let $\mathsf{DPeak}(D) \subseteq \Peak(D)$ be the set of indices such that $i \in \mathsf{DPeak}(D)$ if the $i$-th vertical step of $D$ is a decorated peak. We define \[ \bounce(D) \coloneqq \sum_{i \not \in \DPeak(D)} b_i(D) \] and
\begin{align*}
	\dinv(D) \coloneqq & \, \# \{1\leq i<j \leq n \mid a_i(D) = a_j(D) \; \text{and} \; i \not \in \DPeak(D) \} \\
	+ & \, \# \{i<j \leq n \mid a_i(D) = a_j(D) + 1 \; \text{and} \; j \not \in \DPeak(D) \}.
\end{align*}
We refer to the first and second term as \emph{primary} and \emph{secondary dinv} respectively. For example, the path in Figure~\ref{fig:statdef} has bounce equal to $5$, and dinv also equal to $5$ ($4$ primary and $1$ secondary). This dinv statistic corresponds to the one on labelled decorated Dyck paths when the dinv reading word is a shuffle of  $1,2,\dots,n-a$ and $n,n-1,\dots,n-a+1$ where the labels from $n-a+1$ to $n$ are assigned to decorated peaks. For more implications of this fact and the definition of what it means to be a shuffle, we refer to Section \ref{sec:link_delta}.

We now describe how to compute the \emph{pbounce} statistic of a decorated Dyck path $D\in \DD(n)^{\circ a, \ast b}$. For every decorated peak, delete the horizontal step that directly follows it. When deleting the horizontal steps, move the rest of the path one square to the left, together with part of the main diagonal so that the number of squares between the path and the main diagonal in each row remains the same.  In this way we end up with a path from $(0,0)$ to $(n-a,n)$ and some line segments that form the new \emph{main diagonal}. This is what is called the corresponding \emph{leaning stack} in \cite{Haglund-Remmel-Wilson-2015}. See Figure~\ref{fig:ex_second_bounce} for an example. 

Now we draw the \emph{pbounce path} of this new object as described before: start from $(0,0)$, bounce at the beginning of horizontal steps of the path and at the main diagonal. Then label the vertical steps of this pbounce path as for the bounce path to get a \emph{pbounce word} $b'_1(D) \cdots b_n'(D)$. The pbounce of $D$ is then defined as \[ \pbounce(D) \coloneqq \sum_{i=1}^n b'_i(D). \]

In the example $D\in \DD(11)^{\circ 2, \ast 0}$ of Figure~\ref{fig:ex_second_bounce} its pbounce word is $00111111122$, so that its pbounce is $\pbounce(D)=11$.

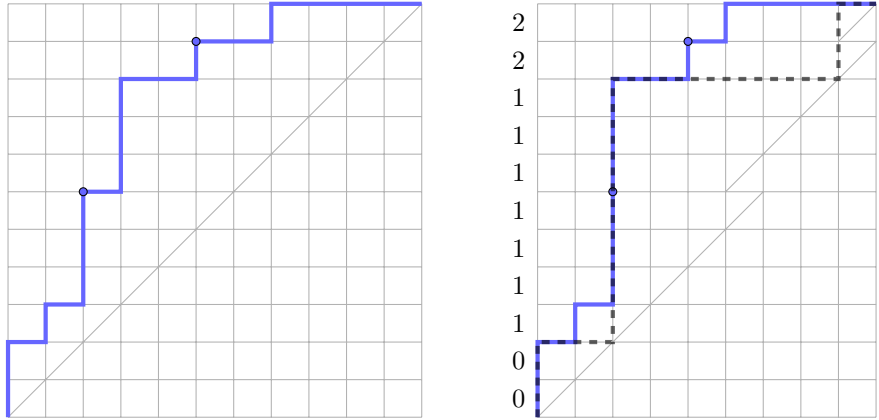
\begin{figure}[!ht]
	\centering
	\begin{minipage}{.55\textwidth}
		\centering
		\begin{tikzpicture}[scale=0.5]
		\draw[gray!60, thin] (0,0) grid (11,11) (0,0) -- (11,11);
		
		\draw[blue!60, line width = 1.6 pt] (0,0) -- (0,2) -- (1,2) -- (1,3) -- (2,3) -- (2,6) -- (3,6) -- (3,9) -- (5,9) -- (5,10) -- (7,10) -- (7,11) --(11,11);
		
		\filldraw[fill=blue!60]
		(2,6) circle (3 pt)
		(5,10) circle (3 pt);
		\end{tikzpicture} 
	\end{minipage}%
	\begin{minipage}{.45\textwidth}
		\centering
		\begin{tikzpicture}[scale=0.5]
		\draw[gray!60, thin] (0,0) grid (9,11) (0,0) -- (6,6) (5,6) -- (9,10) (8,10) -- (9,11);
		
		\draw[blue!60, line width = 1.6 pt] (0,0) -- (0,2) -- (1,2) -- (1,3) -- (2,3) -- (2,6) -- (2,9) -- (4,9) -- (4,10) -- (5,10) -- (5,11) -- (9,11);					
		
		\filldraw[fill=blue!60]
		(2,6) circle (3 pt)
		(4,10) circle (3 pt);
		
		\draw[dashed,ultra thick, opacity=0.6] (0,0) |- (2,2) |- (8,9) |- (9,11);
		\draw
		(-0.5,0.5) node {$0$}
		(-0.5,1.5) node {$0$}
		(-0.5,2.5) node {$1$}
		(-0.5,3.5) node {$1$}
		(-0.5,4.5) node {$1$}
		(-0.5,5.5) node {$1$}
		(-0.5,6.5) node {$1$}
		(-0.5,7.5) node {$1$}
		(-0.5,8.5) node {$1$}
		(-0.5,9.5) node {$2$}
		(-0.5,10.5) node {$2$};
		\end{tikzpicture} 
	\end{minipage}
	\caption{Construction of the pbounce path.}
	\label{fig:ex_second_bounce}
\end{figure}

\begin{remark} \label{rem:pbounce_origin}
	Since a peak is a vertical step followed by a horizontal step, one could think of these two steps together as a diagonal step. So $\DD(n)^{\circ a, \ast b}$ can be interpreted as the set of \emph{decorated Schr\"oder paths} with $a$ diagonal steps and $b$ decorated rises. The definitions of bounce and dinv given in \cite{Haglund-Book-2008}*{Chapter~4} coincide with our definitions of pbounce and dinv. When $b=0$ the definition of the area in \cite{Haglund-Book-2008}*{Chapter~4} and our area also coincide.
\end{remark}

\begin{remark}
	\label{rmk:bounce_pmaj}
	The notation $\pbounce$ was chosen because the $\pbounce$ statistic corresponds to a shuffle of the $\pmaj$ statistic. More precisely, the objects in $\DD(n)^{\circ a, \ast b}$ correspond essentially to labelled Dyck paths with $b$ decorated rises, whose pmaj reading word is a shuffle of $1,2,\dots,n-a$ and $n,n-1,\dots,n-a+1$ where the labels from $n-a+1$ to $n$ are assigned to decorated peaks. For the definition of what it means to be a shuffle, we refer to Section \ref{sec:link_delta}.
\end{remark}

We conclude by introducing some notation that will be convenient when discussing the combinatorics of decorated Dyck paths. For each of the three statistics that depend on the decorations on the peaks, i.e. dinv, bounce and pbounce, there is one specific peak that, when decorated, does not alter this statistic. Often, when dealing with one of these statistics, it will be useful to consider not the whole set of decorated Dyck paths, but rather the set of decorated Dyck paths where this specific peak is not decorated. We will use the following notations:
\begin{center}
	\begin{itemize}
		\item $ \DDd(n)^{\circ a, \ast b} \subseteq \DD(n)^{\circ a, \ast b}$  is the subset where the \emph{rightmost highest peak}, i.e.\ the rightmost peak that is the furthest removed from the main diagonal, is never decorated, denoted as such because decorating this peak does not alter the dinv statistic;
		\item $\DDb(n)^{\circ a, \ast b} \subseteq \DD(n)^{\circ a, \ast b}$ is the subset where the \emph{first peak}, i.e.\ the peak in the leftmost column, is never decorated, denoted as such because decorating this peak does not alter the bounce statistic; 
		\item $ \DDp(n)^{\circ a, \ast b} \subseteq \DD(n)^{\circ a, \ast b}$ is the subset where the \emph{last peak}, i.e.\ the peak in the top row is never decorated, denoted as such because decorating this peak does not alter the pbounce statistic. 
	\end{itemize}
\end{center}

\begin{remark}\label{rem: combinatorial pieri rule }
	We observe that 
	\begin{align*}
		\sum_{D\in \DD(n)^{\circ a, \ast b}} q^{\dinv(D) }t^{\area(D)}=& \sum_{D\in \DDd(n)^{\circ a, \ast b}} q^{\dinv(D) }t^{\area(D)}\\&+\sum_{D\in \DDd(n)^{\circ a-1, \ast b}} q^{\dinv(D) }t^{\area(D)}.
	\end{align*} 
	Indeed, a decoration on the rightmost highest peak does not alter the dinv or area of a path. It follows that we can split the sum over $\DD(n)^{\circ a, \ast b}$ into two parts, one with $a$ decorations on peaks, never decorating the rightmost hightest peak, and one with $a-1$ decorations on peaks, always decorating the rightmost highest peak. This is reflected by the sums over $\DDd(n)^{\circ a, \ast b}$ and $\DDd(n)^{\circ a-1, \ast b}$
	
	A similar relation occurs for the bistatistics $(\area, \pbounce)$ and $(\area, \bounce)$. 
\end{remark}

\subsection{Labelled decorated Dyck paths}

The definitions in this section first appeared in \cite{Haglund-Remmel-Wilson-2015}.

\medskip

Combining the objects of the previous two sections we define some sets of \emph{labelled decorated Dyck paths} whose objects are labelled Dyck paths with some kind of decorations. 

The set $\LDD^{\ast k}(n)$ consists of labelled Dyck paths with $k$ decorations on the rises (or falls by the correspondence explained in Remark~\ref{rmk:rises-falls-correspondence}). For a path $D\in \LDD^{\ast k}(n)$, its $\dinv$ is defined to be the dinv of labelled Dyck paths and its $\area$ in the same way as for decorated Dyck paths.   

\begin{definition}
	The \emph{contractible valleys} of a labelled Dyck path $D$ are the indices
	\begin{align*}
		\Val(D) \coloneqq & \; \{2\leq i\leq n \mid a_i(D)<a_{i-1}(D)\} \\
		\cup & \; \{2\leq i \leq n \mid a_i(D)=a_{i-1}(D) \; \text{and} \; l_i(D)>l_{i-1}(D)\},
	\end{align*}
	or the vertical steps that are directly preceded by a horizontal step and if that horizontal step were to be removed and the rest of the path moved one step to the west, the labels would still be strictly increasing in columns.
\end{definition}

The set $\LDD^{\vee k}(n)$ consists of labelled Dyck paths where $k$ contractible valleys are decorated with a $\vee$. For $D\in \LDD^{\vee k}(n)$ set $\DVal(D)\subseteq \Val(D)\subseteq\{1,\dots,n\}$ to be the set of indices of the decorated contractible valleys. The $\area$ of such a path is simply the area of the underlying Dyck path. Furthermore,  we define  \[\dinv(D) \coloneqq \left( \sum_{i \not \in \DVal(D)} d_i(D) \right) - k. \] This is the statistic $\text{dinv}^-$ defined in \cite{Haglund-Remmel-Wilson-2015}, where one can find the argument for the positivity of this quantity. 

Finally, if $D$ is a labelled decorated Dyck path, we define $x^D$ in the same way as for labelled Dyck paths.

\subsection{Partially labelled Dyck paths}

All the definitions in this section first appeared in \cite{Haglund-Remmel-Wilson-2015}, except for the pmaj, which is new.

\medskip

A \emph{partially labelled Dyck path} is a Dyck path whose vertical steps are labelled with (not necessarily distinct) non-negative integers such that the labels appearing in each column are strictly increasing from bottom to top, and $0$ does not appear in the first column.

These are essentially labelled Dyck paths in which we allow $0$ to be a label. For the definition of the monomial $x^D$ associated to a partially labelled Dyck path $D$, though, we will assume $x_0 = 1$.

We also have the possibility of decorating rises, and three statistics $\area$, $\dinv$, and $\pmaj$ as for standard labelled Dyck paths. We denote by $\PLD(m,n)^{\ast k}$ the set of partially labelled Dyck paths with $m$ zero labels, $n$ non-zero labels, and $k$ decorated rises.

\subsection{Parallelogram polyominoes}
\label{section:parallelogram-polyominoes}

The statistics bounce and dinv on parallelogram polyominoes first appeared in \cite{Dukes-LeBorgne-2013} and \cite{Aval-DAdderio-Dukes-Hicks-LeBorgne-2014} respectively. The statistic area is classical. The definitions for labelled parallelogram polyominoes are new.

\begin{definition}
	\label{def:parallelogrampolyominoes}
	A \emph{parallelogram polyomino} of size $m \times n$ is a pair of lattice paths from $(0,0)$ to $(m,n)$ using only north and east steps, such that the first one (the \emph{red path}) lies always strictly above the second one (the \emph{green path}), except when they meet in the extremal points. A \emph{labelled parallelogram polyomino} is a parallelogram polyomino where the vertical steps of the first path are labelled with (not necessarily distinct) positive integers such that the labels appearing in each column are strictly increasing from bottom to top.
\end{definition}

The set of all parallelogram polyominoes of size $m \times n$ is denoted by $\PP(m,n)$, while the set of labelled ones is denoted by $\LPP(m,n)$. For $P \in \LPP(m,n)$ we set $l_i(D)$ to be the label of the $i$-th vertical step. We define the associated monomial $x^P$ as we did for the labelled Dyck paths.

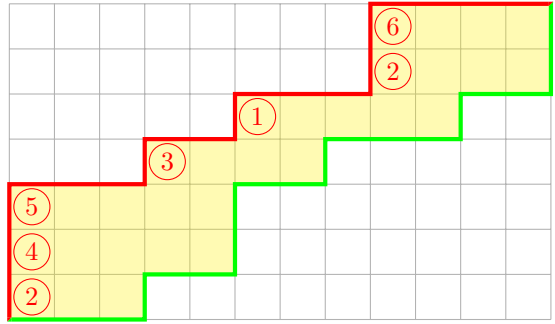
\begin{figure}[!ht]
	\centering
	\begin{tikzpicture}[scale=0.6]
	\draw[gray!60, thin] (0,0) grid (12,7);
	
	\filldraw[yellow, opacity=0.3] (0,0) -- (3,0) -- (3,1) -- (5,1) -- (5,3) -- (7,3) -- (7,4) -- (10,4) -- (10,5) -- (12,5) -- (12,7) -- (8,7) -- (8,5) -- (5,5) -- (5,4) -- (3,4) -- (3,3) -- (0,3) -- cycle;
	
	\draw[green, sharp <-sharp >, sharp angle = 45, line width=1.6pt] (0,0) -- (3,0) -- (3,1) -- (5,1) -- (5,3) -- (7,3) -- (7,4) -- (10,4) -- (10,5) -- (12,5) -- (12,7);
	
	\draw[red, sharp <-sharp >, sharp angle = -45, line width=1.6pt] (0,0) -- (0,3) -- (3,3) -- (3,4) -- (5,4) -- (5,5) -- (8,5) -- (8,7) -- (12,7);
	
	\draw[red]
	(0.5,0.5) circle(0.4 cm) node {$2$}
	(0.5,1.5) circle(0.4 cm) node {$4$}
	(0.5,2.5) circle(0.4 cm) node {$5$}
	(3.5,3.5) circle(0.4 cm) node {$3$}
	(5.5,4.5) circle(0.4 cm) node {$1$}
	(8.5,5.5) circle(0.4 cm) node {$2$}
	(8.5,6.5) circle(0.4 cm) node {$6$};
	\end{tikzpicture}
	
	\caption{A $12 \times 7$ labelled parallelogram polyomino.}
	\label{fig:LPP}
\end{figure}

Parallelogram polyominoes can be encoded using their \textit{area word}, which is an area word in the alphabet $\overline{\mathbb{N}} \setminus \{0\} \coloneqq \bar{0} < 1 < \bar{1} < 2 < \bar{2} < \dots$. It can be computed in two equivalent ways.

The first one consists of drawing a diagonal of slope $-1$ from the end of every horizontal green step, and attaching to that step the length of that diagonal (i.e. the number of squares it crosses). Then, one puts a dot in every square not crossed by any of those diagonals, and attaches to each vertical red step the number of dots in the corresponding row. Next, one bars the numbers attached to vertical red steps, and finally one reads those numbers following the diagonals of slope $-1$, reading the labels when encountering the end of its step and the red label before the green one. See Figure~\ref{fig:aw-polyominoes} for an example.

\begin{figure}[!ht]
	\centering
	\begin{tikzpicture}[scale=0.6]
	\draw[gray!60, thin] (0,0) grid (12,7);
	
	\filldraw[yellow, opacity=0.3] (0,0) -- (3,0) -- (3,1) -- (5,1) -- (5,3) -- (7,3) -- (7,4) -- (10,4) -- (10,5) -- (12,5) -- (12,7) -- (8,7) -- (8,5) -- (5,5) -- (5,4) -- (3,4) -- (3,3) -- (0,3) -- cycle;
	
	\draw[black]
	(1,0) -- (0,1)
	(2,0) -- (0,2)
	(3,0) -- (0,3)
	(4,1) -- (2,3)
	(5,1) -- (3,3)
	(6,3) -- (5,4)
	(7,3) -- (5,5)
	(8,4) -- (7,5)
	(9,4) -- (8,5)
	(10,4) -- (8,6)
	(11,5) -- (9,7)
	(12,5) -- (10,7);
	
	\filldraw[fill=black]
	(2.5,1.5) circle (2pt)
	(1.5,2.5) circle (2pt)
	(4.5,2.5) circle (2pt)
	(3.5,3.5) circle (2pt)
	(4.5,3.5) circle (2pt)
	(6.5,4.5) circle (2pt)
	(9.5,5.5) circle (2pt)
	(8.5,6.5) circle (2pt)
	(11.5,6.5) circle (2pt);
	
	\node[below] at (0.5,0) {$1$};
	\node[below] at (1.5,0) {$2$};
	\node[below] at (2.5,0) {$3$};
	\node[below] at (3.5,1) {$2$};
	\node[below] at (4.5,1) {$2$};
	\node[below] at (5.5,3) {$1$};
	\node[below] at (6.5,3) {$2$};
	\node[below] at (7.5,4) {$1$};
	\node[below] at (8.5,4) {$1$};
	\node[below] at (9.5,4) {$2$};
	\node[below] at (10.5,5) {$2$};
	\node[below] at (11.5,5) {$2$};
	
	\node[left] at (0,0.5) {$\bar{0}$};
	\node[left] at (0,1.5) {$\bar{1}$};
	\node[left] at (0,2.5) {$\bar{2}$};
	\node[left] at (3,3.5) {$\bar{2}$};
	\node[left] at (5,4.5) {$\bar{1}$};
	\node[left] at (8,5.5) {$\bar{1}$};
	\node[left] at (8,6.5) {$\bar{2}$};
	
	\draw[green, sharp <-sharp >, sharp angle = 45, line width=1.6pt] (0,0) -- (3,0) -- (3,1) -- (5,1) -- (5,3) -- (7,3) -- (7,4) -- (10,4) -- (10,5) -- (12,5) -- (12,7);
	
	\draw[red, sharp <-sharp >, sharp angle = -45, line width=1.6pt] (0,0) -- (0,3) -- (3,3) -- (3,4) -- (5,4) -- (5,5) -- (8,5) -- (8,7) -- (12,7);
	\end{tikzpicture}
	
	\caption{The construction of the area word, which is $\bar{0} 1 \bar{1} 2 \bar{2} 3 2 2 \bar{2} 1 \bar{1} 2 1 1 \bar{1} 2 \bar{2} 2 2$.}
	\label{fig:aw-polyominoes}
\end{figure}
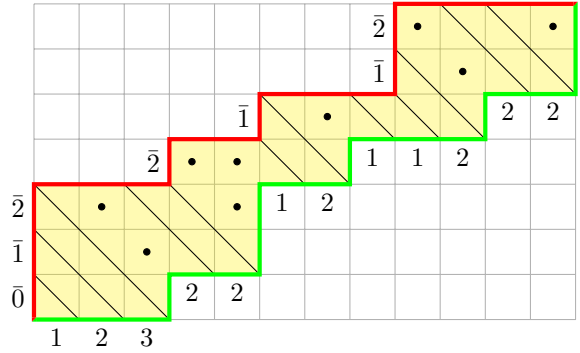

Equivalently, we can build a Dyck path of size $m+n$ from the polyomino in the following way. Running over red and green steps alternatively, we draw a north step in our Dyck path if the corresponding step in the polyomino was either a red north step or a green east step, and we draw an east step in our Dyck path otherwise. Then we take the area word of the Dyck path and replace $0$'s with $\bar{0}$'s, $1$'s with $1$'s, $2$'s with $\bar{1}$'s, $3$'s with $2$'s and so on (replacing the alphabet $\mathbb{N}$ with $\overline{\mathbb{N}} \setminus \{0\}$).

It is not hard to check that those definitions are equivalent, and that the following holds (see \cite{Aval-DAdderio-Dukes-Hicks-LeBorgne-2014}*{Section~3} for detailed proofs and examples). 

\begin{proposition}
	For $m, n \geq 1$, there is a bijective correspondence between $m \times n$ parallelogram polyominoes and area words of length $m+n$ in the alphabet $\overline{\mathbb{N}} \setminus \{0\}$ with exactly one $\bar{0}$ as first letter, exactly $m$ unbarred letters, and exactly $n$ barred letters.
\end{proposition}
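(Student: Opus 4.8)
The plan is to use the second of the two descriptions of the area word (the one passing through an auxiliary Dyck path), since it makes all the asserted features transparent, and to exhibit an explicit inverse. Concretely, given $P\in\PP(m,n)$, read the steps of its red and green paths alternately, $R_1,G_1,R_2,G_2,\dots$, and turn every red north step and every green east step into a north step, and every red east step and every green north step into an east step; this produces a lattice path $D(P)$ with $m+n$ north and $m+n$ east steps, and a short argument — using the defining property that the red path lies strictly above the green one except at the two shared extremal points $(0,0)$ and $(m,n)$ — shows that $D(P)$ stays weakly above the main diagonal, i.e. $D(P)\in\D(m+n)$. It then suffices to prove that $P\mapsto D(P)$ is a bijection from $\PP(m,n)$ onto the set of $D\in\D(m+n)$ whose area word has exactly $n$ even entries and has $0$ only in its first position: indeed the map $\mathbb{N}\to\overline{\mathbb{N}}\setminus\{0\}$ sending $k$ to the $k$-th smallest letter of $\overline{\mathbb{N}}\setminus\{0\}$ is an order isomorphism, so it carries area words to area words (the successor condition is preserved), it sends even entries to barred letters and odd entries to unbarred letters, and it sends $0$ to $\bar 0$; hence the set of area words in the proposition is exactly the image under this relabelling of the above set of Dyck paths.

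The heart of the verification is the position identity $a_i(D)=2i-1-t_i$, where $t_i$ denotes the position, within the sequence of all $2(m+n)$ steps of $D$, of the $i$-th north step of $D$. From it: $a_i(D)$ is even iff $t_i$ is odd iff the $i$-th north step of $D$ occupies a ``red'' slot of the alternating sequence, i.e. comes from a red north step — and $P$ has exactly $n$ of those, so $D(P)$ has exactly $n$ even area-word entries (the remaining $m$ coming from green east steps). Also $a_1(D)=0$, because the first step of $D$ is $R_1$, which must be a north step (near the origin the red path lies strictly above the green one), so $t_1=1$. Finally, $a_i(D)=0$ says that $D$ reaches the point $(i-1,i-1)$ just before its $i$-th north step; by the identity together with the count of north and east steps this happens exactly when the red and green paths of $P$ are at the same lattice point after $i-1$ steps each, and since two monotone lattice paths from $(0,0)$ to $(m,n)$ share a point precisely when they are there after the same number of steps, while the two paths of $P$ meet only at the extremal points, this forces $i=1$. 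Hence $D(P)$ lies in the claimed set.

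For the inverse, given a word $w$ from the set in the statement, relabel backwards to obtain an area word $a$ of length $m+n$ over $\mathbb{N}$ with $a_1=0$, no other occurrence of $0$, and exactly $n$ even entries; since area words of length $N$ starting with $0$ are in bijection with Dyck paths of size $N$, this is the area word of a unique $D\in\D(m+n)$. Now split the $2(m+n)$ steps of $D$ into its odd-position subsequence (the ``red'' path) and its even-position subsequence (the ``green'' path), inverting the north/east rule above. Having exactly $n$ even entries forces both reconstructed paths to run from $(0,0)$ to $(m,n)$, and by the previous paragraph the absence of an interior $0$ in $a$ makes the two paths disjoint away from $(0,0)$ and $(m,n)$; a one-line argument on the walk recording, after the $k$-th step, the difference between the number of north steps taken so far by the red path and by the green path (it starts and ends at $0$, changes by a value in $\{-1,0,1\}$ at each step, and has first interior value $+1$) then shows this is equivalent to the red path lying strictly above the green one away from the endpoints. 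So $w$ arises from a parallelogram polyomino, and the two constructions are manifestly mutually inverse.

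I expect the only genuinely delicate point to be the bookkeeping identity $a_i(D)=2i-1-t_i$ together with its translation into the geometry of the two interleaved polyomino paths — in particular the equivalence between ``the area word has $0$ only in its first position'' and ``the red path lies strictly above the green one''. Once this identity is in hand, everything else is step counting. These computations, with illustrative figures, are spelled out in \cite{Aval-DAdderio-Dukes-Hicks-LeBorgne-2014}*{Section~3}.
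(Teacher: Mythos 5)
Your proof is correct, and since the paper itself offers no proof here (it merely says the claim ``is not hard to check'' and defers to \cite{Aval-DAdderio-Dukes-Hicks-LeBorgne-2014}*{Section~3}), what you have done is supply a self-contained version of that standard argument. The key bookkeeping identity $a_i(D)=2i-1-t_i$ is right, and from it the parity dichotomy (odd slot $\Leftrightarrow$ red step $\Leftrightarrow$ barred letter, hence $n$ barred and $m$ unbarred), the fact that the first letter is $\bar 0$, and the equivalence ``no interior $0$'' $\Leftrightarrow$ ``the two paths meet only at the endpoints'' all check out; the walk $d_k = q_k - s_k$ with increments in $\{-1,0,1\}$ does indeed force ``disjoint at interior antidiagonals, with $d_1=1$'' to be equivalent to ``red strictly above green,'' closing the inverse direction. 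This is the same Dyck-path encoding used in \cite{Aval-DAdderio-Dukes-Hicks-LeBorgne-2014}, just worked out in full.
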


For a parallelogram polyomino $P$, we denote by $a(P)$ its area word, and by $a_i(P)$ its $i$-th letter. For $a \in \overline{\mathbb{N}}$, let $\vert a \vert$ be the value of that letter (i.e. if the letter is barred, just take the underlying number).

\begin{definition}
	We define the statistic \emph{area} on $\PP(m,n)$ and $\LPP(m,n)$ as \[ \area(P) = \sum_{i=1}^{m+n} \vert a_i(P) \vert, \] which is also the number of whole squares between the two paths.
\end{definition}

Since each letter in the area word but the first $\bar{0}$ has value at least $1$, $\area(P) \geq m+n-1$, for all $P \in \PP(m,n)$. This is also obvious with the other definition, because since the two paths only meet in the extremal points, then there is at least a ``snake'' of squares from $(0,0)$ to $(m,n)$ of length $m+n-1$ that lies between the two paths.

Similarly to Dyck paths, parallelogram polyominoes have a \emph{bounce path}. To compute it, start by drawing a single horizontal step, then pursue the following algorithm: draw vertical steps until the path hits the end of a horizontal red step; then draw horizontal steps until the path hits the end of a vertical green step; repeat until it reaches $(m,n)$. Now, attach to each step of the bounce path a letter of the alphabet $\overline{\mathbb{N}}$ starting from $\bar{0}$ and going up by one in $\overline{\mathbb{N}}$ every time the path changes direction. Let us call \textit{bounce word} the sequence $b(P)$ of letters we used (see Figure~\ref{fig:bouncepath-polyominoes} for an example). Let $b_i(P)$ be the $i$-th letter in the bounce word of $P$.

\begin{definition}
	We define the statistic \emph{bounce} on $\PP(m,n)$ and $\LPP(m,n)$ as \[ \bounce(P) \coloneqq \sum_{i=1}^{m+n} \vert b_i(P) \vert. \]
\end{definition}

Since each letter in the bounce word but the first $\bar{0}$ has value at least $1$, $\bounce(P) \geq m+n-1$ for all $P \in \PP(m,n)$.

\begin{figure}[!ht]
	\centering
	\begin{tikzpicture}[scale=0.6]
	\draw[gray!60, thin] (0,0) grid (12,7);
	
	\filldraw[yellow, opacity=0.3] (0,0) -- (3,0) -- (3,1) -- (5,1) -- (5,3) -- (7,3) -- (7,4) -- (10,4) -- (10,5) -- (12,5) -- (12,7) -- (8,7) -- (8,5) -- (5,5) -- (5,4) -- (3,4) -- (3,3) -- (0,3) -- cycle;
	
	\draw[green, sharp <-sharp >, sharp angle = 45, line width=1.6pt] (0,0) -- (3,0) -- (3,1) -- (5,1) -- (5,3) -- (7,3) -- (7,4) -- (10,4) -- (10,5) -- (12,5) -- (12,7);
	\draw[red, sharp <-sharp >, sharp angle = -45, line width=1.6pt] (0,0) -- (0,3) -- (3,3) -- (3,4) -- (5,4) -- (5,5) -- (8,5) -- (8,7) -- (12,7);
	
	\draw[dashed, thick, opacity=0.6] (0,0) -- (1,0) -- (1,3) -- (5,3) -- (5,4) -- (7,4) -- (7,5) -- (10,5) -- (10,7) -- (12,7);
	
	\node[above] at (0.5,0) {$\bar{0}$};
	\node[right] at (1,0.5) {$1$};
	\node[right] at (1,1.5) {$1$};
	\node[right] at (1,2.5) {$1$};
	\node[above] at (1.5,3) {$\bar{1}$};
	\node[above] at (2.5,3) {$\bar{1}$};
	\node[above] at (3.5,3) {$\bar{1}$};
	\node[above] at (4.5,3) {$\bar{1}$};
	\node[right] at (5,3.5) {$2$};
	\node[above] at (5.5,4) {$\bar{2}$};
	\node[above] at (6.5,4) {$\bar{2}$};
	\node[right] at (7,4.5) {$3$};
	\node[above] at (7.5,5) {$\bar{3}$};
	\node[above] at (8.5,5) {$\bar{3}$};
	\node[above] at (9.5,5) {$\bar{3}$};
	\node[right] at (10,5.5) {$4$};
	\node[right] at (10,6.5) {$4$};
	\node[above] at (10.5,7) {$\bar{4}$};
	\node[above] at (11.5,7) {$\bar{4}$};	
	\end{tikzpicture}
	
	\caption{A parallelogram polyomino in which the bounce path is shown.}
	\label{fig:bouncepath-polyominoes}
\end{figure}
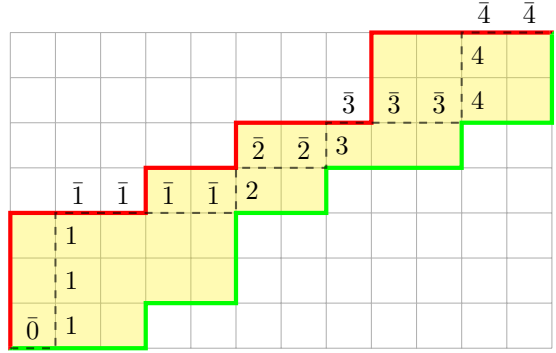

\begin{definition}
	Let $P \in \PP(m,n)$. For $1 \leq i < j \leq m+n$, we (mysteriously) say that the pair $(i,j)$ is an \textit{inversion} if $a_j(P)$ is the successor of $a_i(P)$ in the alphabet $\overline{\mathbb{N}}$. Then we define $d_i(P) \coloneqq \# \{ i < j \mid (i,j) \; \text{is an inversion} \}$. 
\end{definition}

\begin{definition}
	We define the statistic \emph{dinv} on $\PP(m,n)$ as \[ \dinv(P) \coloneqq \sum_{i=1}^{m+n} d_i(P). \]
\end{definition}

Notice that by definition of area word each letter but the first $\bar{0}$ must be the successor of a letter to its left, and this implies that, for $P \in \PP(m,n)$, $\dinv(P) \geq m+n-1$.

\begin{definition}
	Let $P \in \LPP(m,n)$. We define its \emph{parking word} as follows.
	
	Let $C_1$ be the multiset containing the labels appearing in the first column of $P$, and let $p_1(P) \coloneqq \max C_1$. For $i > 1$, at step $i$, if the $i$-th step of the green path is vertical let $C_i = C_{i-1} \setminus \{p_{i-1}(P)\}$; if the $i$-th step of the green path is horizontal let $C_i$ be the multiset obtained from $C_{i-1}$ by replacing $p_{i-1}(P)$ with a $0$, and then adding all the labels in the column of $P$ containing the $i$-th green step (which we recall being horizontal). Next, let $p_i(P) \coloneqq \max \, \{a \in C_i \mid a \leq p_{i-1}(P) \}$ if this set is non-empty, and $p_i(P) \coloneqq \max \, C_i$ otherwise. We finally define the parking word of $P$ as $p(P) \coloneqq p_1(P) \cdots p_{m+n-1}(P)$.
\end{definition}

\begin{definition}
	We define the statistic \emph{pmaj} on $\LPP(m,n)$ as \[ \pmaj(P) \coloneqq \mathsf{maj}(p_{m+n-1}(P) \cdots p_1(P)) + m + n - 1. \] 
\end{definition}

For example, the parallelogram polyomino in Figure~\ref{fig:LPP} (cf. also Figure~\ref{fig:aw-polyominoes} and Figure~\ref{fig:bouncepath-polyominoes}) has area equal to $30$, bounce equal to $41$, and dinv equal to $35$. Its parking word is $542000031000006200$ and hence its pmaj is $33$. 
\begin{remark} \label{rmk:polyo_pmaj_bounce}
	Analogously as it holds for Dyck paths, we have that $\pmaj(D) \leq \bounce(D)$, and if $l_i(D) = i$ (or, more generally, if all the labels are in strictly increasing order), then the equality holds. The converse is not true in general. %We will prove this in Section~\ref{section:proofs-polyominoes}.
\end{remark}

\subsection{Decorated parallelogram polyominoes}

The definitions in this section are all new.

\medskip

Once again, we want to extend the definitions we just gave to decorated objects. Let us give some definitions.

\begin{definition}
	The \emph{rises} of a parallelogram polyomino $P$ are the indices \[ \Rise(P) \coloneqq \{1 \leq i \leq m+n-1 \mid \vert a_{i}(P) \vert < \vert a_{i+1}(P) \vert \},\] which correspond in the figure to diagonals of slope $-1$ from the endpoint of a vertical red step to the endpoint of a horizontal green step.
\end{definition}

\begin{definition}
	The \emph{red peaks} of a parallelogram polyomino $P$ are the horizontal red steps that immediately follow a vertical red step.
\end{definition}

\begin{definition}
	A \emph{decorated parallelogram polyomino} is a parallelogram polyomino where certain red peaks are decorated with a symbol ${\color{red} \bullet}$ (on the point joining the peak to the vertical step preceding it) \emph{or} certain rises are decorated with a symbol $\ast$ (on the segment joining the endpoint of a vertical red step to the endpoint of a horizontal green step).
\end{definition}

We do not decorate the two sets simultaneously. By $\PP(m,n)^{\circ k}$ we denote the set of parallelogram polyominoes of size $m \times n$ with $k$ decorated red peaks, and by $\PP(m,n)^{\ast k}$ we denote the set of parallelogram polyominoes of size $m \times n$ with $k$ decorated rises. We do not decorate the leftmost red peak, nor do we decorate the first $\bar{0}1$ rise.

\begin{definition}
	Let $P \in \PP(m,n)^{\ast k}$, and let $a_1(P) \cdots a_{m+n}(P)$ be its area word. Now, let $\mathsf{DRise}(P) \subseteq \Rise(P)$ be the set of indices such that $i \in \mathsf{DRise}(P)$ if the $i$-th letter of the area word corresponds to a decorated rise. We define the area of $P$ as \[ \area(P) \coloneqq \sum_{i \not \in \mathsf{DRise(P)}} \vert a_{i}(P) \vert . \]
\end{definition}

Notice that a decoration on the first $\bar{0}1$ rise would not have changed the area. This is one of the reasons that suggest not to decorate it.

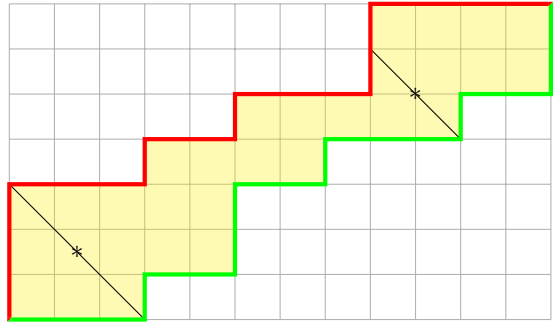
\begin{figure}[!ht]
	\centering
	\begin{tikzpicture}[scale=0.6]
	\draw[gray!60, thin] (0,0) grid (12,7);
	
	\filldraw[yellow, opacity=0.3] (0,0) -- (3,0) -- (3,1) -- (5,1) -- (5,3) -- (7,3) -- (7,4) -- (10,4) -- (10,5) -- (12,5) -- (12,7) -- (8,7) -- (8,5) -- (5,5) -- (5,4) -- (3,4) -- (3,3) -- (0,3) -- cycle;
	
	\draw
	(0,3) -- (3,0)
	(10,4) -- (8,6);
	
	\draw[green, sharp <-sharp >, sharp angle = 45, line width=1.6pt] (0,0) -- (3,0) -- (3,1) -- (5,1) -- (5,3) -- (7,3) -- (7,4) -- (10,4) -- (10,5) -- (12,5) -- (12,7);
	\draw[red, sharp <-sharp >, sharp angle = -45, line width=1.6pt] (0,0) -- (0,3) -- (3,3) -- (3,4) -- (5,4) -- (5,5) -- (8,5) -- (8,7) -- (12,7);
	
	\draw (1.5,1.5) node {$\ast$};
	\draw (9,5) node {$\ast$};
	\end{tikzpicture}
	
	\caption{A parallelogram polyomino with two decorated rises. Its area word is $\bar{0} 1 \bar{1} 2 {\color{red} \bar{2}} 3 2 2 \bar{2} 3 3 \bar{2} 1 1 {\color{red} \bar{1}} 2 \bar{2} 2 2$, where the decorated rises are highlighted in red.}
	\label{fig:decorated-rises-polyominoes}
\end{figure}

\begin{definition}
	Let $P \in \PP(m,n)^{\circ k}$, and let $b_1(P) \cdots b_{m+n}(P)$ be its bounce word. Now, let $\mathsf{DPeak}(P)$ be the set of indices such that $i \in \mathsf{DPeak}(P)$ if the $i$-th step of the bounce path is horizontal and lies in the same column as a decorated red peak. We define the bounce of $P$ as \[ \bounce(P) \coloneqq \sum_{i \not \in \mathsf{DPeak(P)}} \vert b_{i}(P) \vert . \]
\end{definition}

Notice that a decoration on the leftmost red peak would not have changed the bounce, since the label below it must be $\bar{0}$. This is one of the reasons that suggest not to decorate it.

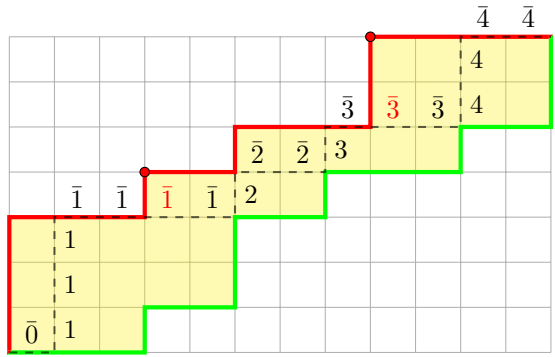
\begin{figure}[!ht]
	\centering
	\begin{tikzpicture}[scale=0.6]
	\draw[gray!60, thin] (0,0) grid (12,7);
	
	\filldraw[yellow, opacity=0.3] (0,0) -- (3,0) -- (3,1) -- (5,1) -- (5,3) -- (7,3) -- (7,4) -- (10,4) -- (10,5) -- (12,5) -- (12,7) -- (8,7) -- (8,5) -- (5,5) -- (5,4) -- (3,4) -- (3,3) -- (0,3) -- cycle;
	
	\draw[green, sharp <-sharp >, sharp angle = 45, line width=1.6pt] (0,0) -- (3,0) -- (3,1) -- (5,1) -- (5,3) -- (7,3) -- (7,4) -- (10,4) -- (10,5) -- (12,5) -- (12,7);
	\draw[red, sharp <-sharp >, sharp angle = -45, line width=1.6pt] (0,0) -- (0,3) -- (3,3) -- (3,4) -- (5,4) -- (5,5) -- (8,5) -- (8,7) -- (12,7);
	
	\draw[dashed, thick, opacity=0.6] (0,0) -- (1,0) -- (1,3) -- (5,3) -- (5,4) -- (7,4) -- (7,5) -- (10,5) -- (10,7) -- (12,7);
	
	\filldraw[fill=red]
	(3,4) circle (3pt)
	(8,7) circle (3pt);
	
	\node[above] at (0.5,0) {$\bar{0}$};
	\node[right] at (1,0.5) {$1$};
	\node[right] at (1,1.5) {$1$};
	\node[right] at (1,2.5) {$1$};
	\node[above] at (1.5,3) {$\bar{1}$};
	\node[above] at (2.5,3) {$\bar{1}$};
	\node[above, red] at (3.5,3) {$\bar{1}$};
	\node[above] at (4.5,3) {$\bar{1}$};
	\node[right] at (5,3.5) {$2$};
	\node[above] at (5.5,4) {$\bar{2}$};
	\node[above] at (6.5,4) {$\bar{2}$};
	\node[right] at (7,4.5) {$3$};
	\node[above] at (7.5,5) {$\bar{3}$};
	\node[above, red] at (8.5,5) {$\bar{3}$};
	\node[above] at (9.5,5) {$\bar{3}$};
	\node[right] at (10,5.5) {$4$};
	\node[right] at (10,6.5) {$4$};
	\node[above] at (10.5,7) {$\bar{4}$};
	\node[above] at (11.5,7) {$\bar{4}$};	
	\end{tikzpicture}
	
	\caption{A parallelogram polyomino with two decorated red peaks. The letters to ignore in its bounce word while computing the bounce are highlighted in red.}
	\label{fig:decorated-peaks-polyominoes}
\end{figure}

\subsection{Reduced polyominoes}

The definitions in this section are all new.

\medskip

We now discuss a reduced version of the parallelogram polyominoes. Even though these objects are strictly related to the original ones, it turns out that some combinatorial results that we will discuss later in the article look more natural on these reduced siblings.

Since in Definition~\ref{def:parallelogrampolyominoes} it is asked that the two paths meet each other only in the extremal points, then the red path has to begin with a vertical step and end with a horizontal step, while the green path has to begin with a horizontal step and end with a vertical step. We can adjust the definition in order to remove this restriction, and get \emph{reduced polyominoes}.

\begin{definition}
	\label{def:reducedpolyominoes}
	A \emph{reduced polyomino} of size $m \times n$ is a pair of lattice paths from $(0,0)$ to $(m,n)$ using only north and east steps, such that the first one (the \emph{red path}) lies always weakly above the second one (the \emph{green path}).
\end{definition}

The set of reduced polyominoes of size $m \times n$ is denoted by $\RP(m,n)$. We do not define a labelled version of reduced polyominoes, since the combinatorics with the labels looks more natural on the regular ones.

Reduced polyominoes are also encoded by their area word, in the alphabet $\overline{\mathbb{N}}$. It is computed using the first of the two algorithms described in Subsection~\ref{section:parallelogram-polyominoes} (notice that some diagonals can have length $0$), but adding an artificial $0$ at the beginning.

The second algorithm can also be used, but with a slight modification: in order to get a Dyck path we should artificially add a north step at the beginning and an east step at the end, and then in the resulting area word we should replace the alphabet $\mathbb{N}$ with $\overline{\mathbb{N}}$. Notice that this gives a bijective correspondence between reduced polyominoes with semi-perimeter $m+n$ and Dyck paths of size $m+n+1$.

\begin{figure}[!ht]
	\centering
	\begin{tikzpicture}[scale=0.6]
	\draw[gray!60, thin] (0,0) grid (6,11);
	
	\filldraw[yellow, opacity=0.3] (0,0) -- (1,0) -- (2,0) -- (2,1) -- (2,2) -- (2,3) -- (3,3) -- (3,4) -- (3,5) -- (4,5) -- (4,6) -- (4,7) -- (4,8) -- (5,8) -- (6,8) -- (6,9) -- (6,10) -- (6,11) -- (5,11) -- (5,10) -- (5,9) -- (4,9) -- (4,8) -- (4,7) -- (4,6) -- (3,6) -- (3,5) -- (3,4) -- (2,4) -- (1,4) -- (1,3) -- (1,2) -- (0,2) -- (0,1) -- (0,0);
	
	\draw
	(1,0) -- (0,1)
	(2,0) -- (0,2)
	(3,3) -- (2,4)
	(4,5) -- (3,6)
	(5,8) -- (4,9)
	(6,8) -- (5,9);
	
	\filldraw[fill=black]
	(1.5,1.5) circle (2pt)
	(1.5,2.5) circle (2pt)
	(1.5,3.5) circle (2pt)
	(5.5,9.5) circle (2pt)
	(5.5,10.5) circle (2pt);
	
	\node[below] at (0.5,0) {$1$};
	\node[below] at (1.5,0) {$2$};
	\node[below] at (2.5,3) {$1$};
	\node[below] at (3.5,5) {$1$};
	\node[below] at (4.5,8) {$1$};
	\node[below] at (5.5,8) {$1$};
	
	\node[left] at (0,0.5) {$\bar{0}$};
	\node[left] at (0,1.5) {$\bar{1}$};
	\node[left] at (1,2.5) {$\bar{1}$};
	\node[left] at (1,3.5) {$\bar{1}$};
	\node[left] at (3,4.5) {$\bar{0}$};
	\node[left] at (3,5.5) {$\bar{0}$};
	\node[left] at (4,6.5) {$\bar{0}$};
	\node[left] at (4,7.5) {$\bar{0}$};
	\node[left] at (4,8.5) {$\bar{0}$};
	\node[left] at (5,9.5) {$\bar{1}$};
	\node[left] at (5,10.5) {$\bar{1}$};
	
	\draw[red, sharp <-sharp >, sharp angle = -45, line width=1.6pt] (0,0) -- (0,1) -- (0,2) -- (1,2) -- (1,3) -- (1,4) -- (2,4) -- (3,4) -- (3,5) -- (3,6) -- (4,6) -- (4,7) -- (4,8) -- (4,9) -- (5,9) -- (5,10) -- (5,11) -- (6,11);
	
	\draw[green, sharp <-sharp >, sharp angle = 45, line width=1.6pt] (0,0) -- (1,0) -- (2,0) -- (2,1) -- (2,2) -- (2,3) -- (3,3) -- (3,4) -- (3,5) -- (4,5) -- (4,6) -- (4,7) -- (4,8) -- (5,8) -- (6,8) -- (6,9) -- (6,10) -- (6,11);
	
	\draw[red, dashed, sharp <-sharp >, sharp angle = -45, line width=1.6pt] (0,0) -- (0,1) -- (0,2) -- (1,2) -- (1,3) -- (1,4) -- (2,4) -- (3,4) -- (3,5) -- (3,6) -- (4,6) -- (4,7) -- (4,8) -- (4,9) -- (5,9) -- (5,10) -- (5,11) -- (6,11);
	\end{tikzpicture}
	
	\caption{A $6 \times 11$ reduced polyomino. Its area word is $0 \bar{0} 1 \bar{1} 2 \bar{1} \bar{1} 1 \bar{0} \bar{0} 1 \bar{0} \bar{0} \bar{0} 1 1 \bar{1} \bar{1}$.}
	\label{fig:aw-reduced-polyomino}
\end{figure}
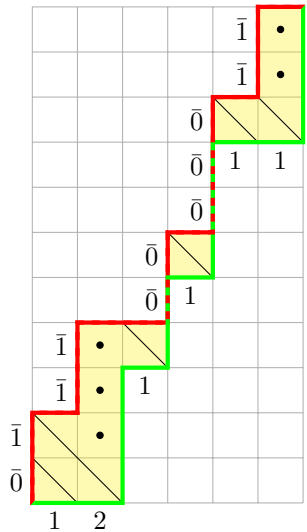

\begin{proposition}
	For $m, n \geq 0$, there is a bijective correspondence between $m \times n$ reduced polyominoes and area words of length $m+n+1$ in the alphabet $\overline{\mathbb{N}}$ starting with $0$, with exactly $m+1$ unbarred letters, and exactly $n$ barred letters.
\end{proposition}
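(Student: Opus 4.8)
The plan is to deduce the statement from the classical encoding of Dyck paths by their area words, following the same pattern as the parallelogram case treated in \cite{Aval-DAdderio-Dukes-Hicks-LeBorgne-2014}*{Section~3} but via the ``second algorithm'' recalled just above the statement, which identifies reduced polyominoes of semi-perimeter $m+n$ with Dyck paths of size $N\coloneqq m+n+1$.

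First I would record the standard fact that $D\mapsto a(D)$ is a bijection from $\D(N)$ onto the set of words $a_1\cdots a_N$ of length $N$ over the alphabet $\mathbb{N}$ (in the sense of the definition of \emph{area word} in Section~\ref{section: Labelled Dyck paths}) with $a_1=0$. Indeed, if the $i$-th north step of $D$ starts at abscissa $x_i$ then $a_i(D)=(i-1)-x_i$, so $a_1=0$, the inequality $a_i\ge 0$ is equivalent to $D$ staying weakly above the main diagonal, and $a_{i+1}\le a_i+1$ holds because $x_{i+1}-x_i\ge 0$ counts the east steps between the two north steps; for integer words $a_{i+1}\le a_i+1$ (for all $i$) is precisely the successor condition, and $x_i\coloneqq(i-1)-a_i$ inverts the map. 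Next, the order isomorphism $\phi\colon \mathbb{N}\to\overline{\mathbb{N}}$ given by $0\mapsto 0$, $1\mapsto\bar{0}$, $2\mapsto 1$, $3\mapsto\bar{1},\dots$ carries the successor relation and the least element of $\mathbb{N}$ to those of $\overline{\mathbb{N}}$, so applying $\phi$ letterwise is a bijection between area words of length $N$ over $\mathbb{N}$ starting with $0$ and area words of length $N$ over $\overline{\mathbb{N}}$ starting with $0$. Composing the three bijections (reduced polyominoes of semi-perimeter $m+n$ $\leftrightarrow$ $\D(N)$ $\leftrightarrow$ $\mathbb{N}$-area words of length $N$ starting with $0$ $\leftrightarrow$ $\overline{\mathbb{N}}$-area words of length $N$ starting with $0$) and unwinding the definitions, one checks the composite is exactly $P\mapsto a(P)$.

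It remains to read off $m$ and $n$ from the area word, i.e.\ to see that a reduced polyomino of size \emph{exactly} $m\times n$ corresponds to a word with $m+1$ unbarred and $n$ barred letters. For this I would use the \emph{first} algorithm, which yields the same area word: it attaches one unbarred letter to each horizontal green step and one barred letter to each vertical red step, and the reduced construction prepends one further unbarred letter ($0$). A reduced polyomino of size $m\times n$ has exactly $m$ horizontal green steps and $n$ vertical red steps, hence its area word has $m+1$ unbarred and $n$ barred letters, of total length $m+n+1$. Conversely a word with $m+1$ unbarred and $n$ barred letters has length $m+n+1$, so it comes from a reduced polyomino of semi-perimeter $m+n$, say of size $m'\times n'$; the forward computation forces $m'+1=m+1$ and $n'=n$, i.e.\ $m'=m$ and $n'=n$. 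This gives the asserted bijection.

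I expect no conceptual difficulty here. The one point deserving care is the claim used in the last paragraph, namely that the ``first algorithm plus prepended $0$'' and the ``second algorithm plus artificial north/east steps, then $\phi$'' compute the same area word of a reduced polyomino --- the reduced analogue of the routine equivalence for parallelogram polyominoes in \cite{Aval-DAdderio-Dukes-Hicks-LeBorgne-2014}*{Section~3} --- which is verified by matching, diagonal of slope $-1$ by diagonal of slope $-1$, the horizontal green and vertical red steps of the polyomino with the north steps of the associated Dyck path. This bookkeeping, rather than anything structural, is what I would be most careful about.
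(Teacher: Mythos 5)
Your proposal is correct and follows essentially the same route as the paper: the paper does not supply a detailed proof beyond the surrounding discussion, which precisely observes that the second algorithm (prepend an artificial north step, append an artificial east step, take the Dyck area word, and transport it along the order isomorphism $\mathbb{N}\to\overline{\mathbb{N}}$) gives a bijection between reduced polyominoes of semi-perimeter $m+n$ and Dyck paths of size $m+n+1$, after which the $(m+1,n)$ count of unbarred/barred letters is read off letterwise as you do. Your argument is a careful fleshing-out of exactly this observation.
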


Statistics on reduced polyominoes look mostly like the corresponding ones for parallelogram polyominoes. The \emph{area} is just, once again, the number of whole squares between the two paths. The \emph{bounce path} starts horizontally from $(0,0)$ and does not artificially turn north after one step; instead it goes east until it hits the beginning of a vertical green step, and then it bounces every time it hits the \emph{beginning} of a step of any of the two paths (notice that we did differently in the regular case, as we bounced every time the path hit the \emph{end} of a step). The labelling of the bounce path starts from $0$ and goes up by one in the alphabet $\overline{\mathbb{N}}$ every time it changes direction; of course, the \emph{bounce} is the sum of the values of all the labels appearing in the bounce path.

\begin{proposition}
	\label{normalizing map}
	There is a bijection $\mathsf{r} \colon \PP(m,n) \rightarrow \RP(n-1,m-1)$ such that $\area(P) = \area(\mathsf{r}(P)) + m+n-1$ and $\bounce(P) = \bounce(\mathsf{r}(P)) + m+n-1$.
\end{proposition}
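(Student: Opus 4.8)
The plan is to build $\mathsf{r}$ by a surgery on the two boundary paths of a parallelogram polyomino, followed by a reflection, and then to verify in turn that $\mathsf{r}$ lands in $\RP(n-1,m-1)$, that it is invertible, and that it lowers each of $\area$ and $\bounce$ by exactly $m+n-1$.

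First I would set up the surgery. Let $P\in\PP(m,n)$ have red path $R$ and green path $G$. Since the two paths meet only at the extremal points, $R$ must begin with a north step and end with an east step, while $G$ must begin with an east step and end with a north step. Strip the initial north step and the final east step from $R$ and translate the remainder to the origin; call the result $\widetilde R$, a path from $(0,0)$ to $(m-1,n-1)$. Strip the initial east step and the final north step from $G$ and translate to the origin, obtaining $\widetilde G$, again from $(0,0)$ to $(m-1,n-1)$. One then checks that $\widetilde R$ lies weakly above $\widetilde G$: this is precisely the condition that $R$ lies strictly above $G$ at every non‑extremal point, read off after removing the forced slack at the two corners. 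Reflecting the pair $(\widetilde R,\widetilde G)$ across the main diagonal interchanges "weakly above'' and "weakly below'', so declaring the reflection of $\widetilde G$ to be the red path and the reflection of $\widetilde R$ the green path produces a reduced polyomino $\mathsf{r}(P)\in\RP(n-1,m-1)$.

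For bijectivity I would exhibit the inverse explicitly. Given $Q\in\RP(n-1,m-1)$ with red path $\rho$ and green path $\gamma$, reflect across the diagonal; then prepend a north step and append an east step to the reflection of $\gamma$ to form a path $R$ from $(0,0)$ to $(m,n)$, and prepend an east step and append a north step to the reflection of $\rho$ to form $G$. The only thing to verify is that $R$ lies strictly above $G$ away from the extremal points, which holds because the reflection of $\gamma$ lies weakly above the reflection of $\rho$ and the prepended/appended steps create the needed strict separation at $(0,0)$ and $(m,n)$. This map is manifestly a two‑sided inverse to $\mathsf{r}$.

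The area identity is a cell count. $\area(P)$ is the number of cells lying between $R$ and $G$, and the surgery removes from that region exactly the $m+n-1$ cells of a staircase "snake'' joining $(0,0)$ to $(m,n)$ — the minimal such snake whose presence underlies the inequality $\area(P)\ge m+n-1$ recalled above — while reflection across the diagonal preserves the number of cells between the two paths; hence $\area(P)=\area(\mathsf{r}(P))+m+n-1$. (Equivalently, one may argue on area words: that of $P$ begins $\bar 0\,1$, and after deleting these forced first letters and transporting the remaining letters through the canonical order‑isomorphism $\overline{\mathbb{N}}\setminus\{0\}\to\overline{\mathbb{N}}$ one obtains the area word of $\mathsf{r}(P)$, whence the identity follows from a short computation with the values of the letters.) The bounce identity is the delicate part and is the main obstacle. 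I would run the bounce‑path algorithm on $P$ and follow it through the surgery: the initial horizontal step of the bounce path, together with its first vertical leg (rising to the top of the first vertical run of $R$) and its first horizontal leg (running right until $G$ first reaches that height), is exactly the portion pinned down by the stripped first steps of $R$ and $G$, and one checks that these first legs contribute precisely $m+n-1$ more to $\bounce(P)$ than the corresponding leg(s) of the bounce path of $\mathsf{r}(P)$ contribute to $\bounce(\mathsf{r}(P))$. Thereafter the bounce path of $P$ and the reflection of the bounce path of $\mathsf{r}(P)$ chase corresponding runs of the two boundary paths with the roles of "vertical leg'' and "horizontal leg'' swapped by the reflection, and one must verify that at every bounce the two algorithms turn at matching points and that the attached labels agree once one accounts for the fact that reflection carries $\overline{\mathbb{N}}$ to itself while interchanging barred and unbarred values in the controlled way. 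Making this lockstep correspondence precise — keeping track at each stage of which boundary path the bounce path is currently following — is mechanical but needs to be carried out carefully, and is where essentially all the work of the proof lies.
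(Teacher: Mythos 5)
Your proposal is correct and takes essentially the same approach as the paper: strip the forced first and last steps from each boundary path, reflect across the main diagonal to swap the roles of red and green, and track how area and bounce transform under this surgery (the bounce path of the image is the reflected bounce path of the original with its first two steps deleted and every label decreased by one in value, which accounts exactly for the drop of $m+n-1$).
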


\begin{proof}
	For $P \in \PP(m,n)$, to compute $\mathsf{r}(P)$ remove the first and the last step from both paths of $P$, then take the symmetry with respect to the line $x=y$, thus swapping the green and the red paths. It is obvious that the area decreases by $m+n-1$, and the bounce path of $\mathsf{r}(P)$ is the same as the bounce path of $P$ except that it is reflected with respect to the line $x=y$, the first two steps are deleted, and the labels are all decreased by one unit (two steps in the alphabet $\overline{\mathbb{N}}$).
\end{proof}

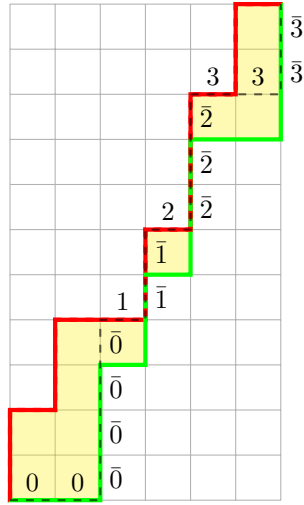
\begin{figure}[!ht]
	\centering
	\begin{tikzpicture}[scale=0.6]
	\draw[gray!60, thin] (0,0) grid (6,11);
	
	\filldraw[yellow, opacity=0.3] (0,0) -- (1,0) -- (2,0) -- (2,1) -- (2,2) -- (2,3) -- (3,3) -- (3,4) -- (3,5) -- (4,5) -- (4,6) -- (4,7) -- (4,8) -- (5,8) -- (6,8) -- (6,9) -- (6,10) -- (6,11) -- (5,11) -- (5,10) -- (5,9) -- (4,9) -- (4,8) -- (4,7) -- (4,6) -- (3,6) -- (3,5) -- (3,4) -- (2,4) -- (1,4) -- (1,3) -- (1,2) -- (0,2) -- (0,1) -- (0,0);
	
	\draw[red, sharp <-sharp >, sharp angle = -45, line width=1.6pt] (0,0) -- (0,1) -- (0,2) -- (1,2) -- (1,3) -- (1,4) -- (2,4) -- (3,4) -- (3,5) -- (3,6) -- (4,6) -- (4,7) -- (4,8) -- (4,9) -- (5,9) -- (5,10) -- (5,11) -- (6,11);
	
	\draw[green, sharp <-sharp >, sharp angle = 45, line width=1.6pt] (0,0) -- (1,0) -- (2,0) -- (2,1) -- (2,2) -- (2,3) -- (3,3) -- (3,4) -- (3,5) -- (4,5) -- (4,6) -- (4,7) -- (4,8) -- (5,8) -- (6,8) -- (6,9) -- (6,10) -- (6,11);
	
	\draw[red, sharp <-sharp >, sharp angle = -45, dashed, line width=1.6pt] (0,0) -- (0,1) -- (0,2) -- (1,2) -- (1,3) -- (1,4) -- (2,4) -- (3,4) -- (3,5) -- (3,6) -- (4,6) -- (4,7) -- (4,8) -- (4,9) -- (5,9) -- (5,10) -- (5,11) -- (6,11);
	
	\draw[dashed, opacity=0.6, thick] (0,0) -- (1,0) -- (2,0) -- (2,1) -- (2,2) -- (2,3) -- (2,4) -- (3,4) -- (3,5) -- (3,6) -- (4,6) -- (4,7) -- (4,8) -- (4,9) -- (5,9) -- (6,9) -- (6,10) -- (6,11);
	
	\node[above] at (0.5,0.0) {$0$};
	\node[above] at (1.5,0.0) {$0$};
	\node[right] at (2.0,0.5) {$\bar{0}$};
	\node[right] at (2.0,1.5) {$\bar{0}$};
	\node[right] at (2.0,2.5) {$\bar{0}$};
	\node[right] at (2.0,3.5) {$\bar{0}$};
	\node[above] at (2.5,4.0) {$1$};
	\node[right] at (3.0,4.5) {$\bar{1}$};
	\node[right] at (3.0,5.5) {$\bar{1}$};
	\node[above] at (3.5,6.0) {$2$};
	\node[right] at (4.0,6.5) {$\bar{2}$};
	\node[right] at (4.0,7.5) {$\bar{2}$};
	\node[right] at (4.0,8.5) {$\bar{2}$};
	\node[above] at (4.5,9.0) {$3$};
	\node[above] at (5.5,9.0) {$3$};
	\node[right] at (6.0,9.5) {$\bar{3}$};
	\node[right] at (6.0,10.5) {$\bar{3}$};
	\end{tikzpicture}
	
	\caption{The image of the parallelogram polyomino in Figure~\ref{fig:bouncepath-polyominoes} through the map $\mathsf{r}$. Notice the correspondence between the two bounce paths.}
	\label{fig:bounce-reduced-polyomino}
\end{figure}

The statistic \emph{dinv} is significantly different.

\begin{definition}
	Let $P \in \RP(m,n)$. For $1 \leq i < j \leq m+n$, we (legitimately) say that the pair $(i,j)$ is an \textit{inversion} if $a_i(P)$ is the successor of $a_j(P)$ in the alphabet $\overline{\mathbb{N}}$. Then we define $d_i(P) \coloneqq \# \{ i < j \mid (i,j) \; \text{is an inversion} \}$.
\end{definition}

Once again, the dinv of the polyomino is the total number of inversions.

\begin{remark}
	These three statistics on reduced polyominoes have the same distribution, up to a normalization factor $m+n-1$, of the corresponding ones on parallelogram polyominoes. We already proved it for area and bounce, and we will show in Section~\ref{sec:reduced_polyo} that the same holds for the dinv.
\end{remark}

\subsection{Decorated reduced polyominoes}

The definitions in this section are all new.

\medskip

The same normalizations obviously apply to decorated objects. The definitions for reduced polyominoes resemble the corresponding ones for Dyck paths a bit more than the ones for parallelogram polyominoes do.

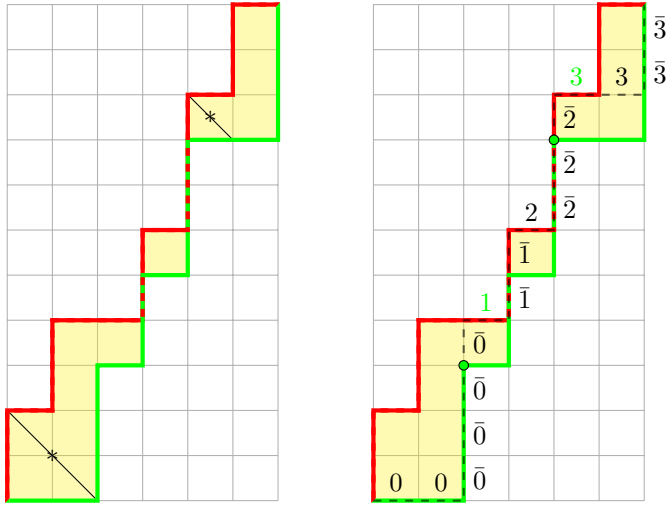
\begin{figure}[!ht]
	\centering
	\begin{minipage}{.40\textwidth}
		\centering
		\begin{tikzpicture}[scale=0.6]
		\draw[gray!60, thin] (0,0) grid (6,11);
		
		\filldraw[yellow, opacity=0.3] (0,0) -- (1,0) -- (2,0) -- (2,1) -- (2,2) -- (2,3) -- (3,3) -- (3,4) -- (3,5) -- (4,5) -- (4,6) -- (4,7) -- (4,8) -- (5,8) -- (6,8) -- (6,9) -- (6,10) -- (6,11) -- (5,11) -- (5,10) -- (5,9) -- (4,9) -- (4,8) -- (4,7) -- (4,6) -- (3,6) -- (3,5) -- (3,4) -- (2,4) -- (1,4) -- (1,3) -- (1,2) -- (0,2) -- (0,1) -- (0,0);
		
		\draw
		(2,0) -- (0,2)
		(5,8) -- (4,9);
		
		\draw (1,1) node {$\ast$};
		\draw (4.5,8.5) node {$\ast$};
		
		\draw[red, sharp <-sharp >, sharp angle = -45, line width=1.6pt] (0,0) -- (0,1) -- (0,2) -- (1,2) -- (1,3) -- (1,4) -- (2,4) -- (3,4) -- (3,5) -- (3,6) -- (4,6) -- (4,7) -- (4,8) -- (4,9) -- (5,9) -- (5,10) -- (5,11) -- (6,11);
		
		\draw[green, sharp <-sharp >, sharp angle = 45, line width=1.6pt] (0,0) -- (1,0) -- (2,0) -- (2,1) -- (2,2) -- (2,3) -- (3,3) -- (3,4) -- (3,5) -- (4,5) -- (4,6) -- (4,7) -- (4,8) -- (5,8) -- (6,8) -- (6,9) -- (6,10) -- (6,11);
		
		\draw[red, sharp <-sharp >, sharp angle = -45, dashed, line width=1.6pt] (0,0) -- (0,1) -- (0,2) -- (1,2) -- (1,3) -- (1,4) -- (2,4) -- (3,4) -- (3,5) -- (3,6) -- (4,6) -- (4,7) -- (4,8) -- (4,9) -- (5,9) -- (5,10) -- (5,11) -- (6,11);
		\end{tikzpicture}
	\end{minipage}%
	\begin{minipage}{.40\textwidth}
		\centering
		\begin{tikzpicture}[scale=0.6]
		\draw[gray!60, thin] (0,0) grid (6,11);
		
		\filldraw[yellow, opacity=0.3] (0,0) -- (1,0) -- (2,0) -- (2,1) -- (2,2) -- (2,3) -- (3,3) -- (3,4) -- (3,5) -- (4,5) -- (4,6) -- (4,7) -- (4,8) -- (5,8) -- (6,8) -- (6,9) -- (6,10) -- (6,11) -- (5,11) -- (5,10) -- (5,9) -- (4,9) -- (4,8) -- (4,7) -- (4,6) -- (3,6) -- (3,5) -- (3,4) -- (2,4) -- (1,4) -- (1,3) -- (1,2) -- (0,2) -- (0,1) -- (0,0);
		
		\draw[red, sharp <-sharp >, sharp angle = -45, line width=1.6pt] (0,0) -- (0,1) -- (0,2) -- (1,2) -- (1,3) -- (1,4) -- (2,4) -- (3,4) -- (3,5) -- (3,6) -- (4,6) -- (4,7) -- (4,8) -- (4,9) -- (5,9) -- (5,10) -- (5,11) -- (6,11);
		
		\draw[green, sharp <-sharp >, sharp angle = 45, line width=1.6pt] (0,0) -- (1,0) -- (2,0) -- (2,1) -- (2,2) -- (2,3) -- (3,3) -- (3,4) -- (3,5) -- (4,5) -- (4,6) -- (4,7) -- (4,8) -- (5,8) -- (6,8) -- (6,9) -- (6,10) -- (6,11);
		
		\draw[red, dashed, sharp <-sharp >, sharp angle = -45, line width=1.6pt] (0,0) -- (0,1) -- (0,2) -- (1,2) -- (1,3) -- (1,4) -- (2,4) -- (3,4) -- (3,5) -- (3,6) -- (4,6) -- (4,7) -- (4,8) -- (4,9) -- (5,9) -- (5,10) -- (5,11) -- (6,11);
		
		\draw[dashed, opacity=0.6, thick] (0,0) -- (1,0) -- (2,0) -- (2,1) -- (2,2) -- (2,3) -- (2,4) -- (3,4) -- (3,5) -- (3,6) -- (4,6) -- (4,7) -- (4,8) -- (4,9) -- (5,9) -- (6,9) -- (6,10) -- (6,11);
		
		\filldraw[fill=green]
		(2,3) circle (3pt)
		(4,8) circle (3pt);
		
		\node[above] at (0.5,0.0) {$0$};
		\node[above] at (1.5,0.0) {$0$};
		\node[right] at (2.0,0.5) {$\bar{0}$};
		\node[right] at (2.0,1.5) {$\bar{0}$};
		\node[right] at (2.0,2.5) {$\bar{0}$};
		\node[right] at (2.0,3.5) {$\bar{0}$};
		\node[above, green] at (2.5,4.0) {$1$};
		\node[right] at (3.0,4.5) {$\bar{1}$};
		\node[right] at (3.0,5.5) {$\bar{1}$};
		\node[above] at (3.5,6.0) {$2$};
		\node[right] at (4.0,6.5) {$\bar{2}$};
		\node[right] at (4.0,7.5) {$\bar{2}$};
		\node[right] at (4.0,8.5) {$\bar{2}$};
		\node[above, green] at (4.5,9.0) {$3$};
		\node[above] at (5.5,9.0) {$3$};
		\node[right] at (6.0,9.5) {$\bar{3}$};
		\node[right] at (6.0,10.5) {$\bar{3}$};
		\end{tikzpicture}
	\end{minipage}
	
	\caption{A reduced polyomino with two decorated rises (left) and area word $0 \bar{0} 1 \bar{1} {\color{green} 2} \bar{1} \bar{1} 1 \bar{0} \bar{0} 1 \bar{0} \bar{0} \bar{0} {\color{green} 1} 1 \bar{1} \bar{1}$, and one with two decorated green peaks (right).}
	\label{fig:decorated-reduced-polyominoes}
\end{figure}

\begin{definition}
	The \emph{rises} of a reduced polyomino $P$ are the indices \[ \Rise(P) \coloneqq \{2 \leq i \leq m+n \mid \vert a_{i}(P) \vert > \vert a_{i-1}(P) \vert \},\] which once again correspond in the figure to diagonals of slope $-1$ from the endpoint of a vertical red step to the endpoint of a horizontal green step.
\end{definition}

\begin{definition}
	The \emph{green peaks} of a reduced polyomino $P$ are the horizontal green steps that immediately follow a vertical green step.
\end{definition}

\begin{definition}
	A \emph{decorated reduced polyomino} is a reduced polyomino where certain green peaks are decorated with a symbol ${\color{green} \bullet}$ \emph{or} certain rises are decorated with a symbol $\ast$.
\end{definition}

Once again, we do not decorate the two sets simultaneously. By $\RP(m,n)^{\circ k}$ we denote the set of reduced polyominoes of size $m \times n$ with $k$ decorated green peaks, and by $\RP(m,n)^{\ast k}$ we denote the set of reduced polyominoes of size $m \times n$ with $k$ decorated rises. This time we allow decorations on any rise or green peak, since all of them give a non-zero contribution to the corresponding statistic.

The statistics area and bounce are defined in the same way we did for decorated parallelogram polyominoes, i.e. we ignore the contribution of decorated rises in the area word while computing the area, and we ignore the labels of the bounce path that lie in the same column as a decorated green peak while computing the bounce.

\subsection{Two car parking functions}

The definitions in this section first appeared in \cite{HHLRU-2005}.

\medskip

We need one more combinatorial object, namely two car parking functions. 

\begin{definition}
	A \emph{two car parking function} is a labelled Dyck path such that all the labels have value $1$ or $2$.
\end{definition}

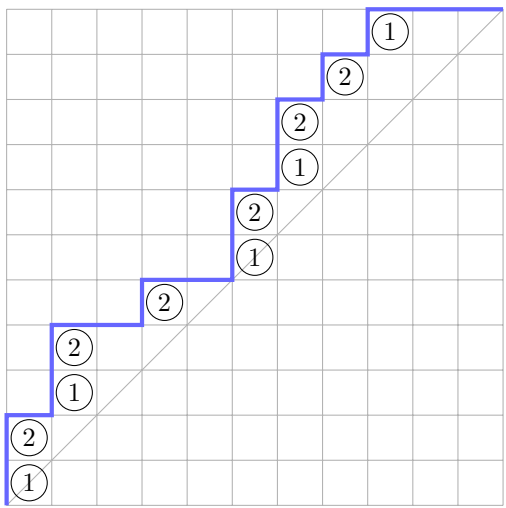
\begin{figure}[!ht]
	\begin{center}
		\begin{tikzpicture}[scale = 0.6]
		\draw[gray!60, thin] (0,0) grid (11,11);
		\draw[gray!60, thin] (0,0) -- (11,11);
		
		\draw[blue!60, line width=1.6pt] (0,0) -- (0,1) -- (0,2) -- (1,2) -- (1,3) -- (1,4) -- (2,4) -- (3,4) -- (3,5) -- (4,5) -- (5,5) -- (5,6) -- (5,7) -- (6,7) -- (6,8) -- (6,9) -- (7,9) -- (7,10) -- (8,10) -- (8,11) -- (9,11) -- (10,11) -- (11,11);
		
		\draw
		(0.5,0.5) circle(0.4 cm) node {$1$}
		(0.5,1.5) circle(0.4 cm) node {$2$}
		(1.5,2.5) circle(0.4 cm) node {$1$}
		(1.5,3.5) circle(0.4 cm) node {$2$}
		(3.5,4.5) circle(0.4 cm) node {$2$}
		(5.5,5.5) circle(0.4 cm) node {$1$}
		(5.5,6.5) circle(0.4 cm) node {$2$}
		(6.5,7.5) circle(0.4 cm) node {$1$}
		(6.5,8.5) circle(0.4 cm) node {$2$}
		(7.5,9.5) circle(0.4 cm) node {$2$}
		(8.5,10.5) circle(0.4 cm) node {$1$};
		\end{tikzpicture}
	\end{center}
	
	\caption{A two car parking function with five $1$'s and six $2$'s.}
	\label{fig:2cpf}
\end{figure}

Observe that replacing the $1$'s and the $2$'s with two decreasing sequences $n, \dots, 1$ and $m+n, \dots, n+1$ in the dinv (resp. pmaj) reading word, the dinv (resp. pmaj) is not altered. In other words, two car parking functions with $n$ labels equal to $1$ and $m$ labels equal to $2$ are essentially the same as parking functions whose dinv (resp. pmaj) reading word is in $(n, \dots, 1) \shuffle (m+n, \dots, n+1)$, since the relabelling is a bijection that preserves the bistatistic $(\dinv, \area)$ (resp. $(\area, \pmaj)$). To avoid confusion, we will refer to labelled Dyck paths with labels in $\{1,2\}$ as two car parking functions, and to labelled Dyck paths whose reading word is a shuffle of two decreasing sequences as \emph{two-shuffle parking functions}. For more implications of this fact and the definition of what it means to be a shuffle, we refer to Section \ref{sec:link_delta}.

\newpage
\subsection{Square paths} \label{sec:sq_path_defs}

In this section we introduce new statistics area, dinv and bounce on square paths, that are similar to (but different from) the ones introduced in \cite{Loehr-Warrington-square-2007}.
\begin{definition}
	A \emph{square path ending east} of size $n$ is a lattice paths going from $(0,0)$ to $(n,n)$ consisting of east or north unit steps, and ending with an east step. We denote the set of such paths by $\mathsf{SQ^E}(n)$. Similarly, we define the set $\mathsf{SQ^N}(n)$ of square paths ending north.
\end{definition}

\begin{definition}
	Given $P\in \mathsf{SQ^E}(n)$ or $P\in \mathsf{SQ^N}(n)$ , we define its \emph{area word} $w_1(P)\cdots w_n(P)$ to be the sequence of integers such that the $i$-th vertical step of $P$ lies on the diagonal $y=x+w_i(P)$. It is an area word in the alphabet $\mathbb Z$. 
\end{definition}

\begin{definition}
	A \emph{parking square path ending east} is a square path ending east $P$ whose vertical steps are labelled from bottom to top by $l_1(P), \dots, l_n(P)$ such that $l_1(P) \cdots l_n(P) \in  \mathfrak{S}_n$ and the labels in the same column are strictly increasing from bottom to top. We denote by $\mathsf{PSQ^E}(n)$ the set of such paths. The set $\PSQN(n)$ of \emph{parking square paths ending north} is defined analogously.  
\end{definition}
See Figure \ref{SQpath} for an example.

\begin{figure}[!ht]
	\begin{center}
		\begin{tikzpicture}[scale=1.2]
		\draw[gray!60,thin] (0,0) grid[step=0.5 cm](5.5,5.5);
		\draw[gray!60, thin] (2,0) to (7.5,5.5);
		\filldraw (3.5,1.5) circle(2pt);
		\draw[blue!60, line width=1.6pt] (3.5,1.5)|-(4,2.5)|-(5,4.5) |-(5.5,5.5) (0,0)|-(1,1)|-(3.5,1.5);
		\draw (.25, 0.25) node {4} circle(.2 cm)
		(0.25, 0.75) node {6} circle(.2 cm)
		(1.25, 1.25) node {11} circle(.2 cm)
		(3.75, 1.75) node {7} circle(.2 cm)
		(3.75, 2.25) node {10} circle(.2 cm)
		(4.25, 2.75) node {1} circle(.2 cm)
		(4.25, 3.25) node {2} circle(.2 cm)
		(4.25, 3.75) node {5} circle(.2 cm)
		(4.25, 4.25) node {8} circle(.2 cm)
		(5.25, 4.75) node {3} circle(.2 cm)
		(5.25, 5.25) node {9} circle(.2 cm);
		\fill[gray,opacity=.2](0,0)--(0,1)--(1,1)--(1,1.5)--(3,1.5)--(3,1)--(2.5,1)--(2.5,.5)--(2,.5)--(2,0) (3.5,2) rectangle (4,2.5) (4,2.5) rectangle (4.5, 4) (4.5,3) rectangle (5,4)(5,3.5) rectangle (5.5,5.5) (5,4.5)rectangle (4,4) (5.5,4.5) rectangle (6.5,5.5) rectangle (7,5) (5.5,4.5) rectangle (6,4);
		\draw [decorate,decoration={brace, mirror,amplitude=8pt},xshift=0pt,yshift=0pt](0,0) --(2,0);
		\draw [decorate,decoration={brace, mirror,amplitude=8pt},xshift=0pt,yshift=0pt](5.5,0)--(5.5,1.5);
		\draw (1,-.5) node{$c$} (6,.75) node {$j$};
		\draw [gray!60, thin](5.5,4.5) grid[step=.5](6.5,5.5) rectangle (7,5) (5.5,4.5) rectangle (6,4) (6,4.5) rectangle (6.5,5) ;
		\end{tikzpicture}
	\end{center}
	\caption{Example of a labelled square path}\label{SQpath}
\end{figure}
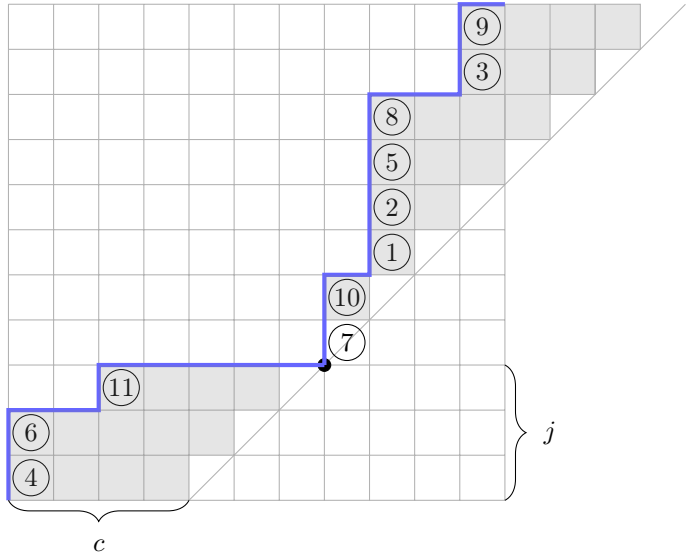

We define an area statistic on each of these sets. As usual, tha area does not depend on the labelling. However the definition for paths ending east and ending north is slightly different.
\begin{definition}\label{def: area sqe}
	Let $P\in \SQE(n)\sqcup \PSQE(n)$. Set 
	\begin{align*}
		& c \coloneqq -\min \{w_i(P)\mid i=1,2,\dots n\} \\
		& j \coloneqq \min\{i\mid w_i(P)=-c\}-1. 
	\end{align*}
	
	We define \[ \mathsf{area}(P) \coloneqq \sum_{i=1}^n w_i(P)+nc-j-c. \]
\end{definition}

For example, the path in Figure \ref{SQpath} has $c=4$, $j=3$ and area $31-3-4=24$. In this picture, we coloured grey the whole squares between the path and the line $y=x-c$: these squares are counted by $\sum_{i=1}^n w_i(P)+nc$, so that the area is equal to the number of grey squares minus $j+c$.

\begin{definition}
	For $P\in \SQN(n)\sqcup \PSQN(n) $ and $c$ and $j$ defined in the same way as in Definition \ref{def: area sqe}, we set \[\mathsf{area}(P) \coloneqq \sum_{i=1}^n w_i(P)+nc-j\]
\end{definition}

\begin{definition}
	For $P\in \PSQE(n) $ we define its \emph{diagonal inversions}, \emph{dinv} and \emph{dinv reading word} in a way analogous to how we did it for labelled Dyck paths (see Definitions \ref{def: inversion LD}, \ref{def: dinv LD} and \ref{def: dinv reading word LD}). 
\end{definition}

Indeed these definitions still make sense, even if the letters of the area word of a square path ending east might be negative. 

Next we define the dinv of unlabelled square paths.

\begin{definition}
	Let $P\in \mathsf{SQ^E}(n)\sqcup \mathsf{SQ^N}(n)$, we set
	\begin{align*}
		\dinv(P)& \coloneqq \sum_{1\leq i<j\leq n}\chi(w_i(P)=w_j(P)) \\
		& + \sum_{1\leq i<j< n}\chi(w_i(P) = w_j(P)+1) \\
		&  + \sum_{1\leq i< n}\chi(w_i(P) = w_n(P)+1 \; \text{and} \; w_n(P) \geq  0),
	\end{align*} 
	where $\chi$ is the indicator function which is defined as $\chi(\mathcal{P})=1$ if $\mathcal{P}$ is true and $\chi(\mathcal{P})=0$ otherwise.

	Notice that the last term means that the last step of a path ending north does not create secondary dinv.
\end{definition}

Finally, we define the bounce of unlabelled objects. 

\begin{definition} Let $P\in \mathsf{SQ^E}(n)\sqcup \mathsf{SQ^N}(n)$. The \emph{bounce path} of $P$ starts at the point $(c+j, j)$ and travels north until it hits the beginning of an east step when it turns east until it hits the diagonal $y=x-c$, where it turns north again, and so on. When it crosses the line $y=n$, at the point $(n+m,n)$ it stops and starts again at the point $(m,0)$. Here there is a slight difference between the definitions for paths ending east and paths ending north:
	\begin{itemize}
		\item if the path ends east, then if $m=0$ the bounce path starts traveling east at the point $(0,0)$, otherwise, it travels north; 
		\item if the path ends north, then the bounce path always travels north starting from $(m,0)$.  
	\end{itemize} 
	Then the path bounces on the \emph{end} of east steps and again on the diagonal $y=x-c$. It ends when it arrives at the point $(c+j,j)$. We then label the vertical steps of the bounce starting with $0$'s (recall that the bounce path starts at $(c+j,j)$) and adding $1$ every time the bounce path bounces. Finally, $\bounce(P)$ is defined to be the sum of the labels of the bounce path. 
\end{definition}

For an example, see Figure~\ref{squarebounceex}, whose bounce equals $15$. 

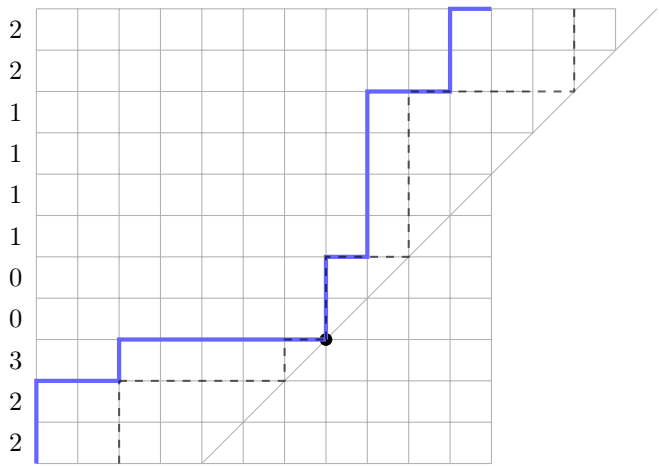
\begin{figure}[!ht]
	\begin{center}
		\begin{tikzpicture}[scale=1.1]
		\draw[gray!60,thin] (0,0) grid[step=0.5 cm](5.5,5.5) (2,0) to (7.5,5.5); 
		\filldraw (3.5,1.5) circle(2pt);
		\draw[blue!60, line width=1.6pt](3.5,1.5)|-(4,2.5)|-(5,4.5) |-(5.5,5.5) (0,0)|-(1,1)|-(3.5,1.5);
		\draw[gray!60, thin] (5.5,4.5) grid[step=.5](6.5,5.5) rectangle (7,5) (5.5,4.5) rectangle (6,4) ;
		\draw[thick, dashed, opacity=0.6] (3.5,1.5)|-(4.5,2.5)|-(6.5,4.5) to (6.5,5.5)(1,0)--(1,1)(1,1)--(3,1)|-(3.5,1.5);
		\draw (-.25, 0.25) node {2}
		(-.25, 0.75) node {2}
		(-.25, 1.25) node {3}
		(-.25, 1.75) node {0}
		(-.25, 2.25) node {0}
		(-.25, 2.75) node {1}
		(-.25, 3.25) node {1}
		(-.25, 3.75) node {1}
		(-.25, 4.25) node {1}
		(-.25, 4.75) node {2}
		(-.25, 5.25) node {2};
		\end{tikzpicture}
	\end{center}
	\caption{Example of a path in $\mathsf{SQ^E}(n)$, its bounce path and its labels (left). } \label{squarebounceex}
\end{figure}

\chapter{Conjectures}

In this chapter we state two known conjectures, the Delta conjecture and a generalization of it, both appearing for the first time in \cite{Haglund-Remmel-Wilson-2015}, and three new conjectures. We also give an overview of the state of the art of these problems. We refer to Section~\ref{section:combinatorial definitions} for the definitions of the combinatorial objects, and to Section~\ref{sec:symmetric_fcts_intro} for the definitions of symmetric function theory.

\section{The Delta conjecture} \label{sec:Delta_Conj}

Most of the results in this paper relate to the Delta conjecture, first stated in \cite{Haglund-Remmel-Wilson-2015}, which gives two combinatorial interpretations of the symmetric functions of the form $\Delta_{e_k}e_n$ where $1\leq k\leq n$. It is therefore pertinent to give the full statement of this conjecture.

%
%We already observed that \[\Delta_{e_k}e_n= \Delta'_{e_k+e_{k-1}}e_n=\Delta'_{e_k}e_n+\Delta'_{e_{k-1}} e_n, \]
%so a combinatorial formula for $\Delta'_{e_k}e_n$ implies one for $\Delta_{e_k}e_n$. 

The Delta conjecture is in fact a pair of conjectures, generally referred to as the \emph{rise version} and the \emph{valley version} of the Delta conjecture. There is, as of yet, no proof that these two versions of the Delta conjecture are equivalent. 

Their first statement in \cite{Haglund-Remmel-Wilson-2015} is given in terms of labelled Dyck paths.  
%
%We refer to Section~\ref{section:combinatorial definitions} for the definitions of the combinatorial objects. We need just one last piece of notation.
%
%\begin{definition}
%	For $D\in \LD(n), \LDD^{\ast k}(n) $ or $\LDD^{\vee k}(n)$ we set \[ x^D \coloneqq \prod_{i=1}^n x_{l_i(D)}. \]
%\end{definition}

\begin{conjecture}[Delta conjecture]\label{conj: 1st Delta} For any integers $n>k\geq 0$, 
	\begin{align}
		\Delta'_{e_{n-k-1}}e_n&=\left.\sum_{D\in \LD(n) }q^{\dinv(D)}t^{\area(D)} \prod_{i\in \Rise(D)}\left(1+\frac{z}{t^{a_i(D)}} \right) x^D \right|_{z^{k}}
		\label{eq: delta conj LD rise} \\
		&=\left.\sum_{D\in \LD(n) }q^{\dinv(D)}t^{\area(D)} \prod_{i\in \Val(D)}\left(1+\frac{z}{q^{d_i(D)+1}} \right) x^D \right|_{z^{k}} \label{eq: delta conj LD valley} 
	\end{align} 
\end{conjecture}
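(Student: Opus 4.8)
The plan is to attack both identities of Conjecture~\ref{conj: 1st Delta} in the spirit that settled the case $k=0$ (the Shuffle theorem \cite{Carlsson-Mellit-ShuffleConj-2015}): produce a recursion that each side provably satisfies, then match the base cases. On the symmetric-function side the engine is an operator recursion for $\Delta'_{e_{n-k-1}}e_n$. Starting from \eqref{eq:deltaop_def} and \eqref{eq:deltaprime}, together with the standard Pieri/creation operators and a new identity for modified Macdonald polynomials extending Haglund's $q,t$-Schr\"oder identity \cite{Haglund-Schroeder-2004}, one expresses $\Delta'_{e_{n-k-1}}e_n$ through smaller instances while tracking, via the formal parameter $z$, the number of decorations; a composition-indexed refinement in the style of the compositional Shuffle theorem \cite{Haglund-Morse-Zabrocki-2012} makes the bookkeeping tractable. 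On the combinatorial side one sets up the mirror recursion on $\LD(n)$ by peeling off the bottom of the path --- grouping decorated labelled Dyck paths by the contents of the first column and by the first return to the diagonal (or by the first bounce step) --- and checks that $\dinv$, $\area$, the monomial $x^D$, and the local decoration factors $\prod_{i\in\Rise(D)}\big(1+z/t^{a_i(D)}\big)$ in \eqref{eq: delta conj LD rise} transform compatibly with the algebraic recursion.

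For the valley version \eqref{eq: delta conj LD valley} there are two natural routes. The first is to reduce it to the rise version by a statistic-preserving bijection $\LD(n)\to\LD(n)$ carrying decorated rises to decorated contractible valleys, sending $(\dinv,\area)$ and the data recorded by the $z$-factors into one another, and fixing $x^D$; the factor $1+z/q^{d_i(D)+1}$ attached to $i\in\Val(D)$ is, like $1+z/t^{a_i(D)}$, local, which is what makes such a swap plausible. The second route is to prove the analogous peeling recursion directly in the valley combinatorics, which requires controlling how $\Val(D)$ and the numbers $d_i(D)$ behave under the decomposition. I would pursue the first route, since a bijection would also answer the open question of whether the two versions are equivalent.

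The hard part is precisely the point at which the full conjecture remains open: for general $k$ the operator recursion for $\Delta'_{e_{n-k-1}}e_n$ is not known to ``close'' against the combinatorial one --- there is no presently known $z$-deformation of the Carlsson--Mellit Dyck-path-algebra machinery, and no bijection proving the rise and valley versions equivalent. What can be done unconditionally, and what this work carries out, is to first apply a functional $\<\,\cdot\,,e_ah_b\>$ or $\<\,\cdot\,,h_ah_b\>$: the symmetric-function side becomes $\<\Delta'_{e_{a+b-k-1}}e_{a+b},e_ah_b\>$ (resp.\ with $h_ah_b$), the combinatorial side collapses to a $q,t$-enumerator of \emph{unlabelled} decorated Dyck paths (resp.\ of decorated parallelogram polyominoes), and now both sides satisfy a genuine finite recursion --- a decorated $q,t$-Schr\"oder recursion and its sibling. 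Proving these recursions coincide, and establishing the surprising identity $\<\Delta_{h_m}e_{n+1},s_{k+1,1^{n-k}}\>=\<\Delta'_{e_{m+n-k-1}}e_{m+n},h_mh_n\>$ relating the two worlds, yields the cases $\<\cdot\,,e_{n-d}h_d\>$ and $\<\cdot\,,h_{n-d}h_d\>$ of Conjecture~\ref{conj: 1st Delta}; the remaining scalar products, and hence the identities exactly as displayed, stay beyond the reach of these methods.
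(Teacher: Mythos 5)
This statement is a \emph{conjecture} in the paper, not a theorem: the text attributes it to Haglund--Remmel--Wilson, records the known special cases in a table, and proves only the specializations $\<\cdot\,,e_{n-d}h_d\>$ (the decorated $q,t$-Schr\"oder theorem of Section~\ref{subsec:qtSchroeder}) and $\<\cdot\,,h_{n-d}h_d\>$ (via decorated polyominoes and Theorem~\ref{thm:magic_equality}). So there is no proof in the paper for you to match, and your proposal --- to your credit --- does not actually supply one either. What you outline for those two scalar products is essentially the paper's own route: recursions satisfied by both the symmetric-function side (the $F$, $G$, $H$ families built from the new summation formula extending Haglund's identity) and the combinatorial side (recursions for decorated Dyck paths and reduced polyominoes), glued by the identity $\<\Delta_{h_m}e_{n+1},s_{k+1,1^{n-k}}\>=\<\Delta'_{e_{m+n-k-1}}e_{m+n},h_mh_n\>$ and the passage through shuffles (Proposition~\ref{prop: scal h} and Theorem~\ref{thm: general eh}) that converts labelled statements into unlabelled decorated ones.

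The genuine gap is that nothing in your proposal establishes the displayed identities themselves. The two mechanisms you would need for the full statement --- a $z$-deformed version of the Carlsson--Mellit Dyck-path-algebra recursion that closes against a peeling recursion on $\LD(n)$ with decoration factors, or a statistic-preserving bijection carrying decorated rises to decorated contractible valleys --- are precisely what is not known; you acknowledge this, but it means the proposal proves only what the paper already proves and leaves Conjecture~\ref{conj: 1st Delta} as stated untouched. One further caution: your suggested rise-to-valley bijection would have to reconcile two differently normalized dinv statistics (the valley version subtracts $k$ and restricts to contractible valleys), and no such map is known even conjecturally to preserve the weighted generating functions coefficientwise in $z$; presenting it as the ``first route'' understates how far it is from the reach of the recursive technology that handles the $e_ah_b$ and $h_ah_b$ cases.
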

We should immediately notice that for $k=0$ the Delta conjecture reduces to the Shuffle conjecture (see \cite{HHLRU-2005}), recently proved by Carlsson and Mellit \cite{Carlsson-Mellit-ShuffleConj-2015} (see also \cite{Haglund-Xin_Lecture-Notes}).

Again in \cite{Haglund-Remmel-Wilson-2015}, the authors reformulate Conjecture~\ref{conj: 1st Delta} in terms of \emph{decorated} labelled Dyck paths. 
\begin{proposition}[\cite{Haglund-Remmel-Wilson-2015}*{Proposition~3.1}] Let $\text{Rise}_{n,k}(x;q,t)$ and $\text{Val}_{n,k}(x;q,t)$ be equal to the right hand sides of equations \eqref{eq: delta conj LD rise} and \eqref{eq: delta conj LD valley} respectively. Then   
	\begin{align*}
		\text{Rise}_{n,k}(x;q,t)&= \sum_{D\in \LDD^{\ast k}(n)}q^{\dinv(D)}t^{\area(D)}x^D  \\ 
		\text{Val}_{n,k}(x;q,t)&= \sum_{D\in \LDD^{\vee k}(n)}q^{\dinv(D)}t^{\area(D)}x^D
	\end{align*}
\end{proposition}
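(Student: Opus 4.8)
The plan is to expand the finite products appearing on the right-hand sides of \eqref{eq: delta conj LD rise} and \eqref{eq: delta conj LD valley}, read off the coefficient of $z^k$ term by term, and recognize the resulting double sum as a generating function over decorated labelled Dyck paths. First, for a fixed $D\in\LD(n)$ the coefficient of $z^k$ in a product $\prod_{i\in A}(1+z\,c_i)$ is the degree-$k$ elementary symmetric polynomial in the $c_i$, i.e.\ the sum over $k$-element subsets $S\subseteq A$ of $\prod_{i\in S}c_i$. Applying this with $A=\Rise(D)$, $c_i=t^{-a_i(D)}$, and with $A=\Val(D)$, $c_i=q^{-(d_i(D)+1)}$, respectively, yields
\[ \text{Rise}_{n,k}(x;q,t)=\sum_{D\in\LD(n)}\ \sum_{\substack{S\subseteq\Rise(D)\\ \#S=k}} q^{\dinv(D)}\,t^{\area(D)-\sum_{i\in S}a_i(D)}\,x^D \]
and
\[ \text{Val}_{n,k}(x;q,t)=\sum_{D\in\LD(n)}\ \sum_{\substack{S\subseteq\Val(D)\\ \#S=k}} q^{\dinv(D)-\sum_{i\in S}(d_i(D)+1)}\,t^{\area(D)}\,x^D. \]

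Next I would match these sums with the claimed ones. The data of a pair $(D,S)$ with $D\in\LD(n)$ and $S$ a $k$-element subset of $\Rise(D)$ is, by definition, exactly an element $\widetilde{D}$ of $\LDD^{\ast k}(n)$ (decorate the rises indexed by $S$); likewise a pair $(D,S)$ with $S\subseteq\Val(D)$ and $\#S=k$ is exactly an element of $\LDD^{\vee k}(n)$. Under these tautological bijections the monomial $x^D$ is unchanged. For the rise version, $\dinv(\widetilde{D})$ is by definition $\dinv(D)$, and
\[ \area(\widetilde{D})=\sum_{i\notin S}a_i(D)=\area(D)-\sum_{i\in S}a_i(D), \]
so both exponents agree with those above. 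For the valley version, $\area(\widetilde{D})=\area(D)$, and the definition of $\dinv$ on $\LDD^{\vee k}(n)$ gives
\[ \dinv(\widetilde{D})=\Bigl(\sum_{i\notin S}d_i(D)\Bigr)-k=\dinv(D)-\sum_{i\in S}\bigl(d_i(D)+1\bigr), \]
which is exactly the $q$-exponent above. Summing these identities over all $D$ and all admissible $S$ then produces the two asserted equalities.

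I expect this to be essentially the whole argument: once the definitions of $\area$ on $\LDD^{\ast k}(n)$ and of $\dinv=\text{dinv}^-$ on $\LDD^{\vee k}(n)$ are unwound, it is pure bookkeeping and there is no genuine combinatorial obstacle. The one point that is not completely formal, and hence the ``hard part'', concerns the valley side: for the right-hand side to be an honest generating function over $\LDD^{\vee k}(n)$ with weight $q^{\dinv}t^{\area}$, the exponent $\dinv(\widetilde{D})=(\sum_{i\notin S}d_i(D))-k$ must be a nonnegative integer for every $\widetilde{D}$, which is the content of the positivity argument for $\text{dinv}^-$ in \cite{Haglund-Remmel-Wilson-2015}. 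On the rise side no such issue arises, since for each admissible $S$ the exponent $\area(D)-\sum_{i\in S}a_i(D)=\sum_{i\notin S}a_i(D)$ is manifestly nonnegative, so every term of the expansion is absorbed, as written, into the weight of a bona fide element of $\LDD^{\ast k}(n)$.
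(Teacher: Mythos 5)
Your argument is correct, and since the paper states this as a cited result from \cite{Haglund-Remmel-Wilson-2015} without reproducing a proof, the relevant comparison is to the original source, which makes essentially the same reformulation: extract the coefficient of $z^k$ as an elementary symmetric function in the $c_i$, identify each pair $(D,S)$ with a decorated path, and check that the exponents match the definitions of $\area$ on $\LDD^{\ast k}(n)$ and of $\mathrm{dinv}^-$ on $\LDD^{\vee k}(n)$. You are also right to flag that the only non-formal point is the nonnegativity of $\mathrm{dinv}^-$ on the valley side, which the paper explicitly defers to \cite{Haglund-Remmel-Wilson-2015}.
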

\begin{remark}
	Notice that we use a slightly different notation for $\text{Rise}_{n,k}(x;q,t)$ and $\text{Val}_{n,k}(x;q,t)$ with respect to \cite{Haglund-Remmel-Wilson-2015}.
\end{remark}

\subsection{State of the art} \label{Delta_State_of_the_art}

While not much is known about the valley version of the Delta conjecture, several special cases of the rise version have been proved, so in this work \emph{we will focus only on these}. We summarise the most general progresses in the following table. 
\begin{center}
	\begin{tabular}{c|c}
		Conditions & Reference \\
		\hline
		$k=n-2$ & \cite{Haglund-Remmel-Wilson-2015} \\
		$k=0$ & \cite{Carlsson-Mellit-ShuffleConj-2015} \\
		$q=1$ & \cite{Romero-Deltaq1-2017} \\
		$q=0$ or $t=0$ & \cite{Garsia-Haglund-Remmel-Yoo-2017} \\
		$\< \cdot, e_{n-d}h_d\> $ & Section~\ref{subsec:qtSchroeder}  \\
		$\< \cdot, h_{n-d}h_d\> $ & Section~\ref{subsec:two-shuffle}
	\end{tabular}
\end{center}
Moreover, combining our results in Section~\ref{subsec:qtSchroeder} with the results in \cite{Zabrocki-4Catalan-2016}, we get a ``compositional'' refinement of the Delta conjecture in the $\< \cdot, e_{n-d}h_d\>$ case.

For more results related to the Delta conjecture, see also \cites{Remmel-Wilson-2015,Wilson-article-2017,Qiu-Remmel-Sergel-Xin-2017,Rhoades-2018,Haglund-Rhoades-Shimozono-arxiv,Haglund-Rhoades-Shimozono-Advances,Wilson-PhD-2015,Haglund-Xin_Lecture-Notes}.

\section{The generalised Delta conjecture} \label{sec:gen_Delta}

In \cite{Haglund-Remmel-Wilson-2015}, the authors state a conjecture for a combinatorial interpretation of $\Delta_{h_{m}} \Delta'_{e_{n-k-1}} e_{n}$ in terms of partially labelled Dyck paths.

%\subsection{Formulation}

%\begin{definition}
%	A \emph{partially labelled Dyck path} is a Dyck path whose vertical steps are labelled with (not necessarily distinct) non-negative integers such that the labels appearing in each column are strictly increasing from bottom to top, and $0$ does not appear in the first column.
%\end{definition}
%
%These are essentially labelled Dyck paths in which we allow $0$ to be a label. In terms of symmetric functions, though, we assume $x_0 = 1$. We also have the possibility of decorating rises, and three statistics $\area$, $\dinv$, and $\pmaj$ as for standard labelled Dyck paths. Let us denote by $\PLD(m,n)^{\ast k}$ the set of partially labelled Dyck paths with $m$ zero labels, $n$ non-zero labels, and $k$ decorated rises. We have the following. 

\begin{conjecture}[\cite{Haglund-Remmel-Wilson-2015}*{Conjecture~7.4}]
	\[ \Delta_{h_{m}} \Delta'_{e_{n-k-1}} e_{n} = \sum_{D \in \PLD(m,n)^{\ast k}} q^{\dinv(D)} t^{\area(D)} x^D \]
\end{conjecture}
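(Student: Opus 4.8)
Write $L_{m,n,k} \coloneqq \Delta_{h_{m}}\Delta'_{e_{n-k-1}}e_{n}$ and $C_{m,n,k} \coloneqq \sum_{D\in\PLD(m,n)^{\ast k}}q^{\dinv(D)}t^{\area(D)}x^D$ for the two sides, so a priori $C_{m,n,k}$ only lies in $\mathbb{Q}(q,t)\otimes\mathrm{QSym}$. The first point is that $C_{m,n,k}$ is in fact symmetric, of degree $n$ in the nonzero variables (with $x_0=1$): this follows from the superization / sign-reversing-involution argument that proves the analogue for $\LDD$ (the $\PLD$ version of \cite{Haglund-Remmel-Wilson-2015}*{Proposition~3.1}), since the natural $\mathfrak S_n$-action on the nonzero labels preserves $(\dinv,\area)$. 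Granting this, the plan is to prove $L_{m,n,k}=C_{m,n,k}$ in $\Lambda^{(n)}$ by checking $\langle L_{m,n,k},g\rangle=\langle C_{m,n,k},g\rangle$ for $g$ ranging over a basis of $\Lambda^{(n)}$, mimicking Haglund's proof of the $q,t$-Schr\"oder theorem: exhibit a recursion satisfied by each side and match base cases.

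\textbf{The cases that go through.} For the two one-parameter families $g=e_{n-d}h_{d}$ and $g=h_{n-d}h_{d}$ this is achievable. On the symmetric-function side one expands $\langle L_{m,n,k},e_{n-d}h_{d}\rangle$ using the new Macdonald identity of this paper — the extension of Haglund's theorem in \cite{Haglund-Schroeder-2004} — to obtain an explicit $q,t$-sum, and passes to $h_{n-d}h_{d}$ via $\omega$ and the identity $\langle\Delta_{h_{m}}e_{n+1},s_{k+1,1^{n-k}}\rangle=\langle\Delta'_{e_{m+n-k-1}}e_{m+n},h_{m}h_{n}\rangle$ established in this paper. On the combinatorial side one sets up a recursion on $\PLD(m,n)^{\ast k}$ (and on the auxiliary $\DD$, parallelogram and reduced polyomino models) by peeling off an initial segment of the path — the block of $0$-labelled columns together with the first bounce/$\pmaj$ return — and checks it matches; the base cases are $n=0$, $k=0$ (the Shuffle conjecture, \cite{Carlsson-Mellit-ShuffleConj-2015}) and $m=0$. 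This is the content of the first-bounce recursions of Part~2 (e.g.\ Theorems~\ref{thm:reco_first_bounce_S_R} and \ref{th:recursion2cpf}) and yields the identity paired against $e_{n-d}h_{d}$ and against $h_{n-d}h_{d}$.

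\textbf{The obstruction.} To deduce the \emph{full} conjecture one must upgrade from these two families to a genuine basis, e.g.\ all $\{e_{\lambda}\}_{\lambda\vdash n}$. This would require a two-sided recursion tracking all parts of $\lambda$ simultaneously: on the combinatorial side, a decomposition of $D\in\PLD(m,n)^{\ast k}$ compatible with both the rise-decorations and the $0$-labels (say, according to the block structure of a distinguished large label, or a secondary bounce path), and on the symmetric side the matching recursion obtained from the Macdonald $\nabla/\Delta$-operator calculus. Producing such a recursion is the main obstacle, and it is exactly here that even the plain Delta conjecture — the case $m=0$ — is open: no recursion refining $(\dinv,\area)$ along arbitrary $e$- or Schur-shapes is known, and the LLT-expansion / Tesler-matrix structure exploited by Carlsson--Mellit in the undecorated case is broken by the decorations on rises. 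The realistic alternative is to extend the Carlsson--Mellit Dyck-path-algebra proof of the Shuffle conjecture: build operators on a graded vector space realizing ``insert a $0$-label'' (accounting for $\Delta_{h_{m}}$) and ``insert a decorated rise'' (accounting for the $\Delta'_{e_{\bullet}}$-shift), show their composition applied to $1$ equals $L_{m,n,k}$ via the plethystic operator identities, and equals $C_{m,n,k}$ by induction on word length with the decorations carried along as inert markers that commute past the elementary operators. Absent such an extension, the present work proves the conjecture only for the test functions $e_{n-d}h_{d}$ and $h_{n-d}h_{d}$.
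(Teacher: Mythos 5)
There is no proof in the paper for you to match: the statement you were given is quoted verbatim as a \emph{conjecture} (Conjecture~7.4 of \cite{Haglund-Remmel-Wilson-2015}, the generalised Delta conjecture), and it remains open — the paper only records its state of the art and proves special cases elsewhere in the text. Your proposal concedes as much in its final paragraph, so as a proof it has a fundamental gap: nothing you write establishes the pairing of the two sides against a full basis of $\Lambda^{(n)}$, and you correctly identify that no recursion refining $(\dinv,\area)$ along arbitrary $e_\lambda$ or Schur shapes is known, nor is there an extension of the Carlsson--Mellit argument \cite{Carlsson-Mellit-ShuffleConj-2015} that carries the rise decorations and the zero labels. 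A plan whose last step is ``this step is the open problem'' is an accurate description of the situation, but it is not a proof of the statement, and it should not be presented as one.

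Two factual corrections to your middle paragraph. First, the cases actually settled in this paper — $\langle\cdot,e_{a}h_{b}\rangle$ via the decorated $q,t$-Schr\"oder (Theorem~\ref{thm:decoqtSchroeder}) and $\langle\cdot,h_{m}h_{n}\rangle$ via polyominoes and two-car parking functions — are cases of the \emph{plain} Delta conjecture, i.e.\ pairings of $\Delta'_{e_{n-k-1}}e_{n}$, which is the $m=0$ instance of the statement; they are not pairings of $\Delta_{h_{m}}\Delta'_{e_{n-k-1}}e_{n}$ for general $m$. The pairings involving $\Delta_{h_m}$ that do occur here, such as $\langle\Delta_{h_m}e_{n+1},s_{k+1,1^{n-k}}\rangle$, are converted by Theorem~\ref{thm:magic_equality} back into plain Delta pairings rather than giving coefficients of the left-hand side of the conjecture for $m\geq 1$; the $m\geq 1$, $\langle\cdot,e_{n-d}h_d\rangle$ case is treated in the separate paper \cite{DAdderio-Iraci-VandenWyngaerd-2018}. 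So your claim that the paper's recursions yield those two families of coefficients of $L_{m,n,k}$ for general $m$ overstates what is proved here. Second, the symmetry of $C_{m,n,k}$ does not follow from an $\mathfrak{S}_n$-action on the nonzero labels preserving $(\dinv,\area)$ — permuting labels does change $\dinv$; symmetry of such sums is obtained from the LLT/superization machinery underlying \cite{Haglund-Remmel-Wilson-2015}*{Proposition~3.1}, which is a different (and more delicate) argument.
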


We notice immediately that for $m=0$ we recover precisely the Delta conjecture, that is why we call this the \emph{generalised Delta conjecture}.

\subsection{State of the art}

We already observed that for $m=0$ we recover the Delta conjecture, for which we already discussed the state of the art in Section~\ref{Delta_State_of_the_art}. For $m\geq 1$, we have the following results:
\begin{center}
	\begin{tabular}{c|c}
		Conditions & Reference \\
		\hline
		$m\geq 1$ and $\< \cdot, e_{n-d}h_d\> $ & \cite{DAdderio-Iraci-VandenWyngaerd-2018} \\
		$m\geq 1$, and $q=0$ or $t=0$ & \cite{DAdderio-Iraci-VandenWyngaerd-Deltat0}
	\end{tabular}
\end{center}

\section{Our conjecture with $\pmaj$}

We guess a similar formula, that uses the $\pmaj$.

\begin{conjecture} \label{conj:our_pmaj}
	\[ \Delta_{h_{m}} \Delta'_{e_{n-k-1}} e_{n} = \sum_{D \in \PLD(m,n)^{\ast k}} q^{\area(D)} t^{\pmaj(D)} x^D \]
\end{conjecture}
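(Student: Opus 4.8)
The plan is to deduce Conjecture~\ref{conj:our_pmaj} from the generalised Delta conjecture (rise version) by proving, \emph{combinatorially}, that the bistatistic $(\area,\pmaj)$ on $\PLD(m,n)^{\ast k}$ is equidistributed with $(\dinv,\area)$ on $\PLD(m,n)^{\ast k}$, in a way compatible with the monomial $x^D$ and with the finer quasisymmetric data. Passing to the Gessel basis by the usual superization argument, both enumerators become $\sum_D$ of a power of $q$ times a power of $t$ times $Q_{S(D),n}$, where for the $(\dinv,\area)$ side $S(D)$ is the descent set of the standardisation of the \emph{dinv} reading word, and for the $(\area,\pmaj)$ side it is the descent set of the standardisation of the \emph{pmaj} reading word, i.e.
\[
\sum_{D\in\PLD(m,n)^{\ast k}} q^{\dinv(D)}t^{\area(D)}\, x^D \;=\; \sum_{D\in\PLD(m,n)^{\ast k}} q^{\dinv(D)}t^{\area(D)}\, Q_{S_{\mathrm{dinv}}(D),\,n},
\]
and likewise with $q^{\area}t^{\pmaj}$ and $S_{\mathrm{pmaj}}$. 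Hence it suffices to construct a bijection $\zeta\colon\PLD(m,n)^{\ast k}\to\PLD(m,n)^{\ast k}$ with $\dinv(D)=\area(\zeta D)$, $\area(D)=\pmaj(\zeta D)$, $x^D=x^{\zeta D}$, and $S_{\mathrm{dinv}}(D)=S_{\mathrm{pmaj}}(\zeta D)$; combined with the rise version of the generalised Delta conjecture this yields the identity, and in particular an \emph{unconditional} proof in every case where that conjecture is currently known --- e.g.\ the $\<\cdot,e_{n-d}h_d\>$ and $\<\cdot,h_{n-d}h_d\>$ cases established in this paper, and the $q=1$, $q=0$, and $t=0$ specialisations.

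First I would build $\zeta$ by induction on $n$, extending Haglund's zeta (sweep) map from Dyck paths to this decorated, partially labelled setting. For a plain Dyck path the sweep map already exchanges $(\dinv,\area)$ with $(\area,\bounce)$; since here the target statistic is $\pmaj$ rather than $\bounce$, the construction should instead be modelled on the (column-by-column) bijection underlying the identity $\sum q^{\dinv}t^{\area}=\sum q^{\area}t^{\pmaj}$ for ordinary parking functions, processed by peeling off the largest value of the parking word, which is what controls $\pmaj$ through $\mathsf{maj}$ of the reversed parking word. The $k$ decorated rises would be carried along the recursion: under the rise$\leftrightarrow$fall correspondence of Remark~\ref{rmk:rises-falls-correspondence} a decorated rise should be matched with the combinatorial data the sweep naturally tracks, and one checks that deleting the $\area$-contribution of a decorated row corresponds to deleting the prescribed $\pmaj$-contribution on the other side; the zero labels, forbidden in the first column, are handled exactly as in the known (non-decorated) partially labelled cases, since $\zeta$ can be arranged to respect the first column.

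Alternatively --- and this is the route I would actually pursue if the global bijection proves too rigid --- I would mirror Part~2 of the paper and show that $\sum_{D\in\PLD(m,n)^{\ast k}}q^{\area(D)}t^{\pmaj(D)}x^D$ satisfies the \emph{same} recursions that are used there to pin down $\Delta_{h_m}\Delta'_{e_{n-k-1}}e_n$ in the $\<\cdot,e_{n-d}h_d\>$ and $\<\cdot,h_{n-d}h_d\>$ cases. On the combinatorial side one obtains a recursion for the $(\area,\pmaj)$-enumerator by analysing the lowest row, the first value $p_1(D)$ of the parking word, and whether the bottom step is a decorated rise; this decomposes $\PLD(m,n)^{\ast k}$ into pieces indexed by smaller parameters, and the $\mathsf{maj}$ of the reversed parking word transforms predictably. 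Since the symmetric-function recursion (the new Macdonald identity extending Haglund's $q,t$-Schröder theorem) is already available from the framework set up in the excerpt, matching initial conditions would then close those cases, and Remark~\ref{rmk:bounce_pmaj} together with the $\pbounce/\pmaj$ dictionary would let one transfer the statement between labelled decorated Dyck paths and (unlabelled) decorated Dyck paths, exactly as the $\bounce/\pbounce$ story is used for the $\dinv$ version.

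The hard part, in either route, is the bijective or recursive control of $\pmaj$. The parking word is a genuinely global object --- computed by a left-to-right sweep with a non-local $\max$ rule --- so local surgery on the path (adding a column, decorating a rise) perturbs it far less predictably than it does $\area$, $\dinv$ or even $\bounce$; and because labels may repeat and may equal $0$, the parking word need not determine the labelling, so one must simultaneously keep the monomial $x^D$ and the reading-word descent set under control. Producing a sweep-type map consistent with \emph{all} of $(\area,\pmaj,x^D,S_{\mathrm{pmaj}})$ at once, and compatible with the recursive decomposition used on the symmetric-function side, is where essentially all of the work lies; absent such a map, only the cases already reachable by recursion can be settled, which is presumably why the statement is recorded here as a conjecture.
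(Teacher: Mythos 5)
You are being asked about a statement the paper records as a \emph{conjecture}, and the paper itself contains no proof of it: the ``state of the art'' table immediately following it lists only two specialisations that can be verified --- $m=0$ with $\langle\cdot,e_{n-d}h_d\rangle$ (via Theorem~\ref{thm: resultsection: decoqtSchroeder} together with Remark~\ref{rmk:bounce_pmaj}, identifying $\pmaj$ with a shuffle of $\pbounce$), and $k=n-1$ with $\langle\cdot,e_n\rangle$ (via Theorem~\ref{th:pmaj}, which shows that $\bounce$ on reduced polyominoes matches $\pmaj$ on the corresponding two-car parking functions). Everything else is computer evidence. So no ``paper's own proof'' exists to compare against.

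That said, your analysis is accurate and your second route is essentially the mechanism the paper actually uses to establish the verifiable cases: prove a $(\area,\pmaj)$-recursion (or a bijection to a $\pbounce$-controlled object like $\DDp$ or a reduced polyomino), match it against the symmetric-function recursions already in place, and transfer via the shuffle/standardisation dictionary of Section~\ref{sec:link_delta}. Your first route --- superizing and then seeking a $\zeta$-type map that simultaneously sends $(\dinv,\area,x^D,S_{\mathrm{dinv}})$ to $(\area,\pmaj,x^D,S_{\mathrm{pmaj}})$ on all of $\PLD(m,n)^{\ast k}$ --- would indeed reduce Conjecture~\ref{conj:our_pmaj} to the generalised Delta conjecture, but it is important to be precise that no such map is known even in the undecorated, fully-labelled case at the level of finer quasisymmetric data; Haglund's sweep map is defined on (unlabelled or canonically labelled) Dyck paths, and the passage to $\pmaj$ on labelled paths is handled in the literature by column-insertion arguments rather than by a single sweep. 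You yourself flag the obstruction --- the non-local $\max$ rule in the parking-word sweep --- and that is exactly why only the two cases in the table have been settled. In short: your proposal correctly identifies where the difficulty lies and correctly does not claim to close the conjecture; it is a reasonable research plan aligned with the paper's partial results, not a proof, and the paper has none either.
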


\subsection{State of the art}

Other than computer verification, in support of our conjecture, we have the following results:
\begin{center}
	\begin{tabular}{c|c}
		Conditions & Reference \\
		\hline
		$m=0$ and $\< \cdot, e_{n-d}h_d\> $ & Section~\ref{subsec:qtSchroeder}\\
		$k=n-1$ and $\< \cdot, e_{n}\>$ & Section~\ref{subsec:two-shuffle}
	\end{tabular}
\end{center}

In the last case, in fact, we have some statistics preserving bijections that allow us to rewrite the conjecture in terms of polyominoes. 

\section{Our conjecture with polyominoes}\label{conj:our_polyo}

We guess a similar conjecture, but with polyominoes.

\begin{conjecture}
	For $m,n \in \mathbb{N}$, it holds
	\[ (qt)^{m+n+1}\Delta_{h_{m}} e_{n+1} = \sum_{P \in \LPP(m+1,n+1)} q^{\area(P)} t^{\pmaj(P)} x^P. \]
\end{conjecture}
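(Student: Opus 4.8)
The plan is to mimic the template used throughout Part~2 of this paper for the cases that are settled: set $C_{m,n}(x;q,t) \coloneqq \sum_{P\in\LPP(m+1,n+1)} q^{\area(P)}t^{\pmaj(P)}x^P$, find a recursion satisfied by $C_{m,n}$, prove the matching recursion on the symmetric function side for $(qt)^{m+n+1}\Delta_{h_m}e_{n+1}$, and check base cases. As a first sanity check, note that $\area(P)\ge m+n+1$ and $\pmaj(P)\ge m+n+1$ for every $P\in\LPP(m+1,n+1)$, so the overall factor $(qt)^{m+n+1}$ on the right is forced, and both sides are homogeneous of degree $n+1$ in $x$; this is consistent. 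A priori $C_{m,n}$ is only a quasisymmetric function: if the two sides can be matched through a recursion whose right-hand side is manifestly symmetric, symmetry of $C_{m,n}$ comes for free from the induction, and otherwise one would first have to establish it directly — e.g.\ by standardizing labels, expanding $C_{m,n}$ in the Gessel basis $Q_{S,n+1}$ via the descent set of the $\pmaj$ reading word, and giving a bijective proof of Schur-positivity.

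\textbf{The combinatorial recursion.} The natural move on the polyomino side is a \emph{bounce decomposition}: peel off the first bounce segment of $P$, i.e.\ decompose according to the length of the leftmost horizontal run of the green path and the column containing the first green peak, and track how $\area$ and $\pmaj$ split. This is the polyomino analogue of the Dyck-path and two-car-parking-function recursions recorded in Part~2 (cf.\ Theorem~\ref{thm:reco1_first_bounce}, Theorem~\ref{th:bouncerecursion}, Theorem~\ref{th:recursion2cpf}). Because $\pmaj$ is computed through the parking-word algorithm it does not peel off cleanly on polyominoes the way $\area$, $\dinv$ and $\bounce$ do; it is therefore likely more convenient to first transport the problem to Dyck paths. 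Remark~\ref{rmk:polyo_pmaj_bounce}, together with the two-shuffle discussion preceding it, suggests a statistic-preserving bijection $\LPP(m+1,n+1)\to\PLD(m',n')^{\ast 0}$ (or to two-shuffle parking functions) sending $(\area,\pmaj)$ to $(\area,\pmaj)$ and $x^P$ to $x^D$; composing with this bijection recasts the claim as a boundary instance of Conjecture~\ref{conj:our_pmaj}, where the partially labelled Dyck path recursions of Part~2 are directly available.

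\textbf{The symmetric function side and base cases.} One must then show that $(qt)^{m+n+1}\Delta_{h_m}e_{n+1}$ obeys the mirror recursion. Here the tools are the Delta/nabla manipulations of Chapter~4 and the new Macdonald identity extending Haglund's theorem, applied to $\Delta_{h_m}e_{n+1}$; the scalar-product evaluations proved in Section~\ref{subsec:two-shuffle}, namely $\langle\Delta_{h_m}e_{n+1},s_{k+1,1^{n-k}}\rangle=\langle\Delta'_{e_{m+n-k-1}}e_{m+n},h_mh_n\rangle$, give a foothold for the hook components. But since $\Delta_{h_m}e_{n+1}$ in general has nontrivial content on non-hook shapes, the recursion must be established at the level of the full symmetric function, not just its hook part — this is the point where the $(qt)^{m+n+1}$-normalized polyomino object is expected to be the right bookkeeping device. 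The base cases $m=0$, where the identity collapses to $(qt)^{n+1}e_{n+1}=\sum_{P\in\LPP(1,n+1)}q^{\area(P)}t^{\pmaj(P)}x^P$ (a computation on a single polyomino shape, essentially the content handled in Section~\ref{subsec:qtSchroeder}), and $k=n-1$ in the language of Conjecture~\ref{conj:our_pmaj} (Section~\ref{subsec:two-shuffle}), serve as the anchors.

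\textbf{Main obstacle.} I expect the hard part to be producing a recursion for the $(\area,\pmaj)$ generating function over labelled polyominoes (or partially labelled Dyck paths) that genuinely closes. The $\pmaj$ statistic is global — it depends on the whole parking word — so a workable recursion will almost certainly require auxiliary parameters (marked columns, partial decorations, or a passage to $\PLD$) and will only be useful if its symmetric-function mirror can be extracted from the Macdonald-operator identities of Chapter~4. An alternative, which I consider harder, would be to prove $C_{m,n}$ symmetric by an explicit ``sweep''-type bijection in the spirit of Carlsson--Mellit and then identify the Schur expansion term by term; but I would attempt the recursion first, since it also automatically upgrades the partial scalar-product results of Section~\ref{subsec:two-shuffle} to the full symmetric function.
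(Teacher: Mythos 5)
The statement you have been handed is Conjecture~\ref{conj:our_polyo}, and it is stated as an \emph{open conjecture} in the paper; the authors do not prove it. The only evidence they report beyond computer checks is the scalar-product case $\langle\cdot,e_{n+1}\rangle$, which they deduce in Remark~\ref{rmk:pmaj_cases} from the $q,t$-Narayana theorem of \cite{Aval-DAdderio-Dukes-Hicks-LeBorgne-2014}, the observation that $\pmaj=\bounce$ for canonically labelled polyominoes (Remark~\ref{rmk:polyo_pmaj_bounce}), and a two-car shuffle argument in the spirit of Section~\ref{sec:link_delta}. There is therefore no proof in the paper to compare your attempt against.

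Your submission is, as you say yourself, a plan rather than a proof, and the gap that keeps it a plan is the one you already identify: producing a closing recursion for the full $(\area,\pmaj)$ $q,t$-enumerator of labelled polyominoes --- carrying the entire symmetric $x$-weight, not just its pairings against $e$'s and $h$'s --- together with a matching recursion for the full $\Delta_{h_m}e_{n+1}$. Every recursion the paper actually proves (Chapters~5--7) is either for unlabelled decorated objects, equivalently for $\langle\cdot,e_ah_b\rangle$ or $\langle\cdot,h_ah_b\rangle$, or for the $(\dinv,\area)$ bistatistic on labelled objects; none of them controls $\pmaj$ at the level of the whole symmetric function, and that is the missing ingredient. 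Two small corrections to your discussion: the $m=0$ base case is not ``the content of Section~\ref{subsec:qtSchroeder}'' --- it is trivial, since $\LPP(1,n+1)$ has a single underlying shape with $\area(P)=\pmaj(P)=n+1$ for every labelling, so the right-hand side collapses to $(qt)^{n+1}e_{n+1}$; and the transport to partially labelled Dyck paths you speculate about is already present in the paper as the map $\eta$ of Theorem~\ref{th:bijection}, through which the polyomino conjecture is seen to be the boundary instance $k=n$ (with $n\mapsto n+1$) of Conjecture~\ref{conj:our_pmaj}. This transport does not settle anything, since that boundary case of Conjecture~\ref{conj:our_pmaj} is equally open, but it does tell you that the symmetric-function side of your proposed recursion is $\Delta_{h_m}e_{n+1}$ itself, which is exactly where the hook scalar-product results of Section~\ref{subsec:two-shuffle} stop short.
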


\subsection{State of the art}

Other than computer verification, in support of our conjecture, we have the following result:
\begin{center}
	\begin{tabular}{c|c}
		Conditions & Reference \\
		\hline
		$\< \cdot, e_{n+1}\> $ & \cite{Aval-DAdderio-Dukes-Hicks-LeBorgne-2014} (see also Remark~\ref{rmk:pmaj_cases})
	\end{tabular}
\end{center}

\section{Our square conjecture} \label{sec:our_square_conj}

In this section we explain how a simple combinatorial transformation translates the Delta conjecture for $\Delta_{e_{n-1}}e_n$ into a new square conjecture, similar to the one for $\nabla \omega(p_n)$ proposed in \cite{Loehr-Warrington-square-2007} and proved in \cite{Leven-2016} after the breakthrough in \cite{Carlsson-Mellit-ShuffleConj-2015}. In particular, we get a new $q,t$-square theorem out of it (see Section \ref{subsec: qt square result}).

The interest in these observations relies into a more surprising one: the two symmetric functions $\Delta_{e_{n-1}}e_n$ and $\nabla \omega(p_n)$ have the same evaluation at $t=1/q$. See Section~\ref{sec:observ_one_over_q} for a proof of this statement.

For $P \in \PSQE(n)$, set $\delta(P)$ to be its dinv reading word. 
\begin{conjecture} \label{conj:new_square}
	For every $n\geq 1$
	\begin{equation}
		\Delta_{e_{n-1}}e_n=\sum_{P\in \mathsf{PSQ^E}(n)}q^{\mathsf{dinv}(P)}t^{\mathsf{area}(P)}Q_{\mathsf{Des}(\delta(P)),n}.
	\end{equation}
\end{conjecture}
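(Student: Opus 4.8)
The plan is to reduce the statement to the decorated (rise) reformulation of the Delta conjecture, Conjecture~\ref{conj: 1st Delta}, in its two smallest cases $k=0$ and $k=1$, by means of an explicit ``unwinding'' bijection between square paths ending east and at-most-singly-decorated Dyck paths. First, on $\Lambda^{(n)}$ the identity \eqref{eq:deltaprime} with $k=n-1$ gives $\Delta_{e_{n-1}}e_n=\Delta'_{e_{n-1}}e_n+\Delta'_{e_{n-2}}e_n$, and the decorated reformulation of the rise Delta conjecture (Proposition~3.1 of \cite{Haglund-Remmel-Wilson-2015}) identifies the two summands with $\sum_{D\in\LDD^{\ast 0}(n)}q^{\dinv(D)}t^{\area(D)}x^D$ and $\sum_{D\in\LDD^{\ast 1}(n)}q^{\dinv(D)}t^{\area(D)}x^D$, the first being the (proven) shuffle theorem \cite{Carlsson-Mellit-ShuffleConj-2015}. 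Standardising labels, the $P$-partition argument of \cite{HHLRU-2005} rewrites $\sum_{D\in\LDD^{\ast k}(n)}q^{\dinv(D)}t^{\area(D)}x^D$ as $\sum_{D}q^{\dinv(D)}t^{\area(D)}Q_{\mathsf{Des}(\text{dinv reading word of }D),n}$, the sum now over the decorated Dyck paths whose labelling lies in $\mathfrak{S}_n$; this is legitimate because $\dinv$ and the descent set of the dinv reading word are constant on standardisation classes. It therefore suffices to produce a bijection $\zeta\colon\PSQE(n)\to\{D\in\LDD^{\ast 0}(n)\cup\LDD^{\ast 1}(n):l_1(D)\cdots l_n(D)\in\mathfrak{S}_n\}$ carrying $(\dinv,\area)$ to $(\dinv,\area)$ and $\mathsf{Des}(\delta(P))$ to the descent set of the dinv reading word of $\zeta(P)$.

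The map $\zeta$ is the ``simple combinatorial transformation'' referred to in Section~\ref{sec:our_square_conj}: if $P\in\PSQE(n)$ has $c\coloneqq-\min_i w_i(P)=0$ it is already a parking function and we put $\zeta(P)=P\in\LDD^{\ast 0}(n)$; if $c\geq 1$, cut $P$ at the lower endpoint of its first vertical step lying on the diagonal $y=x-c$, transport the initial portion (which lies weakly below the main diagonal) to the top of the remaining portion so as to obtain a genuine Dyck path of size $n$, and decorate the single rise created at the re-attachment point (equivalently the corresponding fall, by Remark~\ref{rmk:rises-falls-correspondence}); the labels ride along with their vertical steps. One then checks: (a) $\zeta$ is well defined and invertible, with $c$ and the cut location recovered from the position of the decorated rise; (b) the transport only permutes cells within each fixed value of the area word, so the two kinds of diagonal inversions are preserved, while the correction $nc-j-c$ in Definition~\ref{def: area sqe} equals the area of the removed below-diagonal cells, matching the deletion of the decorated-rise row, so $\area$ is preserved; and (c) since the diagonal $y=x+r$ of $P$ becomes the diagonal $y=x+(r+c)$ for the transported part and is unchanged otherwise, reading diagonals bottom-to-top recovers $\delta(P)$ up to a global shift, so the descent sets agree. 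Feeding (a)--(c) into the computation of the first paragraph yields the identity.

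\textbf{Main obstacle.} There are two. Foremost, the $k=1$ instance of the rise Delta conjecture is \emph{not} known in general (only $q=1$, $q=0$ or $t=0$, $k=n-2$, and the Hall-pairing specialisations of Sections~\ref{subsec:qtSchroeder} and~\ref{subsec:two-shuffle} are available), so the argument above really establishes the \emph{equivalence} of Conjecture~\ref{conj:new_square} with that case of the Delta conjecture rather than a proof of the full symmetric-function statement; closing the gap would require genuinely new input. Secondly, granting that input, the delicate combinatorial point is item (c): one must track the dinv reading word, not merely the $x$-weight, through the unwinding, checking that the single decorated rise lands exactly where it leaves $\mathsf{Des}(\delta)$ unchanged and that no spurious diagonal inversions are created or destroyed when $c\geq 2$. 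An alternative route, avoiding an explicit $\zeta$, is to imitate Leven's proof \cite{Leven-2016} of the Loehr--Warrington $q,t$-square theorem \cite{Loehr-Warrington-square-2007}: derive a recursion for $\sum_{P\in\PSQE(n)}q^{\dinv(P)}t^{\area(P)}Q_{\mathsf{Des}(\delta(P)),n}$ by bouncing on the lowest diagonal $y=x-c$ and match it with a recursion for $\Delta_{e_{n-1}}e_n$ coming from the Macdonald-polynomial identities used elsewhere in this paper; but this too ultimately rests on the unproven $k=1$ input.
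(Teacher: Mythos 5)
The statement you are asked to prove is in fact labelled a \emph{conjecture} in the paper: there is no proof of it, only a reduction. Section~\ref{sec:square_paths_map} establishes the equivalence of Conjecture~\ref{conj:new_square} with the (still open) rise Delta conjecture in the cases $k=0$ and $k=1$, via the unwinding bijection $\gamma_E\colon\PSQE(n)\to\LDD^{\ast 0}(n)\sqcup\LDD^{\ast 1}(n)$. Your argument reproduces that reduction almost verbatim: the decomposition $\Delta_{e_{n-1}}e_n=\Delta'_{e_{n-1}}e_n+\Delta'_{e_{n-2}}e_n$ via \eqref{eq:deltaprime}, the identification of the two summands with the $k=0$ (proved, \cite{Carlsson-Mellit-ShuffleConj-2015}) and $k=1$ (open) instances of the rise Delta conjecture in decorated form, the superization passage from $x^D$ to $Q_{\mathsf{Des},n}$, and the cut-and-transport bijection from square paths ending east to Dyck paths with at most one decorated rise or fall. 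Your map $\zeta$ is the paper's $\gamma_E$, up to the cosmetic choice of decorating the rise at the seam rather than the fall produced by the inserted horizontal step (these are interchanged by Remark~\ref{rmk:rises-falls-correspondence}).

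Most importantly, you correctly flag the decisive point in your ``main obstacle'' paragraph: since the rise Delta conjecture for $k=1$ is not known in general, the argument yields only an equivalence of Conjecture~\ref{conj:new_square} with that case, not a proof of the identity $\Delta_{e_{n-1}}e_n=\sum_P q^{\dinv(P)}t^{\area(P)}Q_{\mathsf{Des}(\delta(P)),n}$. That is precisely why the paper states this as a conjecture, records the reduction under the heading ``state of the art,'' and proves only the Hall-pairing specialisations (the new $q,t$-square theorem of Section~\ref{subsec: qt square result}, i.e.\ the pairing against $e_n$). Your analysis is accurate; what you have written is a correct account of the paper's own reduction together with a correct diagnosis of why it does not amount to a proof.
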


A similar conjecture can be formulated using the square paths ending north.

\subsection{State of the art} \label{sec:square_paths_map}

In support of our conjecture, we provide here its equivalence with the Delta conjecture for $\Delta_{e_{n-1}}e_n$.

Let $\LDD^{\ast k}(n)$ be the set of labelled Dyck paths with $k$ decorations on falls as in \cite{Haglund-Remmel-Wilson-2015} .

We describe a bijective map 
\begin{equation}  \label{gammamap}
	\gamma_E: \mathsf{PSQ^E}(n) \rightarrow \LDD^{\ast 0}(n) \sqcup \LDD^{\ast 1}(n) .
\end{equation}
We have $\LDD^{\ast 0}(n)\subseteq \PSQE(n)$ and so we define $\gamma_{E_{\big{\vert} \LDD^{\ast 0}(n) }}$ to be the identity map. Now take $P\in \PSQE(n)$ such that $P$ is not a Dyck path. Create a labelled Dyck path $D$ from $P$ as follows: start from $(0,0)$ by copying $P$ and its labels starting from the point $(j+c,j)$. When we arrive at the end of $P$ we add an extra horizontal step (thus creating a fall since $P$ ended with an east step). Now continue $D$ by copying $P$ and its labels starting from $(0,0)$. The path $D$ ends when we reach the point $(c+j-1, j)$ in $P$. Finally decorate the fall that was created by adding the extra horizontal step. It is easy to see how to invert $\gamma_E$. We refer to Figure \ref{gamma} for an example.  
\begin{figure}[!ht]
	\centering
	\begin{minipage}{.6\textwidth}
		\centering
		\begin{tikzpicture}[scale=.6]
		\draw[gray!60, thin] (0,0) grid (8,8) grid (11,5);
		\fill[white](8,5)--(11,5)--(11,8)--(8,5);
		\draw[white,ultra thick] (8,5)--(11,5)--(11,8);
		\draw[gray!60, thin](3,0)--(11,8) (8,8)--(8,0);
		\draw[blue!60, line width=1.6pt](0,0)|-(3,1)|-(4,2) (5,2)|-(6,5)|-(7,7)|-(8,8);
		\draw[red!90, line width=1.6pt](4,2)--(5,2) ;
		\filldraw (5,2) circle (3pt);
		\draw (.5,.5) node{2} circle (.4cm)
		(3.5,1.5) node{7}circle (.4cm)
		(5.5,2.5) node{1}circle (.4cm)
		(5.5,3.5) node{6}circle (.4cm)
		(5.5,4.5) node{8}circle (.4cm)
		(6.5,5.5) node{4}circle (.4cm)
		(6.5,6.5) node{5}circle (.4cm)
		(7.5,7.5) node{3} circle (.4cm);
		\end{tikzpicture}
	\end{minipage}%
	\begin{minipage}{.4\textwidth}
		\centering
		\begin{tikzpicture}[scale=.6]
		\draw[gray!60, thin] (0,0) grid (8,8) (0,0)--(8,8);
		\draw[ultra thick, blue!60](0,0)|-(1,3)|-(2,5)|-(4,6)|-(7,7)|-(8,8) ;
		\draw[ultra thick, sharp <-sharp >, sharp > angle = 45, red!90] (3,6)--(4,6);
		\filldraw (0,0) circle (3pt);
		\draw
		(0.5,0.5) node {1} circle (0.4cm)
		(0.5,1.5) node {6} circle (0.4cm)
		(0.5,2.5) node {8} circle (0.4cm)
		(1.5,3.5) node {4} circle (0.4cm)
		(1.5,4.5) node {5} circle (0.4cm)
		(2.5,5.5) node {3} circle (0.4cm)
		(4.5,6.5) node {2} circle (0.4cm)
		(7.5,7.5) node {7} circle (0.4cm)
		(2.5,6.5) node {$\ast$};
		\end{tikzpicture}
	\end{minipage} \caption{The map $\gamma_E$. }\label{gamma}
\end{figure}
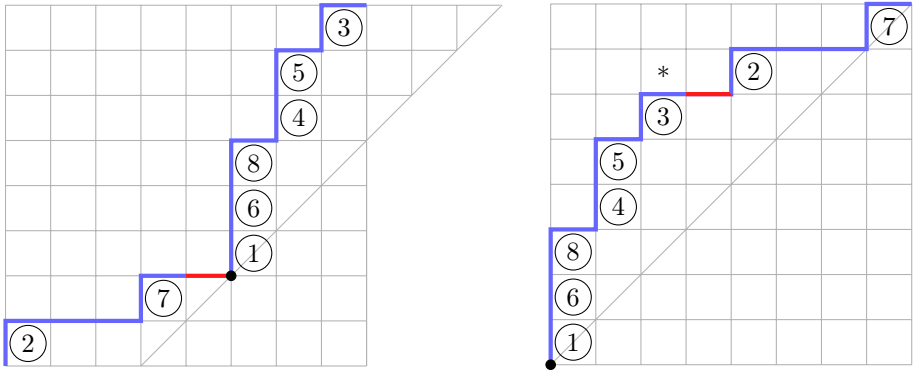

We get that for all $P \in \PSQE(n)$ \begin{align*} \area(\gamma_E(P))&=\area(P)\\ \dinv(\gamma_E(P))&=\dinv(P).\end{align*}

\begin{remark}
	Observe that a combinatorial formula for $\Delta_{e_{n-1}}e_n$ in terms of labelled Dyck paths already appears in \cite[Theorem~7.10]{Bergeron-Garsia-Segel-Xin-Rational} (the formula is now proved thanks to the results in \cite{Carlsson-Mellit-ShuffleConj-2015}):
	\[\Delta_{e_{n-1}}e_n = \sum_{D \in \LD(n)}[\mathsf{ret}(D)]_{1/t}t^{\area(D)}q^{\dinv(D)}x^D  \]
	where
	\[\mathsf{ret}(D):=\min(\{i-1:a_i(D)=0,i>1\}\cup\{n\}).\]
	
	To these authors, it is unclear how to relate this formula to Conjecture~\ref{conj:new_square}.
\end{remark}

\chapter{Our results} 

In this section we give an account of most of our results. Also for this chapter, we refer to Section~\ref{section:combinatorial definitions} for the definitions of the combinatorial objects, and to Section~\ref{sec:symmetric_fcts_intro} for the definitions of symmetric function theory. Detailed proofs of all the statements can be found in Part \ref{part: proofs}.

\section{A decorated $q,t$-Schröder}
\label{subsec:qtSchroeder}

In \cite{Haglund-Remmel-Wilson-2015} (half of Problem 8.1) and \cite{Wilson-PhD-2015}*{Conjecture~5.2.1.1}, the authors asked for a proof of the decorated version of the famous $q,t$-Schr\"{o}der theorem of Haglund \cite{Haglund-Schroeder-2004}. This is a combinatorial interpretation of the formula
\begin{align}
	\label{eq:qtSchroederLHS}
	\< \Delta_{e_{a+b-k-1}}'e_{a+b}, e_a h_{b} \>
\end{align}
as a sum of weights $q^{\dinv(D)}t^{\area(D)}$ of decorated Dyck paths. We solve this problem, by providing two extra combinatorial interpretations of that same polynomial.

\begin{theorem} \label{thm: resultsection: decoqtSchroeder}
	For $a,b,k\in \mathbb{N}$, $a\geq 1$, $a+b\geq k+1$, we have
	%\begin{align}
	%	\label{eq:qtSchroder1.1} \< \Delta_{e_{a+b-k-1}}'e_{a+b},s_{b+1,1^{a-1}} \> & = \sum_{D\in \DDd(a+b)^{\circ b, \ast k} } q^{\dinv(D)}t^{\area(D)} \\
	%	\label{eq:qtSchroder1.2} & =\sum_{D\in \DDp(a+b)^{\circ b, \ast k} } q^{\area(D)}t^{\pbounce(D)} \\
	%	\label{eq:qtSchroder1.3} & =\sum_{D\in \DDb(a+b)^{\circ k, \ast b} } q^{\area(D)}t^{\bounce(D)}.
	%\end{align}
	%so that, for $a+b\geq k+1$, we have
	\begin{align}
		\label{eq:qtSchroder2.1} \< \Delta_{e_{a+b-k-1}}'e_{a+b},e_a h_{b} \> & = \sum_{D\in \DD(a+b)^{\circ b, \ast k} } q^{\dinv(D)}t^{\area(D)} \\
		\label{eq:qtSchroder2.2} & =\sum_{D\in \DD(a+b)^{\circ b, \ast k} } q^{\area(D)}t^{\pbounce(D)} \\
		\label{eq:qtSchroder2.3} & =\sum_{D\in \DD(a+b)^{\circ b, \ast k} } q^{\area(D)}t^{\bounce(D)}.
	\end{align}
\end{theorem}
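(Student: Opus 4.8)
The plan is to follow the strategy announced in the introduction: mimic Haglund's proof of the $q,t$-Schröder theorem by establishing a recursion that is satisfied simultaneously by the symmetric function side \eqref{eq:qtSchroderLHS} and by each of the three combinatorial sides. Concretely, I would fix $b$ and $k$ and induct on $a$ (or on $a+b$), using the specialization of a Macdonald-polynomial identity — the ``new identity'' advertised in the abstract as extending Haglund's theorem from \cite{Haglund-Schroeder-2004} — to produce a recursion for $\langle \Delta_{e_{a+b-k-1}}'e_{a+b}, e_a h_b\rangle$ in terms of analogous brackets with smaller parameters (typically peeling off a row, so that $a\mapsto a-1$ and possibly $b$ changes, with the decoration count $k$ redistributed). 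The base cases $a+b=k+1$ and $b=0$ (the latter being the ordinary decorated $q,t$-Catalan / Schröder situation already controlled in the literature) should be checked directly.

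For the combinatorial sides I would prove that each of the three generating functions
\[
S_1=\sum_{D\in \DD(a+b)^{\circ b,\ast k}}q^{\dinv(D)}t^{\area(D)},\quad
S_2=\sum_{D}q^{\area(D)}t^{\pbounce(D)},\quad
S_3=\sum_{D}q^{\area(D)}t^{\bounce(D)}
\]
obeys the \emph{same} recursion. For $S_1$ this is a $\dinv$/$\area$ recursion: classify paths by the number of vertical steps on the main diagonal (equivalently by $a_2(D)$), track how a decorated rise or decorated peak sits in the bottom block, and use the combinatorial Pieri-type splitting recorded in Remark~\ref{rem: combinatorial pieri rule } to handle decorated peaks on the rightmost highest peak. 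For $S_3$ one uses a first-bounce (or last-bounce) decomposition of the $\bounce$ statistic, which is exactly the kind of argument appearing in Theorems~\ref{thm:reco_first_bounce_S_R}–\ref{thm:reco_old_bounce} referenced in Part~2; here the subset $\DDb$ where the first peak is undecorated is the natural object, since decorating the first peak leaves $\bounce$ unchanged. For $S_2$ one uses the analogous first-$\pbounce$ decomposition on the leaning stack, with $\DDp$ (last peak undecorated) playing the corresponding role. Since all three recursions will have the same shape and the same base cases, induction closes the argument and simultaneously yields $S_1=S_2=S_3$.

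Alternatively — and this is probably the cleaner route for the two bounce-type equalities — I would first prove $S_2=S_3$ and then match one of them to the symmetric function side, rather than deriving three independent recursions. A $\bounce$-to-$\pbounce$ statistic-preserving bijection (or a bijection swapping $(\area,\bounce)$ and $(\area,\pbounce)$, analogous to the classical zeta-type maps between bounce and area statistics on Dyck/Schröder paths) would give $S_2=S_3$ outright; one then only needs the recursion for, say, $S_3$ and for the symmetric function. Establishing $S_1=S_3$ is the harder pairing because it relates a $\dinv$-statistic to a $\bounce$-statistic, and on undecorated Dyck paths this is already the content of Haglund's $q,t$-symmetry; here the decorations on peaks and rises must be carried along, and the bijection (if one insists on a bijective proof) is not expected to be simple — so I would instead get $S_1$ via its own $(\dinv,\area)$ recursion and appeal to the uniqueness of the solution.

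The main obstacle I anticipate is proving the recursion on the symmetric function side: this is where the new Macdonald identity is needed, and extracting the correct $e_a h_b$-bracket recursion from it — correctly bookkeeping the Delta operator $\Delta_{e_{a+b-k-1}}'$ as $a+b$ decreases and the interaction of the $e_a$ factor with the plethystic manipulations — is the technical heart of the paper. On the combinatorial side the delicate points are (i) checking that the decoration bookkeeping (which peaks/rises are forbidden in $\DDd,\DDb,\DDp$, and how a decoration in the bottom block contributes) matches exactly the coefficients produced by the symmetric-function recursion, and (ii) verifying the base cases, especially reconciling the $b=0$ case with the known decorated $q,t$-Catalan results and the $k=0$ case with the classical $q,t$-Schröder theorem of \cite{Haglund-Schroeder-2004}.
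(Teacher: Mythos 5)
Your high-level plan is in fact the paper's plan: establish a common recursion for the symmetric function bracket and for the three decorated-Dyck-path $q,t$-enumerators by decomposing along the first-bounce block, with the new Macdonald summation formula (Section~\ref{sec:sum_formula}) supplying the symmetric-function recursion, the sets $\DDd$, $\DDb$, $\DDp$ as the working objects, and the Pieri splitting of Remark~\ref{rem: combinatorial pieri rule } to pass from the restricted to the unrestricted enumerators. All of that matches. The paper does replace one of your three ``independent recursions'' by a transfer: rather than prove a $(\dinv,\area)$ recursion from scratch, it proves the $(\area,\bounce)$ recursion on $\DDb$ and transports it to $\DDd$ via a decorated extension of Haglund's $\zeta$ map (Theorem~\ref{thm:zeta_map}); this is a presentation choice, not a different route.

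Where your plan has a genuine gap is in the passage to \eqref{eq:qtSchroder2.3}. When the $\DDb$ recursion is matched to the symmetric-function recursion, the natural correspondence that emerges (and that the paper records in Theorem~\ref{thm:qt_enumerators_formulae}) is $\widetilde{F}_{n,k}^{(d,\ell)} = \DDd_{q,t}(n\backslash k)^{\circ d,\ast\ell} = \DDb_{q,t}(n\backslash k)^{\circ \ell,\ast d}$ --- the peak and rise decoration counts appear \emph{swapped} on the $\DDb$ side relative to $\DDd$ and $\DDp$. So after Pieri you do not get $\sum_{D\in\DD(a+b)^{\circ b,\ast k}} q^{\area}t^{\bounce}$ for free; you get the sum over $\DD(a+b)^{\circ k,\ast b}$. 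Closing this gap requires an extra ingredient, and ``three recursions of the same shape'' does not supply it. The paper's ingredient is the map $\psi$ (Theorem~\ref{thm:psi_map}), a bijection $\DDp(n)^{\circ a,\ast b}\to\DDp(n)^{\circ b,\ast a}$ that exchanges peak and rise decorations while preserving $(\area,\pbounce)$; combining $\psi$ with the already-established equalities of \eqref{eq:qtSchroder1} is what turns $\circ k,\ast b$ back into $\circ b,\ast k$ (equivalently, one can invoke the symmetry Theorem~\ref{thm:symmetry}). Your ``alternative'' of a direct $(\area,\pbounce)\leftrightarrow(\area,\bounce)$ statistic-swapping bijection is not what is used and is not obviously available; the actual missing map is a decoration-swapping involution within $\DDp$, not a bounce-to-pbounce transport. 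So as written your argument would establish \eqref{eq:qtSchroder2.1} and \eqref{eq:qtSchroder2.2} but would strand \eqref{eq:qtSchroder2.3} at the wrong combinatorial set unless you add a lemma of the $\psi$-map type.
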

See Section~\ref{sec:deco_qt_Schroeder} for a proof of this theorem.
\begin{remark}
	Observe that Remark~\ref{rmk:bounce_pmaj} together with the discussion in Section~\ref{sec:link_delta} shows that equations \eqref{eq:qtSchroder2.1} and \eqref{eq:qtSchroder2.2} prove the case $m=0$ and $\<\cdot, e_{n-d}h_d\>$ of our Conjecture~\ref{conj:our_pmaj}.
\end{remark}

Notice that the equality \eqref{eq:qtSchroder2.1} is indeed the case $\< \cdot , e_a h_b \>$ of the Delta conjecture, once we ``standardize'' suitably our decorated Dyck paths (see Section~\ref{sec:link_delta} for more details). %See Section~\ref{sec:combinatorial_recursions_ddyck} for the recursion giving in fact a refinement of this theorem.

In the previous theorem, to make the connection with Schr\"{o}der paths explicit, the decorated peaks can be thought of as diagonal steps.

The proof of the equations  \eqref{eq:qtSchroder2.1} and \eqref{eq:qtSchroder2.2} relies on showing that that both sides satisfy the same recursion and initial conditions. We state here the combinatorial recursion for the $q,t$-enumerator of $(\dinv,\area)$:

Let $a,b,n,k\in \mathbb{N}$, with $1 \leq k \leq n$. Let $\DDd(n \backslash k )^{\circ a, \ast b}$ be the subset of $\DDd(n)^{\circ a, \ast b}$ whose elements have an area word containing exactly $k$ zeros, and set
\begin{align*}
	\DDd_{q,t}(n)^{\circ a, \ast b} & :=\sum_{D \in \DDd(n)^{\circ a, \ast b}}q^{\dinv(D)}t^{\area(D)} \\
	\DDd_{q,t}(n\backslash k)^{\circ a, \ast b} & :=\sum_{D \in \DDd(n\backslash k)^{\circ a, \ast b}}q^{\dinv(D)}t^{\area(D)}.
\end{align*}

Since $\DDd(n)^{\circ a, \ast b} = \bigsqcup_{k=1}^n \DDd(n \backslash k)^{\circ a, \ast b}$, we have \[ \DDd_{q,t}(n)^{\circ a, \ast b} = \sum_{k=1}^n \DDd_{q,t}(n \backslash k )^{\circ a, \ast b}, \]
and we already observed in Remark~\ref{rem: combinatorial pieri rule } that
\[\sum_{D\in \DD(n)^{\circ a, \ast b} } q^{\dinv(D)}t^{\area(D)}=\DDd_{q,t}(n)^{\circ a, \ast b}+\DDd_{q,t}(n)^{\circ a-1, \ast b}.\]

We have a recursive formula for $\DDd_{q,t}(n \backslash k )^{\circ a, \ast b}$  (see Section~\ref{sec:combinatorial_recursions_ddyck} for a proof).

\begin{theorem}
	%\label{thm:reco1_first_bounce}
	Let $a,b,n,k \in \mathbb{N}$, with $1 \leq k \leq n$. Then \[ \DDd_{q,t}(n \backslash n )^{\circ b, \ast a} = \delta_{a,0} q^{\binom{n-b}{2}}\qbinom{n-1}{b}_q, \]	and for $1 \leq k \leq n-1$
	\begin{align*}
		\DDd_{q,t}(n \backslash k)^{\circ b, \ast a} & = t^{n-k-a} \sum_{s=0}^{b} \sum_{i=0}^{a} \sum_{h=1}^{n-k} q^{\binom{k-s}{2}+\binom{i+1}{2}} \qbinom{k}{s}_q \qbinom{h-1}{i}_q  \\
		\times \qbinom{h+k-s-i-1}{h}_q & \left( \DDd_{q,t}(n-k \backslash h)^{\circ b-s, \ast a-i} + \DDd_{q,t}(n-k \backslash h )^{\circ b-s, \ast a-i-1}  \right),
	\end{align*}
	with initial conditions \[\DDd_{q,t}(0 \backslash k)^{\circ b, \ast a} = \DDd_{q,t}(n \backslash 0)^{\circ b, \ast a} = 0. \]
\end{theorem}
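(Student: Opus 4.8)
The plan is to fix a decorated Dyck path $D \in \DDd(n \backslash k)^{\circ b, \ast a}$ and decompose it according to its first bounce segment, exactly as in Haglund's original $q,t$-Schr\"oder argument, keeping careful track of how the decorations and the ``rightmost highest peak'' condition interact with the decomposition. First I would dispose of the base cases: if $k = n$ the area word is $0^n$, so the underlying path is the staircase $(NE)^n$, there are no rises (hence $a = 0$, giving the Kronecker delta $\delta_{a,0}$), and every one of the $n$ peaks may or may not be decorated except the rightmost highest one; choosing which $b$ of the remaining $n-1$ peaks to decorate and computing the dinv (which here is purely primary, coming from equal area-word letters among undecorated peaks) yields the $q$-binomial $\qbinom{n-1}{b}_q$ times the area contribution $q^{\binom{n-b}{2}}$. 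The vanishing initial conditions $\DDd_{q,t}(0\backslash k) = \DDd_{q,t}(n \backslash 0) = 0$ are immediate, since a path of size $0$ cannot have $k \geq 1$ zeros and a path of positive size must have at least one zero in its area word.

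For $1 \le k \le n-1$, the main step is the bounce decomposition. Let the first vertical run of the bounce path have length $k$ (these are the $k$ rows with area-word letter $0$), and let the second bounce segment have length $h$ for some $1 \le h \le n-k$. Removing the first $k$ rows and the first $k$ columns leaves a decorated Dyck path $D'$ of size $n-k$ whose area word begins with exactly $h$ copies of the first positive value; this $D'$ ranges over $\DDd(n-k \backslash h)^{\circ b-s, \ast a-i}$ or over $\DDd(n-k \backslash h)^{\circ b-s, \ast a-i-1}$ according to whether or not the rightmost highest peak of $D$ lies in the removed portion — this is precisely the source of the two summands in parentheses and of the passage from the ``d''-restricted enumerators to the full one, via Remark~\ref{rem: combinatorial pieri rule }. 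Here $s$ counts the decorated peaks among the first $k$ vertical steps (there are $k$ such peaks since the area word is $0^k$ there, none of which is the rightmost highest, so all $\binom{k}{s}$ choices are available, contributing $\qbinom{k}{s}_q$ to primary dinv and $q^{\binom{k-s}{2}}$ from the mutual primary dinv of the undecorated ones), and $i$ counts the decorated rises among the $h$ vertical steps of the second bounce segment lying in diagonal position $1$, contributing $\qbinom{h-1}{i}_q$ and $q^{\binom{i+1}{2}}$ (the rises occur among $h-1$ of those steps, and the decorated ones lose their area while still creating secondary dinv against the $0$-letters). The factor $\qbinom{h+k-s-i-1}{h}_q$ records the interleaving dinv between the $h$ relevant second-segment steps and the $k-s$ undecorated first-segment steps, minus the $i$ steps already accounted, exactly as the analogous $q$-binomial appears in the classical Schr\"oder recursion. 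Finally $t^{n-k-a}$ is the total area lost: each of the $n-k$ rows outside the first bounce segment lies one diagonal higher than it would in $D'$, and $a$ of the rises overall are decorated and contribute no area, but those among the first segment contribute nothing anyway, so the net shift is $n-k$ minus the number of decorated rises outside, which combined with the $q^{\binom{i+1}{2}}$ bookkeeping gives the stated power; I would verify this area accounting carefully against a small example such as Figure~\ref{fig:statdef}.

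The hard part will be the simultaneous bookkeeping of the three peak-dependent statistics' interactions with decorations under the decomposition — in particular, checking that decorating a peak in the first bounce segment genuinely contributes the claimed $q$-powers to dinv (both the primary dinv among the $0$-letters and the secondary dinv created against the next diagonal), and that the ``rightmost highest peak never decorated'' constraint propagates correctly: when the rightmost highest peak of $D$ survives into $D'$ it remains the rightmost highest peak of $D'$, but when it lies in the excised part one must instead allow the full enumerator on $D'$ and then re-split using Remark~\ref{rem: combinatorial pieri rule }, which is why both $\DDd_{q,t}(n-k \backslash h)^{\circ b-s, \ast a-i}$ and $\DDd_{q,t}(n-k \backslash h)^{\circ b-s,\ast a-i-1}$ appear. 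Everything else is a routine, if lengthy, matching of $q$-binomial identities to counts of interleavings, entirely parallel to the undecorated case treated in \cite{Haglund-Schroeder-2004}; I would present it by exhibiting the bijection $D \mapsto (s, i, h, D')$ together with a partition of the relevant inversion pairs, and tallying the exponents term by term.
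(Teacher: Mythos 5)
Your plan contains a genuine gap: the decomposition you describe is a bounce-path decomposition, but you are trying to apply it to the $(\dinv,\area)$ enumerator $\DDd_{q,t}(n\backslash k)^{\circ b,\ast a}$, and the two do not line up. The set $\DDd(n\backslash k)$ is defined by the condition that the \emph{area word} of $D$ contains exactly $k$ letters equal to $0$, and those $0$'s are in general scattered throughout the word rather than sitting in the first $k$ positions. For instance the area word $0101$ lies in $\DDd(4\backslash 2)$ but its zeros occupy positions $1$ and $3$. Your phrase ``the first vertical run of the bounce path has length $k$ (these are the $k$ rows with area-word letter $0$)'' is false on both counts: the first bounce run has length equal to the initial $0$-run of the area word, which may be strictly shorter than $k$, and the rows in that run carry area-word letters $0,1,2,\dots$, not $0,0,\dots,0$. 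As a result the construction ``remove the first $k$ rows and columns'' does not produce a path in $\DDd(n-k\backslash h)$, and the bookkeeping for primary dinv against the removed $0$'s, and for the location of the decorated peaks in the removed block, cannot be read off the way you claim. (The further assertion that the first $k$ steps are all peaks also fails: even when all $k$ zeros are consecutive at the start of the area word, step $k$ is a rise, not a peak, whenever $k<n$.)

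The paper sidesteps exactly this difficulty. It proves the analogous recursion for $\DDb_{q,t}(n\backslash k)^{\circ a,\ast b}$ with respect to $(\area,\bounce)$ (Theorems~\ref{thm:reco_first_bounce_S_R} and \ref{thm:reco_first_bounce_R_S}, combined in Theorem~\ref{thm:reco1_first_bounce}), where the parametrization is by the number of zeros in the \emph{bounce} word, which are automatically the first $k$ letters, so that ``cut off the first $k$ rows and columns'' is a legitimate operation. It then transfers the recursion to $\DDd_{q,t}(n\backslash k)^{\circ b,\ast a}$ via the decorated sweep map of Theorem~\ref{thm:zeta_map} and Proposition~\ref{prop:zeta_k}, which maps the number of area-word zeros to the number of bounce-word zeros while sending $(\dinv,\area)\mapsto(\area,\bounce)$ and swapping peak-decorations with rise-decorations. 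To carry out your proposed direct $(\dinv,\area)$ argument you would instead have to delete \emph{all} the scattered $0$'s from the area word (as in the undecorated Garsia--Haglund $q,t$-Catalan argument) and track how each primary and secondary inversion, each decorated peak, and each decorated rise interacts with that deletion; this is possible in principle but is substantially more delicate than what you sketch, and is not what the paper does. Your base case $k=n$ is essentially correct, and your use of Remark~\ref{rem: combinatorial pieri rule } to explain the two summands is on the right track, but the core inductive step needs either the $\zeta$-transfer or a complete rework.
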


%\eqref{eq:qtSchroder1.1}, \eqref{eq:qtSchroder1.2} and \eqref{eq:qtSchroder1.3} relies on showing that that both sides satisfy the same recursion and initial conditions. 
%
%The equations \eqref{eq:qtSchroder2.1} and \eqref{eq:qtSchroder2.2} are easily obtained using Remark~\ref{rem: combinatorial pieri rule }, equations \eqref{eq:qtSchroder1.1} and \eqref{eq:qtSchroder1.2}, and a simple consequence of Pieri rules: \[e_{n-d}h_d=s_{d+1,1^{n-d-1}}+s_{d,1^{n-d}}. \] 

To prove equation \eqref{eq:qtSchroder2.3} one more element is needed to switch the decorations on peak and rises. We achieve this with a new combinatorial bijection.

\begin{theorem} \label{thm: resultsection: psi_map}
	There exists a bijective map $$\psi: \DDp(n)^{\circ a, \ast b} \rightarrow  \DDp(n)^{\circ b, \ast a}$$ such that for all $D\in \DD(n)^{\circ a, \ast b}$ \begin{align*}
		\area(D) & = \area(\psi(D)) \\
		\pbounce(D) & = \pbounce(\psi(D)).
	\end{align*}
\end{theorem}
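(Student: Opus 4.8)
The plan is to construct $\psi$ explicitly from the bounce structure, using the fact that the three quantities involved ($\area$, $\pbounce$, and the number of rows) are all computed ``one bounce block at a time''. Via Remark~\ref{rem:pbounce_origin} I would first restate everything in the Schröder picture: an element of $\DDp(n)^{\circ a,\ast b}$ is a Schröder path $\sigma$ (a leaning stack) with $a$ diagonal steps --- the decorated peaks --- together with $b$ marked rises, where $\pbounce(D)=\bounce(\sigma)$ in Haglund's sense and $\area(D)$ is the area of $\sigma$ with the rows of the marked rises removed. Since $\pbounce$ and $\area$ ignore respectively the marked rises and (the decoration data of) the diagonal steps, the content of the theorem is a symmetry that converts diagonal steps into marked rises and back while fixing $(\area,\pbounce)$; note that this must move the path itself, because a diagonal step and a marked rise occupy columns of different shapes.

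\textbf{Construction.} Cut $\sigma$ along its bounce path into bounce blocks, the $\ell$-th block being the set of rows with bounce label $\ell$ together with the portions of $\sigma$ and of the broken diagonal lying in those rows. Every row of block $\ell$ contributes exactly $\ell$ to $\bounce(\sigma)$, so the bounce is determined by the ordered list of block sizes; and the area, as well as the marked-rise correction to it, splits as a sum of per-block contributions. The engine of $\psi$ is a local bijection performed independently on each block that replaces $j$ diagonal steps sitting in that block by $j'$ marked rises (and conversely), while keeping the block's row set, hence its contribution to $\pbounce$, and keeping the block's net contribution to $\area$; this is an $e_jh_{j'}\leftrightarrow e_{j'}h_j$ exchange at the scale of a single bounce segment, where all relevant counts are small and linear. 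Carrying this out block by block, turning the total diagonal-step count $a$ into marked-rise count and the total marked-rise count $b$ into diagonal-step count, and translating back to Dyck paths, yields $\psi(D)\in\DDp(n)^{\circ b,\ast a}$; the inverse runs the block-level exchanges backwards. The role of the $\DDp$ normalization (the top peak, equivalently the top row, is never decorated) is to make the topmost block and its treatment unambiguous, so that $\psi$ is well defined and lands in, and is a bijection onto, the correct set.

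\textbf{Verification and the main difficulty.} It then remains to check that $\psi$ is a bijection with $\area\circ\psi=\area$ and $\pbounce\circ\psi=\pbounce$: these follow because the blocks of $\sigma$ and of the output $\sigma'$ are matched up preserving row sets (hence bounce labels) and per-block area contributions. The main obstacle is the local exchange: one must describe precisely what a single bounce block of a Schröder path carrying marked rises can look like, show that for a fixed row set the admissible numbers of diagonal steps and of marked rises --- together with the resulting net area --- range over matching sets, and produce the exchange as an explicit involution-like bijection rather than a bare count equality, all the while tracking the fact that inserting or deleting a diagonal step changes the width of the leaning stack and so reshapes the block. If this surgery turns out to be too intricate, a fallback is to prove that $\sum_{D\in\DDp(n)^{\circ a,\ast b}}q^{\area(D)}t^{\pbounce(D)}$ obeys a recursion in which the two decoration parameters appear symmetrically, which already yields the existence of a bijection; but since a $\pbounce$ recursion is used downstream to deduce \eqref{eq:qtSchroder2.3}, it is cleaner to keep $\psi$ independent, via the explicit construction above.
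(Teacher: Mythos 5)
Your proposal proceeds by an entirely different route from the paper's, and it leaves its central step unestablished. You aim to cut the leaning stack along the bounce path into per-block pieces and perform an independent ``$j$ diagonal steps $\leftrightarrow$ $j'$ marked rises'' exchange inside each block. The gap you acknowledge --- actually constructing the local exchange bijection --- is in fact the whole theorem: it is not clear what the correct set-up even is, because changing the count of diagonal steps within a block changes the width of the leaning stack (it has $n-a$ columns for $a$ diagonal steps), and so reshapes \emph{other} blocks rather than just the one you are working in. Block-locality is an additional hypothesis you have introduced, not one that obviously holds, and you do not argue for it.

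For comparison, the paper's map $\psi$ makes no block decomposition and no claim of locality. It uses a single atomic move $\psi_0$: given a decorated fall ending at a point $x$, travel south from $x$ to the main diagonal then west to the end $y$ of a north step, delete the decorated east step, and insert an east step (a decorated peak) immediately after $y$. This move can, and often does, cross several bounce blocks, but one checks directly that neither the pbounce path nor the area changes. The full $\psi$ is obtained by applying $\psi_0$ to every decorated fall (left to right) and then $\psi_0^{-1}$ to the remaining decorated peaks (top to bottom); the ordering is what makes the iterated moves noninterfering, and $\psi$ is manifestly an involution. So the paper's argument replaces your problematic ``independent block surgeries'' with a sequence of non-local but easily verified moves plus an ordering discipline. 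If you want to salvage your approach, you would need to explain how to glue the per-block exchanges across blocks whose widths change --- or give up block independence and reproduce something like $\psi_0$. As stated, your proof does not go through.
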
 
See Section~\ref{sec:psi_map} for a proof of this theorem.

We also provide a purely combinatorial way of proving the equality of the right hand sides of equations \eqref{eq:qtSchroder2.1} and \eqref{eq:qtSchroder2.3} by proving that Halglund's zeta map (also known as \emph{sweep map}) switches the bistatistics and the decorations on peaks and rises. 

\begin{theorem}
	There exists a bijective map \[ \zeta: \DDd(n)^{\circ a, \ast b} \rightarrow \DDb(n)^{\circ b, \ast a} \] such that for all $D\in \DDd(n)^{\circ a, \ast b}$ \begin{align*}
		\area(D) & = \bounce(\zeta(D)) \\
		\dinv(D) & = \area(\zeta(D)).
	\end{align*}
\end{theorem}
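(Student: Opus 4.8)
The plan is to verify that Haglund's \emph{zeta map} (sweep map), as defined in \cite{Haglund-Book-2008}, carries the bistatistic $(\dinv,\area)$ to $(\area,\bounce)$ once it is equipped with a rule for transporting decorations. Recall that on undecorated paths the zeta map is a bijection $\zeta_0\colon \D(n)\to\D(n)$ satisfying $\area(\zeta_0(D))=\dinv(D)$ and $\bounce(\zeta_0(D))=\area(D)$; it is obtained from the area word $a_1(D)\cdots a_n(D)$ by listing the vertical steps of $D$ in weakly increasing order of the diagonal $y=x+a_i(D)$ on which they lie and inserting horizontal steps according to the sweeping rule. The goal is to upgrade $\zeta_0$ to a bijection $\zeta\colon\DDd(n)^{\circ a,\ast b}\to\DDb(n)^{\circ b,\ast a}$ with $\area(D)=\bounce(\zeta(D))$ and $\dinv(D)=\area(\zeta(D))$.

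First I would extract from the construction of $\zeta_0$ two canonical correspondences: one between the peaks of $D$ other than its rightmost highest peak and the rises of $\zeta_0(D)$, and a dual one between the rises of $D$ and the peaks of $\zeta_0(D)$ other than its first peak. Both come from reading off which corners of $\zeta_0(D)$ are created by consecutive (in the sweep order) vertical steps of $D$ lying on the same, resp.\ on consecutive, diagonals. The two distinguished peaks single themselves out intrinsically: the rightmost highest peak of $D$ is the \emph{unique} peak whose decoration leaves $\dinv(D)$ unchanged (it contributes no primary pairs, being last on the highest diagonal, and no secondary pairs, since nothing sits on the next diagonal — and one checks this fails for every other peak), while the first peak of $\zeta_0(D)$, being the top of the first $N$-run, is the unique peak carrying bounce label $0$, so its decoration leaves $\bounce(\zeta_0(D))$ unchanged. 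Granting these correspondences, define $\zeta$ on $\DD(n)^{\circ a,\ast b}$ by applying $\zeta_0$ to the underlying path, sending each decorated peak of $D$ to the associated rise of $\zeta_0(D)$ and each decorated rise of $D$ to the associated peak of $\zeta_0(D)$. Since a peak of $D$ is eligible for decoration exactly when it is not the rightmost highest one, and the peak of $\zeta(D)$ that receives a decoration is never the first one, $\zeta$ sends $\DDd(n)^{\circ a,\ast b}$ into $\DDb(n)^{\circ b,\ast a}$, and it is a bijection because $\zeta_0$ is and both transport rules are invertible.

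For the statistics, the undecorated identities $\dinv(\overline D)=\area(\zeta_0(\overline D))$ and $\area(\overline D)=\bounce(\zeta_0(\overline D))$ for the underlying path $\overline D$ are exactly Haglund's theorem, so only the decoration-induced corrections have to be matched. Concretely: if $p$ is a decorated peak of $D$ on diagonal $d$, the number of pairs it deletes from $\dinv(D)$, namely $\#\{j>p\mid a_j(D)=d\}+\#\{i<p\mid a_i(D)=d+1\}$, must equal $a_r(\zeta(D))$ where $r$ is the rise associated to $p$; and if $r'$ is a decorated rise of $D$, then $a_{r'}(D)$ must equal $b_q(\zeta(D))$ where $q$ is the peak associated to $r'$. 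Each of these is a local assertion about how the sweeping procedure rearranges a single stretch of the area word around $p$ (resp.\ $r'$), which I would verify by following that stretch through the definition of $\zeta_0$.

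The main obstacle is precisely this last bookkeeping, together with making the two correspondences of the second paragraph completely explicit: one must understand in detail how Haglund's sweep map redistributes runs of equal and of consecutive area-word letters, and then check the two deficiency identities (and, along the way, the bijection count $\#\{\text{peaks of }D\}+\#\{\text{peaks of }\zeta(D)\}=n+1$ that these correspondences force). Should a direct treatment become cumbersome, an alternative is an induction on $n$: give $\zeta$ a recursive description that removes the first segment of the bounce path, and check that it intertwines the recursion for $\DDd_{q,t}(n\backslash k)^{\circ a,\ast b}$ established above with the corresponding recursion for the $(\area,\bounce)$-enumerator of $\DDb$.
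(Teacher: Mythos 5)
Your plan coincides with the paper's proof: apply the sweep map to the underlying path, carry each decorated rise of $D$ to the peak of $\zeta(D)$ immediately following the valley that its $(i-1,i)$ pattern produces during the scan, and carry each decorated peak of $D$ to the fall that its east step becomes during the scan (which is the ``associated rise'' you describe, via the correspondence of Remark~\ref{rmk:rises-falls-correspondence}), with the rightmost highest peak of $D$ and the first peak of $\zeta(D)$ playing precisely the distinguished roles you single out. The local bookkeeping you postpone is exactly what the paper's proof carries out — verifying that the new peak's bounce label equals $a_{r'}(D)$, and that the column under the new fall contains as many squares as there are dinv pairs suppressed by the removed peak, one count below and one above the bounce path.
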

See Section~\ref{sec:zeta_map} for a proof of this theorem.

\section{A decorated $q,t$-Narayana}
\label{subsec:two-shuffle}

In \cite{Aval-DAdderio-Dukes-Hicks-LeBorgne-2014} the authors give a combinatorial interpretation of the $q,t$-Narayana $\< \Delta_{h_m} e_{n+1}, e_{n+1} \>$ as a sum of weights $q^{\area(P)} t^{\bounce(P)}$ (equivalently $q^{\dinv(P)} t^{\area(P)}$) of parallelogram polyominoes. We provide a combinatorial interpretation of the more general $\< \Delta_{h_m} e_{n+1}, s_{k+1,1^{n-k}} \>$ in terms of decorated parallelogram polyominoes (using the equivalent reduced version).

\begin{theorem}
	For $m, n, k \in \mathbb{N}$, $0 \leq k \leq m$, we have
	
	\begin{align}
		\< \Delta_{h_m} e_{n+1}, s_{k+1,1^{n-k}} \> & = \sum_{P \in \RP(m,n)^{\ast k}} q^{\dinv(P)} t^{\area(P)} \\
		\label{eq:RPswitch}	& = \sum_{P \in \RP(m,n)^{\circ k}} q^{\area(P)} t^{\bounce(P)}
	\end{align}
\end{theorem}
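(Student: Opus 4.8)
The plan is to mimic the proof of Haglund's $q,t$-Schr\"oder theorem (cf.\ \cite{Haglund-Schroeder-2004} and \cite{Haglund-Book-2008}): show that the three quantities
\[
N_{m,n,k} := \< \Delta_{h_m} e_{n+1}, s_{k+1,1^{n-k}} \>, \qquad
A_{m,n,k} := \sum_{P \in \RP(m,n)^{\ast k}} q^{\dinv(P)} t^{\area(P)}, \qquad
B_{m,n,k} := \sum_{P \in \RP(m,n)^{\circ k}} q^{\area(P)} t^{\bounce(P)}
\]
all satisfy one common recursion with common initial conditions, and then conclude by induction on $m+n$. For the base of the induction, when $k=0$ we have $s_{k+1,1^{n-k}}=s_{1^{n+1}}=e_{n+1}$, so $N_{m,n,0}=\< \Delta_{h_m}e_{n+1},e_{n+1}\>$ is the $q,t$-Narayana polynomial whose combinatorial interpretation on parallelogram polyominoes was established in \cite{Aval-DAdderio-Dukes-Hicks-LeBorgne-2014}; transporting that statement along the normalizing bijection $\mathsf r$ of Proposition~\ref{normalizing map} gives $N_{m,n,0}=A_{m,n,0}=B_{m,n,0}$, and the cases $m=0$ or $n=0$ are immediate.

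For the symmetric-function recursion I would use the new identity in the theory of Macdonald polynomials extending Haglund's theorem (announced as the main technical tool of the paper) to express $N_{m,n,k}$ as a sum over the length of the first run of the bounce path, with $q$-binomial coefficients recording how the $m$ east steps, the $n$ north steps, and the $k$ decorations are distributed between that first run and the smaller reduced polyomino that remains. This is the precise analogue of Haglund's recursion for $\< \nabla e_n, e_a h_b \>$, and most of the algebraic work lives here.

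For the $(\area,\bounce)$ side, recall that the bounce path of a reduced polyomino is a sequence of alternating maximal horizontal and vertical runs; cutting $P$ at the end of its first vertical bounce run peels off a sub-polyomino lying in the upper right, while the complementary lower-left part is determined by the lengths of the first two runs. One then tracks how many of the $m$ east steps, $n$ north steps, and $k$ decorated green peaks land in each piece: the squares in the lower-left piece contribute a fixed amount to $\area$, and every bounce label of the upper-right piece is shifted up by one. Assembling these contributions yields a recursion for $B_{m,n,k}$ that I expect to match the symmetric-function recursion after a routine reindexing (this is the content hinted at as \ref{th:redbouncerecursion} in the introduction).

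The $(\dinv,\area)$ side is where the real difficulty lies. Here I would decompose $P\in\RP(m,n)^{\ast k}$ according to the letters at the top of its area word whose value is $0$ — the analogue of the rows meeting the diagonal in the Schr\"oder case — and carefully account both for the diagonal inversions created between those top rows and the rest of $P$, and for the decorated rises (which are simply omitted from $\area$ but still interact with $\dinv$ through the area word). The main obstacle is proving that the resulting $\dinv$-recursion coincides with the $\bounce$-recursion; in the classical undecorated case this coincidence is exactly the equality of the two $q,t$-Schr\"oder enumerators, and with the extra decoration parameter $k$ one must verify that decorations on rises are transported to decorations on green peaks consistently. A cleaner alternative for the second equality is to construct an explicit bijection $\RP(m,n)^{\ast k}\to\RP(m,n)^{\circ k}$ interchanging $(\dinv,\area)$ with $(\area,\bounce)$ — a sweep/zeta-type map in the spirit of the maps $\psi$ and $\zeta$ used for decorated Dyck paths — which would immediately give the equality of the two right-hand sides and reduce the whole theorem to matching a single combinatorial enumerator with $N_{m,n,k}$. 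Either way, the common recursion together with the base cases finishes the proof by induction on $m+n$.
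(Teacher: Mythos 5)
Your proposal is correct and follows essentially the same route as the paper: one matches both sides by proving a common recursion for the \emph{refined} $q,t$-enumerators $\RP_{q,t}(m\backslash r, n)^{\ast k}$ (indexed by the number $r$ of $0$'s in the area word) together with the corresponding symmetric-function quantity $t^{m-k-r+1}\left\langle \Delta_{h_{m-k-r+1}}\Delta_{e_k}e_n\!\left[X\frac{1-q^r}{1-q}\right],e_n\right\rangle$, and the equality of the two combinatorial enumerators is obtained from a sweep-map bijection $\bar\zeta\colon\RP(m\backslash r,n)^{\circ k}\to\RP(m\backslash r,n)^{\ast k}$ preserving the refinement --- exactly the ``cleaner alternative'' you suggest at the end. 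One small correction to your framing of the induction: the recursion does not strictly decrease $k$ (it passes from $k$ to $k-h$ with $h\geq 0$), so $k=0$ cannot serve as an inductive base; the induction is on $m+n$ via the refined parameter $r\geq 1$, and the paper's base cases are $n=0$ and the degenerate case $r=m+1$ (all unbarred letters equal $0$), computed directly rather than imported from \cite{Aval-DAdderio-Dukes-Hicks-LeBorgne-2014}. The refinement by $r$ is not cosmetic: without it the recursion does not close.
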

See Section~\ref{sec:2shuffle_delta} for a proof of this theorem. 

\begin{remark} \label{rmk:pmaj_cases}
	Observe that the case $k=0$ of this results combined with Proposition~\ref{normalizing map} gives essentially the main result in \cite{Aval-DAdderio-Dukes-Hicks-LeBorgne-2014}. This same case, combined with Theorem~\ref{th:pmaj}, proves the case $k=n-1$ and $\<\cdot, e_{n}\>$ of our Conjecture~\ref{conj:our_pmaj}. Moreover, that same case, combined with Remark~\ref{rmk:polyo_pmaj_bounce} and a discussion of the shuffles for labelled parallelogram polyominoes similar to the one in Section~\ref{sec:link_delta}, proves also the case $\<\cdot, e_{n+1}\>$ of our Conjecture~\ref{conj:our_polyo}. 
\end{remark}

The proof of the previous theorem uses a recursive formula, that we are going to state here on the combinatorial side for the $q,t$-enumerator of $(\dinv,\area)$:

Let $\RP(m\backslash r,n)^{\ast k}$ be the subset of $\RP(m,n)^{\ast k}$ whose area word has exactly $r$ $0$'s, and set
\[\RP_{q,t}(m\backslash r,n)^{\ast k}:= \sum_{P \in \RP(m,n)^{\ast k}} q^{\dinv(P)} t^{\area(P)}.\]
Since $\RP(m,n)^{\ast k}=\bigsqcup_{k=1}^{m+1} \RP(m\backslash r,n)^{\ast k}$ we have $\RP_{q,t}(m,n)^{\ast k}=\sum_{k=1}^{m+1} \RP_{q,t}(m\backslash r,n)^{\ast k}$.

We have a recursive formula for $\RP_{q,t}(m\backslash r,n)^{\ast k}$ (see Section~\ref{sec:reduced_polyo} for a proof).
\begin{theorem}
	%\label{th:reddinvrecursion}
	
	For $m \geq 0$, $n \geq 0$, $k \geq 0$, and $1 \leq r \leq m+1$, the polynomials $\RP_{q,t}(m \backslash r, n)^{\ast k}$ satisfy the recursion
	\begin{align*}
		\RP_{q,t}(m \backslash r, n)^{\ast k} = & \sum_{s=1}^{n} t^{m+n+1-r-s-k} \qbinom{r+s-1}{s}_q \sum_{h=0}^{k} q^{\binom{h}{2}} \qbinom{s}{h}_q \\ \times & \sum_{u=1}^{m-r+1} \qbinom{s+u-h-1}{s-1}_q \RP_{q,t}(m-r \, \backslash \, u, \, n-s)^{\ast \, k-h}
	\end{align*}
	
	with initial conditions \[ \RP_{q,t}(m \backslash m+1, n)^{\ast 0} = \qbinom{m+n}{m}_q \] and $\RP_{q,t}(m \backslash r, 0)^{\ast k} = \delta_{k,0} \delta_{r,m+1}$.
\end{theorem}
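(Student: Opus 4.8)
The plan is to prove the recursion directly on the combinatorial side, by a \emph{peeling} decomposition of reduced polyominoes. Throughout I identify $P\in\RP(m,n)^{\ast k}$ with its area word $a(P)=a_1\cdots a_{m+n+1}$ over $\overline{\mathbb N}$ via the characterization Proposition. Write $r$, $s$, $u$ for the number of letters of $a(P)$ equal to $0$, to $\bar 0$, and to $1$ respectively. The first observation is structural: the value-$0$ letters (the $0$'s and $\bar 0$'s) form a sub-word $w$ over $\{0,\bar0\}$, of length $r+s$, starting with $0$; since the only ascent leaving value $0$ is $\bar 0\mapsto 1$, the first letter of $a(P)$ of value $\geq 1$ is forced to be a $1$, and, more generally, the value-$\geq1$ letters come in maximal runs (the \emph{excursions}), each beginning with a $\bar 0\mapsto 1$ step. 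Deleting every value-$0$ letter and then shifting each surviving letter down by two notches of $\overline{\mathbb N}$ (so $1\mapsto 0$, $\bar1\mapsto\bar0$, $2\mapsto1,\dots$) preserves parity and successor relations, starts the resulting word with a $0$, and hence produces the area word of a well-defined $P'\in\RP(m-r,n-s)$; moreover the zeros of $a(P')$ are precisely the images of the $u$ ones of $a(P)$, so $P'\in\RP(m-r\backslash u,\,n-s)^{\ast\,k-h}$ for the $h$ defined next.

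Second, I would track the statistics. The rises of $P$ inside excursions are in bijection with the rises of $P'$, while each excursion also creates exactly one \emph{new rise}, at its initial $\bar0\mapsto 1$ step, where the value is $1$; decorations of $P$ split accordingly into $k-h$ on rises of $P'$ and $h$ on new rises. Using that value-$0$ letters contribute $0$ to $\area$, that un-shifting adds $1$ to each of the $m+n+1-r-s$ surviving letters, and that the values at decorated new rises equal $1$ while those at the remaining decorated rises exceed the $P'$-values by $1$, one gets
\[
\area(P)=\area(P')+(m+n+1-r-s-k).
\]
For the dinv, only inversions involving a value-$0$ letter fail to descend to $P'$, and these are exactly the pairs "a $\bar0$ occurring before a $0$" (type A) and "a $1$ occurring before a $\bar0$" (type B); hence $\dinv(P)=\dinv(P')+\#A+\#B$.

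Third, the reconstruction: fixing $P'$, we recover $P$ by shifting $a(P')$ up two notches, cutting the result into runs each starting at (the image of) a zero of $P'$, inserting around and between these runs a collection of $0$'s and $\bar0$'s assembling to $w$ — subject to the constraints that $a(P)$ begin with $0$ and that every run boundary be of the form $\bar0\mapsto1$ (so $\bar0$'s are only insertable immediately before a $1$ of the shifted excursion word) — and finally choosing which $h$ of the run-starting $\bar0$'s are decorated. I would organise the sum over all such gluing data so that: the choice of $w$ weighted by $q^{\#A}$ gives $\qbinom{r+s-1}{s}_q$ (the leading $0$ is forced and inversion-free, leaving $r-1$ free $0$'s and $s$ $\bar0$'s); the interleaving of the $s$ $\bar0$'s past the $u$ ones of the excursions, weighted by $q^{\#B}$, together with the choice of decorated run-starts, yields the inner double sum $\sum_h q^{\binom h2}\qbinom sh_q\sum_u\qbinom{s+u-h-1}{s-1}_q\,\RP_{q,t}(m-r\backslash u,n-s)^{\ast\,k-h}$; and the prefactor $t^{m+n+1-r-s-k}$ is the area identity above. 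Summing over $s$ gives the recursion. For the boundary cases: when $n=0$ the area-word condition forces $a(P)=0^{m+1}$ (no ascent out of an unbarred letter is possible without a barred letter), which has $m+1$ zeros, no rises and $\dinv=\area=0$, so $\RP_{q,t}(m\backslash r,0)^{\ast k}=\delta_{k,0}\delta_{r,m+1}$; and when $r=m+1$ there are no value-$\geq1$ letters, so $a(P)$ ranges over all shuffles of $m+1$ $0$'s (the first fixed) with $n$ $\bar0$'s, all with $\area=0$, $\dinv=\#\{\bar0\text{ before }0\}$ and no rises, whence $\RP_{q,t}(m\backslash m+1,n)^{\ast0}=\qbinom{m+n}{m}_q$ and it vanishes for $k\geq1$.

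The main obstacle I anticipate is the bookkeeping in the reconstruction step: one must verify that the validity constraints carve out exactly the index sets of the three Gaussian binomials and that the type-A count, the type-B count and the decoration statistic distribute over the gluing choices with no residual powers of $q$ or $t$; the interaction between the decorated run-starting $\bar0$'s and the type-B inversions (the source of the $q^{\binom h2}$) is the delicate point. I would make this clean by encoding the gluing data as lattice paths in suitable rectangles whose inversion-weighted enumerations are manifestly the $q$-binomials in question, and by checking invertibility of the whole construction to confirm it is a genuine bijection.
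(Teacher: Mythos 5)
Your proposal is correct, and it is essentially the paper's own route: the paper explicitly says this theorem ``can be proved either directly, with the same argument used in Theorem~\ref{th:dinvrecursion} and Theorem~\ref{th:bouncerecursion}, or from the statements of these theorems using Proposition~\ref{prop:dinvarea} and Proposition~\ref{prop:areabounce}.'' You carry out the first (direct) route, which is exactly the $\RP$-analogue of the $\PP$ peeling argument in Theorem~\ref{th:dinvrecursion}: you strip the two bottom levels $0,\bar0$ of the area word and shift down by two notches, while the $\PP$ proof strips $1,\bar1$ below the unique $\bar0$ and shifts; your tally of lost dinv as the two pair--types $(\bar0\text{ before }0)$ and $(1\text{ before }\bar0)$, your area drop of $m+n+1-r-s-k$, and your identification of the $q$-binomials with the shuffle of the $0,\bar0$ block, the placement of the decorated excursion-starting $1$'s, and the remaining interleaving all line up with the coefficient accounting in that proof. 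The one thing you should verify when writing this up is precisely the point you flag — that the generating function of gluings factors cleanly as $q^{\#A}\cdot q^{\#B}$ with independent ranges — since the constraint that a $w$-letter immediately preceding an $\hat a$-letter must be a $\bar0$ ties the two choices together; this works out (and your small-case checks are a good sanity test), but it is the nontrivial step, just as it is in the $\PP$ version.
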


In \cite{Aval-DAdderio-Dukes-Hicks-LeBorgne-2014}*{Section~4}, the authors give a bijection $\zeta \colon \PP(n,m) \rightarrow \PP(m,n)$ swapping $(\area, \bounce)$ and $(\dinv, \area)$. In fact, the same map has a stronger property.

\begin{theorem}
	%\label{th:zetamap}
	For $m \geq 1$, $n \geq 1$ and $k \geq 0$, the bijection $\zeta \colon \PP(n,m) \rightarrow \PP(m,n)$ in \cite{Aval-DAdderio-Dukes-Hicks-LeBorgne-2014}*{Theorem~4.1} extends to a bijection \[ \zeta \colon \PP(n , m)^{\circ k} \rightarrow \PP(m, n )^{\ast k} \] mapping the bistatistic $(\area, \bounce)$ to $(\dinv, \area)$.
\end{theorem}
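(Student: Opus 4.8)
The plan is to run the proof of \cite{Aval-DAdderio-Dukes-Hicks-LeBorgne-2014}*{Theorem~4.1} while keeping track of the decorations. Recall that $\zeta$ is obtained from the area word of $P\in\PP(n,m)$ by a sweep--type procedure, and that the argument in \emph{loc.\ cit.} proving $\dinv(\zeta(P))=\area(P)$ and $\area(\zeta(P))=\bounce(P)$ actually produces a \emph{value-preserving matching} $\phi_P$ between the $m+n$ letters of the bounce word $b(P)$ and the $m+n$ letters of the area word $a(\zeta(P))$: the $\ell$-th vertical/horizontal step of the bounce path of $P$ is matched with the letter of $a(\zeta(P))$ recorded at the $\ell$-th stage of the sweep, and matched letters have equal value $|\cdot|$. (Summing over all letters recovers $\area(\zeta(P))=\bounce(P)$.) The key lemma to establish is that $\phi_P$ restricts to a bijection
\[
\{\,b_i(P)\mid \text{the $i$-th bounce step is horizontal and lies in the column of a red peak of }P\,\}\ \longleftrightarrow\ \{\,a_j(\zeta(P))\mid j\in\Rise(\zeta(P))\,\},
\]
still value-preserving, and compatible with excluding the leftmost red peak of $P$ on the left and the first $\bar 0 1$ rise of $\zeta(P)$ on the right.

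\textbf{The combinatorial heart.} A red peak of $P$ is a \textup{VH}-corner of the red path; as the bounce path is monotone, it has exactly one horizontal step in the column of that corner, and this is precisely the step whose label is ignored when the peak is decorated (this is the defining condition of $\DPeak(P)$). Tracing this distinguished horizontal bounce step through the construction of $\zeta$, one checks that it is recorded as a step of $\zeta(P)$ that \emph{closes a nontrivial slope-$(-1)$ diagonal}, i.e.\ as a letter $a_j(\zeta(P))$ with $|a_{j-1}(\zeta(P))|<|a_j(\zeta(P))|$, that is, $j\in\Rise(\zeta(P))$; and conversely every rise of $\zeta(P)$ is produced in this way from a unique red peak of $P$. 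Since $\phi_P$ preserves $|\cdot|$, the bounce value of the peak equals the area value of the corresponding rise. Finally the leftmost red peak of $P$ (never decorated, bounce contribution $0$ because the label below it is $\bar 0$) matches the first $\bar 0 1$ rise of $\zeta(P)$ (never decorated, area contribution $0$), so after deleting this pair we obtain a canonical value-preserving bijection between the decoratable red peaks of $P$ and the decoratable rises of $\zeta(P)$.

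\textbf{Conclusion.} Given $P\in\PP(n,m)^{\circ k}$, let $\zeta(P)\in\PP(m,n)^{\ast k}$ be the image under $\zeta$ of the underlying polyomino $P_0$, decorated on the $k$ rises that correspond, under the bijection of the previous paragraph, to the $k$ decorated red peaks of $P$. Because $\zeta$ is a bijection on undecorated polyominoes and the peak--rise correspondence is canonical, $P\mapsto\zeta(P)$ is a bijection $\PP(n,m)^{\circ k}\to\PP(m,n)^{\ast k}$, reducing to the original $\zeta$ when $k=0$. For the statistics, $\dinv(\zeta(P))=\area(P)$ already on $P_0$ and neither side sees the decorations, while
\[
\area(\zeta(P))=\area(\zeta(P_0))-\!\!\sum_{j\in\DRise(\zeta(P))}\!\!|a_j(\zeta(P_0))|=\bounce(P_0)-\!\!\sum_{i\in\DPeak(P)}\!\!|b_i(P)|=\bounce(P),
\]
the middle equality being value-preservation of $\phi_P$ restricted to the decorated peaks/rises.

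\textbf{Main obstacle.} The crux is the refinement used in the second paragraph: \cite{Aval-DAdderio-Dukes-Hicks-LeBorgne-2014} proves only the \emph{global} identity $\area(\zeta(P))=\bounce(P)$, whereas I need the letter-by-letter statement that the sweep matches horizontal bounce steps of $P$ with the $(-1)$-diagonals of $\zeta(P)$, and moreover that the horizontal bounce step sitting in the column of a red peak is matched with a diagonal of positive length (a rise). Pinning this down means re-reading the construction of $\zeta$ and verifying that a \textup{VH}-corner of the red path of $P$ is exactly the local pattern the sweep converts into a rise of $\zeta(P)$; one must also double-check the bookkeeping of the excluded leftmost red peak versus the excluded $\bar 0 1$ rise, and confirm that the convention for $\zeta$ on parallelogram polyominoes is the one sending $(\area,\bounce)$ to $(\dinv,\area)$ rather than its inverse. (An alternative would be to transport the already-established decorated $\zeta$ on Dyck paths through the bijections $\PP\leftrightarrow\RP\leftrightarrow\D$, but the normalization shifts by $m+n-1$ make the statistic bookkeeping at least as delicate.)
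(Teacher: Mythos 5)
Your proof is correct and follows essentially the same route as the paper: extend $\zeta$ by tracking decorations through the value-preserving, position-by-position correspondence between the bounce word of $P$ and the area word of $\zeta(P)$ (the latter is an anagram of the former, obtained by projecting bounce labels onto the red and green paths and reading them off from the bounce points). The step you flag as the ``main obstacle'' is exactly the short observation the paper makes and is not a real gap --- reading the red path of $P$ downward from a bounce point, a red peak (a vertical step followed by a horizontal step) yields a barred letter immediately followed by the next, strictly larger, unbarred letter in the area word of $\zeta(P)$, i.e., a rise, and the decoration travels with the barred letter together with its value (note a small indexing slip in your write-up: the rise index is that of the barred letter $a_{j-1}$ opening the diagonal, not of $a_j$).
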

See Section~\ref{sec:original_polyo} for a proof of this theorem.

Essentially the same map translates into the setting of reduced parallelogram polyominoes.
\begin{theorem}
	%\label{th:redzetamap}
	For $m \geq 0$, $n \geq 0$ and $k \geq 0$, there is a bijection \[\bar{\zeta} \colon \RP(m , n)^{\circ k} \rightarrow \RP(m , n)^{\ast k}\] mapping the bistatistic $(\area, \bounce)$ to $(\dinv, \area)$.
\end{theorem}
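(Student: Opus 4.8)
\emph{Strategy.} This theorem is the reduced-polyomino counterpart of the preceding one, and the natural plan is to transport the map $\zeta$ of Section~\ref{sec:original_polyo} to reduced polyominoes. Concretely I would either conjugate $\zeta$ with the normalizing bijection $\mathsf{r}$ of Proposition~\ref{normalizing map}, or re-run the construction of $\zeta$ directly on the area words of reduced polyominoes, which (see Section~\ref{section:parallelogram-polyominoes} and Proposition~\ref{normalizing map}) are words of length $m+n+1$ over $\overline{\mathbb N}$ starting with an artificial $0$, with $m+1$ unbarred and $n$ barred letters, and which differ from the area words of $\PP(n+1,m+1)$ only by the boundary data that $\mathsf{r}$ trims off.

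\emph{What has to be checked.} Along the way one verifies: that $\mathsf{r}$ shifts not only $\area$ and $\bounce$ but also $\dinv$ by the constant $m+n+1$ (for $\dinv$ this is the equidistribution established in Section~\ref{sec:reduced_polyo}, realized by $\mathsf{r}$ because the reflection in its definition reverses the reading order of the area word, interchanging the ``successor to the right'' inversions of parallelogram polyominoes with the ``successor to the left'' inversions of reduced polyominoes); that the resulting $\bar\zeta$ lands in $\RP(m,n)$ and not in some transposed $\RP(n,m)$, which forces one to be careful with the two side-lengths exactly as in the parallelogram case; that $\bar\zeta$ restricts to a bijection from the $k$-fold decorated class onto the $k$-fold decorated class for every $k$; and finally that $\area(P)=\dinv(\bar\zeta(P))$ and $\bounce(P)=\area(\bar\zeta(P))$. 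This last identity is immediate once $\zeta$ is known to exchange $(\area,\bounce)$ with $(\dinv,\area)$ on parallelogram polyominoes and $\mathsf{r}$ is known to normalize all three statistics by the same amount: the three normalization constants cancel, which is precisely why no normalization constant appears in the statement.

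\emph{Main obstacle.} The genuinely delicate point is the compatibility with the decorations: one must check that the construction carries a decorated \emph{green peak} of a reduced polyomino, and not a green valley or a red feature, onto a decorated rise, and that this persists when several peaks are decorated simultaneously. Since a reflection can turn a peak into a valley and interchange the colours, matching the decorations of $\RP(m,n)^{\circ k}$ and $\RP(m,n)^{\ast k}$ with those of parallelogram polyominoes under $\mathsf{r}$ -- equivalently, pinning down the local behaviour of the sweep on the reduced area word -- is where the work lies; the rest, namely the size bookkeeping and the statistic identity, is a routine transcription of Section~\ref{sec:original_polyo}.
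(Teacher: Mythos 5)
Your instinct—transport $\zeta$ to the reduced setting—is the right one, but your two candidate routes are not equally viable, and the one whose details you sketch most carefully is the less robust.

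Conjugating by $\mathsf{r}$ does not land where you need it to. Since $\mathsf{r}\colon\PP(m,n)\to\RP(n-1,m-1)$ and $\zeta\colon\PP(n,m)\to\PP(m,n)$ \emph{both} transpose the side lengths, the composition $\mathsf{r}\circ\zeta\circ\mathsf{r}^{-1}$ takes $\RP(m,n)$ to $\RP(n,m)$, not to $\RP(m,n)$. To undo the transposition you would need a second normalizing map $\PP(m+1,n+1)^{\ast k}\to\RP(m,n)^{\ast k}$ that does not transpose and normalizes $(\dinv,\area)$; but in the paper (Proposition~\ref{prop:dinvarea}) that map is obtained as $\bar\zeta\circ\mathsf{r}\circ\zeta^{-1}$, i.e.\ it presupposes $\bar\zeta$. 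So the conjugation route is circular unless you separately construct the $\ast$-side normalization, which is of the same order of difficulty as constructing $\bar\zeta$ outright. Your remark that the side-length bookkeeping is ``exactly as in the parallelogram case'' is also backwards: $\zeta$ \emph{does} exchange height and width, whereas $\bar\zeta$ does \emph{not}, and the paper explicitly calls this out as a point of difference rather than of similarity.

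The paper's actual proof is your second route—running the sweep directly on area words of reduced polyominoes—but the modifications that make it work are precise and you do not name them. One reads the interlacing \emph{upwards} along each path (bottom to top, left to right) rather than downwards; this is exactly what is forced by the reversed definition of inversions on reduced polyominoes ($a_i$ is the successor of $a_j$ for $i<j$, rather than $a_j$ the successor of $a_i$). One then prepends the artificial $0$ to obtain the image area word. With those two changes, the fact that the interlaced word is a valid reduced area word follows not from the decoration analysis you foreground, but from the modified bouncing rule: on the reduced bounce path the step immediately after a bounce on the red side is horizontal and after a bounce on the green side is vertical, which guarantees each new block of the interlacing starts with the correct type of letter. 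The decoration transport is then comparatively mechanical: read upwards, a green peak contributes a barred letter followed immediately in the interlacing by its unbarred successor, hence a rise, in complete analogy with how $\zeta$ sends red peaks to rises.
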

See Section~\ref{sec:reduced_polyo} for a proof of this theorem.

Notice that this last theorem explains the equality \eqref{eq:RPswitch}.

We also found an interesting relation with the two car parking functions.

\begin{theorem}
	%	\label{th:polyominoesto2cpf}
	For $m \geq 0$, $n \geq 0$ and $k \geq 0$, there exists a bijection \[\phi \colon \RP(m , n)^{\ast k} \rightarrow \PF^2(m , n)^{\ast k}\] such that $(\dinv(P),\area(P)) = (\dinv(\phi(P)), \area(\phi(P)))$ for all $P\in \RP(m, n)^{\ast k}$.
\end{theorem}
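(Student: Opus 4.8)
The plan is to realise $\phi$ through the area-word descriptions of the two families. An element $P\in\RP(m,n)^{\ast k}$ is determined by its area word $a(P)=a_1(P)\cdots a_{m+n+1}(P)$, a word over $\overline{\mathbb N}=0<\bar 0<1<\bar 1<\cdots$ starting with $0$, together with the set of its $k$ decorated rises; forgetting the bars produces the area word $(\lvert a_1(P)\rvert,\dots,\lvert a_{m+n+1}(P)\rvert)$ of a Dyck path, while the positions of the barred letters constitute an extra datum, namely a $2$-colouring of the vertical steps. First I would check that the area-word axiom for $\overline{\mathbb N}$ forces this colouring to have a very rigid shape: since $\bar\ell$ is the successor of $\ell$ and $\ell+1$ the successor of $\bar\ell$, whenever the underlying area word rises (the value increases by one) the lower of the two steps must be barred and the upper unbarred; in particular there is no run of three increasing values, so the underlying Dyck path has all columns of height at most $2$, and in every height-$2$ column the colour of the two steps is forced. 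This is precisely the combinatorial content of a two car parking function (equivalently, via the relabelling of the subsection on two car parking functions, a two-shuffle parking function with the barred letters forming one label class and the unbarred letters the other). Transporting the $k$ decorated rises, this defines $\phi(P)\in\PF^2(m,n)^{\ast k}$; the construction is manifestly reversible (restore the bars from the label classes, then read off $P$), so $\phi$ is a bijection.

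It then remains to check that $\phi$ preserves $\area$ and $\dinv$. The area is immediate and term-by-term: $\lvert a_i(P)\rvert$ is the $i$-th letter of the area word of the underlying Dyck path and the decorated-rise sets correspond, so $\area(P)=\sum_{i\notin\DRise(P)}\lvert a_i(P)\rvert=\area(\phi(P))$. For the dinv I would argue that the pairs $i<j$ counted by $\dinv(P)$ --- those with $a_i(P)$ the $\overline{\mathbb N}$-successor of $a_j(P)$ --- split into two classes according to whether $\lvert a_i(P)\rvert=\lvert a_j(P)\rvert$ (in which case $a_j(P)$ is unbarred and $a_i(P)$ barred, i.e.\ steps $i$ and $j$ lie on the same diagonal in opposite label classes) or $\lvert a_i(P)\rvert=\lvert a_j(P)\rvert+1$ (in which case $a_j(P)$ is barred, $a_i(P)$ unbarred, and step $i$ lies one diagonal above step $j$). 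Using that two cells on the same diagonal are read bottom-to-top by the dinv reading word and that within each label class the labels of $\phi(P)$ appear in strictly decreasing order along that word, one checks that the first class is exactly the set of primary diagonal inversions of $\phi(P)$ and the second exactly the set of secondary ones; hence $\dinv(P)=\dinv(\phi(P))$.

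The main obstacle is this last verification: one has to pin down the correct dictionary between bar type and label class (and dispose of the harmless bookkeeping around the extra vertical step created in the passage to a Dyck path of size $m+n+1$ and around the admissible range of rise indices), and then match, bar type by bar type, the $\overline{\mathbb N}$-successor relation on $a(P)$ with the primary/secondary split of the diagonal inversions of $\phi(P)$. As a less explicit alternative, one could instead prove the identity $\sum_{P\in\RP(m,n)^{\ast k}}q^{\dinv(P)}t^{\area(P)}=\sum_{Q\in\PF^2(m,n)^{\ast k}}q^{\dinv(Q)}t^{\area(Q)}$ by showing that the right-hand side, suitably refined by the number of zeros in the area word, obeys the same recursion and initial conditions as $\RP_{q,t}(m\backslash r,n)^{\ast k}$ (matching it with the two car parking function recursion, Theorem~\ref{th:recursion2cpf}); since for finite families the coincidence of the bivariate $(\dinv,\area)$-enumerators is equivalent to the existence of a bijection preserving both statistics, this already yields the theorem, albeit without exhibiting $\phi$.
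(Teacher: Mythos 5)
Your approach is exactly the paper's: drop the initial artificial $0$ from the area word of $P$, forget the bars to obtain the area word of a Dyck path, label the $i$-th north step $1$ if $a_i(P)$ was barred and $2$ if unbarred, carry over the decorations, and observe that the $\overline{\mathbb{N}}$-successor relation defining inversions of $P$ translates exactly into the primary/secondary diagonal inversions of $\phi(P)$. The items you flag as ``to check'' are immediate (the reading-word/shuffle detour is unnecessary; just unpack $d_i$), and the dictionary barred $\to 1$, unbarred $\to 2$ is forced by the column-increasing condition on labels, since on a rise of the underlying Dyck path the lower letter of $a(P)$ must be barred and the upper unbarred.
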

See Section~\ref{sec:2cars_PF} for a proof of this theorem.

This theorem (suitably interpreted) suggested the following identity.
\begin{theorem}
	%	\label{thm:magic_equality}
	Let $m,n,k\in \mathbb{N}$, $m\geq 0$, $n\geq 0$ and $n\geq k\geq 0$. Then
	\begin{align}
		\< \Delta_{h_m} e_{n+1}, s_{k+1,1^{n-k}} \> =  \< \Delta_{e_{m+n-k-1}}' e_{m+n}, h_m h_n \>.
	\end{align}
	In particular
	\begin{align}
		\< \Delta_{h_m} e_{n+1}, s_{k+1,1^{n-k}} \> =  \< \Delta_{h_{n}} e_{m+1}, s_{k+1,1^{m-k}} \>.
	\end{align}
\end{theorem}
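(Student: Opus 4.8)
The plan is to prove the main identity directly on the symmetric function side, by expanding both members in the modified Macdonald basis, and then to obtain the ``in particular'' part for free from the symmetry of the right hand side. Set $M=(1-q)(1-t)$, $\Pi_\mu=\prod_{c\in\mu,\,c\neq(0,0)}\bigl(1-q^{a_\mu'(c)}t^{l_\mu'(c)}\bigr)$, $w_\mu=\prod_{c\in\mu}\bigl(q^{a_\mu(c)}-t^{l_\mu(c)+1}\bigr)\bigl(t^{l_\mu(c)}-q^{a_\mu(c)+1}\bigr)$, and recall the standard expansion $e_N=\sum_{\mu\vdash N}\frac{M\,B_\mu\,\Pi_\mu}{w_\mu}\,\widetilde{H}_\mu$ (see \cite{Haglund-Book-2008}). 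Using $\langle\widetilde{H}_\mu,s_\lambda\rangle=\widetilde{K}_{\lambda\mu}(q,t)$, applying $\Delta_{h_m}$ to $e_{n+1}$ and pairing with $s_{k+1,1^{n-k}}$ turns the left hand side into
\[
\mathrm{LHS}=\sum_{\mu\vdash n+1}\frac{M\,B_\mu\,\Pi_\mu}{w_\mu}\;h_m[B_\mu]\;\widetilde{K}_{(k+1,1^{n-k}),\,\mu}(q,t),
\]
and applying $\Delta'_{e_{m+n-k-1}}$ to $e_{m+n}$ and pairing with $h_mh_n$ turns the right hand side into
\[
\mathrm{RHS}=\sum_{\nu\vdash m+n}\frac{M\,B_\nu\,\Pi_\nu}{w_\nu}\;e_{m+n-k-1}[B_\nu-1]\;\langle\widetilde{H}_\nu,\,h_mh_n\rangle .
\]

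The next step is to make the two genuinely nonelementary factors explicit and to package the parameters $m,k$ into auxiliary variables $w,z$. For the hook Macdonald--Kostka numbers, a standard Pieri/Jacobi--Trudi computation with hook Schur functions produces a symmetric function $G_z$ with $\langle\widetilde{H}_\mu,G_z\rangle=\sum_{k}\widetilde{K}_{(k+1,1^{n-k}),\mu}\,z^{\,n-k}$; combined with $\sum_{m}h_m[B_\mu]\,w^m=\prod_{c\in\mu}\bigl(1-w\,q^{a_\mu'(c)}t^{l_\mu'(c)}\bigr)^{-1}$, this re-expresses the generating function over $m,k$ of $\mathrm{LHS}$ as a sum over $\mu\vdash n+1$ of products over the cells of $\mu$. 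On the right I would use the specialization $\widetilde{H}_\nu[1-u]=\prod_{c\in\nu}\bigl(1-u\,q^{a_\nu'(c)}t^{l_\nu'(c)}\bigr)$ together with a Cauchy/reproducing-kernel manipulation for the modified Macdonald polynomials to evaluate the $\langle\widetilde{H}_\nu,h_mh_n\rangle$-contributions and their generating function over $m$; since $\sum_{j}e_j[B_\nu-1]\,w^j=\prod_{c\in\nu,\,c\neq(0,0)}\bigl(1+w\,q^{a_\nu'(c)}t^{l_\nu'(c)}\bigr)$ is driven by the same cell data, $\mathrm{RHS}$ likewise becomes a sum over $\nu\vdash m+n$ of products over the cells of $\nu$, now with the shift $B_\nu\rightsquigarrow B_\nu-1$.

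The hard part will be matching these two cell-indexed expressions, and this is precisely where the new identity in the theory of Macdonald polynomials — the one extending Haglund's theorem of \cite{Haglund-Schroeder-2004}, which is the paper's central technical tool — does the work: it equates a sum over $\mu\vdash n+1$ with a sum over $\nu\vdash m+n$ in which the single extra cell on the larger partition absorbs both the size $n+1$ of the hook and the discrepancy between the \emph{unprimed} $h_m[B_\mu]$ (full $B_\mu$) on the left and the \emph{primed} $e_{m+n-k-1}[B_\nu-1]$ (corner cell removed) on the right. Matching degrees in the auxiliary variables pins the parameter $m$ on one side against $k$ on the other, and extracting the coefficient of $z^{\,n-k}w^{\,m}$ gives the identity. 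An alternative, probably cleaner in practice, is to deduce the identity by combining the decorated $q,t$-Narayana theorem, which identifies $\mathrm{LHS}$ with $\sum_{P\in\RP(m,n)^{\ast k}}q^{\dinv(P)}t^{\area(P)}$, the bijection $\phi\colon\RP(m,n)^{\ast k}\to\PF^2(m,n)^{\ast k}$, and the $\langle\,\cdot\,,h_mh_n\rangle$ case of the Delta conjecture for $\mathrm{RHS}$; the nontrivial content of that case is again a recursion matching (the combinatorial recursion for two car parking functions against a symmetric function recursion) that rests on the same Macdonald identity.

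Finally, the ``in particular'' statement is then immediate: $\mathrm{RHS}=\langle\Delta'_{e_{m+n-k-1}}e_{m+n},h_mh_n\rangle$ is manifestly invariant under $m\leftrightarrow n$, whence $\langle\Delta_{h_m}e_{n+1},s_{k+1,1^{n-k}}\rangle=\langle\Delta_{h_n}e_{m+1},s_{k+1,1^{m-k}}\rangle$, with the convention that both sides vanish when $m<k$, so that $(k+1,1^{m-k})$ fails to be a partition (a consistency one checks directly on the expansions above, e.g. $\Delta'_{e_{m+n-k-1}}=0$ when $m+n-k-1<0$).
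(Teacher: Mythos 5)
Your proposal has a genuine gap, and it also badly overshoots the actual difficulty of the result. The paper's proof of this theorem is a four-line chain of two completely elementary identities: the adjointness $\langle\Delta_{e_d}f,h_N\rangle=\langle f,e_d h_{N-d}\rangle$ (an immediate consequence of $\langle\widetilde H_\mu,e_dh_{N-d}\rangle=e_d[B_\mu]$), and Haglund's Lemma $\langle\Delta_{e_{d-1}}e_N,f\rangle=\langle\Delta_{\omega f}e_d,h_d\rangle$. Applying these alternately gives
\[
\langle\Delta_{h_m}e_{n+1},h_{k+1}e_{n-k}\rangle
=\langle\Delta_{h_me_{n-k}}e_{n+1},h_{n+1}\rangle
=\langle\Delta_{e_n}e_{m+n-k},e_mh_{n-k}\rangle
=\langle\Delta_{e_ne_m}e_{m+n-k},h_{m+n-k}\rangle
=\langle\Delta_{e_{m+n-k-1}}e_{m+n},h_mh_n\rangle,
\]
and the theorem then follows from the Pieri decomposition $h_{k+1}e_{n-k}=s_{k+2,1^{n-k-1}}+s_{k+1,1^{n-k}}$ and the alternating-sum relation between $\Delta_{e_j}$ and $\Delta'_{e_{j-1}}$. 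No Macdonald-basis expansion and no summation formula are needed.

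Your first route never carries out the crucial step. After writing the two sides as Macdonald-basis sums over $\mu\vdash n+1$ and $\nu\vdash m+n$ respectively, you say the matching ``is precisely where the new identity\ldots does the work'' and that it ``equates a sum over $\mu\vdash n+1$ with a sum over $\nu\vdash m+n$.'' But the summation formula \eqref{eq:mastereq} is a one-alphabet identity re-expanding $\sum_{\gamma\vdash m}\frac{\widetilde H_\gamma}{w_\gamma}h_k[(1-t)B_\gamma]e_\ell[B_\gamma]$ into products $h_\bullet[X/(1-q)]h_\bullet[X/M]e_\bullet[X/M]$; it does not on its own bridge partitions of different sizes. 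The actual bridging mechanism in the paper is Haglund's Lemma (or, in the recursions of Chapter 4, Macdonald reciprocity combined with the summation formula), and you have not indicated how your generating-function bookkeeping in $w$ and $z$ would be made to agree across the size jump $n+1\leftrightarrow m+n$, where the variable $w^m$ on the left sits inside a single sum over $\mu\vdash n+1$ but on the right changes the partition size of $\nu$. That is exactly the obstruction this theorem is designed to dissolve, and the sketch assumes it away.

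Your second, ``probably cleaner'' route is circular: the decorated $q,t$-Narayana theorem and the $\langle\cdot,h_mh_n\rangle$ case of the Delta conjecture are derived in the paper \emph{from} this theorem (via Theorem~\ref{thm:SF_sum} and Corollary~\ref{cor:SF_sum}, both of which invoke \eqref{eq:symmetry_id} and \eqref{eq:connection_id}), so they cannot be used to prove it. The ``in particular'' part you get right by symmetry of the right-hand side, but the aside about vanishing when $m<k$ is off: the issue there is that $(k+1,1^{m-k})$ is not a partition, not that $\Delta'_{e_{m+n-k-1}}=0$ (which typically it is not in that regime).
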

See Section~\ref{sec:2more_thrms} for a proof of this theorem.

All the previous theorems, combined, lead to the following theorem.
\begin{theorem}\label{thm: two shuffle}
	For $m, n, k \in \mathbb{N}$, $0 \leq k \leq m$, we have
	\begin{equation}
		\< \Delta_{e_{m+n-k-1}}' e_{m+n}, h_m h_n \> = \sum_{D \in \PF^2(m,n)^{\ast k}} q^{\dinv(D)} t^{\area(D)} .
	\end{equation}
\end{theorem}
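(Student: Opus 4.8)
The strategy for Theorem~\ref{thm: two shuffle} is to read the claimed identity from right to left and concatenate three results already established in Section~\ref{subsec:two-shuffle}: the statistic-preserving bijection $\phi$, the decorated $q,t$-Narayana theorem, and the symmetric function identity $\langle \Delta_{h_m} e_{n+1}, s_{k+1,1^{n-k}}\rangle = \langle \Delta'_{e_{m+n-k-1}} e_{m+n}, h_m h_n\rangle$. No new combinatorics or symmetric function manipulation is required: the theorem is simply the composition of these three.

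Concretely, I would first invoke the bijection $\phi \colon \RP(m,n)^{\ast k} \to \PF^2(m,n)^{\ast k}$, which preserves the pair $(\dinv,\area)$; summing $q^{\dinv}t^{\area}$ over its domain and codomain gives
\[
\sum_{D \in \PF^2(m,n)^{\ast k}} q^{\dinv(D)} t^{\area(D)} \;=\; \sum_{P \in \RP(m,n)^{\ast k}} q^{\dinv(P)} t^{\area(P)} .
\]
Next, the decorated $q,t$-Narayana theorem (the first theorem of Section~\ref{subsec:two-shuffle}), valid for $0 \le k \le m$, rewrites the right-hand enumerator as
\[
\sum_{P \in \RP(m,n)^{\ast k}} q^{\dinv(P)} t^{\area(P)} \;=\; \langle \Delta_{h_m} e_{n+1},\, s_{k+1,1^{n-k}} \rangle .
\]
Finally, applying the symmetric function identity $\langle \Delta_{h_m} e_{n+1}, s_{k+1,1^{n-k}} \rangle = \langle \Delta'_{e_{m+n-k-1}} e_{m+n}, h_m h_n \rangle$ and chaining the three displayed equalities yields exactly the statement.

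The one point requiring care is the range of $k$: the symmetric function identity was stated for $0 \le k \le n$, while Theorem~\ref{thm: two shuffle} allows $0 \le k \le m$, so the regime $n < k \le m$ must be disposed of separately. There both sides vanish: a reduced polyomino of size $m \times n$ admits at most $\min(m,n) = n$ rises (each rise is a transition $\bar{j} \to j{+}1$ in the area word, whose source is one of the $n$ barred positions), hence $\RP(m,n)^{\ast k} = \varnothing$ and, via $\phi$, also $\PF^2(m,n)^{\ast k} = \varnothing$; and $\langle \Delta'_{e_{m+n-k-1}} e_{m+n}, h_m h_n \rangle = 0$ in this range, which one checks directly (e.g.\ $\langle \,\cdot\,, h_m h_n\rangle$ extracts a monomial-basis coefficient of a shape that does not occur in $\Delta'_{e_{m+n-k-1}} e_{m+n}$ once $k$ is this large). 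Apart from this elementary bookkeeping there is no obstacle in the present theorem: all the mathematical weight lies in the three ingredients --- the recursion behind the decorated $q,t$-Narayana formula, the construction of $\phi$, and the proof of the symmetric function identity --- each of which is treated in its own section of Part~\ref{part: proofs}, and that is where I would concentrate the effort.
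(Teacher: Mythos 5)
Your proposal is correct and follows essentially the same route as the paper: both rely on the statistic-preserving bijection $\phi$, the recursion-proved Theorem~\ref{thm:main_qtenumerator}, and Theorems~\ref{thm:SF_sum} and~\ref{thm:magic_equality} (the paper bundles the last two as Corollary~\ref{cor:SF_sum} and applies it to the refined polynomials before summing over $r$, while you sum first and pass through $\< \Delta_{h_m} e_{n+1}, s_{k+1,1^{n-k}} \>$ explicitly via the decorated $q,t$-Narayana theorem, but this is the same computation reorganized). One small repair: the justification you give for $\< \Delta_{e_{m+n-k-1}}' e_{m+n}, h_m h_n \> = 0$ when $n < k \le m$ is not a valid argument as stated; instead expand $\Delta_{e_{m+n-k-1}}'=\sum_{u\ge 0}(-1)^u\Delta_{e_{m+n-k-1-u}}$ and apply Lemma~\ref{lem:Haglund} to rewrite each term as $\< \Delta_{e_m e_n} e_{m+n-k-u}, h_{m+n-k-u} \>$, which vanishes because $m>m+n-k-u$ forces $\Delta_{e_m}=0$ on $\Lambda^{(m+n-k-u)}$.
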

See Section~\ref{sec:2shuffle_delta} for a proof of this theorem.

The previous theorem is indeed the case $\< \cdot , h_m h_n \>$ of the Delta conjecture, once we ``standardize'' the two car parking functions (see the next section for more details). This proves the remaining half of Problem~8.1 in \cite{Haglund-Remmel-Wilson-2015}, by proving Conjecture 5.2.2.1 in \cite{Wilson-PhD-2015} (the other half of Problem~8.1 is solved in Section~\ref{subsec:qtSchroeder}). See Section~\ref{sec:reduced_polyo} for the recursion giving in fact a refinement of this theorem.

\section{Links with the Delta conjecture} \label{sec:link_delta}

In this section, we explain how the results of the previous two sections are special cases of the Delta conjecture. In order to do this we will use the theory of \emph{shuffles} from \cite{HHLRU-2005} (see also \cite{Haglund-Book-2008}*{Chapter~6}).

We recall that the rise version of the Delta conjecture predicts that 
\begin{equation}\label{eq: delta rise}
	\Delta'_{e_{n-k-1}}e_n= \sum_{D\in \LDD^{\ast k}(n)}q^{\dinv(D)}t^{\area(D)}x^D. 
\end{equation}

The labels of a labelled Dyck path are not required to be distinct. 

\begin{definition}
	A \emph{parking decorated Dyck path} is a labelled decorated Dyck path  whose labels are exactly the numbers from $1$ to $n$.  
\end{definition}

We will describe a way of relabelling a labelled decorated Dyck path into a parking decorated Dyck path, without changing the dinv and area statistics.  

Recall that, given a labelled Dyck path $D$, its dinv reading word is obtained by reading the labels of the path along the diagonals, from left to right, starting with the main diagonal.

\begin{definition} The \emph{reverse dinv reading word} is the word obtained by reading the dinv reading word from right to left. For $D\in \LDD^{\ast a}(n)$ we denote by $r(D)$ its reverse dinv reading word. 
\end{definition}

For example, the reverse dinv reading word of the left path in Figure~\ref{fig: standardisation} is $35131322$.

\begin{definition}
	Given an ordered set of distinct integers $(a_1,...,a_l)$ with $1\leq a_i\leq n$ and $\sigma\in \mathfrak{S}_n$, we say that $\sigma$ is an  \emph{$(a_1,...,a_l)$-shuffle} if the relative order of the $a_i$ is preserved in $\sigma$. In other words, $a_i$ occurs before $a_{i+1}$ in $\sigma_1\dots \sigma_n$. Given a composition $\mu$ of $n$, we say that $\sigma\in \mathfrak{S}_n$ is a \emph{$\mu$-shuffle} if it is a shuffle of $(1,2,\dots,\mu_1)$, $(\mu_1+1, \dots, \mu_1+\mu_2)$, $\dots$. 
\end{definition}

For example, if $\mu=(2,2)$, then the $\mu$-shuffles are 
\begin{center}
	\begin{tabular}{cccccc}
		1234&1324& 3124&3142&3412&1342.	
	\end{tabular}
\end{center}

\begin{definition}
	Let  $D\in \LDD^{\ast a}(n)$ be such that its labels consist of $\mu_1$ $1$'s, $\mu_2$ $2$'s and so forth. As such, $\mu=(\mu_1,\mu_2,\dots)$ is a weak composition of $n$. We define  the \emph{standardization} $s(D)$ of $D$ by keeping the same Dyck path and replacing the $1$'s by $1,2, \dots \mu_1 $, the $2$'s by $\mu_1+1, \dots , \mu_1+\mu_2$, and so on, such that the reverse dinv reading word of $s(D)$ is a $\mu$-shuffle. 
\end{definition}

See Figure~\ref{fig: standardisation} for an example. It is not hard   to see that $\dinv(\sigma(D))=\dinv(D)$. And of course, since the area does not depend on the labelling, we have also $\area(\sigma(D))=\area(D)$. 

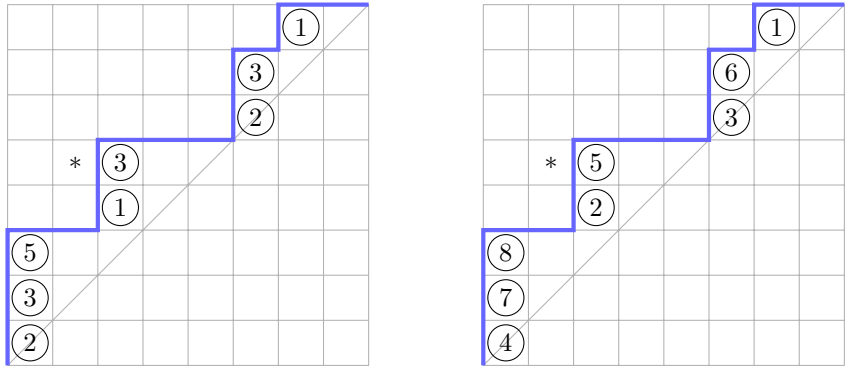
\begin{figure}[!ht]
	\centering
	\begin{minipage}{.5 \textwidth}
		\centering
		\begin{tikzpicture}[scale=0.6]
		\draw[gray!60, thin] (0,0) grid (8,8) (0,0) -- (8,8);
		\draw[blue!60, line width = 1.6pt] (0,0)|-(2,3)|-(5,5)|-(6,7)|-(8,8);
		
		\draw
		(0.5,0.5) circle(0.4 cm) node {$2$}
		(0.5,1.5) circle(0.4 cm) node {$3$}
		(0.5,2.5) circle(0.4 cm) node {$5$}
		(2.5,3.5) circle(0.4 cm) node {$1$}
		(2.5,4.5) circle(0.4 cm) node {$3$}
		(5.5,5.5) circle(0.4 cm) node {$2$}
		(5.5,6.5) circle(0.4 cm) node {$3$}
		(6.5,7.5) circle(0.4 cm) node {$1$};
		\draw (1.5,4.5) node {$\ast$};
		\end{tikzpicture}
	\end{minipage}%
	\begin{minipage}{.5 \textwidth}
		\centering
		\begin{tikzpicture}[scale=0.6]
		\draw[gray!60, thin] (0,0) grid (8,8) (0,0) -- (8,8);
		\draw[blue!60, line width = 1.6pt] (0,0)|-(2,3)|-(5,5)|-(6,7)|-(8,8);
		
		\draw
		(0.5,0.5) circle(0.4 cm) node {$4$}
		(0.5,1.5) circle(0.4 cm) node {$7$}
		(0.5,2.5) circle(0.4 cm) node {$8$}
		(2.5,3.5) circle(0.4 cm) node {$2$}
		(2.5,4.5) circle(0.4 cm) node {$5$}
		(5.5,5.5) circle(0.4 cm) node {$3$}
		(5.5,6.5) circle(0.4 cm) node {$6$}
		(6.5,7.5) circle(0.4 cm) node {$1$};
		\draw (1.5,4.5) node {$\ast$};
		\end{tikzpicture}
	\end{minipage}
	\caption{Standardization}
	\label{fig: standardisation}
\end{figure}

\begin{proposition}\label{prop: scal h}
	The rise version of the Delta conjecture (equation (\ref{eq: delta rise})) is equivalent to \[ \<\Delta'_{e_{n-k-1}}e_n,h_{\mu}\>=\sum_{\substack{D\in \LDD(n)^{\ast k }\\ r(D) \text{  is a } \mu-\text{shuffle}}}q^{\dinv(D)}t^{\area(D)} \] for all partitions $\mu$ of $n$.  
\end{proposition}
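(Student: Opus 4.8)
\emph{Plan.} Write $C_{n,k}(x;q,t):=\sum_{D\in\LDD^{\ast k}(n)}q^{\dinv(D)}t^{\area(D)}x^{D}$ for the right-hand side of \eqref{eq: delta rise}, and $F:=\Delta'_{e_{n-k-1}}e_n$. The argument is the classical one: a symmetric function $G$ of degree $n$ is determined by the numbers $\<G,h_{\mu}\>$, $\mu\vdash n$, and for such a $G$ the pairing $\<G,h_{\mu}\>$ equals the coefficient of the monomial $x_1^{\mu_1}x_2^{\mu_2}\cdots$ in $G$ (since $\<m_{\lambda},h_{\mu}\>=\delta_{\lambda\mu}$ for partitions, and $m_{\mu}$ is the only monomial symmetric function containing $x_1^{\mu_1}x_2^{\mu_2}\cdots$). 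So I would extract this monomial coefficient from $C_{n,k}$ combinatorially and match it to the right-hand side of the displayed formula using the standardization bijection $s$ introduced just before the statement.

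\emph{Extracting the monomial coefficient.} Since $x^{D}=x_1^{c_1(D)}x_2^{c_2(D)}\cdots$, where $c_i(D)$ counts the labels of $D$ equal to $i$, the coefficient of $x_1^{\mu_1}x_2^{\mu_2}\cdots$ in $C_{n,k}$ equals $\sum q^{\dinv(D)}t^{\area(D)}$ over all $D\in\LDD^{\ast k}(n)$ whose labels consist of $\mu_1$ ones, $\mu_2$ twos, and so on. The map $s$ is a bijection from this set onto the set of parking decorated Dyck paths $D'$ (with the given $k$ decorated rises) such that $r(D')$ is a $\mu$-shuffle, and it preserves $\dinv$ and $\area$ (the area ignores the labels, and $\dinv(s(D))=\dinv(D)$ as recalled after its definition; the $\mu$-shuffle condition on $r(D')$ is exactly what makes de-standardization keep the labels strictly increasing in each column, so the map is genuinely invertible). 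Hence, as a formal identity not using the Delta conjecture,
\[
[x_1^{\mu_1}x_2^{\mu_2}\cdots]\,C_{n,k}(x;q,t)=\sum_{\substack{D'\in\LDD(n)^{\ast k}\\ r(D')\text{ a }\mu\text{-shuffle}}}q^{\dinv(D')}t^{\area(D')}.
\]

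\emph{The equivalence.} If \eqref{eq: delta rise} holds then $C_{n,k}=F$ is symmetric, so $\<F,h_{\mu}\>=[x_1^{\mu_1}x_2^{\mu_2}\cdots]\,C_{n,k}$, which by the previous step is the claimed right-hand side; this gives one direction. Conversely, assume the displayed formula for all $\mu\vdash n$. Then $[x_1^{\mu_1}x_2^{\mu_2}\cdots]\,C_{n,k}=\<F,h_{\mu}\>=[x_1^{\mu_1}x_2^{\mu_2}\cdots]\,F$ for every partition $\mu$, so $C_{n,k}$ and $F$ agree on every monomial $x_1^{\mu_1}x_2^{\mu_2}\cdots$. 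Since $F$ is symmetric, this yields $C_{n,k}=F$ once one knows $C_{n,k}$ is symmetric as well.

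\emph{Main point to be handled.} The converse uses the symmetry of $C_{n,k}$, which is independent of the Delta conjecture: for a fixed underlying Dyck path with fixed decorations, $\area$ is constant and the sum of $q^{\dinv}$ times a suitable Gessel fundamental $Q$ over all parking labellings is a (product of) LLT polynomials, hence symmetric, and $C_{n,k}$ is a sum of such terms — the same reasoning as for the shuffle conjecture, carried out in the decorated case in \cite{Haglund-Remmel-Wilson-2015}. Apart from citing this, the only thing requiring attention is the bookkeeping of conventions, namely that $r(s(D))$ being a $\mu$-shuffle is precisely the condition cutting out the image of the content-$\mu$ paths and that $s$ preserves $(\dinv,\area)$ on each content class; both are immediate from the construction of $s$. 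I expect this routine matching of conventions, together with the invocation of symmetry, to be the only content beyond the formal computation above.
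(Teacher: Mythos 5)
Your argument is essentially the paper's: extract the coefficient of $x_1^{\mu_1}x_2^{\mu_2}\cdots$ from the combinatorial side, transport it via the standardization bijection to the $\mu$-shuffle sum, and identify the corresponding coefficient of $\Delta'_{e_{n-k-1}}e_n$ with $\<\Delta'_{e_{n-k-1}}e_n,h_\mu\>$ using symmetry of the symmetric-function side. The one point you spell out that the paper elides---that the converse direction needs the combinatorial sum $C_{n,k}$ to be symmetric \emph{a priori}, which is supplied independently of the conjecture by the LLT/superization theory as in \cite{Haglund-Remmel-Wilson-2015}---is correct and a genuine (if standard) observation, but it does not alter the shape of the argument.
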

\begin{proof}
	Take $\mu=(\mu_1,\mu_2,\dots)$ a partition of $n$. The coefficient of $x_1^{\mu_1}x_2^{\mu_2}\dots $ on the right hand side of (\ref{eq: delta rise}) is the sum of $q^{\dinv(D)}t^{\area(D)}$, where $D$ are all the paths in $\LDD(n)^{\ast k }$ whose labels are $\mu_1$ $1$'s, $\mu_2$ $2$'s, etc. Replacing each of these paths by its standardization, this coefficient is exactly equal to \[\sum_{\substack{D\in \LDD(n)^{\ast k }\\ r(D) \text{  is a } \mu-\text{shuffle}}}q^{\dinv(D)}t^{\area(D)},\] indeed no two differently labelled (decorated) Dyck paths with the same set of labels have the same standardization. Since $\Delta'_{e_{n-k-1}}e_n$ is symmetric it follows that the rise version of the Delta conjecture holds if and only if \[\Delta'_{e_{n-k-1}}{e_n}_{\big |_{ m_\mu}}=\sum_{\substack{D\in \LDD(n)^{\ast k }\\ r(D) \text{  is a } \mu-\text{shuffle}}}q^{\dinv(D)}t^{\area(D)},\] for all $\mu$ partition of $n$ and so the proposition follows because \[\Delta'_{e_{n-k-1}}{e_n}_{\big |_{ m_\mu}}= \< \Delta'_{e_{n-k-1}} e_n, h_\mu\>.\]
\end{proof}

\begin{corollary}
	The first equation in Theorem~\ref{thm: two shuffle} is the case $\<\cdot,h_mh_n\>$ of the Delta conjecture.  
\end{corollary}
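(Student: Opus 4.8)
The plan is to read the statement off from Proposition~\ref{prop: scal h} together with the relabelling of two car parking functions recalled in the ``Two car parking functions'' subsection. First I would fix $m,n,k\in\mathbb{N}$ with $0\le k\le m$ and let $\mu$ be the partition of $m+n$ whose parts are $m$ and $n$, so that $h_m h_n=h_\mu$. By Proposition~\ref{prop: scal h}, the case $\langle\,\cdot\,,h_m h_n\rangle$ of the rise version of the Delta conjecture is, by definition, the identity \[ \langle \Delta'_{e_{m+n-k-1}} e_{m+n},\,h_m h_n\rangle \;=\; \sum_{\substack{D\in\LDD^{\ast k}(m+n)\\ r(D)\text{ is a }\mu\text{-shuffle}}} q^{\dinv(D)} t^{\area(D)}. \] So it suffices to identify the right-hand side of this identity with $\sum_{D\in\PF^2(m,n)^{\ast k}}q^{\dinv(D)}t^{\area(D)}$, the right-hand side of the first equation in Theorem~\ref{thm: two shuffle}.

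For that identification I would argue as follows. A labelled decorated Dyck path $D\in\LDD^{\ast k}(m+n)$ whose reverse dinv reading word $r(D)$ is a $\mu$-shuffle is exactly a path whose dinv reading word is a shuffle of the two \emph{decreasing} sequences obtained by reversing the two increasing blocks of $r(D)$: reversing $r(D)$ returns the dinv reading word and turns the increasing blocks into decreasing ones. These are precisely the two-shuffle parking functions with $k$ decorated rises. By the relabelling recalled in the two car parking functions subsection --- which sends a two car parking function with $m$ labels of one value and $n$ of the other to the parking function whose dinv reading word is a shuffle of two decreasing sequences of lengths $m$ and $n$, and which is a bijection preserving the bistatistic $(\dinv,\area)$ --- the set $\PF^2(m,n)^{\ast k}$ is in $(\dinv,\area)$-preserving bijection with $\{D\in\LDD^{\ast k}(m+n):r(D)\text{ is a }\mu\text{-shuffle}\}$. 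Here one uses that the relabelling leaves the underlying path and its decorated rises untouched and that $\dinv$ depends only on the relative order of the labels, so the actual values assigned to the two blocks are immaterial. Combining this with the display above yields exactly the first equation of Theorem~\ref{thm: two shuffle}, which is therefore the $\langle\,\cdot\,,h_m h_n\rangle$ case of the Delta conjecture.

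The only delicate point --- and the main, admittedly minor, obstacle --- is matching conventions: one must check that the lengths of the two blocks of the $\mu$-shuffle line up with the two label multiplicities $m$ and $n$ defining $\PF^2(m,n)$ (using, if needed, that $h_m h_n$ and hence the whole statement is symmetric in $m$ and $n$ by the symmetry established earlier in Section~\ref{subsec:two-shuffle}), and that the passage from $r(D)$ to the dinv reading word is exactly the reversal that converts the increasing blocks of a $\mu$-shuffle into the decreasing sequences used to relabel two car parking functions. Both are pure bookkeeping, but they must be spelled out so that what is identified is genuinely the asserted case of the conjecture rather than a neighbouring statement.
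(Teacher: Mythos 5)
Your argument is correct and follows the same route as the paper: apply Proposition~\ref{prop: scal h} with $\mu=(m,n)$ and then identify the $\mu$-shuffle sum with the two car parking function sum via standardization. You are in fact a bit more careful than the paper's own one-line proof, which silently absorbs the $(m,n)$-versus-$(n,m)$ block-length convention you flag; invoking $h_mh_n=h_nh_m$ as you do is exactly the right way to close that gap.
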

\begin{proof}
	The result referred to is \[ \< \Delta_{e_{m+n-k-1}}' e_{m+n}, h_m h_n \>  = \sum_{D \in \PF^2(m,n)^{\ast k}} q^{\dinv(D)} t^{\area(D)}. \] So setting $\mu=(m,n)$ and doing the change of variables $n\rightarrow n+m$ in Proposition~\ref{prop: scal h} yields 
	\[ \<\Delta_{e_{m+n-k-1}}'e_{m+n},h_{m}h_{n}\>=\sum_{\substack{D\in \LDD(n)^{\ast k }\\ r(D) \text{  is a } (m,n)-\text{shuffle}}}q^{\dinv(D)}t^{\area(D)}, \] because $h_{(m,n)}=h_mh_n$ by definition. The right hand sides of these equations coincide because standardizing the paths in the former yields the latter without changing the statistics.
\end{proof}

\begin{definition}
	Let $\mu$ and $\nu$ be partitions such that $|\mu|+|\nu|=n$, then $\sigma\in \mathfrak{S}_n$ is a \emph{$\mu,\nu$-shuffle} if it is a shuffle of the decreasing sequences $(\nu_1,\dots, 2,1)$, $(\nu_1+\nu_2, \dots, \nu_1),\dots\,$ and the increasing sequences $(|\nu|+1,\dots, |\nu|+\mu_1)$, $(|\nu|+\mu_1+1, \dots, |\nu| +\mu_1+\mu_2),\dots$.   
\end{definition}
To link our $q,t$-Schröder result to the Delta conjecture we use \cite{Haglund-Book-2008}*{Theorem 6.10}:

\begin{theorem}
	\label{thm: general eh}
	Let $f[Z]$ be a symmetric function of homogeneous degree $n$ and $c(\sigma)$ a family of constants. If \[\<f,h_\mu\>=\sum_{\substack{\sigma\in \mathfrak{S}_n \\ \sigma \text{ is a $\mu$-shuffle}}}c(\sigma)\] holds for all compositions $\mu$ of $n$ then
	\[\<f,e_\nu h_\mu\>=\sum_{\substack{\sigma\in \mathfrak{S}_n \\ \sigma \text{ is a $\mu,\nu$-shuffle}}}c(\sigma)\] holds for all pairs of compositions $\mu$, $\nu$.   
\end{theorem}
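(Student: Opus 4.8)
The plan is to establish this via Gessel's fundamental quasisymmetric functions and the operation of \emph{superization} (evaluation on a signed alphabet $X+\bar X$ followed by applying $\omega$ in the $\bar X$ variables only). This is, essentially, the proof of \cite{Haglund-Book-2008}*{Theorem~6.10}, and it proceeds in three stages: convert the hypothesis into a $Q$-expansion of $f$; record the ``coefficient-extraction'' descriptions of $\langle\,\cdot\,,h_\mu\rangle$ and $\langle\,\cdot\,,e_\nu h_\mu\rangle$; and match the resulting super-coefficients of the $Q_{S,n}$ against the $\mu,\nu$-shuffle count.

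For a composition $\mu\models n$ write $\mathrm{set}(\mu)=\{\mu_1,\mu_1+\mu_2,\dots,\mu_1+\cdots+\mu_{\ell(\mu)-1}\}\subseteq[n-1]$. First I would record two elementary facts. (a) Straight from the definition, the coefficient of $x_1^{\mu_1}x_2^{\mu_2}\cdots$ in $Q_{S,n}$ equals $\chi(S\subseteq\mathrm{set}(\mu))$. (b) A permutation $\sigma\in\mathfrak{S}_n$ is a $\mu$-shuffle if and only if $\mathsf{Des}(\sigma^{-1})\subseteq\mathrm{set}(\mu)$, since ``the value $j$ precedes the value $j+1$ in $\sigma_1\cdots\sigma_n$'' is exactly ``$\sigma^{-1}_j<\sigma^{-1}_{j+1}$''. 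Combining (a), (b) with the standard fact that $\langle g,h_\mu\rangle$ is the coefficient of $x_1^{\mu_1}x_2^{\mu_2}\cdots$ in a symmetric function $g$, the hypothesis says exactly that $f$ and $G:=\sum_{\sigma\in\mathfrak{S}_n}c(\sigma)\,Q_{\mathsf{Des}(\sigma^{-1}),\,n}$ have the same coefficient of $x_1^{\mu_1}x_2^{\mu_2}\cdots$ for every $\mu\models n$. Since a degree-$n$ quasisymmetric function is determined by these coefficients (they are its coordinates in the monomial basis $\{M_\mu\}$) and $f$ is quasisymmetric, I conclude $f=\sum_{\sigma\in\mathfrak{S}_n}c(\sigma)\,Q_{\mathsf{Des}(\sigma^{-1}),\,n}$.

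Next I would use the superization identity
\[
\langle f,\,e_\nu h_\mu\rangle=\Big[\,x_1^{\mu_1}x_2^{\mu_2}\cdots\;\bar x_1^{\nu_1}\bar x_2^{\nu_2}\cdots\,\Big]\;\omega_{\bar X}\big(f[X+\bar X]\big),
\]
where $\omega_{\bar X}$ applies $\omega$ only in the $\bar X$ variables; this is verified on the basis $\{h_\lambda\}$ using $\omega_{\bar X}h_\lambda[X+\bar X]=\prod_i\sum_{a+b=\lambda_i}h_a[X]\,e_b[\bar X]$ and comparing with $\langle h_\lambda,e_\nu h_\mu\rangle$. Feeding in the $Q$-expansion from the previous step and distributing the coefficient extraction gives $\langle f,e_\nu h_\mu\rangle=\sum_{\sigma}c(\sigma)\,\big[x^\mu\bar x^\nu\big]\,\widetilde Q_{\mathsf{Des}(\sigma^{-1}),n}$, where $\widetilde Q_{S,n}:=\omega_{\bar X}Q_{S,n}[X+\bar X]$ is the super fundamental quasisymmetric function, whose monomial expansion runs over weakly increasing words $a_1\preceq\cdots\preceq a_n$ in the ordered super alphabet with the descent condition at position $i$ \emph{flipped} whenever $a_i$ is barred. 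Since a prescribed content $x^\mu\bar x^\nu$ is realised by a unique such word, $\big[x^\mu\bar x^\nu\big]\widetilde Q_{S,n}\in\{0,1\}$, equal to $1$ precisely when that word is compatible with $S$.

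The crux is then the super-analogue of fact (b): with the correct ordering convention on the super alphabet (barred letters below unbarred ones), the unique word of content $(\mu;\nu)$ is compatible with $S=\mathsf{Des}(\sigma^{-1})$ if and only if $\mathsf{Des}(\sigma^{-1})$ contains every non-boundary position inside the barred region $[1,|\nu|-1]$ and is contained in the set of boundary positions inside the unbarred region $[|\nu|+1,n-1]$ — which, read back through $\sigma=(\sigma^{-1})^{-1}$, says exactly that the values $1,\dots,|\nu|$ occur in decreasing runs of sizes $\nu_1,\nu_2,\dots$ and the values $|\nu|+1,\dots,n$ occur in increasing runs of sizes $\mu_1,\mu_2,\dots$ in $\sigma_1\cdots\sigma_n$, i.e.\ that $\sigma$ is a $\mu,\nu$-shuffle. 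Summing over $\sigma$ then yields $\langle f,e_\nu h_\mu\rangle=\sum_{\sigma\ \mu,\nu\text{-shuffle}}c(\sigma)$, as claimed. I expect this final translation to be the only delicate part of the argument: one must keep straight the total order on the super alphabet, the direction of the $\omega$-flip for barred letters, and the (harmless) freedom at the junction $i=|\nu|$ between the two regions; everything else is routine bookkeeping with the definitions of $Q_{S,n}$ and $\omega$.
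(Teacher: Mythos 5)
The paper does not actually prove Theorem~\ref{thm: general eh}; immediately after the statement it writes ``We refer to \cite{Haglund-Book-2008} for the proof,'' so there is no in-paper argument to compare against. Your superization argument is precisely the proof of \cite{Haglund-Book-2008}*{Theorem~6.10}: expand $f$ in Gessel's $Q_{S,n}$ via the hypothesis (using $[x^\mu]Q_{S,n}=\chi(S\subseteq\mathrm{set}(\mu))$ and the identification of $\mu$-shuffles with $\mathsf{Des}(\sigma^{-1})\subseteq\mathrm{set}(\mu)$), then extract $\langle f,e_\nu h_\mu\rangle$ as a super-monomial coefficient of $\omega_{\bar X}f[X+\bar X]$ and match it to the $\mu,\nu$-shuffle condition; the outline, the identification at the junction $|\nu|$, and the caveat about alphabet conventions are all accurate.
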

We refer to \cite{Haglund-Book-2008} for the proof. 

\begin{corollary}
	The equation \eqref{eq:qtSchroder2.1} in Theorem~\ref{thm: resultsection: decoqtSchroeder} is the case $\<\cdot , e_ah_b\>$ of the Delta conjecture. 
\end{corollary}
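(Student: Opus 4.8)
The plan is to work out exactly what ``the case $\<\cdot,e_ah_b\>$ of the Delta conjecture'' says and to recognise it as the already-proved equation~\eqref{eq:qtSchroder2.1}. Write $n=a+b$, let $f=\Delta'_{e_{n-k-1}}e_n$, and for $\sigma\in\mathfrak{S}_n$ set $c(\sigma):=\sum_{D\in\LDD^{\ast k}(n),\,r(D)=\sigma}q^{\dinv(D)}t^{\area(D)}$. Combining Proposition~\ref{prop: scal h} (which rewrites the rise version~\eqref{eq: delta rise} of the Delta conjecture as $\<f,h_\mu\>=\sum_{\sigma\text{ a }\mu\text{-shuffle}}c(\sigma)$ for all $\mu$) with Theorem~\ref{thm: general eh}, the case $\<\cdot,e_ah_b\>$ of the Delta conjecture becomes the identity
\[
\<\Delta'_{e_{a+b-k-1}}e_{a+b},\,e_ah_b\>=\sum_{\substack{D\in\LDD^{\ast k}(a+b)\\ r(D)\text{ a }(b),(a)\text{-shuffle}}}q^{\dinv(D)}t^{\area(D)},
\]
where, writing $e_ah_b=e_\nu h_\mu$ with $\nu=(a)$ and $\mu=(b)$, a $(b),(a)$-shuffle is a shuffle of the decreasing word $(a,a-1,\dots,1)$ coming from $e_a$ with the increasing word $(a+1,\dots,a+b)$ coming from $h_b$.

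By equation~\eqref{eq:qtSchroder2.1} of Theorem~\ref{thm: resultsection: decoqtSchroeder} the left-hand side of this identity equals $\sum_{D\in\DD(a+b)^{\circ b, \ast k}}q^{\dinv(D)}t^{\area(D)}$, so it remains only to produce a bijection
\[
\DD(a+b)^{\circ b, \ast k}\longrightarrow\{D\in\LDD^{\ast k}(a+b):r(D)\text{ is a }(b),(a)\text{-shuffle}\}
\]
preserving $\dinv$ and $\area$; this will at once show that the proven~\eqref{eq:qtSchroder2.1} \emph{is} the asserted case of the Delta conjecture. I would use the ``decorated standardization'' analogous to the one in the proof of Proposition~\ref{prop: scal h}: given an unlabelled $D\in\DD(a+b)^{\circ b, \ast k}$, keep the underlying Dyck path and the $k$ decorated rises, erase the $b$ peak-decorations, and label the vertical steps by scanning them in the order in which they occur in $r(D)$, assigning to the $b$ former decorated peaks the labels $a+1,a+2,\dots,a+b$ in that order and to the remaining $a$ steps the labels $a,a-1,\dots,1$ in that order; the inverse map simply re-decorates the steps carrying a label larger than $a$.

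The verifications to carry out are then: (i) the labelling is legal, i.e.\ strictly increasing up each column --- a decorated peak is always the topmost step of its column and gets a label $>a$, while two consecutive vertical steps of a column form a rise whose upper step precedes the lower one in $r(D)$ and hence receives the larger of the two (small) labels; (ii) the inverse lands back in $\DD(a+b)^{\circ b, \ast k}$, because the $b$ steps carrying labels $>a$ are forced to be peaks (the same column argument, read backwards, using that the last vertical step of a Dyck path is always a peak); (iii) $\area$ is preserved, both sides deleting precisely the rows of decorated rises and otherwise ignoring labels; and (iv) $\dinv$ is preserved, which is the crux. For (iv) one checks, pair by pair, that for $i<j$ with $a_i(D)=a_j(D)$ the pair $(i,j)$ is a primary diagonal inversion of the labelled path iff $i\notin\DPeak(D)$, and that for $i<j$ with $a_i(D)=a_j(D)+1$ it is a secondary one iff $j\notin\DPeak(D)$: splitting on which of the two steps (if any) is a decorated peak, each label comparison is forced by the shuffle pattern, so the two terms of the labelled $\dinv$ collapse exactly onto the two terms in the definition of $\dinv$ on $\DD(a+b)^{\circ b, \ast k}$. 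Step~(iv) is the only non-formal ingredient --- it is precisely the bookkeeping behind the claim, made when $\dinv$ was defined on $\DD(n)^{\circ a,\ast b}$, that it agrees with a suitably shuffle-restricted labelled $\dinv$ --- and is where I expect the bulk of the work to lie; everything else is a black-box use of Theorem~\ref{thm: general eh} and Proposition~\ref{prop: scal h} together with routine column-monotonicity arguments.
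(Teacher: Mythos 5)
Your proposal is correct and follows essentially the same route as the paper: reduce via Proposition~\ref{prop: scal h} and Theorem~\ref{thm: general eh} to a shuffle identity, then exhibit a $\dinv$- and $\area$-preserving bijection between $\DD(a+b)^{\circ b,\ast k}$ and the set of rise-decorated labelled Dyck paths whose reverse dinv reading word is a $(b),(a)$-shuffle. You build this bijection in the opposite direction (labelling the decorated object) from the paper's map $\alpha$ (forgetting labels after re-interpreting the $b$ largest as peak decorations), but it is the same bijection, and you are actually more explicit than the paper about the $\dinv$ case analysis, which the paper merely asserts follows ``almost directly from the definitions.''
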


\begin{proof} Taking $f=\Delta'_{e_{n-k-1}}e_n$ and, 
	\[c(\sigma)=\sum_{\substack{\D\in \LDD^{\ast k}(n)\\ r(D)=\sigma }}q^{\dinv(D)}t^{\area(D)},\] 
	the hypothesis of Theorem~\ref{thm: general eh} is satisfied by Proposition~\ref{prop: scal h}. Therefore, its conclusion holds and we have  
	\begin{align*}
		\<\Delta'_{e_{n-k-1}}e_n, e_\nu h_\mu \>&=\sum_{\substack{\sigma\in \mathfrak{S}_n\\ \sigma \text{ is a $\mu,\nu$-shuffle}} }\sum_{\substack{\D\in \LDD^{\ast k}(n)\\ r(D)=\sigma }}q^{\dinv(D)}t^{\area(D)}\\
		& =\sum_{\substack{\D\in \LDD^{\ast k}(n)\\ r(D) \text{ is a $\mu,\nu$-shuffle} }}q^{\dinv(D)}t^{\area(D)}.
	\end{align*}
	Taking $\mu=(a)$, $\nu=(b)$ and making the change of variable $n\rightarrow a+b$ we obtain 
	\[
	\< \Delta_{e_{a+b-k-1}}'e_{a+b},e_a h_{b} \>=\sum_{\substack{\D\in \LDD^{\ast k}(a+b)\\ r(D) \text{ is a $(a),(b)$-shuffle} }}q^{\dinv(D)}t^{\area(D)}.
	\]
	So we have to show that the right hand side of this equation equals the right hand side of the first result in Theorem~\ref{thm: resultsection: decoqtSchroeder}:
	\[\sum_{\substack{\D\in \LDD^{\ast k}(a+b)\\ r(D) \text{ is a $(a),(b)$-shuffle} }}q^{\dinv(D)}t^{\area(D)} = \sum_{D\in \DD(a+b)^{\circ b, \ast k} } q^{\dinv(D)}t^{\area(D)}.\]
	
	In other words if $S \coloneqq \{D\in \LDD^{\ast k}(a+b)\mid r(D) \text{ is an $(a),(b)$-shuffle}\}$ we need a bijection $\alpha \colon S \rightarrow \DD(a+b)^{\circ b,\ast k}$ that preserves the statistics dinv and area.
	
	Let $D\in S$, define the underlying the Dyck path of $\alpha(D)$ to be the same as underlying the Dyck path of $D$. Decorate the same rises in $\alpha(D)$ as in $D$. It follows that the area is preserved by $\alpha$. Since $r(D)$ is an $(a),(b)$-shuffle, the $b$ largest labels of $D$ occur in decreasing order in $r(D)$. Since the labels must be increasing in columns, it follows from the definition of $r(D)$ that these $b$ biggest labels must be labelling peaks. Decorate these $b$ peaks and forget about the labels: we obtain a path $\alpha(D)$ in $\DD(a+b)^{\circ b, \ast k}$.

	For example, the path $D$ on the left in Figure~\ref{fig: alpha} has $r(D)=65748321$ which is a shuffle of $54321$ and $678$. We decorate the three peaks labelled by $6,7$ and $8$ and forget about the labels to obtain the path on the right. 
	
	It follows almost directly from the definitions of the dinv on these two sets that for any path $D\in S$ we have $\dinv(\alpha(D))=\dinv(D)$.
	
	It is easy to see that this map is invertible. 
	
	\begin{figure}[ht!]
		\centering
		\begin{minipage}{.5 \textwidth}
			\centering
			\begin{tikzpicture}[scale=.6]
			\draw[gray!60, thin] (0,0) grid (8,8) (0,0) -- (8,8);
			\draw[blue!60, line width = 1.6pt] (0,0)|-(1,3)|-(3,4)|-(4,6)|-(8,8);
			
			\draw
			(0.5,0.5) circle(0.4 cm) node {$1$}
			(0.5,1.5) circle(0.4 cm) node {$2$}
			(0.5,2.5) circle(0.4 cm) node {$8$}
			(1.5,3.5) circle(0.4 cm) node {$4$}
			(3.5,4.5) circle(0.4 cm) node {$3$}
			(3.5,5.5) circle(0.4 cm) node {$7$}
			(4.5,6.5) circle(0.4 cm) node {$5$}
			(4.5,7.5) circle(0.4 cm) node {$6$};
			
			\draw 
			(-.5,1.5) node {$\ast$}
			(3.5,7.5) node {$\ast$};
			\end{tikzpicture}
		\end{minipage}%
		\begin{minipage}{.5 \textwidth}
			\centering
			\begin{tikzpicture}[scale=.6]
			\draw[gray!60, thin] (0,0) grid (8,8) (0,0) -- (8,8);
			
			\draw[blue!60, line width = 1.6pt] (0,0)|-(1,3)|-(3,4)|-(4,6)|-(8,8);
			
			\draw 
			(-.5,1.5) node {$\ast$}
			(3.5,7.5) node {$\ast$};
			
			\filldraw[fill=blue!60]
			(0,3) circle (2.5 pt)
			(3,6) circle (2.5 pt)
			(4,8) circle (2.5 pt);
			\end{tikzpicture}
		\end{minipage}
		\caption{The map $\alpha: S\rightarrow \DD^{\circ b, \ast a}(a+b)$}\label{fig: alpha}
	\end{figure} 
	
\end{proof}

%*********************
%
%Also, we prove the formula for $\< \Delta_{e_{m+n-k-1}}' e_{m+n}, h_m h_n \>$ predicted by the Delta conjecture in \cite{Haglund-Remmel-Wilson-2015}, by providing a recursion of these polynomials. This completely solves Problem~8.1 in \cite{Haglund-Remmel-Wilson-2015}. We also give an interpretation in terms of the $\pmaj$ statistic.
%
%\begin{theorem}\label{thm: two shuffle}
%	For $m, n, k \in \mathbb{N}$, $0 \leq k \leq m$, we have
%	
%	\begin{align}
%		\< \Delta_{e_{m+n-k-1}}' e_{m+n}, h_m h_n \> & = \sum_{D \in \PF^2(m,n)^{\ast k}} q^{\dinv(D)} t^{\area(D)} \\
%		& = \sum_{D \in \PF^2(m,n)^{\circ k}} q^{\area(D)} t^{\pmaj(D)}
%	\end{align}
%\end{theorem}
%
%Surprisingly, these two solutions are intimately related: indeed, with a bijection, we show that these two combinatorial polynomials actually coincide.

\section{A symmetry result}
In \cite{Zabrocki-4Catalan-2016}, Zabrocki established a consequence of the Delta conjecture, the so called \emph{$4$-variable Catalan conjecture}. In fact, this conjecture, stated in \cite{Haglund-Remmel-Wilson-2015}, makes other predictions that have not been explained in \cite{Zabrocki-4Catalan-2016}. We managed to prove one of them:
\begin{theorem} \label{thm:resultsection:symmetry}
	For $n>k+\ell$, the following expression is symmetric in $k$ and $\ell$:
	\begin{equation}
		\langle \Delta_{h_k}\Delta_{e_{n-k-\ell-1}}' e_{n-k},e_{n-k}\rangle.
	\end{equation}
\end{theorem}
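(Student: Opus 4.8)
The plan is to reduce the claimed symmetry to the prediction \eqref{eq:HRW_symmetry} by one identity in Macdonald theory, and then to prove \eqref{eq:HRW_symmetry} by a bijective argument. The reduction is short; the bijective argument is where the real work lies.

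\emph{Reduction.} First I would invoke the hook evaluation of the modified Kostka coefficients: for every partition $\mu$ and every $p,q\geq 0$ with $p+q=|\mu|$ one has $\langle\widetilde H_\mu,s_{(p,1^{q})}\rangle=e_{q}[B_\mu-1]$ (the case $q=|\mu|-1$ being the familiar $\langle\widetilde H_\mu,s_{1^{|\mu|}}\rangle=T_\mu$). Since $\nabla$, $\Delta_{h_k}$ and $\Delta'_{e_{n-k-\ell-1}}$ all act diagonally in the basis $\{\widetilde H_\mu\}$, writing $e_{n-k}=\sum_{\mu\vdash n-k}c_\mu\widetilde H_\mu$ and expanding gives
\[
\langle\Delta_{h_k}\Delta'_{e_{n-k-\ell-1}}e_{n-k},e_{n-k}\rangle=\sum_{\mu\vdash n-k}c_\mu\,h_k[B_\mu]\,e_{n-k-\ell-1}[B_\mu-1]\,T_\mu .
\]
Applying the hook identity with $q=n-k-\ell-1$ and $p=\ell+1$ (legitimate since $n>k+\ell$) rewrites the right‑hand side as $\langle\Delta_{h_k}\nabla e_{n-k},\,s_{(\ell+1,1^{\,n-k-\ell-1})}\rangle$. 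Running the same computation with $k$ and $\ell$ interchanged, the assertion that $\langle\Delta_{h_k}\Delta'_{e_{n-k-\ell-1}}e_{n-k},e_{n-k}\rangle$ is symmetric in $k,\ell$ becomes \emph{exactly} \eqref{eq:HRW_symmetry}.

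\emph{Proof of \eqref{eq:HRW_symmetry}.} Here I would pass to combinatorics. By the case $d=0$ of the generalised Delta conjecture at $\langle\,\cdot\,,e_{N-d}h_d\rangle$ (proved in \cite{DAdderio-Iraci-VandenWyngaerd-2018}), applied with $N=n-k$ and with the role of $\Delta_{h_m}$ played by $\Delta_{h_k}$, and after the $\Delta_{h_k}$‑standardisation in the spirit of Section~\ref{sec:link_delta}, the quantity $\langle\Delta_{h_k}\Delta'_{e_{n-k-\ell-1}}e_{n-k},e_{n-k}\rangle$ becomes $\sum q^{\dinv}t^{\area}$ over a family of (partially labelled) decorated Dyck paths of the \emph{common} size $n=k+(n-k)=\ell+(n-\ell)$, carrying $k$ markings of one kind and $\ell$ of the other. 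Interchanging $k$ and $\ell$ amounts to transposing the two kinds of markings, so the target is a $(\dinv,\area)$‑preserving bijection between the two families; this should be constructed exactly along the lines of the maps $\psi$ and $\zeta$ of Theorems~\ref{thm: resultsection: psi_map} and \ref{thm: resultsection: decoqtSchroeder} and of the map $\alpha$ of Section~\ref{sec:link_delta}. (Alternatively, one could attack \eqref{eq:HRW_symmetry} purely on the symmetric‑function side, feeding $\Delta_{h_k}\nabla e_{n-k}$ and $\Delta_{h_\ell}\nabla e_{n-\ell}$ into the new Macdonald identity of the paper that extends Haglund's $q,t$‑Schröder theorem.)

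\emph{Main obstacle.} Everything up to this point — the hook‑Kostka identity, the diagonalisation argument, and the translation into path statistics — is routine given the results already quoted. The genuinely hard step is the last one: producing the marking‑swapping bijection (or the symmetric‑function substitute). The difficulty is structural: the two sides of \eqref{eq:HRW_symmetry} live on objects of \emph{different} sizes $n-k$ and $n-\ell$, so the symmetry is invisible before one uses standardisation and the generalised‑Delta interpretation to move everything onto decorated Dyck paths of the single size $n$; only there do the two families become manifest ``transposes'' of each other, and only there is there any hope of writing down the bijection explicitly.
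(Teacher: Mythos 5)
Your reduction is correct but it is really only a restatement: what you derive, $\langle\Delta_{h_k}\Delta'_{e_{n-k-\ell-1}}e_{n-k},e_{n-k}\rangle = \langle\Delta_{h_k}\nabla e_{n-k},\,s_{(\ell+1,1^{\,n-k-\ell-1})}\rangle$, is precisely \eqref{eq:HRW_symmetry}, so you have replaced the theorem by an equivalent formulation and have not yet moved towards a proof.

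The genuine gap is exactly what you flag at the end: you never construct the marking-swapping bijection, and that is the entire content of the theorem in the route you have chosen. Moreover, your combinatorial set-up is not the one that makes the symmetry visible. You propose to apply the generalised Delta result of \cite{DAdderio-Iraci-VandenWyngaerd-2018} to $\Delta_{h_k}\Delta'_{e_{n-k-\ell-1}}e_{n-k}$, landing on partially labelled Dyck paths of size $n$ with $k$ blank (zero) labels and $\ell$ decorated rises. The two decoration types there are not on the same footing --- blank labels sit on valleys, decorations sit on rises --- and none of the maps you point to ($\psi$, $\zeta$, $\alpha$) swaps those two kinds of markings; $\psi$ swaps decorated peaks with decorated rises on \emph{unlabelled} paths, and $\zeta$ swaps $(\dinv,\area)$ for $(\area,\bounce)$, again for peaks vs. rises. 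So ``constructed along the lines of $\psi$, $\zeta$, $\alpha$'' hides a real problem: the combinatorial model you chose does not present the symmetry as a transposition of like-for-like decorations. (There is also a logical awkwardness: quoting \cite{DAdderio-Iraci-VandenWyngaerd-2018}, a subsequent paper by the same authors, inside this paper's own theorem would not be acceptable even if it did close the gap.)

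What the paper actually does is quite different and considerably shorter. The key is the new Macdonald identity of Theorem~\ref{thm:resultsection:schroeder_identity} (equation \eqref{eq:Schroeder_identityr}): specializing it with $n = a+b$ and $\ell = b-1$ gives
\begin{equation*}
\langle \Delta_{h_k}\Delta_{e_{n-k-\ell-1}}' e_{n-k},e_{n-k}\rangle
= \langle \Delta_{e_{n-\ell-1}}'\Delta_{e_{n-k-1}}'e_{n},h_{n}\rangle,
\end{equation*}
and the right-hand side is \emph{manifestly} symmetric in $k$ and $\ell$ because the two $\Delta'$ operators commute. That is the entire algebraic proof (Section~\ref{sec:algebraic_proof_symmetry}); no bijection is needed. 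The combinatorial proof (Section~\ref{sec:deco_qt_Schroeder}) is a corollary of this same identity: one pushes one more step to $\langle \Delta_{e_{n-\ell-1}}'e_{n},s_{k+1,1^{n-k-1}}\rangle$ via \eqref{eq:Mac_hook_coeff_ss}, identifies this with $\DDp_{q,t}(n)^{\circ k,\ast\ell}$ using the decorated $q,t$-Schr\"oder theorem, and only then applies $\psi$ --- where the two decoration types \emph{are} like-for-like (decorated peaks vs. decorated rises on the same unlabelled path) and $\psi$ is an involution swapping them while preserving $(\area,\pbounce)$. Your proposal skips the Schr\"oder-type identity that makes the symmetry transparent and ends up in a model where no such transparent swap exists; that is why the ``hard last step'' in your plan has no obvious resolution.
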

%\begin{equation} \label{eq:resultsection: :HRW_symmetry}
%	\langle\Delta_{h_{\ell}}\nabla e_{n-\ell},s_{k+1,1^{n-\ell-k-1}}\rangle=\langle\Delta_{h_{k}}\nabla e_{n-k},s_{\ell+1,1^{n-k-\ell-1}}\rangle\qquad \text{for }n>k+\ell.
%\end{equation}
We provide both an algebraic and a combinatorial proof of this fact. The combinatorial proof (see Section~\ref{sec:deco_qt_Schroeder}) relies on the $\psi$-map of Theorem \ref{thm: resultsection: psi_map}. The algebraic one (see Section~\ref{sec:algebraic_proof_symmetry}) uses the following theorem.

\begin{theorem} \label{thm:resultsection:schroeder_identity}
	For all $a,b,k\in \mathbb{N}$, with $a\geq 1$, $b\geq 1$ and $1\leq k\leq a$, we have
	\begin{equation} \label{eq:Schroeder_identityr}
		\langle \Delta_{e_a}'\Delta_{e_{a+b-k-1}}'e_{a+b},h_{a+b}\rangle = \langle \Delta_{h_k}\Delta_{e_{a-k}}' e_{a+b-k},e_{a+b-k}\rangle.
	\end{equation}
\end{theorem}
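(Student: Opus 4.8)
The plan is to pass to the modified Macdonald basis and reduce the claim to a finitary summation identity, which one then establishes by Macdonald–Pieri manipulations. Write $n=a+b$ and recall the expansion $e_n=\sum_{\mu\vdash n}\frac{MB_\mu\Pi_\mu}{w_\mu}\widetilde H_\mu$, where $M=(1-q)(1-t)$, $\Pi_\mu=\prod_{c\in\mu\setminus\{(0,0)\}}(1-q^{a'_\mu(c)}t^{l'_\mu(c)})$ and $w_\mu=\prod_{c\in\mu}(q^{a_\mu(c)}-t^{l_\mu(c)+1})(t^{l_\mu(c)}-q^{a_\mu(c)+1})$. Since the Delta operators act diagonally, $\Delta_{e_j}'\widetilde H_\mu=e_j[B_\mu-1]\widetilde H_\mu$ and $\Delta_{h_k}\widetilde H_\mu=h_k[B_\mu]\widetilde H_\mu$, and since $\langle\widetilde H_\mu,h_n\rangle=1$ and $\langle\widetilde H_\mu,e_n\rangle=T_\mu$, the identity \eqref{eq:Schroeder_identityr} is equivalent to
\begin{equation}
\sum_{\mu\vdash a+b}\frac{MB_\mu\Pi_\mu}{w_\mu}\,e_a[B_\mu-1]\,e_{a+b-k-1}[B_\mu-1]\;=\;\sum_{\nu\vdash a+b-k}\frac{MB_\nu\Pi_\nu T_\nu}{w_\nu}\,h_k[B_\nu]\,e_{a-k}[B_\nu-1].
\end{equation}
The two sides sum over partitions whose sizes differ by $k$; this mismatch is where the real work lies.

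To prove this summation identity, the plan is to use the well-known plethystic evaluation $\widetilde H_\mu[1-u]=\prod_{c\in\mu}(1-u\,q^{a'_\mu(c)}t^{l'_\mu(c)})$, which yields the generating function $\sum_{j\ge0}z^j e_j[B_\mu-1]=\widetilde H_\mu[1+z]/(1+z)$. Introducing formal variables $z,w$ to encode the two factors $e_a[B_\mu-1]$ and $e_{a+b-k-1}[B_\mu-1]$, the left-hand side becomes a coefficient extraction from $\big(\sum_j z^j\Delta_{e_j}'\big)\big(\sum_j w^j\Delta_{e_j}'\big)e_{a+b}$ evaluated at a single variable, and the right-hand side is handled analogously. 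The factor $h_k[B_\nu]$ on the right is exactly the datum needed to invoke the Macdonald $h$-Pieri rule: expanding over partitions $\mu\supseteq\nu$ with $\mu/\nu$ a horizontal $k$-strip, one re-sums the $\nu$-side against the $\mu$-side, the Pieri coefficients absorbing the ratios $w_\mu/w_\nu$, $\Pi_\mu/\Pi_\nu$ and the surplus eigenvalue factors. When $k=0$ the identity collapses to Haglund's original $q,t$-Schröder identity from \cite{Haglund-Schroeder-2004} (the equality of the $(\bounce,\area)$- and $(\dinv,\area)$-generating functions of Schröder paths), so Theorem~\ref{thm:resultsection:schroeder_identity} is precisely its $\Delta_{h_k}$-refinement — the new Macdonald-polynomial input announced in the abstract.

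The hard part will be this last bookkeeping step: controlling, uniformly in $a,b,k$, how the Pieri coefficients, the denominators $w_\mu$, the factors $\Pi_\mu$, and the products of $e$-eigenvalues telescope when one sums over all horizontal $k$-strips $\mu/\nu$. A possibly more robust alternative is to prove \eqref{eq:Schroeder_identityr} by a recursion in $(a,b,k)$ — the combinatorial recursions already isolated in Part~\ref{part: proofs} (cf.\ Theorem~\ref{thm:reco_first_bounce_S_R}) suggest its shape — with base cases $k=0$ (Haglund's identity above) and, say, $b=1$, $k=a$, where both sides reduce to $\langle\nabla e_{a+1},h_{a+1}\rangle=1$. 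Either way, once \eqref{eq:Schroeder_identityr} is established, the symmetry of Theorem~\ref{thm:resultsection:symmetry} is immediate by a relabelling of parameters, since the left-hand side of \eqref{eq:Schroeder_identityr} is visibly invariant under exchanging the two commuting Delta-prime operators, as carried out in Section~\ref{sec:algebraic_proof_symmetry}.
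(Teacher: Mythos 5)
Your reduction to the summation identity
\begin{equation*}
\sum_{\mu\vdash a+b}\frac{MB_\mu\Pi_\mu}{w_\mu}\,e_a[B_\mu-1]\,e_{a+b-k-1}[B_\mu-1]
\;=\;
\sum_{\nu\vdash a+b-k}\frac{MB_\nu\Pi_\nu T_\nu}{w_\nu}\,h_k[B_\nu]\,e_{a-k}[B_\nu-1]
\end{equation*}
is correct and is in fact the same starting point as the paper's argument. However, what follows is a plan rather than a proof, and the plan glosses over precisely the step that required genuinely new work. There are two concrete issues.

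First, the Pieri picture you invoke is not right for the \emph{modified} Macdonald polynomials: the expansions $h_k^\perp\widetilde H_\mu=\sum_{\nu\subset_k\mu}c_{\mu\nu}^{(k)}\widetilde H_\nu$ and $e_k[X/M]\widetilde H_\nu=\sum_{\mu\supset_k\nu}d_{\mu\nu}^{(k)}\widetilde H_\mu$ run over \emph{all} skew shapes $\mu/\nu$ of size $k$, not over horizontal $k$-strips, so the ``re-summing over horizontal strips'' you describe does not exist in this setting. Second, and more importantly, the ``bookkeeping step'' you defer is the entire content of the proof: after two applications of Macdonald--Koornwinder reciprocity (one to swap $\gamma\vdash a+b-k$ against $\mu\vdash a-i$ through $\widetilde H_\mu[MB_\gamma]$, and a second to swap back), the paper reduces everything to the new identity
\begin{equation*}
e_{n-k-1}[B_\beta-1]\,B_\beta \;=\; \sum_{\gamma\subset_k\beta}c_{\beta\gamma}^{(k)}\,B_\gamma\,T_\gamma \qquad(\beta\vdash n),
\end{equation*}
which is then proved by induction on $n-k$ using the Bergeron--Haiman recursion for the Pieri coefficients, together with the elementary evaluations $\sum_{\delta\subset_{k+1}\beta}c_{\beta\delta}^{(k+1)}T_\delta=e_{n-k-1}[B_\beta]$. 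Your sketch does not identify this lemma, nor does it control how the coefficients $c_{\mu\nu}^{(k)}$, the $\Pi_\mu$, the $w_\mu$, and the products of eigenvalues interact — saying they ``absorb the ratios'' is exactly the assertion that has to be proved.

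The generating-function device $\sum_j z^je_j[B_\mu-1]=\widetilde H_\mu[1+z]/(1+z)$ is a true identity but does not by itself engage with the $k$-shift between the two sums, and the alternative recursion route you suggest (inspired by Theorem~\ref{thm:reco_first_bounce_S_R}) is not how the paper proceeds either — those combinatorial recursions are established downstream of the algebraic identities and are used for the decorated Dyck path enumerators, not to prove \eqref{eq:Schroeder_identityr}. So while your framing of the problem is accurate and the reduction is sound, the proposal as written has a genuine gap at the Pieri re-summation: without the inductive lemma above, the argument does not close.
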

See Section~\ref{sec:proof_identity_symmetry} for a proof of this theorem.
%
%\begin{theorem} \label{thm:resultsection:symmetry}
%	For $n>k+\ell$, the following expression is symmetric in $k$ and $\ell$:
%	\begin{equation}
%		\langle \Delta_{h_k}\Delta_{e_{n-k-\ell-1}}' e_{n-k},e_{n-k}\rangle.
%	\end{equation}
%\end{theorem}
%Indeed, using \eqref{eq:Schroeder_identityr}, with $n=a+b$ and $\ell=b-1$, we have
%\begin{equation}
%	\langle \Delta_{h_k}\Delta_{e_{n-k-\ell-1}}' e_{n-k},e_{n-k}\rangle=\langle \Delta_{e_{n-\ell-1}}'\Delta_{e_{n-k-1}}'e_{n},h_{n}\rangle ,
%\end{equation}
%which is obviously symmetric in $k$ and $\ell$.
%
%Equation \eqref{eq:resultsection: :HRW_symmetry} now follows from the following elementary relation.
%\begin{lemma}
%	For $n>k+\ell$, \[ \< \Delta_{h_k}\Delta_{e_{n-k-\ell-1}}' e_{n-k},e_{n-k}\rangle =\langle\Delta_{h_{k}}\nabla e_{n-k},s_{\ell+1,1^{n-k-\ell-1}}\>. \]
%\end{lemma}

\section{A new $q,t$-square}
\label{subsec: qt square result}

Combining the results in Section \ref{subsec:qtSchroeder} with the discussion in Section~\ref{sec:our_square_conj}, we get the following interpretation of $\<\Delta_{e_{n-1}}e_n, e_n\>$ in terms of square paths. 

\begin{theorem} We have
	\begin{align}
		\langle \Delta_{e_{n-1}}e_n,e_n \rangle& =\sum_{P\in \mathsf{SQ^E}(n)}q^{\area(P)}t^{\bounce(P)} = \sum_{P\in \mathsf{SQ^E}(n)}q^{\dinv(P)} t^{\area(P)} \\ & =\sum_{P\in \mathsf{SQ^N}(n)}q^{\area(P)}t^{\bounce(P)} =\sum_{P\in \mathsf{SQ^N}(n)}q^{\dinv(P)} t^{\area(P)}. 
	\end{align}
\end{theorem}
See Section~\ref{sec:new_qt_square} for a proof of this theorem.

We also make the interesting observation that there is a connection between this new $q,t$-square and the older one proved in \cite{Loehr-Warrington-square-2007} as a combinatorial interpretation of $ \langle \Delta_{e_{n}}\omega (p_n),e_n \rangle$.

\begin{proposition}
	\label{prop:coincr}
	We have
	\begin{equation}
		\langle \Delta_{e_{n-1}}e_n,e_n \rangle\big{|}_{t=1/q} = \langle \Delta_{e_{n}}\omega (p_n),e_n \rangle\big{|}_{t=1/q}=q^{-\binom{n}{2}} \frac{1}{1+q^n} \qbinom{2n}{n}_q.
	\end{equation}
\end{proposition}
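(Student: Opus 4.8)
The plan is to work entirely on the symmetric‑function side: expand both quantities in the modified Macdonald basis, then specialize $t=1/q$, where the two expansions collapse dramatically.

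First I would recall the two classical expansions (see \cite{Haglund-Book-2008}). Writing $M=(1-q)(1-t)$, $\Pi_\mu=\prod_{c\in\mu,\,c\neq(0,0)}\!\big(1-q^{a_\mu'(c)}t^{l_\mu'(c)}\big)$, and $w_\mu=\prod_{c\in\mu}\big(q^{a_\mu(c)}-t^{l_\mu(c)+1}\big)\big(t^{l_\mu(c)}-q^{a_\mu(c)+1}\big)$, one has
\[
e_n=\sum_{\mu\vdash n}\frac{M\,B_\mu\,\Pi_\mu}{w_\mu}\,\widetilde H_\mu,
\qquad
\omega(p_n)=\sum_{\mu\vdash n}\frac{(1-q^n)(1-t^n)\,\Pi_\mu}{w_\mu}\,\widetilde H_\mu .
\]
Using $\Delta_f\widetilde H_\mu=f[B_\mu]\widetilde H_\mu$, $\nabla\widetilde H_\mu=T_\mu\widetilde H_\mu$, the identity $e_{n-1}[B_\mu]=T_\mu\,\overline B_\mu$ with $\overline B_\mu:=\sum_{c\in\mu}q^{-a_\mu'(c)}t^{-l_\mu'(c)}$, and $\Delta_{e_n}=\nabla$ on $\Lambda^{(n)}$, this gives
\[
\Delta_{e_{n-1}}e_n=\sum_{\mu\vdash n}\frac{M\,B_\mu\,\overline B_\mu\,\Pi_\mu\,T_\mu}{w_\mu}\,\widetilde H_\mu,
\qquad
\Delta_{e_n}\omega(p_n)=\nabla\omega(p_n)=\sum_{\mu\vdash n}\frac{(1-q^n)(1-t^n)\,\Pi_\mu\,T_\mu}{w_\mu}\,\widetilde H_\mu .
\]

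Next I would set $t=1/q$. For any cell $c\neq(0,0)$ of $\mu$ lying on the main diagonal one has $a_\mu'(c)=l_\mu'(c)$, so $\Pi_\mu$ contains the factor $1-q^{a_\mu'(c)}t^{l_\mu'(c)}$, which becomes $1-q^{0}=0$ at $t=1/q$; since such a cell exists exactly when $\mu$ is \emph{not} a hook, and $w_\mu|_{t=1/q}$ has no zero for transcendental $q$, every non‑hook summand vanishes and both sums reduce to sums over the $n$ hooks $\mu=(n-r,1^{r})$, $0\le r\le n-1$. Moreover, for a hook the multiset $\{a_\mu'(c)-l_\mu'(c):c\in\mu\}$ is a block of $n$ consecutive integers, so $B_\mu|_{t=1/q}=q^{-r}[n]_q$ and $\overline B_\mu|_{t=1/q}=q^{\,r+1-n}[n]_q$, whence
\[
M\,B_\mu\,\overline B_\mu\big|_{t=1/q}=(1-q)(1-q^{-1})\,q^{1-n}[n]_q^{2}=-(1-q)^{2}q^{-n}[n]_q^{2}=(1-q^n)(1-t^n)\big|_{t=1/q}.
\]
Hence the hook‑coefficients of the two Macdonald expansions agree at $t=1/q$; since $\{\widetilde H_\mu[X;q,1/q]\}_{\mu\vdash n}$ is a basis of $\Lambda^{(n)}$ for generic $q$ (its transition matrix to the Schur basis is invertible at $q=1$), this proves $\Delta_{e_{n-1}}e_n\big|_{t=1/q}=\nabla\omega(p_n)\big|_{t=1/q}$ as symmetric functions, and a fortiori the first equality of the Proposition after pairing with $e_n$.

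Finally, using $\langle\widetilde H_\mu,e_n\rangle=T_\mu$, the common value of the pairing is the hook sum
\[
\sum_{r=0}^{n-1}\frac{(1-q^n)(1-t^n)\,\Pi_\mu\,T_\mu^{2}}{w_\mu}\bigg|_{t=1/q,\ \mu=(n-r,1^{r})} .
\]
For a hook every ingredient is an explicit product: $T_\mu|_{t=1/q}=q^{\binom{n-r}{2}-\binom{r+1}{2}}$, while $\Pi_\mu|_{t=1/q}$ and $w_\mu|_{t=1/q}$ factor into a monomial in $q$ times products of the $(1-q^{i})$, and after cancellation the $r$‑th term collapses to a single $q$‑monomial over $[n-r-1]_q!\,[r]_q!$ (up to a global sign and $(1-q)$‑power that telescope across the sum). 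Summing this terminating series — e.g. by partial fractions in the variable $q^{r}$, or after recording $\qbinom{2n}{n}_q=(1+q^{n})\qbinom{2n-1}{n}_q$ so that the target reads $q^{-\binom n2}\qbinom{2n-1}{n}_q$ — yields $q^{-\binom n2}\frac{1}{1+q^{n}}\qbinom{2n}{n}_q$. As a cross‑check one may instead split $\Delta_{e_{n-1}}e_n=\nabla e_n+\Delta'_{e_{n-2}}e_n$ on $\Lambda^{(n)}$ via \eqref{eq:deltaprime}, invoke the classical specialization $\langle\nabla e_n,e_n\rangle|_{t=1/q}=q^{-\binom n2}\tfrac{1}{[n+1]_q}\qbinom{2n}{n}_q$ of the $q,t$‑Catalan and evaluate the one remaining hook sum, or give a bijective proof starting from the $q,t$‑square theorem of Section~\ref{subsec:qtSchroeder} at $t=1/q$.

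The main obstacle is precisely this last step: carefully tracking the signs and powers of $q$ in the hook specializations of $T_\mu$, $\Pi_\mu$, $w_\mu$, and then carrying out the ensuing $q$‑hypergeometric summation. The structural part — the two Macdonald expansions, the vanishing of $\Pi_\mu$ off hooks at $t=1/q$, and the consecutiveness of the contents of a hook — is short and is what makes the whole identity transparent.
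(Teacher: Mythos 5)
Your approach is correct but genuinely different from the paper's. The paper proves this proposition by a direct citation: \cite{Haglund-Book-2008}*{Corollary~4.8.1} (the case $d=1$) gives a closed form for $\langle\Delta_{e_{n-1}}e_n,e_n\rangle\big|_{t=1/q}$, and \cite{Loehr-Warrington-square-2007}*{Theorem~10} gives the same closed form for $\langle\nabla\omega(p_n),e_n\rangle\big|_{t=1/q}$; one just compares. You instead expand both operands in the modified Macdonald basis, note that $\Pi_\mu$ (which appears as a factor in the coefficient of $\widetilde H_\mu$ in both $e_n$ and $\omega(p_n)$) vanishes at $t=1/q$ for every non-hook shape, and then match the surviving hook coefficients via the evaluation $M\,B_\mu\,\overline B_\mu\big|_{t=1/q}=(1-q^n)(1-t^n)\big|_{t=1/q}$. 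This is more self-contained, and in fact establishes the stronger symmetric-function identity $\Delta_{e_{n-1}}e_n\big|_{t=1/q}=\nabla\omega(p_n)\big|_{t=1/q}$. The paper also proves this stronger identity, but later (Theorem~\ref{thm:square_cnjs_at_1_q}) and by a different mechanism: there the hook support of $\omega(p_n)$ is obtained from the Murnaghan--Nakayama expansion together with $\widetilde H_\mu(X;q,1/q)=C\,s_\mu\!\left[\frac{X}{1-q}\right]$, and the closed plethystic formulas for $\Delta_f\omega(p_n)\big|_{t=1/q}$ and $\Delta_f e_n\big|_{t=1/q}$ (Theorem~\ref{thm:omegapn} and equation~\eqref{eq:HRW_lemma}) do the work that your $\Pi_\mu$-vanishing and $M B_\mu\overline B_\mu$ computation does.

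Two small remarks. For linear independence, the cleaner argument is that $\widetilde H_\mu(X;q,1/q)$ is a nonzero $\mathbb Q(q)$-scalar multiple of $s_\mu[X/(1-q)]$ and the plethystic substitution $f\mapsto f[X/(1-q)]$ is invertible on $\Lambda$; invoking a transition matrix at $q=1$ is the wrong specialization to appeal to. Second, you leave the final $q$-hypergeometric hook sum, which yields the closed form $q^{-\binom n2}\frac{1}{1+q^n}\qbinom{2n}{n}_q$, explicitly uncomputed. You acknowledge the gap; the paper sidesteps it entirely by quoting the two closed forms from the literature, and your proposed cross-check via $\Delta_{e_{n-1}}=\Delta'_{e_{n-1}}+\Delta'_{e_{n-2}}$ on $\Lambda^{(n)}$ and the $q,t$-Catalan specialization is in substance what \cite{Haglund-Book-2008}*{Corollary~4.8.1} packages.
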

See Section~\ref{sec:observ_one_over_q} for a proof of this proposition.

In fact in Section~\ref{sec:observ_one_over_q} we will suggest that this link between the two $q,t$-square theorems could be just an instance of a more general phenomenon.

\section{Symmetric functions identities}

Other than the ones mentioned in previous sections, we would like to highlight here some of the most important new symmetric functions identities that we prove in this paper in order to get the mentioned results.

The most important (and technical) one is the following summation formula. This is a generalization of a theorem implicit in a work of Haglund (e.g. see equation (2.38) in \cite{Haglund-Schroeder-2004}).

\begin{theorem}
	For $m,k\geq 1$ and $\ell\geq 0$, we have
	
	\begin{align*}
		& \sum_{\gamma\vdash m}\frac{\widetilde{H}_\gamma[X]}{w_\gamma} h_k[(1-t)B_\gamma]e_\ell[B_\gamma] \\
		= & \sum_{j=0}^{\ell} t^{\ell-j}\sum_{s=0}^{k}q^{\binom{s}{2}} \qbinom{s+j}{s}_q \qbinom{k+j-1}{s+j-1}_q h_{s+j} \left[ \frac{X}{1-q} \right] h_{\ell-j} \left[ \frac{X}{M} \right] e_{m-s-\ell} \left[ \frac{X}{M} \right] .
	\end{align*}
\end{theorem}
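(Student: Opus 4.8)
The plan is to deduce the identity from its $\ell=0$ case, which is (a restatement of) a known formula of Haglund, by applying the Delta operator $\Delta_{e_\ell}$. The first step is to collapse the left-hand side into a single Delta image. By the Cauchy identity for the modified Macdonald polynomials, $\sum_{\gamma\vdash m}\widetilde{H}_\gamma[X]\widetilde{H}_\gamma[Y]/w_\gamma=e_m[XY/M]$; evaluating at the one-letter alphabet $Y=1$, where $\widetilde{H}_\gamma[1]=\widetilde{K}_{(m),\gamma}=1$ for every $\gamma$, gives
\[
\sum_{\gamma\vdash m}\frac{\widetilde{H}_\gamma[X]}{w_\gamma}=e_m\!\left[\frac{X}{M}\right].
\]
Since $\Delta_f\widetilde{H}_\gamma=f[B_\gamma]\widetilde{H}_\gamma$ and $h_k[(1-t)B_\gamma]=g_k[B_\gamma]$ for the symmetric function $g_k[X]:=h_k[(1-t)X]$, the left-hand side of the theorem is exactly $\Delta_{e_\ell}\Delta_{g_k}\,e_m[X/M]$.

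For $\ell=0$ the operator $\Delta_{e_0}$ is the identity, and the asserted identity specializes to
\[
\Delta_{g_k}\,e_m\!\left[\frac{X}{M}\right]=\sum_{s=0}^{k}q^{\binom{s}{2}}\qbinom{k-1}{s-1}_q\,h_s\!\left[\frac{X}{1-q}\right]e_{m-s}\!\left[\frac{X}{M}\right],
\]
which is the content of equation~(2.38) in \cite{Haglund-Schroeder-2004}. I would record this as a lemma, either quoting Haglund or reproving it directly by computing the eigenvalue $\prod_{c\in\gamma}(1-zq^{a_\gamma'(c)}t^{l_\gamma'(c)+1})/\prod_{c\in\gamma}(1-zq^{a_\gamma'(c)}t^{l_\gamma'(c)})$ of the relevant Macdonald operator on $\widetilde{H}_\gamma$ and extracting the coefficient of $z^k$.

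For general $\ell$ I would apply $\Delta_{e_\ell}$ to this base identity. By linearity the problem reduces to a formula for $\Delta_{e_\ell}\big(h_s[X/(1-q)]\,e_{m-s}[X/M]\big)$ for each $s$. The key input is a splitting rule for $\Delta_{e_\ell}$ applied to such a product: one expands $\Delta_{e_\ell}$ across the two plethystically ``disjoint'' factors, uses again a Haglund-type expansion of $\Delta_{e_i}\,e_{m-s}[X/M]$ as a sum of $h_\bullet[X/(1-q)]\,e_\bullet[X/M]$ with $q$-binomial coefficients and powers of $t$, and tracks how the complementary piece of $\Delta_{e_\ell}$ is absorbed into $h_s[X/(1-q)]$. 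After this substitution the statement becomes an identity of $q$-binomial sums, namely that $\sum_s q^{\binom{s}{2}}\qbinom{k-1}{s-1}_q$ times the coefficients produced by $\Delta_{e_\ell}$ reorganizes, via the $q$-Chu–Vandermonde (equivalently $q$-Pfaff–Saalschütz) identity, into $t^{\ell-j}q^{\binom{s}{2}}\qbinom{s+j}{s}_q\qbinom{k+j-1}{s+j-1}_q$; this last step is routine bookkeeping once the right form is reached.

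The main obstacle is the splitting rule for $\Delta_{e_\ell}$ on $h_s[X/(1-q)]\,e_{m-s}[X/M]$: Delta operators are diagonal on the $\widetilde{H}$-basis but do not obey a naive Leibniz rule, so one cannot simply distribute $\Delta_{e_\ell}$ over the product, and making the ``absorption into $h_s[X/(1-q)]$'' precise is the technically delicate point. I would handle it either by establishing the needed product formula as an auxiliary lemma, or — probably cleaner — by a double induction on $(k,\ell)$ carried out entirely on the symmetric-function side: the trivial recursions on $B_\gamma$ (multiplying $h_k[(1-t)B_\gamma]$ by one more factor $1+(1-t)(\,\cdot\,)$, and passing from $e_\ell[B_\gamma]$ to $e_{\ell+1}[B_\gamma]$) transport to recursions among the right-hand sides, whose inductive verification is again a $q$-binomial identity. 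An alternative packaging uses the generating function $\sum_{\ell\ge 0}w^\ell(\text{LHS})=\Delta_{g_k}\,e_m[X(1+w)/M]$, obtained from $\prod_{c\in\gamma}(1+wq^{a_\gamma'(c)}t^{l_\gamma'(c)})=\widetilde{H}_\gamma[1+w]$ together with the two-alphabet Cauchy identity; there the obstacle becomes evaluating $\Delta_{g_k}$ on $e_m[X(1+w)/M]$, which requires a two-alphabet refinement of the base identity above, and I expect that refinement to be the crux.
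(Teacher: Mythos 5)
Your reformulation of the left-hand side as $\Delta_{e_\ell}\Delta_{g_k}\,e_m[X/M]$ with $g_k[X]=h_k[(1-t)X]$ is correct, and so is the identification of the $\ell=0$ case with the formula implicit in Haglund's $q,t$-Schr\"oder paper (equation (2.38) there; it is reproved in the text as Theorem~\ref{thm:Haglund_summation}). Your generating-function reduction $\sum_{\ell\geq 0}w^\ell(\text{LHS})=\Delta_{g_k}\,e_m[X(1+w)/M]$ is also valid, since $\widetilde{H}_\gamma[1+w]=\prod_{c\in\gamma}(1+wq^{a'(c)}t^{l'(c)})=\sum_\ell w^\ell e_\ell[B_\gamma]$. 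So the framing is right and the base case is secured.

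The gap is precisely where you flag it, and it is not a small one. Delta operators are diagonal in the $\widetilde{H}$ basis and have no Leibniz rule, so evaluating $\Delta_{e_\ell}$ on a product like $h_s[X/(1-q)]\,e_{m-s}[X/M]$ --- or, in the generating-function packaging, evaluating $\Delta_{g_k}$ on $e_m[X(1+w)/M]$ --- requires new content that your proposal does not supply. None of the three routes you list for closing this (a ``splitting rule,'' a double induction whose inductive step you leave implicit, or a conjectural two-alphabet Cauchy refinement) is carried out, and the $q$-binomial reorganization you describe as ``routine once the right form is reached'' is where the entire difficulty sits. The paper does not iterate from the $\ell=0$ case at all: it treats $h_k[(1-t)B_\gamma]$ and $e_\ell[B_\gamma]$ symmetrically from the start using the identity $\langle f,\widetilde{H}_\mu[X+1]\rangle_*=\nabla^{-1}\tau_{-\epsilon}f[X]\big|_{X=D_\mu}$ (equation \eqref{eq:glenn_formula}) to reverse-engineer a symmetric function $f[X]=\tau_\epsilon\nabla\bigl(h_k\bigl[\frac{X+1}{1-q}\bigr]e_\ell\bigl[\frac{X+1}{M}\bigr]\bigr)$ whose star scalar products with the $\widetilde{H}_\mu$ produce the coefficients. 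The genuinely hard input in that route is Proposition~\ref{prop:key_identity}, a closed formula for $\nabla\bigl(h_i[X/(1-q)]\,e_j[X/M]\bigr)$, which is itself derived from the $\ell=m$ specialization (a $\nabla E_{n,k}$ computation you do not use) together with a Garsia--Hicks--Stout expansion and an elementary $q$-binomial lemma. So even if one of your routes were made to work, it would be a structurally different argument; as written, the proposal identifies a correct setup and a correct obstacle but stops exactly at the point where the proof has to begin.
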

See Section~\ref{sec:sum_formula} for the proof of this theorem.

The following theorem is crucial in order to get the result in Section~\ref{subsec:two-shuffle}.
\begin{theorem}
	%	\label{thm:SF_sum}
	Let $m,n,k\in \mathbb{N}$, $m\geq 0$, $n\geq 0$ and $m\geq k\geq 0$. Then
	\begin{align}
		\sum_{r=1}^{m-k+1} t^{m-k-r+1} \< \Delta_{h_{m-k-r+1}} \Delta_{e_k} e_n \left[ X \frac{1-q^r}{1-q} \right], e_n \> = \< \Delta_{h_m} e_{n+1}, s_{k+1,1^{n-k}} \>.
	\end{align}
\end{theorem}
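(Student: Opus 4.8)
The plan is to reduce both sides to scalars that can be compared coefficient by coefficient in the modified Macdonald basis, and then to match them using the summation formula of Section~\ref{sec:sum_formula}. \emph{First,} I would rewrite the right-hand side without the hook Schur function: expanding $s_{k+1,1^{n-k}}=\sum_{i=0}^{n-k}(-1)^{i}h_{k+1+i}e_{n-k-i}$ and invoking the classical identity $\langle\widetilde{H}_\mu,e_{j}h_{|\mu|-j}\rangle=e_{j}[B_\mu]$ (see \cite{Haglund-Book-2008}), one gets, for every $\mu\vdash n+1$,
\[
\langle\widetilde{H}_\mu,s_{k+1,1^{n-k}}\rangle=\sum_{l=0}^{n-k}(-1)^{n-k-l}e_{l}[B_\mu]=e_{n-k}[B_\mu-1],
\]
the last equality being the telescoping relation $\sum_{l=0}^{L}(-1)^{l}e_{l}[A]=(-1)^{L}e_{L}[A-1]$. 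Since $e_{n-k}[B_\mu-1]$ is exactly the eigenvalue of $\Delta'_{e_{n-k}}$ on $\widetilde{H}_\mu$ while $\langle\widetilde{H}_\mu,h_{n+1}\rangle=1$ for all $\mu\vdash n+1$, this yields $\langle\Delta_{h_m}e_{n+1},s_{k+1,1^{n-k}}\rangle=\langle\Delta_{h_m}\Delta'_{e_{n-k}}e_{n+1},h_{n+1}\rangle$.

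\emph{Next,} I would expand everything in the modified Macdonald basis. Using the standard expansion $e_{N}=\sum_{\mu\vdash N}\frac{M B_\mu\Pi_\mu}{w_\mu}\widetilde{H}_\mu$, the right-hand side becomes $\sum_{\mu\vdash n+1}\frac{M B_\mu\Pi_\mu}{w_\mu}\,h_m[B_\mu]\,e_{n-k}[B_\mu-1]$. On the left-hand side, writing $e_{n}\bigl[X\frac{1-q^{r}}{1-q}\bigr]=\sum_{\nu\vdash n}d_\nu(r)\,\widetilde{H}_\nu$ and using $\langle\widetilde{H}_\nu,e_{n}\rangle=e_{n}[B_\nu]=T_\nu$, the $r$-th term equals $t^{m-k-r+1}\sum_{\nu\vdash n}d_\nu(r)\,h_{m-k-r+1}[B_\nu]\,e_{k}[B_\nu]\,T_\nu$. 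Since $d_\nu(r)$ is a polynomial in $q^{r}$ of degree at most $n$ (expand $e_{n}$ of a geometric alphabet), I would carry out the sum over $r$ first; the pieces $\sum_{r}t^{m-k-r+1}q^{jr}h_{m-k-r+1}[B_\nu]$ sum geometrically to products of the form $\prod_{c\in\nu}(1-u\,q^{a_\nu'(c)}t^{l_\nu'(c)})^{-1}$ for a handful of values of $u$, which by the Cauchy identity for the $\widetilde H$-basis and Macdonald reciprocity can be re-expanded over a fresh index.

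\emph{The key step} is then to recognise the two resulting partition sums as the two sides of an instance of the summation formula
\[
\sum_{\gamma\vdash m}\frac{\widetilde{H}_\gamma[X]}{w_\gamma}h_{k}[(1-t)B_\gamma]\,e_{\ell}[B_\gamma]=\sum_{j=0}^{\ell}t^{\ell-j}\sum_{s=0}^{k}q^{\binom{s}{2}}\qbinom{s+j}{s}_q\qbinom{k+j-1}{s+j-1}_q h_{s+j}\!\Bigl[\frac{X}{1-q}\Bigr]h_{\ell-j}\!\Bigl[\frac{X}{M}\Bigr]e_{m-s-\ell}\!\Bigl[\frac{X}{M}\Bigr],
\]
read after the plethystic substitution that turns $X$ into (a rescaling of) the alphabet obtained above: the plethysm $h_{s+j}[X/(1-q)]$ on its right-hand side is precisely what produces the substitution $X\mapsto X\frac{1-q^{r}}{1-q}$, and its $q$-binomials are exactly those coming from $e_{n}\bigl[X\frac{1-q^{r}}{1-q}\bigr]$ together with the $r$-sum.

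The hard part will be this last matching: the skeleton above is short, but bringing both sides into the precise shape of the summation formula forces one to control several nested $q$-binomial sums and plethystic substitutions, and it is here that $q$-Chu--Vandermonde identities together with a careful use of Macdonald reciprocity are needed to collapse the auxiliary summations. The degenerate cases $k=n$ (the hook becomes a single row and $\Delta'_{e_0}=\mathrm{id}$) and $n=0$ would be treated separately.
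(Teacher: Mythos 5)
Your first reduction,
\[
\langle\Delta_{h_m}e_{n+1},s_{k+1,1^{n-k}}\rangle=\langle\Delta_{h_m}\Delta'_{e_{n-k}}e_{n+1},h_{n+1}\rangle,
\]
is correct and matches the paper's identity \eqref{eq:Mac_hook_coeff_ss}. After that, however, the proposal loses traction and the ``key step'' is not just unsubstantiated but is the wrong tool. You claim the finite sum $\sum_{r=1}^{m-k+1}t^{m-k-r+1}q^{jr}h_{m-k-r+1}[B_\nu]$ ``sums geometrically'' to products $\prod_{c}(1-uq^{a'}t^{l'})^{-1}$ --- but a truncated geometric progression does not factor like that, and nothing in your proposal controls the degree-$n$ polynomial $d_\nu(r)$ after interchange. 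You then assert that the partition sums one obtains are ``an instance'' of the summation formula \eqref{eq:mastereq}, but that formula has the shape $\sum_\gamma \widetilde H_\gamma[X]\,h_k[(1-t)B_\gamma]e_\ell[B_\gamma]/w_\gamma$, which simply does not appear in this computation; it is the central tool elsewhere in the paper (proving the $F$--$G$--$H$ recursions), not here. The actual proof in the paper rests on three ingredients that are entirely absent from your sketch: (i) recognizing the $r$-sum, via Haglund's formula $\nabla E_{N,r}=t^{N-r}(1-q^r)\mathbf\Pi\,h_{N-r}[X/M]h_r[X/(1-q)]$ together with $\sum_r E_{N,r}=e_N$, as $\mathbf\Pi^{-1}\nabla e_{m-k+1}$ evaluated at $X=MB_\mu$; (ii) the Pieri-coefficient lemma $\sum_{\gamma\subset_k\alpha}c_{\alpha\gamma}^{(k)}B_\gamma T_\gamma = e_{m-k}[B_\alpha-1]B_\alpha$, which is what replaces the double sum over $\gamma\subset_k\alpha$ by the single Delta-operator eigenvalue; and (iii) after the dust settles, the computation lands on $\langle\Delta_{h_n}e_{m+1},s_{k+1,1^{m-k}}\rangle$ with $m$ and $n$ in the wrong slots, so one must still invoke the symmetry \eqref{eq:symmetry_id} from Theorem~\ref{thm:magic_equality} to conclude. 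Without these, the proposal does not close; it is a different, and to my knowledge not salvageable, route.
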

See Section~\ref{sec:2more_thrms} for a proof of this theorem.

The following result shows a surprisingly tight relation between the square conjecture in \cite{Loehr-Warrington-square-2007} and our Conjecture~\ref{conj:new_square}.
\begin{theorem} \label{thm:square_cnjs_at_1_qopenp}
	We have
	\begin{equation}
		{\Delta_{e_n} \omega(p_n)}_{\big{|}_{t=1/q}}={\Delta_{e_{n-1}}e_n}_{\big{|}_{t=1/q}}.
	\end{equation}
\end{theorem}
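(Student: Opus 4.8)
The plan is to expand both symmetric functions in the modified Macdonald basis $\{\widetilde{H}_\mu\}_{\mu\vdash n}$ and compare coefficients after the substitution $t=1/q$. Since $\omega(p_n)$ is homogeneous of degree $n$, we have $\Delta_{e_n}\omega(p_n)=\nabla\omega(p_n)$. Writing $M=(1-q)(1-t)$ and $\Pi_\mu=\prod_{c\in\mu\setminus\{(0,0)\}}\bigl(1-q^{a_\mu'(c)}t^{l_\mu'(c)}\bigr)$, I will use the standard expansions
\[
e_n=\sum_{\mu\vdash n}\frac{M\,B_\mu\,\Pi_\mu}{w_\mu}\,\widetilde{H}_\mu,\qquad
\omega(p_n)=\sum_{\mu\vdash n}\frac{(1-q^n)(1-t^n)\,\Pi_\mu}{w_\mu}\,\widetilde{H}_\mu,
\]
the second of which follows from the evaluation $\langle\widetilde{H}_\mu,p_n\rangle=\Pi_\mu$ (ordinary Hall product) together with the deformed scalar product under which $\{\widetilde{H}_\mu\}$ is orthogonal with $\langle\widetilde{H}_\mu,\widetilde{H}_\mu\rangle_\ast=w_\mu$ and $\langle p_n,p_n\rangle_\ast=(-1)^{n-1}n(1-q^n)(1-t^n)$. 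Applying $\nabla$ (which sends $\widetilde{H}_\mu\mapsto T_\mu\widetilde{H}_\mu$) to the first and $\Delta_{e_{n-1}}$ (which sends $\widetilde{H}_\mu\mapsto e_{n-1}[B_\mu]\widetilde{H}_\mu$) to the second, and noting that the coefficients have no pole at $t=1/q$ (so the substitution is legitimate, and the comparison of the two resulting linear combinations is valid term by term even if the $\widetilde{H}_\mu[X;q,1/q]$ happen to be linearly dependent), the theorem reduces to proving, for every $\mu\vdash n$,
\[
\left.(1-q^n)(1-t^n)\,T_\mu\,\Pi_\mu\right|_{t=1/q}=\left.e_{n-1}[B_\mu]\,M\,B_\mu\,\Pi_\mu\right|_{t=1/q}.
\]

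First I would check that $w_\mu|_{t=1/q}\neq 0$: it is a product of terms $q^{a_\mu(c)}-q^{-l_\mu(c)-1}$ and $q^{-l_\mu(c)}-q^{a_\mu(c)+1}$, and in each such term the two exponents differ since arms and legs are nonnegative; so it is a nonzero Laurent polynomial in $q$. Hence it suffices to compare $(1-q^n)(1-t^n)T_\mu\Pi_\mu$ with $e_{n-1}[B_\mu]MB_\mu\Pi_\mu$ at $t=1/q$. If $\mu$ is not a hook, its Durfee square has side at least $2$, so $(1,1)\in\mu$; that cell contributes the factor $1-q^{1}t^{1}$ to $\Pi_\mu$, which vanishes at $t=1/q$, and both sides equal $0$. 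The only remaining case is a hook $\mu=(n-k,1^k)$.

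For $\mu=(n-k,1^k)$ the multiset of contents $a_\mu'(c)-l_\mu'(c)$, $c\in\mu$, equals $\{-k,-k+1,\dots,n-k-1\}$, a block of $n$ consecutive integers, so $B_\mu|_{t=1/q}=\sum_{c}q^{a_\mu'(c)-l_\mu'(c)}=q^{-k}[n]_q$. Moreover $e_{n-1}[B_\mu]=T_\mu\sum_{c\in\mu}q^{-a_\mu'(c)}t^{-l_\mu'(c)}$ (since $e_{n-1}$ of the $n$ monomials making up $B_\mu$ is their product times the sum of their inverses), whence $e_{n-1}[B_\mu]|_{t=1/q}=T_\mu|_{t=1/q}\cdot q^{\,k+1-n}[n]_q$. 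Using $(1-q)[n]_q=1-q^n$ and $(1-q^{-1})[n]_q=-q^{-1}(1-q^n)$ we get, at $t=1/q$,
\[
M\,B_\mu\,e_{n-1}[B_\mu]=(1-q)(1-q^{-1})\,q^{\,1-n}[n]_q^{2}\,T_\mu=-q^{-n}(1-q^n)^2\,T_\mu,
\]
while $(1-q^n)(1-t^n)T_\mu|_{t=1/q}=(1-q^n)(1-q^{-n})\,T_\mu|_{t=1/q}=-q^{-n}(1-q^n)^2\,T_\mu|_{t=1/q}$, which is the same. This establishes the displayed identity for all $\mu$, hence the theorem.

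I expect the main difficulty to be bookkeeping rather than conceptual: one must fix the sign conventions in $w_\mu$ and in $\langle\,,\,\rangle_\ast$ so that the two expansions above are literally correct, and one must treat $t\mapsto 1/q$ as a specialization of rational functions in $q$, which is licit precisely because $w_\mu|_{t=1/q}\neq 0$. If one prefers to avoid the deformed scalar product, the expansion of $\omega(p_n)$ can instead be deduced from $\langle s_\lambda,p_n\rangle=(-1)^{k}$ when $\lambda$ is the hook $(n-k,1^k)$ (and $0$ otherwise) together with $\widetilde{H}_\mu=\sum_\lambda\widetilde{K}_{\lambda\mu}s_\lambda$, giving $\langle\widetilde{H}_\mu,p_n\rangle=\sum_k(-1)^k\widetilde{K}_{(n-k,1^k),\mu}=\Pi_\mu$; the rest of the argument goes through unchanged.
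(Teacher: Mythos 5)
Your proof is correct, and it follows a genuinely different route from the paper's. The paper first establishes the closed-form evaluations $\Delta_f \omega(p_n)\big|_{t=1/q}=\frac{[n]_q\,f[[n]_q]}{[k]_q\,q^{k(n-1)}}\,e_n[X[k]_q]$ (Theorem~\ref{thm:omegapn}) and $\Delta_f e_n\big|_{t=1/q}=\frac{f[[n]_q]}{[k+1]_q\,q^{k(n-1)}}\,e_n[X[k+1]_q]$ (equation \eqref{eq:HRW_lemma}), then plugs in $f=e_n$ and $f=e_{n-1}$ and matches the two expressions using $e_n[[n]_q]=q^{\binom{n}{2}}$ and $e_{n-1}[[n]_q]=q^{\binom{n-1}{2}}[n]_q$. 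You instead expand both $\omega(p_n)$ and $e_n$ in the $\widetilde{H}_\mu$ basis, observe that the common factor $\Pi_\mu$ annihilates all non-hook contributions at $t=1/q$, and verify the remaining Laurent-polynomial identity $(1-q^n)(1-t^n)T_\mu = M B_\mu\, e_{n-1}[B_\mu]$ at $t=1/q$ directly for each hook $\mu=(n-k,1^k)$; your expansion of $\omega(p_n)$ is correctly justified from $\langle\widetilde{H}_\mu,p_n\rangle=\sum_r(-1)^r e_r[B_\mu-1]=\Pi_\mu$, which follows from \eqref{eq:Mac_hook_coeff_ss}. The paper's route is shorter given Theorem~\ref{thm:omegapn} (which it also uses for Corollary~\ref{cor:omegapn}); yours is self-contained, avoids the HRW specialization lemma entirely, and makes explicit the hook-only phenomenon that underlies both. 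One small point of phrasing: ``the theorem reduces to proving [coefficient equality]'' should read ``it suffices to prove,'' since coefficient equality implies equality of the sums but, given possible linear dependence of the $\widetilde{H}_\mu[X;q,1/q]$, is not equivalent to it; this does not affect your argument, which establishes the sufficient condition.
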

See Section~\ref{sec:observ_one_over_q} for a proof of this theorem.

The following result suggests that Proposition~\ref{prop:coincr} is not just an isolated coincidence.
\begin{proposition} %\label{cor:omegapn}
	For any $f\in \Lambda^{(k)}$ we have
	\begin{equation}
		{\langle\Delta_f e_n,e_{n-1}e_1\rangle}_{\big{|}_{t=1/q}}={\langle\Delta_f \omega(p_n),e_{n}\rangle}_{\big{|}_{t=1/q}}.
	\end{equation}
\end{proposition}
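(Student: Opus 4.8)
The plan is to expand both scalar products in the modified Macdonald basis $\{\widetilde{H}_\mu\}_{\mu\vdash n}$ and to exploit the fact that, under the specialization $t=1/q$, only the hook shapes survive. Recall the standard expansions
$$e_n=\sum_{\mu\vdash n}\frac{M B_\mu\Pi_\mu}{w_\mu}\,\widetilde{H}_\mu,\qquad \omega(p_n)=\sum_{\mu\vdash n}\frac{(1-q^n)(1-t^n)\,\Pi_\mu}{w_\mu}\,\widetilde{H}_\mu,$$
where $M=(1-q)(1-t)$, $\Pi_\mu=\prod_{c\in\mu,\,c\neq(1,1)}(1-q^{a_\mu'(c)}t^{l_\mu'(c)})$ and $w_\mu=\prod_{c\in\mu}(q^{a_\mu(c)}-t^{l_\mu(c)+1})(t^{l_\mu(c)}-q^{a_\mu(c)+1})$ (the second expansion is $\nabla^{-1}$ applied to the known form of $\nabla\omega(p_n)$). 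Since $\Delta_f\widetilde{H}_\mu=f[B_\mu]\widetilde{H}_\mu$, these give
$$\langle\Delta_f e_n,e_{n-1}e_1\rangle=\sum_{\mu\vdash n}\frac{M B_\mu\Pi_\mu}{w_\mu}\,f[B_\mu]\,\langle\widetilde{H}_\mu,e_{n-1}e_1\rangle,\qquad \langle\Delta_f\omega(p_n),e_n\rangle=\sum_{\mu\vdash n}\frac{(1-q^n)(1-t^n)\Pi_\mu}{w_\mu}\,f[B_\mu]\,\langle\widetilde{H}_\mu,e_n\rangle.$$
For the inner products I would use the standard evaluation of hook Macdonald--Kostka numbers $\langle\widetilde{H}_\mu,s_{(n-s,1^s)}\rangle=e_s[B_\mu-1]$; in particular $\langle\widetilde{H}_\mu,e_n\rangle=e_{n-1}[B_\mu-1]=T_\mu$, and combining this with the Pieri rule $e_1e_{n-1}=s_{1^n}+s_{(2,1^{n-2})}$ and the plethystic identity $e_k[Z+1]=e_k[Z]+e_{k-1}[Z]$ yields $\langle\widetilde{H}_\mu,e_{n-1}e_1\rangle=e_{n-1}[B_\mu-1]+e_{n-2}[B_\mu-1]=e_{n-1}[B_\mu]$.

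The crucial step is the specialization $t=1/q$. There $\Pi_\mu$ becomes $\prod_{c\neq(1,1)}(1-q^{a_\mu'(c)-l_\mu'(c)})$, which vanishes as soon as $\mu$ has a cell other than $(1,1)$ with $a_\mu'(c)=l_\mu'(c)$, i.e.\ a cell on the main diagonal, i.e.\ as soon as $\mu$ is not a hook; on the other hand a short inspection of its factors shows that $w_\mu|_{t=1/q}$ is a nonzero rational function of $q$ for \emph{every} $\mu$. Hence $(MB_\mu\Pi_\mu/w_\mu)|_{t=1/q}$ and $((1-q^n)(1-t^n)\Pi_\mu/w_\mu)|_{t=1/q}$ both vanish unless $\mu$ is a hook, so after specializing, both sums run only over the $n$ hooks $\mu=(n-r,1^r)$, $0\le r\le n-1$, each carrying the untouched weight $f[B_\mu]|_{t=1/q}$. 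It therefore suffices to prove, for each such hook,
$$\Bigl.\frac{M B_\mu\Pi_\mu}{w_\mu}\,e_{n-1}[B_\mu]\Bigr|_{t=1/q}=\Bigl.\frac{(1-q^n)(1-t^n)\Pi_\mu}{w_\mu}\,T_\mu\Bigr|_{t=1/q}.$$

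Since $(\Pi_\mu/w_\mu)|_{t=1/q}\neq0$ for hooks, this reduces to $MB_\mu\,e_{n-1}[B_\mu]\big|_{t=1/q}=(1-q^n)(1-t^n)\,T_\mu\big|_{t=1/q}$, which is a direct computation. For $\mu=(n-r,1^r)$ the content alphabet specializes to the $n$ consecutive powers $\{q^{-r},q^{-r+1},\dots,q^{n-1-r}\}$, so $B_\mu|_{t=1/q}=q^{-r}(1-q^n)/(1-q)$ and $T_\mu|_{t=1/q}=q^{\binom n2-nr}$; writing $e_{n-1}$ of $n$ letters as $e_n$ times the power sum $p_1$ of their reciprocals gives $e_{n-1}[B_\mu]|_{t=1/q}=q^{\binom n2-nr}\,q^{r+1-n}[n]_q$. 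Combining this with $M|_{t=1/q}=-(1-q)^2/q$ and $(1-q^n)(1-t^n)|_{t=1/q}=-(1-q^n)^2/q^n$, both sides collapse to $-q^{-n}(1-q^n)^2\,q^{\binom n2-nr}$, which proves the coefficient identity and hence the proposition.

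I expect the only real difficulty to be bookkeeping: fixing the correct normalizations and signs in the Macdonald identities used (the two $\widetilde{H}$-expansions and the hook Kostka evaluation) and then pushing the $t=1/q$ specialization through $M$, $\Pi_\mu$, $w_\mu$, $T_\mu$ without sign slips. A point worth recording explicitly along the way is the non-vanishing of $\Pi_\mu|_{t=1/q}$ and $w_\mu|_{t=1/q}$ for hook shapes, which is what legitimizes the cancellation in the last step.
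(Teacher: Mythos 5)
Your argument is correct, and it takes a genuinely different route from the paper's. The paper first establishes two general ``$t=1/q$'' identities as theorems in their own right: Theorem~\ref{thm:omegapn}, which shows that $\Delta_f\omega(p_n)\big|_{t=1/q}$ collapses to a scalar times $e_n[X[k]_q]$, and the Haglund--Remmel--Wilson formula \eqref{eq:HRW_lemma}, which does the same for $\Delta_f e_n\big|_{t=1/q}$ with $e_n[X[k+1]_q]$. The corollary itself then reduces to the elementary computation of the two scalar products $\<e_n[X[k]_q],e_n\>$ and $\<e_n[X[k+1]_q],e_{n-1}e_1\>$ as $q$-binomials. You instead expand $e_n$ (via \eqref{eq:en_expansion}) and $\omega(p_n)$ in the modified Macdonald basis, invoke the hook Kostka evaluation \eqref{eq:Mac_hook_coeff_ss}, and then use the fact that $\Pi_\mu\big|_{t=1/q}$ annihilates every non-hook while $w_\mu\big|_{t=1/q}\ne 0$ for all $\mu$ to reduce both sums to the $n$ hooks $(n-r,1^r)$, which you then check term-by-term. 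All the individual pieces you use are correct (the $\omega(p_n)$ expansion is indeed what one gets by applying $\nabla^{-1}$ to the Loehr--Warrington formula for $\nabla\omega(p_n)$, and the final hook computation does collapse both sides to $-q^{-n}(1-q^n)^2 q^{\binom n2-nr}$). The trade-off is that the paper's route yields the intermediate Theorems as reusable symmetric-function identities (the stronger identity $\Delta_{e_n}\omega(p_n)\big|_{t=1/q}=\Delta_{e_{n-1}}e_n\big|_{t=1/q}$ of Theorem~\ref{thm:square_cnjs_at_1_qopenp} is proved from them), whereas yours is more self-contained but stops at the scalar-product equality.
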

See Section~\ref{sec:observ_one_over_q} for a proof of this proposition.

\section{A few open problems}

Other than the open questions in \cite{Haglund-Remmel-Wilson-2015} that we did not answer, our work leaves open a few natural combinatorial questions. We refer to Section~\ref{sec:combinatorial_recursions_ddyck} for the missing definitions.

For example, consider the identity
\begin{equation}
	\DDd_{q,t}(n \backslash k )^{\circ a, \ast b} = \sum_{j=0}^{n-k} \DDp_{q,t}(n\backslash k+j \backslash j )^{\circ a, \ast b},
\end{equation}
which is obtained by combining Theorem~\ref{thm:qt_enumerators_formulae} with Theorem~\ref{thm:rel_F_H}. There should be a bijective explanation of this identity. Notice that, summing over $k$, this would lead to a bijective explanation of the identity
\begin{equation} \DDd_{q,t}(n)^{\circ a, \ast b}= \DDp_{q,t}(n)^{\circ a, \ast b},
\end{equation}
coming from Theorem~\ref{thm:decoqtSchroeder}.

We observe here that for $b=0$ these explanations are provided by the bijection given in \cite{Egge-Haglund-Killpatrick-Kremer-2003}, so it is conceivable to look for an extension of that bijection to objects with decorated rises.

Another natural problem is the following: assuming Conjecture~\ref{conj:new_square} and recalling the results in \cite{Leven-2016}, find a combinatorial explanation of Theorem~\ref{thm:square_cnjs_at_1_qopenp}.

\part{Proofs}\label{part: proofs}

\chapter{Symmetric functions}

In this chapter we provide most of our results on symmetric functions. We start with a section that, together with Section~\ref{sec:symmetric_fcts_intro}, indicates most of the notations and the results that we will use in the rest of the paper.

\section{Basic identities}

We integrate here Section~\ref{sec:symmetric_fcts_intro} with more notations, and we introduce some basic identities that we will use all along this article. Again, the main references that we will use for symmetric functions are \cite{Macdonald-Book-1995} and \cite{Stanley-Book-1999}. 

As we already said, we will make extensive use of the \emph{plethystic notation}.

We have for example the addition formulas
\begin{align}
	p_k[X+Y]=p_k[X]+p_k[Y]\quad \text{ and } \quad p_k[X-Y]=p_k[X]-p_k[Y],
\end{align}
and
\begin{align}
	\label{eq:e_h_sum_alphabets}
	e_n[X+Y]=\sum_{i=0}^ne_{n-i}[X]e_i[Y]\quad \text{ and } \quad  h_n[X+Y]=\sum_{i=0}^nh_{n-i}[X]h_i[Y].
\end{align}
Notice in particular that $p_k[-X]$ equals $-p_k[X]$ and not $(-1)^kp_k[X]$. As the latter sort of negative sign can be also useful, it is customary to use the notation $\epsilon$ to express it: we will have $p_k[\epsilon X] = (-1)^k p_k[X]$, so that, in general,
\begin{align}
	\label{eq:minusepsilon}
	f[-\epsilon X] = \omega f[X]
\end{align}
for any symmetric function $f$, where $\omega$ is the fundamental algebraic involution which sends $e_k$ to $h_k$, $s_{\lambda}$ to $s_{\lambda'}$ and $p_k$ to $(-1)^{k-1}p_k$.

With our definition of the Hall scalar product on symmetric functions, we have the orthogonality
\begin{align}
	\< p_{\lambda}, p_{\mu} \> = z_{\mu} \chi(\lambda=\mu)
\end{align}
which defines the integers $z_{\mu}$, where $\chi(\mathcal{P})=1$ if the statement $\mathcal{P}$ is true, and $\chi(\mathcal{P})=0$ otherwise.

Recall the \emph{Cauchy identities}
\begin{align}
	\label{eq:Cauchy_identities}
	e_n[XY] = \sum_{\lambda\vdash n} s_{\lambda}[X] s_{\lambda'}[Y] \quad \text{ and } \quad h_n[XY] = \sum_{\lambda\vdash n} s_{\lambda}[X] s_{\lambda}[Y].
\end{align}
With the symbol ``$\perp$'' we denote the operation of taking the adjoint of an operator with respect to the Hall scalar product, i.e.
\begin{align}
	\< f^\perp g, h \> = \< g, fh \> \quad \text{ for all } f, g, h \in \Lambda.
\end{align}
We introduce also the operator
\begin{align}
	\tau_z f[X] \coloneqq f[X+z] \qquad \text{ for all } f[X] \in \Lambda,
\end{align}
so for example
\begin{align}
	\tau_{-\epsilon} f[X] = f[X-\epsilon] \qquad \text{ for all } f[X] \in \Lambda.
\end{align}
The operator $\tau_z$ can be computed using the following formula (see \cite{Garsia-Haiman-Tesler-Explicit-1999}*{Theorem~1.1} for a proof):
\begin{equation}
	\tau_z =\sum_{r\geq 0}z^rh_r^\perp.
\end{equation}

We record here also the following well-known identity (see \cite{Garsia-Hicks-Stout-2011}*{Lemma~2.1} for a proof).

For all $\mu\vdash n$ we have 
\begin{equation} \label{eq:s_plethystic_eval}
	s_{\mu}[1-u]=\left\{\begin{array}{ll}
		(-u)^r(1-u) & \text{ if }\mu=(n-r,1^r)\text{ for some }r\in \{0,1,2,\dots,n-1\},\\
		0 & \text{ otherwise}.
	\end{array}\right.
\end{equation}

We refer also to \cite{Haglund-Book-2008} for more informations on this topic.

\subsection{Macdonald symmetric function toolkit}

With the notation of Section~\ref{sec:symmetric_fcts_intro}, we set $M \coloneqq (1-q)(1-t)$, and we define for every partition $\mu$
\begin{align}
	B_{\mu} & \coloneqq B_{\mu}(q,t)=\sum_{c\in \mu}q^{a_{\mu}'(c)}t^{l_{\mu}'(c)} \\
	D_{\mu} & \coloneqq MB_{\mu}(q,t)-1 \\
	T_{\mu} & \coloneqq T_{\mu}(q,t)=\prod_{c\in \mu}q^{a_{\mu}'(c)}t^{l_{\mu}'(c)} \\
	\Pi_{\mu} & \coloneqq \Pi_{\mu}(q,t)=\prod_{c\in \mu/(1)}(1-q^{a_{\mu}'(c)}t^{l_{\mu}'(c)}) \\
	w_{\mu} & \coloneqq w_{\mu}(q,t)=\prod_{c\in \mu} (q^{a_{\mu}(c)} - t^{l_{\mu}(c) + 1}) (t^{l_{\mu}(c)} - q^{a_{\mu}(c) + 1}).
\end{align}
Notice that
\begin{align}
	\label{eq:Bmu_Tmu}
	B_{\mu} = e_1[B_{\mu}]\quad \text{ and } \quad T_{\mu}=e_{|\mu|}[B_{\mu}].
\end{align}

It is useful to introduce the so called \emph{star scalar product} on $\Lambda$ given by
\[ \< p_{\lambda},p_{\mu} \>_*=(-1)^{|\mu|-|\lambda|}\prod_{i=1}^{\ell(\mu)}(1-q^{\mu_i})(1-t^{\mu_i}) z_{\mu}\chi(\lambda=\mu). \]

For every symmetric function $f[X]$ and $g[X]$ we have (see \cite{Garsia-Haiman-Tesler-Explicit-1999}*{Proposition~1.8})
\begin{equation}
	\langle f,g\rangle_*= \langle \omega \phi f,g\rangle=\langle \phi \omega f,g\rangle
\end{equation}
where 
\begin{equation}
	\phi f[X] \coloneqq f[MX]\qquad \text{ for all } f[X]\in \Lambda.
\end{equation}
For every symmetric function $f[X]$ we set
\begin{equation}
	f^*=f^*[X] \coloneqq \phi^{-1}f[X]=f\left[\frac{X}{M}\right].
\end{equation}
Then for all symmetric functions $f,g,h$ we have
\begin{equation} \label{eq:hperp_estar_adjoint}
	\langle h^\perp f,g\rangle_*=\langle h^\perp f,\omega \phi g\rangle=\langle f,h\omega \phi g\rangle=\langle f,\omega \phi ( (\omega h)^* \cdot g)\rangle =\langle f,  (\omega h)^* \cdot g \rangle_*,
\end{equation}
so the operator $h^\perp$ is the adjoint of the multiplication by $(\omega h)^*$ with respect to the star scalar product.

It turns out that the Macdonald polynomials are orthogonal with respect to the star scalar product: more precisely
\begin{align}
	\label{eq:H_orthogonality}
	\< \widetilde{H}_{\lambda},\widetilde{H}_{\mu}\>_*=w_{\mu}(q,t)\chi(\lambda=\mu).
\end{align}
These orthogonality relations give the following Cauchy identities
\begin{align}
	\label{eq:Mac_Cauchy}
	e_n \left[ \frac{XY}{M} \right] = \sum_{\mu \vdash n} \frac{ \widetilde{H}_{\mu} [X] \widetilde{H}_\mu [Y]}{w_\mu} \quad \text{ for all } n.
\end{align}
%
%The following linear operators were introduced in \cites{Bergeron-Garsia-ScienceFiction-1999, Bergeron-Garsia-Haiman-Tesler-Positivity-1999}, and they are at the basis of the conjectures relating symmetric function coeffcients and $q,t$-combinatorics in this area. 

We will use the following form of \emph{Macdonald-Koornwinder reciprocity} (see \cite{Macdonald-Book-1995}*{p.~332} or \cite{Garsia-Haiman-Tesler-Explicit-1999}): for all partitions $\alpha$ and $\beta$
\begin{align}
	\label{eq:Macdonald_reciprocity}
	\frac{\widetilde{H}_{\alpha}[MB_{\beta}]}{\Pi_{\alpha}} = \frac{\widetilde{H}_{\beta}[MB_{\alpha}]}{\Pi_{\beta}}.
\end{align}
One of the most important identities in this theory is the following one (see \cite{Garsia-Haiman-Tesler-Explicit-1999}*{Theorem~I.2}): for every symmetric function $f[X]$ and every partition $\mu$, we have
\begin{align}
	\label{eq:glenn_formula}
	\< f[X], \widetilde{H}_\mu[X+1]\>_*= \left.\nabla^{-1}\tau_{-\epsilon} f[X]\right|_{X=D_\mu}.
\end{align}

\subsection{Pieri rules and summation formulae}

For a given $k\geq 1$, we define the Pieri coefficients $c_{\mu \nu}^{(k)}$ and $d_{\mu \nu}^{(k)}$ by setting
\begin{align}
	\label{eq:def_cmunu} h_{k}^\perp \widetilde{H}_{\mu}[X] & =\sum_{\nu \subset_k \mu} c_{\mu \nu}^{(k)} \widetilde{H}_{\nu}[X], \\
	\label{eq:def_dmunu} e_{k}\left[\frac{X}{M}\right] \widetilde{H}_{\nu}[X] & = \sum_{\mu \supset_k \nu} d_{\mu \nu}^{(k)} \widetilde{H}_{\mu}[X].
\end{align}
The following identity is \cite{Bergeron-Haiman-2013}*{Proposition~5}, written in the notation of \cite{Garsia-Haglund-Xin-Zabrocki-Pieri-2016}, which is coherent with ours:
\begin{align}
	\label{eq:cmunu_recursion}
	c_{\mu \nu}^{(k+1)} = \frac{1}{B_{\mu/\nu}} \sum_{\nu\subset_1 \alpha\subset_k\mu} c_{\mu \alpha}^{(k)} c_{\alpha \nu}^{(1)} \frac{T_{\alpha}}{T_{\nu}} \quad \text{ with } \quad B_{\mu/\nu} \coloneqq B_{\mu} - B_{\nu},
\end{align}
where $\nu\subset_k \mu$ means that $\nu$ is contained in $\mu$ (as Ferrers diagrams) and $\mu/\nu$ has $k$ lattice cells, and the symbol $\mu \supset_k \nu$ is analogously defined. It follows from \eqref{eq:H_orthogonality} that
\begin{align}
	\label{eq:rel_cmunu_dmunu}
	c_{\mu \nu}^{(k)} = \frac{w_{\mu}}{w_{\nu}}d_{\mu \nu}^{(k)}.
\end{align}
For every $m\in \mathbb{N}$ with $m\geq 1$, and for every $\gamma\vdash m$, we have 
\begin{align*}
	B_{\gamma} & = e_{1}[B_{\gamma}]\\
	\text{(using \eqref{eq:Mac_hook_coeff})}& = \langle \widetilde{H}_{\gamma}, e_1h_{m-1}\rangle \\
	& = \langle \widetilde{H}_{\gamma}, h_1h_{m-1}\rangle \\
	& = \langle h_{1}^{\perp} \widetilde{H}_{\gamma}, h_{m-1}\rangle\\
	\text{(using \eqref{eq:def_dmunu})} & = \sum_{\delta\subset_1 \gamma}c_{\gamma \delta}^{(1)}\langle \widetilde{H}_{\delta}, h_{m-1}\rangle\\ 
	\text{(using \eqref{eq:Mac_hook_coeff})} & = \sum_{\delta\subset_1 \gamma}c_{\gamma \delta}^{(1)},
\end{align*}	
so we get the well-known summation formula
\begin{equation} \label{eq:Pieri_sum1}
	B_{\gamma}=\sum_{\delta\subset_1 \gamma}c_{\gamma \delta}^{(1)}.
\end{equation}

\subsection{$q$-notation}

Using the notation in Section~\ref{sec:symmetric_fcts_intro}, recall the well-known recursion
\begin{align}
	\label{eq:qbin_recursion}
	\qbinom{n}{k}_q = q^k \qbinom{n-1}{k}_q + \qbinom{n-1}{k-1}_q = \qbinom{n-1}{k}_q + q^{n-k} \qbinom{n-1}{k-1}_q.
\end{align}

Recall also the standard notation for the $q$-\emph{rising factorial}
\begin{align}
	(a;q)_s \coloneqq (1 - a)(1 - qa)(1 - q^2 a) \cdots (1 - q^{s-1} a).
\end{align}
It is well-known (cf. \cite{Stanley-Book-1999}*{Theorem~7.21.2}) that (recall that $h_0 = 1$)
\begin{align}
	\label{eq:h_q_binomial}
	h_k[[n]_q] = \frac{(q^{n};q)_k}{(q;q)_k} = \qbinom{n+k-1}{k}_q \quad \text{ for } n \geq 1 \text{ and } k \geq 0,
\end{align}
and (recall that $e_0 = 1$)
\begin{align} \label{eq:e_q_binomial}
	e_k[[n]_q] = q^{\binom{k}{2}} \qbinom{n}{k}_q \quad \text{ for all } n, k \geq 0.
\end{align}
Also (cf. \cite{Stanley-Book-1999}*{Corollary~7.21.3})
\begin{align}
	\label{eq:h_q_prspec}
	h_k\left[\frac{1}{1-q}\right] = \frac{1}{(q;q)_k} = \prod_{i=1}^k \frac{1}{1-q^i} \quad \text{ for } k \geq 0,
\end{align}
and
\begin{align}
	\label{eq:e_q_prspec}
	e_k\left[\frac{1}{1-q}\right] = \frac{q^{\binom{k-1}{2}}}{(q;q)_k} = q^{\binom{k-1}{2}} \prod_{i=1}^k \frac{1}{1-q^i} \quad \text{ for } k \geq 0.
\end{align}

\subsection{Useful identities}

In this section we collect some results from the literature that we are going to use later in the text.

The symmetric functions $E_{n,k}$ were introduced in \cite{Garsia-Haglund-qtCatalan-2002} by means of the following expansion:
\begin{align}
	\label{eq:def_Enk}
	e_n \left[ X \frac{1-z}{1-q} \right] = \sum_{k=1}^n \frac{(z;q)_k}{(q;q)_k} E_{n,k}.
\end{align}
Notice that setting $z=q^j$ in \eqref{eq:def_Enk} we get
\begin{align}
	\label{eq:en_q_sum_Enk}
	e_n \left[ X \frac{1-q^j}{1-q} \right] = \sum_{k=1}^n \frac{(q^j;q)_k}{(q;q)_k} E_{n,k} = \sum_{k=1}^n \qbinom{k+j-1}{k}_q E_{n,k} .
\end{align}
In particular, for $j=1$, we get
\begin{align}
	\label{eq:en_sum_Enk}
	e_n = E_{n,1} + E_{n,2} + \cdots +E_{n,n}.
\end{align}
The following identity is proved in \cite{Garsia-Haglund-qtCatalan-2002}*{Proposition~2.2}:
\begin{align}
	\label{eq:garsia_haglund_eval}
	\widetilde{H}_\mu[(1-t)(1-q^j)] = (1-q^j) \Pi_\mu h_j[(1-t)B_\mu].
\end{align}
So, using \eqref{eq:Mac_Cauchy} with $Y = [j]_q = \frac{1-q^j}{1-q}$, we get
\begin{align} \label{eq:qn_q_Macexp}
	e_n \left[ X \frac{1-q^j}{1-q} \right] & = \sum_{\mu \vdash n} \frac{\widetilde{H}_\mu[X] \widetilde{H}_\mu [(1-t)(1-q^j)] }{w_\mu} \\
	\notag & = (1-q^j) \sum_{\mu\vdash n} \frac{ \Pi_\mu \widetilde{H}_\mu[X] h_j[(1-t)B_\mu]}{w_\mu}.
\end{align}

For $\mu \vdash n$, Macdonald proved (see \cite{Macdonald-Book-1995}*{p.~362}) that
\begin{align}
	\label{eq:Mac_hook_coeff_ss}
	\< \widetilde{H}_{\mu}, s_{(n-r,1^r)} \> = e_r[B_{\mu}-1],
\end{align}
so that, since by Pieri rule $e_r h_{n-r} = s_{(n-r,1^r)} + s_{(n-r+1,1^{r-1})}$,
\begin{align}
	\label{eq:Mac_hook_coeff}
	\< \widetilde{H}_{\mu}, e_r h_{n-r} \> = e_r[B_{\mu}].
\end{align}
The following well-known identity is an easy consequence of \eqref{eq:Mac_hook_coeff}.

\begin{lemma}
	\label{lem:Mac_hook_coeff}
	For any symmetric function $f\in \Lambda^{(n)}$,
	\begin{align}
		\label{eq:lem_e_h_Delta}
		\< \Delta_{e_{d}} f, h_n \> = \< f, e_d h_{n-d} \>.
	\end{align}
\end{lemma}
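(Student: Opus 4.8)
The plan is to deduce this directly from the two Macdonald-polynomial identities quoted just before the statement, namely the formula $\Delta_f\widetilde H_\mu = f[B_\mu]\widetilde H_\mu$ from \eqref{eq:deltaop_def} and the expansion $\<\widetilde H_\mu, e_d h_{n-d}\> = e_d[B_\mu]$ from \eqref{eq:Mac_hook_coeff}. Since both sides of \eqref{eq:lem_e_h_Delta} are linear in $f$, it suffices to verify the identity on a basis of $\Lambda^{(n)}$, and the modified Macdonald basis $\{\widetilde H_\mu\}_{\mu\vdash n}$ is the convenient choice because $\Delta_{e_d}$ acts diagonally on it.

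First I would fix a partition $\mu\vdash n$ and take $f=\widetilde H_\mu$. On the left-hand side we compute
\begin{align*}
	\< \Delta_{e_d}\widetilde H_\mu, h_n\> = \< e_d[B_\mu]\,\widetilde H_\mu, h_n\> = e_d[B_\mu]\,\<\widetilde H_\mu, h_n\>,
\end{align*}
using that $e_d[B_\mu]$ is a scalar in $\mathbb{Q}(q,t)$ that can be pulled out of the Hall pairing. Now $\<\widetilde H_\mu, h_n\> = 1$ for every $\mu\vdash n$: this is the $d=0$ (equivalently $r=0$) instance of \eqref{eq:Mac_hook_coeff}, since $e_0 h_n = h_n$ and $e_0[B_\mu]=1$ by the convention $e_0=1$. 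Hence the left-hand side equals $e_d[B_\mu]$. On the right-hand side, \eqref{eq:Mac_hook_coeff} gives directly $\<\widetilde H_\mu, e_d h_{n-d}\> = e_d[B_\mu]$. So the two sides agree on each $\widetilde H_\mu$, and by linearity they agree for all $f\in\Lambda^{(n)}$.

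Since the whole argument is a two-line manipulation built entirely on identities already established in the excerpt, there is essentially no obstacle; the only point requiring a word of care is the normalization $\<\widetilde H_\mu, h_n\>=1$, which should be stated explicitly (as the $r=0$ case of \eqref{eq:Mac_hook_coeff}) rather than taken for granted. One could alternatively phrase the proof using the adjointness $\<\Delta_{e_d} f, h_n\> = \<f, \Delta_{e_d}^{\,*} h_n\>$, but since $\Delta_{e_d}$ is self-adjoint on each $\widetilde H_\mu$ only up to the eigenvalue this is really the same computation, so I would keep the basis-expansion presentation as the cleanest route.
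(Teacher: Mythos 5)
Your proof is correct and follows exactly the same route as the paper: check on the Macdonald basis $\{\widetilde H_\mu\}_{\mu\vdash n}$, pull out the eigenvalue $e_d[B_\mu]$, and apply \eqref{eq:Mac_hook_coeff} at $r=d$ and $r=0$. The paper's proof is the same one-line computation, just more tersely stated.
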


\begin{proof}
	Checking it on the Macdonald basis elements $\widetilde{H}_\mu\in \Lambda^{(n)}$, using \eqref{eq:Mac_hook_coeff}, we get
	\[ \< \Delta_{e_{d}} \widetilde{H}_\mu, h_n \> = e_d[B_\mu] \< \widetilde{H}_\mu, h_n \> = e_d[B_\mu] = \< \widetilde{H}_\mu, e_d h_{n-d} \>. \]
\end{proof}

The following lemma is due to Haglund. 
\begin{lemma}[\cite{Haglund-Schroeder-2004}*{Corollary~2}]
	\label{lem:Haglund}
	For positive integers $d,n$ and any symmetric function $f\in \Lambda^{(n)}$,
	\begin{align} \label{eq:Haglund_Lemma}
		\< \Delta_{e_{d-1}} e_n, f \> = \< \Delta_{\omega f} e_d, h_d \>.
	\end{align}
\end{lemma}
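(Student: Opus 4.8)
The plan is to push both sides into the modified Macdonald basis and then let Macdonald--Koornwinder reciprocity do the work. Since \eqref{eq:Haglund_Lemma} is linear in $f$, it suffices to check it when $f$ runs over the basis of $\Lambda^{(n)}$ that is Hall-dual to $\{\widetilde H_\mu\}_{\mu\vdash n}$. From $\langle \widetilde H_\lambda,\widetilde H_\mu\rangle_* = w_\mu\,\chi(\lambda=\mu)$ (equation \eqref{eq:H_orthogonality}) together with $\langle g,h\rangle_* = \langle \omega\phi g,h\rangle$, this dual basis is $\{G_\lambda := w_\lambda^{-1}\,\omega\phi\widetilde H_\lambda\}_{\lambda\vdash n}$; in particular $\omega G_\lambda = w_\lambda^{-1}\phi\widetilde H_\lambda$, so $(\omega G_\lambda)[B_\mu] = w_\lambda^{-1}\,\widetilde H_\lambda[MB_\mu]$ for every partition $\mu$.

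Two preliminary expansions are needed. Setting the second alphabet equal to $M$ in the Cauchy identity \eqref{eq:Mac_Cauchy} gives $e_N[X] = \sum_{\nu\vdash N}w_\nu^{-1}\,\widetilde H_\nu[X]\,\widetilde H_\nu[M]$ for $N\ge 1$, while \eqref{eq:garsia_haglund_eval} with $j=1$ gives $\widetilde H_\nu[M] = MB_\nu\Pi_\nu$. Since $\Delta_{e_{d-1}}$ is diagonal on the $\widetilde H_\nu$, applying it to the $N=n$ expansion and pairing with $f=G_\lambda$ ($\lambda\vdash n$) leaves only the term $\nu=\lambda$, so the left-hand side of \eqref{eq:Haglund_Lemma} equals $w_\lambda^{-1}\,e_{d-1}[B_\lambda]\,MB_\lambda\Pi_\lambda$.

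For the right-hand side I would apply $\Delta_{\omega G_\lambda}$ to the $N=d$ expansion of $e_d$ and pair with $h_d$, using $\langle \widetilde H_\mu,h_d\rangle = e_0[B_\mu] = 1$ from \eqref{eq:Mac_hook_coeff}; this gives $\frac{M}{w_\lambda}\sum_{\mu\vdash d} w_\mu^{-1}\,\widetilde H_\lambda[MB_\mu]\,B_\mu\Pi_\mu$. Then reciprocity \eqref{eq:Macdonald_reciprocity} rewrites $\widetilde H_\lambda[MB_\mu] = (\Pi_\lambda/\Pi_\mu)\,\widetilde H_\mu[MB_\lambda]$, turning this into $\frac{M\Pi_\lambda}{w_\lambda}\sum_{\mu\vdash d} w_\mu^{-1}\,\widetilde H_\mu[MB_\lambda]\,B_\mu$. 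The remaining sum I would evaluate by writing $B_\mu = e_1[B_\mu] = \langle \widetilde H_\mu,e_1h_{d-1}\rangle$ (again \eqref{eq:Mac_hook_coeff}) and invoking the elementary identity $\sum_{\mu\vdash d} w_\mu^{-1}\,\widetilde H_\mu[X]\,\langle \widetilde H_\mu,g\rangle = (\omega g)[X/M]$, which follows by pairing \eqref{eq:Mac_Cauchy} with $g$ in the second alphabet and using the ordinary Cauchy identity \eqref{eq:Cauchy_identities}; with $g=e_1h_{d-1}$ and $X=MB_\lambda$ this yields $\sum_{\mu\vdash d} w_\mu^{-1}\,\widetilde H_\mu[MB_\lambda]\,B_\mu = h_1[B_\lambda]\,e_{d-1}[B_\lambda] = B_\lambda\,e_{d-1}[B_\lambda]$. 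Substituting back produces $w_\lambda^{-1}\,MB_\lambda\Pi_\lambda\,e_{d-1}[B_\lambda]$, which matches the left-hand side.

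The one genuinely delicate step will be reconciling a sum over partitions of $n$ with a sum over partitions of $d$ — sizes that are unrelated in general — and it is precisely Macdonald--Koornwinder reciprocity \eqref{eq:Macdonald_reciprocity} that bridges them; the rest is plethystic bookkeeping. I would also keep track of the hypothesis $d,n\ge 1$, since that is exactly what validates the expansions $e_N = \sum_\nu w_\nu^{-1}\widetilde H_\nu\,\widetilde H_\nu[M]$ and the evaluation $\widetilde H_\nu[M] = MB_\nu\Pi_\nu$ (the empty partition must be excluded).
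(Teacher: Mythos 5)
Your proof is correct; I traced through each step. The dual-basis reduction to $f = G_\lambda$ is sound (with $\langle G_\lambda, \widetilde H_\mu\rangle = \chi(\lambda=\mu)$), the expansion $e_N[X] = \sum_{\nu\vdash N}w_\nu^{-1}MB_\nu\Pi_\nu\widetilde H_\nu[X]$ that you re-derive from \eqref{eq:Mac_Cauchy} and \eqref{eq:garsia_haglund_eval} is already recorded in the paper as \eqref{eq:en_expansion} and could simply be cited, the evaluations $\langle\widetilde H_\mu,h_d\rangle=1$ and $(\omega G_\lambda)[B_\mu]=w_\lambda^{-1}\widetilde H_\lambda[MB_\mu]$ are both right, and invoking reciprocity \eqref{eq:Macdonald_reciprocity} across partitions of different sizes is exactly what it is for. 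The one identity you derive ``by hand,'' namely $\sum_{\mu\vdash d}w_\mu^{-1}B_\mu\widetilde H_\mu[X] = (h_1 e_{d-1})[X/M]$, is the $k=1$, $n=d$ case of \eqref{eq:e_h_expansion}, so that too is already in the paper. Note that the paper itself supplies no proof of this lemma — it cites \cite{Haglund-Schroeder-2004}*{Corollary~2} — so there is no in-paper argument to compare against; but your derivation is precisely in the style the paper uses elsewhere, most visibly in Section~\ref{sec:proof_identity_symmetry} for Theorem~\ref{thm:schroeder_identity}, which similarly couples \eqref{eq:en_expansion}, \eqref{eq:e_h_expansion}, and Macdonald--Koornwinder reciprocity to bridge sums over partitions of unrelated sizes. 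This is a clean, self-contained reconstruction that validates the citation, and could be shortened by replacing your two from-scratch derivations with references to \eqref{eq:en_expansion} and \eqref{eq:e_h_expansion}.
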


The following theorem is due to Haglund.

\begin{theorem}[\cite{Haglund-Schroeder-2004}*{Theorem~2.11}]
	\label{thm:Haglund_formula}
	For $n,k\in \mathbb{N}$ with $1\leq k\leq n$, \[ \< \nabla E_{n,k},h_n\>= \delta_{n,k}, \]
	where $\delta_{n,k}$ is the Kronecker delta function, i.e. $\delta_{n,k}=1$ if $n=k$ and $\delta_{n,k}=0$ if $n\neq k$.
	
	In addition, if $m>0$ and $\lambda \vdash m$,
	\begin{align}
		\< \Delta_{s_\lambda} \nabla E_{n,k}, h_n \> = t^{n-k} \< \Delta_{h_{n-k}} e_m \left[ X \frac{1-q^k}{1-q} \right], s_{\lambda'} \>,
	\end{align}
	or equivalently
	\begin{align}
		\< \Delta_{s_\lambda} \nabla E_{n,k}, h_n\> = t^{n-k} \sum_{\mu \vdash m} \frac{(1-q^k) h_k[(1-t)B_\mu] h_{n-k}[B_\mu] \Pi_{\mu} \widetilde{K}_{\lambda' \mu}}{w_{\mu}} .
	\end{align}
\end{theorem}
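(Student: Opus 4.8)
The plan is to prove the statement in three moves: first dispatch the ``equivalently'' reformulation, which is immediate; then establish the main formula for $\langle \Delta_{s_\lambda}\nabla E_{n,k},h_n\rangle$ by a generating‑function argument in an auxiliary integer parameter $j$; and finally recover $\langle \nabla E_{n,k},h_n\rangle=\delta_{n,k}$ from a simplified version of the same argument. For the reformulation: apply the Macdonald expansion \eqref{eq:qn_q_Macexp} (with $n$ replaced by $m$ and $j$ by $k$), then $\Delta_{h_{n-k}}$, which multiplies the term indexed by $\mu$ by $h_{n-k}[B_\mu]$, and pair against $s_{\lambda'}$ using $\langle \widetilde{H}_\mu, s_{\lambda'}\rangle = \widetilde{K}_{\lambda'\mu}$; this gives
\[ t^{n-k}\langle \Delta_{h_{n-k}} e_m[X\tfrac{1-q^k}{1-q}], s_{\lambda'}\rangle = t^{n-k}\sum_{\mu\vdash m}\frac{(1-q^k)\,h_k[(1-t)B_\mu]\,h_{n-k}[B_\mu]\,\Pi_\mu\,\widetilde{K}_{\lambda'\mu}}{w_\mu}, \]
so it suffices to prove the identity in either form.

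For the main identity, I would work with $e_n[X\frac{1-q^j}{1-q}]\in\Lambda^{(n)}$ for integers $j\geq 1$ and compute $\langle \Delta_{s_\lambda}\nabla e_n[X\frac{1-q^j}{1-q}],h_n\rangle$ in two ways. From the definition \eqref{eq:en_q_sum_Enk} of the symmetric functions $E_{n,k}$, linearity gives
\[ \langle \Delta_{s_\lambda}\nabla e_n[X\tfrac{1-q^j}{1-q}], h_n\rangle = \sum_{k=1}^n \qbinom{k+j-1}{k}_q \langle \Delta_{s_\lambda}\nabla E_{n,k}, h_n\rangle. \]
From the Macdonald expansion \eqref{eq:qn_q_Macexp}, the eigenrelations $\nabla \widetilde{H}_\nu = T_\nu \widetilde{H}_\nu$ and $\Delta_{s_\lambda}\widetilde{H}_\nu = s_\lambda[B_\nu]\widetilde{H}_\nu$, and $\langle \widetilde{H}_\nu, h_n\rangle = 1$ (the case $r=0$ of \eqref{eq:Mac_hook_coeff}), the same scalar product equals
\[ (1-q^j)\sum_{\nu\vdash n}\frac{\Pi_\nu\,T_\nu\,s_\lambda[B_\nu]\,h_j[(1-t)B_\nu]}{w_\nu}. \]
Equating the two produces, for every $j\geq 1$, a polynomial identity in $q^j$.

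The heart of the proof — and the step I expect to be the main obstacle — is to massage this $\nu$‑sum into the shape $\sum_{k=1}^n \qbinom{k+j-1}{k}_q$ times the claimed value of $\langle \Delta_{s_\lambda}\nabla E_{n,k},h_n\rangle$. The first observation is that $(1-q^j)\Pi_\nu h_j[(1-t)B_\nu] = \widetilde{H}_\nu[(1-t)(1-q^j)]$ by \eqref{eq:garsia_haglund_eval}, i.e.\ $\widetilde{H}_\nu$ evaluated at $MB_{(j)}$ for the one‑row partition $(j)$. Then Macdonald-Koornwinder reciprocity \eqref{eq:Macdonald_reciprocity}, together with the Cauchy identity \eqref{eq:Mac_Cauchy}, the reproducing identity \eqref{eq:glenn_formula} and Pieri‑type expansions, is used to migrate the sum from the alphabets $B_\nu$ (over $\nu\vdash n$) to the alphabets $B_\mu$ (over $\mu\vdash m$): the Schur evaluation $s_\lambda[B_\nu]$ turns into the Kostka coefficient $\widetilde{K}_{\lambda'\mu}$, the $q$‑binomial coefficients $\qbinom{k+j-1}{k}_q=h_k[[j]_q]$ appear via \eqref{eq:h_q_binomial}, and the power $t^{n-k}$ is generated by the same bookkeeping. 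This is the computation carried out by Haglund (cf.\ \cite{Haglund-Schroeder-2004} and its equation~(2.38)); keeping careful track of the plethystic substitutions and of the separation of the different $E_{n,k}$ is the delicate part.

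Finally, since $\qbinom{k+j-1}{k}_q = (q^j;q)_k/(q;q)_k$ is a polynomial of degree $k$ in $q^j$, the family $\{\qbinom{k+j-1}{k}_q\}_{k=1}^n$ is linearly independent as $j$ varies; comparing coefficients in the two expansions yields the asserted formula for $\langle \Delta_{s_\lambda}\nabla E_{n,k},h_n\rangle$. The first equation then follows from the same argument with $\Delta_{s_\lambda}$ removed: the $\nu$‑sum equals $\langle \nabla e_n[X\frac{1-q^j}{1-q}], h_n\rangle$, which via the Cauchy identity \eqref{eq:Cauchy_identities} (writing $e_n[X[j]_q] = \sum_\nu s_\nu[X]\,s_{\nu'}[[j]_q]$) and the standard fact $\langle \nabla s_\nu, h_n\rangle = \delta_{\nu,(1^n)}$ collapses to $s_{(n)}[[j]_q] = h_n[[j]_q] = \qbinom{n+j-1}{n}_q$; linear independence forces $\langle \nabla E_{n,k},h_n\rangle = \delta_{n,k}$. (Equivalently, this is the degenerate case $\lambda=\emptyset$, $m=0$ of the second part, where $e_0[X\frac{1-q^k}{1-q}]=1$ and $\Delta_{h_{n-k}}1 = h_{n-k}[B_\emptyset]\cdot 1 = h_{n-k}[0]\cdot 1 = \delta_{n,k}$.)
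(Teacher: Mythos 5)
Your setup is sound: the reduction of the "equivalently" part to the Macdonald expansion \eqref{eq:qn_q_Macexp} together with $\langle \widetilde{H}_\mu, s_{\lambda'}\rangle = \widetilde{K}_{\lambda'\mu}$ is fully carried out and correct; the observation that $\qbinom{k+j-1}{k}_q$ is a polynomial of degree $k$ in $q^j$, so that the family $\{\qbinom{k+j-1}{k}_q\}_{k=1}^n$ is linearly independent and the $j$-generating-function identity determines all the $\langle \Delta_{s_\lambda}\nabla E_{n,k},h_n\rangle$ simultaneously, is also valid; and the recovery of $\langle\nabla E_{n,k},h_n\rangle=\delta_{n,k}$ as the degenerate case $\lambda=\emptyset$, $m=0$ of the second formula (using $h_{n-k}[B_\emptyset]=\delta_{n,k}$) is clean and preferable to the appeal to $\langle\nabla s_\nu,h_n\rangle=\delta_{\nu,(1^n)}$, which you should not treat as folklore without justification.

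However, there is a genuine gap at exactly the step you flag as "the heart of the proof": you reduce the theorem to showing that
\[
(1-q^j)\sum_{\nu\vdash n}\frac{\Pi_\nu\,T_\nu\,s_\lambda[B_\nu]\,h_j[(1-t)B_\nu]}{w_\nu}
\;=\;
\sum_{k=1}^n \qbinom{k+j-1}{k}_q\,t^{n-k}\,(1-q^k)\sum_{\mu\vdash m}\frac{\Pi_\mu\,h_k[(1-t)B_\mu]\,h_{n-k}[B_\mu]\,\widetilde{K}_{\lambda'\mu}}{w_\mu},
\]
and then you write "this is the computation carried out by Haglund" rather than performing it. That equality is the entire content of the theorem: it passes from a sum over $\nu\vdash n$ to a sum over $\mu\vdash m$ via reciprocity, changes $s_\lambda[B_\nu]$ into $\widetilde{K}_{\lambda'\mu}$, and isolates the $q$-binomial factor $\qbinom{k+j-1}{k}_q=h_k[[j]_q]$ — none of which is routine. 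The ingredients you list (\eqref{eq:garsia_haglund_eval}, \eqref{eq:Macdonald_reciprocity}, \eqref{eq:Mac_Cauchy}, \eqref{eq:h_q_binomial}) are the right ones, but citing the original reference for the delicate step does not amount to a proof. To close the gap you would need, for instance, to invoke Haglund's formula $\nabla E_{n,k}=t^{n-k}(1-q^k)\,\mathbf{\Pi}\big(h_k[\tfrac{X}{1-q}]\,h_{n-k}[\tfrac{X}{M}]\big)$ (equation \eqref{eq:Haglund_nablaEnk}) and then expand $h_k[\tfrac{X}{1-q}]h_{n-k}[\tfrac{X}{M}]$ along the lines of \eqref{eq:GHS_2_6}, or carry out the reciprocity computation directly; as written, the argument is an outline, not a proof.
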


We need another theorem of Haglund: the following is essentially \cite{Haglund-Schroeder-2004}*{Theorem~2.5}.
\begin{theorem}
	For $k,n\in \mathbb{N}$ with $1\leq k\leq n$, 
	\begin{align}
		\label{eq:Haglund_nablaEnk}
		\nabla E_{n,k} = t^{n-k}(1-q^k) \mathbf{\Pi} h_k \left[ \frac{X}{1-q} \right] h_{n-k} \left[ \frac{X}{M} \right],
	\end{align}
	where $\mathbf{\Pi}$ is the invertible linear operator defined by
	\begin{equation}
		\mathbf{\Pi} \widetilde{H}_\mu[X] = \Pi_\mu \widetilde{H}_\mu[X] \qquad \text{ for all } \mu.
	\end{equation}
\end{theorem}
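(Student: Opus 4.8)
The plan is to establish \eqref{eq:Haglund_nablaEnk} for all $k$ at once by applying $\nabla$ to the defining relation \eqref{eq:def_Enk}. Since $\nabla$ is linear and the polynomials $(z;q)_1/(q;q)_1,\dots,(z;q)_n/(q;q)_n$ are linearly independent in $z$ (they have distinct degrees $1,\dots,n$), the family \eqref{eq:Haglund_nablaEnk} over $1\le k\le n$ is equivalent to the single identity
\[
\nabla\!\left(e_n\!\left[X\,\frac{1-z}{1-q}\right]\right) \;=\; \sum_{k=1}^{n}\frac{(z;q)_k}{(q;q)_k}\,t^{\,n-k}(1-q^k)\,\mathbf{\Pi}\!\left(h_k\!\left[\frac{X}{1-q}\right]h_{n-k}\!\left[\frac{X}{M}\right]\right).
\]
(Equivalently, one may specialize $z=q^j$, use \eqref{eq:en_q_sum_Enk}, and invert the triangular $q$-binomial system.)

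Next I would expand both sides in the Macdonald basis. For the left-hand side, using $X\tfrac{1-z}{1-q}=\tfrac{X\,(1-t)(1-z)}{M}$ in the Macdonald--Cauchy identity \eqref{eq:Mac_Cauchy} together with $\nabla\widetilde{H}_\mu=T_\mu\widetilde{H}_\mu$ gives
\[
\nabla\!\left(e_n\!\left[X\,\frac{1-z}{1-q}\right]\right)=\sum_{\mu\vdash n}\frac{T_\mu\,\widetilde{H}_\mu[(1-t)(1-z)]}{w_\mu}\,\widetilde{H}_\mu[X].
\]
For the right-hand side, since $\mathbf{\Pi}\widetilde{H}_\mu=\Pi_\mu\widetilde{H}_\mu$ and the $\widetilde{H}_\mu$ are $\langle\,,\,\rangle_*$-orthogonal by \eqref{eq:H_orthogonality}, the coefficient of $\widetilde{H}_\mu[X]$ equals $\Pi_\mu/w_\mu$ times $\bigl\langle \sum_k \tfrac{(z;q)_k}{(q;q)_k}t^{n-k}(1-q^k)\,h_k[\tfrac{X}{1-q}]\,h_{n-k}[\tfrac{X}{M}],\ \widetilde{H}_\mu\bigr\rangle_*$. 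Hence the whole statement reduces to proving, for each $\mu\vdash n$,
\[
T_\mu\,\widetilde{H}_\mu[(1-t)(1-z)]\;=\;\Pi_\mu\left\langle \sum_{k=1}^{n}\frac{(z;q)_k}{(q;q)_k}\,t^{\,n-k}(1-q^k)\,h_k\!\left[\frac{X}{1-q}\right]h_{n-k}\!\left[\frac{X}{M}\right],\ \widetilde{H}_\mu\right\rangle_* .
\]

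The computational core is the evaluation of this star scalar product. I would move the two multiplication operators onto $\widetilde{H}_\mu$ by the adjunction \eqref{eq:hperp_estar_adjoint} (writing $h_{n-k}[X/M]$ and $h_k[X/(1-q)]$ in the shape $(\omega g)^*$), so that the pairing becomes a sum over subdiagrams $\nu\subseteq\mu$ weighted by Pieri coefficients $c^{(k)}_{\mu\nu}$, $d^{(k)}_{\mu\nu}$ from \eqref{eq:def_cmunu}--\eqref{eq:def_dmunu} and by explicit $q$-binomials arising from specializations of complete homogeneous symmetric functions at geometric alphabets \eqref{eq:h_q_prspec}. Summing over $k$ against $(z;q)_k/(q;q)_k$ should then collapse by the $q$-binomial theorem to a closed form; evaluated at $z=q^j$ it must match $\widetilde{H}_\mu[(1-t)(1-q^j)]=(1-q^j)\Pi_\mu h_j[(1-t)B_\mu]$ (Garsia--Haglund, \eqref{eq:garsia_haglund_eval}) multiplied by $T_\mu=e_{|\mu|}[B_\mu]$ (from \eqref{eq:Bmu_Tmu}), which is exactly the left-hand side. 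Carrying out this scalar-product evaluation and the attendant $q$-binomial summation is the main obstacle; it is precisely the content of Haglund's computation, so alternatively the theorem may simply be quoted as \cite{Haglund-Schroeder-2004}*{Theorem~2.5}, the only difference being the cosmetic repackaging of the right-hand side via the operator $\mathbf{\Pi}$.
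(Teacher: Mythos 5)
The paper does not in fact prove this theorem: it is introduced with ``We need another theorem of Haglund: the following is essentially \cite{Haglund-Schroeder-2004}*{Theorem~2.5}'' and is then stated without proof. So there is no in-paper argument to compare against; the paper's treatment coincides with your final sentence, namely quoting Haglund (equivalently equation~(7.86) of \cite{Haglund-Book-2008}, which the paper invokes later in precisely that form).

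As for your sketch of a from-scratch derivation, the framework --- linearize in $z$ via \eqref{eq:def_Enk}, expand the left side in the Macdonald basis via \eqref{eq:Mac_Cauchy} with $Y=(1-t)(1-z)$, match coefficients of $\widetilde H_\mu$ using $\langle\cdot\,,\cdot\rangle_*$-orthogonality, and check at $z=q^j$ against \eqref{eq:garsia_haglund_eval} --- is sound in outline. But the computational core is where it stalls, and in two places more than you acknowledge. First, the adjunction step is not as clean as phrased: $h_{n-k}[X/M]$ is indeed $(\omega e_{n-k})^*$, so its adjoint under $\langle\cdot\,,\cdot\rangle_*$ is $e_{n-k}^\perp$, but $h_k[X/(1-q)]$ is $g^*$ only for $g=h_k[(1-t)X]$, whose $\omega$-image $e_k[(1-t)X]$ is not a single Pieri operator; you would first need to expand it (e.g.\ via $s_\lambda[1-t]$ supported on hooks, \eqref{eq:s_plethystic_eval}), which introduces an additional sum over hooks and a second layer of Pieri coefficients. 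Second, the assertion that the resulting double/triple sum over $k$ and over subdiagrams ``collapses by the $q$-binomial theorem'' to $T_\mu\,\widetilde H_\mu[(1-t)(1-z)]$ is exactly what needs proving and is not substantiated. That collapse is the actual content of Haglund's Theorem~2.5, and his own argument goes through a different mechanism (properties of $\nabla$ and the $B_k$/$\boldsymbol{\Pi}$ operator calculus) rather than a direct star scalar-product evaluation. So as a standalone proof your plan has a genuine gap at its center; as a decision about how to treat the statement in this paper, your fallback (cite Haglund) is exactly what the authors do.
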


The following expansions are well-known, and they can be deduced from the Cauchy identities \eqref{eq:Mac_Cauchy}.

\begin{proposition} 
	For $n\in \mathbb{N}$ we have
	\begin{align}
		\label{eq:en_expansion}
		e_n[X] = e_n \left[ \frac{XM}{M} \right] = \sum_{\mu \vdash n} \frac{M B_\mu \Pi_{\mu} \widetilde{H}_\mu[X]}{w_\mu}.
	\end{align}
	Moreover, for all $k\in \mathbb{N}$ with $0\leq k\leq n$, we have
	\begin{align}
		\label{eq:e_h_expansion}
		h_k \left[ \frac{X}{M} \right] e_{n-k} \left[ \frac{X}{M} \right] = \sum_{\mu \vdash n} \frac{e_k[B_\mu] \widetilde{H}_\mu[X]}{w_\mu}.
	\end{align}
\end{proposition}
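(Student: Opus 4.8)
The plan is to derive both expansions from the Macdonald Cauchy identity \eqref{eq:Mac_Cauchy} together with the orthogonality \eqref{eq:H_orthogonality} of the modified Macdonald polynomials with respect to the star scalar product, evaluating the resulting coefficients by means of \eqref{eq:garsia_haglund_eval} and \eqref{eq:Mac_hook_coeff}.

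For \eqref{eq:en_expansion}, I would write $e_n[X] = e_n\!\left[\frac{X\cdot M}{M}\right]$ and apply \eqref{eq:Mac_Cauchy} with the second alphabet set to $Y = M$, obtaining $e_n[X] = \sum_{\mu\vdash n} \frac{\widetilde{H}_\mu[X]\,\widetilde{H}_\mu[M]}{w_\mu}$. It then remains to show that $\widetilde{H}_\mu[M] = M B_\mu \Pi_\mu$. Since $M = (1-t)(1-q)$, this is precisely the case $j = 1$ of \eqref{eq:garsia_haglund_eval}, once one notes that $h_1[(1-t)B_\mu] = p_1[(1-t)B_\mu] = (1-t)B_\mu$; hence $\widetilde{H}_\mu[M] = (1-q)\,\Pi_\mu\,(1-t)B_\mu = MB_\mu\Pi_\mu$, which yields \eqref{eq:en_expansion}.

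For \eqref{eq:e_h_expansion}, set $g \coloneqq h_k\!\left[\frac{X}{M}\right]e_{n-k}\!\left[\frac{X}{M}\right]$, which lies in $\Lambda^{(n)}$, and recall that by \eqref{eq:H_orthogonality} the basis $\{\widetilde{H}_\mu\}_{\mu\vdash n}$ is orthogonal for $\langle\,,\,\rangle_*$ with $\langle \widetilde{H}_\mu,\widetilde{H}_\mu\rangle_* = w_\mu \neq 0$. Writing $g = \sum_{\mu\vdash n} c_\mu \widetilde{H}_\mu$ and pairing with $\widetilde{H}_\mu$ gives $c_\mu = \langle g, \widetilde{H}_\mu\rangle_* / w_\mu$, so it suffices to check that $\langle g, \widetilde{H}_\mu\rangle_* = e_k[B_\mu]$. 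Using $\langle f,h\rangle_* = \langle \omega\phi f, h\rangle$ with $\phi f[X] = f[MX]$, one has $\phi g = h_k[X]\,e_{n-k}[X]$ (indeed $\phi$ inverts the substitution $X \mapsto X/M$), hence $\omega\phi g = \omega(h_k e_{n-k}) = e_k h_{n-k}$, and therefore $\langle g, \widetilde{H}_\mu\rangle_* = \langle e_k h_{n-k}, \widetilde{H}_\mu\rangle = e_k[B_\mu]$ by \eqref{eq:Mac_hook_coeff}. This is exactly \eqref{eq:e_h_expansion}.

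All the steps are routine; the only point requiring a bit of care is the bookkeeping of the operators $\omega$ and $\phi$ and of the plethystic substitution $X \mapsto X/M$ — in particular the fact that $\phi$ is the inverse of $f \mapsto f^* = f[X/M]$, which is what makes $\phi g = h_k e_{n-k}$ hold exactly. I do not expect any genuine obstacle beyond aligning the cited identities correctly.
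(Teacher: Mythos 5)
Your proof is correct and is the standard deduction the paper has in mind when it says the expansions ``can be deduced from the Cauchy identities.'' For \eqref{eq:en_expansion} you specialize the Macdonald Cauchy kernel at $Y=M$ and evaluate $\widetilde{H}_\mu[M]$ via the $j=1$ case of \eqref{eq:garsia_haglund_eval}, and for \eqref{eq:e_h_expansion} you read off the coefficients from the $*$-orthogonality \eqref{eq:H_orthogonality} together with the adjointness relation $\langle f,g\rangle_*=\langle\omega\phi f,g\rangle$ and Macdonald's hook formula \eqref{eq:Mac_hook_coeff}; all the bookkeeping ($\phi$ being the inverse of $f\mapsto f^*=f[X/M]$, and $\omega(h_ke_{n-k})=e_kh_{n-k}$) is handled correctly.
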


%Finally, we need to recall (part of) a theorem from \cite{DAdderio-VandenWyngaerd-2017}, which turns out to be crucial: set
%\begin{align}
%	\notag F_{n,k}^{(d,\ell)} & \coloneqq \langle \Delta_{h_{\ell}} \Delta_{e_{n-d-\ell}}E_{n-\ell,k},e_{n-\ell}\rangle .
%\end{align}
%\begin{theorem}[\cite{DAdderio-VandenWyngaerd-2017}*{Theorem~4.6}] \label{thm:dvw}
%	For $k,\ell,d\geq 0$, $n\geq k+\ell$ and $n\geq d$, the $F_{n,k}^{(d,\ell)}$ satisfy the following recursion: for $n\geq 1$
%	\begin{align}
%		F_{n,n}^{(d,\ell)} & =\delta_{\ell,0} q^{\binom{n-d}{2}}\qbinom{n}{d}_q,
%	\end{align}
%	\begin{align*}
%		F_{n,k}^{(d,\ell)}
%		& = \chi(n=k+\ell) q^{\binom{k-d}{2}}\qbinom{n-1}{\ell}_q \qbinom{k}{d}_q\\
%		& \quad + \sum_{j=0}^{n-k}t^{n-k-j}\sum_{s=0}^{k}\sum_{h=1}^{n-k-j}q^{\binom{s}{2}} \qbinom{k}{s}_q \qbinom{k+j-1}{j}_q \qbinom{s+j-1+h}{h}_q F_{n-k-j,h}^{(d-k+s,\ell-j)}.
%	\end{align*}
%	with initial conditions
%	\begin{align}
%		F_{0,k}^{(d,\ell)} & = \delta_{k,0}\delta_{\ell,0}\delta_{d,0}, \qquad F_{n,0}^{(d,\ell)} = \delta_{n,0}\delta_{\ell,0}\delta_{d,0}.
%	\end{align}
%\end{theorem}

%-------------------------------------------------------------------------------

\section{A summation formula} \label{sec:sum_formula}

This section is the most technically demanding part of the paper and it is devoted to prove the following theorem.
\begin{theorem}
	For $m,k\geq 1$ and $\ell\geq 0$, we have
	\begin{equation} \label{eq:mastereq}
		\sum_{\gamma\vdash m}\frac{\widetilde{H}_\gamma[X]}{w_\gamma} h_k[(1-t)B_\gamma]e_\ell[B_\gamma]=\qquad \qquad \qquad \qquad  \qquad \qquad  \qquad \qquad 
	\end{equation}
	\begin{align*} 
		=\sum_{j=0}^{\ell} t^{\ell-j}\sum_{s=0}^{k}q^{\binom{s}{2}} \begin{bmatrix}
			s+j\\
			s
		\end{bmatrix}_q \begin{bmatrix}
			k+j-1\\
			s+j-1
		\end{bmatrix}_qh_{s+j}\left[\frac{X}{1-q}\right] h_{\ell-j}\left[\frac{X}{M}\right] e_{m-s-\ell}\left[\frac{X}{M}\right].
	\end{align*}
\end{theorem}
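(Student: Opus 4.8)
The plan is to expand the left-hand side using the Macdonald–Koornwinder machinery collected in the excerpt and then reorganize the result into the stated form. First I would use equation \eqref{eq:garsia_haglund_eval}, namely $\widetilde{H}_\mu[(1-t)(1-q^j)] = (1-q^j)\Pi_\mu h_j[(1-t)B_\mu]$, combined with the Cauchy identity \eqref{eq:Mac_Cauchy}, to recognize that the generating function $\sum_{\gamma\vdash m} \frac{\widetilde{H}_\gamma[X]}{w_\gamma} h_k[(1-t)B_\gamma]\,z^{(\cdot)}$-type sums are governed by the $E_{n,k}$ expansion \eqref{eq:def_Enk}–\eqref{eq:qn_q_Macexp}. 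More precisely, the factor $h_k[(1-t)B_\gamma]$ is the natural coefficient appearing when one plethystically substitutes $X \mapsto X\frac{1-z}{1-q}$; the extra factor $e_\ell[B_\gamma]$ can be introduced by applying a Delta-type operator, since $\Delta_{e_\ell}$ multiplies $\widetilde H_\gamma$ by $e_\ell[B_\gamma]$. So the strategy is: write the left side as $\frac{1}{1-q^k}\,\Delta_{e_\ell}$ applied to (the $h_k[(1-t)B_\gamma]$-weighted Macdonald expansion of) $e_m\!\left[X\frac{1-q^k}{1-q}\right]$, up to the $\Pi_\gamma$ factor, and then transport this through the identities until it becomes visibly equal to the right side.

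The cleanest route is probably to avoid $\Pi_\gamma$ altogether by pairing the identity against a test family. I would check \eqref{eq:mastereq} by taking the star scalar product of both sides against $\widetilde H_\mu$ for an arbitrary partition $\mu$, using \eqref{eq:H_orthogonality} so that the left side collapses to $h_k[(1-t)B_\mu]e_\ell[B_\mu]$. On the right side I would use \eqref{eq:e_h_expansion}, $h_a[X/M]e_{b}[X/M] = \sum_\mu \frac{e_a[B_\mu]\widetilde H_\mu[X]}{w_\mu}$, together with the analogous expansion for $h_c[X/(1-q)]$ — which follows from \eqref{eq:Mac_Cauchy} specialized via \eqref{eq:garsia_haglund_eval} at the point $Y = \frac{1}{1-q}$, i.e. the ``$q^j \to 0$'' or generating-function form. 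This reduces the whole identity to a polynomial identity in the variables $q,t$ and $B_\mu$, in fact to an identity among products like $h_{s+j}[(1-t)B_\mu/(1-q)\cdot\text{stuff}]$, $h_{\ell-j}[B_\mu]$, $e_{m-s-\ell}[B_\mu]$ — but since $B_\mu$ ranges over enough values as $\mu$ varies, it suffices to prove it as a formal identity where $B_\mu$ is replaced by an indeterminate, equivalently as an identity of symmetric functions in a single alphabet.

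Concretely, after this reduction I expect the core to be the ``one-alphabet'' identity: for an alphabet $Y$ (playing the role of $B_\mu$),
\begin{align*}
h_k[(1-t)Y]\,e_\ell[Y] = \sum_{j=0}^{\ell} t^{\ell-j}\sum_{s=0}^{k} q^{\binom{s}{2}} \qbinom{s+j}{s}_q \qbinom{k+j-1}{s+j-1}_q\, (\text{factor})\,,
\end{align*}
where the right-hand factors arise from re-expanding $h_{s+j}[X/(1-q)]$, $h_{\ell-j}[X/M]$, $e_{m-s-\ell}[X/M]$ back in terms of $B_\mu$; the $m$-dependence must cancel so that only the $s,j$ ranges survive. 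To prove this, I would use the addition formula \eqref{eq:e_h_sum_alphabets} to split $e_\ell[Y]$ and the standard plethystic identities for $h_k$ under multiplication of the alphabet by $(1-t)$, and then carry out the $q$-binomial bookkeeping using the recursions \eqref{eq:qbin_recursion} and the specializations \eqref{eq:h_q_binomial}, \eqref{eq:e_q_binomial}, \eqref{eq:h_q_prspec}, \eqref{eq:e_q_prspec}. The main obstacle, I anticipate, is exactly this last step: matching the triple sum over $(j,s)$ and taming the $q$-binomial coefficients $\qbinom{s+j}{s}_q\qbinom{k+j-1}{s+j-1}_q$ and the prefactor $q^{\binom{s}{2}}$ — this is the combinatorial heart of the ``generalization of equation (2.38) in \cite{Haglund-Schroeder-2004}'' and will likely require an induction on $k$ (peeling off one box via \eqref{eq:qbin_recursion}) or on $\ell$, or a clever application of the $q$-Vandermonde/Pieri identity \eqref{eq:Pieri_sum1}, rather than a direct manipulation. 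Everything else — the Macdonald reciprocity bookkeeping, the $\Pi_\mu$ cancellations, and the passage between the $\langle\cdot,\cdot\rangle_*$ and $\langle\cdot,\cdot\rangle$ pairings — should be routine given the toolkit already assembled in this section.
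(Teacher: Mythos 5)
Your high‑level idea --- project both sides onto the Macdonald basis via the star scalar product and compare coefficients --- is morally the right frame and not so far from what the paper actually does, since equation \eqref{eq:glenn_formula} is precisely a packaged way of computing the coefficient of $\widetilde H_\mu/w_\mu$ on the left side. But there is a concrete gap in your reduction step, and the combinatorial heart that you flag but defer is in fact the whole proof.

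The problem is your claim that pairing the right side against $\widetilde H_\mu$ yields ``a polynomial identity in the variables $q,t$ and $B_\mu$.'' That would be true if every factor on the right were of the form $f[X/M]$: then \eqref{eq:e_h_expansion} and orthogonality \eqref{eq:H_orthogonality} would turn the pairing into a product of evaluations at $B_\mu$. But the factor $h_{s+j}\!\left[\tfrac{X}{1-q}\right]$ is \emph{not} of that form ($\tfrac{1}{1-q}=\tfrac{1-t}{M}$, so there is a stray $(1-t)$), and the quantity $\left\langle h_{s+j}\!\left[\tfrac{X}{1-q}\right] h_{\ell-j}\!\left[\tfrac{X}{M}\right] e_{m-s-\ell}\!\left[\tfrac{X}{M}\right],\,\widetilde H_\mu\right\rangle_*$ does not collapse to a product of $e_a[B_\mu]$'s; computing it essentially forces you through the reproducing‑kernel formula \eqref{eq:glenn_formula}, the shift $\tau_{-\epsilon}$, and --- most importantly --- an application of $\nabla^{-1}$. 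The ``analogous expansion for $h_c[X/(1-q)]$'' you invoke does not come for free from \eqref{eq:Mac_Cauchy} and \eqref{eq:garsia_haglund_eval}: those give you the $\Delta e_n[X(1-q^j)/(1-q)]$ type expansions, which carry a $\Pi_\mu$, not a simple evaluation.

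The missing structural ingredient is the nabla identity \eqref{eq:key_identity},
\[
\nabla\!\left( h_i\!\left[\tfrac{X}{1-q}\right] e_j\!\left[\tfrac{X}{M}\right]\right) = \sum_{s=1}^{i+j} t^{i+j-s} q^{\binom{i}{2}} \qbinom{s}{i}_q h_s\!\left[\tfrac{X}{1-q}\right] h_{i+j-s}\!\left[\tfrac{X}{M}\right],
\]
which is what lets one expand the $\tau_{-\epsilon}\nabla$ image of $h_k\!\left[\tfrac{X+1}{1-q}\right]e_\ell\!\left[\tfrac{X+1}{M}\right]$ back into the basis $h_s[X/(1-q)]\,h_r[X/M]$. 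This proposition in turn depends on the special case $\ell=m$ of the statement (deduced from Haglund's formula for $\nabla E_{n,k}$) together with the one‑alphabet identity \eqref{eq:GHS_2_6} and a $q$‑binomial lemma --- none of which appear in your plan. Finally, even after all that, there is a long coefficient simplification (Lemma~\ref{lem:elementary2}) that is not a routine application of \eqref{eq:qbin_recursion} or a $q$‑Vandermonde. In short: the strategy ``compare Macdonald coefficients'' is sound, but as written your proposal asserts a false simplification (the $B_\mu$‑evaluation claim) and leaves the genuinely hard parts (the nabla identity, its own proof via the $\ell=m$ case, and the coefficient untangling) unaddressed.
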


\subsection{Two special cases}

We start by establishing two special cases of this formula: the cases $\ell=0$ and $\ell=m$.

The case $\ell=0$ is implicit in the work of Haglund (e.g. see equation (2.38) in \cite{Haglund-Schroeder-2004}). Though we don't need this special case in our argument, we outline here its proof (referring to \cite{Haglund-Schroeder-2004} for some details) since it shows the strategy that we will use also in the general case. 

\begin{theorem}[Haglund] \label{thm:Haglund_summation}
	For $m,k\geq 1$ we have
	\begin{equation} \label{eq:Haglund_summation}
		\sum_{\mu\vdash m} \frac{\widetilde{H}_\mu[X]}{w_\mu}h_k[(1-t)B_\mu] = \sum_{s=1}^{m}
		q^{\binom{s}{2}} 
		\begin{bmatrix}
			k-1\\
			s-1\\
		\end{bmatrix}_q h_s \left[ \frac{X}{1-q}\right] e_{m-s}\left[ \frac{X}{M}\right].
	\end{equation}	
\end{theorem}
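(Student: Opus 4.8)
The plan is to prove Theorem~\ref{thm:Haglund_summation} by expanding the right-hand side into the Macdonald basis and comparing coefficients of $\widetilde H_\mu[X]/w_\mu$ on both sides. Concretely, I would use the expansion \eqref{eq:e_h_expansion}, which gives
\[
h_s\left[\frac{X}{1-q}\right]e_{m-s}\left[\frac{X}{M}\right]
\]
in the Macdonald basis once we rewrite $h_s[X/(1-q)]$ appropriately. The cleanest route is to note that $h_s[X/(1-q)] = h_s\left[\frac{X}{M}\cdot(1-t)\right]$, so by the standard ``dual Pieri'' type expansion the product $h_s[X/(1-q)]\,e_{m-s}[X/M]$ expands with coefficients that are (up to $w_\mu$) symmetric function evaluations of the form $h_s[(1-t)B_\mu]$-ish quantities paired against Schur functions. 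So the comparison of coefficients reduces \eqref{eq:Haglund_summation} to a polynomial identity in $q$ (with $t$ a free parameter, or specializing $t$), namely an identity expressing $h_k[(1-t)B_\mu]$ as a $q$-linear combination of the basis elements $\{h_s[(1-t)B_\mu]\}$ that arise from the right-hand side.

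The key step is therefore to isolate the purely $q$-combinatorial identity that makes the coefficients match. After pairing both sides against $\widetilde H_\mu[X]$ using \eqref{eq:H_orthogonality}, the statement becomes: for all partitions $\mu \vdash m$,
\[
h_k[(1-t)B_\mu] = \sum_{s=1}^m q^{\binom{s}{2}} \qbinom{k-1}{s-1}_q \langle\text{(the coefficient extracted from }h_s[X/(1-q)]e_{m-s}[X/M]\text{)}\rangle.
\]
Since $B_\mu$ is a sum of monomials $q^a t^l$, and $h_k$ is a symmetric function, the identity should follow from a single-variable (or few-variable) $q$-series manipulation — essentially a $q$-binomial theorem computation of $h_k$ applied to a geometric-like alphabet. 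I would reduce to checking it on alphabets of the form $(1-t)(1 + q + \cdots + q^{r-1})$ by the principal specialization $B_\mu \mapsto [r]_q$ trick (this is exactly the kind of evaluation \eqref{eq:garsia_haglund_eval} and \eqref{eq:en_q_sum_Enk} enable), using \eqref{eq:h_q_binomial} and \eqref{eq:e_q_binomial}.

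The main obstacle I expect is bookkeeping the correct form of the Macdonald-basis expansion of $h_s[X/(1-q)]\,e_{m-s}[X/M]$: the excerpt gives \eqref{eq:e_h_expansion} for $h_k[X/M]\,e_{n-k}[X/M]$ but not directly for the mixed $1/(1-q)$ denominator, so I would first derive the needed expansion (most likely via \eqref{eq:qn_q_Macexp}, writing $h_s[X/(1-q)]$ in terms of $e_n[X\frac{1-q^j}{1-q}]$-type pieces, or by applying $\tau_z$/plethystic substitutions). Once that expansion is in hand with explicit $q$-binomial coefficients, the remaining identity is a finite $q$-hypergeometric summation that can be verified by the $q$-Vandermonde identity or by induction on $k$ using the recursion \eqref{eq:qbin_recursion}. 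Since the theorem statement explicitly says this is ``implicit in the work of Haglund'' and refers to \cite{Haglund-Schroeder-2004}*{equation~(2.38)}, I would lean on that reference for the routine parts of the $q$-calculus and focus the write-up on making the basis-change step rigorous, so that it can then serve as the template for the general Theorem~\eqref{eq:mastereq}.
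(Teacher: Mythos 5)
Your approach is genuinely different from the paper's. The paper proves this lemma ``from the left-hand side'': it posits $g[X]=\sum_{\mu}\widetilde H_\mu h_k[(1-t)B_\mu]/w_\mu$ as $\sum_{r\geq 0}e_r^*f[X]$, observes via the adjointness \eqref{eq:hperp_estar_adjoint} that $\langle g,\widetilde H_\mu\rangle_* = \langle f,\widetilde H_\mu[X+1]\rangle_*$, applies the Garsia--Haiman--Tesler evaluation \eqref{eq:glenn_formula} to turn this into $\nabla^{-1}\tau_{-\epsilon}f\big|_{X=D_\mu}$, solves for $f=\tau_\epsilon\nabla h_k[(X+1)/(1-q)]$, and then cites Haglund's equation (2.83) for the homogeneous pieces of $f$; the degree-$m$ projection assembles the right-hand side. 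You are instead proposing to expand the right-hand side directly into the Macdonald basis and compare coefficients. That plan is feasible, and your ``main obstacle'' has in fact already been dispatched: the expansion you want for $h_s[X/(1-q)]\,e_{m-s}[X/M]$ is precisely \eqref{eq:GHS_2_6} from Garsia--Hicks--Stout, which this paper quotes and uses (for Proposition~\ref{prop:key_identity}, not for this theorem). Plugging that in and using trinomial revision of the two $q$-binomials, the needed scalar identity collapses, after a pleasant cancellation in the $q$-exponent, to $\qbinom{k-1}{r-1}_q\sum_{s'=0}^{k-r}(-1)^{s'}q^{\binom{s'}{2}}\qbinom{k-r}{s'}_q = \delta_{r,k}$, which is indeed the classical $q$-binomial theorem evaluation $\sum_{j}(-1)^j q^{\binom j 2}\qbinom n j_q = (1;q)_n$ rather than $q$-Vandermonde — close to what you predicted.

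There is, however, a concrete gap. The right-hand sum in \eqref{eq:Haglund_summation} runs over $1\leq s\leq m$, but $\qbinom{k-1}{s-1}_q\neq 0$ for $s$ up to $k$. When $m\geq k$ this is harmless and the termwise identity above closes the argument. But when $m<k$ the $s$-range truncates at $m$, so your inner sum becomes a \emph{partial} $q$-binomial sum $\sum_{s'=0}^{m-r}(-1)^{s'}q^{\binom{s'}{2}}\qbinom{k-r}{s'}_q$, which does not vanish. The theorem is still true in this range (check $m=1,k=2$: both sides equal $h_1[X/(1-q)]$), but the equality no longer holds coefficient-by-coefficient in $r$; it relies on genuine linear relations among the quantities $h_r[(1-t)B_\mu]$ for $\mu\vdash m$ (e.g.\ for $m=1$ every $h_r[(1-t)] = 1-t$ coincides, and for $m=2$ one gets $(1+q)h_2 - qh_1 = h_3$ on both $\mu\vdash 2$). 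Your plan to ``reduce to checking it on alphabets $[r]_q(1-t)$'' is a reasonable instinct for handling this, but as written it is not a proof — you would need to argue that enough such specializations separate the span. This is exactly the issue the paper's operator route sidesteps: Haglund's (2.83) gives $(f[X])_d$ for all $d$, and taking the degree-$m$ homogeneous part of $g$ automatically produces the correct truncated sum without ever needing to invert a coefficient matrix or prove dependence relations. If you want to pursue your route, state the $q$-binomial-theorem identity explicitly, handle $m\geq k$ by it, and supply a separate argument (principal specialization or otherwise) for $m<k$; or switch to the paper's ``solve for $f$'' strategy, which is uniform in $m$ and $k$.
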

\begin{proof}
	Let us set
	\begin{equation}
		g[X] \coloneqq \sum_{\mu\vdash m} \frac{ \widetilde{H}_\mu[X]}{w_\mu}h_k[(1-t)B_{\mu}].
	\end{equation}
	The idea is to use the expansion 
	\begin{equation}
		\sum_{\mu\vdash m} \frac{ \widetilde{H}_\mu[X]}{w_\mu}h_k[(1-t)B_{\mu}]=g[X]= \sum_{\mu\vdash m} \frac{ \widetilde{H}_\mu[X]}{w_\mu} \< g[X],\widetilde{H}_\mu[X]\>_*.
	\end{equation}
	We suppose that $g[X]$ is of the form \[ g[X]=\sum_{r\geq 0}e_r^* f[X] \] for some $f[X]\in \Lambda$, so that
	\begin{align*}
		\< g[X],\widetilde{H}_\mu[X]\>_* & = \< \sum_{r\geq 0}e_r^*f[X],\widetilde{H}_\mu[X]\>_*\\ 
		\text{(using \eqref{eq:hperp_estar_adjoint})}& = \< f[X], \sum_{r\geq 0}h_r^\perp \widetilde{H}_\mu[X]\>_*\\ 
		& = \< f[X], \widetilde{H}_\mu[X+1]\>_*.
	\end{align*}
	Then we can use the identity \eqref{eq:glenn_formula} to get
	\begin{equation}
		\< g[X],\widetilde{H}_\mu[X]\>_* = \< f[X], \widetilde{H}_\mu[X+1]\>_*=  \left.\nabla^{-1}\tau_{-\epsilon} f[X]\right|_{X=D_\mu}.
	\end{equation}
	So we look for an $f[X]$ such that
	\begin{equation}
		\left.\nabla^{-1}\tau_{-\epsilon}f[X]\right|_{X=D_\mu}=h_k[(1-t)B_{\mu}].
	\end{equation}
	Clearly, we can use
	\begin{equation}
		\nabla^{-1}\tau_{-\epsilon}f[X]  =  h_{k}\left[\frac{X+1}{1-q}\right],
	\end{equation}
	so that
	\begin{equation}
		f[X]  =  \tau_{\epsilon}\nabla h_{k}\left[\frac{X+1}{1-q}\right].
	\end{equation}
	This is precisely equation (2.76) in \cite{Haglund-Schroeder-2004}. 
	
	Now notice that
	\begin{equation}
		\sum_{\mu\vdash m} \frac{\widetilde{H}_\mu[X]}{w_\mu}\< g[X],\widetilde{H}_\mu[X]\>_*=(g[X])_m,
	\end{equation}
	where $(g[X])_d$ denotes the homogeneous component of $g[X]$ of degree $d$.
	
	Equation (2.83) in \cite{Haglund-Schroeder-2004} shows that for all $d\geq 0$
	\begin{equation}
		(f[X])_d  = h_{d}\left[\frac{X}{1-q}\right]q^{\binom{d}{2}} \begin{bmatrix}
			k-1\\
			k-d\\
		\end{bmatrix}_q,
	\end{equation}
	so that
	\begin{align*}
		\sum_{\mu\vdash m} \frac{ \widetilde{H}_\mu[X]}{w_\mu}h_k[(1-t)B_{\mu}] & = \sum_{\mu\vdash m} \frac{ \widetilde{H}_\mu[X]}{w_\mu} \< g[X],\widetilde{H}_\mu[X]\>_*\\
		& =(g[X])_m\\
		& =\sum_{d=0}^{m}e_{m-d}\left[\frac{X}{M}\right](f[X])_d\\
		& =\sum_{d=0}^{m}e_{m-d}\left[\frac{X}{M}\right]h_{d}\left[\frac{X}{1-q}\right]q^{\binom{d}{2}} \begin{bmatrix}
			k-1\\
			k-d\\
		\end{bmatrix}_q\\
		& =\sum_{d=0}^{m}q^{\binom{d}{2}} \begin{bmatrix}
			k-1\\
			d-1\\
		\end{bmatrix}_qh_{d}\left[\frac{X}{1-q}\right]e_{m-d}\left[\frac{X}{M}\right],
	\end{align*}
	as we wanted.
\end{proof}

We now prove the case $\ell=m$ (recall \eqref{eq:Bmu_Tmu}), as we need it to establish the general case.
\begin{theorem}
	For $m\geq 1$ and $k\geq 1$ we have
	\begin{align} \label{eq:basic_summation_2}
		\sum_{\mu\vdash m} \frac{T_\mu \widetilde{H}_\mu[X]}{w_\mu}h_k[(1-t)B_\mu]  & =  \sum_{j=1}^m \begin{bmatrix}
			k+j-1\\
			j-1
		\end{bmatrix}_q h_j\left[\frac{X}{1-q}\right] h_{m-j}\left[\frac{tX}{M}\right] .
	\end{align}	
\end{theorem}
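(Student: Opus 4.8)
The plan is to mimic the strategy used in the proof of Theorem~\ref{thm:Haglund_summation} (the case $\ell=0$), which is the natural template since the current statement is the ``top degree'' companion ($\ell=m$) of that same master formula. Concretely, set
\begin{equation*}
	g[X] \coloneqq \sum_{\mu\vdash m} \frac{T_\mu\,\widetilde{H}_\mu[X]}{w_\mu}\,h_k[(1-t)B_\mu],
\end{equation*}
and use the Macdonald Cauchy expansion \eqref{eq:Mac_Cauchy} to write $g[X]=\sum_{\mu\vdash m}\frac{\widetilde{H}_\mu[X]}{w_\mu}\<g[X],\widetilde{H}_\mu[X]\>_*$. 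As before I would guess that $g$ is of the form $\sum_{r\ge 0} e_r^* f[X]$ for a suitable $f[X]\in\Lambda$, so that by \eqref{eq:hperp_estar_adjoint} one gets $\<g,\widetilde{H}_\mu\>_* = \<f,\widetilde{H}_\mu[X+1]\>_*$, and then \eqref{eq:glenn_formula} converts this to $\nabla^{-1}\tau_{-\epsilon}f[X]\big|_{X=D_\mu}$.

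The only change from Haglund's case is the extra factor $T_\mu = e_m[B_\mu]$. Since $D_\mu = MB_\mu - 1$, we have $B_\mu = \frac{D_\mu+1}{M}$, so the target eigenvalue $T_\mu\, h_k[(1-t)B_\mu]$ should be realized by choosing
\begin{equation*}
	\nabla^{-1}\tau_{-\epsilon} f[X] = e_m\!\left[\frac{X+1}{M}\right] h_k\!\left[\frac{X+1}{1-q}\right],
\end{equation*}
i.e. $f[X] = \tau_{\epsilon}\nabla\!\left(e_m\!\left[\frac{X+1}{M}\right] h_k\!\left[\frac{X+1}{1-q}\right]\right)$. Here one uses $e_m\!\left[\frac{D_\mu+1}{M}\right]=e_m[B_\mu]=T_\mu$ and $h_k\!\left[\frac{D_\mu+1}{1-q}\right]=h_k\!\left[\frac{MB_\mu}{1-q}\right]=h_k[(1-t)B_\mu]$. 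Then, exactly as in Haglund's argument, $\sum_{\mu\vdash m}\frac{\widetilde{H}_\mu[X]}{w_\mu}\<g,\widetilde{H}_\mu\>_* = (g[X])_m$, the degree-$m$ homogeneous component; and since $\nabla^{-1}\tau_{-\epsilon}g = f$ and $\tau_{-\epsilon}$, $\nabla$ preserve degree, $(g[X])_m$ is obtained from the homogeneous components of $f$ via $(g[X])_m=\sum_{d}e_{m-d}^*\,(f[X])_d$ applied appropriately (using \eqref{eq:en_expansion} type expansions). The bulk of the work is therefore to compute $(f[X])_d$ explicitly.

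For that computation I would expand $h_k\!\left[\frac{X+1}{1-q}\right]=\sum_{a} h_a\!\left[\frac{X}{1-q}\right] h_{k-a}\!\left[\frac{1}{1-q}\right]$ using \eqref{eq:e_h_sum_alphabets} and \eqref{eq:h_q_prspec}, and similarly expand $e_m\!\left[\frac{X+1}{M}\right]$, then apply $\nabla$ and $\tau_\epsilon$ and track degrees. The appearance of $T_\mu$ is what produces the $h_{m-j}\!\left[\frac{tX}{M}\right]$ factor rather than $e_{m-j}\!\left[\frac{X}{M}\right]$: morally, $\nabla$ acting on $e_m\!\left[\tfrac{X+1}{M}\right]$ introduces the $t$-twist, and one should expect to invoke an identity like $\nabla E_{n,n} = \nabla e_n = \cdots$ or the known formula \eqref{eq:Haglund_nablaEnk} with $k=n$ to see the $h_{m-j}[tX/M]$ emerge; alternatively one can derive it directly from $\nabla \widetilde H_\mu = T_\mu \widetilde H_\mu$ combined with the expansion \eqref{eq:en_expansion}. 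After collecting terms, the $q$-binomial coefficient $\qbinom{k+j-1}{j-1}_q$ should fall out of the product $h_a[1/(1-q)]$-type factors via \eqref{eq:h_q_binomial}, matching the shape already seen in Haglund's case (where the analogous coefficient is $q^{\binom{s}{2}}\qbinom{k-1}{s-1}_q$; note here there is no $q^{\binom{s}{2}}$ because the elementary-symmetric twist is replaced by a complete-homogeneous one).

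The main obstacle I anticipate is precisely the bookkeeping in the last step: correctly applying $\nabla$ and $\tau_{\pm\epsilon}$ to the two-factor plethystic product $e_m\!\left[\frac{X+1}{M}\right] h_k\!\left[\frac{X+1}{1-q}\right]$ and extracting homogeneous components cleanly — Haglund's paper does the analogous computation for a \emph{single} factor (his equations (2.76)--(2.83)), and here the presence of the second factor $e_m[(X+1)/M]$ means one cannot simply quote his line-by-line computation but must redo it, being careful that $\nabla$ does not commute with plethystic substitution and that the $t$-dependence is introduced only through $\nabla$. Once $(f[X])_d$ is pinned down as $q^{\binom{d-j}{2}}$-free times the right $q$-binomial times $h_j[X/(1-q)]\,h_{d-j}[tX/M]$, the summation over $d$ and $\mu$ closes the proof. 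I would also sanity-check the formula at $k=1$ and at small $m$ against a direct expansion, since the degenerate cases ($j=0$, $j=m$) are where sign and normalization errors tend to hide.
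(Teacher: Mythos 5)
Your setup is correct as far as it goes: the target eigenvalue is indeed $T_\mu\,h_k[(1-t)B_\mu]$, and your choice $\nabla^{-1}\tau_{-\epsilon}f[X]=e_m\!\left[\frac{X+1}{M}\right]h_k\!\left[\frac{X+1}{1-q}\right]$ is the right one, since $e_m\!\left[\frac{D_\mu+1}{M}\right]=e_m[B_\mu]=T_\mu$ and $h_k\!\left[\frac{D_\mu+1}{1-q}\right]=h_k[(1-t)B_\mu]$. The difficulty is precisely the step you flag as ``the main obstacle,'' and it is worse than a bookkeeping headache: it is circular.

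When you expand the two-factor plethystic expression via \eqref{eq:e_h_sum_alphabets} and push $\nabla$ inside the sum, you are reduced to computing $\nabla\!\left(h_i\!\left[\frac{X}{1-q}\right]e_j\!\left[\frac{X}{M}\right]\right)$. This is exactly Proposition~\ref{prop:key_identity} in the paper, and that proposition's proof begins by expanding $h_i\!\left[\frac{X}{1-q}\right]e_j\!\left[\frac{X}{M}\right]$ in the Macdonald basis via \eqref{eq:GHS_2_6} and then invoking the \emph{very identity you are trying to prove} (\eqref{eq:basic_summation_2}, which supplies the extra $T_\mu$ factor after $\nabla$ lands). The same circularity hides behind your alternative suggestion of using Haglund's $\ell=0$ summation \eqref{eq:Haglund_summation} (or equivalently $\nabla\widetilde{H}_\mu=T_\mu\widetilde{H}_\mu$ applied to its Macdonald form): either way you arrive at $\nabla$ applied to $h_s\!\left[\frac{X}{1-q}\right]e_{m-s}\!\left[\frac{X}{M}\right]$, which once more presupposes the result.

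The paper sidesteps this entirely by proving \eqref{eq:basic_summation_2} \emph{before} Proposition~\ref{prop:key_identity}, using a much shorter argument that never touches $\tau_{\pm\epsilon}$ or Glenn's formula. It computes $\nabla e_m\!\left[X\frac{1-q^k}{1-q}\right]$ twice: once by the $E_{n,k}$-expansion \eqref{eq:en_q_sum_Enk} combined with Haglund's closed form $\nabla E_{m,j}=t^{m-j}(1-q^j)\mathbf{\Pi}\!\left(h_{m-j}\!\left[\frac{X}{M}\right]h_j\!\left[\frac{X}{1-q}\right]\right)$, and once via \eqref{eq:qn_q_Macexp} together with $\nabla\widetilde{H}_\mu=T_\mu\widetilde{H}_\mu$, which produces exactly $(1-q^k)\mathbf{\Pi}\sum_{\mu\vdash m}\frac{T_\mu\widetilde{H}_\mu[X]}{w_\mu}h_k[(1-t)B_\mu]$. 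Cancelling the invertible $\mathbf{\Pi}$ (and the factor $1-q^k$) gives the identity at once. Your instinct to reach for \eqref{eq:Haglund_nablaEnk} is the right ingredient, but it must be used at the level of comparing two $\mathbf{\Pi}$-twisted expansions of the same symmetric function, \emph{not} inside a computation of $(f[X])_d$ for the Glenn-formula machinery. If you want to salvage your route, you would have to independently prove Proposition~\ref{prop:key_identity} without appealing to \eqref{eq:basic_summation_2}; absent such a proof, the argument does not close.
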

\begin{proof}
	Using \eqref{eq:en_q_sum_Enk}, we have
	\begin{equation}
		\nabla e_m\left[X\frac{1-q^k}{1-q}\right]=\sum_{j=1}^m \begin{bmatrix}
			k+j-1\\
			j
		\end{bmatrix}_q\nabla E_{mj} .
	\end{equation}
	Also, Haglund showed in equation (7.86) of \cite{Haglund-Book-2008} that
	\begin{equation}
		\nabla E_{mj}=t^{m-j}(1-q^j)\mathbf{\Pi} \left(h_{m-j}\left[\frac{X}{M}\right] h_{j}\left[\frac{X}{1-q}\right]\right),
	\end{equation}
	where $\mathbf{\Pi}$ is the linear operator defined by
	\begin{equation}
		\mathbf{\Pi} \widetilde{H}_\mu[X]=\Pi_\mu \widetilde{H}_\mu[X]\qquad \text{for all }\mu.
	\end{equation}
	So we get
	\begin{align} \label{eq:enq_aux1}
		\notag \nabla e_n\left[X\frac{1-q^k}{1-q}\right] & =\sum_{k=1}^n \begin{bmatrix}
			k+j-1\\
			j
		\end{bmatrix}_qt^{m-j}(1-q^j)\mathbf{\Pi} \left(h_{m-j}\left[\frac{X}{M}\right] h_{j}\left[\frac{X}{1-q}\right]\right)\\ 
		& =(1-q^k)\sum_{k=1}^n \begin{bmatrix}
			k+j-1\\
			j-1
		\end{bmatrix}_qt^{m-j}\mathbf{\Pi} \left(h_{m-j}\left[\frac{X}{M}\right] h_{j}\left[\frac{X}{1-q}\right]\right)  ,
	\end{align}
	where we used the obvious
	\begin{equation}
		(1-q^j)\begin{bmatrix}
			k+j-1\\
			j
		\end{bmatrix}_q=(1-q^k)\begin{bmatrix}
			k+j-1\\
			j-1
		\end{bmatrix}_q.
	\end{equation}
	
	But, using \eqref{eq:qn_q_Macexp}, we also have
	\begin{align} \label{eq:enq_aux2}
		\notag \nabla e_m\left[X\frac{1-q^k}{1-q}\right] & =(1-q^k)\sum_{\mu\vdash m}\frac{\Pi_\mu T_\mu \widetilde{H}_\mu[X]h_k[(1-t)B_\mu]}{w_\mu}\\
		& =(1-q^k)\mathbf{\Pi}\sum_{\mu\vdash m}\frac{T_\mu \widetilde{H}_\mu[X]h_k[(1-t)B_\mu]}{w_\mu}.
	\end{align}
	Comparing \eqref{eq:enq_aux1} and \eqref{eq:enq_aux2}, and noticing that the operator $\mathbf{\Pi}$ is invertible, we get the result.
\end{proof}

\subsection{A key identity}

Now we want to prove the following key identity.
\begin{proposition} \label{prop:key_identity}
	For $j\geq 0$ and $i\geq 1$ we have
	\begin{equation} \label{eq:key_identity}
		\nabla \left( h_i\left[\frac{X}{1-q}\right] e_j\left[\frac{X}{M}\right] \right) = 
		\sum_{s =1}^{i+j} t^{i+j-s}q^{\binom{i}{2}} \begin{bmatrix}
			s\\
			i
		\end{bmatrix}_q  h_s\left[\frac{X}{1-q}\right] h_{i+j-s}\left[\frac{X}{M}\right].
	\end{equation}
\end{proposition}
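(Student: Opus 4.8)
The plan is to prove Proposition~\ref{prop:key_identity} by the same ``reciprocity'' technique already used in the proofs of Theorem~\ref{thm:Haglund_summation} and of equation~\eqref{eq:basic_summation_2}, rather than by manipulating the right-hand side directly. First I would expand the symmetric function $h_i\left[\frac{X}{1-q}\right] e_j\left[\frac{X}{M}\right]$ in the Macdonald basis. By \eqref{eq:e_h_expansion} we have $h_i\left[\frac{X}{M}\right] e_{n-i}\left[\frac{X}{M}\right]=\sum_{\mu\vdash n}\frac{e_i[B_\mu]\widetilde H_\mu[X]}{w_\mu}$; the presence of the factor $\frac{1}{1-q}$ instead of $\frac{1}{M}=\frac{1}{(1-q)(1-t)}$ in the $h_i$ slot can be absorbed using the identity $h_i\left[\frac{X}{1-q}\right]=h_i\left[\frac{X(1-t)}{M}\right]$ together with the expansion of $h_i[(1-t)Y]$, or more directly by computing the star scalar product $\left\langle h_i\left[\frac{X}{1-q}\right] e_j\left[\frac{X}{M}\right],\widetilde H_\mu[X]\right\rangle_*$. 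The cleanest route is to observe that $h_i\left[\frac{X}{1-q}\right]e_j\left[\frac{X}{M}\right]$ is, up to the operator $\mathbf{\Pi}^{-1}$, exactly the shape appearing in Haglund's formula \eqref{eq:Haglund_nablaEnk} for $\nabla E_{n,k}$ when we set the right parameters; indeed $\nabla E_{n,k}=t^{n-k}(1-q^k)\mathbf{\Pi}\, h_k\left[\frac{X}{1-q}\right] h_{n-k}\left[\frac{X}{M}\right]$, so $h_i\left[\frac{X}{1-q}\right] h_j\left[\frac{X}{M}\right]$ differs from the target only in the $e_j$ versus $h_j$.

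So concretely: set $n:=i+j$, write $g[X]:=h_i\left[\frac{X}{1-q}\right] e_j\left[\frac{X}{M}\right]$, and compute $\langle g,\widetilde H_\mu\rangle_*$ for $\mu\vdash n$. Using \eqref{eq:hperp_estar_adjoint} one rewrites the scalar product against a suitable ``$\tau$-shifted'' symmetric function and then applies the master evaluation identity \eqref{eq:glenn_formula}, $\langle f[X],\widetilde H_\mu[X+1]\rangle_*=\nabla^{-1}\tau_{-\epsilon}f[X]\big|_{X=D_\mu}$, exactly as in the proof of Theorem~\ref{thm:Haglund_summation}. Since $e_j\left[\frac{X}{M}\right]$ is \emph{not} of the form $\sum_r e_r^* f$, I would instead split $e_j$ using $e_j[A+B]=\sum e_{j-\ell}[A]e_\ell[B]$ with $A+1$ in place of $A$, i.e. use $\tau_1 e_j\left[\frac{X}{M}\right]$-type expansions to reduce to the homogeneous pieces $h_d\left[\frac{X}{1-q}\right]$ whose $\nabla$-image is controlled. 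An alternative and perhaps smoother route: apply $\nabla$ to both sides and use that $\nabla$ is diagonal on $\{\widetilde H_\mu\}$ with eigenvalue $T_\mu$, reducing \eqref{eq:key_identity} to the identity $\sum_{\mu\vdash n}\frac{T_\mu e_i[B_\mu]\widetilde H_\mu[X]}{w_\mu}=\sum_{s}t^{n-s}q^{\binom i2}\qbinom{s}{i}_q h_s\left[\frac{X}{1-q}\right]h_{n-s}\left[\frac{X}{M}\right]$, which can in turn be attacked by pairing both sides against $h_\lambda$ or $e_\lambda$ and invoking \eqref{eq:Mac_hook_coeff} and \eqref{eq:garsia_haglund_eval}.

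The cleanest concrete plan I would actually write out: pair both sides of \eqref{eq:key_identity} with an arbitrary $\widetilde H_\nu[(1-t)(1-q^a)]/w_\nu$-type test object, or equivalently evaluate the LHS at $X=(1-q)z$ and at $X=M\cdot(\text{stuff})$ to separate the two plethystic slots; but the genuinely robust approach is the scalar-product-with-$\widetilde H_\mu$ computation above. One computes $\langle \nabla g,\widetilde H_\mu\rangle_* = T_\mu\langle g,\widetilde H_\mu\rangle_*$, determines $\langle g,\widetilde H_\mu\rangle_*$ via \eqref{eq:glenn_formula} as the evaluation at $D_\mu$ of $\nabla^{-1}$ applied to $h_i\left[\frac{X+1}{1-q}\right]$ times an $e_j$-factor, extracts the homogeneous components exactly as in Haglund's equation (2.83) (which gives the $q^{\binom d2}\qbinom{\bullet}{\bullet}_q$ coefficients), and finally reassembles the sum $\sum_\mu \frac{\widetilde H_\mu}{w_\mu}\langle \nabla g,\widetilde H_\mu\rangle_*$ using \eqref{eq:e_h_expansion} to recognize each term as $h_s\left[\frac{X}{1-q}\right]h_{n-s}\left[\frac{X}{M}\right]$. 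The main obstacle I anticipate is bookkeeping the $e_j\left[\frac{X}{M}\right]$ slot: unlike the pure-$h_k$ case of Theorem~\ref{thm:Haglund_summation}, here the $\tau_{-\epsilon}$-shift interacts nontrivially with the $e_j$ factor, producing a double sum whose inner summation must be resolved by a $q$-binomial identity (likely the $q$-Vandermonde or the recursion \eqref{eq:qbin_recursion}) in order to collapse to the single clean $\qbinom{s}{i}_q$ coefficient on the right. Verifying that this collapse happens — and tracking the powers of $t$ correctly through $T_\mu$ and the $\nabla^{-1}$ — is where the real work lies; everything else is a direct transcription of the method from the two special cases already proved above.
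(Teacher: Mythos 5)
Your outline does not match the paper's proof, and it contains a concrete error that would derail the attempt. The ``alternative and perhaps smoother route'' you propose reduces the claim to the identity
$\sum_{\mu\vdash n}\frac{T_\mu e_i[B_\mu]\widetilde H_\mu[X]}{w_\mu}=\text{RHS}$,
which implicitly assumes the Macdonald expansion
$h_i\left[\frac{X}{1-q}\right]e_j\left[\frac{X}{M}\right]=\sum_{\mu\vdash n}\frac{e_i[B_\mu]\widetilde H_\mu[X]}{w_\mu}$.
That expansion is false: by \eqref{eq:e_h_expansion} the coefficient $e_i[B_\mu]$ belongs to $h_i\left[\frac{X}{M}\right]e_j\left[\frac{X}{M}\right]$, not to the present $g$, whose left slot has $\frac{1}{1-q}$ rather than $\frac{1}{M}$. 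This is not a small discrepancy; the factor of $1-t$ is exactly what makes the Macdonald coefficients of $g$ involve $h_r[(1-t)B_\mu]$ rather than $e_i[B_\mu]$.

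The ingredient you are missing is the Garsia--Hicks--Stout identity \eqref{eq:GHS_2_6}, which supplies the \emph{correct} Macdonald expansion of $h_i\left[\frac{X}{1-q}\right]e_j\left[\frac{X}{M}\right]$ as an $r$-sum of terms $\sum_\mu \frac{\widetilde H_\mu}{w_\mu}h_r[(1-t)B_\mu]$, with explicit signed $q$-binomial weights. With that in hand the paper's proof is short: apply $\nabla$ (which merely inserts $T_\mu$), invoke the already-proved special case \eqref{eq:basic_summation_2} to rewrite each $\sum_\mu \frac{T_\mu\widetilde H_\mu}{w_\mu}h_r[(1-t)B_\mu]$ as a combination of $h_s\left[\frac{X}{1-q}\right]h_{n-s}\left[\frac{tX}{M}\right]$, and collapse the resulting double $r,s$-sum to a single clean $q^{\binom{i}{2}}\qbinom{s}{i}_q$ coefficient via the $q$-binomial Lemma~\ref{lem:elementary1}. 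Your fallback route via the star scalar product and \eqref{eq:glenn_formula} also stalls at the point you yourself identify: that formula computes $\langle g,\widetilde H_\mu\rangle_*$ only once $g$ has been written in the form $\sum_r e_r^*f$ against some known $f$, and you would need to already possess the expansion you are trying to produce (or re-derive GHS from scratch, or replay the entire ``main argument'' machinery of the full master theorem, which would be a much longer and in this context rather redundant detour). The $\tau$-shifted splitting of $e_j$ you gesture at is not carried out and would not by itself remove the $\frac{1}{1-q}$ vs.\ $\frac{1}{M}$ mismatch.
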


In order to prove this proposition, we need a proposition and a lemma.

The following proposition is proved in \cite{Garsia-Hicks-Stout-2011}*{Proposition~2.6}.
\begin{proposition} 
	For $i\geq 1$ and $j\geq 0$ we have
	\begin{equation} \label{eq:GHS_2_6}
		h_i\left[\frac{X}{1-q}\right] e_j\left[\frac{X}{M}\right]  = \sum_{\mu\vdash i+j} \frac{\widetilde{H}_\mu[X]}{w_\mu} \sum_{r=1}^i \begin{bmatrix}
			i-1\\
			r-1
		\end{bmatrix}_q q^{\binom{r}{2}+r-ir}(-1)^{i-r}h_r[(1-t)B_\mu].
	\end{equation}
\end{proposition}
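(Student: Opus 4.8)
The plan is to observe that \eqref{eq:GHS_2_6} is precisely the inverse of the already-established summation \eqref{eq:Haglund_summation} (Theorem~\ref{thm:Haglund_summation}), and then to carry out the inversion of the corresponding triangular transition matrix by means of an elementary $q$-binomial identity.

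Fix $m\coloneqq i+j$ and introduce, as auxiliary objects in $\Lambda^{(m)}$, the two families
\begin{align*}
	A_s & \coloneqq h_s\!\left[\frac{X}{1-q}\right]e_{m-s}\!\left[\frac{X}{M}\right] \qquad (1\le s\le m),\\
	B_r & \coloneqq \sum_{\mu\vdash m}\frac{\widetilde H_\mu[X]}{w_\mu}\,h_r[(1-t)B_\mu] \qquad (1\le r\le m),
\end{align*}
where in $B_r$ the symbol $B_\mu$ is the Macdonald $B$-polynomial, while $B_r$ has an integer subscript. By \eqref{eq:Haglund_summation} we have $B_k=\sum_{s=1}^{m}M_{k,s}A_s$ for $1\le k\le m$, with $M_{k,s}\coloneqq q^{\binom s2}\qbinom{k-1}{s-1}_q$. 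Since $\qbinom{k-1}{s-1}_q=0$ for $s>k$, the scalar $m\times m$ matrix $M=(M_{k,s})$ is triangular with diagonal entries $M_{k,k}=q^{\binom k2}\ne 0$, hence invertible over $\mathbb Q(q)$. Writing $\vec A=(A_1,\dots,A_m)^{\mathrm T}$ and $\vec B=(B_1,\dots,B_m)^{\mathrm T}$ (column vectors with entries in $\Lambda^{(m)}$), the relation $\vec B=M\vec A$ forces $\vec A=M^{-1}\vec B$, so the whole matter reduces to identifying $M^{-1}$.

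I claim that $(M^{-1})_{i,r}=N_{i,r}\coloneqq \qbinom{i-1}{r-1}_q\,q^{\binom r2+r-ir}(-1)^{i-r}$, which I would prove by verifying $MN=I$, i.e. that for $1\le s\le i\le m$
\[
	\sum_{r=s}^{i}q^{\binom r2}\qbinom{i-1}{r-1}_q\qbinom{r-1}{s-1}_q\,q^{\binom s2+s-rs}(-1)^{r-s}=\delta_{i,s}.
\]
Applying the $q$-analogue of the subset-of-a-subset identity $\qbinom{i-1}{r-1}_q\qbinom{r-1}{s-1}_q=\qbinom{i-1}{s-1}_q\qbinom{i-s}{r-s}_q$ and then substituting $r=s+\ell$, the combined $q$-exponent $\binom r2+\binom s2+s-rs$ collapses to $\binom\ell2$ (using $\binom{s+\ell}2=\binom s2+\binom\ell2+s\ell$ and $rs=s^2+\ell s$, all terms in $s$ and in $s\ell$ cancel), so the left-hand side becomes $\qbinom{i-1}{s-1}_q\sum_{\ell=0}^{i-s}(-1)^{\ell}q^{\binom\ell2}\qbinom{i-s}{\ell}_q$. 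By the $q$-binomial theorem the inner sum equals $\prod_{k=0}^{i-s-1}(1-q^{k})$, which is $0$ when $i>s$ and $1$ when $i=s$; since $\qbinom{i-1}{s-1}_q=1$ for $i=s$, this is exactly $\delta_{i,s}$. Hence $N=M^{-1}$.

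Consequently $A_i=\sum_{r=1}^{i}N_{i,r}B_r$, the upper limit being $i$ because $\qbinom{i-1}{r-1}_q=0$ for $r>i$. Substituting the definitions of $A_i$ and of the $B_r$, recalling that $m=i+j$, and interchanging the order of the sums over $r$ and over $\mu$ reproduces \eqref{eq:GHS_2_6} verbatim; the range restriction to $i\ge 1$, $j\ge 0$ is exactly what makes all the objects above well defined. The proof is thus a linear-algebra observation once Haglund's summation \eqref{eq:Haglund_summation} is available; the only genuinely computational ingredient is the $q$-binomial verification of $MN=I$, and the only delicate point there is the bookkeeping of the exponents $\binom r2$ and $rs$ after the substitution $r=s+\ell$, which I have sketched above.
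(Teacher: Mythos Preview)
Your proof is correct. The paper itself does not prove this proposition at all: it simply cites \cite{Garsia-Hicks-Stout-2011}*{Proposition~2.6} and states the result. Your argument is therefore not a reproduction of the paper's proof but a genuine alternative, and a rather natural one: you recognise that \eqref{eq:GHS_2_6} is precisely the inverse relation of Haglund's summation \eqref{eq:Haglund_summation}, so once the latter is available (as it is, earlier in the same section), the former follows by inverting a lower-triangular $q$-binomial matrix. The inversion itself is routine---your use of the subset-of-a-subset identity $\qbinom{i-1}{r-1}_q\qbinom{r-1}{s-1}_q=\qbinom{i-1}{s-1}_q\qbinom{i-s}{r-s}_q$, the exponent bookkeeping after $r=s+\ell$, and the $q$-binomial theorem $\sum_{\ell}(-1)^\ell q^{\binom{\ell}{2}}\qbinom{n}{\ell}_q=(1;q)_n$ are all standard and correctly applied. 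This approach has the advantage of keeping the argument self-contained within the paper, whereas the cited proof in Garsia--Hicks--Stout proceeds differently (via direct manipulation rather than by recognising the pair of identities as mutual inverses).
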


The following elementary lemma is proved in the appendix of the present article.
\begin{lemma} \label{lem:elementary1}
	For $i\geq 1$, $a\geq -i$ and $s\geq 0$ we have
	\begin{equation} \label{eq:first_qlemma}
		\sum_{r=1}^{i}\begin{bmatrix}
			i-1\\
			r-1
		\end{bmatrix}_q \begin{bmatrix}
			r+s+a-1\\
			s-1
		\end{bmatrix}_q q^{\binom{r}{2}+r-ir}(-1)^{i-r} =q^{\binom{i}{2}+(i-1)a} \begin{bmatrix}
			s+a\\
			i+a
		\end{bmatrix}_q .
	\end{equation}
\end{lemma}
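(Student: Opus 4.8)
\emph{Proof plan.} The plan is to establish this by a purely formal generating function computation resting on the finite $q$-binomial theorem $\prod_{j=0}^{n-1}(1+xq^j)=\sum_{k=0}^n\qbinom{n}{k}_q q^{\binom k2}x^k$ and on the expansion $\frac{1}{(z;q)_s}=\sum_{n\ge 0}\qbinom{n+s-1}{n}_q z^n$ (which is just \eqref{eq:h_q_binomial}). Write $N\coloneqq s+a$ and $c_r\coloneqq\qbinom{i-1}{r-1}_q(-1)^{i-r}q^{\binom r2+r-ir}$, so the left hand side of \eqref{eq:first_qlemma} is $\sum_{r=1}^i c_r\,\qbinom{r+s+a-1}{s-1}_q$. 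First I would use $\qbinom{n}{k}_q=\qbinom{n}{n-k}_q$ to rewrite $\qbinom{r+s+a-1}{s-1}_q=\qbinom{r+N-1}{r+a}_q$, which is exactly the coefficient of $z^{r+a}$ in $\frac{1}{(z;q)_s}$. Hence the left hand side equals $\sum_{r=1}^i c_r\,[z^{r+a}]\frac{1}{(z;q)_s}$, and since $G(z)\coloneqq\sum_{r=1}^i c_r z^r$ is an honest (finite) polynomial, this reorganizes cleanly inside the ring of formal Laurent series as $[z^a]\!\left(\frac{1}{(z;q)_s}\,G(1/z)\right)$; note $a\ge -i$ is exactly the condition that makes this coefficient well defined.

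Next I would evaluate $G$ in closed form. Substituting $r=k+1$ and pulling out the factors not depending on $k$, the sum $\sum_{k\ge 0}\qbinom{i-1}{k}_q q^{\binom k2}x^k$ appears with $x=-zq^{2-i}$; by the $q$-binomial theorem it equals $(zq^{2-i};q)_{i-1}=\prod_{l=0}^{i-2}(1-zq^{-l})$, and a short computation gives $G(z)=(-1)^{i-1}q^{1-i}\,z\prod_{l=0}^{i-2}(1-zq^{-l})$. Replacing $z$ by $1/z$ and clearing the denominators (each factor $1-z^{-1}q^{-l}$ becomes $-q^{-l}z^{-1}(1-zq^l)$), this simplifies to $G(1/z)=q^{\,1-i-\binom{i-1}2}\,z^{-i}\,(z;q)_{i-1}$.

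Then I would put the two pieces together. Using $\frac{(z;q)_{i-1}}{(z;q)_s}=\frac{1}{(zq^{i-1};q)_{s-i+1}}$ and expanding once more via $\frac{1}{(w;q)_{m+1}}=\sum_{n\ge 0}\qbinom{m+n}{n}_q w^n$ with $w=zq^{i-1}$ and $m=s-i$, the product becomes $q^{\,1-i-\binom{i-1}2}\,z^{-i}\sum_{n\ge 0}\qbinom{n+s-i}{n}_q q^{(i-1)n}z^n$, so extracting $[z^a]$ forces $n=a+i$ and yields $q^{\,1-i-\binom{i-1}2+(i-1)(a+i)}\,\qbinom{s+a}{i+a}_q$. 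Finally the exponent simplifies: $(i-1)i-\binom{i-1}2=\frac{(i-1)(i+2)}2$, whence $1-i-\binom{i-1}2+(i-1)i=\binom i2$, and the whole expression is $q^{\binom i2+(i-1)a}\qbinom{s+a}{i+a}_q$, which is the right hand side of \eqref{eq:first_qlemma}.

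The only point requiring care is the coefficient extraction together with the boundary behaviour: when $s<i$ the quotient $\frac{(z;q)_{i-1}}{(z;q)_s}$ is a polynomial of degree $i-1-s$ rather than a power series, and one checks directly (using the conventions of Section~\ref{sec:symmetric_fcts_intro} for $\qbinom{n}{k}_q$ when $n<k$, together with $i+a\ge 0$) that both sides then vanish; the extreme values of $a$ are dealt with by the same conventions. As a robust fail-safe to the Laurent-series bookkeeping one can instead induct on $i$, using the $q$-Pascal recursion $\qbinom{i-1}{r-1}_q=\qbinom{i-2}{r-2}_q+q^{r-1}\qbinom{i-2}{r-1}_q$ to split and reindex the sum, with base case $i=1$ collapsing to the trivial $\qbinom{s+a}{s-1}_q=\qbinom{s+a}{a+1}_q$. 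I expect the only real obstacle to be the routine but error-prone tracking of powers of $q$ and signs, rather than anything conceptual.
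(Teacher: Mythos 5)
Your generating-function approach is genuinely different from the paper's own proof, which sets $f(s,i,a)$ equal to the left-hand side, uses the $q$-Pascal recursion to derive $f(s,i,a)=-q^{-1}f(s,i-1,a)+q^{1-i}f(s,i-1,a+1)$, and then inducts on~$i$; you actually flag exactly that inductive route as a ``robust fail-safe,'' and it is precisely what the paper does. Your primary argument via the $q$-binomial theorem and coefficient extraction from $q^{1-i-\binom{i-1}{2}}z^{-i}(z;q)_{i-1}/(z;q)_s$ checks out: I verified the closed form for $G(z)$, the clearing of denominators in $G(1/z)$, the telescoping $(z;q)_{i-1}/(z;q)_s=1/(zq^{i-1};q)_{s-i+1}$, and the exponent simplification $1-i-\binom{i-1}{2}+(i-1)(a+i)=\binom{i}{2}+(i-1)a$. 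What this route buys is that the closed form emerges mechanically from the $q$-binomial theorem rather than having to be guessed and then verified against a recursion; what the paper's route buys is that it avoids any Laurent-series bookkeeping at the price of a somewhat opaque inductive step.

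One caution on the boundary remark, though. Your claim that ``both sides then vanish'' whenever $s<i$ is not quite right. When $s<i$ the quotient $(z;q)_{i-1}/(z;q)_s=(zq^{s};q)_{i-1-s}$ is a polynomial of degree $i-1-s$, and $[z^a]$ of $q^{1-i-\binom{i-1}{2}}z^{-i}(zq^s;q)_{i-1-s}$ can be nonzero for $-i\le a\le -s-1$; for instance $i=2$, $s=1$, $a=-2$ gives $q^{-1}$. On the right-hand side, $\qbinom{s+a}{i+a}_q$ has $s+a<0\le i+a$ in that corner range, so under the convention of Section~\ref{sec:symmetric_fcts_intro} it is zero, and the two sides disagree. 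So the identity as stated needs a caveat (or a more generous convention for $\qbinom{n}{0}_q$ with $n<0$) in the range $s<i$, $a\le -s-1$. This is a latent issue that the paper's own induction also glosses over: its base case $i=1$ uses the symmetry $\qbinom{s+a}{s-1}_q=\qbinom{s+a}{1+a}_q$, which tacitly requires $s\ge 1$ and $s+a\ge 0$, and its check of the recursion at $a=-i$ implicitly requires $s\ge i$. None of this affects the paper's use of the lemma, which invokes it only with $a=0$ (in the proof of Proposition~\ref{prop:key_identity}), where your argument is airtight; but if you want a clean statement you should restrict to, say, $a\ge 0$ or $s\ge i$ rather than asserting mutual vanishing.
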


\begin{proof}[Proof of Proposition~\ref{prop:key_identity}]
	We have
	\begin{equation*}
		\nabla \left( h_i\left[\frac{X}{1-q}\right] e_j\left[\frac{X}{M}\right]\right)= \qquad\qquad\qquad\qquad \qquad\qquad
	\end{equation*}
	\begin{align*}
		\text{(using \eqref{eq:GHS_2_6})}	& = \nabla \sum_{\mu\vdash i+j} \frac{\widetilde{H}_\mu[X]}{w_\mu} \sum_{r=1}^i \begin{bmatrix}
			i-1\\
			r-1
		\end{bmatrix}_q q^{\binom{r}{2}+r-ir}(-1)^{i-r}h_r[(1-t)B_\mu]\\
		& = \sum_{r=1}^i \begin{bmatrix}
			i-1\\
			r-1
		\end{bmatrix}_q q^{\binom{r}{2}+r-ir}(-1)^{i-r} \sum_{\mu\vdash i+j} \frac{T_\mu \widetilde{H}_\mu[X]}{w_\mu} h_r[(1-t)B_\mu]\\
		\text{(using \eqref{eq:basic_summation_2})}	& = \sum_{r=1}^i \begin{bmatrix}
			i-1\\
			r-1
		\end{bmatrix}_q q^{\binom{r}{2}+r-ir}(-1)^{i-r}   \sum_{s =1}^{i+j} \begin{bmatrix}
			r+s-1\\
			s-1
		\end{bmatrix}_q h_s\left[\frac{X}{1-q}\right] h_{i+j-s}\left[\frac{tX}{M}\right]  \\
		& = \sum_{s =1}^{i+j} \left(\sum_{r=1}^i \begin{bmatrix}
			i-1\\
			r-1
		\end{bmatrix}_q q^{\binom{r}{2}+r-ir}(-1)^{i-r}   \begin{bmatrix}
			r+s-1\\
			s-1
		\end{bmatrix}_q \right) h_s\left[\frac{X}{1-q}\right] h_{i+j-s}\left[\frac{tX}{M}\right]  \\
		\text{(using \eqref{eq:first_qlemma})}	& = \sum_{s =1}^{i+j} q^{\binom{i}{2}} \begin{bmatrix}
			s\\
			i
		\end{bmatrix}_q  h_s\left[\frac{X}{1-q}\right] h_{i+j-s}\left[\frac{tX}{M}\right] \\
		& = \sum_{s =1}^{i+j} t^{i+j-s}q^{\binom{i}{2}} \begin{bmatrix}
			s\\
			i
		\end{bmatrix}_q  h_s\left[\frac{X}{1-q}\right] h_{i+j-s}\left[\frac{X}{M}\right]  .
	\end{align*}
\end{proof}

\subsection{The main argument}

The main argument follows the main strategy of the proof of Theorem~\ref{thm:Haglund_summation}: we look for an $f[X]\in \Lambda$ such that
\begin{equation}
	\left.\nabla^{-1}\tau_{-\epsilon}f[X]\right|_{X=D_\mu}=h_k[(1-t)B_{\mu}]e_\ell[B_\mu],
\end{equation}
so that
\begin{equation}
	\sum_{\mu\vdash m} \frac{ \widetilde{H}_\mu[MB_\gamma]}{w_\mu}h_k[(1-t)B_{\mu}]e_{\ell}[B_\mu]=\sum_{d=0}^{m}e_{m-d}\left[\frac{X}{M}\right](f[X])_d.
\end{equation}

Clearly, we can use
\begin{align*}
	\nabla^{-1} \tau_{-\epsilon} f[X] & =  h_{k}\left[\frac{X+1}{1-q}\right] e_{\ell}\left[\frac{X+1}{M}\right]\\
	\text{(using \eqref{eq:e_h_sum_alphabets})}& = \sum_{i=0}^{k} \sum_{j=0}^{\ell}h_{k-i}\left[\frac{1}{1-q}\right]e_{\ell-j}\left[\frac{1}{M}\right]h_{i}\left[\frac{X}{1-q}\right] e_{j}\left[\frac{X}{M}\right]
\end{align*}
so
\begin{align*}
	\tau_{-\epsilon}f[X] & = \sum_{i=0}^{k} \sum_{j=0}^{\ell}h_{k-i}\left[\frac{1}{1-q}\right]e_{\ell-j}\left[\frac{1}{M}\right]\nabla \left( h_{i}\left[\frac{X}{1-q}\right] e_{j}\left[\frac{X}{M}\right]\right)\\
	\text{(using \eqref{eq:key_identity})}& =  \sum_{j=0}^{\ell}h_{k}\left[\frac{1}{1-q}\right]e_{\ell-j}\left[\frac{1}{M}\right]
	h_{j}\left[\frac{X}{M}\right]+\\
	& + \sum_{i=1}^{k} \sum_{j=0}^{\ell}h_{k-i}\left[\frac{1}{1-q}\right]e_{\ell-j}\left[\frac{1}{M}\right]
	\sum_{s =1}^{i+j}t^{i+j-s} q^{\binom{i}{2}} \begin{bmatrix}
		s\\
		i
	\end{bmatrix}_q  h_s\left[\frac{X}{1-q}\right] h_{i+j-s}\left[\frac{X}{M}\right]\\
	\text{(using \eqref{eq:e_h_sum_alphabets})} & =  h_{k}\left[\frac{1}{1-q}\right]  h_{\ell}\left[\frac{X-\epsilon}{M}\right]+\\
	& + \sum_{i=1}^{k} \sum_{j=0}^{\ell}h_{k-i}\left[\frac{1}{1-q}\right]e_{\ell-j}\left[\frac{1}{M}\right]
	\sum_{s =1}^{i+j}t^{i+j-s} q^{\binom{i}{2}} \begin{bmatrix}
		s\\
		i
	\end{bmatrix}_q  h_s\left[\frac{X}{1-q}\right] h_{i+j-s}\left[\frac{X}{M}\right]
\end{align*}
and therefore
\begin{align*}
	f[X]
	& =  h_{k}\left[\frac{1}{1-q}\right]  h_{\ell}\left[\frac{X+\epsilon -\epsilon}{M}\right]+\\
	& + \sum_{i=1}^{k} \sum_{j=0}^{\ell}h_{k-i}\left[\frac{1}{1-q}\right]e_{\ell-j}\left[\frac{1}{M}\right]
	\sum_{s =1}^{i+j}t^{i+j-s} q^{\binom{i}{2}} \begin{bmatrix}
		s\\
		i
	\end{bmatrix}_q  h_s\left[\frac{X+\epsilon}{1-q}\right] h_{i+j-s}\left[\frac{X+\epsilon}{M}\right]\\
	& =  h_{k}\left[\frac{1}{1-q}\right]  h_{\ell}\left[\frac{X}{M}\right]+ \sum_{i=1}^{k} \sum_{j=0}^{\ell}h_{k-i}\left[\frac{1}{1-q}\right]e_{\ell-j}\left[\frac{1}{M}\right]
	\sum_{s =1}^{i+j}t^{i+j-s} q^{\binom{i}{2}} \begin{bmatrix}
		s\\
		i
	\end{bmatrix}_q\\
	& \qquad \times \sum_{b=0}^s(-1)^{s-b} h_{s-b}\left[\frac{1}{1-q}\right] \sum_{c=0}^{i+j-s}(-1)^{i+j-s-c} h_{i+j-s-c}\left[\frac{1}{M}\right] h_b\left[\frac{X}{1-q}\right] h_{c}\left[\frac{X}{M}\right],
\end{align*}
where in the last equality we used \eqref{eq:e_h_sum_alphabets}.

From this formula, it is easy to extract the homogeneous component of $f[X]$ of degree $d\geq 0$:
\begin{align*}
	(f[X])_d & = \delta_{d,\ell} h_{k}\left[\frac{1}{1-q}\right]  h_{\ell}\left[\frac{X}{M}\right]\\
	& + \sum_{b=0}^{k+\ell} 
	\sum_{i=1}^{k} 
	\sum_{j=0}^{\ell}
	\sum_{s =1}^{i+j-d+b} 
	h_{k-i}\left[\frac{1}{1-q}\right]e_{\ell-j}\left[\frac{1}{M}\right]\\
	& \qquad \times t^{i+j-s} q^{\binom{i}{2}} \begin{bmatrix}
		s\\
		i
	\end{bmatrix}_q h_{s-b}\left[\frac{1}{1-q}\right] (-1)^{i+j-d} h_{i+j-s-d+b}\left[\frac{1}{M}\right] h_b\left[\frac{X}{1-q}\right] h_{d-b}\left[\frac{X}{M}\right],
\end{align*}
where $\delta_{d,\ell}$ is the Kronecker delta function, i.e. $\delta_{d,\ell}=1$ if $d=\ell$ and $\delta_{d,\ell}=0$ if $d\neq \ell$.

Notice that here and later we use the convention that $e_r=h_r=0$ for $r<0$.

Finally
%\begin{equation*}
%\sum_{d=0}^{\min(m,k+\ell)}e_{m-d}\left[\frac{X}{M}\right](f[X])_d=\qquad \qquad \qquad \qquad \qquad \qquad \qquad \qquad \qquad \qquad \qquad \qquad 
%\end{equation*}
\begin{align}
	& \hspace{-0.5cm} \sum_{d=0}^{k+\ell}e_{m-d}\left[\frac{X}{M}\right](f[X])_d= \\
	& = e_{m-\ell}\left[\frac{X}{M}\right]h_k\left[\frac{1}{1-q}\right] h_{\ell}\left[\frac{X}{M}\right]+\\
	\notag & + \sum_{d=0}^{k+\ell}\sum_{b=0}^{k+\ell} 
	\sum_{i=1}^{k} 
	\sum_{j=0}^{\ell}
	\sum_{s =1}^{i+j-d+b} 
	h_{k-i}\left[\frac{1}{1-q}\right]e_{\ell-j}\left[\frac{1}{M}\right]\\
	\notag & \qquad \times t^{i+j-s} q^{\binom{i}{2}} \begin{bmatrix}
		s\\
		i
	\end{bmatrix}_q h_{s-b}\left[\frac{1}{1-q}\right] (-1)^{i+j-d} h_{i+j-s-d+b}\left[\frac{1}{M}\right] \\
	\notag & \qquad \times e_{m-d}\left[\frac{X}{M}\right]h_b\left[\frac{X}{1-q}\right] h_{d-b}\left[\frac{X}{M}\right].
\end{align}
If we set
\begin{align*}
	g(\ell,b,d,k)&  \coloneqq  
	\sum_{i=1}^{k} 
	\sum_{j=0}^{\ell}
	\sum_{s =1}^{i+j-d+b} 
	h_{k-i}\left[\frac{1}{1-q}\right]e_{\ell-j}\left[\frac{1}{M}\right]\\
	& \qquad \times t^{i+j-s} q^{\binom{i}{2}} \begin{bmatrix}
		s\\
		i
	\end{bmatrix}_q h_{s-b}\left[\frac{1}{1-q}\right] (-1)^{i+j-d} h_{i+j-s-d+b}\left[\frac{1}{M}\right],
\end{align*}
then we proved the identity
%\begin{equation} 
%\sum_{\mu\vdash m} \frac{ \widetilde{H}_\mu[X]}{w_\mu}h_k[(1-t)B_{\mu}]e_\ell[B_\mu]=\qquad\qquad\qquad\qquad\qquad \qquad\qquad\qquad\qquad
%\end{equation}
\begin{align} \label{eq:crucial_sum_lemma}
	\notag	& \hspace{-0.5cm} \sum_{\mu\vdash m} \frac{ \widetilde{H}_\mu[X]}{w_\mu}h_k[(1-t)B_{\mu}]e_\ell[B_\mu]=\\
	& = h_k\left[\frac{1}{1-q}\right] e_{m-\ell}\left[\frac{X}{M}\right] h_{\ell}\left[\frac{X}{M}\right]+ \\
	\notag	& + \sum_{d=0}^{k+\ell}\sum_{b=0}^{k+\ell} g(\ell,b,d,k) e_{m-d}\left[\frac{X}{M}\right]h_b\left[\frac{X}{1-q}\right] h_{d-b}\left[\frac{X}{M}\right] .
\end{align} 

Now we want to simplify the coefficients $g(\ell,b,d,k)$. 

\subsection{Simplifying coefficients}

Notice that because of the factor $h_{d-b}[X/M]$ in \eqref{eq:crucial_sum_lemma}, we can set $g(\ell,b,d,k)=0$ for $d<b$.

Using \eqref{eq:minusepsilon}, we have
\begin{align*}
	g(\ell,b,d,k)&  \coloneqq  
	\sum_{i=1}^{k} 
	\sum_{j=0}^{\ell}
	\sum_{s =1}^{i+j-d+b} 
	h_{k-i}\left[\frac{1}{1-q}\right]e_{\ell-j}\left[\frac{1}{M}\right]\\
	& \qquad \times t^{i+j-s} q^{\binom{i}{2}} \begin{bmatrix}
		s\\
		i
	\end{bmatrix}_q h_{s-b}\left[\frac{1}{1-q}\right] (-1)^{i+j-d} h_{i+j-s-d+b}\left[\frac{1}{M}\right] \\
	& = \sum_{i=1}^{k} 
	\sum_{s =1}^{i+\ell-d+b} 
	\sum_{j=\max(0,s-i+d-b)}^{\ell}
	h_{k-i}\left[\frac{1}{1-q}\right]e_{\ell-j}\left[\frac{1}{M}\right]\\
	& \qquad \times t^{i+j-s} q^{\binom{i}{2}} \begin{bmatrix}
		s\\
		i
	\end{bmatrix}_q h_{s-b}\left[\frac{1}{1-q}\right] (-1)^{i+j-d} h_{i+j-s-d+b}\left[\frac{1}{M}\right] \\
	& = \sum_{i=1}^{k} 
	\sum_{s =1}^{i+\ell-d+b} 
	h_{k-i}\left[\frac{1}{1-q}\right]  q^{\binom{i}{2}} \begin{bmatrix}
		s\\
		i
	\end{bmatrix}_q e_{s-b}\left[-\frac{1}{1-q}\right]  \\
	& \qquad \times t^{d-b}\sum_{j=\max(0,s-i+d-b)}^{\ell}
	e_{\ell-j}\left[\frac{1}{M}\right] e_{i+j-s-d+b}\left[\frac{-t}{M}\right].
\end{align*}
Observe that in the last formula, because of the factor $\qbinom{s}{i}_q$, the terms with $s<i$ all vanish. Since $g(\ell,b,d,k)=0$ for $d<b$, the nonzero terms in the last formula can only occur for $s-i+d-b\geq 0$. Therefore we can replace the last formula by
\begin{align*}
	& \hspace{-0.5cm} \sum_{i=1}^{k} 
	\sum_{s =1}^{i+\ell-d+b} 
	h_{k-i}\left[\frac{1}{1-q}\right]  q^{\binom{i}{2}} \begin{bmatrix}
		s\\
		i
	\end{bmatrix}_q e_{s-b}\left[-\frac{1}{1-q}\right] \\
	& \qquad \times t^{d-b}\sum_{j= s-i+d-b}^{\ell}
	e_{\ell-j}\left[\frac{1}{M}\right] e_{i+j-s-d+b}\left[\frac{-t}{M}\right] .
\end{align*}

Using \eqref{eq:e_h_sum_alphabets} this expression becomes
\begin{align*}
	& \hspace{-0.5cm} t^{d-b} \sum_{i=1}^{k} 
	\sum_{s =1}^{i+\ell-d+b} 
	h_{k-i}\left[\frac{1}{1-q}\right]  q^{\binom{i}{2}} \begin{bmatrix}
		s\\
		i
	\end{bmatrix}_q e_{s-b}\left[-\frac{1}{1-q}\right] 
	e_{i+\ell-s-d+b}\left[\frac{1}{1-q}\right].
\end{align*}
Observe that since the factor $\qbinom{s}{i}_q$ vanishes for $s<i$, and we always have $i\geq 1$, the last formula does not change if we start its internal sum with $s=0$. Therefore

\begin{align*}
	& \hspace{-0.5cm} t^{d-b} \sum_{i=1}^{k} 
	\sum_{s =1}^{i+\ell-d+b} 
	h_{k-i}\left[\frac{1}{1-q}\right]  q^{\binom{i}{2}} \begin{bmatrix}
		s\\
		i
	\end{bmatrix}_q e_{s-b}\left[-\frac{1}{1-q}\right] 
	e_{i+\ell-s-d+b}\left[\frac{1}{1-q}\right]=\\
	& = t^{d-b} \sum_{i=1}^{k} 
	\sum_{s =0}^{i+\ell-d+b} 
	h_{k-i}\left[\frac{1}{1-q}\right]  q^{\binom{i}{2}} \begin{bmatrix}
		s\\
		i
	\end{bmatrix}_q e_{s-b}\left[-\frac{1}{1-q}\right] 
	e_{i+\ell-s-d+b}\left[\frac{1}{1-q}\right]\\
	& = t^{d-b} \sum_{i=0}^{k} 
	\sum_{s =0}^{i+\ell-d+b} 
	h_{k-i}\left[\frac{1}{1-q}\right]  q^{\binom{i}{2}} \begin{bmatrix}
		s\\
		i
	\end{bmatrix}_q e_{s-b}\left[-\frac{1}{1-q}\right] 
	e_{i+\ell-s-d+b}\left[\frac{1}{1-q}\right] \\
	& \qquad - t^{d-b} h_{k}\left[\frac{1}{1-q}\right] \sum_{s=0}^{\ell-d+b}e_{s-b}\left[-\frac{1}{1-q}\right] 
	e_{\ell-s-d+b}\left[\frac{1}{1-q}\right]\\
	& = t^{d-b} \sum_{i=0}^{k} 
	\sum_{s =0}^{i+\ell-d+b} 
	h_{k-i}\left[\frac{1}{1-q}\right]  q^{\binom{i}{2}} \begin{bmatrix}
		s\\
		i
	\end{bmatrix}_q e_{s-b}\left[-\frac{1}{1-q}\right] 
	e_{i+\ell-s-d+b}\left[\frac{1}{1-q}\right] \\
	& \qquad - t^{d-b} h_{k}\left[\frac{1}{1-q}\right] e_{\ell-d}\left[\frac{1}{1-q}-\frac{1}{1-q}\right]\\
	& = t^{d-b} \sum_{i=0}^{k} 
	\sum_{s =0}^{i+\ell-d+b} 
	h_{k-i}\left[\frac{1}{1-q}\right]  q^{\binom{i}{2}} \begin{bmatrix}
		s\\
		i
	\end{bmatrix}_q e_{s-b}\left[-\frac{1}{1-q}\right] 
	e_{i+\ell-s-d+b}\left[\frac{1}{1-q}\right] \\
	& \qquad -\delta_{\ell,d} t^{d-b} h_{k}\left[\frac{1}{1-q}\right]
\end{align*}
where we used the usual \eqref{eq:e_h_sum_alphabets}.

At this point we need the following elementary lemma, which we prove in the appendix of the present article.
\begin{lemma} \label{lem:elementary2}
	For $\ell\geq 0$, $d\geq b\geq 0$ and $k\geq 1$ we have
	\begin{align} \label{eq:qlemma3}
		\notag	& \hspace{-0.5cm}  q^{\binom{d-\ell}{2}} \qbinom{b}{d-\ell}_q\qbinom{k+\ell-d+b-1}{k-d+\ell}_q=\\
		& = \sum_{i=0}^{k} 
		\sum_{s =0}^{i+\ell-d+b} 
		h_{k-i}\left[\frac{1}{1-q}\right]  q^{\binom{i}{2}} \begin{bmatrix}
			s\\
			i
		\end{bmatrix}_q e_{s-b}\left[-\frac{1}{1-q}\right] 
		e_{i+\ell-s-d+b}\left[\frac{1}{1-q}\right].
	\end{align}
\end{lemma}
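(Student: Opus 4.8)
The plan is to reduce the claimed identity to a routine plethystic manipulation using only the specializations already collected in Section~4.1.

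\textbf{Step 1: rewrite the left-hand side.} By \eqref{eq:e_q_binomial} and \eqref{eq:h_q_binomial} one has $q^{\binom{d-\ell}{2}}\qbinom{b}{d-\ell}_q = e_{d-\ell}[[b]_q]$ and $\qbinom{k+\ell-d+b-1}{k-d+\ell}_q = h_{k-d+\ell}[[b]_q]$, where as usual $[b]_q=\tfrac{1-q^b}{1-q}$. Writing $n:=d-\ell$ and $u:=k-d+\ell$, the identity \eqref{eq:qlemma3} becomes the assertion that its right-hand side equals $e_n[[b]_q]\,h_u[[b]_q]$. (The case $b=0$ is immediate: then $[b]_q=0$, both sides collapse to $\delta_{n,0}\delta_{k,0}$, which vanishes under the hypothesis $k\ge 1$; so assume $b\ge 1$.)

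\textbf{Step 2: unravel the right-hand side.} In the right-hand side of \eqref{eq:qlemma3}, rewrite each factor plethystically: use $e_{s-b}[-\tfrac{1}{1-q}]=(-1)^{s-b}h_{s-b}[\tfrac{1}{1-q}]$ (from $e_m[-X]=(-1)^m h_m[X]$), and $q^{\binom i2}\qbinom si_q = e_i[[s]_q] = e_i\!\left[\tfrac{1}{1-q}-\tfrac{q^s}{1-q}\right]$ by \eqref{eq:e_q_binomial}, which we expand with the addition formula \eqref{eq:e_h_sum_alphabets}. After the substitution $t:=s-b$, the inner sum over $t$ has the shape $\sum_{t\ge 0}(-q^p)^t\,h_t[\tfrac{1}{1-q}]\,e_{(i-n)-t}[\tfrac{1}{1-q}]$, which is the coefficient of $x^{i-n}$ in the Cauchy product $(-x;q)_\infty/(-q^{p}x;q)_\infty=(-x;q)_p$, using the generating functions $\sum_m h_m[\tfrac{1}{1-q}]x^m=\prod_{j\ge0}(1-q^jx)^{-1}$ and $\sum_m e_m[\tfrac{1}{1-q}]x^m=\prod_{j\ge0}(1+q^jx)$ coming from \eqref{eq:h_q_prspec}–\eqref{eq:e_q_prspec}; hence it equals $e_{i-n}[[p]_q]=q^{\binom{i-n}{2}}\qbinom{p}{i-n}_q$. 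Reindexing $i=n+r$, $p=r+a$ and simplifying $\qbinom{r+a}{r}_q h_{r+a}[\tfrac{1}{1-q}]=h_r[\tfrac{1}{1-q}]h_a[\tfrac{1}{1-q}]$, the remaining double sum \emph{factors} as
\[ \Bigl(\sum_{a} e_{n-a}\bigl[\tfrac{1}{1-q}\bigr]\,(-1)^a q^{ab}h_a\bigl[\tfrac{1}{1-q}\bigr]\Bigr)\Bigl(\sum_{r} h_{u-r}\bigl[\tfrac{1}{1-q}\bigr]\,(-1)^r q^{rb}q^{\binom r2}h_r\bigl[\tfrac{1}{1-q}\bigr]\Bigr). \]

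\textbf{Step 3: recognize the two factors.} Using \eqref{eq:e_h_sum_alphabets} together with $h_a[-\tfrac{q^b}{1-q}]=(-1)^a q^{ab}e_a[\tfrac{1}{1-q}]$ and $e_r[\tfrac{1}{1-q}]=q^{\binom r2}h_r[\tfrac{1}{1-q}]$, the first factor is $\sum_a e_{n-a}[\tfrac{1}{1-q}]\,e_a[-\tfrac{q^b}{1-q}]=e_n\!\left[\tfrac{1}{1-q}-\tfrac{q^b}{1-q}\right]=e_n[[b]_q]$, and likewise the second factor is $\sum_r h_{u-r}[\tfrac{1}{1-q}]\,h_r[-\tfrac{q^b}{1-q}]=h_u\!\left[\tfrac{1}{1-q}-\tfrac{q^b}{1-q}\right]=h_u[[b]_q]$. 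Multiplying gives $e_n[[b]_q]h_u[[b]_q]$, which by Step~1 is the left-hand side of \eqref{eq:qlemma3}.

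\textbf{Main obstacle.} There is no conceptual difficulty here; the work is entirely bookkeeping. The two delicate points are: (i) keeping the \emph{plethystic} minus sign (negation of an alphabet, which swaps $e$ and $h$ through $\omega$) carefully distinct from ordinary scalar multiplication, so that the signs and the $q$-powers $q^{\binom\cdot2}$ survive the two reindexings correctly; and (ii) verifying that all summation ranges are respected under the convention $e_r=h_r=0$ for $r<0$, so that both the collapse of the inner sum and the final factorization are legitimate. A more pedestrian alternative would be a double induction on $k$ and $b$ via the two forms of the recursion \eqref{eq:qbin_recursion}, but the plethystic argument above is shorter and explains why the two $q$-binomials on the left arise as a product.
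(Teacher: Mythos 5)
Your argument is correct and takes a genuinely different route from the paper's. The paper proves \eqref{eq:qlemma3} by an induction on $b$, $d$ and $k$: from \eqref{eq:qbin_recursion} it derives the Pascal-type recursion $f(\ell,b,d,k)=f(\ell,b-1,d,k)+q^{b-1}f(\ell,b-1,d-1,k-1)+q^{b-1}f(\ell,b,d,k-1)$ for the right-hand side, and then checks the base cases $k=1$ and $b=0$ by separate plethystic computations. You instead observe that the left-hand side is the product $e_{d-\ell}\bigl[[b]_q\bigr]\cdot h_{k-d+\ell}\bigl[[b]_q\bigr]$, unfold $q^{\binom{i}{2}}\qbinom{s}{i}_q=e_i\bigl[[s]_q\bigr]$ by the addition formula, collapse the resulting inner sum over $t=s-b$ via the Cauchy-product specialization $\sum_t(-q^p)^t\,h_t[\tfrac{1}{1-q}]\,e_{m-t}[\tfrac{1}{1-q}]=e_m\bigl[[p]_q\bigr]$, and then watch the remaining double sum split multiplicatively into exactly those two factors. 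The two proofs are of comparable length, but yours has the advantage of explaining \emph{why} the left-hand side is a product of an $e$- and an $h$-specialization at $[b]_q$, which the induction does not reveal. One caveat worth flagging: you use the identity $e_r[\tfrac{1}{1-q}]=q^{\binom{r}{2}}h_r[\tfrac{1}{1-q}]$, which is correct, whereas \eqref{eq:e_q_prspec} as printed in the text has $q^{\binom{k-1}{2}}$ in the numerator; the printed form appears to be a misprint (it contradicts \eqref{eq:e_q_binomial} on letting $n\to\infty$, and a direct check gives $e_2[\tfrac{1}{1-q}]=q/(q;q)_2$ rather than $1/(q;q)_2$), so your version is the right one, but a reader tracking your citations back to \eqref{eq:e_q_prspec} will find a mismatch.
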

So, using Lemma~\ref{lem:elementary2}, we finally proved
\begin{align*}
	g(\ell,b,d,k)	 & =- \delta_{d,\ell}t^{d-b}h_k\left[\frac{1}{1-q}\right]+  t^{d-b} q^{\binom{d-\ell}{2}}\qbinom{b}{d-\ell}_q \qbinom{k+\ell-d+b-1}{k-d+\ell}_q.
\end{align*}
Replacing this in \eqref{eq:crucial_sum_lemma} gives
\begin{align*} 
	& \hspace{-0.5cm} \sum_{\mu\vdash m} \frac{ \widetilde{H}_\mu[X]}{w_\mu}h_k[(1-t)B_{\mu}]e_\ell[B_\mu]=\\
	& = h_k\left[\frac{1}{1-q}\right] e_{m-\ell}\left[\frac{X}{M}\right] h_{\ell}\left[\frac{X}{M}\right]+ \sum_{d=0}^{k+\ell}\sum_{b=0}^{k+\ell} g(\ell,b,d,k) e_{m-d}\left[\frac{X}{M}\right]h_b\left[\frac{X}{1-q}\right] h_{d-b}\left[\frac{X}{M}\right] \\
	& = h_k\left[\frac{1}{1-q}\right] e_{m-\ell}\left[\frac{X}{M}\right] h_{\ell}\left[\frac{X}{M}\right]\\
	& - \sum_{d=0}^{k+\ell}\sum_{b=0}^{k+\ell}\delta_{d,\ell}t^{d-b}h_k\left[\frac{1}{1-q}\right] e_{m-d}\left[\frac{X}{M}\right]h_b\left[\frac{X}{1-q}\right] h_{d-b}\left[\frac{X}{M}\right]\\
	& + \sum_{d=0}^{k+\ell}\sum_{b=0}^{k+\ell}t^{d-b} q^{\binom{d-\ell}{2}}\qbinom{b}{d-\ell}_q \qbinom{k+\ell-d+b-1}{k-d+\ell}_q e_{m-d}\left[\frac{X}{M}\right]h_b\left[\frac{X}{1-q}\right] h_{d-b}\left[\frac{X}{M}\right] \\
	& = h_k\left[\frac{1}{1-q}\right] e_{m-\ell}\left[\frac{X}{M}\right] h_{\ell}\left[\frac{X}{M}\right]\\
	& - \sum_{b=0}^{k+\ell} h_k\left[\frac{1}{1-q}\right] e_{m-\ell}\left[\frac{X}{M}\right]h_b\left[\frac{X}{1-q}\right] h_{\ell-b}\left[\frac{tX}{M}\right]\\
	& + \sum_{d=0}^{k+\ell}\sum_{b=0}^{k+\ell}t^{d-b} q^{\binom{d-\ell}{2}}\qbinom{b}{d-\ell}_q \qbinom{k+\ell-d+b-1}{k-d+\ell}_q e_{m-d}\left[\frac{X}{M}\right]h_b\left[\frac{X}{1-q}\right] h_{d-b}\left[\frac{X}{M}\right] \\
	& = h_k\left[\frac{1}{1-q}\right] e_{m-\ell}\left[\frac{X}{M}\right] h_{\ell}\left[\frac{X}{M}\right]\\
	& - h_k\left[\frac{1}{1-q}\right] e_{m-\ell}\left[\frac{X}{M}\right] h_{\ell}\left[X\left(\frac{1}{1-q} + \frac{t}{M}\right)\right]\\
	& + \sum_{d=0}^{k+\ell}\sum_{b=0}^{k+\ell}t^{d-b} q^{\binom{d-\ell}{2}}\qbinom{b}{d-\ell}_q \qbinom{k+\ell-d+b-1}{k-d+\ell}_q e_{m-d}\left[\frac{X}{M}\right]h_b\left[\frac{X}{1-q}\right] h_{d-b}\left[\frac{X}{M}\right] \\
	& =  \sum_{d=0}^{k+\ell}\sum_{b=0}^{k+\ell}t^{d-b} q^{\binom{d-\ell}{2}}\qbinom{b}{d-\ell}_q \qbinom{k+\ell-d+b-1}{k-d+\ell}_q e_{m-d}\left[\frac{X}{M}\right]h_b\left[\frac{X}{1-q}\right] h_{d-b}\left[\frac{X}{M}\right] 
\end{align*} 
where we used again \eqref{eq:e_h_sum_alphabets}. Now we get \eqref{eq:mastereq} simply by changing the indices of summation: $d\mapsto s+\ell$, $b\mapsto s+j$.

\section{Three families of plethystic formulae}

We introduce here three (pairs of) families of polynomials, which turn out to be the pieces needed for our recursions.

\medskip

\emph{By convention, all our polynomials are equal to $0$ if any of their indices is negative.}

\medskip

We set 
\begin{equation}
	\widetilde{F}_{n,k}^{(d,\ell)} \coloneqq \< \Delta_{h_{\ell}} \Delta_{e_{n-\ell-d-1}}'E_{n-\ell,k},e_{n-\ell}\>.
\end{equation}
and
\begin{align} \label{eq:rel_F_Ftilde}
	\notag F_{n,k}^{(d,\ell)} &  \coloneqq \< \Delta_{h_{\ell}} \Delta_{e_{n-\ell-d}}E_{n-\ell,k},e_{n-\ell}\>\\
	\text{(using \eqref{eq:deltaprime})}& =\widetilde{F}_{n,k}^{(d,\ell)}+\widetilde{F}_{n,k}^{(d-1,\ell)}.
\end{align}
Observe that, using Theorem~\ref{thm:Haglund_formula} and Lemma~\ref{lem:Mac_hook_coeff}, we have
\begin{align}
	\notag F_{n,k}^{(d,\ell)} & =\< \Delta_{h_{\ell}} \Delta_{e_{n-d-\ell}}E_{n-\ell,k},e_{n-\ell}\>\\ 
	& =t^{n-\ell-k}\left\< \Delta_{h_{n-\ell-k}} e_{n-d}\left[X\frac{1-q^k}{1-q}\right] ,e_{\ell}h_{n-d-\ell}\right\> \\
	\notag & =t^{n-\ell-k}\left\< \Delta_{h_{n-\ell-k}}\Delta_{e_{\ell}} e_{n-d}\left[X\frac{1-q^k}{1-q}\right] ,h_{n-d}\right\> .
\end{align}
Also, using \eqref{eq:rel_F_Ftilde}, we have 
\begin{align} \label{eq:rel_Ftilde_F}
	\notag \widetilde{F}_{n,k}^{(d,\ell)} & =\< \Delta_{h_{\ell}} \Delta_{e_{n-d-\ell-1}}'E_{n-\ell,k},e_{n-\ell}\>\\ 
	& =\sum_{u\geq 1}(-1)^{u-1}\< \Delta_{h_{\ell}} \Delta_{e_{n-(d+u)-\ell}}E_{n-\ell,k},e_{n-\ell}\> \\
	\notag & =\sum_{u\geq 1}(-1)^{u-1} F_{n,k}^{(d+u,\ell)}.
\end{align}

We set 
\begin{equation}
	\widetilde{G}_{n,k}^{(d,\ell)} \coloneqq t^{n-\ell}q^{\binom{k}{2}}\left\<  \Delta_{e_{n-\ell}}\Delta_{e_{n-d-1}}' e_{n}\left[X\frac{1-q^k}{1-q}\right] ,h_{n}\right\>
\end{equation}
and
\begin{align} \label{eq:rel_G_Gtilde}
	\notag G_{n,k}^{(d,\ell)} & \coloneqq  t^{n-\ell}q^{\binom{k}{2}}\left\<  \Delta_{e_{n-\ell}}\Delta_{e_{n-d}} e_{n}\left[X\frac{1-q^k}{1-q}\right] ,h_{n}\right\>\\
	\text{(using \eqref{eq:deltaprime})} & =\widetilde{G}_{n,k}^{(d,\ell)}+\widetilde{G}_{n,k}^{(d-1,\ell)}.
\end{align}
Observe that, using Lemma~\ref{lem:Mac_hook_coeff}, we have
\begin{align*}
	G_{n,k}^{(d,\ell)} &  \coloneqq  t^{n-\ell}q^{\binom{k}{2}}\left\<  \Delta_{e_{n-\ell}}\Delta_{e_{n-d}} e_{n}\left[X\frac{1-q^k}{1-q}\right] ,h_{n}\right\>\\ 
	& = t^{n-\ell}q^{\binom{k}{2}}\left\<  \Delta_{e_{n-\ell}} e_{n}\left[X\frac{1-q^k}{1-q}\right] ,e_{n-d}h_{d}\right\> .
\end{align*}
Also, using \eqref{eq:rel_G_Gtilde}, we have
\begin{align} \label{eq:rel_Gtilde_G}
	\notag \widetilde{G}_{n,k}^{(d,\ell)} & = t^{n-\ell}q^{\binom{k}{2}}\left\<  \Delta_{e_{n-\ell}}\Delta_{e_{n-d-1}}' e_{n}\left[X\frac{1-q^k}{1-q}\right] ,h_{n}\right\>\\ 
	& =\sum_{u\geq 1}(-1)^{u-1}t^{n-\ell}q^{\binom{k}{2}}\left\<  \Delta_{e_{n-\ell}}\Delta_{e_{n-(d+u)}} e_{n}\left[X\frac{1-q^k}{1-q}\right] ,h_{n}\right\>\\
	\notag  & =\sum_{u\geq 1}(-1)^{u-1}G_{n,k}^{(d+u,\ell)}.
\end{align}

We set
\begin{align}
	\overline{H}_{n,n,i}^{(d,\ell)} &  \coloneqq \delta_{i,\ell} q^{\binom{n-\ell-d}{2}}\begin{bmatrix}
		n-1\\
		\ell
	\end{bmatrix}_q\begin{bmatrix}
		n-\ell-1\\
		d
	\end{bmatrix}_q
\end{align}
and for $1\leq k<n$
\begin{align} \label{eq:rel_Htilde_Ftilde}
	\notag \overline{H}_{n,k,i}^{(d,\ell)} &  \coloneqq   t^{n-k}\sum_{s=0}^{k-i}\sum_{h=0}^{n-k}q^{\binom{s}{2}} \begin{bmatrix}
		k-i\\
		s
	\end{bmatrix}_q \begin{bmatrix}
		k-1\\
		i
	\end{bmatrix}_q \begin{bmatrix}
		s+i-1+h\\
		h
	\end{bmatrix}_q \widetilde{F}_{n-k,h}^{(d-k+i+s,\ell-i)}.
\end{align}
Similarly
\begin{align}
	H_{n,n,i}^{(d,\ell)} &  \coloneqq \overline{H}_{n,n,i}^{(d,\ell)}+\overline{H}_{n,n,i}^{(d-1,\ell)}=\delta_{i,\ell} q^{\binom{n-\ell-d}{2}}\begin{bmatrix}
		n-1\\
		\ell
	\end{bmatrix}_q\begin{bmatrix}
		n-\ell\\
		d
	\end{bmatrix}_q
\end{align}
and for $1\leq k<n$
\begin{align}   \label{eq:rel_H_F}
	H_{n,k,i}^{(d,\ell)} &  \coloneqq  \overline{H}_{n,k,i}^{(d,\ell)}+\overline{H}_{n,k,i}^{(d-1,\ell)}\\
	\text{(using \eqref{eq:rel_F_Ftilde})}& =  t^{n-k}\sum_{s=0}^{k-i}\sum_{h=0}^{n-k}q^{\binom{s}{2}} \begin{bmatrix}
		k-i\\
		s
	\end{bmatrix}_q \begin{bmatrix}
		k-1\\
		i
	\end{bmatrix}_q \begin{bmatrix}
		s+i-1+h\\
		h
	\end{bmatrix}_q F_{n-k,h}^{(d-k+i+s,\ell-i)}.
\end{align}
Notice that, using the definitions, we have
\begin{align} \label{eq:rel_Htilde_H}
	\overline{H}_{n,k,i}^{(d,\ell)} & =\sum_{u\geq 1}(-1)^{u-1}H_{n,k,i}^{(d+u,\ell)}.
\end{align}

\subsection{Recursive relations among the families}

In this section we establish some recursive relations among our families.

\begin{theorem}
	For $k,\ell,d\geq 0$, $n\geq k+\ell$ and $n\geq d$, we have
	\begin{align} \label{eq:Fnn_formula}
		F_{n,n}^{(d,\ell)} & = \delta_{\ell,0} q^{\binom{n-d}{2}}\begin{bmatrix}
			n\\
			d
		\end{bmatrix}_q  ,
	\end{align}
	and for $n>k$
	\begin{align} \label{eq:rel_F_G}
		F_{n,k}^{(d,\ell)} & =\sum_{s=1}^{\min(k,n-d)} 
		\begin{bmatrix}
			k\\
			s\\
		\end{bmatrix}_q G_{n-k,s}^{(d-k+s,\ell)}.
	\end{align}
	Similarly
	\begin{align} \label{eq:Ftildenn_formula}
		\widetilde{F}_{n,n}^{(d,\ell)} & = \delta_{\ell,0} q^{\binom{n-d}{2}}\begin{bmatrix}
			n-1\\
			d
		\end{bmatrix}_q  ,
	\end{align}
	and for $n>k$
	\begin{align} \label{eq:rel_Ftilde_Gtilde}
		\widetilde{F}_{n,k}^{(d,\ell)} & =\sum_{s=1}^{\min(k,n-d)} 
		\begin{bmatrix}
			k\\
			s\\
		\end{bmatrix}_q \widetilde{G}_{n-k,s}^{(d-k+s,\ell)}.
	\end{align}
\end{theorem}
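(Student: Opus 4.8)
By \eqref{eq:rel_F_Ftilde}, \eqref{eq:rel_G_Gtilde} the unprimed and primed families are linked by $F_{n,k}^{(d,\ell)}=\widetilde F_{n,k}^{(d,\ell)}+\widetilde F_{n,k}^{(d-1,\ell)}$ and similarly for $G,\widetilde G$, while \eqref{eq:rel_Ftilde_F}, \eqref{eq:rel_Gtilde_G} invert this: $\widetilde F_{n,k}^{(d,\ell)}=\sum_{u\ge 1}(-1)^{u-1}F_{n,k}^{(d+u,\ell)}$, and the same for $\widetilde G,G$. These sums are finite, since $F_{n,k}^{(d,\ell)}=0$ whenever $d>n-\ell$ (then $\Delta_{e_{n-\ell-d}}=0$) and $G_{n,k}^{(d,\ell)}=0$ for $d>n$. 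Hence, once \eqref{eq:Fnn_formula} and \eqref{eq:rel_F_G} are established, applying $\sum_{u\ge1}(-1)^{u-1}(\,\cdot\,)^{(d+u,\ell)}$ term by term and interchanging the (finite) sums yields \eqref{eq:rel_Ftilde_Gtilde} at once, and yields \eqref{eq:Ftildenn_formula} provided one knows
\[
\sum_{u\ge 1}(-1)^{u-1}q^{\binom{n-d-u}{2}}\qbinom{n}{d+u}_q=q^{\binom{n-d}{2}}\qbinom{n-1}{d}_q .
\]
Substituting $i=n-d-u$, this follows from the standard partial-sum identity $\sum_{i=0}^{m}(-1)^i q^{\binom i2}\qbinom ni_q=(-1)^m q^{\binom{m+1}{2}}\qbinom{n-1}{m}_q$ (immediate by induction on $m$ using \eqref{eq:qbin_recursion}). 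Thus it suffices to prove the $F$-statements.

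\textbf{Step 2: the base case \eqref{eq:Fnn_formula}.}
For $\ell\ge 1$ both sides vanish (the right-hand side by $\delta_{\ell,0}$, the left because $E_{n-\ell,n}=0$), so assume $\ell=0$. Using Lemma~\ref{lem:Mac_hook_coeff} with $d=n$ (so that $\langle f,e_n\rangle=\langle\nabla f,h_n\rangle$ on $\Lambda^{(n)}$, since $\nabla=\Delta_{e_n}$ there) and $e_{n-d}=s_{(1^{n-d})}$,
\[
F_{n,n}^{(d,0)}=\langle\Delta_{e_{n-d}}E_{n,n},e_n\rangle=\langle\Delta_{s_{(1^{n-d})}}\nabla E_{n,n},h_n\rangle .
\]
If $d=n$ this equals $\langle\nabla E_{n,n},h_n\rangle=\delta_{n,n}=1=q^{\binom{0}{2}}\qbinom{n}{n}_q$ by Theorem~\ref{thm:Haglund_formula}. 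If $d<n$, Theorem~\ref{thm:Haglund_formula} with $\lambda=(1^{n-d})$, $\lambda'=(n-d)$, $m=n-d>0$ gives $F_{n,n}^{(d,0)}=t^{0}\langle e_{n-d}[X\tfrac{1-q^{n}}{1-q}],h_{n-d}\rangle=e_{n-d}[[n]_q]=q^{\binom{n-d}{2}}\qbinom{n}{n-d}_q$ by the Cauchy identity and \eqref{eq:e_q_binomial}, which is the asserted value.

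\textbf{Step 3: the recursion \eqref{eq:rel_F_G}.}
Here I would start from the two representations already derived in the text from Theorem~\ref{thm:Haglund_formula}, namely
\[
F_{n,k}^{(d,\ell)}=t^{\,n-\ell-k}\bigl\langle\Delta_{h_{n-\ell-k}}\Delta_{e_\ell}\,e_{n-d}\!\left[X\tfrac{1-q^k}{1-q}\right],h_{n-d}\bigr\rangle,\qquad
G_{n-k,s}^{(d-k+s,\ell)}=t^{\,n-k-\ell}q^{\binom s2}\bigl\langle\Delta_{e_{n-k-\ell}}\,e_{n-k}\!\left[X\tfrac{1-q^s}{1-q}\right],e_{n-d-s}h_{d-k+s}\bigr\rangle .
\]
Expanding $e_{\bullet}[X\tfrac{1-q^\bullet}{1-q}]$ by \eqref{eq:qn_q_Macexp} and using $\langle\widetilde H_\mu,h_m\rangle=1$ together with $\langle\widetilde H_\mu,e_ah_b\rangle=e_a[B_\mu]$ (from \eqref{eq:Mac_hook_coeff}) turns both sides into explicit plethystic sums over $\mu\vdash n-d$, resp.\ over $\nu\vdash n-k$; after cancelling the common power of $t$ and using \eqref{eq:garsia_haglund_eval} to write $\Pi_\mu h_k[(1-t)B_\mu](1-q^k)=\widetilde H_\mu[(1-t)(1-q^k)]$, \eqref{eq:rel_F_G} becomes an identity relating a sum over partitions of $n-d$ to one over partitions of $n-k$. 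This transition of the size of the indexing partitions is precisely what the summation formula \eqref{eq:mastereq} supplies: applying the operator $\mathbf{\Pi}$ to \eqref{eq:mastereq} with $m=n-d$ identifies $\Delta_{e_\ell}\,e_{n-d}[X\tfrac{1-q^k}{1-q}]$ with $(1-q^k)\mathbf{\Pi}$ of the right-hand side of \eqref{eq:mastereq}, so that $F_{n,k}^{(d,\ell)}$ becomes a double sum over $(s,j)$ of quantities $\langle\Delta_{h_{n-\ell-k}}\mathbf{\Pi}\bigl(h_{s+j}[\tfrac X{1-q}]h_{\ell-j}[\tfrac XM]e_{n-d-s-\ell}[\tfrac XM]\bigr),h_{n-d}\rangle$; expanding each of these again through \eqref{eq:e_h_sum_alphabets} and \eqref{eq:Mac_hook_coeff}, and expanding $\sum_s\qbinom ks_q G_{n-k,s}^{(d-k+s,\ell)}$ similarly (using $\qbinom ks_q q^{\binom s2}=e_s[[k]_q]$ from \eqref{eq:e_q_binomial} and the addition formula \eqref{eq:e_h_sum_alphabets}), reduces \eqref{eq:rel_F_G} to a family of purely $q$-binomial identities, which are then disposed of with \eqref{eq:qbin_recursion}, $q$-Vandermonde, and the elementary Lemmas~\ref{lem:elementary1} and \ref{lem:elementary2}.

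\textbf{Main obstacle.}
The real difficulty is entirely in the last step: carrying out the bridge between the two partition-sums of unequal size and then collapsing the resulting nested $q$-binomial sums down to the single coefficient $\qbinom ks_q$ on the right of \eqref{eq:rel_F_G}. This bookkeeping is long, and is exactly why the self-contained technical summation formula \eqref{eq:mastereq} (and the appendix $q$-lemmas) are proved beforehand. A secondary nuisance is keeping the degenerate ranges ($d\ge n-\ell$, $\ell>n-d$, $s>\min(k,n-d)$, $n-\ell-k<0$, etc.) consistent with the convention $e_r=h_r=0$ for $r<0$, so that the finite alternating sums used in Step~1 are legitimate.
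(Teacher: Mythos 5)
Your Steps~1 and~2 are correct and closely mirror the paper: the reduction from the tilde to the untilde families via the finite alternating sums \eqref{eq:rel_Ftilde_F}, \eqref{eq:rel_Gtilde_G}, the partial-sum $q$-binomial identity you quote (the paper derives it directly by writing $q^{\binom{n-d}{2}}\qbinom{n-1}{d}_q$ as $f(n,d)$ and checking $f(n,d)+f(n,d-1)=q^{\binom{n-d}{2}}\qbinom{n}{d}_q$), and the evaluation $F_{n,n}^{(d,0)}=e_{n-d}[[n]_q]$ (the paper cites \cite{Haglund-Schroeder-2004}*{Proposition~2.3} where you route through Theorem~\ref{thm:Haglund_formula}; both are fine).

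Step~3 is where you leave a genuine gap, and it is also where you depart from the paper. You propose to substitute \eqref{eq:mastereq} (with $m=n-d$) into $\Delta_{e_\ell}e_{n-d}[X\tfrac{1-q^k}{1-q}]$ and then pair $\Delta_{h_{n-\ell-k}}\mathbf{\Pi}(\cdot)$ against $h_{n-d}$. But the quantities $\langle\Delta_{h_{n-\ell-k}}\mathbf{\Pi}\bigl(h_{s+j}[\tfrac{X}{1-q}]\,h_{\ell-j}[\tfrac{X}{M}]\,e_{n-d-s-\ell}[\tfrac{X}{M}]\bigr),h_{n-d}\rangle$ are \emph{not} disposed of by \eqref{eq:e_h_sum_alphabets} and \eqref{eq:Mac_hook_coeff}: the argument of $\mathbf{\Pi}$ is a three-factor product in three different plethystic alphabets, which is not Macdonald-diagonal, so $\mathbf{\Pi}$ has no obvious action on it, and \eqref{eq:Mac_hook_coeff} only evaluates pairings of $\widetilde H_\mu$ with $e_rh_{n-r}$, not of $\mathbf{\Pi}$ applied to such products. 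The paper avoids this entirely: writing $F_{n,k}^{(d,\ell)}$ as $t^{n-\ell-k}(1-q^k)\sum_{\gamma\vdash n-d}\frac{\Pi_\gamma}{w_\gamma}h_k[(1-t)B_\gamma]\,h_{n-\ell-k}[B_\gamma]\,e_\ell[B_\gamma]$, one expands the \emph{product} $h_{n-\ell-k}[B_\gamma]e_\ell[B_\gamma]$ through \eqref{eq:e_h_expansion} as a Macdonald sum over $\nu\vdash n-k$, applies Macdonald reciprocity \eqref{eq:Macdonald_reciprocity} to interchange the roles of $\gamma$ and $\nu$, and the resulting inner sum $\sum_{\gamma\vdash n-d}h_k[(1-t)B_\gamma]\widetilde H_\gamma[MB_\nu]/w_\gamma$ carries only one extra factor and so is handled by the \emph{one-factor} summation formula \eqref{eq:Haglund_summation} (Theorem~\ref{thm:Haglund_summation}), not the full \eqref{eq:mastereq}. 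The operator $\mathbf{\Pi}$ never appears. The full identity \eqref{eq:mastereq} (which has a two-factor $hk$--$e_\ell$ shape, still with no $h_{n-\ell-k}$, so it does not even fit the sum you need) is used only for the companion recursion \eqref{eq:rel_G_F}, which is a separate theorem. So you are missing the decisive maneuver: fold the two $B_\gamma$-factors via \eqref{eq:e_h_expansion} first, then reciprocate, then apply the simpler summation formula.
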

\begin{proof}
	Notice that
	\begin{equation}
		F_{n,n}^{(d,\ell)}=\< \Delta_{h_{\ell}} \Delta_{e_{n-\ell-d}}E_{n-\ell,n},e_{n-\ell}\>
	\end{equation}
	so clearly it is equal to $0$ when $\ell\neq 0$. For $\ell=0$ we get
	\begin{align}
		F_{n,n}^{(d,0)} & =\< \Delta_{e_{n-d}}E_{n ,n},e_{n }\>\\
		\text{(using \eqref{eq:Mac_hook_coeff})} & =\< \Delta_{e_{n-d}}\nabla E_{n ,n},h_{n }\>.
	\end{align}
	So using \cite{Haglund-Schroeder-2004}*{Proposition~2.3}, we get
	\begin{align}
		F_{n,n}^{(d,0)} & =e_{n-d}[[n]_q]\\
		\text{(using \eqref{eq:e_q_binomial})} & = q^{\binom{n-d}{2}} \qbinom{n}{d}_q.
	\end{align}
	If we set
	\begin{equation}
		f(n,d) \coloneqq q^{\binom{n-d}{2}} \qbinom{n-1}{d}_q,
	\end{equation}
	then
	\begin{align}
		f(n,d)+f(n,d-1)& =q^{\binom{n-d}{2}}\begin{bmatrix}
			n-1\\
			d
		\end{bmatrix}_q+q^{\binom{n-d+1}{2}}\begin{bmatrix}
			n-1\\
			d-1
		\end{bmatrix}_q\\
		& =q^{\binom{n-d}{2}}\left(\begin{bmatrix}
			n-1\\
			d
		\end{bmatrix}_q+q^{n-d}\begin{bmatrix}
			n-1\\
			d-1
		\end{bmatrix}_q\right)\\
		& = q^{\binom{n-d}{2}}\begin{bmatrix}
			n\\
			d
		\end{bmatrix}_q,
	\end{align}
	so that
	\begin{align}
		q^{\binom{n-d}{2}}\begin{bmatrix}
			n-1\\
			d
		\end{bmatrix}_q & =f(n,d)\\
		& =\sum_{u\geq 1}(-1)^{u-1}q^{\binom{n-(d+u)}{2}}\begin{bmatrix}
			n\\
			d+u
		\end{bmatrix}_q\\
		& =\sum_{u\geq 1}(-1)^{u-1}F_{n,n}^{(d+u,0)}.
	\end{align}
	
	Therefore \eqref{eq:Ftildenn_formula} follows from \eqref{eq:Fnn_formula} using \eqref{eq:rel_Ftilde_F}. 
	
	Similarly, \eqref{eq:rel_Ftilde_Gtilde} follows from \eqref{eq:rel_F_G} using \eqref{eq:rel_Ftilde_F} and \eqref{eq:rel_Gtilde_G}. Hence it remains to prove only \eqref{eq:rel_F_G}.
	% TODO Fix spacing!
	
	Using \eqref{eq:qn_q_Macexp} and Lemma~\ref{lem:Mac_hook_coeff}, we have
	\begin{align*}
		F_{n,k}^{(d,\ell)} & = t^{n-\ell-k}\left\< \Delta_{h_{n-\ell-k}}\Delta_{e_{\ell}} e_{n-d}\left[X\frac{1-q^k}{1-q}\right] ,h_{n-d}\right\> \\
		& =t^{n-\ell-k} (1-q^k)\sum_{\gamma\vdash n-d}\frac{\Pi_\gamma}{w_\gamma} h_k[(1-t)B_\gamma] h_{n-\ell-k}[B_\gamma]e_{\ell}[B_\gamma]\\
		\text{(using \eqref{eq:e_h_expansion})}& =t^{n-\ell-k} (1-q^k)\sum_{\gamma\vdash n-d}\frac{\Pi_\gamma}{w_\gamma} h_k[(1-t)B_\gamma] \sum_{\nu\vdash n-k} e_{n-k-\ell}[B_\nu]\frac{\widetilde{H}_\nu[MB_\gamma]}{w_\nu} \\
		\text{(using \eqref{eq:Macdonald_reciprocity})}& =t^{n-\ell-k} (1-q^k)\sum_{\nu\vdash n-k} e_{n-k-\ell}[B_\nu] \frac{\Pi_\nu}{w_\nu}\sum_{\gamma\vdash n-d} h_k[(1-t)B_\gamma]  \frac{\widetilde{H}_\gamma[MB_\nu]}{w_\gamma}\\
		\text{(using \eqref{eq:Haglund_summation})}& =t^{n-\ell-k} (1-q^k)\sum_{\nu\vdash n-k} e_{n-k-\ell}[B_\nu] \frac{\Pi_\nu}{w_\nu}\\
		& \qquad \times \sum_{s=1}^{\min(k,n-d)}
		q^{\binom{s}{2}} 
		\begin{bmatrix}
			k-1\\
			s-1\\
		\end{bmatrix}_q e_{n-d-s}[B_\nu]h_s[(1-t)B_\nu]\\
		& =t^{n-\ell-k}  \sum_{s=1}^{\min(k,n-d)}
		q^{\binom{s}{2}} 
		\begin{bmatrix}
			k\\
			s\\
		\end{bmatrix}_q\\
		& \qquad \times (1-q^s)\sum_{\nu\vdash n-k}  \frac{\Pi_\nu}{w_\nu} e_{n-k-\ell}[B_\nu] e_{n-d-s}[B_\nu]h_s[(1-t)B_\nu] \\
		\text{(using \eqref{eq:qn_q_Macexp})}& =t^{n-\ell-k}  \sum_{s=1}^{\min(k,n-d)}
		q^{\binom{s}{2}} 
		\begin{bmatrix}
			k\\
			s\\
		\end{bmatrix}_q \left\<  \Delta_{e_{n-k-\ell}}\Delta_{e_{n-d-s}} e_{n-k}\left[X\frac{1-q^s}{1-q}\right] ,h_{n-k}\right\> \\
		& =  \sum_{s=1}^{\min(k,n-d)} 
		\begin{bmatrix}
			k\\
			s\\
		\end{bmatrix}_q   G_{n-k,s}^{(d-k+s,\ell)} .
	\end{align*}	
\end{proof}

\begin{theorem} 
	For $k,\ell,d\geq 0$, $n\geq k+\ell$ and $n\geq d$, we have
	\begin{align} \label{eq:rel_G_F}
		G_{n,k}^{(d,\ell)}& =\sum_{h=1}^{n} \sum_{s=0}^{\ell} t^{n-\ell}q^{\binom{k}{2}+\binom{s}{2}}\begin{bmatrix}
			k\\
			s\\
		\end{bmatrix}_q \begin{bmatrix}
			h+k-s-1\\
			h-s\\
		\end{bmatrix}_q  F_{n,h}^{(d,\ell-s)}\\
		\notag	& =\sum_{h=1}^{n}\sum_{s=0}^{\ell}t^{n-\ell}q^{\binom{k}{2}+\binom{s+1}{2}}\begin{bmatrix}
			h-1\\
			s\\
		\end{bmatrix}_q \begin{bmatrix}
			h+k-s-1\\
			h\\
		\end{bmatrix}_q(F_{n,h}^{(d,\ell-s)}+F_{n,h}^{(d,\ell-s-1)}),
	\end{align}
	and
	\begin{align} \label{eq:rel_Gtilde_Ftilde}
		\widetilde{G}_{n,k}^{(d,\ell)}& =\sum_{h=1}^{n} \sum_{s=0}^{\ell} t^{n-\ell}q^{\binom{k}{2}+\binom{s}{2}}\begin{bmatrix}
			k\\
			s\\
		\end{bmatrix}_q \begin{bmatrix}
			h+k-s-1\\
			h-s\\
		\end{bmatrix}_q  \widetilde{F}_{n,h}^{(d,\ell-s)}\\
		\notag	& =\sum_{h=1}^{n}\sum_{s=0}^{\ell}t^{n-\ell}q^{\binom{k}{2}+\binom{s+1}{2}}\begin{bmatrix}
			h-1\\
			s\\
		\end{bmatrix}_q \begin{bmatrix}
			h+k-s-1\\
			h\\
		\end{bmatrix}_q(\widetilde{F}_{n,h}^{(d,\ell-s)}+\widetilde{F}_{n,h}^{(d,\ell-s-1)}).
	\end{align}
\end{theorem}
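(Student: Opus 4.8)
The plan is to follow the strategy already used to establish \eqref{eq:rel_F_G} in the previous theorem, but with Haglund's summation \eqref{eq:Haglund_summation} (which is the $\ell=0$ instance of the master formula) replaced by the full summation formula \eqref{eq:mastereq}. For $k=0$ both sides of \eqref{eq:rel_G_F} vanish, so we may assume $k\geq 1$. Starting from the identity
\[
G_{n,k}^{(d,\ell)}=t^{n-\ell}q^{\binom{k}{2}}\left\langle\Delta_{e_{n-\ell}}e_{n}\!\left[X\tfrac{1-q^{k}}{1-q}\right],e_{n-d}h_{d}\right\rangle
\]
recorded above, I would expand $e_{n}[X\tfrac{1-q^{k}}{1-q}]$ by \eqref{eq:qn_q_Macexp}, apply $\Delta_{e_{n-\ell}}$ (which multiplies $\widetilde{H}_{\mu}$ by $e_{n-\ell}[B_{\mu}]$), and evaluate the Hall pairing against $e_{n-d}h_{d}$ using \eqref{eq:Mac_hook_coeff}, obtaining
\[
G_{n,k}^{(d,\ell)}=t^{n-\ell}q^{\binom{k}{2}}(1-q^{k})\sum_{\mu\vdash n}\frac{\Pi_{\mu}\,h_{k}[(1-t)B_{\mu}]\,e_{n-\ell}[B_{\mu}]\,e_{n-d}[B_{\mu}]}{w_{\mu}}.
\]

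The next step is to absorb the factor $e_{n-d}[B_{\mu}]$: by \eqref{eq:e_h_expansion} with $k=0$ and $X=MB_{\mu}$ (equivalently \eqref{eq:Mac_Cauchy} with one alphabet specialized to $1$) one has $e_{n-d}[B_{\mu}]=\sum_{\nu\vdash n-d}\widetilde{H}_{\nu}[MB_{\mu}]/w_{\nu}$. Substituting this, interchanging the two finite sums, and applying Macdonald--Koornwinder reciprocity \eqref{eq:Macdonald_reciprocity} in the form $\Pi_{\mu}\widetilde{H}_{\nu}[MB_{\mu}]=\Pi_{\nu}\widetilde{H}_{\mu}[MB_{\nu}]$ yields
\[
G_{n,k}^{(d,\ell)}=t^{n-\ell}q^{\binom{k}{2}}(1-q^{k})\sum_{\nu\vdash n-d}\frac{\Pi_{\nu}}{w_{\nu}}\sum_{\mu\vdash n}\frac{\widetilde{H}_{\mu}[MB_{\nu}]}{w_{\mu}}h_{k}[(1-t)B_{\mu}]\,e_{n-\ell}[B_{\mu}].
\]
The inner sum over $\mu\vdash n$ is now exactly the left-hand side of \eqref{eq:mastereq} with $m=n$, $X=MB_{\nu}$, and with the roles of its $k$ and $\ell$ played by $k$ and $n-\ell$. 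Invoking \eqref{eq:mastereq} and simplifying the plethystic arguments ($h_{r}[MB_{\nu}/(1-q)]=h_{r}[(1-t)B_{\nu}]$, $h_{r}[MB_{\nu}/M]=h_{r}[B_{\nu}]$ and $e_{r}[MB_{\nu}/M]=e_{r}[B_{\nu}]$) rewrites the inner sum as a double sum over the indices $s$ and $j$ of \eqref{eq:mastereq}; putting $h:=s+j$, each term becomes a multiple of $\Pi_{\nu}\,h_{h}[(1-t)B_{\nu}]\,h_{n-\ell+s-h}[B_{\nu}]\,e_{\ell-s}[B_{\nu}]/w_{\nu}$.

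Finally, running the same chain of steps on the $F$-family (starting from the identity $F_{n,h}^{(d,\ell-s)}=t^{n-\ell+s-h}\langle\Delta_{h_{n-\ell+s-h}}\Delta_{e_{\ell-s}}e_{n-d}[X\tfrac{1-q^{h}}{1-q}],h_{n-d}\rangle$ recorded above and using \eqref{eq:qn_q_Macexp}) gives
\[
F_{n,h}^{(d,\ell-s)}=t^{n-\ell+s-h}(1-q^{h})\sum_{\gamma\vdash n-d}\frac{\Pi_{\gamma}\,h_{h}[(1-t)B_{\gamma}]\,h_{n-\ell+s-h}[B_{\gamma}]\,e_{\ell-s}[B_{\gamma}]}{w_{\gamma}},
\]
so the $\nu$-sums appearing above are precisely $F_{n,h}^{(d,\ell-s)}$ up to the explicit prefactor $t^{n-\ell+s-h}(1-q^{h})$. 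Collecting powers of $t$ (they telescope to $t^{n-\ell}$) and reconciling the $q$-binomial coefficients via the elementary identity $(1-q^{k})\qbinom{h}{s}_{q}\qbinom{h+k-s-1}{h-1}_{q}=(1-q^{h})\qbinom{k}{s}_{q}\qbinom{h+k-s-1}{h-s}_{q}$ (immediate from the factorial formula and $[h]_{q}/[k]_{q}=(1-q^{h})/(1-q^{k})$) gives the first line of \eqref{eq:rel_G_F}, once one checks that the boundary terms ($h=0$, $s>k$, $s>\ell$) all vanish so the summation ranges can be written as $\sum_{h=1}^{n}\sum_{s=0}^{\ell}$. The second line of \eqref{eq:rel_G_F} then follows by a routine $q$-Vandermonde/Pascal rearrangement that regroups $F_{n,h}^{(d,\ell-s)}$ with $F_{n,h}^{(d,\ell-s-1)}$, and \eqref{eq:rel_Gtilde_Ftilde} follows formally from \eqref{eq:rel_G_F} by applying the alternating-sum relations \eqref{eq:rel_Gtilde_G} and \eqref{eq:rel_Ftilde_F}, since the coefficients in \eqref{eq:rel_G_F} are independent of $d$. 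The main obstacle is the bookkeeping in the step invoking \eqref{eq:mastereq}: one must carefully track all plethystic substitutions, the reindexing $h=s+j$, and the index ranges, and verify the stated $q$-binomial identities; once this is done, everything reduces to routine manipulation.
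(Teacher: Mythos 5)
Your proposal reproduces the paper's proof essentially step by step: the same chain of expansion via \eqref{eq:qn_q_Macexp}, the Pieri/Cauchy substitution via \eqref{eq:e_h_expansion}, Macdonald reciprocity, invocation of the master summation formula \eqref{eq:mastereq}, reindexing $h=s+j$, and the same $q$-binomial reconciliation (your stated identity is, after $\qbinom{n}{r}_q=\qbinom{n}{n-r}_q$, exactly the paper's Lemma~\ref{lem:elementary3}, and the ``routine Pascal rearrangement'' you invoke for the second displayed form is precisely Lemma~\ref{lem:elementary4}). The only immaterial variation is that you reach the tilded identity via the alternating-sum relations \eqref{eq:rel_Gtilde_G}/\eqref{eq:rel_Ftilde_F} rather than the paper's two-term relations \eqref{eq:rel_F_Ftilde}/\eqref{eq:rel_G_Gtilde}; both rest on the $d$-independence of the coefficients, which you correctly flag.
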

\begin{proof}
	Clearly equations \eqref{eq:rel_Gtilde_Ftilde} follow from equations \eqref{eq:rel_G_F} using \eqref{eq:rel_F_Ftilde} and \eqref{eq:rel_G_Gtilde}. Hence we will prove only equations \eqref{eq:rel_G_F}.
	
	Using \eqref{eq:qn_q_Macexp} and \eqref{eq:Mac_hook_coeff}, we have
	\begin{align*}
		G_{n,k}^{(d,\ell)} & =t^{n-\ell}q^{\binom{k}{2}}\left\<  \Delta_{e_{n-\ell}}\Delta_{e_{n-d}} e_{n}\left[X\frac{1-q^k}{1-q}\right] ,h_{n}\right\>	 \\
		%	\end{align*}
		%	\begin{align*}
		&= t^{n-\ell}q^{\binom{k}{2}}(1-q^k)\sum_{\nu\vdash n} \frac{\Pi_\nu}{w_\nu}h_k[(1-t)B_\nu]e_{n-\ell}[B_\nu]e_{n-d}[B_\nu] \\
		\text{(using \eqref{eq:e_h_expansion})}&= t^{n-\ell}q^{\binom{k}{2}}(1-q^k)\sum_{\nu\vdash n} \frac{\Pi_\nu}{w_\nu}h_k[(1-t)B_\nu]e_{n-\ell}[B_\nu]\sum_{\gamma\vdash n-d}\frac{\widetilde{H}_\gamma[MB_\nu]}{w_\gamma}  \\
		\text{(using \eqref{eq:Macdonald_reciprocity})}&= t^{n-\ell}q^{\binom{k}{2}}\sum_{\gamma\vdash n-d} \frac{\Pi_\gamma}{w_\gamma} (1-q^k)\sum_{\nu\vdash n} \frac{\widetilde{H}_\nu[MB_\gamma]}{w_\nu}h_k[(1-t)B_\nu]e_{n-\ell}[B_\nu]\\
		\text{(using \eqref{eq:mastereq})} & =t^{n-\ell} \sum_{\gamma\vdash n-d} \frac{\Pi_\gamma}{w_\gamma} (1-q^k)\sum_{j=0}^{n-\ell} t^{n-\ell-j}\sum_{s=0}^{\ell}q^{\binom{k}{2}+\binom{s}{2}} \begin{bmatrix}
			s+j\\
			s
		\end{bmatrix}_q \begin{bmatrix}
			k+j-1\\
			s+j-1
		\end{bmatrix}_q\\
		& \qquad \times h_{s+j}\left[(1-t)B_\gamma\right] h_{n-\ell-j}\left[B_\gamma\right] e_{\ell-s}\left[B_\gamma\right]\\
		& =t^{n-\ell}\sum_{\gamma\vdash n-d} \frac{\Pi_\gamma}{w_\gamma} \sum_{j=0}^{n-\ell} t^{n-\ell-j}\sum_{s=0}^{\ell}q^{\binom{k}{2}+\binom{s}{2}} \begin{bmatrix}
			k\\
			s
		\end{bmatrix}_q \begin{bmatrix}
			k+j-1\\
			j
		\end{bmatrix}_q\\
		& \qquad \times (1-q^{s+j}) h_{s+j}\left[(1-t)B_\gamma\right] h_{n-\ell-j}\left[B_\gamma\right] e_{\ell-s}\left[B_\gamma\right],
	\end{align*}
	where in the last equality we used the following elementary lemma, whose proof is in the appendix.
	\begin{lemma} \label{lem:elementary3}
		For all $k,s,j\geq 0$
		\begin{equation}
			\begin{bmatrix}
				k\\
				s
			\end{bmatrix}_q \begin{bmatrix}
				k+j-1\\
				j
			\end{bmatrix}_q (1-q^{s+j}) =(1-q^k) \begin{bmatrix}
				s+j\\
				s
			\end{bmatrix}_q \begin{bmatrix}
				k+j-1\\
				k-s
			\end{bmatrix}_q .
		\end{equation}
	\end{lemma}
	Rearranging the terms, and using \eqref{eq:qn_q_Macexp} and Lemma~\ref{lem:Mac_hook_coeff}, we get	
	\begin{align*}
		G_{n,k}^{(d,\ell)}&= \sum_{j=0}^{n-\ell} \sum_{s=0}^{\ell}t^{n-\ell}q^{\binom{k}{2}+\binom{s}{2}} \begin{bmatrix}
			k\\
			s
		\end{bmatrix}_q \begin{bmatrix}
			k+j-1\\
			j
		\end{bmatrix}_qt^{n-\ell-j}\\
		& \qquad \times \sum_{\gamma\vdash n-d} \frac{\Pi_\gamma}{w_\gamma}(1-q^{s+j}) h_{s+j}\left[(1-t)B_\gamma\right] h_{n-\ell-j}\left[B_\gamma\right] e_{\ell-s}\left[B_\gamma\right]\\
		& =\sum_{j=0}^{n-\ell} \sum_{s=0}^{\ell}t^{n-\ell} q^{\binom{k}{2}+\binom{s}{2}}\begin{bmatrix}
			k\\
			s\\
		\end{bmatrix}_q \begin{bmatrix}
			k+j-1\\
			j\\
		\end{bmatrix}_q  t^{n-\ell-j}\\
		& \qquad \times \left\< \Delta_{h_{n-\ell-j}} \Delta_{e_{\ell-s}} e_{n-d}\left[X[s+j]_q\right],h_{n-d}\right\>\\
		& =\sum_{j=0}^{n-\ell} \sum_{s=0}^{\ell}t^{n-\ell} q^{\binom{k}{2}+\binom{s}{2}}\begin{bmatrix}
			k\\
			s\\
		\end{bmatrix}_q \begin{bmatrix}
			k+j-1\\
			j\\
		\end{bmatrix}_q  F_{n,s+j}^{(d,\ell-s)}\\
		& =\sum_{h=1}^{n} \sum_{s=0}^{\ell} t^{n-\ell}q^{\binom{k}{2}+\binom{s}{2}}\begin{bmatrix}
			k\\
			s\\
		\end{bmatrix}_q \begin{bmatrix}
			h+k-s-1\\
			h-s\\
		\end{bmatrix}_q  F_{n,h}^{(d,\ell-s)}.
	\end{align*}
	For the last equality in \eqref{eq:rel_G_F} we can use Lemma~\ref{lem:elementary4}: we have
	\begin{align*}
		G_{n,k}^{(d,\ell)}& =\sum_{h=1}^{n} \sum_{s=0}^{\ell} t^{n-\ell}q^{\binom{k}{2}+\binom{s}{2}}\begin{bmatrix}
			k\\
			s\\
		\end{bmatrix}_q \begin{bmatrix}
			h+k-s-1\\
			h-s\\
		\end{bmatrix}_q  F_{n,h}^{(d,\ell-s)}\qquad \qquad \qquad \qquad 
	\end{align*}
	\begin{align*}
		& =\sum_{h=1}^{n} \sum_{s=0}^{\ell}t^{n-\ell}q^{\binom{k}{2}} \left(q^{\binom{s+1}{2}}\begin{bmatrix}
			h-1\\
			s\\
		\end{bmatrix}_q \begin{bmatrix}
			h+k-s-1\\
			h\\
		\end{bmatrix}_q +q^{\binom{s}{2}}\begin{bmatrix}
			h-1\\
			s-1\\
		\end{bmatrix}_q \begin{bmatrix}
			h+k-s\\
			h\\
		\end{bmatrix}_q\right)\\
		& \qquad \times F_{n,h}^{(d,\ell-s)}\\
		& =\sum_{h=1}^{n}\sum_{s=0}^{\ell}t^{n-\ell}q^{\binom{k}{2}+\binom{s+1}{2}}\begin{bmatrix}
			h-1\\
			s\\
		\end{bmatrix}_q \begin{bmatrix}
			h+k-s-1\\
			h\\
		\end{bmatrix}_q(F_{n,h}^{(d,\ell-s)}+F_{n,h}^{(d,\ell-s-1)}),
	\end{align*}
	as we wanted.
\end{proof}
\begin{theorem} \label{thm:rel_F_H}
	For $k,\ell,d\geq 0$, $n\geq k+\ell$ and $n\geq d$, we have
	\begin{equation} \label{eq:rel_F_and_H}
		F_{n,k}^{(d,\ell)}= \sum_{j=0}^{\min(n-k,\ell)}H_{n,k+j,j}^{(d,\ell)},
	\end{equation}
	and
	\begin{equation} \label{eq:rel_Ftilde_and_Hbar}
		\widetilde{F}_{n,k}^{(d,\ell)}= \sum_{j=0}^{\min(n-k,\ell)}\overline{H}_{n,k+j,j}^{(d,\ell)}.
	\end{equation}
\end{theorem}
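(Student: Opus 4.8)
The plan is to reduce first to \eqref{eq:rel_F_and_H}, from which \eqref{eq:rel_Ftilde_and_Hbar} will follow by the alternating‑sum relations \eqref{eq:rel_Ftilde_F} and \eqref{eq:rel_Htilde_H}: both of these sums are finite (since $F_{n,k}^{(d,\ell)}$ and $H_{n,k,i}^{(d,\ell)}$ vanish once $d>n-\ell$), so
\[
\widetilde F_{n,k}^{(d,\ell)}=\sum_{u\ge1}(-1)^{u-1}F_{n,k}^{(d+u,\ell)}=\sum_{u\ge1}(-1)^{u-1}\sum_{j=0}^{\min(n-k,\ell)}H_{n,k+j,j}^{(d+u,\ell)}=\sum_{j=0}^{\min(n-k,\ell)}\overline H_{n,k+j,j}^{(d,\ell)}.
\]
Then I will prove \eqref{eq:rel_F_and_H} by induction on $n$. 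For the base case $k=n$ (so $\min(n-k,\ell)=0$), the right‑hand side is $H_{n,n,0}^{(d,\ell)}=\overline H_{n,n,0}^{(d,\ell)}+\overline H_{n,n,0}^{(d-1,\ell)}=\delta_{\ell,0}\bigl(q^{\binom{n-d}{2}}\qbinom{n-1}{d}_q+q^{\binom{n-d+1}{2}}\qbinom{n-1}{d-1}_q\bigr)$, which collapses to $\delta_{\ell,0}q^{\binom{n-d}{2}}\qbinom{n}{d}_q$ by the $q$‑Pascal recursion \eqref{eq:qbin_recursion}; this equals $F_{n,n}^{(d,\ell)}$ by \eqref{eq:Fnn_formula}. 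The degenerate case $k=0<n$ is immediate, both sides vanishing.

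For the inductive step I fix $1\le k<n$ and assume \eqref{eq:rel_F_and_H}, hence also \eqref{eq:rel_Ftilde_and_Hbar}, for all smaller values of $n$. Substituting \eqref{eq:rel_G_F} into \eqref{eq:rel_F_G} will present the left‑hand side as the explicit sum
\[
F_{n,k}^{(d,\ell)}=\sum_{a}\sum_{b=0}^{\ell}\sum_{h}t^{n-k-\ell}\,q^{\binom a2+\binom b2}\qbinom{k}{a}_q\qbinom{a}{b}_q\qbinom{h+a-b-1}{h-b}_q\,F_{n-k,h}^{(d-k+a,\ell-b)},
\]
a $q$‑linear combination of the polynomials $F_{n-k,h}^{(\,\cdot\,,\ell-b)}$, all of first index $n-k<n$. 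On the other side, the definition \eqref{eq:rel_H_F} of $H$, once the boundary contribution $H_{n,n,n-k}^{(d,\ell)}$ (which survives only when $\ell=n-k$) is split off, presents $\sum_{j}H_{n,k+j,j}^{(d,\ell)}$ as a $q$‑linear combination of the polynomials $F_{n-k-j,h}^{(d-k+s,\ell-j)}$, again of first index $<n$. I will then apply the induction hypothesis to both families, iterating \eqref{eq:rel_F_G}--\eqref{eq:rel_G_F} wherever an $F_{m,h}$ still has $m>h$, so as to bring the two sides to one and the same $q$‑weighted sum over the boundary polynomials $F_{m,m}^{(e,0)}=q^{\binom{m-e}{2}}\qbinom{m}{e}_q$; comparing coefficients will then amount to a family of $q$‑binomial identities of $q$‑Vandermonde type, whose simplest instances are the Appendix Lemmas~\ref{lem:elementary1}--\ref{lem:elementary4} and whose remaining instances follow by the same manipulations (iterated use of \eqref{eq:qbin_recursion} and of the evaluations \eqref{eq:h_q_binomial}--\eqref{eq:e_q_binomial}).

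The hard part will be exactly this last reconciliation: the two sides are, a priori, combinations of the $F$'s sitting at \emph{different depths} of the recursion \eqref{eq:rel_F_G}--\eqref{eq:rel_G_F}, and hence carrying different powers of $t$, so the matching will require choosing the order of the (several nested) summations carefully and proving a new $q$‑binomial identity in the spirit of \eqref{eq:first_qlemma}, after which the powers of $t$ and the $q$‑binomial coefficients telescope as expected. As a warm‑up and sanity check — one that avoids the induction altogether — note that for $\ell=0$ the relations \eqref{eq:rel_F_G} and \eqref{eq:rel_G_F} specialize to $F_{n,k}^{(d,0)}=t^{n-k}\sum_{a,h}q^{\binom a2}\qbinom{k}{a}_q\qbinom{h+a-1}{h}_q F_{n-k,h}^{(d-k+a,0)}$, which is literally $H_{n,k,0}^{(d,0)}$, i.e.\ the right‑hand side of \eqref{eq:rel_F_and_H} in that case.
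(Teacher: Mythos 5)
Your base case $k=n$ is correct, your $\ell=0$ warm-up check is correct, and the reduction of \eqref{eq:rel_Ftilde_and_Hbar} to \eqref{eq:rel_F_and_H} via the alternating-sum relations is fine. However, the inductive step has a genuine gap that your own write-up acknowledges: after unfolding $F_{n,k}^{(d,\ell)}$ via \eqref{eq:rel_F_G}--\eqref{eq:rel_G_F} into a sum of $F_{n-k,h}^{(\cdot,\cdot)}$ and unfolding $\sum_j H_{n,k+j,j}^{(d,\ell)}$ via \eqref{eq:rel_H_F} into a sum of $F_{n-k-j,h}^{(\cdot,\cdot)}$, the two collections of $F$'s live at \emph{different} first indices, and matching them requires a family of multi-parameter $q$-identities that you have not stated or proved. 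Saying these ``follow by the same manipulations'' as Lemmas~\ref{lem:elementary1}--\ref{lem:elementary4} is premature: those lemmas are each single (two- or three-parameter) $q$-Vandermonde-type identities, whereas what you need here is precisely the content of the statement you are trying to prove, so nothing has been reduced. Moreover, "iterating the recursion down to boundary polynomials $F_{m,m}^{(e,0)}$" is no longer an induction but a full unfolding, and the explicit coefficient of a given $F_{m,m}^{(e,0)}$ in such an unfolding is a deeply nested $q$-binomial sum for which no closed form or comparison is offered.

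The paper takes an entirely different route. It never inducts on $n$: it uses Macdonald--Koornwinder reciprocity \eqref{eq:Macdonald_reciprocity} to rewrite $F_{n,k}^{(d,\ell)}$ as
$t^{n-\ell-k}\sum_{\mu\vdash n-\ell-k} T_\mu\frac{\Pi_\mu}{w_\mu}(1-q^k)\sum_{\gamma\vdash n-d}\frac{\widetilde{H}_\gamma[MB_\mu]}{w_\gamma}h_k[(1-t)B_\gamma]e_\ell[B_\gamma]$,
expands $\sum_jH_{n,k+j,j}^{(d,\ell)}$ similarly via \eqref{eq:en_q_sum_Enk}, \eqref{eq:qn_q_Macexp} and Lemma~\ref{lem:elementary3}, and then appeals directly to the master summation formula \eqref{eq:mastereq} of Section~\ref{sec:sum_formula} to match the two expressions. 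That formula is the only nontrivial identity needed, and it is proved once and for all with plethystic methods. Your plan does not avoid this dependence: \eqref{eq:rel_G_F}, which you use to unfold $F_{n,k}^{(d,\ell)}$, is itself proved in the paper using \eqref{eq:mastereq}. So the master formula is lurking behind your own starting relations, and a genuinely ``recursion-only'' proof would likely have to rediscover a version of it in $q$-binomial form. In short: the plan is plausible in spirit but the crucial $q$-identities at the heart of the reconciliation remain unproven, so the proposal does not constitute a proof as written.
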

\begin{proof}
	Clearly \eqref{eq:rel_Ftilde_and_Hbar} follows from \eqref{eq:rel_F_and_H}  using \eqref{eq:rel_F_Ftilde} and \eqref{eq:rel_H_F}. Hence we will prove only \eqref{eq:rel_F_and_H}.
	
	For $n=k+\ell$, using \eqref{eq:qn_q_Macexp} and Lemma~\ref{lem:Mac_hook_coeff}, we have
	\begin{align*}
		F_{n,k}^{(d,\ell)} & =\< \Delta_{h_{\ell}} \Delta_{e_{n-\ell-d}}E_{n-\ell,k},e_{n-\ell}\>\\
		& =t^{n-\ell-k}\left\< \Delta_{h_{n-\ell-k}}\Delta_{e_{\ell}} e_{n-d}\left[X\frac{1-q^k}{1-q}\right] ,h_{n-d}\right\> \\
		& = t^{n-\ell-k}(1-q^k)\sum_{\gamma\vdash n-d}\frac{\Pi_\gamma}{w_\gamma}h_k[(1-t)B_\gamma]h_{n-\ell-k}[B_\gamma]e_\ell[B_\gamma]\\
		& = (1-q^k)\sum_{\gamma\vdash n-d}\frac{\Pi_\gamma}{w_\gamma}h_k[(1-t)B_\gamma]e_\ell[B_\gamma]\\
		\text{(using \eqref{eq:garsia_haglund_eval})}& = \sum_{\gamma\vdash n-d}\frac{\widetilde{H}_\gamma[M[k]_q]}{w_\gamma}e_\ell[B_\gamma]\\
		\text{(using \eqref{eq:e_h_expansion})}& = h_\ell[[k]_q]e_{n-\ell-d}[[k]]_q\\
		\text{(using \eqref{eq:h_q_binomial})}& = \begin{bmatrix}
			k+\ell-1\\
			\ell
		\end{bmatrix}_qe_{k-d}[[k]]_q\\
		\text{(using \eqref{eq:e_q_binomial})}& = q^{\binom{k-d}{2}}\begin{bmatrix}
			k\\
			d
		\end{bmatrix}_q\begin{bmatrix}
			k+\ell-1\\
			\ell
		\end{bmatrix}_q.
	\end{align*}
	On the other hand, using \eqref{eq:en_q_sum_Enk},
	\begin{align*}
		\sum_{j=0}^{\min(n-k,\ell)}H_{n,k+j,j}^{(d,\ell)}& = \sum_{j=0}^{\min(n-k,\ell)} t^{n-k-j}\sum_{s=0}^{k}q^{\binom{s}{2}} \begin{bmatrix}
			k\\
			s
		\end{bmatrix}_q \begin{bmatrix}
			k+j-1\\
			j
		\end{bmatrix}_q \qquad \qquad \qquad \\
		& \qquad \times \left\< \Delta_{h_{\ell-j}} \Delta_{e_{n-d-s-\ell}}e_{n-k-\ell}\left[X\frac{1-q^{s+j}}{1-q}\right],e_{n-k-\ell} \right\>\\
		& = \sum_{j=0}^{\ell} t^{n-k-j}\sum_{s=0}^{k}q^{\binom{s}{2}} \begin{bmatrix}
			k\\
			s
		\end{bmatrix}_q \begin{bmatrix}
			k+j-1\\
			j
		\end{bmatrix}_q\\
		& \qquad \times \left\< \Delta_{h_{\ell-j}} \Delta_{e_{k-d-s}}e_{0}\left[X\frac{1-q^{s+j}}{1-q}\right],e_{0} \right\>\\
		& = q^{\binom{k-d}{2}}\begin{bmatrix}
			k\\
			d
		\end{bmatrix}_q\begin{bmatrix}
			k+\ell-1\\
			\ell
		\end{bmatrix}_q,
	\end{align*}
	which is the term $s=k-d$ of the internal sum. This proves the case $n=k+\ell$.
	
	For $n>k+\ell$, using \eqref{eq:qn_q_Macexp} and Lemma~\ref{lem:Mac_hook_coeff}, we have
	\begin{align*}
		F_{n,k}^{(d,\ell)} & =\< \Delta_{h_{\ell}} \Delta_{e_{n-\ell-d}}E_{n-\ell,k},e_{n-\ell}\>\\
		& =t^{n-\ell-k}\left\< \Delta_{h_{n-\ell-k}}\Delta_{e_{\ell}} e_{n-d}\left[X\frac{1-q^k}{1-q}\right] ,h_{n-d}\right\> \\
		& = t^{n-\ell-k}(1-q^k)\sum_{\gamma\vdash n-d}\frac{\Pi_\gamma}{w_\gamma}h_k[(1-t)B_\gamma]h_{n-\ell-k}[B_\gamma]e_\ell[B_\gamma]\\
		\text{(using \eqref{eq:e_h_expansion})}& = t^{n-\ell-k}(1-q^k)\sum_{\gamma\vdash n-d}\frac{\Pi_\gamma}{w_\gamma}h_k[(1-t)B_\gamma]\sum_{\mu\vdash n-\ell-k}T_\mu\frac{\widetilde{H}_\mu[MB_\gamma]}{w_\mu} e_\ell[B_\gamma]
	\end{align*}
	\begin{align*}
		\text{(using \eqref{eq:Macdonald_reciprocity})}& = t^{n-\ell-k}\sum_{\mu\vdash n-\ell-k} T_\mu\frac{\Pi_\mu}{w_\mu} (1-q^k)\sum_{\gamma\vdash n-d}\frac{\widetilde{H}_\gamma[MB_\mu]}{w_\gamma} h_k[(1-t)B_\gamma]e_\ell[B_\gamma].
	\end{align*}
	On the other hand, using \eqref{eq:en_q_sum_Enk},
	\begin{align*}
		\sum_{j=0}^{\min(n-k,\ell)}H_{n,k+j,j}^{(d,\ell)}& = \sum_{j=0}^{\min(n-k,\ell)} t^{n-k-j}\sum_{s=0}^{k}q^{\binom{s}{2}} \begin{bmatrix}
			k\\
			s
		\end{bmatrix}_q \begin{bmatrix}
			k+j-1\\
			j
		\end{bmatrix}_q\\
		& \qquad \times \left\< \Delta_{h_{\ell-j}} \Delta_{e_{n-d-s-\ell}}e_{n-k-\ell}\left[X\frac{1-q^{s+j}}{1-q}\right],e_{n-k-\ell} \right\>\\
		\text{(using \eqref{eq:qn_q_Macexp})}& = t^{n-k-\ell} \sum_{j=0}^{\min(n-k,\ell)} t^{\ell-j}\sum_{s=0}^{k}q^{\binom{s}{2}} \begin{bmatrix}
			k\\
			s
		\end{bmatrix}_q \begin{bmatrix}
			k+j-1\\
			j
		\end{bmatrix}_q\\
		& \qquad \times (1-q^{s+j}) \sum_{\mu\vdash n-\ell-k} \frac{\Pi_\mu}{w_\mu}h_{s+j}[(1-t)B_\mu]T_\mu h_{\ell-j}[B_\mu] e_{n-d-s-\ell}[B_\mu] \\
		\qquad 	& = t^{n-k-\ell} \sum_{\mu\vdash n-\ell-k} T_\mu \frac{\Pi_\mu}{w_\mu} \sum_{j=0}^{\min(n-k,\ell)} t^{\ell-j}\sum_{s=0}^{k}q^{\binom{s}{2}} \begin{bmatrix}
			k\\
			s
		\end{bmatrix}_q \begin{bmatrix}
			k+j-1\\
			j
		\end{bmatrix}_q\\
		& \qquad \times (1-q^{s+j})  h_{s+j}[(1-t)B_\mu] h_{\ell-j}[B_\mu] e_{n-d-s-\ell}[B_\mu]
	\end{align*}
	\begin{align*}
		\qquad \quad & = t^{n-k-\ell} \sum_{\mu\vdash n-\ell-k} T_\mu \frac{\Pi_\mu}{w_\mu}(1-q^k) \sum_{j=0}^{\min(n-k,\ell)} t^{\ell-j}\sum_{s=0}^{k}q^{\binom{s}{2}} \begin{bmatrix}
			s+j\\
			s
		\end{bmatrix}_q \begin{bmatrix}
			k+j-1\\
			k-s
		\end{bmatrix}_q\\
		& \qquad \times   h_{s+j}[(1-t)B_\mu] h_{\ell-j}[B_\mu] e_{n-d-s-\ell}[B_\mu],
	\end{align*}
	where in the last equality we used Lemma~\ref{lem:elementary3}.
	
	Now comparing the result of the last two computations and using \eqref{eq:mastereq}, we complete our proof.
\end{proof}

\subsection{Two recursions for $F_{n,k}^{(d,\ell)}$ and $\widetilde{F}_{n,k}^{(d,\ell)}$}

We are now able to establish two recursions for $F_{n,k}^{(d,\ell)}$ and $\widetilde{F}_{n,k}^{(d,\ell)}$.

\begin{theorem} \label{thm:reco1_F_and_Ftilde}
	For $k,\ell,d\geq 0$, $n\geq k+\ell$ and $n\geq d$, the $F_{n,k}^{(d,\ell)}$ satisfy the following recursion: for $n\geq 1$
	\begin{align}
		F_{n,n}^{(d,\ell)} & =\delta_{\ell,0}  q^{\binom{n-d}{2}}\begin{bmatrix}
			n\\
			d
		\end{bmatrix}_q  ,
	\end{align}
	and, for $n\geq 1$ and $1\leq k<n$,
	\begin{align} \label{eq:reco1_F_F}
		F_{n,k}^{(d,\ell)} & = \sum_{s=0}^{k}\sum_{i=0}^{\ell}q^{\binom{s}{2}+\binom{i+1}{2}}t^{n-k-\ell}\begin{bmatrix}
			k\\
			s
		\end{bmatrix}_q\sum_{h=1}^{n-k} \begin{bmatrix}
			h-1\\
			i
		\end{bmatrix}_q \\
		\notag & \quad \times \begin{bmatrix}
			h+s-i-1\\
			h
		\end{bmatrix}_q (F_{n-k,h}^{(d-k+s,\ell-i)}+F_{n-k,h}^{(d-k+s,\ell-i-1)}),
	\end{align}
	with initial conditions
	\begin{align}
		F_{0,k}^{(d,\ell)} & = \delta_{k,0}\delta_{\ell,0}\delta_{d,0}, \qquad F_{n,0}^{(d,\ell)} = \delta_{n,0}\delta_{\ell,0}\delta_{d,0}.
	\end{align}
	
	Similarly, the $\widetilde{F}_{n,k}^{(d,\ell)}$ satisfy the following recursion: for $n\geq 1$
	\begin{align}
		\widetilde{F}_{n,n}^{(d,\ell)} & = \delta_{\ell,0}  q^{\binom{n-d}{2}}\begin{bmatrix}
			n-1\\
			d
		\end{bmatrix}_q  ,
	\end{align}
	and, for $n\geq 1$ and $1\leq k<n$,
	\begin{align} \label{eq:reco1_Ftilde_Ftilde}
		\widetilde{F}_{n,k}^{(d,\ell)} & = \sum_{s=0}^{k}\sum_{i=0}^{\ell}q^{\binom{s}{2}+\binom{i+1}{2}}t^{n-k-\ell}\begin{bmatrix}
			k\\
			s
		\end{bmatrix}_q\sum_{h=1}^{n-k} \begin{bmatrix}
			h-1\\
			i
		\end{bmatrix}_q \\
		\notag & \quad \times \begin{bmatrix}
			h+s-i-1\\
			h
		\end{bmatrix}_q (\widetilde{F}_{n-k,h}^{(d-k+s,\ell-i)}+\widetilde{F}_{n-k,h}^{(d-k+s,\ell-i-1)}),
	\end{align}
	with initial conditions
	\begin{align}
		\widetilde{F}_{0,k}^{(d,\ell)} & =  \widetilde{F}_{n,0}^{(d,\ell)} =0.
	\end{align}
\end{theorem}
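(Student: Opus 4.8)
The plan is to derive the recursion \eqref{eq:reco1_F_F} by composing the two transfer identities already at our disposal, so the theorem is essentially a repackaging of earlier results. First I would invoke \eqref{eq:rel_F_G}, valid for $n>k$, which expresses $F_{n,k}^{(d,\ell)}$ as $\sum_{s=1}^{\min(k,n-d)}\qbinom{k}{s}_q G_{n-k,s}^{(d-k+s,\ell)}$, and then substitute into each summand the second form of \eqref{eq:rel_G_F}, applied with parameters $(n,k,d,\ell)\mapsto(n-k,s,d-k+s,\ell)$, which rewrites $G_{n-k,s}^{(d-k+s,\ell)}$ as a $q,t$-weighted double sum of terms $F_{n-k,h}^{(d-k+s,\ell-i)}+F_{n-k,h}^{(d-k+s,\ell-i-1)}$. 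After renaming the outer summation variable, the resulting expression is precisely the right-hand side of \eqref{eq:reco1_F_F}, the only discrepancy being that $s$ now runs over $1\le s\le\min(k,n-d)$ instead of $0\le s\le k$. The boundary value $F_{n,n}^{(d,\ell)}$ is nothing but \eqref{eq:Fnn_formula}, so no further work is needed there.

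The second step is to reconcile the two ranges of $s$. The term $s=0$ carries a factor $\qbinom{h+s-i-1}{h}_q=\qbinom{h-i-1}{h}_q$, which vanishes for every $h\ge 1$ and $i\ge 0$, so including $s=0$ adds nothing. For $\min(k,n-d)<s\le k$ I distinguish two cases: if $d-k+s<0$ then every $F$ (and $G$) occurring has a negative superscript and hence vanishes by the convention that all our polynomials vanish when an index is negative; if $d-k+s\ge 0$ but $s>n-d$, then $(n-k)-(\ell-i)-(d-k+s)=n-\ell+i-d-s\le i-\ell-1<0$ because $0\le i\le\ell$, so $F_{n-k,h}^{(d-k+s,\ell-i)}=F_{n-k,h}^{(d-k+s,\ell-i-1)}=0$. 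Thus extending the range is harmless and \eqref{eq:reco1_F_F} follows. The initial conditions are then checked directly from the definition of $F_{n,k}^{(d,\ell)}$ as a scalar product: using $E_{0,0}=1$, $E_{N,0}=0$ for $N\ge 1$, $B_\emptyset=0$, $h_r[0]=e_r[0]=\delta_{r,0}$, and the vanishing of $\Delta_{e_j}$ and $\Delta_{h_j}$ for $j<0$, one obtains $F_{0,k}^{(d,\ell)}=\delta_{k,0}\delta_{\ell,0}\delta_{d,0}$ and, for $n\ge 1$, $F_{n,0}^{(d,\ell)}=0$ (with value $\delta_{\ell,0}\delta_{d,0}$ at $n=0$), as claimed.

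For the family $\widetilde{F}_{n,k}^{(d,\ell)}$ the cleanest route is to observe that none of the coefficients on the right-hand side of \eqref{eq:reco1_F_F} depends on $d$, so applying the (finite) alternating sum $\widetilde{F}^{(d)}=\sum_{u\ge 1}(-1)^{u-1}F^{(d+u)}$ of \eqref{eq:rel_Ftilde_F} termwise — once at level $n$ on the left and, via the same identity at level $n-k$, on the right — turns \eqref{eq:reco1_F_F} into \eqref{eq:reco1_Ftilde_Ftilde}; the same operation turns the initial conditions for $F$ into $\widetilde{F}_{0,k}^{(d,\ell)}=\widetilde{F}_{n,0}^{(d,\ell)}=0$ and turns \eqref{eq:Fnn_formula} into \eqref{eq:Ftildenn_formula}, the last reduction amounting to the $q$-binomial identity $q^{\binom{n-d}{2}}\qbinom{n-1}{d}_q+q^{\binom{n-d+1}{2}}\qbinom{n-1}{d-1}_q=q^{\binom{n-d}{2}}\qbinom{n}{d}_q$ coming from \eqref{eq:qbin_recursion}. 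Alternatively, one may repeat the argument of the first two paragraphs verbatim with \eqref{eq:rel_Ftilde_Gtilde} and \eqref{eq:rel_Gtilde_Ftilde}, which carry identical $q,t$-coefficients.

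I expect the only genuinely delicate point to be a range/convention check rather than any new idea: after the index shift $(n,k,d,\ell)\mapsto(n-k,s,d-k+s,\ell)$ one must ensure that \eqref{eq:rel_G_F} — and, for the instances where $s$ may exceed $n-k-\ell$ so that the hypotheses of \eqref{eq:rel_G_F} as stated are not met, the underlying Macdonald–Koornwinder reciprocity derivation — remains valid or that both sides vanish; similarly one should record that in the degenerate regimes $d<0$, $\ell<0$ or $n<k+\ell$ of the theorem every term collapses and the asserted identities read $0=0$. Everything else is routine bookkeeping with $q$-binomial coefficients.
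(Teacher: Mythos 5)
Your proposal is correct and follows the same route as the paper's own proof, which simply says the recursion is obtained ``by combining \eqref{eq:rel_F_G} and \eqref{eq:rel_G_F}'' (and the tilded analogues \eqref{eq:rel_Ftilde_Gtilde} and \eqref{eq:rel_Gtilde_Ftilde}) and that the initial conditions are ``straightforward to check.'' The added value of your write-up is making explicit the bookkeeping the paper leaves implicit: that the composed outer sum runs over $1\le s\le\min(k,n-d)$ while \eqref{eq:reco1_F_F} allows $0\le s\le k$, and that the discrepancy is harmless because the $s=0$ term is killed by $\qbinom{h-i-1}{h}_q=0$ and the terms with $s>n-d$ vanish (via the negative-index convention and/or $\Delta_{e_{j}}=0$ for $j<0$) — in the edge case $i=\ell$, $s=n-d+1$ the term $F_{n-k,h}^{(d-k+s,\ell-i-1)}$ vanishes because of the negative superscript $\ell-i-1=-1$, not because of the $\Delta_e$ index, a point worth spelling out since the inequality you display only directly covers the other summand. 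Your derivation of the $\widetilde F$ recursion by applying the alternating sum of \eqref{eq:rel_Ftilde_F} termwise is an equally valid alternative to the paper's direct use of \eqref{eq:rel_Ftilde_Gtilde}--\eqref{eq:rel_Gtilde_Ftilde}; both rely on the fact that the $q,t$-coefficients do not involve $d$.
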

\begin{proof}
	Combining \eqref{eq:rel_F_G} and \eqref{eq:rel_G_F} we immediately get \eqref{eq:reco1_F_F}. Similarly, combining \eqref{eq:rel_Ftilde_Gtilde} and \eqref{eq:rel_Gtilde_Ftilde} we immediately get \eqref{eq:reco1_Ftilde_Ftilde}. The remaining initial conditions are straightforward to check.
\end{proof}

\begin{theorem} \label{thm:dvw}
	For $k,\ell,d\geq 0$, $n\geq k+\ell$ and $n\geq d$, the $F_{n,k}^{(d,\ell)}$ satisfy the following recursion: for $n\geq 1$
	\begin{align}
		F_{n,n}^{(d,\ell)} & =\delta_{\ell,0} q^{\binom{n-d}{2}}\begin{bmatrix}
			n\\
			d
		\end{bmatrix}_q,
	\end{align}
	\begin{align*}
		F_{n,k}^{(d,\ell)}
		& = \delta_{n,k+\ell}\; q^{\binom{k-d}{2}}\begin{bmatrix}
			n-1\\
			\ell
		\end{bmatrix}_q\begin{bmatrix}
			k\\
			d
		\end{bmatrix}_q\\
		& \quad + \sum_{j=0}^{\min(n-k,\ell)}t^{n-k-j}\sum_{s=0}^{k}\sum_{h=1}^{n-k-j}q^{\binom{s}{2}} \begin{bmatrix}
			k\\
			s
		\end{bmatrix}_q \begin{bmatrix}
			k+j-1\\
			j
		\end{bmatrix}_q \begin{bmatrix}
			s+j-1+h\\
			h
		\end{bmatrix}_q F_{n-k-j,h}^{(d-k+s,\ell-j)}.
	\end{align*}
	with initial conditions
	\begin{align}
		F_{0,k}^{(d,\ell)} & = \delta_{k,0}\delta_{\ell,0}\delta_{d,0}, \qquad F_{n,0}^{(d,\ell)} = \delta_{n,0}\delta_{\ell,0}\delta_{d,0}.
	\end{align}
	Similarly, the $\widetilde{F}_{n,k}^{(d,\ell)}$ satisfy the following recursion: for $n\geq 1$
	\begin{align}
		\widetilde{F}_{n,n}^{(d,\ell)} & = \delta_{\ell,0} q^{\binom{n-d}{2}}\begin{bmatrix}
			n-1\\
			d
		\end{bmatrix}_q   ,
	\end{align}
	and, for $n\geq 1$ and $1\leq k<n$, 
	\begin{align*}
		\widetilde{F}_{n,k}^{(d,\ell)} & = \delta_{n,k+\ell} q^{\binom{k-d}{2}}\begin{bmatrix}
			n-1\\
			\ell
		\end{bmatrix}_q\begin{bmatrix}
			k-1\\
			d
		\end{bmatrix}_q\\
		& \quad + \sum_{j=0}^{\min(n-k,\ell)}t^{n-k-j}\sum_{s=0}^{k}\sum_{h=1}^{n-k-j}q^{\binom{s}{2}} \begin{bmatrix}
			k\\
			s
		\end{bmatrix}_q \begin{bmatrix}
			k+j-1\\
			j
		\end{bmatrix}_q \begin{bmatrix}
			s+j-1+h\\
			h
		\end{bmatrix}_q \widetilde{F}_{n-k-j,h}^{(d-k+s,\ell-j)},
	\end{align*}
	with initial conditions
	\begin{align}
		\widetilde{F}_{0,k}^{(d,\ell)} & = \widetilde{F}_{n,0}^{(d,\ell)} =0.
	\end{align}
\end{theorem}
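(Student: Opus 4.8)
The plan is to read off both recursions directly from Theorem~\ref{thm:rel_F_H}, using the explicit definitions \eqref{eq:rel_H_F} and \eqref{eq:rel_Htilde_Ftilde} of $H_{n,k,i}^{(d,\ell)}$ and $\overline{H}_{n,k,i}^{(d,\ell)}$ together with the closed forms of $H_{n,n,i}^{(d,\ell)}$ and $\overline{H}_{n,n,i}^{(d,\ell)}$; no new symmetric function input is needed. I would carry out the argument for $F_{n,k}^{(d,\ell)}$, the case of $\widetilde{F}_{n,k}^{(d,\ell)}$ being identical after replacing \eqref{eq:rel_F_and_H} by \eqref{eq:rel_Ftilde_and_Hbar}, \eqref{eq:rel_H_F} by \eqref{eq:rel_Htilde_Ftilde}, and the closed form of $H_{n,n,i}$ by that of $\overline{H}_{n,n,i}$ (this last swap is exactly what turns the $\qbinom{k}{d}_q$ in the leading term into $\qbinom{k-1}{d}_q$).

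For $k=n$ there is nothing to do: the claimed formula is precisely \eqref{eq:Fnn_formula} (resp.\ \eqref{eq:Ftildenn_formula}). So assume $1\le k<n$ and start from $F_{n,k}^{(d,\ell)}=\sum_{j=0}^{\min(n-k,\ell)}H_{n,k+j,j}^{(d,\ell)}$. I would split this sum into the boundary term $k+j=n$ and the terms with $k+j<n$. The term $k+j=n$ is present exactly when $n-k\le\ell$, i.e.\ exactly when $\delta_{n,k+\ell}$ can be nonzero; for it one uses $H_{n,n,i}^{(d,\ell)}=\delta_{i,\ell}q^{\binom{n-\ell-d}{2}}\qbinom{n-1}{\ell}_q\qbinom{n-\ell}{d}_q$ with $i=j=n-k$ and $n-\ell=k$, which collapses to the leading term $\delta_{n,k+\ell}\,q^{\binom{k-d}{2}}\qbinom{n-1}{\ell}_q\qbinom{k}{d}_q$. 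For each remaining $j$ (so $k+j<n$) I would substitute \eqref{eq:rel_H_F} with $k$ replaced by $k+j$ and $i$ replaced by $j$, checking that $(k+j)-j=k$ and that the superscript $d-(k+j)+j+s$ collapses to $d-k+s$, which produces exactly the triple sum over $s,h$ in the statement.

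The last point to verify is that the inner sum over $h$ may be started at $h=1$ rather than $h=0$: the $h=0$ summand is a multiple of $F_{n-k-j,0}^{(d-k+s,\ell-j)}$, which vanishes by the initial condition $F_{m,0}^{(d',\ell')}=\delta_{m,0}\delta_{\ell',0}\delta_{d',0}$ whenever $n-k-j\ge1$, and $n-k-j\ge1$ is exactly the condition $k+j<n$ defining this part of the sum. Assembling the boundary term and the triple sum then gives the recursion verbatim; the stated initial conditions $F_{0,k}^{(d,\ell)}=\delta_{k,0}\delta_{\ell,0}\delta_{d,0}$, $F_{n,0}^{(d,\ell)}=\delta_{n,0}\delta_{\ell,0}\delta_{d,0}$ (and $\widetilde{F}_{0,k}^{(d,\ell)}=\widetilde{F}_{n,0}^{(d,\ell)}=0$) follow immediately from the definitions of these polynomials as Hall scalar products, just as in the proof of Theorem~\ref{thm:reco1_F_and_Ftilde}.

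There is no deep obstacle here — the result is a bookkeeping consequence of Theorem~\ref{thm:rel_F_H} — but the step most likely to hide an error is the reindexing in \eqref{eq:rel_H_F} under $k\mapsto k+j$, $i\mapsto j$: one must track the three $q$-binomials $\qbinom{k+j-j}{s}_q$, $\qbinom{k+j-1}{j}_q$, $\qbinom{s+j-1+h}{h}_q$ and the exponent $d-(k+j)+j+s$ simultaneously, and separately confirm that the Kronecker delta $\delta_{i,\ell}$ sitting inside the closed form of $H_{n,n,i}^{(d,\ell)}$ (resp.\ $\overline{H}_{n,n,i}^{(d,\ell)}$) reproduces $\delta_{n,k+\ell}$ and not some shifted delta.
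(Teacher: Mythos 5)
Your proposal is correct and takes essentially the same approach as the paper, whose entire proof reads ``Combining Theorem~\ref{thm:rel_F_H} and the definitions of the polynomials $\overline{H}_{n,k,i}^{(d,\ell)}$ and $H_{n,k,i}^{(d,\ell)}$, the result follows.'' Your write-up just makes explicit the boundary-term extraction at $k+j=n$, the reindexing $k\mapsto k+j$, $i\mapsto j$ in \eqref{eq:rel_H_F}/\eqref{eq:rel_Htilde_Ftilde}, and the harmless truncation of the $h$-sum at $h=1$, all of which the paper leaves implicit.
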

\begin{proof}
	Combining Theorem~\ref{thm:rel_F_H} and the definitions of the polynomials $\overline{H}_{n,k,i}^{(d,\ell)}$ and $H_{n,k,i}^{(d,\ell)}$, the result follows.
\end{proof}

\subsection{A recursion for $G_{n,k}^{(d,\ell)}$ and $\widetilde{G}_{n,k}^{(d,\ell)}$}

In this section we establish a recursion for $G_{n,k}^{(d,\ell)}$ and $\widetilde{G}_{n,k}^{(d,\ell)}$.

\begin{theorem} \label{thm:recoG_and_Gtilde}
	For $k,\ell,d\geq 0$, $n\geq k+\ell$ and $n\geq d$, the $G_{n,k}^{(d,\ell)}$ satisfy the following recursion: 
	\begin{align} \label{eq:reco_G}
		G_{n,k}^{(d,\ell)} & = t^{n-\ell}q^{\binom{k}{2}+\binom{\ell}{2}}\begin{bmatrix}
			k\\
			\ell
		\end{bmatrix}_q \begin{bmatrix}
			n+k-\ell-1\\
			n-\ell
		\end{bmatrix}_q q^{\binom{n-d}{2}}\begin{bmatrix}
			n\\
			d
		\end{bmatrix}_q\\
		& \quad +\sum_{h=1}^{n-1}\sum_{s=0}^{\ell}\sum_{j=1}^{\min(n-d,h)}t^{n-\ell}q^{\binom{k}{2}+\binom{s}{2}}\begin{bmatrix}
			k\\
			s
		\end{bmatrix}_q \begin{bmatrix}
			h\\
			j
		\end{bmatrix}_q \\
		\notag & \quad \times \begin{bmatrix}
			h+k-s-1\\
			h-s
		\end{bmatrix}_q G_{n-h,j}^{(d-h+j,\ell-s)},
	\end{align}
	with initial conditions
	\begin{equation}
		G_{0,k}^{(d,\ell)}=\delta_{d,0}\delta_{\ell,0}q^{\binom{k}{2}}.
	\end{equation}
	
	Similarly
	\begin{align} \label{eq:reco_Gtilde}
		\widetilde{G}_{n,k}^{(d,\ell)} & = t^{n-\ell}q^{\binom{k}{2}+\binom{\ell}{2}}\begin{bmatrix}
			k\\
			\ell
		\end{bmatrix}_q \begin{bmatrix}
			n+k-\ell-1\\
			n-\ell
		\end{bmatrix}_q q^{\binom{n-d}{2}}\begin{bmatrix}
			n-1\\
			d
		\end{bmatrix}_q\\
		& \quad +\sum_{h=1}^{n-1}\sum_{s=0}^{\ell}\sum_{j=1}^{\min(n-d,h)}t^{n-\ell}q^{\binom{k}{2}+\binom{s}{2}}\begin{bmatrix}
			k\\
			s
		\end{bmatrix}_q \begin{bmatrix}
			h\\
			j
		\end{bmatrix}_q \\
		\notag & \quad \times \begin{bmatrix}
			h+k-s-1\\
			h-s
		\end{bmatrix}_q \widetilde{G}_{n-h,j}^{(d-h+j,\ell-s)},
	\end{align}
	with initial conditions
	\begin{equation}
		\widetilde{G}_{0,k}^{(d,\ell)}=0.
	\end{equation}
\end{theorem}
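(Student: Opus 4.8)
The plan is to derive the recursion by composing the two ``transfer'' identities \eqref{eq:rel_G_F} and \eqref{eq:rel_F_G} of the preceding theorems, exactly dual to the way Theorem~\ref{thm:reco1_F_and_Ftilde} was obtained. I would start from \eqref{eq:rel_G_F},
\[
G_{n,k}^{(d,\ell)} = \sum_{h=1}^{n}\sum_{s=0}^{\ell} t^{n-\ell} q^{\binom{k}{2}+\binom{s}{2}}\qbinom{k}{s}_q\qbinom{h+k-s-1}{h-s}_q F_{n,h}^{(d,\ell-s)},
\]
and isolate the diagonal summand $h=n$. By the closed form \eqref{eq:Fnn_formula} we have $F_{n,n}^{(d,\ell-s)} = \delta_{\ell-s,0}\,q^{\binom{n-d}{2}}\qbinom{n}{d}_q$, so only $s=\ell$ contributes to that summand, and the coefficient $\qbinom{h+k-s-1}{h-s}_q$ specializes to $\qbinom{n+k-\ell-1}{n-\ell}_q$; this produces precisely the first line of \eqref{eq:reco_G}.

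For the remaining range $1\le h\le n-1$ one has $n>h$, so \eqref{eq:rel_F_G} applies and gives $F_{n,h}^{(d,\ell-s)} = \sum_{j=1}^{\min(h,n-d)}\qbinom{h}{j}_q G_{n-h,j}^{(d-h+j,\ell-s)}$. Substituting this into the surviving part of the sum and interchanging the order of the $h$, $s$ and $j$ summations yields exactly the triple sum in \eqref{eq:reco_G}; since $n-h<n$ this is a genuine well-founded recursion. The recursion for $\widetilde{G}_{n,k}^{(d,\ell)}$ follows in the identical manner, replacing \eqref{eq:rel_G_F}, \eqref{eq:rel_F_G}, \eqref{eq:Fnn_formula} by their tilde counterparts \eqref{eq:rel_Gtilde_Ftilde}, \eqref{eq:rel_Ftilde_Gtilde}, \eqref{eq:Ftildenn_formula}; the only change is that $\widetilde{F}_{n,n}^{(d,\ell)}=\delta_{\ell,0}\,q^{\binom{n-d}{2}}\qbinom{n-1}{d}_q$ carries $\qbinom{n-1}{d}_q$ in place of $\qbinom{n}{d}_q$, which is reflected in the first line of \eqref{eq:reco_Gtilde}.

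Finally I would check the initial conditions directly from the definitions of $G_{n,k}^{(d,\ell)}$ and $\widetilde{G}_{n,k}^{(d,\ell)}$: at $n=0$ the hypotheses $n\ge k+\ell$ and $n\ge d$ force $\ell=d=0$, and since $\Delta_{e_0}$ acts as the identity while $\Delta_{e_{-j}}=\Delta'_{e_{-j}}=0$ for $j>0$, we get $G_{0,k}^{(0,0)} = q^{\binom{k}{2}}\langle e_0,h_0\rangle = q^{\binom{k}{2}}$, whereas the presence of $\Delta'_{e_{-1}}=0$ forces $\widetilde{G}_{0,k}^{(d,\ell)}=0$. I do not expect any conceptual difficulty in this argument; the only points requiring care are the bookkeeping of the summation bounds (in particular checking that the separated $h=n$ term exactly reproduces the explicit first line, which is where \eqref{eq:Fnn_formula} and \eqref{eq:Ftildenn_formula} enter) and confirming that the ranges on the right-hand side remain consistent with the standing convention that polynomials with a negative index vanish.
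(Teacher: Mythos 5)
Your argument is exactly the paper's proof: isolate the $h=n$ summand of \eqref{eq:rel_G_F}, evaluate it via \eqref{eq:Fnn_formula} (which forces $s=\ell$), and substitute \eqref{eq:rel_F_G} into the remaining $1\le h\le n-1$ range to replace $F_{n,h}^{(d,\ell-s)}$ by a $G$-sum; the tilde case and the initial conditions are handled identically. No gaps, and no substantive difference from the paper's route.
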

\begin{proof}
	Using \eqref{eq:rel_G_F}, we have
	\begin{align}
		G_{n,k}^{(d,\ell)}& =\sum_{h=1}^{n} \sum_{s=0}^{\ell} t^{n-\ell}q^{\binom{k}{2}+\binom{s}{2}}\begin{bmatrix}
			k\\
			s\\
		\end{bmatrix}_q \begin{bmatrix}
			h+k-s-1\\
			h-s\\
		\end{bmatrix}_q  F_{n,h}^{(d,\ell-s)}\\
		& = \sum_{s=0}^{\ell} t^{n-\ell}q^{\binom{k}{2}+\binom{s}{2}}\begin{bmatrix}
			k\\
			s\\
		\end{bmatrix}_q \begin{bmatrix}
			n+k-s-1\\
			n-s\\
		\end{bmatrix}_q  F_{n,n}^{(d,\ell-s)}\\
		& \quad + \sum_{h=1}^{n-1} \sum_{s=0}^{\ell} t^{n-\ell}q^{\binom{k}{2}+\binom{s}{2}}\begin{bmatrix}
			k\\
			s\\
		\end{bmatrix}_q \begin{bmatrix}
			h+k-s-1\\
			h-s\\
		\end{bmatrix}_q  F_{n,h}^{(d,\ell-s)}\\
		\text{(using \eqref{eq:Fnn_formula})}& = t^{n-\ell}q^{\binom{k}{2}+\binom{\ell}{2}}\begin{bmatrix}
			k\\
			\ell
		\end{bmatrix}_q \begin{bmatrix}
			n+k-\ell-1\\
			n-\ell
		\end{bmatrix}_q q^{\binom{n-d}{2}}\begin{bmatrix}
			n\\
			d
		\end{bmatrix}_q\\
		& \quad + \sum_{h=1}^{n-1} \sum_{s=0}^{\ell} t^{n-\ell}q^{\binom{k}{2}+\binom{s}{2}}\begin{bmatrix}
			k\\
			s\\
		\end{bmatrix}_q \begin{bmatrix}
			h+k-s-1\\
			h-s\\
		\end{bmatrix}_q  F_{n,h}^{(d,\ell-s)}
	\end{align}
	\begin{align}
		\text{(using \eqref{eq:rel_F_G})}& = t^{n-\ell}q^{\binom{k}{2}+\binom{\ell}{2}}\begin{bmatrix}
			k\\
			\ell
		\end{bmatrix}_q \begin{bmatrix}
			n+k-\ell-1\\
			n-\ell
		\end{bmatrix}_q q^{\binom{n-d}{2}}\begin{bmatrix}
			n\\
			d
		\end{bmatrix}_q\\
		&  \quad +\sum_{h=1}^{n-1}\sum_{s=0}^{\ell}\sum_{j=1}^{\min(n-d,h)}t^{n-\ell}q^{\binom{k}{2}+\binom{s}{2}}\begin{bmatrix}
			k\\
			s
		\end{bmatrix}_q \begin{bmatrix}
			h\\
			j
		\end{bmatrix}_q \\
		\notag & \quad \times \begin{bmatrix}
			h+k-s-1\\
			h-s
		\end{bmatrix}_q \widetilde{G}_{n-h,j}^{(d-h+j,\ell-s)},
	\end{align}
	as we wanted.
	
	For \eqref{eq:reco_Gtilde} we get the same argument using \eqref{eq:rel_Gtilde_Ftilde}, \eqref{eq:Ftildenn_formula} and then \eqref{eq:rel_Ftilde_Gtilde}.
	
	The initial conditions are easy to check: since $\Delta_{e_{-r}}=\Delta_{e_{-r}}'=0$ for $r>0$ we have
	\begin{align*}
		G_{0,k}^{(d,\ell)} & =t^{0-\ell}q^{\binom{k}{2}}\left\<  \Delta_{e_{0-\ell}}\Delta_{e_{0-d}} e_{0}\left[X\frac{1-q^k}{1-q}\right] ,h_{0}\right\>=\delta_{\ell,0}\delta_{d,0}q^{\binom{k}{2}}
	\end{align*}
	and
	\begin{align*}
		\widetilde{G}_{0,k}^{(d,\ell)} & =t^{0-\ell}q^{\binom{k}{2}}\left\<  \Delta_{e_{0-\ell}}\Delta_{e_{0-d-1}}' e_{0}\left[X\frac{1-q^k}{1-q}\right] ,h_{0}\right\>=0.
	\end{align*}
	
	This completes the proof.
\end{proof}

\subsection{A recursion for $H_{n,k,i}^{(d,\ell)}$ and $\overline{H}_{n,k,i}^{(d,\ell)}$}

In this section we establish a recursion for $H_{n,k,i}^{(d,\ell)}$ and $\overline{H}_{n,k,i}^{(d,\ell)}$.

\begin{theorem} \label{thm:reco_H_and_Htilde}
	For $k,\ell,d\geq 0$, $n\geq k+\ell$ and $n\geq d$, the $H_{n,k,i}^{(d,\ell)}$ satisfy the following recursion:
	\begin{align}
		H_{n,n,i}^{(d,\ell)} & =\delta_{i,\ell} q^{\binom{n-\ell-d}{2}}\begin{bmatrix}
			n-1\\
			\ell
		\end{bmatrix}_q\begin{bmatrix}
			n-\ell\\
			d
		\end{bmatrix}_q,
	\end{align}
	and for $1\leq k<n$
	\begin{align} \label{eq:reco_H}
		H_{n,k,i}^{(d,\ell)} & = t^{n-k}\sum_{s=0}^{k-i}\sum_{h=0}^{n-k}q^{\binom{s}{2}} \begin{bmatrix}
			k-i\\
			s
		\end{bmatrix}_q \begin{bmatrix}
			k-1\\
			i
		\end{bmatrix}_q \begin{bmatrix}
			s+i-1+h\\
			h
		\end{bmatrix}_q \sum_{j=0}^{\min(n-k-h,\ell-i)}H_{n-k,h+j,j}^{(d-k+i+s,\ell-i)},
	\end{align}
	with initial conditions
	\begin{equation}
		H_{0,k,i}^{(d,\ell)}=H_{n,0,i}^{(d,\ell)}=0.
	\end{equation}
	Similarly
	\begin{align}
		\overline{H}_{n,n,i}^{(d,\ell)} & =\delta_{i,\ell} q^{\binom{n-\ell-d}{2}}\begin{bmatrix}
			n-1\\
			\ell
		\end{bmatrix}_q\begin{bmatrix}
			n-\ell-1\\
			d
		\end{bmatrix}_q,
	\end{align}
	and for $1\leq k<n$
	\begin{align} \label{eq:reco_Hbar}
		\overline{H}_{n,k,i}^{(d,\ell)} & = t^{n-k}\sum_{s=0}^{k-i}\sum_{h=0}^{n-k}q^{\binom{s}{2}} \begin{bmatrix}
			k-i\\
			s
		\end{bmatrix}_q \begin{bmatrix}
			k-1\\
			i
		\end{bmatrix}_q \begin{bmatrix}
			s+i-1+h\\
			h
		\end{bmatrix}_q \sum_{j=0}^{\min(n-k-h,\ell-i)}\overline{H}_{n-k,h+j,j}^{(d-k+i+s,\ell-i)},
	\end{align}
	with initial conditions
	\begin{equation}
		\overline{H}_{0,k,i}^{(d,\ell)}=\overline{H}_{n,0,i}^{(d,\ell)}=0.
	\end{equation}
\end{theorem}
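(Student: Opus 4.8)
The plan is to obtain both recursions as an immediate consequence of the relations of Theorem~\ref{thm:rel_F_H} together with the \emph{definitions} \eqref{eq:rel_H_F} and \eqref{eq:rel_Htilde_Ftilde} of the families $H_{n,k,i}^{(d,\ell)}$ and $\overline{H}_{n,k,i}^{(d,\ell)}$: no new symmetric function identity is required, since all the analytic work was already carried out in proving Theorem~\ref{thm:rel_F_H} (which in turn rests on the summation formula \eqref{eq:mastereq} and Macdonald--Koornwinder reciprocity).

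First I would handle the boundary value $k=n$. By construction $H_{n,n,i}^{(d,\ell)}$ and $\overline{H}_{n,n,i}^{(d,\ell)}$ are \emph{defined} to be
\[
H_{n,n,i}^{(d,\ell)}=\delta_{i,\ell}\, q^{\binom{n-\ell-d}{2}}\qbinom{n-1}{\ell}_q\qbinom{n-\ell}{d}_q,\qquad \overline{H}_{n,n,i}^{(d,\ell)}=\delta_{i,\ell}\, q^{\binom{n-\ell-d}{2}}\qbinom{n-1}{\ell}_q\qbinom{n-\ell-1}{d}_q,
\]
which are precisely the asserted base cases. For the initial conditions, $H_{n,0,i}^{(d,\ell)}=\overline{H}_{n,0,i}^{(d,\ell)}=0$ because the factor $\qbinom{k-1}{i}_q$ appearing in \eqref{eq:rel_H_F} and \eqref{eq:rel_Htilde_Ftilde} equals $\qbinom{-1}{i}_q=0$ when $k=0$, while $H_{0,k,i}^{(d,\ell)}=\overline{H}_{0,k,i}^{(d,\ell)}=0$ because every summand then carries a factor $F_{-k,h}^{(\cdots)}$ (resp. $\widetilde{F}_{-k,h}^{(\cdots)}$), which vanishes under the convention that our polynomials are zero whenever an index is negative.

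The main step is the range $1\le k<n$. Here I would start from the defining formula
\[
H_{n,k,i}^{(d,\ell)} = t^{n-k}\sum_{s=0}^{k-i}\sum_{h=0}^{n-k}q^{\binom{s}{2}} \qbinom{k-i}{s}_q \qbinom{k-1}{i}_q \qbinom{s+i-1+h}{h}_q\, F_{n-k,h}^{(d-k+i+s,\ell-i)}
\]
and substitute, for each term, the expansion \eqref{eq:rel_F_and_H} of Theorem~\ref{thm:rel_F_H} applied with the parameter replacement $(n,k,d,\ell)\mapsto(n-k,\,h,\,d-k+i+s,\,\ell-i)$, namely
\[
F_{n-k,h}^{(d-k+i+s,\ell-i)}=\sum_{j=0}^{\min(n-k-h,\ell-i)}H_{n-k,h+j,j}^{(d-k+i+s,\ell-i)}.
\]
Inserting this and keeping the sum over $j$ innermost reproduces \eqref{eq:reco_H} verbatim. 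Replacing $F$ by $\widetilde{F}$, \eqref{eq:rel_H_F} by \eqref{eq:rel_Htilde_Ftilde}, and \eqref{eq:rel_F_and_H} by \eqref{eq:rel_Ftilde_and_Hbar} gives \eqref{eq:reco_Hbar} in exactly the same way. This completes the argument.

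Since the whole proof reduces to a single substitution, there is no genuine obstacle internal to it; the only point deserving attention is the bookkeeping of the summation ranges. One must check that the upper bound $\min(n-k-h,\ell-i)$ on $j$ coming out of Theorem~\ref{thm:rel_F_H} matches the one written in \eqref{eq:reco_H}, that the degenerate contributions (for instance $h+j=n-k$, which invokes the $k=n$ base value of $H$) are accounted for consistently, and that terms falling outside the hypotheses of Theorem~\ref{thm:rel_F_H} (e.g. $d-k+i+s<0$) vanish on both sides by the negative-index convention, so that the substitution is legitimate term by term.
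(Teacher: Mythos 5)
Your proof is correct and follows essentially the same route as the paper: substitute the expansion $F_{n-k,h}^{(d-k+i+s,\ell-i)}=\sum_j H_{n-k,h+j,j}^{(d-k+i+s,\ell-i)}$ from Theorem~\ref{thm:rel_F_H} into the definition \eqref{eq:rel_H_F} of $H_{n,k,i}^{(d,\ell)}$, and read off the base cases from the definitions. The only cosmetic difference is that you derive the $\overline{H}$ recursion in parallel by running the same substitution with $\widetilde{F}$ and \eqref{eq:rel_Ftilde_and_Hbar}, whereas the paper proves the $H$ recursion first and obtains the $\overline{H}$ one from it via the alternating-sum relation \eqref{eq:rel_Htilde_H}; both are equally valid.
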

\begin{proof}
	Clearly \eqref{eq:reco_Hbar} follows from \eqref{eq:reco_H} using \eqref{eq:rel_Htilde_H}. Hence we will only prove \eqref{eq:reco_H}.
	
	To prove \eqref{eq:reco_H}, just combine the definition \eqref{eq:rel_Htilde_H} of $H_{n,k,i}^{(d,\ell)}$ and \eqref{eq:rel_F_and_H}.
	
	The initial conditions are easy to check.
\end{proof}
\begin{remark} \label{rem:rewriting_reco_H}
	Using some change of variables, we can rewrite the recursions in Theorem~\ref{thm:reco_H_and_Htilde} in the following way:
	\begin{align*} 
		\notag \overline{H}_{n,k,i}^{(d,\ell)} & = t^{n-k}\sum_{s=0}^{k-i}\sum_{h=0}^{n-k}q^{\binom{s}{2}} \begin{bmatrix}
			k-i\\
			s
		\end{bmatrix}_q \begin{bmatrix}
			k-1\\
			i
		\end{bmatrix}_q \begin{bmatrix}
			s+i-1+h\\
			h
		\end{bmatrix}_q \sum_{j=0}^{\min(n-k-h,\ell-i)}\overline{H}_{n-k,h+j,j}^{(d-k+i+s,\ell-i)}\\
		\notag & = t^{n-k}\sum_{s=0}^{k-i}\sum_{j=0}^{\ell-i}\sum_{f=j}^{n-k+j}q^{\binom{s}{2}} \begin{bmatrix}
			k-i\\
			s
		\end{bmatrix}_q \begin{bmatrix}
			k-1\\
			i
		\end{bmatrix}_q \begin{bmatrix}
			s+i-1+f-j\\
			f-j
		\end{bmatrix}_q \overline{H}_{n-k,f,j}^{(d-k+i+s,\ell-i)}\\
		& = t^{n-k}\sum_{p=i}^{k}\sum_{j=0}^{\ell-i}\sum_{f=j}^{n-k+j}q^{\binom{p-i}{2}} \begin{bmatrix}
			k-i\\
			p-i
		\end{bmatrix}_q \begin{bmatrix}
			k-1\\
			i
		\end{bmatrix}_q \begin{bmatrix}
			p-1+f-j\\
			f-j
		\end{bmatrix}_q \overline{H}_{n-k,f,j}^{(d-k+p,\ell-i)}.
	\end{align*}
	Similarly for $H_{n,k,i}^{(d,\ell)}$.
\end{remark}

\section{Another symmetric function identity}
The goal of this section is to prove the following theorem and deduce some of its consequences.

\begin{theorem} \label{thm:schroeder_identity}
	For all $a,b,k\in \mathbb{N}$, with $a\geq 1$, $b\geq 1$ and $1\leq k\leq a$, we have
	\begin{equation} \label{eq:Schroeder_identity}
		\< \Delta_{e_a}'\Delta_{e_{a+b-k-1}}'e_{a+b},h_{a+b}\> = \< \Delta_{h_k}\Delta_{e_{a-k}}' e_{a+b-k},e_{a+b-k}\>.
	\end{equation}
\end{theorem}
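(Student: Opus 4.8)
The plan is to reduce both sides of \eqref{eq:Schroeder_identity} to explicit sums over Macdonald polynomials and then match them by means of Macdonald--Koornwinder reciprocity \eqref{eq:Macdonald_reciprocity} together with the summation formula \eqref{eq:mastereq}. For the right-hand side, Lemma~\ref{lem:Mac_hook_coeff} applied with $d=a+b-k$, the identity $\Delta_{e_{a+b-k}}=\nabla=\Delta'_{e_{a+b-k-1}}$ on $\Lambda^{(a+b-k)}$, and commutativity of the Delta operators give $\langle\Delta_{h_k}\Delta'_{e_{a-k}}e_{a+b-k},e_{a+b-k}\rangle=\langle\Delta_{h_k}\Delta'_{e_{a-k}}\nabla e_{a+b-k},h_{a+b-k}\rangle$; expanding $e_{a+b-k}$ via \eqref{eq:en_expansion} and using $\langle\widetilde H_\gamma,h_{a+b-k}\rangle=1$ turns this into $\sum_{\gamma\vdash a+b-k}w_\gamma^{-1}\,MB_\gamma\Pi_\gamma\,T_\gamma\,h_k[B_\gamma]\,e_{a-k}[B_\gamma-1]$. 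For the left-hand side, iterating \eqref{eq:deltaprime} yields $\Delta'_{e_a}=\sum_{i\ge0}(-1)^i\Delta_{e_{a-i}}$, so applying Lemma~\ref{lem:Mac_hook_coeff} termwise together with the hook expansion $\sum_{i\ge0}(-1)^ie_{a-i}h_{b+i}=s_{(b,1^a)}$ gives $\langle\Delta'_{e_a}\Delta'_{e_{a+b-k-1}}e_{a+b},h_{a+b}\rangle=\langle\Delta'_{e_{a+b-k-1}}e_{a+b},s_{(b,1^a)}\rangle$, which by \eqref{eq:en_expansion} and \eqref{eq:Mac_hook_coeff_ss} equals $\sum_{\mu\vdash a+b}w_\mu^{-1}\,MB_\mu\Pi_\mu\,e_a[B_\mu-1]\,e_{a+b-k-1}[B_\mu-1]$.

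The heart of the matter is then to reconcile the sum over $\mu\vdash a+b$ with the one over $\gamma\vdash a+b-k$. Following the pattern of the proofs of \eqref{eq:rel_F_G}, \eqref{eq:rel_G_F} and Theorem~\ref{thm:rel_F_H}, I would rewrite the factor $h_k[B_\gamma]$ (and, via \eqref{eq:Mac_hook_coeff_ss}, an $e_{\,\cdot}[B_\mu-1]$ factor using \eqref{eq:e_h_expansion}) as a sum of terms $\widetilde H_\alpha[MB_\gamma]$, apply reciprocity \eqref{eq:Macdonald_reciprocity} to trade $\widetilde H_\alpha[MB_\gamma]$ for $\widetilde H_\gamma[MB_\alpha]$, interchange the summations, and recognise the inner $\gamma$-sum as an instance of \eqref{eq:qn_q_Macexp}/\eqref{eq:garsia_haglund_eval}; feeding the outcome into the master formula \eqref{eq:mastereq} (whose special cases \eqref{eq:Haglund_summation} and \eqref{eq:basic_summation_2} are already in play) collapses the resulting double sum, and a final reindexing should make the two sides coincide. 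A convenient way to bookkeep all this is through the families of Section~\ref{sec:sum_formula}: by the very definition of $\widetilde F_{n,k}^{(d,\ell)}$ the right-hand side is $\sum_h\widetilde F_{a+b,h}^{(b-1,k)}$, while expanding $\Delta'_{e_a}=\sum_i(-1)^i\Delta_{e_{a-i}}$, using the $k=1$ specialisation of \eqref{eq:rel_Gtilde_Ftilde}, and telescoping (the extreme term is zero, as it involves $\Delta'_{e_{-k-1}}=0$) rewrites the left-hand side as $\sum_h\widetilde F_{a+b,h}^{(k,b-1)}$; thus \eqref{eq:Schroeder_identity} is precisely the statement that $\sum_h\widetilde F_{n,h}^{(d,\ell)}$ is symmetric in $d$ and $\ell$ (equivalently Theorem~\ref{thm:resultsection:symmetry}), and this symmetry is what one establishes directly from \eqref{eq:mastereq}.

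The step I expect to be the main obstacle is exactly this last computation: carrying the reciprocity/master-formula manipulation through with all indices and the truncated $e_r[B_\mu-1]$ factors correctly tracked is technically heavy, and the intermediate expressions carry several nested $q$-binomial sums that must be resummed using the elementary $q$-identities proved in the appendix.
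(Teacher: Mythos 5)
Your opening moves are correct and agree exactly with the paper: using $\nabla=\Delta_{e_{a+b-k}}$ on $\Lambda^{(a+b-k)}$ and Lemma~\ref{lem:Mac_hook_coeff} to write the right side as $\sum_{\gamma\vdash a+b-k}w_\gamma^{-1}MB_\gamma\Pi_\gamma T_\gamma\,h_k[B_\gamma]\,e_{a-k}[B_\gamma-1]$, and telescoping $\Delta'_{e_a}=\sum_i(-1)^i\Delta_{e_{a-i}}$ together with the hook identity $\sum_i(-1)^ie_{a-i}h_{b+i}=s_{(b,1^a)}$ to write the left side as $\sum_{\mu\vdash a+b}w_\mu^{-1}MB_\mu\Pi_\mu\,e_a[B_\mu-1]\,e_{a+b-k-1}[B_\mu-1]$. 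These are precisely the two expressions the paper sets out to match. The gap is in how you plan to match them. The paper's proof never invokes the master formula \eqref{eq:mastereq} here, and the obstacle you anticipate (resumming nested $q$-binomial coefficients via the appendix lemmas) does not arise. Instead the argument is purely Pieri-coefficient bookkeeping: expanding $e_{a-k}[B_\gamma-1]=\sum_i(-1)^ie_{a-k-i}[B_\gamma]$, introducing a third partition $\mu\vdash a-i$ through \eqref{eq:e_h_expansion}, and applying reciprocity \eqref{eq:Macdonald_reciprocity} twice together with \eqref{eq:def_dmunu} and \eqref{eq:rel_cmunu_dmunu} transforms the $\gamma$-sum into $\sum_{\beta\vdash a+b}e_a[B_\beta-1]\frac{\Pi_\beta M}{w_\beta}\sum_{\gamma\subset_k\beta}c_{\beta\gamma}^{(k)}B_\gamma T_\gamma$, at which point the theorem collapses to the single new identity $\sum_{\gamma\subset_k\beta}c_{\beta\gamma}^{(k)}B_\gamma T_\gamma=e_{n-k-1}[B_\beta-1]B_\beta$, proved by induction on $n-k$ using the Pieri recursion \eqref{eq:cmunu_recursion} and \eqref{eq:Pieri_sum1}. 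That Pieri-coefficient lemma is the key idea missing from your sketch, and nothing in your reciprocity/master-formula plan points to it.

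Your alternative route through the $\widetilde F$ polynomials is a correct reformulation but not a proof. The identification of the left side with $\sum_h\widetilde F_{a+b,h}^{(k,b-1)}$ via the $k=1$ case of \eqref{eq:rel_Gtilde_Ftilde} and telescoping, and of the right side with $\sum_h\widetilde F_{a+b,h}^{(b-1,k)}$ by definition, does show that \eqref{eq:Schroeder_identity} is equivalent to the symmetry of $\sum_h\widetilde F_{n,h}^{(d,\ell)}$ under $d\leftrightarrow\ell$. But your claim that ``this symmetry is what one establishes directly from \eqref{eq:mastereq}'' is not substantiated and on its face is false: the master formula and the recursions it yields treat $d$ and $\ell$ very differently and have no manifest $(d,\ell)$-symmetry. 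In the paper, Theorem~\ref{thm:symmetry} is \emph{deduced from} \eqref{eq:Schroeder_identity}, so invoking it as stated is circular. The only independent proof of that symmetry available in the paper goes through the combinatorial $\psi$-map of Theorem~\ref{thm:psi_map} (Corollary~\ref{cor:comb_symmetry}, combined with Theorems~\ref{thm:rel_F_H} and~\ref{thm:qt_enumerators_formulae}); your reduction could in principle be closed along those lines, but that is a genuinely different mechanism from the one you describe and would need to be spelled out.
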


\subsection{Proof of Theorem~\ref{thm:schroeder_identity}} \label{sec:proof_identity_symmetry}

We start working on the right hand side of \eqref{eq:Schroeder_identity}: using \eqref{eq:en_expansion}, we have
\begin{align*}
	& \hspace{-0.5cm} \Delta_{h_k}\Delta_{e_{a-k}}' e_{a+b-k}=\\
	& = \sum_{\gamma\vdash a+b-k} h_k[B_\gamma] e_{a-k}[B_\gamma-1] \widetilde{H}_{\gamma}[X] \frac{MB_{\gamma}\Pi_{\gamma}}{w_{\gamma}}\\
	\text{(using \eqref{eq:e_h_sum_alphabets})} & = \sum_{\gamma\vdash a+b-k} h_k[B_\gamma] \sum_{i=0}^{a-k}(-1)^ie_{a-k-i}[B_\gamma] \widetilde{H}_{\gamma}[X] \frac{MB_{\gamma}\Pi_{\gamma}}{w_{\gamma}}\\
	& = \sum_{i=0}^{a-k}(-1)^i\sum_{\gamma\vdash a+b-k} h_k\left[\frac{MB_\gamma}{M}\right] e_{a-k-i}\left[\frac{MB_\gamma}{M}\right] \widetilde{H}_{\gamma}[X] \frac{MB_{\gamma}\Pi_{\gamma}}{w_{\gamma}}\\
	\text{(using \eqref{eq:e_h_expansion})} & = \sum_{i=0}^{a-k}(-1)^i\sum_{\gamma\vdash a+b-k} \sum_{\mu\vdash a-i}\frac{e_k[B_{\mu}]\widetilde{H}_{\mu}[MB_{\gamma}]}{w_{\mu}} \cdot  \widetilde{H}_{\gamma}[X] \frac{MB_{\gamma}\Pi_{\gamma}}{w_{\gamma}}\\
	\text{($e_k[B_{\mu}]\! =\! 0$ for $k\! >\! |\mu|$)} & = \sum_{i=0}^{a}(-1)^i\sum_{\gamma\vdash a+b-k} \sum_{\mu\vdash a-i}\frac{e_k[B_{\mu}]\widetilde{H}_{\mu}[MB_{\gamma}]}{w_{\mu}} \cdot  \widetilde{H}_{\gamma}[X] \frac{MB_{\gamma}\Pi_{\gamma}}{w_{\gamma}}\\
	\text{(using \eqref{eq:Macdonald_reciprocity})} & = \sum_{i=0}^{a}(-1)^i\sum_{\gamma\vdash a+b-k} \sum_{\mu\vdash a-i}\frac{e_k[B_{\mu}]}{w_{\mu}}\frac{\widetilde{H}_{\gamma}[MB_{\mu}]\Pi_{\mu}}{\Pi_{\gamma}} \cdot  \widetilde{H}_{\gamma}[X] \frac{MB_{\gamma}\Pi_{\gamma}}{w_{\gamma}}\\
	& = \sum_{i=0}^{a}(-1)^i\sum_{\gamma\vdash a+b-k} \sum_{\mu\vdash a-i} e_k\left[\frac{MB_\mu}{M}\right] \widetilde{H}_{\gamma}[MB_{\mu}] \cdot  \widetilde{H}_{\gamma}[X] \frac{\Pi_{\mu} MB_{\gamma}}{w_{\mu}w_{\gamma}}\\
	\text{(using \eqref{eq:def_dmunu})} & = \sum_{i=0}^{a}(-1)^i\sum_{\gamma\vdash a+b-k} \sum_{\mu\vdash a-i} \sum_{\beta \supset_k \gamma} d_{\beta \gamma}^{(k)} \widetilde{H}_{\beta}[MB_{\mu}] \cdot  \widetilde{H}_{\gamma}[X] \frac{\Pi_{\mu} MB_{\gamma}}{w_{\mu}w_{\gamma}} \\
	\text{(using \eqref{eq:rel_cmunu_dmunu})} & = \sum_{i=0}^{a}(-1)^i\sum_{\gamma\vdash a+b-k} \sum_{\mu\vdash a-i} \sum_{\beta \supset_k \gamma} c_{\beta \gamma}^{(k)}\frac{w_{\gamma}}{w_{\beta}} \widetilde{H}_{\beta}[MB_{\mu}] \cdot  \widetilde{H}_{\gamma}[X] \frac{\Pi_{\mu} MB_{\gamma}}{w_{\mu}w_{\gamma}}\\
	& =\sum_{i=0}^{a}(-1)^i \sum_{\mu\vdash a-i}  \sum_{\beta\vdash a+b} \sum_{\gamma \subset_k \beta} c_{\beta \gamma}^{(k)}B_{\gamma} \widetilde{H}_{\gamma}[X] \cdot \widetilde{H}_{\beta}[MB_{\mu}] \frac{\Pi_{\mu} M}{w_{\mu}w_{\beta}}\\
	\text{(using \eqref{eq:Macdonald_reciprocity})} & =  \sum_{i=0}^{a}(-1)^i \sum_{\beta\vdash a+b} \sum_{\gamma \subset_k \beta} c_{\beta \gamma}^{(k)}B_{\gamma} \widetilde{H}_{\gamma}[X] \cdot \sum_{\mu\vdash a-i}  \frac{\widetilde{H}_{\mu}[MB_{\beta}]}{w_{\mu}}\frac{\Pi_{\beta} M}{w_{\beta}} \\
	\text{(using \eqref{eq:e_h_expansion})}  & =  \sum_{\beta\vdash a+b} \sum_{\gamma \subset_k \beta} c_{\beta \gamma}^{(k)}B_{\gamma} \widetilde{H}_{\gamma}[X] \cdot \sum_{i=0}^{a}(-1)^ie_{a-i}[B_{\beta}]\frac{\Pi_{\beta} M}{w_{\beta}}\\
	\text{(using \eqref{eq:e_h_sum_alphabets})} & =  \sum_{\beta\vdash a+b} \sum_{\gamma \subset_k \beta} c_{\beta \gamma}^{(k)}B_{\gamma} \widetilde{H}_{\gamma}[X] \cdot e_{a}[B_{\beta}-1]\frac{\Pi_{\beta} M}{w_{\beta}}.
\end{align*}
Taking the scalar product with $e_{a+b-k}$ we get
\begin{align*}
	\< \Delta_{h_k}\Delta_{e_{a-k}}' e_{a+b-k} ,e_{a+b-k}\> &  =  \sum_{\beta\vdash a+b} \sum_{\gamma \subset_k \beta} c_{\beta \gamma}^{(k)}B_{\gamma} \< \widetilde{H}_{\gamma}[X],e_{a+b-k}\> \cdot e_a[B_{\beta}-1]\frac{\Pi_{\beta} M}{w_{\beta}}\\
	\text{(using \eqref{eq:Mac_hook_coeff})} &  =  \sum_{\beta\vdash a+b} \sum_{\gamma \subset_k \beta} c_{\beta \gamma}^{(k)}B_{\gamma} e_{a+b-k}[B_{\gamma}] \cdot e_a[B_{\beta}-1]\frac{\Pi_{\beta} M}{w_{\beta}}\\
	\text{(using \eqref{eq:Bmu_Tmu})} &  =  \sum_{\beta\vdash a+b} \sum_{\gamma \subset_k \beta} c_{\beta \gamma}^{(k)}B_{\gamma} T_{\gamma} \cdot e_a[B_{\beta}-1]\frac{\Pi_{\beta} M}{w_{\beta}}.
\end{align*}
For the left hand side we have
\begin{align*}
	\< \Delta_{e_a}'\Delta_{e_{a+b-k-1}}'e_{a+b},h_{a+b}\> 
	&  =  \sum_{\beta\vdash a+b} e_{a+b-k-1}[B_{\beta}-1]B_{\beta} \< \widetilde{H}_{\beta}[X],h_{a+b}\> \cdot e_a[B_{\beta}-1]\frac{\Pi_{\beta} M}{w_{\beta}}\\
	\text{(using \eqref{eq:Bmu_Tmu})} &  =  \sum_{\beta\vdash a+b} e_{a+b-k-1}[B_{\beta}-1]B_{\beta}   \cdot e_a[B_{\beta}-1]\frac{\Pi_{\beta} M}{w_{\beta}}.
\end{align*}
So, in order to conclude the proof of \eqref{eq:Schroeder_identity}, it will be enough to prove the following lemma.
\begin{lemma} 
	For every $n,k\in \mathbb{N}$, with $n> k\geq 1$, and for every $\beta\vdash n$, we have
	\begin{equation} \label{eq:lem_eBcmunu}
		e_{n-k-1}[B_{\beta}-1]B_{\beta}=\sum_{\gamma \subset_k \beta} c_{\beta \gamma}^{(k)}B_{\gamma} T_{\gamma} .
	\end{equation}
\end{lemma}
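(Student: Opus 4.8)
The identity to prove is
\begin{equation*}
	e_{n-k-1}[B_{\beta}-1]B_{\beta}=\sum_{\gamma \subset_k \beta} c_{\beta \gamma}^{(k)}B_{\gamma} T_{\gamma},
\end{equation*}
which is a statement purely about the Pieri coefficients $c_{\beta\gamma}^{(k)}$, the operators $B$ and $T$, and evaluations of elementary symmetric functions at $B_\beta-1$. My plan is to recognize both sides as scalar products involving Macdonald polynomials and to reduce everything to the action of the operator $h_k^\perp$ together with the identities \eqref{eq:Mac_hook_coeff}, \eqref{eq:Mac_hook_coeff_ss}, \eqref{eq:Bmu_Tmu}, and \eqref{eq:def_cmunu}.

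First I would rewrite the right hand side. By \eqref{eq:Bmu_Tmu} we have $B_\gamma T_\gamma = e_1[B_\gamma]e_{|\gamma|}[B_\gamma]$, but more usefully $B_\gamma = e_1[B_\gamma] = \langle \widetilde H_\gamma, e_1 h_{n-k-1}\rangle$ and $T_\gamma = e_{n-k}[B_\gamma] = \langle \widetilde H_\gamma, e_{n-k} h_0\rangle = \langle \widetilde H_\gamma, e_{n-k}\rangle$ via \eqref{eq:Mac_hook_coeff}. The cleanest route, though, is to pair the whole identity against nothing and instead interpret it through the Pieri expansion: by \eqref{eq:def_cmunu}, $h_k^\perp \widetilde H_\beta[X] = \sum_{\gamma\subset_k\beta} c_{\beta\gamma}^{(k)}\widetilde H_\gamma[X]$, so applying $\langle\,\cdot\,, g\rangle$ for a well-chosen $g$ turns $\sum_\gamma c_{\beta\gamma}^{(k)}B_\gamma T_\gamma$ into $\langle h_k^\perp \widetilde H_\beta, g\rangle = \langle \widetilde H_\beta, h_k g\rangle$ provided $g$ is a symmetric function of degree $n-k$ with $\langle \widetilde H_\gamma, g\rangle = B_\gamma T_\gamma$ for all $\gamma\vdash n-k$. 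Now $B_\gamma T_\gamma = \langle \Delta_{e_1} \widetilde H_\gamma, e_{n-k}\rangle$ since $\Delta_{e_1}\widetilde H_\gamma = B_\gamma \widetilde H_\gamma$ and $\langle \widetilde H_\gamma, e_{n-k}\rangle = T_\gamma = e_{n-k}[B_\gamma]$ by \eqref{eq:Mac_hook_coeff} with $r=n-k$; equivalently $B_\gamma T_\gamma = \langle \widetilde H_\gamma, \Delta_{e_1}^* (e_{n-k})\rangle$ where I use self-adjointness of $\Delta_{e_1}$ (it is a diagonal operator on the orthogonal basis $\{\widetilde H_\gamma\}$ with respect to both scalar products, hence self-adjoint for the Hall product too since the $\widetilde H_\gamma$ are not orthogonal for $\langle\,,\rangle$ — so I must instead argue directly). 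So I will take $g = $ the unique degree-$(n-k)$ symmetric function with $\langle\widetilde H_\gamma, g\rangle = B_\gamma T_\gamma$, namely $g = \nabla(e_1 \cdot \text{something})$ — actually $g$ is determined by the system and one checks $g = \Delta_{e_1}\omega(\text{...})$; the concrete identification is $\sum_\gamma c_{\beta\gamma}^{(k)}B_\gamma T_\gamma = \langle h_k^\perp \widetilde H_\beta, g\rangle$ and then $g$ enters the left side after expanding.

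Then I would turn to the left hand side: $e_{n-k-1}[B_\beta-1]B_\beta$. Using \eqref{eq:Mac_hook_coeff_ss} and Pieri, $e_{n-k-1}[B_\beta-1] = \langle \widetilde H_\beta, s_{(k+1,1^{n-k-1})}\rangle$, and $B_\beta = \langle\widetilde H_\beta, e_1 h_{n-1}\rangle$. The key will be to combine these so that the $B_\beta$ factor is absorbed: indeed $\Delta_{e_{n-k-1}}'\widetilde H_\beta = e_{n-k-1}[B_\beta-1]\widetilde H_\beta$ and $\Delta_{e_1}\widetilde H_\beta = B_\beta\widetilde H_\beta$, so $e_{n-k-1}[B_\beta-1]B_\beta = \langle \Delta_{e_1}\Delta_{e_{n-k-1}}'\widetilde H_\beta, \text{(dual basis elt)}\rangle$. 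Matching this against the right side, the whole identity becomes the assertion that, after applying $\langle\,\cdot\,, h_{n}\rangle$ or an appropriate functional, $\Delta_{e_1}\Delta_{e_{n-k-1}}'\widetilde H_\beta$ and $\mathbf{\Pi}$-free manipulation of $h_k^\perp\widetilde H_\beta$ agree cell-by-cell — which reduces to the single-cell Pieri recursion \eqref{eq:cmunu_recursion} and \eqref{eq:Pieri_sum1}. Concretely, I expect to prove it by induction on $k$ using \eqref{eq:cmunu_recursion}: the base case $k=1$ is \eqref{eq:Pieri_sum1} twisted by $T$, i.e. $\sum_{\gamma\subset_1\beta}c_{\beta\gamma}^{(1)}B_\gamma T_\gamma = e_{n-2}[B_\beta-1]B_\beta$, which should follow from $\sum_{\gamma\subset_1\beta}c_{\beta\gamma}^{(1)} = B_\beta$ and tracking how $T_\gamma = T_\beta/(\text{removed cell weight})$ and $B_\gamma = B_\beta - (\text{removed cell weight})$ vary, together with the combinatorial fact $\sum_{c\in\beta}q^{a'(c)}t^{l'(c)}\prod_{\text{corners}}(\cdots) $ that packages $e_{n-2}$ at $B_\beta - 1$.

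The main obstacle will be the base case and the bookkeeping in the inductive step: \eqref{eq:cmunu_recursion} expresses $c_{\beta\gamma}^{(k+1)}$ as a sum over chains $\gamma\subset_1\alpha\subset_k\beta$ weighted by $T_\alpha/T_\gamma$ and divided by $B_{\beta/\gamma}$, so plugging it into $\sum_\gamma c_{\beta\gamma}^{(k+1)}B_\gamma T_\gamma$ produces $\sum_{\gamma,\alpha} c_{\beta\alpha}^{(k)}c_{\alpha\gamma}^{(1)} (T_\alpha/T_\gamma) B_\gamma T_\gamma / B_{\beta/\gamma} = \sum_\alpha c_{\beta\alpha}^{(k)}T_\alpha \sum_{\gamma\subset_1\alpha} c_{\alpha\gamma}^{(1)} B_\gamma/B_{\beta/\gamma}$, and the inner sum is \emph{not} simply the $k=1$ identity because of the $B_{\beta/\gamma}$ in the denominator. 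Resolving this requires either a more clever choice of what to pair against (so that the $B_{\beta/\gamma}$ cancels structurally), or invoking that $\Delta'$ operators interact with $h_k^\perp$ in a known way. I suspect the intended proof is the scalar-product one I sketched — realize both sides as $\langle \widetilde H_\beta, h_k\cdot g\rangle$ versus $\langle \widetilde H_\beta, (\text{explicit symmetric function})\rangle$ and then prove the two symmetric functions inside are equal by checking coefficients on the $\widetilde H_\gamma$ basis, which again collapses to \eqref{eq:Mac_hook_coeff_ss} and \eqref{eq:Mac_hook_coeff} — so I would prioritize getting that functional identification exactly right, as it is where all the content lives.
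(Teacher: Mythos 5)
Your proposal correctly identifies the ingredients (the Pieri coefficients, \eqref{eq:cmunu_recursion}, \eqref{eq:Pieri_sum1}, \eqref{eq:Mac_hook_coeff}, and an induction), but neither of the two routes you sketch is actually closed, and you yourself flag the fatal step. The inductive route runs \eqref{eq:cmunu_recursion} in the dissecting direction, expanding $c_{\beta\gamma}^{(k+1)}$ as a chain sum, which introduces a division by $B_{\beta/\gamma}$; as you observe, the resulting inner sum $\sum_{\gamma\subset_1\alpha}c_{\alpha\gamma}^{(1)}B_\gamma/B_{\beta/\gamma}$ does not collapse. The functional route is even less developed: diagonal $\Delta$ operators are \emph{not} self-adjoint for the Hall product (the $\widetilde H_\mu$ are orthogonal for $\langle\,,\,\rangle_*$, not $\langle\,,\,\rangle$), and you do not actually produce the degree-$(n-k)$ symmetric function $g$ you need, so $\langle\widetilde H_\beta, h_k g\rangle$ is never pinned down.

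The missing idea is to apply \eqref{eq:cmunu_recursion} in the \emph{synthesizing} direction, with the right pre-processing. Use \eqref{eq:Pieri_sum1} to write $B_\gamma=\sum_{\delta\subset_1\gamma}c_{\gamma\delta}^{(1)}$ on the right-hand side, so $\sum_{\gamma}c_{\beta\gamma}^{(k)}B_\gamma T_\gamma=\sum_{\delta\subset_{k+1}\beta}\sum_{\delta\subset_1\gamma\subset_k\beta}c_{\beta\gamma}^{(k)}c_{\gamma\delta}^{(1)}T_\gamma$; now pull out $T_\delta B_{\beta/\delta}$ and recognize the inner double sum over the chain as exactly $B_{\beta/\delta}^{-1}\sum c_{\beta\gamma}^{(k)}c_{\gamma\delta}^{(1)}T_\gamma/T_\delta=c_{\beta\delta}^{(k+1)}$ by \eqref{eq:cmunu_recursion}, so the whole thing becomes $\sum_{\delta\subset_{k+1}\beta}T_\delta B_{\beta/\delta}c_{\beta\delta}^{(k+1)}$: $B_{\beta/\delta}$ enters multiplicatively, not as a denominator. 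Then split $B_{\beta/\delta}=B_\beta-B_\delta$: the first piece is $B_\beta\sum_\delta c_{\beta\delta}^{(k+1)}T_\delta=B_\beta\langle h_{k+1}^\perp\widetilde H_\beta,e_{n-k-1}\rangle=B_\beta e_{n-k-1}[B_\beta]$ by \eqref{eq:Bmu_Tmu}, \eqref{eq:def_cmunu} and \eqref{eq:Mac_hook_coeff}; the second piece is $-\sum_\delta c_{\beta\delta}^{(k+1)}B_\delta T_\delta$, which is the same statement at level $k+1$. This gives
\[
\sum_{\gamma\subset_k\beta}c_{\beta\gamma}^{(k)}B_\gamma T_\gamma=B_\beta e_{n-k-1}[B_\beta]-\sum_{\delta\subset_{k+1}\beta}c_{\beta\delta}^{(k+1)}B_\delta T_\delta,
\]
and one inducts on $n-k$ (so downward on $k$, base case $k=n-1$ where the sum is $c_{\beta(1)}^{(n-1)}=B_\beta$), finishing with $B_\beta\bigl(e_{n-k-1}[B_\beta]-e_{n-k-2}[B_\beta-1]\bigr)=B_\beta e_{n-k-1}[B_\beta-1]$. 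So your plan of inducting upward on $k$ from the base $k=1$, feeding $c^{(k)}$ into $c^{(k+1)}$ by chain expansion, is indeed the wrong orientation; the clean recursion only appears when you go the other way and let $B_\gamma$ itself supply the extra cell.
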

\begin{proof}
	For $\gamma\vdash n-k$, applying \eqref{eq:Pieri_sum1}, we have 
	\begin{equation}
		B_{\gamma}=\sum_{\delta\subset_1 \gamma}c_{\gamma \delta}^{(1)}.
	\end{equation}
	
	Therefore
	\begin{align*}
		\sum_{\gamma \subset_k \beta} c_{\beta \gamma}^{(k)}B_{\gamma} T_{\gamma} & = \sum_{\gamma \subset_k \beta} c_{\beta \gamma}^{(k)}\sum_{\delta\subset_1 \gamma}c_{\gamma \delta}^{(1)} T_{\gamma}\\
		& = \sum_{\delta\subset_{k+1} \beta}\sum_{\delta\subset_1 \gamma \subset_k \beta} c_{\beta \gamma}^{(k)}c_{\gamma \delta}^{(1)} T_{\gamma}\\
		& = \sum_{\delta\subset_{k+1} \beta}T_{\delta}B_{\beta/\delta}\frac{1}{B_{\beta/\delta}}\sum_{\delta\subset_1 \gamma \subset_k \beta} c_{\beta \gamma}^{(k)}c_{\gamma \delta}^{(1)} \frac{T_{\gamma}}{T_{\delta}}\\
		\text{(using \eqref{eq:cmunu_recursion})}& = \sum_{\delta\subset_{k+1} \beta}T_{\delta}B_{\beta/\delta} c_{\beta \delta}^{(k+1)}\\
		& = B_{\beta}\sum_{\delta\subset_{k+1} \beta} c_{\beta \delta}^{(k+1)}T_{\delta} -\sum_{\delta\subset_{k+1} \beta} c_{\beta \delta}^{(k+1)}B_{\delta}T_{\delta},
	\end{align*}
	which gives
	\begin{equation}
		\sum_{\gamma \subset_k \beta} c_{\beta \gamma}^{(k)}B_{\gamma} T_{\gamma}=-\sum_{\delta\subset_{k+1} \beta} c_{\beta \delta}^{(k+1)}B_{\delta}T_{\delta} +B_{\beta}\sum_{\delta\subset_{k+1} \beta} c_{\beta \delta}^{(k+1)}T_{\delta}.
	\end{equation}
	Observe that by \eqref{eq:Bmu_Tmu} we have
	\begin{align*}
		\sum_{\delta\subset_{k+1} \beta} c_{\beta \delta}^{(k+1)}T_{\delta} 
		& = \sum_{\delta\subset_{k+1} \beta} c_{\beta \delta}^{(k+1)}e_{n-k-1}[B_{\delta}]\\
		\text{(using \eqref{eq:Mac_hook_coeff})}& = \sum_{\delta\subset_{k+1} \beta} c_{\beta \delta}^{(k+1)}\< \widetilde{H}_{\delta},e_{n-k-1}\> \\
		\text{(using \eqref{eq:def_cmunu})}& =\< h_{k+1}^{\perp}\widetilde{H}_{\beta},e_{n-k-1}\>\\
		& =\< \widetilde{H}_{\beta},e_{n-k-1}h_{k+1}\>\\
		\text{(using \eqref{eq:Mac_hook_coeff})}  & = e_{n-k-1}[B_{\beta}],
	\end{align*}
	so
	\begin{equation} \label{eq:inductive_step}
		\sum_{\gamma \subset_k \beta} c_{\beta \gamma}^{(k)}B_{\gamma} T_{\gamma}=-\sum_{\delta\subset_{k+1} \beta} c_{\beta \delta}^{(k+1)}B_{\delta}T_{\delta} +B_{\beta}e_{n-k-1}[B_{\beta}].
	\end{equation}
	Therefore we can argue by induction on $n-k$: for $n-k=1$, we have $k=n-1$, so
	\begin{equation}
		\sum_{\gamma \subset_{n-1} \beta} c_{\beta \gamma}^{(n-1)}B_{\gamma} T_{\gamma}=c_{\beta (1)}^{(n-1)} B_{(1)}T_{(1)}=c_{\beta (1)}^{(n-1)},
	\end{equation}
	but
	\begin{align*}
		c_{\beta (1)}^{(n-1)} & =c_{\beta (1)}^{(n-1)}\< e_{1},e_1\>\\
		& =c_{\beta (1)}^{(n-1)}\< \widetilde{H}_{(1)},e_1\>\\
		& = \< h_{n-1}^{\perp}\widetilde{H}_{\beta},e_1\>\\
		& = \< \widetilde{H}_{\beta},e_1 h_{n-1}\>\\
		\text{(using \eqref{eq:Mac_hook_coeff})} & = e_1[B_{\beta}]= B_{\beta} = e_0[B_{\beta}-1]B_{\beta}.
	\end{align*}
	If $n-k\geq 2$, by induction
	\begin{equation}
		\sum_{\delta\subset_{k+1} \beta} c_{\beta \delta}^{(k+1)}B_{\delta}T_{\delta}=e_{n-k-2}[B_{\beta}-1]B_{\beta},
	\end{equation}
	so, using \eqref{eq:inductive_step}, we have
	\begin{align*}
		\sum_{\gamma \subset_k \beta} c_{\beta \gamma}^{(k)}B_{\gamma} T_{\gamma} & =-\sum_{\delta\subset_{k+1} \beta} c_{\beta \delta}^{(k+1)}B_{\delta}T_{\delta} +B_{\beta}e_{n-k-1}[B_{\beta}]\\
		& = -e_{n-k-2}[B_{\beta}-1]B_{\beta}+B_{\beta}e_{n-k-1}[B_{\beta}]\\
		& = B_{\beta}(e_{n-k-1}[B_{\beta}]-e_{n-k-2}[B_{\beta}-1])\\
		& = B_{\beta}e_{n-k-1}[B_{\beta}-1].
	\end{align*}
	This concludes the proof of the lemma.
\end{proof}
We already observed that the lemma finishes the proof of Theorem~\ref{thm:schroeder_identity}.

\subsection{Some consequences of Theorem~\ref{thm:schroeder_identity}}\label{sec:algebraic_proof_symmetry}

We deduce here a few consequences of Theorem~\ref{thm:schroeder_identity}.

First of all, we have the following corollary.
\begin{corollary}
	\begin{equation} \label{eq:Schroeder_corol}
		\< \Delta_{e_a}\Delta_{e_{a+b-k-1}}'e_{a+b},h_{a+b}\> = \< \Delta_{h_k}\Delta_{e_{a-k}} e_{a+b-k},e_{a+b-k}\>.
	\end{equation}
\end{corollary}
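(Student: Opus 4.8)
The plan is to deduce \eqref{eq:Schroeder_corol} from Theorem~\ref{thm:schroeder_identity} by replacing each unprimed Delta operator by primed ones via \eqref{eq:deltaprime}. On $\Lambda^{(a+b)}$ we have $\Delta_{e_a}=\Delta_{e_a}'+\Delta_{e_{a-1}}'$, so the left-hand side of \eqref{eq:Schroeder_corol} splits as $\langle \Delta_{e_a}'\Delta_{e_{a+b-k-1}}'e_{a+b},h_{a+b}\rangle+\langle \Delta_{e_{a-1}}'\Delta_{e_{a+b-k-1}}'e_{a+b},h_{a+b}\rangle$. Likewise, on $\Lambda^{(a+b-k)}$ we have $\Delta_{e_{a-k}}=\Delta_{e_{a-k}}'+\Delta_{e_{a-k-1}}'$ (with the convention $\Delta_{e_{-1}}'=0$, which covers the case $a-k=0$ since $\Delta_{e_0}=\Delta_{e_0}'$), so the right-hand side splits as $\langle \Delta_{h_k}\Delta_{e_{a-k}}'e_{a+b-k},e_{a+b-k}\rangle+\langle \Delta_{h_k}\Delta_{e_{a-k-1}}'e_{a+b-k},e_{a+b-k}\rangle$. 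The first summands on the two sides coincide: this is precisely Theorem~\ref{thm:schroeder_identity} with parameters $(a,b,k)$.

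For the second summands I would rewrite $a+b=(a-1)+(b+1)$, $a+b-k-1=(a-1)+(b+1)-k-1$ and $a-k-1=(a-1)-k$, so that the left second summand equals $\langle \Delta_{e_{a-1}}'\Delta_{e_{(a-1)+(b+1)-k-1}}'e_{(a-1)+(b+1)},h_{(a-1)+(b+1)}\rangle$ and the right second summand equals $\langle \Delta_{h_k}\Delta_{e_{(a-1)-k}}'e_{(a-1)+(b+1)-k},e_{(a-1)+(b+1)-k}\rangle$. When $1\le k\le a-1$ (which forces $a\ge 2$, while $b+1\ge 1$ is automatic), these two are equal by a second application of Theorem~\ref{thm:schroeder_identity}, now with parameters $(a-1,b+1,k)$, and the corollary follows.

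The only case not covered this way, and the part I expect to require a small separate argument, is $k=a$. Then the right second summand is $\langle \Delta_{h_a}\Delta_{e_{-1}}'e_b,e_b\rangle=0$, and I must check that the left second summand $\langle \Delta_{e_{a-1}}'\Delta_{e_{b-1}}'e_{a+b},h_{a+b}\rangle$ (here $a+b-k-1=b-1$) vanishes as well. I would do this by expanding $\Delta_{e_{a-1}}'=\sum_{i\ge 0}(-1)^i\Delta_{e_{a-1-i}}$ and $\Delta_{e_{b-1}}'=\sum_{j\ge 0}(-1)^j\Delta_{e_{b-1-j}}$ on $\Lambda^{(a+b)}$ (from $e_m[B-1]=\sum_{i\ge 0}(-1)^i e_{m-i}[B]$, a case of \eqref{eq:e_h_sum_alphabets}), which reduces the claim to $\langle \Delta_{e_{a-1-i}}\Delta_{e_{b-1-j}}e_{a+b},h_{a+b}\rangle=0$ for all $i,j\ge 0$. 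Applying Lemma~\ref{lem:Mac_hook_coeff} (equation~\eqref{eq:lem_e_h_Delta}) and then Lemma~\ref{lem:Haglund}, this bracket equals $\langle \Delta_{h_{a-1-i}e_{b+1+i}}e_{b-j},h_{b-j}\rangle$, which is $0$ because $e_{b+1+i}[B_\nu]=0$ for every $\nu\vdash b-j$ (as $b+1+i>b-j$). This settles $k=a$ and completes the proof.

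In short, the argument is purely formal: there is no computation, all the content sits in Theorem~\ref{thm:schroeder_identity}, and the only mild obstacle is organizing the two splittings so that the cross terms match and separately disposing of the boundary value $k=a$ as above.
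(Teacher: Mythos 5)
Your proof is correct and follows the same route as the paper's: apply Theorem~\ref{thm:schroeder_identity} once with $(a,b,k)$ and once with $(a-1,b+1,k)$, then add, using $\Delta_{e_m}=\Delta_{e_m}'+\Delta_{e_{m-1}}'$ to recombine the primed operators. Your additional treatment of the boundary case $k=a$ is a welcome refinement: the paper's one-line proof silently invokes Theorem~\ref{thm:schroeder_identity} with second argument $a-1$, whose hypothesis $1\le k\le a-1$ fails exactly when $k=a$, and your reduction of $\langle\Delta_{e_{a-1}}'\Delta_{e_{b-1}}'e_{a+b},h_{a+b}\rangle$ to $\langle\Delta_{h_{a-1-i}e_{b+1+i}}e_{b-j},h_{b-j}\rangle=0$ via \eqref{eq:lem_e_h_Delta} and \eqref{eq:Haglund_Lemma} (vanishing because $e_{b+1+i}[B_\nu]=0$ on partitions of size $b-j<b+1+i$) cleanly closes that gap.
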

\begin{proof}
	The same identity \eqref{eq:Schroeder_identity} with $a$ and $b$ replaced by $a-1$ and $b+1$ respectively, gives 
	\begin{equation} \label{eq:Schroeder_id2}
		\< \Delta_{e_{a-1}}'\Delta_{e_{a+b-k-1}}'e_{a+b},h_{a+b}\> = \< \Delta_{h_k}\Delta_{e_{a-k-1}}' e_{a+b-k},e_{a+b-k}\>.
	\end{equation}
	Adding this one to \eqref{eq:Schroeder_identity} and using $\Delta_{e_m}=\Delta_{e_m}'+\Delta_{e_{m-1}}'$ on $\Lambda^{(n)}$ with $1\leq m\leq n$, we get the result.
\end{proof}
As an application of \eqref{eq:Schroeder_identity}, we give a first algebraic proof of the symmetry \eqref{eq:HRW_symmetry} predicted by Conjecture~7.1 in \cite{Haglund-Remmel-Wilson-2015}, which is left open in \cite{Zabrocki-4Catalan-2016} as Conjecture~16. A combinatorial proof will be provided in Section~\ref{sec:deco_qt_Schroeder}.
\begin{theorem} \label{thm:symmetry}
	For $n>k+\ell$, the following expression is symmetric in $k$ and $\ell$:
	\begin{equation}
		\< \Delta_{h_k}\Delta_{e_{n-k-\ell-1}}' e_{n-k},e_{n-k}\>.
	\end{equation}
\end{theorem}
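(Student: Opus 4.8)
The plan is to obtain the symmetry directly from the symmetric function identity in Theorem~\ref{thm:schroeder_identity}, combined with the fact that the Delta operators commute with one another (they all act diagonally on the basis $\{\widetilde{H}_\mu\}$, by \eqref{eq:deltaop_def}). Concretely, I would first establish the auxiliary identity
\begin{equation*}
\langle \Delta_{h_k}\Delta'_{e_{n-k-\ell-1}} e_{n-k}, e_{n-k}\rangle = \langle \Delta'_{e_{n-k-1}}\Delta'_{e_{n-\ell-1}} e_n, h_n\rangle,
\end{equation*}
valid for all integers $k,\ell\geq 0$ with $n>k+\ell$. Granting this, the right-hand side is manifestly symmetric in $k$ and $\ell$ (swapping $k$ and $\ell$ just reorders the two commuting Delta operators), hence so is the left-hand side, which is exactly the statement of Theorem~\ref{thm:symmetry}.

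For the auxiliary identity when $k\geq 1$, I would apply Theorem~\ref{thm:schroeder_identity} with the substitution $a=n-\ell-1$, $b=\ell+1$ (the free parameter in that theorem being taken to be our $k$). The hypotheses hold: $b=\ell+1\geq 1$; $a=n-\ell-1\geq 1$ because $n>k+\ell\geq \ell+1$; and $1\leq k\leq a$ since $k\geq 1$ and $k\leq n-\ell-1\iff n>k+\ell$. Using $a+b=n$, $a+b-k-1=n-k-1$, $a-k=n-k-\ell-1$ and $a+b-k=n-k$, equation \eqref{eq:Schroeder_identity} becomes
\begin{equation*}
\langle \Delta'_{e_{n-\ell-1}}\Delta'_{e_{n-k-1}} e_n, h_n\rangle = \langle \Delta_{h_k}\Delta'_{e_{n-k-\ell-1}} e_{n-k}, e_{n-k}\rangle,
\end{equation*}
which after commuting the two Delta operators on the left is precisely the auxiliary identity.

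The remaining case $k=0$ (which also covers $\ell=0$, since the auxiliary identity is to be shown for all such pairs) I would treat directly: there the left-hand side is $\langle \Delta'_{e_{n-\ell-1}} e_n, e_n\rangle$, while on the right-hand side $\Delta'_{e_{n-k-1}}=\Delta'_{e_{n-1}}=\Delta_{e_n}$ on $\Lambda^{(n)}$ and $\Delta'_{e_{n-\ell-1}} e_n\in\Lambda^{(n)}$, so Lemma~\ref{lem:Mac_hook_coeff} with $d=n$ gives $\langle \Delta_{e_n}(\Delta'_{e_{n-\ell-1}} e_n), h_n\rangle = \langle \Delta'_{e_{n-\ell-1}} e_n, e_n\rangle$, matching the left-hand side. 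I do not expect any genuine obstacle here: all the analytic content is already packaged inside Theorem~\ref{thm:schroeder_identity}, and the only care required is the bookkeeping of the index shifts and the degenerate case where one Delta operator collapses to $\nabla$.
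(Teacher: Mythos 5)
Your proposal is correct and follows essentially the same route as the paper's own algebraic proof: both invoke Theorem~\ref{thm:schroeder_identity} with the identical reparametrization $a=n-\ell-1$, $b=\ell+1$, and read off symmetry of $\langle\Delta'_{e_{n-\ell-1}}\Delta'_{e_{n-k-1}}e_n,h_n\rangle$ from commutativity of the diagonal Delta operators. The only genuine addition in your write-up is the careful verification of the hypotheses of Theorem~\ref{thm:schroeder_identity} (in particular that $a\ge 1$ and $1\le k\le a$), which fail when $k=0$, and the separate direct treatment of the $k=0$ case via $\Delta'_{e_{n-1}}=\Delta_{e_n}$ on $\Lambda^{(n)}$ and Lemma~\ref{lem:Mac_hook_coeff}; the paper glosses over this boundary case.
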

\begin{proof}[Algebraic proof of Theorem~\ref{thm:symmetry}]
	Using \eqref{eq:Schroeder_identity}, with $n=a+b$ and $\ell=b-1$, we have
	\begin{equation}
		\< \Delta_{h_k}\Delta_{e_{n-k-\ell-1}}' e_{n-k},e_{n-k}\>=\< \Delta_{e_{n-\ell-1}}'\Delta_{e_{n-k-1}}'e_{n},h_{n}\> ,
	\end{equation}
	which is obviously symmetric in $k$ and $\ell$.
\end{proof}
\begin{proposition} \label{prop:SF_sums_F_H}
	For $n,d,\ell\geq 0$, $n\geq k+\ell$ and $n\geq d$, we have
	\begin{equation} \label{eq:Fnk_sum}
		\< \Delta_{e_{n-\ell-1}}'e_{n},e_{n-d}h_{d}\>=\sum_{k=1}^n F_{n,k}^{(d,\ell)}=\sum_{k=1}^n \sum_{i=0}^{k-1} H_{n,k,i}^{(d,\ell)},
	\end{equation}
	and
	\begin{equation} \label{eq:Ftildenk_sum}
		\< \Delta_{e_{n-\ell-1}}'e_{n},s_{d+1,1^{n-d-1}}\>=\sum_{k=1}^n \widetilde{F}_{n,k}^{(d,\ell)}=\sum_{k=1}^n \sum_{i=0}^{k-1} \overline{H}_{n,k,i}^{(d,\ell)}.
	\end{equation}
\end{proposition}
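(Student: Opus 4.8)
The plan is to treat the two displayed chains separately, reducing the right-hand equalities to Theorem~\ref{thm:rel_F_H} and the left-hand ones to Theorem~\ref{thm:schroeder_identity}, with the Macdonald toolkit of Section~\ref{sec:symmetric_fcts_intro} providing the glue.

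\textbf{Right-hand equalities.} First I would sum the identity $F_{n,k}^{(d,\ell)}=\sum_{j=0}^{\min(n-k,\ell)}H_{n,k+j,j}^{(d,\ell)}$ of Theorem~\ref{thm:rel_F_H} over $k=1,\dots,n$ and reindex by $K=k+j$, obtaining $\sum_{k=1}^n F_{n,k}^{(d,\ell)}=\sum_{K=1}^n\sum_{j=0}^{\min(K-1,\ell)}H_{n,K,j}^{(d,\ell)}$ (the pair $(k,j)\mapsto(K,j)$ is a bijection onto the set $1\le K\le n$, $0\le j\le\min(K-1,\ell)$). Since by its definition $H_{n,k,i}^{(d,\ell)}$ carries a factor $F_{n-k,\cdot}^{(\cdot,\ell-i)}$ (or the Kronecker delta $\delta_{i,\ell}$ when $k=n$), which vanishes for $i>\ell$ by the standing convention that a polynomial with a negative index is $0$, the inner sum may be extended to $0\le j\le K-1$, giving $\sum_{k=1}^n\sum_{i=0}^{k-1}H_{n,k,i}^{(d,\ell)}$. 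The same argument applied to the second identity of Theorem~\ref{thm:rel_F_H} gives the second right-hand equality with $\widetilde F$ and $\overline H$.

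\textbf{Left-hand equalities.} By \eqref{eq:en_sum_Enk} together with the convention $E_{m,k}=0$ for $k>m$ we have $\sum_{k=1}^n E_{n-\ell,k}=e_{n-\ell}$, so by linearity
\[
\sum_{k=1}^n \widetilde F_{n,k}^{(d,\ell)}=\< \Delta_{h_\ell}\Delta'_{e_{n-\ell-d-1}}e_{n-\ell},\,e_{n-\ell}\> .
\]
The heart of the matter is then the identity
\[
\< \Delta_{h_\ell}\Delta'_{e_{n-\ell-d-1}}e_{n-\ell},\,e_{n-\ell}\>=\< \Delta'_{e_{n-\ell-1}}e_n,\,s_{d+1,1^{n-d-1}}\> ,
\]
which I call $(\star)$. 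To prove it I would first rewrite its right-hand side: for any $g\in\Lambda^{(n)}$, expanding $g$ in the Macdonald basis and using $\< \widetilde H_\mu,s_{d+1,1^{n-d-1}}\>=e_{n-d-1}[B_\mu-1]$ from \eqref{eq:Mac_hook_coeff_ss} together with $\< \widetilde H_\mu,h_n\>=1$ (the case $r=0$ of \eqref{eq:Mac_hook_coeff}), one gets $\< g,s_{d+1,1^{n-d-1}}\>=\< \Delta'_{e_{n-d-1}}g,h_n\>$; taking $g=\Delta'_{e_{n-\ell-1}}e_n$ turns the right-hand side of $(\star)$ into $\< \Delta'_{e_{n-d-1}}\Delta'_{e_{n-\ell-1}}e_n,h_n\>$. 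Now Theorem~\ref{thm:schroeder_identity} applied with $a=n-d-1$, $b=d+1$, $k=\ell$ (legitimate when $1\le\ell\le n-d-1$) has exactly this as its left-hand side and $\< \Delta_{h_\ell}\Delta'_{e_{n-\ell-d-1}}e_{n-\ell},e_{n-\ell}\>$ as its right-hand side, which yields $(\star)$. For $\ell=0$ one instead notes that $\Delta'_{e_{n-1}}=\Delta_{e_n}$ on $\Lambda^{(n)}$ and uses Lemma~\ref{lem:Mac_hook_coeff} (namely $\< \Delta_{e_n}f,h_n\>=\< f,e_n\>$) to check $(\star)$ directly; the degenerate ranges, where $n-\ell-d-1<0$ so that $\Delta'_{e_{n-\ell-d-1}}=0$, or where $s_{d+1,1^{n-d-1}}$ is not a partition, reduce to $0=0$ after expanding with \eqref{eq:deltaprime}.

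Finally, $(\star)$ is precisely the second left-hand equality. For the first one I would combine $\sum_{k=1}^n F_{n,k}^{(d,\ell)}=\sum_{k=1}^n\widetilde F_{n,k}^{(d,\ell)}+\sum_{k=1}^n\widetilde F_{n,k}^{(d-1,\ell)}$, which follows from \eqref{eq:rel_F_Ftilde}, with $(\star)$ applied at $d$ and at $d-1$, and then invoke the Pieri rule $e_{n-d}h_d=s_{d+1,1^{n-d-1}}+s_{d,1^{n-d}}$ to recognize the sum as $\< \Delta'_{e_{n-\ell-1}}e_n,e_{n-d}h_d\>$. The only nonroutine input is Theorem~\ref{thm:schroeder_identity}; the step most likely to require care is the correct identification of its parameters and the bookkeeping for the boundary values of $\ell$ and $d$.
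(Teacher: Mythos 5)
Your proof is correct and is essentially the same as the paper's: the key inputs in both are \eqref{eq:en_sum_Enk}, the Schr\"oder identity (Theorem~\ref{thm:schroeder_identity} or its Corollary~\eqref{eq:Schroeder_corol}, which are equivalent), the Macdonald pairings \eqref{eq:Mac_hook_coeff}/\eqref{eq:Mac_hook_coeff_ss}, and Theorem~\ref{thm:rel_F_H} for the second equalities. The only (cosmetic) difference is that you prove the hook case \eqref{eq:Ftildenk_sum} first and derive \eqref{eq:Fnk_sum} via \eqref{eq:rel_F_Ftilde} and the Pieri rule, while the paper proves \eqref{eq:Fnk_sum} directly via the unprimed corollary and leaves the hook case as ``a similar computation.''
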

\begin{proof}
	To prove the first equality in \eqref{eq:Fnk_sum}, observe that
	\begin{align*}
		\sum_{k=1}^n\notag F_{n,k}^{(d,\ell)} & =\< \Delta_{h_{\ell}} \Delta_{e_{n-d-\ell}}\sum_{k=1}^{n-\ell}E_{n-\ell,k},e_{n-\ell}\>\\ 
		\text{(using \eqref{eq:en_sum_Enk})}	& =\< \Delta_{h_{\ell}} \Delta_{e_{n-d-\ell}}e_{n-\ell},e_{n-\ell}\> \\
		\text{(using \eqref{eq:Schroeder_corol})}& =\< \Delta_{e_{n-d}}\Delta_{e_{n-\ell-1}}'e_{n},h_{n}\>\\
		\text{(using \eqref{eq:Mac_hook_coeff})}& =\< \Delta_{e_{n-\ell-1}}'e_{n},e_{n-d}h_{d}\>.
	\end{align*}
	The second equality follows simply from \eqref{eq:rel_F_and_H}.
	
	A similar computation using \eqref{eq:Schroeder_identity}, \eqref{eq:Mac_hook_coeff_ss} and \eqref{eq:rel_Ftilde_and_Hbar} gives \eqref{eq:Ftildenk_sum}.
\end{proof}

\section{Two theorems and a corollary}\label{sec:2more_thrms}

We prove two theorems and a corollary on symmetric functions.

\begin{theorem}
	\label{thm:magic_equality}
	Let $m,n,k\in \mathbb{N}$, $m\geq 0$, $n\geq 0$ and $n\geq k\geq 0$. Then
	\begin{align}
		\label{eq:connection_id}
		\< \Delta_{h_m} e_{n+1}, s_{k+1,1^{n-k}} \> =  \< \Delta_{e_{m+n-k-1}}' e_{m+n}, h_m h_n \>.
	\end{align}
	In particular
	\begin{align}
		\label{eq:symmetry_id}
		\< \Delta_{h_m} e_{n+1}, s_{k+1,1^{n-k}} \> =  \< \Delta_{h_{n}} e_{m+1}, s_{k+1,1^{m-k}} \>.
	\end{align}
\end{theorem}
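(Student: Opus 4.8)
The plan is to rewrite both sides of \eqref{eq:connection_id} as one and the same alternating sum. First observe that \eqref{eq:symmetry_id} is an immediate corollary of \eqref{eq:connection_id}: its right-hand side $\langle \Delta'_{e_{m+n-k-1}}e_{m+n},h_m h_n\rangle$ is unchanged under $m\leftrightarrow n$, so applying \eqref{eq:connection_id} to $(m,n,k)$ and to $(n,m,k)$ (both meaningful exactly when $k\le\min(m,n)$, the range in which \eqref{eq:symmetry_id} is stated) identifies the two left-hand sides. So I will concentrate on \eqref{eq:connection_id}.

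Introduce the auxiliary scalars $w(N)\coloneqq \langle \Delta_{e_m}\Delta_{e_n}e_N,h_N\rangle=\langle \Delta_{e_m e_n}e_N,h_N\rangle$ for $N\ge 0$, and record that $w(N)=0$ whenever $N<\max(m,n)$, since $\Delta_{e_r}$ kills $\Lambda^{(N)}$ for $r>N$. For the right-hand side I would iterate \eqref{eq:deltaprime} to write $\Delta'_{e_{m+n-k-1}}=\sum_{i\ge 0}(-1)^i\Delta_{e_{m+n-k-1-i}}$ on $\Lambda^{(m+n)}$, and then apply Haglund's Lemma~\ref{lem:Haglund} (equation \eqref{eq:Haglund_Lemma}) to each term with $f=h_m h_n$, so that $\omega f=e_m e_n$ and
\[ \langle \Delta_{e_{m+n-k-1-i}}e_{m+n},h_m h_n\rangle=\langle \Delta_{e_m e_n}e_{m+n-k-i},h_{m+n-k-i}\rangle=w(m+n-k-i). \]
This gives $\langle \Delta'_{e_{m+n-k-1}}e_{m+n},h_m h_n\rangle=\sum_{i\ge 0}(-1)^i w(m+n-k-i)$, a finite sum, terms with negative argument being zero.

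For the left-hand side I would start from the hook expansion $s_{(k+1,1^{n-k})}=\sum_{i=0}^{n-k}(-1)^i e_{n-k-i}h_{k+1+i}$, which follows by iterating the Pieri identity $e_r h_{n-r}=s_{(n-r,1^r)}+s_{(n-r+1,1^{r-1})}$ recalled just before \eqref{eq:Mac_hook_coeff} (equivalently, from the plethystic expansion of $e_{n-k}[B_\mu-1]$ together with \eqref{eq:Mac_hook_coeff} and \eqref{eq:Mac_hook_coeff_ss}). For each summand, Lemma~\ref{lem:Mac_hook_coeff} gives $\langle \Delta_{h_m}e_{n+1},e_{n-k-i}h_{k+1+i}\rangle=\langle \Delta_{h_m}\Delta_{e_{n-k-i}}e_{n+1},h_{n+1}\rangle$; then Haglund's Lemma~\ref{lem:Haglund} applied with $f=e_m h_{n-k-i}$ (so $\omega f=h_m e_{n-k-i}$) turns this into $\langle \Delta_{e_n}e_{m+n-k-i},e_m h_{n-k-i}\rangle$, and a last use of Lemma~\ref{lem:Mac_hook_coeff} rewrites it as $\langle \Delta_{e_m}\Delta_{e_n}e_{m+n-k-i},h_{m+n-k-i}\rangle=w(m+n-k-i)$. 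Hence $\langle \Delta_{h_m}e_{n+1},s_{k+1,1^{n-k}}\rangle=\sum_{i=0}^{n-k}(-1)^i w(m+n-k-i)$.

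The two sums so obtained have a priori different ranges of summation, but because $w(N)=0$ for $N<\max(m,n)$ both reduce to $\sum_{i=0}^{\min(m,n)-k}(-1)^i w(m+n-k-i)$, and \eqref{eq:connection_id} follows. The routine points to be checked will be the degree bookkeeping in each invocation of Haglund's Lemma (it requires a nonnegative index and a positive-degree $f$; outside that range the corresponding term vanishes and matches $w$ at a nonpositive argument) and the degenerate case $m=n=k=0$, checked directly. The one genuinely non-formal ingredient is the vanishing $w(N)=0$ for $N<\max(m,n)$: this is exactly what reconciles the two differently truncated sums, so I expect that, rather than any single computation, to be the step deserving most care.
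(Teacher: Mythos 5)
Your proof is correct and in essence identical to the paper's: the chain Lemma~\ref{lem:Mac_hook_coeff} $\to$ Lemma~\ref{lem:Haglund} $\to$ Lemma~\ref{lem:Mac_hook_coeff} $\to$ Lemma~\ref{lem:Haglund} that the paper uses to prove $\langle \Delta_{h_m} e_{n+1}, h_{k+1}e_{n-k}\rangle = \langle \Delta_{e_{m+n-k-1}} e_{m+n}, h_m h_n\rangle$ term by term is exactly what you use, only split at the midpoint $w(m+n-k-i)=\langle \Delta_{e_me_n}e_{m+n-k-i},h_{m+n-k-i}\rangle$. The paper avoids your range bookkeeping because it proves the per-$k$ identity as stated (valid for all $k\geq 0$ with vacuous terms when indices go negative) and then sums the alternating series on both sides simultaneously; this is a slightly cleaner organization but not a different argument.
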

\begin{proof}
	Using \eqref{eq:Mac_hook_coeff}, we have
	\begin{align*}
		\< \Delta_{h_m} e_{n+1}, h_{k+1}e_{n-k} \>  & = \< \Delta_{h_m e_{n-k}} e_{n+1}, h_{n+1} \> \\
		\text{(using \eqref{eq:Haglund_Lemma})}& = \< \Delta_{e_n} e_{m+n-k}, e_m h_{n-k}\> \\
		\text{(using \eqref{eq:Mac_hook_coeff})}& = \< \Delta_{e_ne_m} e_{m+n-k},  h_{m+n-k}\> \\
		\text{(using \eqref{eq:Haglund_Lemma})} & = \< \Delta_{e_{m+n-k-1}} e_{m+n},  h_{m}h_n\> .
	\end{align*}
	Now using this identity, \eqref{eq:deltaprime} and the classical
	\begin{align}
		h_{k+1}e_{n-k}=s_{k+2,1^{n-k-1}}+s_{k+1,1^{n-k}},
	\end{align}
	we get
	\begin{align*}
		\< \Delta_{h_m} e_{n+1}, s_{k+1,1^{n-k}} \> & = \sum_{j\geq 0}(-1)^{j} \< \Delta_{h_m} e_{n+1}, h_{k+1+j}e_{n-k-j}\>\\
		& = \sum_{j\geq 0}(-1)^{j} \< \Delta_{e_{m+n-k-1-j}} e_{m+n},  h_{m}h_n\> \\
		& =  \< \Delta_{e_{m+n-k-1}}' e_{m+n},  h_{m}h_n\>.
	\end{align*}
	This proves \eqref{eq:connection_id}. Now \eqref{eq:symmetry_id} follows immediately from the fact that the right hand side of \eqref{eq:connection_id} is obviously symmetric in $m$ and $n$.
\end{proof}
%
%For the next theorem we need the following lemma from \cite{DAdderio-VandenWyngaerd-2017}.
%
%\begin{lemma}[\cite{DAdderio-VandenWyngaerd-2017}*{Lemma~5.2}]
%	For every $n,k\in \mathbb{N}$, with $n> k\geq 1$, $\beta\vdash n$, we have
%	\begin{align}
%		\label{eq:lemma_sum_cmunu}
%		e_{n-k-1}[B_{\beta}-1] B_{\beta} = \sum_{\gamma \subset_k \beta} c_{\beta \gamma}^{(k)} B_{\gamma} T_{\gamma} .
%	\end{align}
%\end{lemma}

\begin{theorem}
	\label{thm:SF_sum}
	Let $m,n,k\in \mathbb{N}$, $m\geq 0$, $n\geq 0$ and $m\geq k\geq 0$. Then
	\begin{align}
		\sum_{r=1}^{m-k+1} t^{m-k-r+1} \< \Delta_{h_{m-k-r+1}} \Delta_{e_k} e_n \left[ X \frac{1-q^r}{1-q} \right], e_n \> = \< \Delta_{h_m} e_{n+1}, s_{k+1,1^{n-k}} \>.
	\end{align}
\end{theorem}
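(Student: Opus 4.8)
The plan is to expand both sides in the modified Macdonald basis, collapse the sum over $r$ on the left using plethystic manipulations, and then reconcile the two resulting rational expressions in the $B_\mu$'s.

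\emph{Reducing the right‑hand side.} Using the classical identity $s_{(k+1,1^{n-k})}=\sum_{j=0}^{n-k}(-1)^{j}e_{n-k-j}h_{k+1+j}$ together with Lemma~\ref{lem:Mac_hook_coeff} applied termwise, one gets
\[
\<\Delta_{h_{m}}e_{n+1},s_{k+1,1^{n-k}}\>=\<\Delta_{h_{m}}\Delta_{e_{n-k}}'e_{n+1},h_{n+1}\>
\]
(equivalently one may quote Theorem~\ref{thm:magic_equality} and work with $\Delta_{e_{m+n-k-1}}'e_{m+n}$ on the other side). Expanding $e_{n+1}$ via \eqref{eq:en_expansion}, applying $\Delta_{h_{m}}\Delta_{e_{n-k}}'$, and pairing with $h_{n+1}$ using $\<\widetilde H_{\beta},h_{n+1}\>=1$ (the $r=0$ case of \eqref{eq:Mac_hook_coeff}) yields the closed form
\[
\<\Delta_{h_{m}}e_{n+1},s_{k+1,1^{n-k}}\>=\sum_{\beta\vdash n+1}\frac{MB_{\beta}\Pi_{\beta}}{w_{\beta}}\,h_{m}[B_{\beta}]\,e_{n-k}[B_{\beta}-1].
\]

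\emph{Reducing the left‑hand side.} By \eqref{eq:qn_q_Macexp},
\[
e_{n}\!\left[X\frac{1-q^{r}}{1-q}\right]=(1-q^{r})\sum_{\mu\vdash n}\frac{\Pi_{\mu}\,\widetilde H_{\mu}[X]\,h_{r}[(1-t)B_{\mu}]}{w_{\mu}};
\]
applying $\Delta_{h_{m-k-r+1}}\Delta_{e_{k}}$ and pairing with $e_{n}=s_{1^{n}}$, for which $\<\widetilde H_{\mu},s_{1^{n}}\>=e_{n-1}[B_{\mu}-1]=T_{\mu}$ by \eqref{eq:Mac_hook_coeff_ss}, the left‑hand side becomes
\[
\sum_{\mu\vdash n}\frac{\Pi_{\mu}T_{\mu}}{w_{\mu}}\,e_{k}[B_{\mu}]\sum_{r=1}^{m-k+1}t^{m-k-r+1}(1-q^{r})\,h_{m-k-r+1}[B_{\mu}]\,h_{r}[(1-t)B_{\mu}].
\]
The inner sum collapses: writing $a=m-k+1$ and using $t^{a-r}h_{a-r}[B_{\mu}]=h_{a-r}[tB_{\mu}]$, $q^{r}h_{r}[(1-t)B_{\mu}]=h_{r}[q(1-t)B_{\mu}]$, the addition formula \eqref{eq:e_h_sum_alphabets}, and the plethystic identities $tB_{\mu}+(1-t)B_{\mu}=B_{\mu}$ and $tB_{\mu}+q(1-t)B_{\mu}=(1-M)B_{\mu}$ (both valid on every power sum, since $M=(1-q)(1-t)$), one obtains
\[
\sum_{r=1}^{a}t^{a-r}(1-q^{r})h_{a-r}[B_{\mu}]h_{r}[(1-t)B_{\mu}]=h_{a}[B_{\mu}]-h_{a}\big[(1-M)B_{\mu}\big],
\]
so that the left‑hand side equals $\displaystyle\sum_{\mu\vdash n}\frac{\Pi_{\mu}T_{\mu}}{w_{\mu}}\,e_{k}[B_{\mu}]\big(h_{m-k+1}[B_{\mu}]-h_{m-k+1}[(1-M)B_{\mu}]\big)$.

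\emph{Matching.} It remains to prove that the sum over $\mu\vdash n$ just obtained coincides with the sum over $\beta\vdash n+1$ from the first step. I would expand $h_{m-k+1}[(1-M)B_{\mu}]=\sum_{i}h_{m-k+1-i}[B_{\mu}]\,h_{i}[-MB_{\mu}]$ by \eqref{eq:e_h_sum_alphabets}, use $MB_{\mu}\Pi_{\mu}=\widetilde H_{\mu}[M]$ (Macdonald--Koornwinder reciprocity \eqref{eq:Macdonald_reciprocity} with the second partition equal to $(1)$), and then transport the ``extra box'' between the two partition sizes using the Pieri rules \eqref{eq:def_cmunu}, \eqref{eq:def_dmunu}, the relation \eqref{eq:rel_cmunu_dmunu}, and the summation \eqref{eq:Pieri_sum1}. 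Alternatively, re‑expanding the left‑hand side through \eqref{eq:en_q_sum_Enk} turns it into $\sum_{r,K}t^{m-k-r+1}\qbinom{K+r-1}{K}_{q}F_{n+m-k-r+1,K}^{(n-k,m-k-r+1)}$ in the $F$‑family of the paper, after which one may invoke Proposition~\ref{prop:SF_sums_F_H}, the recursions for the three families, and Theorem~\ref{thm:magic_equality}; this, however, leads to essentially the same bookkeeping. This last reconciliation — matching sums indexed by partitions of $n$ and of $n+1$ while correctly handling the plethystic term $h_{m-k+1}[(1-M)B_{\mu}]$ — is the step I expect to be the main obstacle; the rest is routine.
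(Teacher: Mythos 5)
Your reductions of both sides are correct, and the closed-form collapse
\[
\sum_{r=1}^{a}t^{a-r}(1-q^{r})\,h_{a-r}[B_{\mu}]\,h_{r}[(1-t)B_{\mu}]=h_{a}[B_{\mu}]-h_{a}\big[(1-M)B_{\mu}\big],\qquad a=m-k+1,
\]
is a valid identity (it follows exactly as you say from $tB_\mu+(1-t)B_\mu=B_\mu$ and $tB_\mu+q(1-t)B_\mu=(1-M)B_\mu$). However, the step you flag as ``the main obstacle'' really is one, and as written your proof has a genuine gap there. What the paper does at this point is deliberately \emph{not} collapse the inner sum to a closed form. Instead, it recognizes $t^{a-r}(1-q^r)h_{a-r}[B_\mu]h_r[(1-t)B_\mu]$ as $\left.\mathbf{\Pi}^{-1}\nabla E_{a,r}[X]\right|_{X=MB_\mu}$ via \eqref{eq:Haglund_nablaEnk}, sums over $r$ using \eqref{eq:en_sum_Enk} to get $\left.\mathbf{\Pi}^{-1}\nabla e_{a}[X]\right|_{X=MB_\mu}$, and then re-expands $\nabla e_a$ on the Macdonald basis via \eqref{eq:en_expansion}, producing a double sum over $\mu\vdash n$ and $\gamma\vdash m-k+1$. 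That second partition variable is essential for the rest of the argument: one moves $e_k[B_\mu]=e_k[MB_\mu/M]$ into a Pieri expansion \eqref{eq:def_dmunu} landing on $\alpha\vdash m+1$, converts the $d_{\alpha\gamma}^{(k)}$ to $c_{\alpha\gamma}^{(k)}$ via \eqref{eq:rel_cmunu_dmunu}, collapses the $\mu$-sum via \eqref{eq:e_h_expansion} and reciprocity, invokes the identity $\sum_{\gamma\subset_k\alpha}c_{\alpha\gamma}^{(k)}B_\gamma T_\gamma=e_{m-k}[B_\alpha-1]B_\alpha$ of \eqref{eq:lem_eBcmunu}, and recognizes the result as $\<\Delta_{h_n}e_{m+1},s_{k+1,1^{m-k}}\>$, finishing with the $m\leftrightarrow n$ symmetry \eqref{eq:symmetry_id} of Theorem~\ref{thm:magic_equality}.

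Concretely, the obstacle with your route is that the closed form $h_{a}[B_\mu]-h_{a}[(1-M)B_\mu]$ has discarded the auxiliary partition $\gamma\vdash m-k+1$ which is exactly what the Pieri/reciprocity machinery uses to transfer between sums over partitions of $n$ and of $n+1$ (equivalently $m+1$). Your suggested expansion of $h_{a}[(1-M)B_\mu]$ via \eqref{eq:e_h_sum_alphabets} gives terms $h_{a-i}[B_\mu]e_i[MB_\mu]$, but there is no obvious way to match these against $\sum_{\beta\vdash n+1}\frac{MB_\beta\Pi_\beta}{w_\beta}h_m[B_\beta]e_{n-k}[B_\beta-1]$ without essentially undoing the collapse and re-introducing the $\gamma$-sum; so the step you call ``routine bookkeeping'' is, after your simplification, actually harder rather than easier. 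You are also silent about the final $m\leftrightarrow n$ symmetry invocation, which is not a bookkeeping step but a separate theorem. Your alternative fallback (rewrite in terms of the $F$-family) is essentially the approach used later in the paper in Theorem~\ref{thm:main_qtenumerator}, which requires the recursions of Chapter~4 and is therefore not a shortcut here.
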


\begin{proof}
	Using \eqref{eq:qn_q_Macexp}, we have
	\begin{align*}
		& \hspace{-0.5cm}\sum_{r=1}^{m-k+1} t^{m-k-r+1} \< \Delta_{h_{m-k-r+1}} \Delta_{e_k} e_n \left[ X \frac{1-q^r}{1-q} \right], e_n \>= \\
		= & \sum_{r=1}^{m-k+1} t^{m-k-r+1} \sum_{\mu\vdash n} (1-q^r) h_r[(1-t)B_\mu] \frac{\Pi_\mu}{w_\mu} h_{m-r-k+1}[B_\mu] e_k[B_\mu] T_\mu \\
		= & \sum_{\mu \vdash n} \left( \sum_{r=1}^{m-k+1} t^{m-k-r+1} (1-q^r) h_r[(1-t)B_\mu] h_{m-r-k+1}[B_\mu] \right) \frac{\Pi_\mu}{w_\mu} e_k[B_\mu] T_\mu \\
		\text{(using \eqref{eq:Haglund_nablaEnk})} = & \sum_{\mu \vdash n} \left( \sum_{r=1}^{m-k+1} \left. \mathbf{\Pi}^{-1} \nabla E_{m-k+1,r}[X] \right|_{X=MB_\mu} \right) \frac{\Pi_\mu}{w_\mu} e_k[B_\mu] T_\mu \\
		\text{(using \eqref{eq:en_sum_Enk})} = & \sum_{\mu \vdash n} \left( \left. \mathbf{\Pi}^{-1} \nabla e_{m-k+1}[X] \right|_{X=MB_\mu} \right) \frac{\Pi_\mu}{w_\mu} e_k[B_\mu] T_\mu \\
		\text{(using \eqref{eq:en_expansion})} = & \sum_{\mu \vdash n} \sum_{\gamma\vdash m-k+1} T_\gamma \frac{MB_\gamma}{w_\gamma} \widetilde{H}_\gamma[MB_\mu] \frac{\Pi_\mu}{w_\mu} e_k[B_\mu] T_\mu \\
	\end{align*}
	\begin{align*}
		= & \sum_{\gamma\vdash m-k+1} \sum_{\mu \vdash n} T_\mu M \frac{\Pi_\mu}{w_\mu} T_\gamma \frac{B_\gamma}{w_\gamma} e_k \left[ \frac{MB_\mu}{M} \right] \widetilde{H}_\gamma[MB_\mu]\qquad \qquad \qquad  \\
		= & \sum_{\gamma \vdash m-k+1} \sum_{\mu \vdash n} T_\mu M \frac{\Pi_\mu}{w_\mu} T_\gamma \frac{B_\gamma}{w_\gamma} \sum_{\alpha \supset_k \gamma} d_{\alpha \gamma}^{(k)}  \widetilde{H}_\alpha[MB_\mu] \\
		\text{(using \eqref{eq:Macdonald_reciprocity})} = & \sum_{\gamma \vdash m-k+1} \sum_{\alpha \supset_k \gamma} M \Pi_{\alpha} \sum_{\mu \vdash n} T_\mu \frac{\widetilde{H}_\mu[MB_\alpha] }{w_\mu} T_\gamma B_\gamma \frac{d_{\alpha \gamma}^{(k)}}{w_\gamma} \\
		\text{(using \eqref{eq:e_h_expansion})} = & \sum_{\gamma \vdash m-k+1} \sum_{\alpha \supset_k \gamma} M \Pi_{\alpha} h_n \left[ \frac{MB_\alpha}{M} \right] T_\gamma B_\gamma \frac{d_{\alpha \gamma}^{(k)}}{w_\gamma} \\
		\text{(using \eqref{eq:rel_cmunu_dmunu})} = & \sum_{\alpha \vdash m+1} \sum_{\gamma \subset_k \alpha} M \Pi_{\alpha} h_n \left[ B_\alpha \right] T_\gamma B_\gamma \frac{c_{\alpha \gamma}^{(k)}}{w_\alpha} \\
		= & \sum_{\alpha \vdash m+1} M \frac{\Pi_{\alpha}}{w_\alpha} h_n \left[ B_\alpha \right] \sum_{\gamma \subset_k \alpha} T_\gamma B_\gamma c_{\alpha \gamma}^{(k)} \\
		\text{(using \eqref{eq:lem_eBcmunu})} = & \sum_{\alpha \vdash m+1} M \frac{\Pi_{\alpha}}{w_\alpha} h_n \left[ B_\alpha \right] e_{m-k}[B_\alpha -1] B_\alpha \\
		\text{(using \eqref{eq:Mac_hook_coeff_ss})} = & \< \Delta_{h_n} e_{m+1}, s_{k+1,1^{m-k}} \> \\
		\text{(using \eqref{eq:symmetry_id})} = & \< \Delta_{h_m} e_{n+1}, s_{k+1,1^{n-k}} \>
	\end{align*}
\end{proof}

The following corollary is an immediate consequence of these results.

\begin{corollary} \label{cor:SF_sum}
	Let $m,n,k\in \mathbb{N}$, $m\geq 0$, $n\geq 0$ and $m\geq k\geq 0$. Then
	\[ \sum_{r=1}^{m-k+1} t^{m-k-r+1} \< \Delta_{h_{m-k-r+1}} \Delta_{e_k} e_n \left[ X \frac{1-q^r}{1-q} \right], e_n \> = \< \Delta_{e_{m+n-k-1}}' e_{m+n}, h_m h_n \>. \]
\end{corollary}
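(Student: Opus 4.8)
The plan is to simply chain together the two theorems that precede this corollary in the same section, since the left-hand side is literally the left-hand side of Theorem~\ref{thm:SF_sum} and the right-hand side is literally the right-hand side of equation~\eqref{eq:connection_id} in Theorem~\ref{thm:magic_equality}. Concretely, the first step is to invoke Theorem~\ref{thm:SF_sum}, valid under the hypotheses $m\geq 0$, $n\geq 0$, $m\geq k\geq 0$, to rewrite
\[
\sum_{r=1}^{m-k+1} t^{m-k-r+1} \< \Delta_{h_{m-k-r+1}} \Delta_{e_k} e_n \left[ X \tfrac{1-q^r}{1-q} \right], e_n \> = \< \Delta_{h_m} e_{n+1}, s_{k+1,1^{n-k}} \>.
\]

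The second step is to apply equation~\eqref{eq:connection_id} of Theorem~\ref{thm:magic_equality}, which holds under $m\geq 0$, $n\geq 0$, $n\geq k\geq 0$ — a condition implied by our hypothesis $m\geq k\geq 0$ together with $n\geq 0$ only if $n\geq k$; here the needed inequality $n\geq k$ is not assumed, so one should note that when $k>n$ the partition $s_{k+1,1^{n-k}}$ is not defined and the identity is to be read with the convention that both sides vanish, or simply observe that Theorem~\ref{thm:magic_equality} is stated for $n\geq k\geq 0$ and the corollary inherits exactly the overlap of hypotheses. Assuming $n\geq k$, Theorem~\ref{thm:magic_equality} gives
\[
\< \Delta_{h_m} e_{n+1}, s_{k+1,1^{n-k}} \> = \< \Delta_{e_{m+n-k-1}}' e_{m+n}, h_m h_n \>.
\]

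Substituting the second identity into the first yields the claimed equality, completing the proof. There is no real obstacle here: the entire content has already been established in Theorems~\ref{thm:magic_equality} and~\ref{thm:SF_sum}, and the only minor point to be careful about is bookkeeping of the index ranges (ensuring $n\geq k$ so that the Schur function $s_{k+1,1^{n-k}}$ appearing as the intermediary is well defined), which is exactly the intersection of the hypotheses of the two cited theorems. Thus the corollary follows by transitivity of equality, and one can state the proof in a single line: combine Theorem~\ref{thm:SF_sum} with Theorem~\ref{thm:magic_equality}.
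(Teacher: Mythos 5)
Your proof is correct and is exactly the paper's argument: the paper labels this as ``an immediate consequence of these results,'' meaning precisely the composition of Theorem~\ref{thm:SF_sum} with equation~\eqref{eq:connection_id} of Theorem~\ref{thm:magic_equality} that you describe. Your side note about the index bookkeeping (the corollary and Theorem~\ref{thm:SF_sum} assume $m\geq k$ while the Schur function $s_{k+1,1^{n-k}}$ appearing as intermediary is a genuine partition only when $n\geq k$) is a reasonable caution; the paper does not address it explicitly, but one can avoid it by stopping the chain in the proof of Theorem~\ref{thm:SF_sum} at the form $\< \Delta_{h_n} e_{m+1}, s_{k+1,1^{m-k}} \>$ and applying \eqref{eq:connection_id} with the roles of $m$ and $n$ interchanged, which only needs $m\geq k$.
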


\section{$\Delta_f\omega(p_n)$ at $t=1/q$}

The goal of this section is to prove the following theorem.
\begin{theorem} \label{thm:omegapn}
	For any $f\in \Lambda^{(k)}$ we have
	\begin{equation}
		{\Delta_f \omega(p_n)}_{\big{|}_{t=1/q}}=\frac{[n]_qf[[n]_q]}{[k]_qq^{k(n-1)}}e_n[X[k]_q].
	\end{equation}
\end{theorem}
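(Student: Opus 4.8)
The plan is to expand $\omega(p_n)$ in the modified Macdonald basis, apply $\Delta_f$, and then specialise $t=1/q$; the crucial point is that at $t=1/q$ the coefficient $\Pi_\mu$ forces the sum onto hook shapes. First, recall the classical expansion (see e.g.\ \cite{Haglund-Book-2008})
\[
\omega(p_n)=(1-q^n)(1-t^n)\sum_{\mu\vdash n}\frac{\Pi_\mu\,\widetilde H_\mu[X]}{w_\mu},
\]
so that, $\Delta_f$ being diagonal on the $\widetilde H_\mu$,
\[
\Delta_f\omega(p_n)=(1-q^n)(1-t^n)\sum_{\mu\vdash n}\frac{f[B_\mu]\,\Pi_\mu\,\widetilde H_\mu[X]}{w_\mu}.
\]

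Since $\Pi_\mu=\prod_{c\in\mu\setminus\{(1,1)\}}(1-q^{a'_\mu(c)}t^{l'_\mu(c)})$ and $w_\mu$ has no zero at $t=1/q$, each summand $\Pi_\mu\widetilde H_\mu/w_\mu$ is regular there; at $t=1/q$ the factor $1-q^{a'_\mu(c)}t^{l'_\mu(c)}$ becomes $1-q^{a'_\mu(c)-l'_\mu(c)}$, which vanishes as soon as some cell $c\ne(1,1)$ lies on the main diagonal, i.e.\ whenever $(2,2)\in\mu$, i.e.\ whenever $\mu$ is not a hook. Hence, writing $W_a:=\bigl(\Pi_{(a,1^{n-a})}\widetilde H_{(a,1^{n-a})}[X]/w_{(a,1^{n-a})}\bigr)\big|_{t=1/q}$,
\[
\Delta_f\omega(p_n)\big|_{t=1/q}=(1-q^n)(1-q^{-n})\sum_{a=1}^n f\!\left[B_{(a,1^{n-a})}\big|_{t=1/q}\right]W_a .
\]
Counting co-arms and co-legs of a hook gives $B_{(a,1^{n-a})}\big|_{t=1/q}=[a]_q+q^{a-n}[n-a]_q=q^{a-n}[n]_q$, and since $f$ is homogeneous of degree $k$ we get $f\!\left[B_{(a,1^{n-a})}\big|_{t=1/q}\right]=q^{k(a-n)}f[[n]_q]$.

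To evaluate the $W_a$ it is cleanest to feed the identity \eqref{eq:qn_q_Macexp} (with $j=k$) through the same specialisation: its left-hand side $e_n[X[k]_q]$ is free of $t$, while on the right-hand side the non-hook terms die at $t=1/q$, so
\[
e_n[X[k]_q]=(1-q^k)\sum_{a=1}^n W_a\;h_k\!\left[(1-q^{-1})\,B_{(a,1^{n-a})}\big|_{t=1/q}\right].
\]
Now $(1-q^{-1})\,q^{a-n}[n]_q=q^{a-1}-q^{a-n-1}$ is a difference of two single letters, so for $k\ge 1$
\[
h_k\!\left[q^{a-1}-q^{a-n-1}\right]=q^{(a-1)(k-1)}\bigl(q^{a-1}-q^{a-n-1}\bigr)=q^{k(a-1)-n}(q^n-1),
\]
whence $\displaystyle\sum_{a=1}^n q^{k(a-1)}W_a=\frac{q^n}{(1-q^k)(q^n-1)}\,e_n[X[k]_q].$

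Putting the pieces together,
\[
\Delta_f\omega(p_n)\big|_{t=1/q}
=(1-q^n)(1-q^{-n})\,f[[n]_q]\,q^{k(1-n)}\sum_{a=1}^n q^{k(a-1)}W_a
=(1-q^n)(1-q^{-n})\,f[[n]_q]\,q^{k(1-n)}\cdot\frac{q^n\,e_n[X[k]_q]}{(1-q^k)(q^n-1)},
\]
and using $\tfrac{1-q^n}{q^n-1}=-1$ together with $\tfrac{1-q^n}{1-q^k}=\tfrac{[n]_q}{[k]_q}$ the scalar collapses to $\tfrac{[n]_q}{[k]_q\,q^{k(n-1)}}$, which is exactly the claimed constant. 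The only delicate step is the bookkeeping of powers of $q$ (and the sign $(1-q^n)/(q^n-1)=-1$); there is no genuine obstacle once one notices that $\Pi_\mu$ kills the non-hook terms at $t=1/q$. A slightly more hands-on variant, if preferred, evaluates each $W_a$ directly from the principal-specialisation formula $\widetilde H_\mu[X;q,1/q]=q^{-n(\mu)}\bigl(\prod_{c\in\mu}(1-q^{h(c)})\bigr)s_\mu[X/(1-q)]$ and matches coefficients of $s_{(a,1^{n-a})}[X/(1-q)]$ on the two sides, expanding $e_n[X[k]_q]$ on hooks via \eqref{eq:s_plethystic_eval}; either route reduces the theorem to the alphabet identity $[a]_q+q^{a-n}[n-a]_q=q^{a-n}[n]_q$ and the homogeneity of $f$.
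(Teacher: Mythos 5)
Your proof is correct, and it takes a genuinely different route from the paper's. The paper's argument first re-expresses $\omega(p_n)/(1-q^n)$ via the Murnaghan--Nakayama rule as the alternating hook sum $\sum_{a}(-1)^{a-1}s_{(a,1^{n-a})}[X/(1-q)]$, and then invokes the classical $t=1/q$ specialization $\widetilde{H}_\mu(X;q,1/q)=C\,s_\mu[X/(1-q)]$ to conclude that $\Delta_f$ acts diagonally on this Schur expression with eigenvalue $f[B_\mu(q,1/q)]$; the final recombination uses \eqref{eq:s_plethystic_eval} and the Cauchy identity. You instead stay entirely inside the Macdonald basis: you start from the expansion $\omega(p_n)=(1-q^n)(1-t^n)\sum_\mu\Pi_\mu\widetilde H_\mu/w_\mu$ (not stated in the paper, but standard), apply $\Delta_f$ directly, and use the vanishing of $\Pi_\mu$ at $t=1/q$ on non-hooks to reduce to a hook sum; then, rather than computing the hook weights $W_a$ explicitly, you extract $\sum_a q^{k(a-1)}W_a$ by running the same hook reduction through \eqref{eq:qn_q_Macexp} at $j=k$ and simplifying $h_k$ on the two-letter difference $q^{a-1}-q^{a-n-1}$. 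Both proofs hinge on $B_{(a,1^{n-a})}(q,1/q)=q^{a-n}[n]_q$ and the homogeneity of $f$; the benefit of your route is that it never touches the Schur form of $\widetilde H_\mu$ at $t=1/q$ and makes the extra $[n]_q/[k]_q$ factor appear as a clean quotient of $1-q^n$ by $1-q^k$, while the cost is importing one additional Macdonald expansion. Your side remark correctly identifies the paper's actual route as the ``more hands-on variant.''
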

\begin{proof}
	By the definition of plethystic notation and Murnaghan-Nakayama rule, we have
	\begin{align*}
		\omega(p_n)/(1-q^n) & =(-1)^{n-1}p_n/(1-q^n)\\
		& = ((-1)^{n-1}p_n)[X/(1-q)]\\
		& = \left(\sum_{a=1}^n(-1)^{a-1}s_{(a,1^{n-a})}\right)[X/(1-q)]\\
		& = \sum_{a=1}^n(-1)^{a-1}s_{(a,1^{n-a})}[X/(1-q)]
	\end{align*}
	It is well-known (see \cite{Garsia-Haiman-qLagrange-1996}) that
	\begin{equation}
		\widetilde{H}_{\mu}(X;q,1/q)=Cs_{\mu}\left[\frac{X}{1-q}\right]
	\end{equation}
	for all $\mu\vdash n$ and some constant $C$ (depending on $\mu$), so
	\begin{align*}
		{\Delta_f \omega(p_n)/(1-q^n)}_{\big{|}_{t=1/q}} & = \sum_{a=1}^n(-1)^{a-1}\Delta_f s_{(a,1^{n-a})}[X/(1-q)]\\
		& = \sum_{a=1}^n(-1)^{a-1}f[B_{(a,1^{n-a})}(q,1/q)] s_{(a,1^{n-a})}[X/(1-q)].
	\end{align*}
	But notice that
	\begin{equation}
		B_{(a,1^{n-a})}(q,1/q)=q^{-(n-a)}[n]_q,
	\end{equation}
	so that
	\begin{align*}
		{\Delta_f \omega(p_n)/(1-q^n)}_{\big{|}_{t=1/q}} & = \sum_{a=1}^n(-1)^{a-1}f[B_{(a,1^{n-a})}(q,1/q)] s_{(a,1^{n-a})}[X/(1-q)]\\
		& = \sum_{a=1}^n(-1)^{a-1}f[q^{-(n-a)}[n]_q] s_{(a,1^{n-a})}[X/(1-q)]\\
		& = \sum_{a=1}^n(-1)^{a-1}q^{-k(n-a)}f[[n]_q] s_{(a,1^{n-a})}[X/(1-q)]\\
		& =  f[[n]_q] q^{-k(n-1)}\sum_{a=1}^n(-1)^{a-1}q^{k(a-1)} s_{(a,1^{n-a})}[X/(1-q)]\\
		& =  \frac{f[[n]_q]}{(1-q^k) q^{k(n-1)}}\sum_{a=1}^n(-q^k)^{a-1} (1-q^k) s_{(a,1^{n-a})}[X/(1-q)]
	\end{align*}
	\begin{align*}
		\text{(using \eqref{eq:s_plethystic_eval})} & =  \frac{f[[n]_q]}{(1-q^k) q^{k(n-1)}}\sum_{a=1}^ns_{(n-a+1,1^{a-1})}[1-q^k] s_{(a,1^{n-a})}[X/(1-q)]\\
		\text{(using \eqref{eq:s_plethystic_eval})}  & = \frac{f[[n]_q]}{(1-q^k) q^{k(n-1)}}\sum_{\mu\vdash n} s_{\mu'}[1-q^k] s_{\mu}[X/(1-q)]\\
		\text{(using \eqref{eq:Cauchy_identities})}& = \frac{f[[n]_q]}{(1-q^k) q^{k(n-1)}}e_n[X[k]_q].
	\end{align*}
	This immediately implies our theorem.
\end{proof}

\chapter{Combinatorics of decorated Dyck paths} \label{chapter:dec_Dyck}

In this chapter we collect some results about decorated Dyck paths.

\section{Haglund's $\zeta$ (sweep) map}\label{sec:zeta_map}

The bounce behaves well under Haglund's $\zeta$ map.

\begin{theorem}
	\label{thm:zeta_map}
	There exists a bijective map \[ \zeta: \DDd(n)^{\circ a, \ast b} \rightarrow \DDb(n)^{\circ b, \ast a} \] such that for all $D\in \DDd(n)^{\circ a, \ast b}$ \begin{align*}
		\area(D) & = \bounce(\zeta(D)) \\
		\dinv(D) & = \area(\zeta(D)).
	\end{align*}
\end{theorem}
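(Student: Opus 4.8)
The plan is to build $\zeta$ on top of Haglund's classical sweep map $\zeta_0\colon\D(n)\to\D(n)$ on undecorated Dyck paths (see \cite{Haglund-Book-2008}*{Chapter~3}), which lists the $N$ steps of $D$ in the order prescribed by the diagonals they lie on (the $i$-th $N$ step ordered by the pair $(a_i(D),i)$) and inserts $E$ steps accordingly, and which is the bijection satisfying $\dinv(D)=\area(\zeta_0(D))$ and $\area(D)=\bounce(\zeta_0(D))$ on $\D(n)$. The heart of the argument is a \emph{transfer lemma}: for every $D\in\D(n)$ there are bijections
\[
\rho_D\colon\Rise(D)\xrightarrow{\ \sim\ }\Peak(\zeta_0(D))\setminus\{p_0\},
\qquad
\pi_D\colon\Peak(D)\setminus\{q_0\}\xrightarrow{\ \sim\ }\Rise(\zeta_0(D)),
\]
where $p_0$ is the peak of $\zeta_0(D)$ lying in its leftmost column and $q_0$ is the rightmost highest peak of $D$, such that $a_i(D)=b_{\rho_D(i)}(\zeta_0(D))$ for every rise $i$ of $D$, and such that $\#\{j>p:a_j(D)=a_p(D)\}+\#\{i<p:a_i(D)=a_p(D)+1\}=a_{\pi_D(p)}(\zeta_0(D))$ for every peak $p\ne q_0$; the two left-hand sides here are exactly the quantities that $\area$, respectively $\dinv$, loses when a decorated rise, respectively a decorated peak, is inserted. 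That $\rho_D$ and $\pi_D$ have matching domains and codomains rests on the identity $\#\Peak(D)+\#\Peak(\zeta_0(D))=n+1$, which I would derive from the construction; the maps themselves and the value matchings are read off directly from the diagonal ordering defining $\zeta_0$. For $b=0$ the map $\zeta$ essentially recovers the Schr\"oder-path bijection of \cite{Egge-Haglund-Killpatrick-Kremer-2003}.

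Granting the transfer lemma, I define $\zeta$ as follows: for $D\in\DDd(n)^{\circ a,\ast b}$ let $\zeta(D)$ have underlying path $\zeta_0(D)$, with the peak $\rho_D(i)$ decorated for each decorated rise $i$ of $D$ and the rise $\pi_D(p)$ decorated for each decorated peak $p$ of $D$. This is well defined precisely because $D\in\DDd$, so no decorated peak of $D$ equals $q_0$; and since $\rho_D$ never hits $p_0$, the result has its first peak undecorated, hence lies in $\DDb(n)^{\circ b,\ast a}$. The map is a bijection because $\zeta_0$, $\rho_D$ and $\pi_D$ are bijections with explicit inverses, and because the constraint defining $\DDd$ in the source ($q_0$ never decorated) translates under $\pi_D$ precisely into ``any rise of the target may be decorated'', while the constraint defining $\DDb$ in the target ($p_0$ never decorated) translates under $\rho_D$ precisely into ``any rise of the source may be decorated''. (This matching of constraints is the reason one restricts to $\DDd$ and $\DDb$.)

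For the statistics one exploits that the two bistatistics decouple: $\area(D)$ depends only on the underlying path and $\mathsf{DRise}(D)$, $\dinv(D)$ only on the underlying path and $\DPeak(D)$, and likewise $\area(\zeta(D))$, $\bounce(\zeta(D))$ depend only on $\zeta_0(D)$ together with $\mathsf{DRise}(\zeta(D))$, respectively $\DPeak(\zeta(D))$. Writing $\area_0,\dinv_0,\bounce_0$ for the undecorated versions, $\area(D)=\area_0(D)-\sum_{i\in\mathsf{DRise}(D)}a_i(D)$ and $\bounce(\zeta(D))=\bounce_0(\zeta_0(D))-\sum_{j\in\DPeak(\zeta(D))}b_j(\zeta_0(D))$; since $\area_0(D)=\bounce_0(\zeta_0(D))$ by Haglund and $\DPeak(\zeta(D))=\rho_D(\mathsf{DRise}(D))$, the first matching in the transfer lemma gives $\area(D)=\bounce(\zeta(D))$. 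Symmetrically, $\dinv(D)=\dinv_0(D)-\sum_{p\in\DPeak(D)}\big(\#\{j>p:a_j=a_p\}+\#\{i<p:a_i=a_p+1\}\big)$ and $\area(\zeta(D))=\area_0(\zeta_0(D))-\sum_{k\in\mathsf{DRise}(\zeta(D))}a_k(\zeta_0(D))$; since $\dinv_0(D)=\area_0(\zeta_0(D))$ and $\mathsf{DRise}(\zeta(D))=\pi_D(\DPeak(D))$, the second matching gives $\dinv(D)=\area(\zeta(D))$. The one genuinely hard step is the transfer lemma: producing $\rho_D$, $\pi_D$ and verifying the value equalities requires a careful trace through the sweep, tracking how the peak/rise local structure and the attached $a$- and $b$-values move when the $N$ steps are re-sorted by diagonal; granting it, everything else is formal.
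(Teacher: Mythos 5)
Your plan is exactly the right one and it is the same route the paper takes: extend Haglund's sweep map $\zeta_0$, sending decorated rises of $D$ to decorated peaks of $\zeta_0(D)$ and decorated peaks of $D$ to decorated falls (equivalently rises, via the rise--fall correspondence of Remark~\ref{rmk:rises-falls-correspondence}) of $\zeta_0(D)$, and check that the contributions these decorations subtract from $(\dinv,\area)$ are precisely the contributions subtracted from $(\area,\bounce)$ in the image. Your ``transfer lemma'' with the maps $\rho_D$, $\pi_D$ is a clean way to package exactly the bijections and value matchings that the paper constructs and verifies in the course of its proof, and your identification of why the domains should be restricted to $\DDd$ and $\DDb$ (the rightmost highest peak of $D$ has nowhere to go, and the first peak of $\zeta_0(D)$ is never hit by $\rho_D$) is also what the paper argues.

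The problem is that the transfer lemma \emph{is} the theorem, and you explicitly decline to prove it. The paper's entire argument is a direct construction and verification of $\rho_D$ and $\pi_D$: it shows that, when $j$ is a decorated rise with $a_{j-1}(D)=i-1$ and $a_j(D)=i$, the sweep produces an $EN$ corner between the $i$-th and $(i+1)$-th bounce peaks whose first following peak sits in a row with bounce-label $i$, giving your first value equality; and it shows that, when $j$ is a decorated peak with $a_j(D)=i-1$, the corresponding $E$ step in the image is a fall unless $j$ is the rightmost highest peak of $D$, and it counts the cells under that fall (split into those below and above the bounce path) and matches them against the $\#\{p<j:\ a_p=a_j\}$ and $\#\{p>j:\ a_p=a_j+1\}$ pairs that decorating $j$ suppresses from $\dinv$, giving your second value equality. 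None of this case analysis, the identification of which $E$ step survives as a fall, why it fails exactly for $q_0$, or the cell-by-pair counting for $\pi_D$, appears in the proposal. The identity $\#\Peak(D)+\#\Peak(\zeta_0(D))=n+1$ you invoke to match cardinalities is also asserted, not derived (though it is not hard). So this is a correct blueprint with the right abstraction, but the substance of the proof is still missing and would have to be supplied in essentially the same form the paper supplies it.
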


\begin{proof}
	We remind the reader of the notations: 
	\begin{center}
		\begin{itemize}
			\item $ \DDd(n)^{\circ a, \ast b} \subseteq \DD(n)^{\circ a, \ast b}$  is the subset where the \emph{rightmost highest peak}, i.e.\ the rightmost peak that is the furthest removed from the main diagonal, is never decorated, denoted as such because decorating this peak does not alter the dinv statistic;
			\item $\DDb(n)^{\circ a, \ast b} \subseteq \DD(n)^{\circ a, \ast b}$ is the subset where the \emph{first peak}, i.e.\ the peak in the leftmost column, is never decorated, denoted as such because decorating this peak does not alter the bounce statistic.
		\end{itemize}
	\end{center}

	The map is Haglund's $\zeta$ map on undecorated Dyck paths (\cite[Theorem 3.15]{Haglund-Book-2008}), whose definition we know recall; we just have to specify what happens to the decorations. 
	
	Take $D\in\DDd(n)^{\circ a, \ast b}$ and rearrange its area word in ascending order. This new word, call it $u$, will be the bounce word of $\zeta(D)$. We construct $\zeta(D)$ as follows. First we draw the bounce path corresponding to $u$. The first vertical stretch and last horizontal stretch of $\zeta(D)$ are fixed by this path. For the section of the path  between the peaks of the bounce path we apply the following procedure: place a pen on the top of the $i$-th peak (counting from the bottom row) of the bounce path and scan the area word of $D$ from left to right. Every time we encounter a letter equal to $i-1$ we draw an east step and when we encounter a letter equal to $i$ we draw a north step. By construction of the bounce path, we end up with our pen on top of the $(i+1)$-th peak of the bounce path. Note that in an area word a letter equal to $i\neq 0$ cannot appear unless it is preceded by a letter equal to $i-1$. This means that starting from the $i$-th peak, we always start with a horizontal step which explains why $u$ is indeed the bounce word of $\zeta(D)$. It is well-known (cf. \cite[Theorem 3.15]{Haglund-Book-2008}) that, ignoring the decorations, this map is a bijection from $\DDd(n)^{\circ a, \ast b}$ into $\DDb(n)^{\circ b, \ast a}$ sending the bistatistic $(\dinv,\area)$ into $(\area,\bounce)$.
	
	Let us now deal with the decorated rises. Let $j$ be such that the $j$-th vertical step of $D$ is a decorated rise. It follows that  $a_{j}(D) = a_{j-1}(D)+1$. Take $i$ such that $a_{j-1}(D)=i-1$. While scanning the area word to construct the path between the $i$-th and $(i+1)$-th peak of the bounce path, we will encounter $a_{j-1}(D)=i-1$, directly followed by $a_{j}(D)=i$. This will correspond to a horizontal step followed by a vertical step in $\zeta(D)$, sometimes called a \emph{valley}. Decorate the first peak following this valley. This peak must be in the section of the path between the $i$-th and $(i+1)$-th peak of the bounce path so the label of the vertical step of the bounce path contained in the same row must be equal to $i$. It now follows from the definition of the statistics that $\area$ gets sent into $\bounce$. Note that the first peak never gets decorated in this way, since it is not preceded by a valley.

	Finally, we determine where to send the decorated peaks. The most natural thing to do is to send decorated peaks into decorated falls. Let $j$ be the index of a decorated peak. It follows that $a_{j+1}(D) \leq a_j(D)$. Take once more $i$ such that $a_{j}(D)=i-1$. When scanning the area word to construct the path between the $i$-th and $(i+1)$-th peak of the bounce path, we will encounter $a_j(D)=i-1$ which corresponds to a horizontal step, which we call $h$. Since $a_{j+1}(D) \leq i-1$ we must, by definition of an area word, encounter a letter equal to $i-1$ before we might encounter another letter equal to $i$. It follows that our scanning continues with the encounter of a letter equal to $i-1$ or that we encounter no more letters equal to $i-1$ or $i$. In the first case it is clear that $h$ is followed by a horizontal step. In the second case, if the $i$-th step of the bounce path is not the last one, then $h$ is also followed by a horizontal step since the next portion of the path must start with a horizontal step. Only when the $i$-th peak is the last peak of the bounce path and $a_j(D)$ is the last occurrence of $i-1$ in the area word, $h$ is not followed by a horizontal step. But this means that the $j$-th vertical step was the rightmost highest peak of $D$ and so it was not supposed to be decorated as $D \in \DDd(n)^{\circ a, \ast b}$.

	Thus, $h$ is always a fall, which we decorate. So in the area of $\zeta(D)$ the squares directly under $h$ and above the main diagonal are not counted. We distinguish two types of squares under $h$, those that are under the bounce path of $\zeta(D)$ and those above it. The number of squares of the first kind equals the number of letters equal to $i-1$ that occur in the area word after its $j$-th letter. The number of squares of the second kind is equal to the number of letters equal to $i$ that occur in the area word before its $j$-th letter. Since $j\in \mathsf{Peak}(D)$ the pairs formed by $j$ and the indices of these letters are exactly the pairs that are not counted for the dinv. The pairs that do contribute to the dinv correspond to the squares under undecorated horizontal steps. We conclude that dinv gets sent into area. 
	
	One readily checks that this map is invertible: the bounce path of $\zeta(D)$ provides the letters of the area word of $D$, and the path the interlacing of the letters. The decorations of $D$ can be obtained applying the arguments above backwards.  
	
	\begin{figure*}[!ht]
		\begin{minipage}{.5\textwidth}
			\centering
			\begin{tikzpicture}[scale=.6]
			\draw
			(8.5,0.5) node {0}
			(8.5,1.5) node {1}
			(8.5,2.5) node {2}
			(8.5,3.5) node {2}
			(8.5,4.5) node {3}
			(8.5,5.5) node {1}
			(8.5,6.5) node {0}
			(8.5,7.5) node {1};
			
			\fill[pattern=north west lines, pattern color=gray](0,1) rectangle (1,2) (1,4) rectangle (4,5)(6,7) rectangle (7,8);
			\draw[gray!60, thin](0,0) grid (8,8) (0,0)--(8,8);
			\draw[blue!60, line width=1.6pt](0,0)--(0,3)--(1,3)--(1,5)--(4,5)|-(6,6)|-(8,8);
			
			\draw
			(-0.5,1.5) node {$\ast$}
			(0.5,4.5) node {$\ast$}
			(5.5,7.5) node {$\ast$};
			
			\filldraw[fill=blue!60]
			(4,6) circle (3 pt)
			(6,8) circle (3 pt);

			\end{tikzpicture}
			\caption{An element $D\in \DDd(8)^{\circ 2, \ast 3}$ and its area word.}
		\end{minipage}%
		\begin{minipage}{.5 \textwidth}
			\centering
			\begin{tikzpicture}[scale=0.6]
			\draw[gray!60, thin](0,0) grid (8,8) (0,0)--(8,8);
			
			\fill[pattern=north west lines, pattern color=gray] (3,4) rectangle (4,7) (4,5) rectangle (5,7); 
			\draw[blue!60, line width=1.6pt] (0,0)|-(1,2)|-(2,4)|-(3,5)|-(7,7)|-(8,8);
			
			\draw[dashed, opacity=0.6, ultra thick] (0,0)|-(2,2)|-(5,5)|-(7,7)|-(8,8);
			
			\draw
			(8.5,0.5) node {0}
			(8.5,1.5) node {0}
			(8.5,2.5) node {1}
			(8.5,3.5) node {1}
			(8.5,4.5) node {1}
			(8.5,5.5) node {2}
			(8.5,6.5) node {2}
			(8.5,7.5) node {3};
			
			\draw
			(3.5,7.5) node {$\ast$}
			(4.5,7.5) node {$\ast$};
			
			\filldraw[fill=blue!60]
			(1,4) circle (3 pt)
			(2,5) circle (3 pt)
			(7,8) circle (3 pt);

			\end{tikzpicture}
			\caption{The image $\zeta(D)\in \DDb(8)^{\circ 3, \ast 2}$ and its bounce word.}
			
		\end{minipage}
		
	\end{figure*}
	
\end{proof}
\section{The $\psi$ map exchanging peaks and falls} \label{sec:psi_map}

We define a map that explains an interesting symmetry.

\begin{theorem} \label{thm:psi_map}
	There exists a bijective map $$\psi: \DDp(n)^{\circ a, \ast b} \rightarrow  \DDp(n)^{\circ b, \ast a}$$ such that for all $D\in \DDp(n)^{\circ a, \ast b}$ \begin{align*}
		\area(D) & = \mathsf{area} (\psi(D)) \\
		\pbounce(D) & = \pbounce(\psi(D)).
	\end{align*}
\end{theorem}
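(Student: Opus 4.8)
The plan is to construct $\psi$ explicitly and to reduce its verification to the combinatorics of the leaning stack and its pbounce path. First I would isolate where the decorated rises enter: the leaning stack of a path $D\in\DDp(n)^{\circ a,\ast b}$ — obtained by contracting the east step following each decorated peak and shifting the remaining path (and the broken main diagonal) to the left — depends only on the underlying Dyck path and on the positions of the decorated peaks, and the pbounce path it carries does not see the decorated rises at all; meanwhile $\area(D)$ equals $\area$ of the underlying Dyck path minus $\sum_{i\in\mathsf{DRise}(D)}a_i(D)$. Hence the whole content of the theorem is a local rule that trades a contracted (decorated) peak for a decorated rise in a way that (i) leaves the bounce word of the leaning stack unchanged, so that $\pbounce$ is preserved, and (ii) is area-neutral, i.e. the area recovered by un-contracting a peak is exactly cancelled by the area now ignored at the newly decorated rise. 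The defining restriction of $\DDp$ — the peak in the top row is never decorated — is precisely what makes such a rule everywhere applicable and reversible, which is why the statement is phrased for $\DDp$ rather than $\DD$.

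Next, the construction itself: cut the leaning stack into its bounce blocks using the pbounce path (the maximal segments of the stack between consecutive peaks of the pbounce path), together with the first vertical run and the last horizontal run. Within a given block, each decorated peak of $D$ corresponds to a specific local configuration (a marked location where an east step has been contracted away), and each decorated rise to a rise of the path inside that block. I would then reorganize the north and east steps of the block so as to convert configurations of the first kind into decorated rises of the resulting path and vice versa, while keeping the multiset of column heights — equivalently the bounce word — of the block unchanged, so that the global pbounce path is preserved. This is modeled on the way Haglund's $\zeta$ map (Theorem~\ref{thm:zeta_map}) moves decorations between rises and peaks within bounce blocks, and on the bijection of \cite{Egge-Haglund-Killpatrick-Kremer-2003}, which handles the special case $b=0$; the new ingredient is carrying the decorated rises along the same block-by-block surgery.

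The main obstacle is pinning down this local move: producing a rule that is simultaneously a bijection on each block, converts exactly one peak decoration into one rise decoration (so that the counts $a\leftrightarrow b$ are swapped globally), fixes the bounce word of the block, and is area-neutral — and, in particular, correctly treating the boundary cases, namely the last bounce block (where the ``top peak undecorated'' hypothesis is used) and blocks carrying several decorations of both kinds. Once the move is fixed, checking $\area(D)=\area(\psi(D))$ and $\pbounce(D)=\pbounce(\psi(D))$ is a direct computation from the definitions, and invertibility follows because the move is essentially an involution applied block by block. As a fallback, should a clean closed-form description of the move prove elusive, one can instead show that $\sum_{D\in\DDp(n)^{\circ a,\ast b}}q^{\area(D)}t^{\pbounce(D)}$ is symmetric in $a$ and $b$ by producing a recursion for it in the spirit of Theorem~\ref{thm:reco1_F_and_Ftilde} and its companions; but a bijective proof is preferable, and in view of the $\zeta$-map analogue it is clearly within reach.
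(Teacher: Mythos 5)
There is a genuine gap: you have correctly reduced the problem to finding a move that trades a decorated peak for a decorated rise while keeping the pbounce path and the area fixed, and you have correctly identified that $\DDp$ is exactly the right domain for such a move; but the move itself — the heart of the theorem — is never constructed. You explicitly flag ``pinning down this local move'' as the main obstacle, and nothing in the proposal resolves it. The fallback (a recursion showing $\DDp_{q,t}(n)^{\circ a,\ast b}=\DDp_{q,t}(n)^{\circ b,\ast a}$) would prove the $q,t$-identity but not the bijective statement that the theorem actually asserts, and even that recursion is not written down.

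Moreover, the block-by-block strategy you sketch appears to be the wrong shape. The paper's construction is a single explicit move $\psi_0$: given a decorated \emph{fall} with endpoint $x$, travel \emph{south} from $x$ to the main diagonal and then \emph{west} to the endpoint $y$ of a north step; delete the decorated east step at $x$, slide the portion of the path between $y$ and $x$ one column to the right, and insert a new east step just after $y$, which creates a peak at $y$ that we decorate. This move is \emph{not} confined to a single bounce block: $x$ and $y$ can lie in different blocks of the pbounce decomposition (indeed, $y$ sits on, or just above, the main diagonal, while $x$ can be far from it). Thus you cannot expect to cut the leaning stack into bounce blocks and operate independently within each one; the exchange of decorations is global along a column/diagonal of the square. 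What actually makes the paper's proof work is a careful \emph{ordering}: first apply $\psi_0$ to all decorated falls, left to right, then apply $\psi_0^{-1}$ to the decorated peaks that didn't come from step one, top to bottom. Without that ordering the moves interfere with each other and area is not preserved (a portion of path carrying a decorated fall gets shifted, changing the column under that fall). Your proposal contains no analogue of this subtlety, so even if a correct local move were supplied, the composition would still have to be addressed. In short: the reduction is fine, but both the elementary move and the rule for composing the moves are missing, and the block-locality assumption is at odds with the construction that actually works.
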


\begin{proof}Let us first look at a simpler map, $\psi_0$, that transforms one decorated fall into a decorated peak, conserving the said statistics. Call the endpoint of the decorated fall $x$. We travel southward from $x$ until we hit the main diagonal then travel west until we hit the endpoint of a north step of the path. Call this point $y$. Now we modify the portion of the path between $x$ and $y$ by deleting the decorated east step and by adding an east step right after the point $y$. See Figure~\ref{fig:psi0}. Since $y$ was the endpoint of a north step and we added an east step after $y$, we have created a peak which we decorate. 
	
	\begin{figure}[!ht]
		\centering
		\begin{minipage}{.5 \textwidth}
			\centering
			\begin{tikzpicture}
			\draw
			(2,4) node[anchor=south west] {$x$}
			(0,2) node[anchor=north west] {$y$};
			
			\draw[gray!60, thin] grid (0,0) grid (5,5) (0,0)--(5,5);
			\fill[pattern=north west lines, pattern color=gray](1,2) rectangle (2,4);
			\draw[blue!60, line width=1.6pt](0,0)--(0,3)-|(1,4)--(3,4)|-(5,5);
			\draw[red!90, line width=1.6pt, sharp <-sharp >, sharp < angle = 45] (1,4) -- (2,4);
			
			\draw(1.5,4.5) node {$\ast$};
			\draw[thick, dashed] (2,4)--(2,2)--(0,2);
			\filldraw (2,4) circle (1.5pt) (0,2) circle (1.5pt);
			\end{tikzpicture}
		\end{minipage}%
		\begin{minipage}{.5\textwidth}
			\centering
			\begin{tikzpicture}
			\draw
			(2,4) node[anchor=south west] {$x$}
			(0,2) node[anchor=north west] {$y$};
			
			\draw[gray!60, thin] grid (0,0) grid (5,5) (0,0)--(5,5);
			%				\draw[thick, dashed] (2,4)--(2,2)--(0,2);
			\draw[blue!60, line width=1.6pt](0,0)--(0,2)--(1,2)|-(2,3)|-(3,4)|-(5,5);				
			\draw[red!90, line width=1.6pt, sharp <-sharp >, sharp < angle = 45, sharp > angle = 45] (0,2) -- (1,2);
			
			\filldraw[fill=blue!60] (0,2) circle (3 pt);
			\filldraw (2,4) circle (1.5pt) (0,2) circle (1.5pt);
			\end{tikzpicture}
		\end{minipage}
		\caption{The map $\psi_0$.} \label{fig:psi0}
	\end{figure}
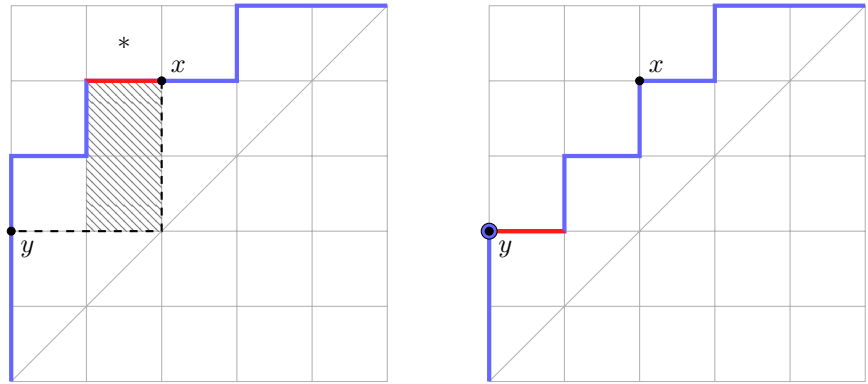
	
	Since $x$ is the endpoint of a fall, it is never on the main diagonal. It follows that the image is still a Dyck path. Furthermore, the area has not been altered since the number of squares beneath the path are exactly diminished by the number of squares under the decorated fall. Finally, recalling the definition of $\pbounce$, it is easy to see that the pbounce path is not altered, so that this statistic is not altered either. Clearly, $\psi_0$ is invertible. 
	
	Notice that $\psi_0^{-1}$ would not work with a decoration on the peak in the top row: the added horizontal step would be the last of the path, so not a fall. This explains why $\psi$ is defined on $\DDp(n)^{\circ a, \ast b} $. 
	
	The general map $\psi$ relies on $\psi_0$ as follows: apply $\psi_0$ to all the decorated falls and $\psi_0^{-1}$ to all the decorated peaks, one by one. However, we must be careful, because the order in which we transform the decorations matters. We use the following order:
	\begin{enumerate}
		\item apply $\psi_0$, from left to right, to all decorated falls. We end up with a path all of whose decorations are on peaks;
		\item apply $\psi_0^{-1}$, from top to bottom, to all the decorated peaks that did not come from decorated falls in step (1). 
	\end{enumerate}
	
	See Figure~\ref{fig:psi_definition} for an example.
	
	Notice that if we do not respect these rules, then the area of the final path might not be the same. Indeed $\psi_0$ requires pushing a portion of the path to the right. If there happens to be a decorated fall in this portion, then one decreases the number of squares under this step and so increases the overall area. Similarly, when applying $\psi_0^{-1}$, a portion of the path gets pushed to the left, so when there is a decorated fall in this portion the overall area decreases. These two situations are always avoided by respecting the given order. Since every step of $\psi$ preserves $\area$ and $\pbounce$, so does $\psi$. 
	
	It is clear that $\psi^2: \DDp(n)^{\circ a, \ast b} \to \DDp(n)^{\circ a, \ast b}$ is the identity map, in other words $\psi$ is an involution, so it is bijective. 
	
	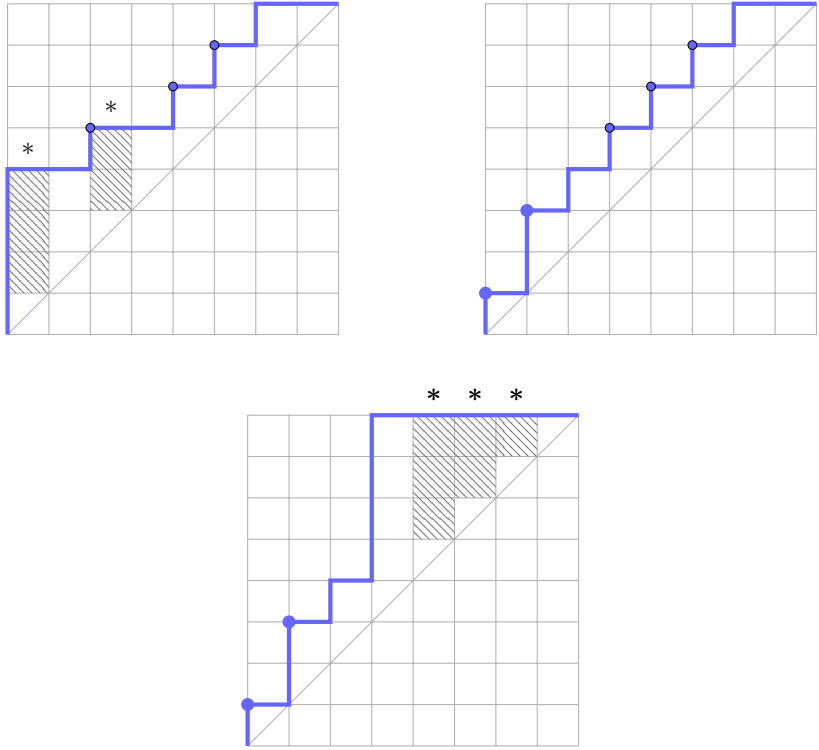
\begin{figure}[!ht]
		\centering
		\begin{minipage}{.5 \textwidth}
			\centering
			\begin{tikzpicture}[scale=0.55]
			\draw[gray!60, thin] (0,0) grid (8,8) (0,0)--(8,8);
			\fill[pattern=north west lines, pattern color=gray] (0,1) rectangle (1,4) (2,3) rectangle (3,5);
			\draw[blue!60, line width=1.6pt] (0,0) -- (0,4) -- (2,4) -- (2,5) -- (4,5) -- (4,6) -- (5,6) -- (5,7) --(6,7) -- (6,8) -- (8,8);
			
			\draw
			(0.5,4.5) node {$\ast$} 
			(2.5,5.5) node {$\ast$};
			
			\filldraw[fill=blue!60]
			(2,5) circle (3 pt)
			(4,6) circle (3 pt)
			(5,7) circle (3 pt);
			\end{tikzpicture}	
		\end{minipage}%
		\begin{minipage}{.5 \textwidth}
			\centering
			\begin{tikzpicture}[scale=0.55]
			\draw[gray!60, thin](0,0) grid (8,8) (0,0)--(8,8);
			\draw[blue!60, line width=1.6pt](0,0)|-(1,1)--(1,3)--(2,3)|-(3,4)|-(4,5)|-(5,6)|-(6,7)|-(8,8);
			
			\filldraw[blue!60, ultra thick]
			(0,1) circle (3 pt)
			(1,3) circle (3 pt);
			
			\filldraw[fill=blue!60]
			(3,5) circle (3 pt) 
			(5,7) circle (3 pt)
			(4,6) circle (3 pt);
			\end{tikzpicture}		
		\end{minipage}
		\vspace{.5cm}
		
		\begin{minipage}{\textwidth}
			\centering
			\begin{tikzpicture}[scale=0.55]
			\draw[gray!60, thin](0,0) grid (8,8) (0,0)--(8,8);
			\fill[pattern=north west lines, pattern color=gray] (5,5) |- (6,6) |- (7,7) |- (8,8) -- (4,8) -- (4,5) -- (5,5);
			\draw[blue!60, line width=1.6pt](0,0)|-(1,1)--(1,3)--(2,3)|-(3,4)|-(8,8);
			
			\filldraw[blue!60, ultra thick]
			(0,1) circle (3 pt)
			(1,3) circle (3 pt);
			\draw
			(4.5,8.5) node {$\boldsymbol{\ast}$}
			(5.5,8.5) node {$\boldsymbol{\ast}$}
			(6.5,8.5) node {$\boldsymbol{\ast}$};
			\end{tikzpicture}	
		\end{minipage}
		\caption{The two steps in the definition of the map $\psi$}
		\label{fig:psi_definition}
	\end{figure}
\end{proof}

\section{Combinatorial recursions}
\label{sec:combinatorial_recursions_ddyck}

Let us introduce some notation.

\begin{align*}
	&\DDd_{q,t}(n)^{\circ a, \ast b} \coloneqq \sum_{D\in \DDd(n)^{\circ a, \ast b}} q^{\dinv(D)} t^{\area(D)} \\
	&\DDb_{q,t}(n)^{\circ a, \ast b} \coloneqq \sum_{D\in \DDb(n)^{\circ a, \ast b}} q^{\area(D)} t^{\bounce(D)} \\ 
	&\DDp_{q,t}(n)^{\circ a, \ast b} \coloneqq \sum_{D\in \DDp(n)^{\circ a, \ast b}} q^{\area(D)} t^{\pbounce(D)}.
\end{align*}

In general, when considering a subset of one of the sets $\DDd(n)^{\circ a, \ast b}, \DDb(n)^{\circ a, \ast b}$ and $ \DDp(n)^{\circ a, \ast b}$, and subscript $q,t$ appears, it denotes the $q,t$-polynomial that is the sum over this subset with the relevant bistatisitc.

In this section, we study the polynomials of this kind by breaking them into pieces and finding a recursive way to construct these pieces. 

\subsection{bounce}

Let $a,b,n,k\in \mathbb{N}$, with $1 \leq k \leq n$. We introduce two new sets 

\[ \DDb(n\backslash k \backslash 0 )^{\circ a, \ast b} \subseteq \DDb(n \backslash k )^{\circ a, \ast b} \subseteq \DDb(n)^{\circ a, \ast b} \]

\begin{itemize}
	\item a path in $ \DDb(n)^{\circ a, \ast b}$ is a path in $ \DDb(n \backslash k)^{\circ a, \ast b}$ if its bounce word contains exactly $k$ zeros; 
	\item a path in $ \DDb(n \backslash k)^{\circ a, \ast b}$ is a path in $ \DDb(n \backslash k \backslash 0 )^{\circ a, \ast b}$ if its first $k$ columns do not contain any decorated falls.   
\end{itemize}

Since $\DDb(n)^{\circ a, \ast b} = \bigsqcup_{k=1}^n \DDb(n \backslash k)^{\circ a, \ast b}$, we have \[ \DDb_{q,t}(n)^{\circ a, \ast b} = \sum_{k=1}^n \DDb_{q,t}(n \backslash k )^{\circ a, \ast b} \]

We prove two recursive formulas for $\DDb_{q,t}(n \backslash k )^{\circ a, \ast b}$ and $\DDb_{q,t}(n \backslash k \backslash 0)^{\circ a, \ast b}$.

\begin{theorem}
	\label{thm:reco_first_bounce_S_R}
	Let $a,b,n,k\in \mathbb{N}$, $1 \leq k \leq n$. Then 
	\begin{align} 
		\DDb_{q,t}(n\backslash n)^{\circ a, \ast b} & = \delta_{a,0} q^{\binom{n-b}{2}}\qbinom{n-1}{b}_q  ,
	\end{align}
	and for $k < n$
	\begin{equation}
		\DDb_{q,t}(n\backslash k )^{\circ a, \ast b}=\sum_{s=0}^{b} \qbinom{k}{s}_q  \DDb_{q,t}(n-s \backslash k-s \backslash 0 )^{\circ a, \ast b-s}.
	\end{equation}
\end{theorem}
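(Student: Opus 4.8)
The plan is to prove the two recursions by a direct combinatorial analysis of the bounce path's first ``leg.'' For a path $D\in\DDb(n)^{\circ a,\ast b}$, the bounce word begins with a block of zeros; the number $k$ of zeros is exactly the length of the first vertical run of the bounce path, equivalently the number of rows whose area-word letter is $0$ that lie at the bottom, and these $k$ rows together with the first $k$ columns of $D$ form a self-contained ``sub-path'' of size $k$ that touches the main diagonal only at the top. So the first thing I would do is describe the standard decomposition of an (undecorated) Dyck path $D$ of bounce first-leg length $k$ into: a Dyck path $D'$ of size $k$ sitting in the bottom-left $k\times k$ square (the part of $D$ below the first bounce), and a Dyck path $D''$ of size $n-k$ obtained from the rest, with the extra data of how the columns of $D''$ interleave above the $k$-th column. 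This is precisely the decomposition underlying the classical $q,t$-Catalan recursion (cf. \cite{Haglund-Book-2008}*{Chapter~3}); I would recall it and note that under it $\area$ adds and $\bounce$ adds $(n-k)$ plus the bounce of $D''$, while the interleaving contributes a Gaussian binomial factor tracking $\area$.

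Next I would handle the base case $k=n$: if the bounce word is all zeros, the bounce path is just the staircase $(0,0)\to(0,n)\to(n,n)$, which forces $D$ itself to be the path $(0,0)\to(0,n)\to(n,n)$; hence $\bounce(D)=0$ and there are no falls, so $\DDb_{q,t}(n\backslash n)^{\circ a,\ast b}$ is nonzero only for $a=0$, and then it counts the ways to place $b$ decorated rises among the $n-1$ rises of the unique path, weighted by $q^{\area}$. Since all of the first $n-1$ vertical steps are rises, decorating $b$ of them and reading off $q^{\area}$ gives $q^{\binom{n-b}{2}}\qbinom{n-1}{b}_q$ by a direct $q$-count (the decorated rows contribute $0$ to the area, and the undecorated rows must be the ``lowest'' ones to maximize — actually the generating function over all choices is exactly $e_{n-1-b}$-type, giving $q^{\binom{n-b}{2}}\qbinom{n-1}{b}_q$); I'd just verify this elementary identity. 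This matches the formula.

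For $k<n$, I would combine the undecorated bounce decomposition with the bookkeeping of the two kinds of decorations. Decorated rises ($\ast$) and decorated peaks ($\bullet$, equivalently decorated falls) each lie either in the bottom $k\times k$ part $D'$ or in the ``outer'' part. The key point forcing the shape of the recursion is the restriction in $\DDb$: decorating the first peak doesn't change $\bounce$, so $\DDb$ excludes it; and the auxiliary set $\DDb(n\backslash k\backslash 0)$ further forbids decorated falls in the first $k$ columns. So I would argue that in $\DDb(n\backslash k)^{\circ a,\ast b}$ the $\ast$-decorations split as: $s$ of them land inside the bottom part $D'$ (which is a size-$k$ Dyck path whose bounce is entirely $0$, hence $D'$ is the staircase and those $s$ rises among its $k-1$ genuine rises contribute via a $\qbinom{k}{s}_q$-type factor after accounting for area), while the remaining $b-s$ rises and all $a$ peak-decorations live in the outer part, which (after removing the $s$ decorated bottom rows, which carry area $0$ and shrink both $n$ and $k$ by $s$) is exactly a path in $\DDb(n-s\backslash k-s\backslash 0)^{\circ a,\ast b-s}$. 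Tracking $q^{\area}t^{\bounce}$ through this surgery: removing the $s$ decorated bottom rows removes $0$ from the area and leaves the bounce path of the remainder unchanged, and the interleaving of those $s$ trivial rows among the bottom $k$ rows of $D'$ gives the Gaussian binomial $\qbinom{k}{s}_q$. Summing over $s$ from $0$ to $b$ yields
\[
\DDb_{q,t}(n\backslash k)^{\circ a,\ast b}=\sum_{s=0}^{b}\qbinom{k}{s}_q\,\DDb_{q,t}(n-s\backslash k-s\backslash 0)^{\circ a,\ast b-s}.
\]
The main obstacle I anticipate is getting the area bookkeeping exactly right when the $s$ decorated bottom rows are ``pulled out'': one must check that these rows genuinely carry area contribution $0$ in $D$ (they are decorated rises, hence excluded from $\area$ by definition), that their removal does not alter the area of any other row, and that the number of valid interleavings of these rows with the other $k-s$ bottom rows is $\qbinom{k}{s}_q$ rather than some shifted binomial — this depends on the precise convention for how falls/rises in the first $k$ columns interact with the bounce path, which is where the companion recursion for $\DDb(n\backslash k\backslash 0)$ (the next theorem, presumably of the form involving $t^{n-k-a}$ and a sum pushing the outer path down to size $n-k$) will do the remaining work. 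I would then remark that iterating the two recursions together, with the stated initial conditions $\DDb_{q,t}(0\backslash k)=\DDb_{q,t}(n\backslash 0)=0$, determines all these polynomials.
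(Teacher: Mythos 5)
There is a genuine gap: your splitting parameter does not match the one the auxiliary set $\DDb(n\backslash k\backslash 0)$ is built around. By definition, a path lies in $\DDb(n\backslash k\backslash 0)^{\circ a,\ast b}$ if its first $k$ \emph{columns} contain no decorated \emph{falls}; the paper's recursion therefore splits $\DDb(n\backslash k)$ according to the number $s$ of decorated falls among the first $k$ columns, then builds $P'$ from a $P\in\DDb(n-s\backslash k-s\backslash 0)$ by prepending $s$ north steps \emph{and} inserting $s$ fresh east steps (the decorated falls) interleaved with the $k-s$ east steps of $P$ in its first $k-s$ columns; the removed/added north and east steps are chosen independently, not as matched rise/fall pairs, and the interlacing is exactly what gives $\qbinom{k}{s}_q$. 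Your version instead splits by the number of decorated \emph{rises in the bottom $k$ rows} and proposes to delete those rows. But a decorated rise sitting in one of the $k$ bottom rows may well have its corresponding fall in a column strictly beyond $k$: e.g.\ $D=NNENENNEEE$ (size $5$, two zeros in the bounce word so $k=2$) has a rise at step $2$, row $2\le k$, whose fall is the east step in column $4>k$. So the set of ``decorations in the bottom $k$ rows'' and the set of ``decorations whose fall lies in the first $k$ columns'' are different, your parameter $s$ is not the paper's, and after deleting decorated-rise rows you do not land in $\DDb(n-s\backslash k-s\backslash 0)$. (There is also a technical issue with row deletion itself: deleting a row with $a_i=a_{i-1}+1$ can leave $a_{i+1}=a_{i-1}+2$, so the truncated word is no longer an area word.)

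There are also a few smaller but telling slips. In the base case $k=n$, the unique path $N^nE^n$ does have falls — in fact all first $n-1$ east steps are falls — and that is precisely what lets the paper place the $b$ $\ast$-decorations there; the reason $a=0$ is forced is that the only peak of $N^nE^n$ is the first peak, which $\DDb$ never decorates. The sentence ``decorated peaks ($\bullet$, equivalently decorated falls)'' conflates the two decoration types: $\ast$ on rises corresponds bijectively to falls by Remark~\ref{rmk:rises-falls-correspondence}, while $\bullet$ on peaks is an independent kind of decoration. And there is no ``Dyck path $D'$ of size $k$ sitting in the bottom-left $k\times k$ square'': for a path whose bounce word has $k$ zeros, the part of $D$ inside that square is just the vertical segment of its $k$ initial north steps — the bounce path, not $D$, runs along the staircase. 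The surgery you need is the paper's column/fall insertion, not a row/rise removal.
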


\begin{proof}
	First of all, consider the case $k=n$. The paths in $\DDb(n \backslash n)^{\circ a, \ast b}$ are the paths that consist of $n$ north steps followed by $n$ east steps and so the bounce must be zero. There can be no decorations of the first and only peak of these paths so $a$ must equal zero. Of the $n$ consecutive east steps, $b$ are decorated falls. The last east step must be undecorated since it is not a fall. We choose an interlacing between the remaining $n-1-b$ undecorated and $b$ decorated east steps. In each interlacing the contributions of the undecorated east steps are distinct, between $1$ and $n$. It follows that the area is $q$-counted by \[\delta_{a,0}q^{n-1-b}q^{\binom{n-1-b}{2}} \qbinom{n-1}{n-1-b}_q= \delta_{a,0}q^{\binom{n-b}{2}} \qbinom{n-1}{b}_q.\]

	Now consider $k<n$. Take a path $P \in\DDb(n-s \backslash k-s \backslash 0 )^{\circ a, \ast b-s}$. The first $k-s$ columns of $P$ do not contain any decorated fall. We will add $s$ falls to these first columns and prepend $s$ vertical steps to $P$, thus obtaining a  path $P'$ in $\DDb(n \backslash k )^{\circ a, \ast b}$ (as in Figure \ref{fig R and S}). Going through all the possible values of $s$, we obtain in this way all the paths of $\DDb(n \backslash k )^{\circ a, \ast b}$. The bounce remains unchanged since one obtains the bounce word of $P'$ by prepending $s$ zeros to the bounce word of $P$, and the decorated peaks stay in the same position. Choose an interlacing between the $k-s$ first horizontal steps of $P$ and the $s$ falls that are to be added. Reading the interlacing left to right, we prepend the decorated falls to the following undecorated horizontal steps. We may also end the interlacing with a decorated fall since $P$ has $k-s$ zeros in its bounce word, so the portion of $P$ in the first $k-s$ columns is followed by a horizontal step. The squares under the decorated falls do not contribute to the area but every time an undecorated horizontal step precedes a decorated fall in the interlacing, one square of area is added. This explains the factor $\qbinom{k-s+s}{s}_q = \qbinom{k}{s}_q$ and proves the theorem.	
\end{proof}

\begin{figure}[!ht]
	\centering
	\begin{minipage}{.35\textwidth}
		\centering
		\begin{tikzpicture}[scale=0.6]
		\draw[gray!60, thin] (3,3) grid (10,10) (3,3)--(10,10);
		\draw[dashed, opacity=0.6, ultra thick] (3,3)|-(6,6)|-(9,9)|-(10,10);
		\draw[blue!60, line width=1.6pt] (3,3)|-(4,6)|-(5,7)|-(6,8)--(6,9)--(8,9)|-(10,10);
		\filldraw[fill=blue!60] (4,7) circle (3 pt);
		\draw(6.5,9.5) node {$\ast$};
		\fill[gray, opacity=.4] (3,4) rectangle (4,6)(4,5) rectangle (5,7)(5,6) rectangle (6,8);
		\end{tikzpicture}
	\end{minipage}%
	\begin{minipage}{.65 \textwidth}
		\centering	
		\begin{tikzpicture}[scale=0.6]
		\draw[gray!60] (0,0) grid (10,10) (0,0)--(10,10);
		\fill[pattern=north west lines, pattern color=gray](0,1) rectangle (1,6)(4,5) rectangle (5,9)rectangle (6,6);% (8,9) rectangle (9,10);
		
		\draw[thick, opacity=0.7] (2.9,2.9) rectangle (10.1,10.1); 
		\draw[blue!60, sharp <-sharp >, sharp > angle = -45, line width=1.6pt] (6,9)--(8,9)|-(10,10) (0,3)--(0,6) (1,6)-|(2,7) -|(3,8)-|(4,9);
		\draw[red!90, sharp <-sharp >, sharp < angle = 45, line width=1.6pt] (0,0)--(0,3) (0,6)--(1,6) (4,9)--(6,9);
		
		\draw
		(0.5,6.5) node {$\ast$}
		(4.5,9.5) node {$\ast$}
		(5.5,9.5) node {$\ast$}
		%(8.5,10.5) node {$\ast$}
		(6.5,9.5) node {$\ast$};
		
		\filldraw[fill=blue!60] (2,7) circle (3 pt);
		
		\fill[gray, opacity=0.4]
		(1,2) rectangle (2,4)
		(2,3) rectangle (3,5)
		(3,4) rectangle (4,6);
		\draw[dashed, opacity=0.6, ultra thick] (0,0) |- (6,6) |- (9,9) |- (10,10);
		\end{tikzpicture}
	\end{minipage}
	
	\caption{Illustration of the proof of Theorem~\ref{thm:reco_first_bounce_S_R}.} \label{fig R and S}
\end{figure}
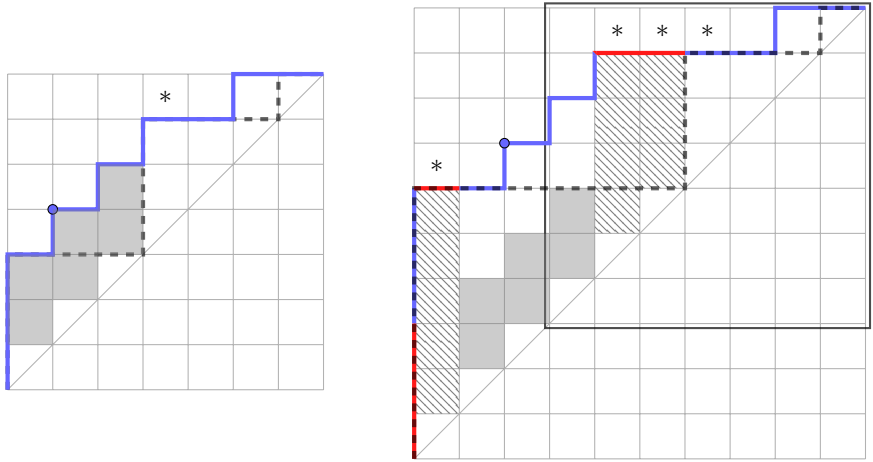

\begin{theorem}
	\label{thm:reco_first_bounce_R_S}
	Let $a,b,n,k \in \mathbb{N}$, with $1 \leq k \leq n$. Then \[ \DDb_{q,t}(n \backslash n\backslash 0)^{\circ a, \ast b} = \delta_{0,a} \delta_{0,b}
	q^{\binom{n}{2}}, \]
	and for $1 \leq k \leq n-1$ 
	\begin{align*}
		\DDb_{q,t}(n \backslash k \backslash 0)^{\circ a, \ast b}  & = t^{n-k-a} \sum_{i=0}^a q^{\binom{k}{2}+\binom{i+1}{2}} \sum_{h=1}^{n-k} \qbinom{h-1}{i}_q \qbinom{h+k-i-1}{h}_q \\
		& \times \left( \DDb_{q,t}(n-k \backslash h)^{\circ a-i, \ast b} + \DDb_{q,t}(n-k \backslash h)^{\circ a-i-1, \ast b} \right).
	\end{align*}
\end{theorem}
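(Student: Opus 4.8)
\textbf{Proof plan for Theorem~\ref{thm:reco_first_bounce_R_S}.}

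The plan is to translate the recursion for $\DDb_{q,t}(n\backslash k\backslash 0)^{\circ a,\ast b}$ into a direct combinatorial decomposition of a path $P\in\DDb(n\backslash k\backslash 0)^{\circ a,\ast b}$ along the first ``bounce block''. Recall that the first $k$ columns of such a $P$ carry no decorated fall, and since the bounce word starts with exactly $k$ zeros, the path begins with a vertical run of length $k$, then bounces. Write $P_1$ for the portion of $P$ lying in the first $k$ columns (together with the attached vertical steps before the first bounce) and $P_2$ for the remainder, which will naturally be an element of $\DDb(n-k\backslash h)^{\circ ?,\ast b}$ for some $h$ recording the size of the second bounce block, after shifting coordinates. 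The factor $t^{n-k-a}$ should come from the area contributed by the rows of $P_1$ above the diagonal that are not killed by a decorated rise; the decorated rises in $P_1$ (there are $i$ of them, giving the $\sum_{i=0}^a$ and the choice of which $a-i$ decorated peaks fall into $P_2$) account for the subtraction of $a$ from the exponent.

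First I would set up the bijection carefully: given $P$, the first vertical run has length $k$; immediately after, the path takes $h\le n-k$ east steps before the next bounce? — no, rather the structure is dual. I would mimic the argument of Theorem~\ref{thm:reco_first_bounce_S_R} but ``from the other side'': a path in $\DDb(n\backslash k\backslash 0)^{\circ a,\ast b}$ is built from a path $P'\in\DDb(n-k\backslash h)^{\circ a-i,\ast b}$ (or $\DDb(n-k\backslash h)^{\circ a-i-1,\ast b}$, the two cases corresponding to whether or not the relevant highest/first peak of the smaller object gets decorated, exactly the Remark~\ref{rem: combinatorial pieri rule } phenomenon for bounce) by: (1) choosing where the $h$ vertical steps of the new second bounce block interleave with the $k$ vertical steps of the new first block, which is where $\qbinom{h-1}{i}_q\qbinom{h+k-i-1}{h}_q$ and the $q^{\binom{k}{2}+\binom{i+1}{2}}$ come from — $\binom{k}{2}$ is the area of the ``staircase'' in the first $k$ rows, $\binom{i+1}{2}$ and $\qbinom{h-1}{i}_q$ count the dinv/area interaction of the $i$ decorated rises placed among the new rows, and $\qbinom{h+k-i-1}{h}_q$ counts the interlacing of the non-decorated new vertical steps with the old block; (2) placing $i$ decorated rises in the newly created rises. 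I would verify each statistic (area and bounce) transforms as claimed by tracking, row by row, the contribution before and after the insertion — this is the routine but lengthy part, entirely parallel to the bookkeeping in the proof of Theorem~\ref{thm:reco_first_bounce_S_R}.

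For the base case $k=n$: then $\DDb(n\backslash n\backslash 0)^{\circ a,\ast b}$ contains only the single staircaseless path of $n$ north steps followed by $n$ east steps with no decorated falls in the first $n$ columns, i.e.\ no decorated falls at all, and no decorated peak (the unique peak is the first peak, which is never decorated in $\DDb$); hence $a=b=0$ is forced, the bounce is $0$, and the area is $\binom n2$, giving $\delta_{0,a}\delta_{0,b}q^{\binom n2}$. I would also check the degenerate cases where the inner sums are empty against the convention that all these polynomials vanish when an index is negative. The main obstacle I anticipate is getting the $q$-exponents and the two $q$-binomials in step~(1) exactly right: in particular separating the area contribution of the first $k$ staircase rows ($\binom k2$) from the extra area created when an un-decorated old east step precedes a new vertical step, and simultaneously accounting for the $i$ decorated rises both in the area (they are not counted) and in the dinv-type weight $q^{\binom{i+1}{2}}\qbinom{h-1}{i}_q$; the cleanest route is probably to first prove the identity at the level of a refined statistic that records the interlacing pattern explicitly, and only then collapse it using the standard $q$-binomial evaluations \eqref{eq:h_q_binomial} and \eqref{eq:e_q_binomial}, exactly as is done when passing from the refined recursion to Theorem~\ref{thm:reco1_first_bounce} elsewhere in the paper.
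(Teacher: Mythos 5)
Your overall decomposition is the right one — split a path in $\DDb(n\backslash k\backslash 0)^{\circ a,\ast b}$ along the first bounce block, identify the remainder with a path in $\DDb(n-k\backslash h)^{\circ \cdot,\ast b}$ translated to the top right corner, read off $t^{n-k-a}$ from the bounce word shifting up by one except on decorated peaks, take $q^{\binom k2}$ as the area below the first bounce-path segment, and handle $k=n$ as the unique staircaseless path with $a=b=0$. The two-term split ($a-i$ vs.\ $a-i-1$) is indeed the phenomenon you gesture at: the top row of the newly constructed block sits where the (never-decorated) first peak of the smaller path was, but in the extended path it is a legitimate internal peak that may or may not carry a decoration, and this cannot be encoded inside a single interlacing choice.

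The proposal, however, has a genuine error: you repeatedly say that $i$ counts \emph{decorated rises} placed in the new first block, and you read $q^{\binom{i+1}{2}}\qbinom{h-1}{i}_q$ as a ``dinv/area interaction of the $i$ decorated rises.'' This cannot be right. By the defining condition of $\DDb(n\backslash k\backslash 0)^{\circ a,\ast b}$, the first $k$ columns contain \emph{no} decorated falls at all, so no decorated rises are ever placed in the new block; correspondingly, the $\ast$ index stays fixed at $b$ on both sides of the recursion, while it is the $\circ$ index that drops from $a$ to $a-i$ (or $a-i-1$), and the outer sum runs $i=0,\dots,a$, not $0,\dots,b$. The index $i$ counts \emph{decorated peaks} placed in the new block. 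The factor $q^{\binom{i+1}{2}}\qbinom{h-1}{i}_q$ arises from choosing an interlacing of $i$ decorated peaks (each treated as a north step followed by an east step) with the $h-i-1$ remaining plain north steps, and $\qbinom{h+k-i-1}{h}_q$ from then interleaving the $k-i-1$ leftover non-peak east steps with the $h$ north steps. Your phase~(2), ``placing $i$ decorated rises in the newly created rises,'' therefore does not occur; without fixing the peaks/rises identification the bookkeeping for both the area and the $\circ/\ast$ indices will fail.
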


\begin{proof}
	First suppose that $k=n$, that is, the bounce word contains $k$ zeros so the path must go north for $n$ steps and then east for $n$ steps. It follows that the bounce equals zero.  It is clear that for $\DDb(n \backslash n \backslash 0 )^{\circ a, \ast b} $ to be nonempty, $a$ and $b$ have to equal $0$ since we cannot decorate the first (and only) peak and the first $k=n$ columns may not contain a decorated fall. So there is only one path in $ \DDb(n \backslash n \backslash 0)^{\circ a, \ast b}$ and its area is equal to $\binom{n}{2}$. 
	
	Now suppose that $k<n$. We explain a procedure that uniquely describes an element of $\DDb(n \backslash k \backslash 0)^{\circ a, \ast b}$.
	
	Start with an element $D$ of $\DDb(n-k \backslash h)^{\circ a-i, \ast b} $ for $1 \leq h \leq n-k$. We will extend this path to obtain an element $D'$ of $\DDb(n \backslash k \backslash 0 )^{\circ a, \ast b}$, keeping track of the statistics during the process. We refer to Figure~\ref{bounce recursion} for a visual aid. Consider an $n \times n$ grid. Place $D$ in the top right corner of this grid such that $D$ goes from $(k,k)$ to $(n,n)$. Since the number of $0$'s in the bounce word of $D$ equals $h$ we know that $D$ starts with $h$ vertical steps, followed by a horizontal step.  Delete these $h$ vertical steps from $D$. To obtain a path $D'$ of size $n$ we need to complete what remains of $D$ with a path from $(0,0)$ to $(k,k+h)$ which we will call $P$. Note that $D'$ must have $k$ zeros in its bounce word, so $P$ must start with $k$ vertical steps followed by a horizontal step. Hence the path that is still to be determined goes from $(1,k)$ to $(k,k+h)$. Note that we have $h \geq 1$ since $k < n$.
	
	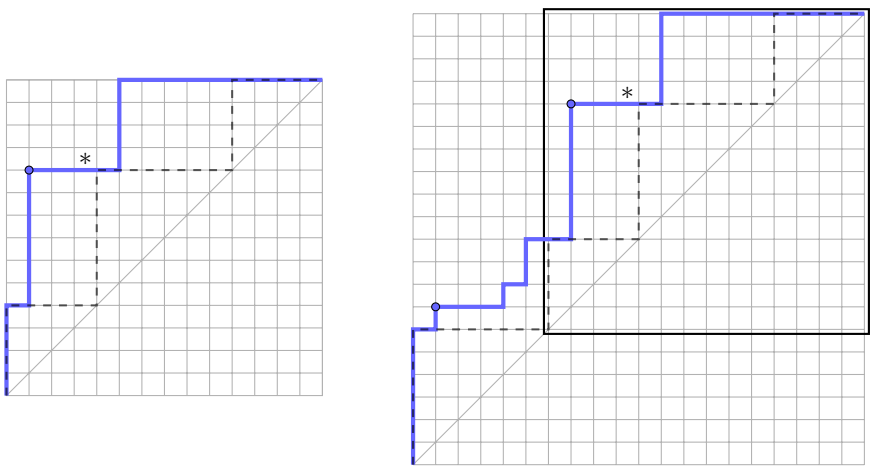
\begin{figure}[!ht]
		\centering
		\begin{minipage}{0.4 \textwidth}
			\centering
			\begin{tikzpicture}[scale=0.6]
			\draw[gray!60, thin] (3,3) rectangle (10,10);
			\draw[gray!60, thin, step=0.5] (3,3) grid (10,10) (3,3) -- (10, 10);
			\draw[blue!60, line width=1.6pt](3,3) |-(3.5,5) |-(5.5,8)|-(10,10);
			\draw[thick, dashed, opacity=0.6](3,3)|-(5,5)|-(8,8)|-(10,10);
			\filldraw[fill=blue!60] (3.5,8) circle (2.5 pt);
			\draw (4.75, 8.25) node {$\ast$};
			\end{tikzpicture}
			
		\end{minipage}%
		\begin{minipage}{0.6 \textwidth}
			\centering
			\begin{tikzpicture}[scale=0.6]
			\draw[gray!60, thin, step=0.5] (0,0) grid (10,10) (0,0) -- (10, 10);
			\draw[thick] (2.9,2.9) rectangle (10.1,10.1); 
			\draw[blue!60, line width=1.6pt](0,0) |-(.5,3)|-(2,3.5)|-(2.5,4)|- (3.5,5) |-(5.5,8)|-(10,10);	
			\draw[thick, dashed, opacity=0.6](0,0) |-(3,3)|-(5,5)|-(8,8)|-(10,10);			
			\filldraw[fill=blue!60] (3.5,8) circle (2.5 pt) (0.5,3.5) circle (2.5 pt);
			\draw (4.75, 8.25) node {$\ast$};
			\end{tikzpicture}
		\end{minipage}
		\caption{Theorem~\ref{thm:reco_first_bounce_R_S}: $D$ on the left and $D'$ on the right.}
		\label{bounce recursion}
	\end{figure}
	
	At this point we can already determine the bounce of $D'$. We have \[ \bounce(D')=\bounce(D)+(n-k-a). \] Indeed its bounce word is formed by $k$ $0$'s followed by the letters of the bounce word of $D$, all augmented by $1$. So $n-k$ letters are augmented by one, but $a$ of them will not contribute to the bounce as they will correspond to decorated peaks (all the decorated peaks are above the line $y=k$, as the first peak cannot be decorated). This explains the factor $t^{n-k-a}$.
	
	The area of $D'$ will be the sum of the area of $D$ with the area beneath the path $P$. We construct $P$ in two steps. We construct the path from $(k,k+h)$ to $(1,k)$ keeping track of the area underneath it and above the main diagonal. We set $i$ to be the number of decorated peaks in $P$. The different steps are illustrated in Figure \ref{creating P}.
	
	\begin{enumerate}
		\item We start by creating $i$ decorated peaks. Consider a peak as a vertical step followed by a horizontal step. \emph{We require that the top row does not contain a decorated peak.} So choose an interlacing of $i$ peaks and $h-i-1$ vertical steps. The area that lies above the bounce path and under this portion of the path is reflected by the terms \[ q^{\binom{i+1}{2}}\qbinom{h-i-1+i}{i}_q=q^{\binom{i+1}{2}}\qbinom{h-1}{i}_q. \]	
		
		\item Now, we have placed all the vertical steps and $i$ horizontal steps. So there are $k-i-1$ horizontal steps left to place (indeed, the first $k$ vertical steps of $P$ must be followed by a horizontal step). We choose an interlacing between these $k-i-1$ horizontal and $h$ vertical steps. The area above the bounce path that is created is counted by \[ \qbinom{h+k-i-1}{h}_q. \] 
	\end{enumerate}
	Finally, the area under the bounce path is equal to $q^{\binom{k}{2}}$.
	
	Depending on the choices made in the construction of $P$ we have now obtained a unique path in $\DDb(n \backslash k \backslash 0)^{\circ a, \ast b}$, and kept track of its statistics. However, not all the paths of $\DDb(n \backslash k \backslash 0)^{\circ a, \ast b}$ are obtained by this procedure. Indeed, in step (1), we required that the top row of $P$ does not contain a decorated peak. We did this in order to be able to differentiate between the situation when the horizontal step following the decorated peak is part of $P$ and the situation where it isn't. We solve this by adding another term, now starting from a path in $\DDb(n-k \backslash h)^{\circ a-i-1, \ast b}  $ and considering the last row of $P$ to be always decorated (as opposed to never decorated before). Now the argument for the area is exactly the same, as a decorated peak does not influence the area. For the bounce, the argument does not change either: $D$ has $a-i-1$ decorated peaks, we place $i$ decorated peaks in step (1) and consider the last row of $P$ to contain a decorated peak, which totals to $a$ decorated peaks and so the bounce still goes up by $n-k-a$.
	\begin{figure}
		\centering
		\begin{minipage}{0.5\textwidth}
			\centering
			\begin{tikzpicture}[scale=0.5]
			\draw[gray!60, thin] (0,0) grid (8,6);
			
			\filldraw[gray, opacity=.25](5,0)--(8,0)--(8,3)--(7,3)--(7,2)--(6,2)--(6,1)--(5,1);
			\draw[gray!60, thin] (0,-8) grid (8,0);
			\filldraw[white](-.2,-8.2)--(8.2,0.2)--(8.2,-8.2)--(-.2,-8.2);
			\draw[gray!60, thin] (0,-8)--(8,0);
			
			\draw[blue!60, line width=1.6pt] (8,6)--(8,5)--(7,5)--(7,3)--(6,3)--(6,1)--(5,1)--(5,0) (0,-8)|-(1,0);
			\draw[dashed, opacity=0.6, thick] (0,-8) -- (0,0) -| (8,6);
			\draw (9.4,3) node {$h$} (4,7.5) node {$k$} (9.4,-4) node {$k$};
			
			\filldraw[fill=blue!60]
			(5,1) circle (3 pt)
			(6,3) circle (3 pt)
			(7,5) circle (3 pt);
			
			\draw[decorate,decoration={brace, mirror,amplitude=15pt},xshift=0pt,yshift=0pt] (8,0)--(8,6);
			\draw[decorate,decoration={brace,amplitude=15pt},xshift=0pt,yshift=0pt] (0,6)--(8,6);
			\draw[decorate,decoration={brace,amplitude=15pt},xshift=0pt,yshift=0pt] (8,0)--(8,-8);
			\end{tikzpicture}  
		\end{minipage}%
		\begin{minipage}{.5 \textwidth}
			\centering
			\begin{tikzpicture}[scale=0.5]
			\draw[gray!60, thin] (0,0) grid (8,6);
			\draw [gray!60 ,thin] (0,-8) grid (8,0);
			\filldraw[gray, opacity=0.3];
			
			\draw (9.4,3) node {$h$} (4,7.5) node {$k$} (9.4,-4) node {$k$};
			\draw[blue!60, line width=1.6pt] (0,-8)|-(1,0);
			
			\draw[blue!60, line width=1.6pt] (8,6) -- (8,5) -- (7,5) -- (7,4) -- (6,4) -- (6,3) -- (5,3) (3,3)|-(2,1)(1,1)--(1,0);
			\draw[red!90, sharp <-sharp >, sharp < angle = 45, sharp > angle = 45, line width=1.6pt] (6,4)--(7,4);
			\draw[red!90, line width=1.6pt] (1,1)--(2,1) (3,3)--(5,3);
			
			\draw[dashed, opacity=0.6, thick] (0,-8)--(0,0)-|(8,6);
			
			\filldraw[fill=blue!60]
			(1,1) circle (3 pt)
			(3,3) circle (3 pt)
			(7,5) circle (3 pt);
			
			\filldraw[white](-.2,-8.2)--(8.2,0.2)--(8.2,-8.2)--(-.2,-8.2);
			\draw[gray!60, thin] (0,-8)--(8,0);
			\draw[decorate,decoration={brace, mirror,amplitude=15pt},xshift=0pt,yshift=0pt](8,0)--(8,6);
			\draw[decorate,decoration={brace,amplitude=15pt},xshift=0pt,yshift=0pt](8,0)--(8,-8);
			\draw[decorate,decoration={brace,amplitude=15pt},xshift=0pt,yshift=0pt](0,6)--(8,6);
			\end{tikzpicture} 
		\end{minipage} 
		\caption{Creating $P$, step 1 and 2}\label{creating P}
	\end{figure}
	
	Summing over all possible values of $h$ and $i$, we obtain the result.
\end{proof}

Let $a,b,n,k \in \mathbb{N}$, with $1 \leq k \leq n$. We introduce two new sets \[ \DDd(n\backslash k \backslash 0 )^{\circ a, \ast b} \subseteq \DDd(n \backslash k )^{\circ a, \ast b}\subseteq \DDd(n)^{\circ a, \ast b} \]

\begin{itemize}
	\item a path in $ \DDd(n)^{\circ a, \ast b}$ is a path in $\DDd(n\backslash k )^{\circ a, \ast b}$ if its area word contains exactly $k$ zeros;   
	\item a path in $\DDd(n\backslash k )^{\circ a, \ast b}$ is a path in $ \DDd(n\backslash k \backslash 0 )^{\circ a, \ast b}$ if the peaks whose starting point lies on the main diagonal are not decorated. 
\end{itemize}

Since $\DDd(n)^{\circ a, \ast b} = \bigsqcup_{k=1}^n  \DDd(n \backslash k)^{\circ a, \ast b}$, we have \[ \DDd_{q,t}(n)^{\circ a, \ast b} = \sum_{k=1}^n \DDd_{q,t}(n \backslash k)^{\circ a, \ast b}. \] Using the $\zeta$ map, we can deduce the following results for $\DDd_{q,t}(n \backslash k)^{\circ a, \ast b}$ and $\DDd_{q,t}(n \backslash k \backslash 0)^{\circ a, \ast b}$.

The next proposition follows immediately from the construction of $\zeta$.

\begin{proposition}
	\label{prop:zeta_k}
	Let $a,b,n,k \in \mathbb{N}$, with $n \geq k \geq 1$. Then the $\zeta$ map defined in the proof of Theorem~\ref{thm:zeta_map}, suitably restricted, gives a bijection from $\DDd(n \backslash k)^{\circ a, \ast b} $ to $\DDb(n \backslash k)^{\circ b, \ast a} $ and from $\DDd(n \backslash k \backslash 0)^{\circ a, \ast b} $ to $\DDb(n \backslash k \backslash 0)^{\circ b, \ast a} $.
\end{proposition}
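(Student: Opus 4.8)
The plan is to read everything off the explicit construction of $\zeta$ in the proof of Theorem~\ref{thm:zeta_map}, using that $\zeta\colon\DDd(n)^{\circ a,\ast b}\to\DDb(n)^{\circ b,\ast a}$ is already known to be a bijection. First I would recall that in that construction the bounce word of $\zeta(D)$ is \emph{defined} to be the area word of $D$ sorted in ascending order; since sorting preserves the multiset of letters, the area word of $D$ and the bounce word of $\zeta(D)$ have the same number of zeros. Hence $D$ has exactly $k$ zeros in its area word if and only if $\zeta(D)$ has exactly $k$ zeros in its bounce word, and restricting the known bijection to these subsets gives at once the bijection $\DDd(n\backslash k)^{\circ a,\ast b}\to\DDb(n\backslash k)^{\circ b,\ast a}$.

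For the finer statement I would translate the two ``$\backslash 0$'' conditions into the language of the construction. On the $\DDd$ side, it is immediate from the definition of the area word that the starting point of the $j$-th vertical step of a Dyck path lies on the main diagonal exactly when $a_j(D)=0$; so $\DDd(n\backslash k\backslash 0)^{\circ a,\ast b}$ is the set of paths in $\DDd(n\backslash k)^{\circ a,\ast b}$ having no decorated peak at a position $j$ with $a_j(D)=0$. On the $\DDb$ side the condition is already columnwise: no decorated fall in the first $k$ columns.

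The core step is then to follow a decorated peak of $D$ with $a_j(D)=0$ through $\zeta$. Since $D$ has $k$ zeros, the bounce path of $\zeta(D)$ starts with a vertical run of length $k$, so its first peak sits at $(0,k)$ and every later bounce peak has abscissa $\ge k$. In the construction, a decorated peak $j$ of $D$ with $a_j(D)=i-1$ is converted into a decorated \emph{fall} of $\zeta(D)$ lying in the stretch of path drawn between the $i$-th and $(i+1)$-th bounce peaks, and that stretch occupies precisely the columns lying between the abscissae of those two peaks. For $i=1$ these are exactly the first $k$ columns; for $i\ge 2$ they all have abscissa $\ge k$; and neither the initial vertical run nor the final horizontal run of $\zeta(D)$ contains a decorated fall. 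Therefore a decorated fall of $\zeta(D)$ lies in the first $k$ columns if and only if it comes from a decorated peak $j$ of $D$ with $a_j(D)=0$. Feeding this into the reformulations above shows that $\zeta$ maps $\DDd(n\backslash k\backslash 0)^{\circ a,\ast b}$ onto $\DDb(n\backslash k\backslash 0)^{\circ b,\ast a}$ bijectively.

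The only thing that really needs care — and the reason the statement is not totally formal — is making sure one invokes the $\zeta$ construction correctly: that decorated peaks of $D$ genuinely go to decorated \emph{falls} in the claimed stretch (rather than to peaks, the way decorated rises do), and that the column range of each such stretch matches up with the corresponding block of equal letters of the bounce word. Once those bookkeeping points and the identification ``starting point on the diagonal $\iff a_j=0$'' are in hand, the rest is immediate, as the remark asserts.
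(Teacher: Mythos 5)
Your argument is correct and is exactly the "immediate from the construction" reasoning that the paper invokes but does not write out: the sorted-area-word rule forces equality of zero-counts for the first restriction, and the fact that decorated peaks of $D$ at diagonal level $i-1$ become decorated falls of $\zeta(D)$ in the column block between the $i$-th and $(i+1)$-th bounce peaks pins the "$\backslash 0$" condition on each side to $i=1$, i.e.\ to the first $k$ columns. You have simply supplied the bookkeeping details the paper leaves to the reader.
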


The following corollary follows from Proposition~\ref{prop:zeta_k} and Theorem~\ref{thm:zeta_map}.

\begin{corollary}
	\label{cor:zeta_map}
	Let $a,b,n,k \in \mathbb{N}$, with $n \geq k \geq 1$. Then
	\begin{align*}
		\DDd_{q,t}(n \backslash k \backslash 0)^{\circ a, \ast b} & = \DDb_{q,t}(n \backslash k \backslash 0)^{\circ b, \ast a} \\
		\DDd_{q,t}(n \backslash k)^{\circ a, \ast b} & = \DDb_{q,t}(n \backslash k)^{\circ b, \ast a}.
	\end{align*}
	In particular $\DDd_{q,t}(n \backslash k \backslash 0)^{\circ a, \ast b}$ and $\DDd_{q,t}(n \backslash k)^{\circ a, \ast b}$ satisfy the same recursion as $\DDb_{q,t}(n \backslash k \backslash 0)^{\circ b, \ast a}$ and $\DDb_{q,t}(n \backslash k)^{\circ b, \ast a}$ do in Theorem~\ref{thm:reco_first_bounce_R_S}.
\end{corollary}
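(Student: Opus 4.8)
The plan is to obtain both displayed identities as a formal consequence of Proposition~\ref{prop:zeta_k} and Theorem~\ref{thm:zeta_map}, and then to transport the recursions of Theorems~\ref{thm:reco_first_bounce_S_R} and \ref{thm:reco_first_bounce_R_S} across those identities. By Proposition~\ref{prop:zeta_k}, Haglund's $\zeta$ map restricts to bijections $\DDd(n\backslash k)^{\circ a,\ast b}\to\DDb(n\backslash k)^{\circ b,\ast a}$ and $\DDd(n\backslash k\backslash 0)^{\circ a,\ast b}\to\DDb(n\backslash k\backslash 0)^{\circ b,\ast a}$; and Theorem~\ref{thm:zeta_map} says $\zeta$ exchanges the bistatistic, so that $\dinv(D)=\area(\zeta(D))$ and $\area(D)=\bounce(\zeta(D))$ for every $D$. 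First I would simply write $\DDd_{q,t}(n\backslash k)^{\circ a,\ast b}=\sum_D q^{\dinv(D)}t^{\area(D)}$, perform the change of summation variable $D'=\zeta(D)$, and substitute the statistic identities to get $\sum_{D'}q^{\area(D')}t^{\bounce(D')}=\DDb_{q,t}(n\backslash k)^{\circ b,\ast a}$; the $\backslash 0$ version is identical. The one point to verify is that $\zeta$ respects the $k$-grading and the $\backslash 0$ condition, but this is exactly the content of Proposition~\ref{prop:zeta_k}: $\zeta$ produces as bounce word the sorted area word, so the count of zeros is unchanged.

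For the ``in particular'' part I would argue as follows. Take the recursion of Theorem~\ref{thm:reco_first_bounce_R_S}, which expresses $\DDb_{q,t}(n\backslash k\backslash 0)^{\circ a,\ast b}$ in terms of the $\DDb_{q,t}(n-k\backslash h)^{\circ a-i,\ast b}$ and $\DDb_{q,t}(n-k\backslash h)^{\circ a-i-1,\ast b}$. Every polynomial occurring there is of one of the two shapes covered by the equalities just established, so replacing each $\DDb_{q,t}$-term by the corresponding $\DDd_{q,t}$-term with the two decoration superscripts swapped, and then relabelling $a\leftrightarrow b$, turns that recursion into a recursion for $\DDd_{q,t}(n\backslash k\backslash 0)^{\circ a,\ast b}$ of exactly the asserted form. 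To get the recursion for $\DDd_{q,t}(n\backslash k)^{\circ a,\ast b}$ one does the same, but first applies Theorem~\ref{thm:reco_first_bounce_S_R}, which relates $\DDb_{q,t}(n\backslash k)^{\circ a,\ast b}$ to the $\backslash 0$ pieces via the extra sum $\sum_s \qbinom{k}{s}_q$; composing the two transported recursions yields the three-fold sum recursion for $\DDd_{q,t}(n\backslash k)^{\circ b,\ast a}$ recorded earlier in Section~\ref{subsec:qtSchroeder}. The base cases ($k=n$) and the initial conditions ($n=0$ or $k=0$) transport verbatim along $\zeta$.

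Since the bijection, the exchange of statistics, and the $\DDb$-side recursions are all already in hand, there is no genuine obstacle here; the only care needed is the bookkeeping of the two decoration parameters under the swap $a\leftrightarrow b$, and — as noted — the check that $\zeta$ preserves the $k$-grading and the auxiliary $\backslash 0$ condition, both of which are furnished by Proposition~\ref{prop:zeta_k}.
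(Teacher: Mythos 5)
Your argument is correct and is essentially what the paper intends: the paper states this corollary as an immediate consequence of Proposition~\ref{prop:zeta_k} and Theorem~\ref{thm:zeta_map} without further elaboration, and you have spelled out exactly that deduction, including the transport of the recursion via the change of summation variable $D'=\zeta(D)$ and the swap of the decoration superscripts. The additional remark about composing with Theorem~\ref{thm:reco_first_bounce_S_R} to recover the three-fold sum recursion for $\DDd_{q,t}(n\backslash k)^{\circ b,\ast a}$ is accurate and matches what the paper does a few lines later in Theorem~\ref{thm:reco1_first_bounce}.
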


We can now write down a recursion for the polynomials $\DDb_{q,t}(n \backslash k)^{\circ a, \ast b}$.

\begin{theorem}
	\label{thm:reco1_first_bounce}
	Let $a,b,n,k \in \mathbb{N}$, with $1 \leq k \leq n$. Then \[ \DDb_{q,t}(n \backslash n )^{\circ a, \ast b} = \delta_{a,0} q^{\binom{n-b}{2}}\qbinom{n-1}{b}_q, \]	and for $1 \leq k \leq n-1$
	\begin{align*}
		\DDb_{q,t}(n \backslash k)^{\circ a, \ast b} & = t^{n-k-a} \sum_{s=0}^{b} \sum_{i=0}^{a} \sum_{h=1}^{n-k} q^{\binom{k-s}{2}+\binom{i+1}{2}} \qbinom{k}{s}_q \qbinom{h-1}{i}_q  \\
		\times \qbinom{h+k-s-i-1}{h}_q & \left( \DDb_{q,t}(n-k \backslash h)^{\circ a-i, \ast b-s} + \DDb_{q,t}(n-k \backslash h )^{\circ a-i-1, \ast b-s}  \right),
	\end{align*}
	with initial conditions \[\DDb_{q,t}(0 \backslash k)^{\circ a, \ast b} = \DDb_{q,t}(n \backslash 0)^{\circ a, \ast b} = 0. \]
\end{theorem}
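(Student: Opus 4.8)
The plan is to combine the two preceding recursions — Theorem~\ref{thm:reco_first_bounce_S_R} and Theorem~\ref{thm:reco_first_bounce_R_S} — by substituting the second into the first, exactly as Theorem~\ref{thm:reco1_F_and_Ftilde} (resp. Theorem~\ref{thm:dvw}) was assembled from the symmetric-function recursions. First I would record the base case $k=n$: this is literally the first displayed equation of Theorem~\ref{thm:reco_first_bounce_S_R}, so nothing needs to be done there. The initial conditions $\DDb_{q,t}(0\backslash k)^{\circ a,\ast b}=\DDb_{q,t}(n\backslash 0)^{\circ a,\ast b}=0$ are immediate from the definitions (there are no paths of size $0$ with $k\ge 1$ zeros in their bounce word, and no path of positive size has zero zeros in its bounce word), so those are dispatched in one line.

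For the main recursion with $1\le k\le n-1$, I would start from Theorem~\ref{thm:reco_first_bounce_S_R},
\[
\DDb_{q,t}(n\backslash k)^{\circ a,\ast b}=\sum_{s=0}^{b}\qbinom{k}{s}_q\,\DDb_{q,t}(n-s\backslash k-s\backslash 0)^{\circ a,\ast b-s},
\]
and then apply Theorem~\ref{thm:reco_first_bounce_R_S} to each term $\DDb_{q,t}(n-s\backslash k-s\backslash 0)^{\circ a,\ast b-s}$ (note the parameters: the ambient size is $n-s$ and the number of zeros is $k-s$, with $1\le k-s\le (n-s)-1$ exactly when $s<k$ and $s\le b<n-\dots$; the boundary cases $k-s=n-s$, i.e. $s=k$, only arise when $k\le b$ and must be handled separately using the first displayed formula of Theorem~\ref{thm:reco_first_bounce_R_S}, but in fact that term is $\delta_{0,a}\delta_{0,b-s}q^{\binom{n-s}{2}}$ and one checks it is absorbed into the general sum as the degenerate $h=0$ contribution or vanishes). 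Substituting gives a triple sum over $s$, $i$ (the number of decorated peaks placed in the prepended block), and $h$ (the number of zeros of the smaller path), with exponent of $t$ equal to $(n-s)-(k-s)-a=n-k-a$, which is independent of $s$ as required, and with $q$-power $q^{\binom{k-s}{2}}$ coming from the area under the bounce path in the reduced problem of size $k-s$, times $q^{\binom{i+1}{2}}$ from the decorated peaks, together with the three $q$-binomials $\qbinom{k}{s}_q\qbinom{h-1}{i}_q\qbinom{h+(k-s)-i-1}{h}_q$. This is precisely the claimed formula.

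The only genuinely non-routine point is the bookkeeping of the boundary/degenerate terms when $s=k$ or when $h$ reaches its extreme values, and checking that the "$a-i$ versus $a-i-1$" splitting in Theorem~\ref{thm:reco_first_bounce_R_S} survives the summation over $s$ unchanged — it does, because the prepended block in Theorem~\ref{thm:reco_first_bounce_S_R} contributes only decorated falls, never a decorated peak, so the count of decorated peaks is untouched by the $\qbinom{k}{s}_q$ step. I expect this to be essentially a transcription of the proof of Theorem~\ref{thm:dvw}, replacing the symmetric-function identities by the two combinatorial recursions just established, so the proof can be stated quite compactly: combine Theorem~\ref{thm:reco_first_bounce_S_R} and Theorem~\ref{thm:reco_first_bounce_R_S}, with the degenerate terms checked against the definitions, and the initial conditions are immediate. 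Alternatively, by Corollary~\ref{cor:zeta_map}, the polynomials $\DDb_{q,t}(n\backslash k)^{\circ a,\ast b}$ coincide (after swapping $a\leftrightarrow b$) with $\DDd_{q,t}(n\backslash k)^{\circ b,\ast a}$, so the same recursion could be read off from the $\zeta$-map; but the direct substitution argument is cleaner and self-contained, so that is the route I would take.
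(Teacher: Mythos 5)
Your proposal is correct and is essentially the paper's own proof: the paper simply says "The initial conditions are easy to check. The recursive step follows immediately by combining Theorem~\ref{thm:reco_first_bounce_S_R} and Theorem~\ref{thm:reco_first_bounce_R_S}," which is precisely your substitution of the second into the first. Your extra bookkeeping is sound (the $s=k$ terms vanish automatically because $\qbinom{h+k-s-i-1}{h}_q=\qbinom{h-i-1}{h}_q=0$ for all $h\ge1$, $i\ge0$), though note the small slip where you wrote "$k-s=n-s$, i.e.\ $s=k$" — the degenerate boundary you mean is $k-s=0$, not $k-s=n-s$.
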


\begin{proof}
	The initial conditions are easy to check. The recursive step follows immediately by combining Theorem~\ref{thm:reco_first_bounce_S_R} and Theorem~\ref{thm:reco_first_bounce_R_S}.
\end{proof}

The following corollary follows immediately from Theorem~\ref{thm:reco1_first_bounce} and Proposition~\ref{prop:zeta_k}. This recursion for $\DDd_{q,t}(n \backslash k )^{\circ a, \ast b} $  is essentially due to Wilson (cf. \cite{Wilson-PhD-2015}*{Proposition~5.3.1.1}).

\begin{corollary}[Wilson]
	The polynomials $\DDd_{q,t}(n \backslash k)^{\circ b, \ast a} $ satisfy the same recursion as the polynomials $\DDb_{q,t}(n \backslash k)^{\circ a, \ast b}$ in Theorem~\ref{thm:reco1_first_bounce}.
\end{corollary}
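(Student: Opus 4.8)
The statement to prove is that the polynomials $\DDd_{q,t}(n \backslash k)^{\circ b, \ast a}$ satisfy the same recursion as the polynomials $\DDb_{q,t}(n \backslash k)^{\circ a, \ast b}$ established in Theorem~\ref{thm:reco1_first_bounce}. The plan is to deduce this directly from the bijective correspondence already in hand rather than to re-derive the recursion combinatorially from scratch.

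First I would invoke Proposition~\ref{prop:zeta_k}, which says that Haglund's $\zeta$ map restricts to a bijection $\DDd(n \backslash k)^{\circ a, \ast b} \to \DDb(n \backslash k)^{\circ b, \ast a}$ (and likewise on the $\backslash 0$ subsets). Combined with Theorem~\ref{thm:zeta_map}, which guarantees that $\zeta$ sends the bistatistic $(\dinv, \area)$ to $(\area, \bounce)$, this yields the identity of $q,t$-enumerators
\[
\DDd_{q,t}(n \backslash k)^{\circ a, \ast b} = \DDb_{q,t}(n \backslash k)^{\circ b, \ast a},
\]
which is exactly Corollary~\ref{cor:zeta_map}. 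Now swap the roles of $a$ and $b$ in this identity: $\DDd_{q,t}(n \backslash k)^{\circ b, \ast a} = \DDb_{q,t}(n \backslash k)^{\circ a, \ast b}$. Since the right-hand side satisfies the recursion of Theorem~\ref{thm:reco1_first_bounce}, so does the left-hand side, term by term — one simply substitutes $\DDb_{q,t}(n-k \backslash h)^{\circ a-i, \ast b-s} = \DDd_{q,t}(n-k \backslash h)^{\circ b-s, \ast a-i}$ (again by Corollary~\ref{cor:zeta_map} with indices permuted) throughout the right-hand side of the recursion. The base case $\DDb_{q,t}(n \backslash n)^{\circ a, \ast b} = \delta_{a,0} q^{\binom{n-b}{2}}\qbinom{n-1}{b}_q$ transports to $\DDd_{q,t}(n \backslash n)^{\circ b, \ast a} = \delta_{a,0} q^{\binom{n-b}{2}}\qbinom{n-1}{b}_q$ in the same way, and the vanishing initial conditions $\DDd_{q,t}(0 \backslash k)^{\circ b, \ast a} = \DDd_{q,t}(n \backslash 0)^{\circ b, \ast a} = 0$ are immediate since the underlying sets are empty.

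There is essentially no obstacle here: the entire content was already packaged into Proposition~\ref{prop:zeta_k} and Theorem~\ref{thm:zeta_map}, whose proofs carry the combinatorial weight. The only point requiring a moment of care is the bookkeeping of which index plays which role — in the $\DDd$ world the superscript $\circ$ counts decorated peaks and $\ast$ counts decorated rises, and $\zeta$ interchanges these two kinds of decorations, so the recursion for $\DDd_{q,t}(n \backslash k)^{\circ b, \ast a}$ is literally the recursion of Theorem~\ref{thm:reco1_first_bounce} read with $a$ and $b$ transposed. Writing it out, one gets
\[
\DDd_{q,t}(n \backslash k)^{\circ b, \ast a} = t^{n-k-a} \sum_{s=0}^{b} \sum_{i=0}^{a} \sum_{h=1}^{n-k} q^{\binom{k-s}{2}+\binom{i+1}{2}} \qbinom{k}{s}_q \qbinom{h-1}{i}_q \qbinom{h+k-s-i-1}{h}_q \left( \DDd_{q,t}(n-k \backslash h)^{\circ b-s, \ast a-i} + \DDd_{q,t}(n-k \backslash h)^{\circ b-s, \ast a-i-1} \right),
\]
which matches the formula stated earlier in the excerpt (Theorem with $\DDd_{q,t}(n \backslash k)^{\circ b, \ast a}$). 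The attribution to Wilson follows from \cite{Wilson-PhD-2015}*{Proposition~5.3.1.1}, where this recursion for decorated Dyck paths (in the $(\dinv, \area)$ statistics) was first obtained by a direct argument.
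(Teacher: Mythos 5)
Your proposal is correct and takes exactly the route the paper does: the paper states that the corollary "follows immediately from Theorem~\ref{thm:reco1_first_bounce} and Proposition~\ref{prop:zeta_k}," which is precisely your deduction via the $\zeta$-map equality $\DDd_{q,t}(n\backslash k)^{\circ b, \ast a} = \DDb_{q,t}(n\backslash k)^{\circ a, \ast b}$. Your index bookkeeping (swapping $a\leftrightarrow b$ because $\zeta$ interchanges decorated peaks and decorated rises) is also correct.
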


We can now write down a recursion for the polynomials $\DDb_{q,t}(n \backslash k \backslash 0)^{\circ a, \ast b}$.

\begin{theorem}
	\label{thm:reco_R_first_bounce}
	Let $a,b,n,k\in \mathbb{N}$, with $1 \leq k \leq n$. Then
	\begin{align*}
		\DDb_{q,t}(n\backslash k \backslash 0)^{\circ a, \ast b} & = 
		t^{n-k-a}
		\sum_{i=0}^a   q^{\binom{k}{2}+\binom{i+1}{2}}
		\sum_{h=1}^{n-k} \sum_{s=0}^{b} \qbinom{h}{s}_q
		\qbinom{h-1}{i}_q 
		\qbinom{h+k-i-1}{h}_q \\
		& \quad \times ( \DDb_{q,t}(n-k-s \backslash h-s \backslash 0)^{\circ a-i, \ast b-s}  \\
		& \quad \quad + \DDb_{q,t}(n-k-s \backslash h-s \backslash 0)^{\circ a-i-1, \ast b-s} ) \\
		& =
		t^{n-k-a}
		q^{\binom{k}{2}+\binom{a}{2}}
		\qbinom{k}{a}_q 
		\qbinom{n-a-1}{n-k-a}_q q^{\binom{n-k-b}{2}} \qbinom{n-k-1}{b}_q \\
		& \quad +
		t^{n-k-a}
		\sum_{i=0}^a q^{\binom{k}{2}+\binom{i}{2}}
		\sum_{h=1}^{n-k-1} \sum_{s=0}^{b} \qbinom{h}{s}_q
		\qbinom{k}{i}_q 
		\qbinom{h+k-i-1}{h-i}_q \\
		& \quad \times \DDb_{q,t}(n-k-s \backslash h-s \backslash 0)^{\circ a-i, \ast b-s} 
	\end{align*}
	with initial conditions \[ \DDb_{q,t}(n \backslash n \backslash 0)^{\circ a, \ast b}  = \delta_{0,a} \delta_{0,b} q^{\binom{n}{2}}. \]
\end{theorem}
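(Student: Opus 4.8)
The statement contains three pieces: the initial condition $\DDb_{q,t}(n\backslash n\backslash 0)^{\circ a,\ast b}=\delta_{0,a}\delta_{0,b}q^{\binom n2}$, the first (two-summand) recursive expression, and the second (one-summand plus closed-form) expression. The initial condition is not new: it is literally the initial condition already recorded in Theorem~\ref{thm:reco_first_bounce_R_S}, so there is nothing to prove there. The two recursive expressions are obtained, in the spirit of the $F,G,H$ recursions of Chapter~4, by \emph{composing} the two recursions already established for the $\DDb$ polynomials and then massaging $q$-binomials; no new combinatorial or symmetric-function input is needed.

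For the first equality (case $1\le k\le n-1$) the plan is to feed Theorem~\ref{thm:reco_first_bounce_S_R} into Theorem~\ref{thm:reco_first_bounce_R_S}. Theorem~\ref{thm:reco_first_bounce_R_S} writes $\DDb_{q,t}(n\backslash k\backslash 0)^{\circ a,\ast b}$ as $t^{n-k-a}\sum_{i=0}^a q^{\binom k2+\binom{i+1}2}\sum_{h=1}^{n-k}\qbinom{h-1}{i}_q\qbinom{h+k-i-1}{h}_q$ times $(\DDb_{q,t}(n-k\backslash h)^{\circ a-i,\ast b}+\DDb_{q,t}(n-k\backslash h)^{\circ a-i-1,\ast b})$, and Theorem~\ref{thm:reco_first_bounce_S_R} rewrites each inner $\DDb_{q,t}(n-k\backslash h)^{\circ c,\ast b}$ with $1\le h\le n-k-1$ as $\sum_{s=0}^b\qbinom hs_q\DDb_{q,t}(n-k-s\backslash h-s\backslash 0)^{\circ c,\ast b-s}$. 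Substituting and collecting yields exactly the first displayed expression, provided one handles the boundary value $h=n-k$ with care: there Theorem~\ref{thm:reco_first_bounce_S_R} supplies the closed form $\DDb_{q,t}(n-k\backslash n-k)^{\circ c,\ast b}=\delta_{c,0}q^{\binom{n-k-b}2}\qbinom{n-k-1}{b}_q$ rather than its recursion, and one must check, using the $q$-Pascal rule $\qbinom{n-k}{b}_q=\qbinom{n-k-1}{b}_q+q^{n-k-b}\qbinom{n-k-1}{b-1}_q$, that this boundary contribution slots into the uniform sum. This bookkeeping at $h=n-k$ is the only place real care is needed, and it is the step I expect to be the main (if modest) obstacle.

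For the second equality the plan is to recombine the two $\DDb$ summands into one. Shifting $i\mapsto i-1$ in the piece carrying the superscript $\circ\,a-i-1$ turns it into a contribution to $\DDb_{q,t}(\cdots)^{\circ a-i,\ast b-s}$ with coefficient $q^{\binom k2+\binom i2}\qbinom{h-1}{i-1}_q\qbinom{h+k-i}{h}_q$, and merging with the original $\circ\,a-i$ coefficient $q^{\binom k2+\binom{i+1}2}\qbinom{h-1}{i}_q\qbinom{h+k-i-1}{h}_q$ uses the elementary identity
\[
q^{i}\qbinom{h-1}{i}_q\qbinom{h+k-i-1}{h}_q+\qbinom{h-1}{i-1}_q\qbinom{h+k-i}{h}_q=\qbinom{k}{i}_q\qbinom{h+k-i-1}{h-i}_q,
\]
which is precisely the $q$-binomial identity invoked in passing from the second line of \eqref{eq:rel_G_F} to the first (our Lemma~\ref{lem:elementary4}); the index shift creates no boundary term since $\DDb_{q,t}(\cdots)^{\circ -1,\ast b-s}=0$. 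One then peels off the $h=n-k$ slice of the resulting sum: since $\DDb_{q,t}(m\backslash m\backslash 0)^{\circ c,\ast d}=\delta_{0,c}\delta_{0,d}q^{\binom m2}$, only $s=b$ and $i=a$ survive there, and after replacing the overshoot $\qbinom{n-k}{b}_q$ by $\qbinom{n-k-1}{b}_q$ via $q$-Pascal (the same correction as above) this slice becomes the displayed closed-form term $t^{n-k-a}q^{\binom k2+\binom a2}\qbinom ka_q\qbinom{n-a-1}{n-k-a}_q q^{\binom{n-k-b}2}\qbinom{n-k-1}{b}_q$, leaving the sum over $1\le h\le n-k-1$ exactly as stated. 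Everything else is routine $q$-binomial algebra; the only patience-requiring point is confirming that both closed-form boundary contributions at $h=n-k$ land on the correct side of the $q$-Pascal split.
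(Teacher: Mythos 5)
Your plan — substitute Theorem~\ref{thm:reco_first_bounce_S_R} into Theorem~\ref{thm:reco_first_bounce_R_S}, then recombine the two superscript terms via the $i\mapsto i-1$ shift and Lemma~\ref{lem:elementary4}, peeling off the $h=n-k$ slice as a closed form — is indeed the paper's approach. But the step you flag as ``the main obstacle'' and then wave away is exactly where the argument breaks: the boundary contribution at $h=n-k$ does \emph{not} slot into the uniform sum. Theorem~\ref{thm:reco_first_bounce_S_R}'s recursion only applies for $k<n$, i.e.\ for $h<n-k$; at $h=n-k$ you must use the closed form $\DDb_{q,t}(n-k\backslash n-k)^{\circ c,\ast b}=\delta_{c,0}q^{\binom{n-k-b}{2}}\qbinom{n-k-1}{b}_q$, whereas naively extending the recursion produces $\delta_{c,0}q^{\binom{n-k-b}{2}}\qbinom{n-k}{b}_q$. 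No $q$-Pascal identity reconciles the two — they genuinely differ, by $q^{n-k-b}\qbinom{n-k-1}{b-1}_q\cdot q^{\binom{n-k-b}{2}}$, whenever $b\geq 1$ — and as a result the \emph{first displayed equality in the theorem statement is false as written} (it is a typo in the paper). A check with $n=3$, $k=1$, $a=0$, $b=1$: the first displayed formula evaluates to $(1+q)t^2$ (the term $h=2$, $s=1$ contributes $\qbinom{2}{1}_q\cdot\DDb_{q,t}(1\backslash 1\backslash 0)^{\circ 0,\ast 0}=1+q$), but the direct count of $\DDb(3\backslash 1\backslash 0)^{\circ 0,\ast 1}$ gives a single path ($NENNEE$ with its unique rise decorated) with $(\area,\bounce)=(0,2)$, hence value $t^2$.

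The correct combination therefore runs the substitution only for $1\leq h\leq n-k-1$ and carries $h=n-k$ separately via the closed form; after merging the two superscripts with Lemma~\ref{lem:elementary4} (both in the boundary, where it collapses the $i=a$ and $i=a-1$ contributions into $q^{\binom{k}{2}+\binom{a}{2}}\qbinom{k}{a}_q\qbinom{n-a-1}{n-k-a}_q$, and in the sum over $h\leq n-k-1$), you land directly on the \emph{second} displayed expression, which is the correct one and the one used downstream (cf.\ its role in Theorem~\ref{thm:qt_enumerators_formulae}). So: the derivation should bypass the first display entirely rather than try to justify it — you had the right instinct in singling out the $h=n-k$ step, but the ``one must check'' you deferred would have revealed a statement error rather than a routine $q$-binomial massage.
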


\begin{proof}
	For the first equality, just combine Theorem~\ref{thm:reco_first_bounce_R_S} with Theorem~\ref{thm:reco_first_bounce_S_R}.
	
	For the second equality, rearrange the terms and use the following elementary lemma, whose proof is in the appendix of this article. %TODO: Add the proof!
	\begin{lemma}
		\label{lem:elementary4}
		For $h\geq 1$ and $k,s\geq 0$
		\[ q^{s} \qbinom{h-1}{s}_q \qbinom{h+k-s-1}{h}_q + \qbinom{h-1}{s-1}_q \qbinom{h+k-s}{h}_q = \qbinom{k}{s}_q \qbinom{h+k-s-1}{h-s}_q. \]
	\end{lemma}
	
	The thesis follows.
\end{proof}

The following corollary follows immediately from Theorem~\ref{thm:reco_R_first_bounce} and Proposition~\ref{prop:zeta_k}. 

\begin{corollary}
	The polynomials $\DDd_{q,t}(n\backslash k \backslash 0 )^{\circ b, \ast a}$ satisfy the same recursion as the polynomials $\DDb_{q,t}(n\backslash k \backslash 0 )^{\circ a, \ast b}$ in Theorem~\ref{thm:reco_R_first_bounce}.
\end{corollary}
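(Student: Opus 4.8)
The statement to prove is the final \textbf{Corollary}: the polynomials $\DDd_{q,t}(n\backslash k \backslash 0)^{\circ b, \ast a}$ satisfy the same recursion as the polynomials $\DDb_{q,t}(n\backslash k \backslash 0)^{\circ a, \ast b}$ established in Theorem~\ref{thm:reco_R_first_bounce}.

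\begin{proof}
The plan is to invoke Proposition~\ref{prop:zeta_k} together with Theorem~\ref{thm:reco_R_first_bounce}, exactly as was done for the two preceding corollaries. By Proposition~\ref{prop:zeta_k}, the restriction of Haglund's $\zeta$ map furnishes, for every $n\geq k\geq 1$, a bijection
\[
\zeta\colon \DDd(n\backslash k\backslash 0)^{\circ b,\ast a}\longrightarrow \DDb(n\backslash k\backslash 0)^{\circ a,\ast b},
\]
and by Theorem~\ref{thm:zeta_map} this bijection sends the bistatistic $(\dinv,\area)$ to $(\area,\bounce)$. Consequently the generating functions coincide:
\[
\DDd_{q,t}(n\backslash k\backslash 0)^{\circ b,\ast a}=\DDb_{q,t}(n\backslash k\backslash 0)^{\circ a,\ast b}
\]
for all admissible $a,b,n,k$ (this is, of course, the content of Corollary~\ref{cor:zeta_map} in the case with the extra $\backslash 0$ condition, now with the roles of $a$ and $b$ displayed as needed).

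Given this identity, the recursion is immediate. Starting from the recursion of Theorem~\ref{thm:reco_R_first_bounce} for $\DDb_{q,t}(n\backslash k\backslash 0)^{\circ a,\ast b}$, replace each occurrence of a $\DDb_{q,t}$-polynomial (on both sides, including the summands $\DDb_{q,t}(n-k-s\backslash h-s\backslash 0)^{\circ a-i,\ast b-s}$ and $\DDb_{q,t}(n-k-s\backslash h-s\backslash 0)^{\circ a-i-1,\ast b-s}$, as well as the initial condition $\DDb_{q,t}(n\backslash n\backslash 0)^{\circ a,\ast b}=\delta_{0,a}\delta_{0,b}q^{\binom{n}{2}}$) by the corresponding $\DDd_{q,t}$-polynomial with the superscripts $\circ$ and $\ast$ swapped, using the displayed equality above. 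Since the coefficients appearing in the recursion are symmetric in the way required — i.e. they do not depend on which of $a,b$ plays which role once the swap is performed — the resulting identity is precisely the claimed recursion for $\DDd_{q,t}(n\backslash k\backslash 0)^{\circ b,\ast a}$, with the same initial condition $\DDd_{q,t}(n\backslash n\backslash 0)^{\circ b,\ast a}=\delta_{0,a}\delta_{0,b}q^{\binom{n}{2}}$.

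There is essentially no obstacle here: the only point requiring (minimal) care is bookkeeping of the $a\leftrightarrow b$ swap in the superscripts, to make sure that the recursion one writes down for the $\DDd$-side is literally the one in Theorem~\ref{thm:reco_R_first_bounce} and not a transposed variant with the parameters in the wrong slots. All the real work — constructing $\zeta$, checking it respects the $\backslash 0$ refinement, and verifying the statistic correspondence — has already been carried out in the proof of Theorem~\ref{thm:zeta_map} and in Proposition~\ref{prop:zeta_k}.
\end{proof}
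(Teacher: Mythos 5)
Your proof is correct and follows essentially the same route as the paper: invoke Proposition~\ref{prop:zeta_k} (the restriction of $\zeta$ to the $\backslash 0$ refinement) and Corollary~\ref{cor:zeta_map} to identify $\DDd_{q,t}(n\backslash k\backslash 0)^{\circ b,\ast a}$ with $\DDb_{q,t}(n\backslash k\backslash 0)^{\circ a,\ast b}$, then transport the recursion of Theorem~\ref{thm:reco_R_first_bounce}. The paper states this as an immediate consequence of those two results, which is exactly your argument.
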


\subsection{pbounce} 

Let $a,b,n,k,i\in \mathbb{N}$, with $1 \leq i \leq k \leq n$. We introduce two sets \[ \DDp(n \backslash k \backslash i)^{\circ a, \ast b}\subseteq \DDp(n \backslash k)^{\circ a, \ast b} \subseteq \DDp(n)^{\circ a, \ast b} \]

\begin{itemize}
	\item a path in $\DDp(n)^{\circ a, \ast b} $ is a path in $\DDp(n \backslash k )^{\circ a, \ast b}$ if its pbounce word contains exactly $k$ zeros;   
	\item a path in $\DDp(n \backslash k)^{\circ a, \ast b}$ is a path in $\DDp(n \backslash k \backslash i)^{\circ a, \ast b}$ if among the $k$ first vertical steps of $D$, exactly $i$ are decorated rises. 
\end{itemize}

Observe that $ \DDp(n)^{\circ a, \ast b} = \bigsqcup_{k=1}^{n}\DDp(n \backslash k)^{\circ a, \ast b}$ and \[ \DDp(n\backslash k )^{\circ a, \ast b} = \bigsqcup_{i=0}^{b} \DDp(n \backslash k \backslash i)^{\circ a, \ast b}, \] so that

\begin{align*}
	\DDp_{q,t}(n\backslash k )^{\circ a, \ast b}  & = \sum_{i=0}^{b} \DDp_{q,t}(n\backslash k \backslash i)^{\circ a, \ast b}, \; \text{ and } \\ \DDp_{q,t}(n)^{\circ a, \ast b}  & = \sum_{k=1}^n \DDp_{q,t}(n \backslash k)^{\circ a, \ast b}  = \sum_{k=1}^n \sum_{i=0}^{b} \DDp_{q,t}(n \backslash k \backslash i)^{\circ a, \ast b}. 
\end{align*}

The following proposition is an immediate consequence of the definition of the map $\psi$ in Theorem~\ref{thm:psi_map}.
\begin{proposition}
	\label{prop:psi_k}
	The map $\psi$ defined in the proof of Theorem~\ref{thm:psi_map} restricts to a bijection between $\DDp(n \backslash k)^{\circ a, \ast b}$ and $\DDp(n \backslash k)^{\circ b, \ast a}$.
\end{proposition}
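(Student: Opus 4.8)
The statement to prove is Proposition~\ref{prop:psi_k}: that the map $\psi$ constructed in Theorem~\ref{thm:psi_map} restricts to a bijection between $\DDp(n \backslash k)^{\circ a, \ast b}$ and $\DDp(n \backslash k)^{\circ b, \ast a}$. Since $\psi$ is already known (from Theorem~\ref{thm:psi_map}) to be an involution on $\DDp(n)^{\circ a, \ast b}$ that exchanges the number of decorated peaks and decorated falls while preserving $\area$ and $\pbounce$, the only thing that genuinely needs verification is that $\psi$ preserves the quantity $k$, i.e.\ the number of zeros in the pbounce word. Once that is established, restricting the involution $\psi$ to the subset $\DDp(n \backslash k)^{\circ a, \ast b}$ automatically lands in $\DDp(n \backslash k)^{\circ b, \ast a}$ (the peak/fall counts swap by Theorem~\ref{thm:psi_map}), and its square is still the identity, so it is a bijection.

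The plan is therefore: first, recall that $\psi$ is built out of the elementary moves $\psi_0$ (turning one decorated fall into a decorated peak) and $\psi_0^{-1}$, applied in the prescribed order. It suffices to check that each single application of $\psi_0$ (hence also $\psi_0^{-1}$) leaves the pbounce \emph{path} — and in particular the set of zeros of the pbounce word — unchanged. This was in fact already observed inside the proof of Theorem~\ref{thm:psi_map}: the move $\psi_0$ deletes a decorated east step $x$ and inserts an east step immediately after the point $y$ obtained by traveling south from $x$ to the main diagonal and then west to the end of a north step, and the argument there notes that ``the pbounce path is not altered.'' I would simply make this explicit: deleting the horizontal step following a decorated peak (which is what the pbounce construction does to form the leaning stack) commutes with the surgery $\psi_0$ performs, because $\psi_0$ only rearranges steps within a region lying strictly between the path and the main diagonal below the diagonal level of $x$, and the number of cells between path and diagonal in each row is preserved. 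Hence the leaning stack, its bounce path, and the resulting pbounce word (including its number of zeros) are all unchanged.

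Having checked invariance of $k$ under $\psi_0$ and $\psi_0^{-1}$, I would conclude as follows: take $D \in \DDp(n \backslash k)^{\circ a, \ast b}$. By Theorem~\ref{thm:psi_map}, $\psi(D) \in \DDp(n)^{\circ b, \ast a}$ with $\area(\psi(D)) = \area(D)$ and $\pbounce(\psi(D)) = \pbounce(D)$; by the invariance just discussed, the pbounce word of $\psi(D)$ has the same number $k$ of zeros as that of $D$, so $\psi(D) \in \DDp(n \backslash k)^{\circ b, \ast a}$. The same reasoning applied to $\psi(D)$ shows $\psi$ maps $\DDp(n \backslash k)^{\circ b, \ast a}$ back into $\DDp(n \backslash k)^{\circ a, \ast b}$, and since $\psi \circ \psi = \mathrm{id}$ on $\DDp(n)^{\circ a, \ast b}$, the restriction is a bijection.

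The only mildly delicate point — the ``main obstacle'' — is making the claim that $\psi_0$ does not move any cell out of its row airtight, especially near the boundary: one must check that the point $y$ is correctly placed (it is the endpoint of a north step, so inserting an east step there creates a genuine peak) and that the portion of the path shifted by $\psi_0$ never crosses below the main diagonal, which is exactly why the hypothesis that $x$ is the endpoint of a \emph{fall} (hence never on the main diagonal) is used in Theorem~\ref{thm:psi_map}. I would phrase the pbounce-invariance as a short lemma about $\psi_0$ and cite the area/pbounce bookkeeping already done in the proof of Theorem~\ref{thm:psi_map} rather than repeat it.
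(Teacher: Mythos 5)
Your reduction is the same one the paper (very tersely) has in mind: $\psi$ is by Theorem~\ref{thm:psi_map} an involution on $\DDp(n)^{\circ a, \ast b}$ preserving $\area$ and $\pbounce$ and swapping peak and rise decoration counts, so the only new content in Proposition~\ref{prop:psi_k} is that $\psi$ preserves $k$, the number of zeros in the pbounce word; and you correctly reduce this to the assertion, made inside the proof of Theorem~\ref{thm:psi_map}, that each elementary move $\psi_0$ leaves the pbounce word unchanged. The final involution argument is also fine.

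The one place your elaboration would not survive scrutiny is your proposed justification of that assertion. It is not true that ``the number of cells between path and diagonal in each row is preserved'' by $\psi_0$, nor that the leaning stack is unchanged: already in the example of Figure~\ref{fig:psi0}, $D$ and $\psi_0(D)$ have different area words in the rows touched by the surgery, and the leaning stack of $\psi_0(D)$ has one \emph{fewer} east step than that of $D$, because the pbounce construction now deletes the east step after the freshly created decorated peak and nothing compensates for it. What $\psi_0$ actually preserves is only the pbounce \emph{word}. One correct way to see it: the leaning stack of $\psi_0(D)$ is that of $D$ with the decorated fall additionally removed and the broken main diagonal shifted one further unit to the left above the height of the new decorated peak; removing a fall (an east step not at the start of its horizontal run) does not alter the set of heights at which the leaning stack starts an east run, and the compensating shift of the broken diagonal ensures the bounce path hits the same heights as before, so the labels of the pbounce path --- in particular the number of $0$'s --- are unchanged. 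If you are going to make the paper's assertion ``explicit,'' it is this bookkeeping, not cell-per-row invariance, that needs to be carried out; alternatively, simply citing the assertion in the proof of Theorem~\ref{thm:psi_map} suffices, as the paper itself evidently intends.
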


The next corollary follows immediately from Proposition~\ref{prop:psi_k} and Theorem~\ref{thm:psi_map}

\begin{corollary}
	\label{cor:comb_symmetry}
	Let $a,b,n,k \in \mathbb{N}$, with $1 \leq k \leq n$. We have \[ \DDp_{q,t}(n\backslash k )^{\circ a, \ast b}  = \DDp_{q,t}(n\backslash k )^{\circ b, \ast a}, \] so that \[ \DDp_{q,t}(n)^{\circ a, \ast b} = \DDp_{q,t}(n)^{\circ b, \ast a}. \]
\end{corollary}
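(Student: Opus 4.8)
The plan is to obtain Corollary~\ref{cor:comb_symmetry} as an immediate consequence of Theorem~\ref{thm:psi_map} and Proposition~\ref{prop:psi_k}. First I would recall that $\psi\colon \DDp(n)^{\circ a, \ast b}\to \DDp(n)^{\circ b, \ast a}$ is a bijection with $\area(D)=\area(\psi(D))$ and $\pbounce(D)=\pbounce(\psi(D))$ for every $D$. By Proposition~\ref{prop:psi_k}, this map restricts to a bijection $\DDp(n\backslash k)^{\circ a, \ast b}\to \DDp(n\backslash k)^{\circ b, \ast a}$, i.e. it preserves the number $k$ of zeros in the pbounce word. Therefore, substituting $D\mapsto \psi(D)$ term by term,
\[
\DDp_{q,t}(n\backslash k)^{\circ a, \ast b}=\sum_{D\in \DDp(n\backslash k)^{\circ a, \ast b}}q^{\area(D)}t^{\pbounce(D)}=\sum_{D\in \DDp(n\backslash k)^{\circ b, \ast a}}q^{\area(D)}t^{\pbounce(D)}=\DDp_{q,t}(n\backslash k)^{\circ b, \ast a},
\]
which is the first identity.

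For the second identity I would simply sum over $k$. Since $\DDp(n)^{\circ a, \ast b}=\bigsqcup_{k=1}^{n}\DDp(n\backslash k)^{\circ a, \ast b}$ and likewise for $(b,a)$, we get $\DDp_{q,t}(n)^{\circ a, \ast b}=\sum_{k=1}^{n}\DDp_{q,t}(n\backslash k)^{\circ a, \ast b}=\sum_{k=1}^{n}\DDp_{q,t}(n\backslash k)^{\circ b, \ast a}=\DDp_{q,t}(n)^{\circ b, \ast a}$, using the first identity termwise. The argument is completely formal once Proposition~\ref{prop:psi_k} is in hand.

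The only point that requires genuine verification — and it is precisely the content of Proposition~\ref{prop:psi_k}, which we are granted — is that $\psi$ does not alter the number of zeros in the pbounce word; concretely, that the elementary move $\psi_0$ used in the construction of $\psi$ leaves the pbounce path (in particular its initial vertical run) untouched, as already noted in the proof of Theorem~\ref{thm:psi_map}. Since this has been established, I expect no obstacle here: the corollary admits an essentially one-line proof.
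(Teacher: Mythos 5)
Your proof is correct and is exactly the argument the paper intends: the corollary is stated as an immediate consequence of Proposition~\ref{prop:psi_k} and Theorem~\ref{thm:psi_map}, and your write-up simply makes explicit the term-by-term substitution $D\mapsto\psi(D)$ and the final summation over $k$. No gaps.
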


The following theorem provides a recursion for the polynomials $\DDp_{q,t}(n \backslash k \backslash i)^{\circ a, \ast b}$.

\begin{theorem}
	\label{thm:reco_old_bounce}
	Let $a,b,n,k,i\in \mathbb N$ with $1 \leq k \leq n$. Then \[ \DDp_{q,t}(n \backslash n \backslash i)^{\circ a, \ast b} = \delta_{i,b} q^{\binom{n-a-b}{2}} \qbinom{n-b-1}{a}_q \qbinom{n-1}{b}_q, \] and for $k < n$
	\begin{align*}
		\DDp_{q,t}(n \backslash k \backslash i)^{\circ a, \ast b} = & t^{n-k} \sum_{p=1}^k \sum_{j=0}^{n-k} \sum_{f=0}^{b-i} q^{\binom{p-i}{2}} \qbinom{k-i}{p-i}_q \qbinom{k-1}{i}_q \qbinom{p-1+j-f}{j-f}_q  \\
		& \times \DDp_{q,t}(n-k \backslash j \backslash f)^{\circ a-k+p, \ast b-i} 
	\end{align*}
	with the initial conditions \[ \DDp_{q,t}(n\backslash 0 \backslash i )^{\circ a, \ast b} = \DDp_{q,t}(0 \backslash k \backslash i )^{\circ a, \ast b} = 0.
	\]
\end{theorem}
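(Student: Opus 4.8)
The plan is to prove this recursion by a direct combinatorial decomposition of paths in $\DDp(n \backslash k \backslash i)^{\circ a, \ast b}$, mirroring the structure of the proof of Theorem~\ref{thm:reco_first_bounce_R_S} but now working with the pbounce path instead of the bounce path. Recall that the pbounce path is obtained by first deleting the horizontal step after each decorated peak, sliding the picture to form a leaning stack, and then running the usual bounce construction on the resulting object; the pbounce word of $D$ has exactly $k$ zeros precisely when the first segment of this pbounce path has height $k$. I would first dispose of the base case $k=n$: here the pbounce path is a single bounce, the underlying Dyck path (after contracting the $a$ decorated peaks) must be the ``staircase'' path of the leaning stack of size $n-a$ going straight up then straight across, the $i$ decorated rises among the first $k=n$ vertical steps forces $i=b$, and a routine count of the interlacings of the $b$ decorated rises, the $a$ contracted peaks, and the remaining undecorated steps gives the product $q^{\binom{n-a-b}{2}}\qbinom{n-b-1}{a}_q\qbinom{n-1}{b}_q$ (the two $q$-binomials come from placing the decorated rises and the decorated peaks independently, exactly as in the $k=n$ case of Theorem~\ref{thm:reco_first_bounce_S_R} combined with Theorem~\ref{thm:reco_old_bounce}'s top-row analysis).

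For $k < n$, I would set up the decomposition as follows. Take a path $D \in \DDp(n-k \backslash j \backslash f)^{\circ a-k+p, \ast b-i}$. Its pbounce word starts with $j$ zeros, so after the leaning-stack contraction it starts with $j$ vertical steps followed by a horizontal step. I extend $D$ to a path $D'$ of size $n$ by prepending a new bottom portion $P$ occupying the first $k$ rows and the corresponding columns: $D'$ must have exactly $k$ zeros in its pbounce word, so $P$ begins with $k$ vertical steps and then a horizontal one, and the remaining part of $P$ interpolates between the top of those $k$ vertical steps and the bottom of $D$. The pbounce relation is $\pbounce(D') = \pbounce(D) + (n-k)$ since all the labels of the (shifted) pbounce path of $D$ go up by one while the first $k$-segment contributes zeros — and crucially, because pbounce does \emph{not} discard any labels for decorated peaks (unlike bounce), there is no $-a$ correction here; this explains the bare factor $t^{n-k}$. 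The remaining combinatorial data in $P$ is: choosing which of the first $k$ vertical steps are decorated rises ($i$ of them, but the very first step of $D'$ cannot be a rise, so among the $k-i$ non-rises and $\ldots$ — actually the rises among the first $k$ steps: one distributes $i$ decorated rises among $k$ positions with the first position forbidden, but more carefully, the quantity $p$ records the number of undecorated north steps in a certain scanned region, giving the factor $q^{\binom{p-i}{2}}\qbinom{k-i}{p-i}_q\qbinom{k-1}{i}_q$), and then interlacing the remaining $k-p$ horizontal steps of $P$ with the $j$ vertical steps of the contracted $D$ that rise past row $k$, contributing $\qbinom{p-1+j-f}{j-f}_q$ after accounting for the $f$ decorated rises of $D$ that sit in that interlaced region. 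Summing over $p$, $j$, $f$ gives the stated recursion, and the initial conditions $\DDp_{q,t}(n\backslash 0\backslash i) = \DDp_{q,t}(0\backslash k\backslash i) = 0$ are immediate since a pbounce word always starts with at least one zero and a size-zero path has an empty pbounce word.

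An alternative, and in my view cleaner, route is to \emph{avoid re-deriving the recursion from scratch} and instead deduce it from what is already proved. By Remark~\ref{rem:pbounce_origin}, the objects of $\DDp(n)^{\circ a, \ast b}$ with their $(\area,\pbounce)$ statistic are exactly decorated Schröder paths with the classical $(\area,\bounce)$ of Chapter~4 of \cite{Haglund-Book-2008}, where a decorated peak is a diagonal step; under this identification the ``$k$ zeros in the pbounce word'' condition becomes ``$k$ zeros in the classical bounce word'', and ``$i$ decorated rises among the first $k$ vertical steps'' translates into a condition on the first bounce segment. Since Theorem~\ref{thm:reco1_first_bounce} (equivalently Theorem~\ref{thm:reco_first_bounce_R_S} composed with Theorem~\ref{thm:reco_first_bounce_S_R}) already gives the bounce recursion for $\DDb_{q,t}$, one can transport it through the $\psi$-map of Theorem~\ref{thm:psi_map} and the peaks-to-diagonal-steps dictionary, tracking how the refinement indices $k$ and $i$ change; the absence of the $t^{-a}$-type correction in the pbounce case is then explained structurally by the fact that pbounce, unlike bounce, retains the labels at the contracted peaks. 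Either way, the genuinely delicate point — and the one I expect to be the main obstacle — is the bookkeeping in step (1) of the $P$-construction: correctly arguing that the number of \emph{undecorated} north steps in the scanned region equals $p-i$ (hence the $q^{\binom{p-i}{2}}\qbinom{k-i}{p-i}_q$ and the separate $\qbinom{k-1}{i}_q$ for the decorated rises), and verifying that the top-row convention for decorated peaks/rises interacts correctly with the leaning-stack contraction so that no path is double-counted and none is missed. This is exactly the kind of subtlety that required the two-term split in Theorem~\ref{thm:reco_first_bounce_R_S} and Theorem~\ref{thm:reco_H_and_Htilde}, and I would handle it by an explicit scanning argument on the area/pbounce words together with a picture, as in Figures~\ref{bounce recursion} and~\ref{creating P}.
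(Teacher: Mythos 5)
Your overall architecture is the same as the paper's: a direct combinatorial decomposition that prepends a first ``bounce block'' $P$ occupying the first $k$ rows (pre-contraction) to a smaller decorated Dyck path $D\in\DDp(n-k\backslash j\backslash f)^{\circ a-k+p,\ast b-i}$, and a careful area/pbounce bookkeeping via interlacings. You also correctly spot the main structural point that distinguishes this from the bounce recursion, namely that the pbounce word retains labels at contracted peaks, so the bounce increment is $t^{n-k}$ with no $-a$ correction. The base case reasoning for $k=n$ forcing $i=b$ and the initial conditions are likewise in order.

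However, there is a genuine gap exactly where you flag one, and you have not closed it. You write that the top-row/decorated-peak bookkeeping is ``exactly the kind of subtlety that required the two-term split in Theorem~\ref{thm:reco_first_bounce_R_S}'' and that you expect it to be the main obstacle. But the recursion you are asked to prove has \emph{no} two-term split on the right-hand side, and a proof modeled naively on the bounce argument (where one must distinguish whether a decorated peak's horizontal step belongs to $P$ or to $D$, hence the $\DDb_{q,t}(\cdot)^{\circ a-i,\ast b}+\DDb_{q,t}(\cdot)^{\circ a-i-1,\ast b}$ correction) would not arrive at it. The resolution, which is the real content you are missing, is that in the pbounce construction one treats each decorated peak as an atomic unit (a vertical step together with its following horizontal step): in the paper's step (2) the path from $(0,0)$ to $(k-p,k-i)$ is built as an interlacing of $k-p$ such atomic decorated peaks with $p-i$ bare north steps, so the factor $q^{\binom{p-i}{2}}\qbinom{k-i}{p-i}_q$ falls out of the usual $q$-rook count and there is no ambiguity at the boundary row $k$ to dissolve. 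Your description instead speaks of ``interlacing the remaining $k-p$ horizontal steps of $P$ with the $j$ vertical steps of the contracted $D$,'' which conflates the two interlacings in the paper's steps (1) and (2) and does not make peaks atomic; this is what leaves the two-term question open in your account. Related to this, your reading of $p$ (``the number of undecorated north steps in a certain scanned region'') is off: $p$ is the number of horizontal steps of $P$ that lie strictly above row $k$, i.e.\ the number of east steps of $P$ not eaten by a decorated peak; the number of bare north steps in the low portion is $p-i$ only after one separately accounts for the $i$ decorated rises, which is precisely why $\qbinom{k-i}{p-i}_q$ and $\qbinom{k-1}{i}_q$ appear as independent factors. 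Your proposed alternative route via Remark~\ref{rem:pbounce_origin} and the $\psi$-map is not the paper's route either: $\psi$ relates $\DDp^{\circ a,\ast b}$ to $\DDp^{\circ b,\ast a}$ but does not, by itself, transport the $\DDb$-recursion into the $\DDp$-recursion with the refinement index $i$, and the required tracking of $k$ and $i$ through that dictionary is substantial new work you have not carried out.
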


\begin{proof}
	In this proof, when we refer to a \emph{peak}, we mean the vertical step \emph{and} the horizontal step that follows it, as this simplifies the argument.
	
	We describe a procedure to obtain a unique path in $\DDp(n \backslash k \backslash i)^{\circ a, \ast b}$, keeping track of the statistics $\pbounce$ and $\area$. See Figure~\ref{fig:second_bounce_recursion} for a visual aid.
	
	We start with the case $k<n$. Consider a path $D$ in $\DDp(n-k \backslash j \backslash f )^{\circ a-k+p, \ast b-i} $. Call $P$ the portion of $D$ that starts at its lower left corner and ends at the endpoint of its $j$-th vertical step. Since the pbounce word of $D$ contains $j$ zeros, $P$ consists of vertical steps and decorated peaks and is followed by a horizontal step.  Furthermore, $P$ has $f$ decorated rises. We extend $D$ to a path of size $n$. Place $D$ in the top right corner of an $n \times n$ grid.
	
	\begin{enumerate}
		\item Let $p$ be an integer between $1$ and $k$. We add $p$ horizontal steps to $P$ as follows. Start by deleting the $f$ rises of $P$, but keeping track of where they were; they will be put back later. Next, prepend one horizontal step to the path. Then choose an interlacing between the $j-f$ vertical steps of the path and the $p-1$ remaining horizontal steps. Each time a vertical step precedes a horizontal step the area under $P$ and above the line $y=k$ goes up by 1. So the terms of 
		\[ \qbinom{j-f+p-1}{j-f}_q \] represent the added area in this zone. Conclude by putting back the $f$ decorated rises. This does not modify the area as the squares in the rows following decorated rises do not contribute to the area. The resulting $P$ starts at $(k-p,k)$ and ends at the same point as before. 
		\item Next, we construct a path from $(0,0)$ to $ (k-p,k-i)$. We do this by choosing a sequence of $k-p$ decorated peaks (recall what peak means here) and $p-i$ vertical steps. The squares east of this path and west of the main diagonal contributing to the area can be seen as of two kinds: those east and those west of the line $x=k-p$. Clearly there are $\binom{p-i}{2}$ squares of the first kind. The number of squares of the second kind depends on the interlacing of the peaks and vertical steps: every time that a vertical step precedes a peak, a square of area is created. This explains the factor \[ \qbinom{k-p+p-i}{p-i}_q = \qbinom{k-i}{p-i}_q. \]
		\item Finally, we add $i$ decorated rises. The path from $(0,0)$ to $(k-p,k-i)$ that we constructed contains $k-p$ peaks and $p-i$ vertical steps. The lowest of them cannot be a decorated rise, since the first step of a Dyck path is never a rise. So we choose an interlacing between $k-i-1$ vertical steps of the existing path and $i$ decorated rises. The decorated rises are inserted directly after the existing vertical step in the interlacing. This operation does not add area west of the line $x=k-p$ but does add one square of area east of this line every time a decorated rise precedes a vertical step that is not a decorated rise. Thus, we have a term \[ \qbinom{k-i-1+i}{i}_q=\qbinom{k-1}{i}_q. \]
	\end{enumerate}
	Depending on the choice of the mentioned interlacings, we obtain a unique path $D'$ of size $n$. Its pbounce word contains $k$ zeros (indeed, the first horizontal step that is not part of a decorated peak occurs after the $k$-th row), and $i$ of the first $k$ vertical steps are decorated rises. So $D' \in \DDp(n \backslash k \backslash i)^{\circ a, \ast b}$. We kept track of the area during the construction and it is not difficult to see that $\pbounce(D')=\pbounce(D)+n-k$. Indeed the pbounce word of $D'$ is $k$ zeros followed by the pbounce word of $D$ whose letters are all increased by 1. 	Summing over the possible values of $p$, $f$ and $j$, we obtain the right equation.
	
	Now consider the case when $k=n$. Since there is only one ``bounce'', it is clear that $i=b$, hence the factor $\delta_{i,b}$. Since there are no horizontal steps that are not part of peaks, step (1) is not necessary. In step (2), the last step of the interlacing is not allowed to be a decorated peak since it would be the last peak of the path. We must thus choose an interlacing between $n-a-b-1$ vertical steps and the $a$ peaks. The area this creates between the path and the line $x=b$ is $q$-counted by $\qbinom{n-b-1}{a}_q$ and the area between $x=b$ and the main diagonal equals ${n-a-b\choose 2}$. Finally, in step (3) we add the $b$ decorated rises and interlace the $n-b$ vertical steps and the $b$ decorated rises. The same argument as the general case applies and the area is $q$-counted by $\qbinom{n-1}{b}_q$.
	%The rest of the reasoning is quite similar to the previous one except for step (2). Indeed since $k=n$ the last step in this interlacing may not be a decorated peak. So we fix one undecorated vertical step at the end and obtain a term \[ \qbinom{k-p+p-i-1}{p-i-1}_q = \qbinom{k-i-1}{p-i-1}_q. \] The result easily follows.
	
	The initial conditions of this recursion are easy to check.  
	
	\begin{figure}[!ht]
		\centering
		\begin{tikzpicture}[scale=0.5]
		\draw[gray!60, thin] (0,0)--(20,20) (0,0) grid (20,20);
		
		\filldraw[gray,pattern=north west lines, pattern color=gray] (1,4) rectangle (4,5) (2,8) rectangle (8,9) (2,9) rectangle (9,10) (5,14) rectangle (14,15) (11,17) rectangle (17,18);
		
		\draw[blue!60, line width=1.6pt] (0,0)|-(1,1)--(1,6)--(2,6)|-(3,11)--(3,13)--(5,13)--(5,15)--(9,15)--(9,16)--(11,16)|-(13,18)|-(18,19)|-(20,20);
		
		\draw
		(0.5,4.5) node {$\ast$}
		(1.5,8.5) node {$\ast$}
		(1.5,9.5) node {$\ast$}
		(4.5,14.5) node {$\ast$}
		(11.5,14.5) node {$\ast$}
		(12.5,17.5) node {$\ast$}
		(10.5,17.5) node {$\ast$};
		
		\draw[ultra thick, dashed, opacity=0.6] (2,2)|-(11,11); 
		\draw[red!90, sharp <-sharp >, sharp > angle = -45, line width=1.6pt] (11,11)--(11,13)--(12,13)--(12,16)--(13,16)--(13,19);
		\draw[thick] (10.8,10.8) rectangle (20.2,20.2);
		
		\filldraw[fill=blue!60]
		(0,1) circle (3 pt)
		(1,6) circle (3 pt)
		(3,13) circle (3 pt)
		(9,16) circle (3 pt)
		(12,16) circle (3 pt)
		(18,20) circle (3 pt)
		(11,13) circle (3 pt);
		
		\draw (0.6, 11.4) node {$(k-p,k)$} (3.5,1.5) node {$(k-p,k-p)$} (11.5,10.4) node{$(k,k)$} ;
		\end{tikzpicture}
		\caption{The recursion for $(\area, \pbounce)$.}
		\label{fig:second_bounce_recursion}
	\end{figure}
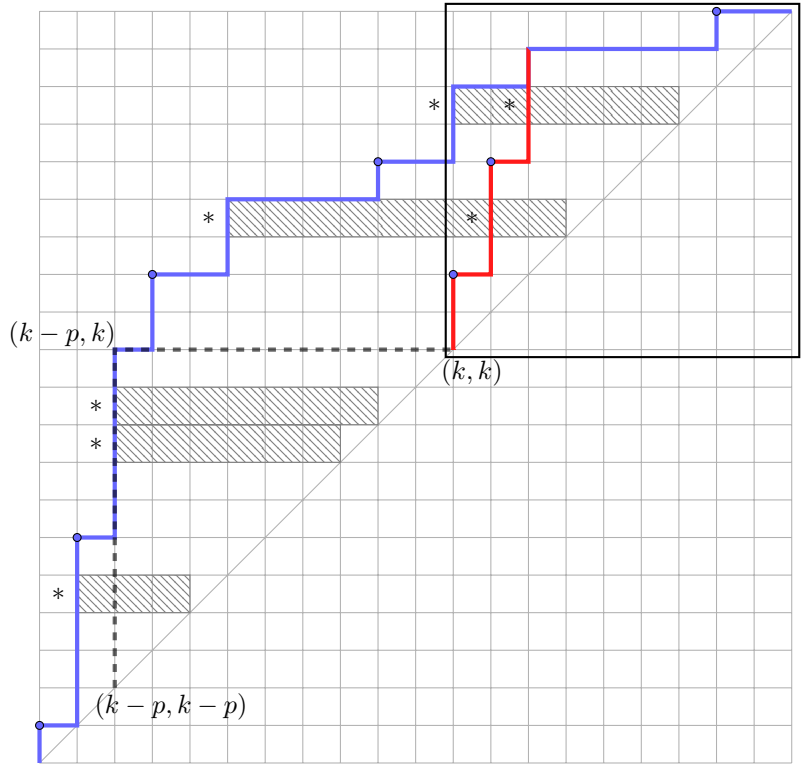
\end{proof}

\chapter{Combinatorics of polyominoes} \label{chapter:proofs-polyominoes}

In this chapter we discuss the combinatorics of parallelogram polyominoes, including a reduced version of these objects, decorations, labelling, and their relations to the combinatorics of Dyck paths.

\section{Parallelogram polyominoes} \label{sec:original_polyo}

In order to prove results about parallelogram polyominoes, it is convenient to refine our sets according to some extra parameters. Let us fix some notation.
\begin{align}
	&\PP(m,n) && \coloneqq \quad \{ P \mid P \; \text{is a $m \times n$ parallelogram polyomino} \} \\
	&\PP(m \backslash r, n) && \coloneqq \quad \{ P \in \PP(m,n) \mid P \; \text{has $r$ $1$'s in its area word} \} \label{dinvarea1} \\
	&\PP(m \backslash r, n)^{\ast k} && \coloneqq \quad \{ P \in \PP(m,n)^{\ast k} \mid  P \; \text{has $r$ $1$'s in its area word} \} \label{dinvarea2} \\
	&\PP(m, n \backslash s) && \coloneqq \quad \{ P \in \PP(m,n) \mid P \; \text{has $s$ $1$'s in its bounce word} \} \label{areabounce1} \\
	&\PP(m, n \backslash s)^{\circ k} && \coloneqq \quad \{  P \in \PP(m,n)^{\circ k} \mid P \; \text{has $s$ $1$'s in its bounce word}  \} \label{areabounce2}
\end{align}

In \cite{Aval-DAdderio-Dukes-Hicks-LeBorgne-2014}*{Section~4}, the authors give a bijection $\zeta \colon \PP(n,m) \rightarrow \PP(m,n)$ swapping $(\area, \bounce)$ and $(\dinv, \area)$. We recall here its definition.

We refer to Example~\ref{ex: zeta-pol} and Figure~\ref{fig:zetamap}. Pick a parallelogram polyomino and draw its bounce path; then, project the labels of the bounce path on both the red and the green path, i.e. label the horizontal steps (of both the red and the green path) with the label of the step of the bounce path in the same column, and the vertical steps (of both the red and the green path) with the label of the step of the bounce path in the same row.

Now, build the area word of the image as follows: pick the first bounce point on the red path, and write down the $\bar{0}$ and the $1$'s as they appear going downwards along the red path (in this case, the relative order will always be with the $\bar{0}$ first, and all the $1$'s next). Then, go to the first bounce point on the green path, and insert the $\bar{1}$'s after the correct number of $1$'s, in the same relative order in which they appear going downwards to the previous bounce point. If a letter is decorated, keep the decoration. Now, move to the second bounce point on the red path, and repeat.

As proved in \cite{Aval-DAdderio-Dukes-Hicks-LeBorgne-2014}*{Section~4}, the result will be the area word of a $n \times m$ parallelogram polyomino. It is also proved that $\area$ is mapped to $\dinv$, since the squares of the starting parallelogram polyomino correspond to the inversions in the image.

\begin{example}[The $\zeta$ map.]\label{ex: zeta-pol}
	
	First of all, we project the labels of the bounce path of the polyomino in Figure~\ref{fig:zetamap} on both the red and the green path. For clarity, we only show the labels up to $\bar{1}$ in the picture. The first bounce point on the red path is $(1,3)$: starting from there and going downwards along the red path, we read $\bar{0} 1 1 1$, which gives us the relative order of $\bar{0}$'s and $1$'s in the area word of the image.
	
	Now we move to the first bounce point on the green path, which is $(5,3)$: starting from there and going downwards along the green path, we read $1 1 \bar{1} {\color{red} \bar{1}} 1 \bar{1} \bar{1}$, which gives us the relative order of $1$'s and $\bar{1}$'s in the area word of the image. Combining it with the previous step, we get $\bar{0} 1 1 \bar{1} {\color{red} \bar{1}} 1 \bar{1} \bar{1}$. The red ${\color{red} \bar{1}}$ is the image of the label corresponding to the first decorated red peak, and thus it will be part of a decorated rise: the reason will be clear after one more step.
	
	Next, we move to the second bounce point on the red path, which is $(5,4)$: starting from there and going downwards along the red path, we read $\bar{1} {\color{red} \bar{1}} 2 \bar{1} \bar{1}$, (the $2$ is not shown in the image due to lack of space) which gives us the relative order of $\bar{1}$'s and $2$'s in the area word of the image. Combining it with the previous step, we get $\bar{0} 1 1 \bar{1} {\color{red} \bar{1}} 2 1 \bar{1} \bar{1}$. Since the red ${\color{red} \bar{1}}$ corresponded to a (decorated) peak, when reading the projected labels going downwards it must be followed by a $2$, hence it must be a rise (which we decorate).
	
	Iterating the procedure until the last point of the bounce path, we get the parallelogram polyomino with area word $\bar{0} 1 1 \bar{1} {\color{red} \bar{1}} 2 \bar{2} \bar{2} 3 \bar{3} {\color{red} \bar{3}} 4 4 \bar{4} \bar{4} \bar{3} 1 \bar{1} \bar{1}$.
	
	\begin{figure}[!ht]
		\begin{center}
			\begin{tikzpicture}[scale=0.6]
			\draw[gray!60, thin] (0,0) grid (12,7);
			\filldraw[yellow, opacity=0.3] (0,0) -- (3,0) -- (3,1) -- (5,1) -- (5,3) -- (7,3) -- (7,4) -- (10,4) -- (10,5) -- (12,5) -- (12,7) -- (8,7) -- (8,5) -- (5,5) -- (5,4) -- (3,4) -- (3,3) -- (0,3) -- cycle;
			
			\draw[green, sharp <-sharp >, sharp angle = 45, line width=1.6pt] (0,0) -- (3,0) -- (3,1) -- (5,1) -- (5,3) -- (7,3) -- (7,4) -- (10,4) -- (10,5) -- (12,5) -- (12,7);
			\draw[red, sharp <-sharp >, sharp angle = -45, line width=1.6pt] (0,0) -- (0,3) -- (3,3) -- (3,4) -- (5,4) -- (5,5) -- (8,5) -- (8,7) -- (12,7);
			
			\filldraw[fill=red]
			(3,4) circle (3pt)
			(8,7) circle (3pt);
			
			\draw[dashed, opacity=0.6, thick] (0,0) -- (1,0) -- (1,3) -- (5,3) -- (5,4) -- (7,4) -- (7,5) -- (10,5) -- (10,7) -- (12,7);
			
			\node[above] at (0.5,0) {$\bar{0}$};
			\node[right] at (1,0.5) {$1$};
			\node[right] at (1,1.5) {$1$};
			\node[right] at (1,2.5) {$1$};
			\node[above] at (1.5,3) {$\bar{1}$};
			\node[above] at (2.5,3) {$\bar{1}$};
			\node[red, above] at (3.5,3) {$\bar{1}$};
			\node[above] at (4.5,3) {$\bar{1}$};
			\node[right] at (5,3.5) {$2$};
			\node[above] at (5.5,4) {$\bar{2}$};
			\node[above] at (6.5,4) {$\bar{2}$};
			\node[right] at (7,4.5) {$3$};
			\node[above] at (7.5,5) {$\bar{3}$};
			\node[red, above] at (8.5,5) {$\bar{3}$};
			\node[above] at (9.5,5) {$\bar{3}$};
			\node[right] at (10,5.5) {$4$};
			\node[right] at (10,6.5) {$4$};
			\node[above] at (10.5,7) {$\bar{4}$};
			\node[above] at (11.5,7) {$\bar{4}$};
			
			\node[gray, left] at (0,0.5) {$1$};
			\node[gray, left] at (0,1.5) {$1$};
			\node[gray, left] at (0,2.5) {$1$};
			\node[gray, above] at (0.5,3) {$\bar{0}$};
			
			\node[gray, below] at (1.5,0) {$\bar{1}$};
			\node[gray, below] at (2.5,0) {$\bar{1}$};
			\node[gray, left] at (3,0.5) {$1$};
			\node[pink, below] at (3.5,1) {$\bar{1}$};
			\node[gray, below] at (4.5,1) {$\bar{1}$};
			\node[gray, left] at (5,1.5) {$1$};
			\node[gray, left] at (5,2.5) {$1$};				
			
			\node[pink, above] at (3.5,4) {$\bar{1}$};
			\node[gray, above] at (4.5,4) {$\bar{1}$};
			\end{tikzpicture}
		\end{center}
		
		\caption{The first three steps needed to compute the $\zeta$ map.}
		\label{fig:zetamap}
	\end{figure}
\end{example}

In fact, the map $\zeta$ has a stronger property.

\begin{theorem}
	\label{th:zetamap}
	For $m \geq 1$, $n \geq 1$, $k \geq 0$, and $1 \leq r \leq m$, the bijection $\zeta \colon \PP(n,m) \rightarrow \PP(m,n)$ in \cite{Aval-DAdderio-Dukes-Hicks-LeBorgne-2014}*{Theorem~4.1} extends to a bijection \[ \zeta \colon  \PP(n, m \backslash r)^{\circ k}\rightarrow  \PP(m \backslash r, n)^{\ast k}\] mapping the bistatistic $(\area, \bounce)$ to $(\dinv, \area)$.
\end{theorem}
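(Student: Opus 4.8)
The plan is to extend the unlabelled bijection $\zeta \colon \PP(n,m) \to \PP(m,n)$ from \cite{Aval-DAdderio-Dukes-Hicks-LeBorgne-2014}*{Theorem~4.1} by tracking what happens to decorations, and then to verify that the two extra refinement parameters ($r$, the number of $1$'s in one word, and $k$, the number of decorations) behave as claimed. First I would recall the construction of $\zeta$ exactly as in Example~\ref{ex: zeta-pol}: draw the bounce path of $P \in \PP(n, m \backslash r)^{\circ k}$, project its labels onto both the red and the green paths, and then read off the area word of $\zeta(P)$ by sweeping through the bounce points of the red and green paths alternately. The key new ingredient is the rule ``if a letter is decorated, keep the decoration'': a decorated red peak of $P$ carries a label (the label of the bounce step in its row), and this label, once projected and inserted into the area word of $\zeta(P)$, sits on some letter which I claim is always a rise of $\zeta(P)$. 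I would prove this claim by the local argument sketched in the example: a decorated red peak is a horizontal step of the red path immediately preceded by a vertical red step, so when we read the projected labels \emph{downwards} along the red path starting from a bounce point, the label attached to that horizontal step is immediately followed by the label of the vertical step below it, which is strictly larger in the alphabet $\overline{\mathbb{N}}$; hence in the area word of $\zeta(P)$ that letter is immediately followed by its successor, which by definition of $\Rise$ on parallelogram polyominoes (Definition of $\Rise(P)$ in Subsection~\ref{section:parallelogram-polyominoes}, i.e. $\vert a_i \vert < \vert a_{i+1} \vert$) makes it a decorated rise. Conversely, the same local analysis shows $\zeta^{-1}$ sends a decorated rise back to a decorated red peak, so $\zeta$ restricts to a bijection $\PP(n,m)^{\circ k} \to \PP(m,n)^{\ast k}$.

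Next I would check the statistic correspondence. The unlabelled $\zeta$ already maps $(\area, \bounce)$ to $(\dinv, \area)$ by \cite{Aval-DAdderio-Dukes-Hicks-LeBorgne-2014}*{Theorem~4.1}; I need to see that the decorated versions of these statistics match up. On the source side, $\bounce(P) = \sum_{i \notin \DPeak(P)} \vert b_i(P)\vert$ ignores the bounce-word letters lying in the column of a decorated red peak. On the target side, $\area(\zeta(P)) = \sum_{i \notin \DRise(\zeta(P))} \vert a_i(\zeta(P))\vert$ ignores the area-word letters corresponding to decorated rises. Under the bijection a decorated red peak in column $c$ of $P$ corresponds exactly to the decorated letter of the area word of $\zeta(P)$ described above; and in the undecorated correspondence, the bounce letter in column $c$ (which has some value $v$) maps precisely to the area-word letter whose value is $v$ (this is how the projection works — the label of a horizontal step is the column-label of the bounce path, and reading downwards along the red path from a bounce point produces exactly the run of letters of that value in the area word of the image). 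So the set of letters suppressed on the left is in value-preserving bijection with the set suppressed on the right, giving $\bounce(P) = \area(\zeta(P))$ even after decorations; and since decorations do not affect $\dinv$ of $\zeta(P)$ or $\area$ of $P$ in the relevant way, the identity $\area(P) = \dinv(\zeta(P))$ persists verbatim from the undecorated theorem.

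Finally I would match the refinement parameter $r$. By the Proposition in Subsection~\ref{section:parallelogram-polyominoes}, the area word of an $m\times n$ parallelogram polyomino has exactly $m$ unbarred letters and $n$ barred letters, with a single $\bar 0$ at the front; the number of $1$'s in the area word of $P \in \PP(n,m)$ equals the number of $\bar{1}$'s — no, more carefully: ``$r$ $1$'s in its area word'' for $P \in \PP(n,m)$ versus ``$s$ $1$'s in its bounce word''. I would use the known fact from \cite{Aval-DAdderio-Dukes-Hicks-LeBorgne-2014} that under $\zeta$ the number of $1$'s in the bounce word of the source equals the number of $1$'s in the area word of the target (both counting the size of the first ``block'' after the initial $\bar 0$), i.e. $\zeta$ sends $\PP(n, m\backslash r)$ to $\PP(m\backslash r, n)$; this is implicit in the construction since the first bounce point on the red path contributes exactly the $\bar 0$ and all the $1$'s of the target area word, and their number is the length of the first vertical run of the bounce path, which is the number of $1$'s in the source bounce word. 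Combining all three checks yields the claimed refined bijection.

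The step I expect to be the main obstacle is the careful bookkeeping in the statistic correspondence: verifying, letter by letter, that the value-preserving correspondence between ``suppressed bounce letters of $P$'' and ``suppressed area letters of $\zeta(P)$'' is exactly the restriction of the undecorated $\zeta$-correspondence, and in particular that decorating a red peak never creates or destroys a rise elsewhere. This requires a precise description of which area-word letter receives each projected label and an argument that the local ``successor'' pattern forcing the rise is unaffected by the presence of other decorations (since decorations are kept passively, this should go through, but it is where the proof needs genuine care rather than routine computation). A secondary, more minor point is checking the edge cases — the leftmost red peak and the first $\bar 0 1$ rise, which are excluded from being decorated in $\PP(m,n)^{\circ k}$ and $\PP(m,n)^{\ast k}$ respectively — to confirm the correspondence respects these exclusions; the local analysis above should show the leftmost red peak would map to the first $\bar 0 1$ rise, so the two exclusions are compatible under $\zeta$.
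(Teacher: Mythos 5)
Your proposal follows the same route the paper takes: recall the undecorated $\zeta$ of Aval--D'Adderio--Dukes--Hicks--Le~Borgne, observe that reading the red path top-to-bottom sends a red peak to a barred letter immediately followed by its successor (hence a rise, and this is the same observation the paper uses), conclude that the value-preserving anagram of bounce word to area word carries $\DPeak(P)$ onto $\DRise(\zeta(P))$ so that $\bounce\mapsto\area$, note that $\area\mapsto\dinv$ is unchanged from the undecorated theorem, and finally use the anagram property to match the refinement parameter $r$. The one place where you are slightly looser than the paper is the claim that the vertical step's projected label is merely ``strictly larger'' than the barred label of the peak: the paper pins down that it is exactly the successor (because both steps lie between the same pair of bounce points of the bounce path, so the column gets the label $\bar{\imath}$ and the row gets $i+1$), and it is this successor fact combined with the structure of area words that forces the inserted $(i+1)$ to land immediately after the decorated $\bar{\imath}$ in the area word of $\zeta(P)$ rather than somewhere farther along. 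Your argument still reaches the right conclusion because any letter of value $|a|>i$ that immediately follows $\bar{\imath}$ in a valid area word must be $i+1$, but it is cleaner to establish the successor directly as the paper does.
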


\begin{proof}
	We already recalled the definition of the $\zeta$ map, and the fact that it sends $\area$ into $\dinv$. We also know that $\bounce$ would go into $\area$ if there were no decorations. So we need to understand what happens to the decorations. 
	
	Notice that red peaks are mapped into rises, because when reading the red path top to bottom, one reads the horizontal step of the peak first, which corresponds to a barred letter, and the vertical step of the peak next, which corresponds to the next unbarred letter. Moreover, the decoration is kept on a letter with the same value. This implies that $\bounce$ is mapped to $\area$.
	
	Furthermore, by construction one has that the number of $1$'s in the bounce word is equal to the number of $1$'s in the area word of the image polyomino, since that area word is just an anagram of the bounce word of the starting polyomino. This shows that an element of $\PP(n, m \backslash r)^{\circ k}$ is actually sent into an element of $\PP(m \backslash r, n)^{\ast k}$, as we wanted.
\end{proof}

We now want to compute some $q,t$-enumerators for our sets. Let us define

\[ \PP_{q,t}(m,n) \coloneqq \sum_{P\in \PP(m,n)} q^{\area(P)} t^{\bounce(P)} = \sum_{P\in \PP(m,n)} q^{\dinv(P)} t^{\area(P)} \]

which are proved to be equal in \cite{Aval-DAdderio-Dukes-Hicks-LeBorgne-2014}*{Section~6}. We analogously denote with a subscript $q,t$ the $q,t$-enumerator for the other sets previously defined, using the bistatistic $(\dinv, \area)$ for \eqref{dinvarea1} and \eqref{dinvarea2}, and the bistatistic $(\area, \bounce)$ for \eqref{areabounce1} and \eqref{areabounce2}. We are now ready to state some recursions.

\begin{theorem}
	\label{th:dinvrecursion}
	For $m \geq 1$, $n \geq 1$, $k \geq 0$, and $1 \leq r \leq m$, the polynomials $\PP_{q,t}(m \backslash r,n)^{\ast k}$ satisfy the recursion
	\begin{align*}
		\PP_{q,t}(m \backslash r, n)^{\ast k} = t^{m+n-k-1} & \sum_{s=1}^{n-1} q^{r+s} \qbinom{r+s-1}{s}_q \sum_{h=0}^{k} q^{\binom{h}{2}} \qbinom{s}{h}_q \\
		\times & \sum_{u=1}^{m-r} \qbinom{s+u-h-1}{s-1}_q \PP_{q,t}(m-r \, \backslash \, u, \, n-s)^{\ast \, k-h}
	\end{align*}
	
	with initial conditions \[ \PP_{q,t}(m \backslash m,n)^{\ast 0} = (qt)^{m+n-1} \qbinom{m+n-2}{n-1}_q \] and $\PP_{q,t}(m \backslash r,1)^{\ast k} = \delta_{r,m}\delta_{k,0} (qt)^{m}$.
\end{theorem}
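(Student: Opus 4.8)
The plan is to prove the recursion combinatorially by analyzing how a parallelogram polyomino in $\PP(m \backslash r, n)^{\ast k}$ decomposes after the ``first bounce segment'' of its bounce path. Recall that the bounce path of a parallelogram polyomino starts with a single horizontal step, then alternates vertical/horizontal stretches. The integer $r$ records the number of $1$'s in the area word, and $s$ (the summation index) will record the number of $1$'s in the bounce word, i.e.\ the length of the first vertical stretch of the bounce path after the initial horizontal step. The idea is to remove the part of the polyomino lying in the first $r$ rows (roughly speaking, the ``bottom block'' bounded above by the first bounce point on the red path) together with the part lying in the first $s$ columns swept out before the bounce path turns, and to recognize what remains as a smaller parallelogram polyomino in $\PP(m-r \backslash u, n-s)^{\ast k-h}$, where $u$ is its number of $1$'s in the area word and $h$ the number of decorated rises that were ``used up'' in the removed portion.

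First I would set up the decomposition precisely. Given $P \in \PP(m \backslash r, n)^{\ast k}$, the first bounce point on the red path is at height $r$ (since there are exactly $r$ unbarred $1$'s read going down from the top of the first column), and the first vertical stretch of the bounce path has length $s$ where $s$ is the number of $\bar 1$'s below that first green bounce point — equivalently the number of $1$'s in the bounce word. Cutting $P$ along the line $y = r$ and along the appropriate vertical line determined by the bounce path isolates a ``staircase-with-choices'' region whose combinatorial content is: an interlacing of the $r$ rows of the bottom block with the $s$ columns of the first bounce stretch (contributing the factor $\qbinom{r+s-1}{s}_q$ for the area created, together with the $q^{r+s}$ accounting for the mandatory area of the initial snake of squares and the first row/column), a choice of which of the $s$ columns carry decorated rises descending into the bottom block ($h$ of them, contributing $q^{\binom h 2}\qbinom s h_q$ by the usual ``decorated falls/rises create $\binom h2$ forced squares plus an interlacing'' count, exactly as in Lemma on elementary identities and as in the proof of Theorem~\ref{thm:reco_first_bounce_R_S}), and finally an interlacing of the remaining $s-1$ columns with the $u$ rows of the next block ($\qbinom{s+u-h-1}{s-1}_q$). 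The remaining path is an honest element of $\PP(m-r \backslash u, n-s)^{\ast k-h}$, and one checks that $\area$ and $\bounce$ of $P$ equal the respective statistics of the smaller polyomino plus the contributions just enumerated; the power of $t$ collects as $t^{m+n-k-1}$ after noting $\bounce(P) = \bounce(P') + (m + n - 1) - k + (\text{terms absorbed})$ — I would verify the exact bookkeeping of the $t$-exponent against the known base case $\PP_{q,t}(m\backslash m,n)^{\ast 0} = (qt)^{m+n-1}\qbinom{m+n-2}{n-1}_q$.

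The main obstacle I expect is getting the area bookkeeping exactly right — in particular reconciling the fact that for parallelogram polyominoes every square counts (so $\area \geq m+n-1$) with the recursion on $\PP(m-r \backslash u, n-s)$, which itself has a shifted minimum; this is why the explicit powers $q^{r+s}$ and $t^{m+n-k-1}$ appear rather than cleaner exponents, and it is easy to be off by a row or column. A clean way to manage this is to first prove the statement for the \emph{reduced} polyominoes $\RP(m \backslash r, n)^{\ast k}$, where the analogous recursion (stated earlier in the excerpt) has the normalized shape, and then transfer via the bijection $\mathsf{r}\colon \PP(m,n) \to \RP(n-1,m-1)$ of Proposition~\ref{normalizing map} combined with the decoration-compatible extension $\bar\zeta$; the factor $(qt)^{m+n-1}$ is precisely the normalization of that bijection. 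Alternatively, one can prove the recursion directly on $\PP$ and then derive the $\RP$ recursion; either way, the content is the same decomposition. I would also double-check the range of $s$: here $s$ runs from $1$ to $n-1$ (not to $n$), reflecting that the last column of a parallelogram polyomino cannot be the entire remaining block, and that $\PP_{q,t}(m \backslash r, 1)^{\ast k} = \delta_{r,m}\delta_{k,0}(qt)^m$ is the genuine base case of the induction on $n$.
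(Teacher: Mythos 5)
Your proposal conflates the two different decompositions used for the two bistatistics. The set $\PP(m\backslash r, n)^{\ast k}$ in this theorem is refined by the number of $1$'s in the \emph{area word}, and the enumerator is for $(\dinv,\area)$; your decomposition is a bounce-path cut, which is the right tool for $\PP(m, n\backslash s)^{\circ k}$ (refined by the number of $1$'s in the \emph{bounce word}, with the $(\area,\bounce)$ enumerator) -- that is Theorem~\ref{th:bouncerecursion}, not this one. Your key assertion "the first bounce point on the red path is at height $r$ (since there are exactly $r$ unbarred $1$'s...)" is false: in the paper's running example (Figures~\ref{fig:aw-polyominoes} and~\ref{fig:bouncepath-polyominoes}) the area word has four $1$'s while the first vertical stretch of the bounce path has length three. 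The two refinements only match up after applying the $\zeta$ map of Theorem~\ref{th:zetamap}, which swaps $m$ and $n$, so a bounce cut in $P$ does not isolate the structure counted by $r$ in $\PP(m\backslash r, n)^{\ast k}$.

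The paper's actual proof does not cut geometrically at all. It works entirely on the area word: given $P\in\PP(m\backslash r, n)^{\ast k}$, let $s$ be the number of $\bar 1$'s, $h$ the number of decorated $\bar 1$'s, $u$ the number of $2$'s; delete all letters $1$ and $\bar 1$ and lower every remaining letter (except $\bar 0$) by one, obtaining the area word of a polyomino in $\PP(m-r\backslash u, n-s)^{\ast k-h}$. The factors then record \emph{lost dinv}: $q^r$ for inversions between $\bar 0$ and the $1$'s, $q^s\qbinom{r+s-1}{s}_q$ for inversions between the $1$'s and the $\bar 1$'s, $q^{\binom h2}\qbinom sh_q$ for inversions between $\bar 1$'s and the $h$ decorated-rise $2$'s, and $\qbinom{s+u-h-1}{s-1}_q$ for inversions between the remaining $\bar 1$'s and $2$'s; the $t^{m+n-k-1}$ is the drop in area from lowering every undecorated letter by one. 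If you want a geometric route, rather than salvaging the bounce cut, the cleaner alternative in the paper's framework is to prove Theorem~\ref{th:bouncerecursion} by the bounce decomposition (where your geometric picture is correct after replacing the area-word $r$ with the bounce-word $s$) and then transfer to the present statement via the $\zeta$ map of Theorem~\ref{th:zetamap}; but $\mathsf r$ alone (Proposition~\ref{normalizing map}) does not do this, since it swaps dimensions and only controls $(\area,\bounce)$, not $(\dinv,\area)$.
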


\begin{proof}
	The initial conditions are immediate to check. 
	
	Let us look at the recursive step: let $P$ be a polyomino with $k$ decorated rises, $r$ be the number of $1$'s in its area word, $s$ be the number of $\bar{1}$'s, $h$ be the number of $\bar{1}$'s with a decoration, and $u$ be the number of $2$'s. We remove from the area word of $P$ all the $1$'s and $\bar{1}$'s, and then we lower by $1$ all the other entries but $\bar{0}$. In this way we get the area word of a polyomino in $\PP(m-r \, \backslash \, u, \, n-s)^{\ast \, k-h}$.
	
	We have to check that our coefficients take care of the statistics. The factor $t^{m+n-k-1}$ takes care of the fact that every non-decorated letter but $\bar{0}$ is now contributing one unit less towards the area. The factor $q^r$ takes care of the lost dinv between the $\bar{0}$ and the $1$'s, and the factor $q^s \qbinom{r+s-1}{s}_q$ takes care of the lost dinv between the $1$'s and the $\bar{1}$'s. The factor $q^{\binom{h}{2}} \qbinom{s}{h}_q$ takes care of the lost dinv between the $\bar{1}$'s and the $2$'s that follow a decorated rise: ignoring the inversion formed with the leftmost $\bar{1}$ (which will be replaced by the inversion with the $\bar{0}$ once we lower the value from $2$ to $1$), each of these $2$'s gives a contribution between $0$ and $s-1$, and since they all form rises each of these $h$ $2$'s is directly preceded by a $\bar{1}$, hence the contributions must be all different. Next, the factor $\qbinom{s+u-h-1}{s-1}_q$ takes care of the dinv between the $s-1$ $\bar{1}$'s but the first, and the $u-h$ $2$'s that are not part of a decorated rise. The inversions formed with the leftmost $\bar{1}$ are once again replaced by the inversions with the $\bar{0}$ once we lower the value from $2$ to $1$, so their contribution is preserved.
	
	The factor $\PP_{q,t}(m-r \, \backslash \, u, \, n-s)^{\ast \, k-h}$ takes care of the dinv and the area of the remaining polyomino, and this concludes the proof.
\end{proof}

\begin{theorem}
	\label{th:bouncerecursion}
	
	For $m \geq 1$, $n \geq 1$, $k \geq 0$, and $1 \leq s \leq n$, the polynomials $\PP_{q,t}(m, n \backslash s)^{\circ k}$ satisfy the recursion
	\begin{align*}
		\PP_{q,t}(m, n \backslash s)^{\circ k} = t^{m+n-k-1} & \sum_{r=1}^{m-1} q^{r+s} \qbinom{r+s-1}{r}_q \sum_{h=0}^{k} q^{\binom{h}{2}} \qbinom{r}{h}_q \\ \times & \sum_{v=1}^{n-s} \qbinom{r+v-h-1}{r-1}_q \PP_{q,t}(m-r, n-s \, \backslash \, v)^{\circ \, k-h}
	\end{align*}
	
	with initial conditions \[ \PP_{q,t}(m, n \backslash n)^{\circ 0} = (qt)^{m+n-1} \qbinom{m+n-2}{m-1}_q \] and $\PP_{q,t}(1, n \backslash s)^{\circ k} = \delta_{s,n}\delta_{k,0} (qt)^{n}$.
\end{theorem}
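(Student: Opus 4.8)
The plan is to mirror the proof of Theorem~\ref{th:dinvrecursion}, which handled the parallel situation for $\PP_{q,t}(m \backslash r, n)^{\ast k}$, and to exploit the symmetry of the whole picture under the bijection $\zeta \colon \PP(n, m \backslash r)^{\circ k} \to \PP(m \backslash r, n)^{\ast k}$ of Theorem~\ref{th:zetamap}, which maps $(\area, \bounce)$ to $(\dinv, \area)$ and preserves the number of $1$'s in the bounce word (resp. area word) as well as the number of decorated red peaks (resp. rises). Indeed, applying $\zeta$ we get $\PP_{q,t}(m, n \backslash s)^{\circ k} = \PP_{q,t}(n \backslash s, m)^{\ast k}$, so the desired recursion should follow from Theorem~\ref{th:dinvrecursion} by swapping the roles of $m$ and $n$ (and of $r$ and $s$). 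Concretely, I would substitute $m \mapsto n$, $n \mapsto m$, $r \mapsto s$ in the statement of Theorem~\ref{th:dinvrecursion}, then rewrite each $q,t$-enumerator on both sides back in terms of the $(\area,\bounce)$-statistics via $\zeta$, and check that the result is literally the recursion claimed here (the summation indices $s$, $h$, $v$ here correspond to the indices $s$, $h$, $u$ there after relabelling).

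Alternatively — and this is the version I would actually write out, since it is self-contained and parallels the earlier proof cleanly — I would give a direct combinatorial argument on the bounce side. The initial conditions are immediate: $\PP(m, n \backslash n)^{\circ 0}$ consists of the polyominoes whose bounce word is $\bar 0 \bar 1 \bar 1 \cdots \bar 1$ with all barred ones, which forces a very constrained shape and gives the stated $q$-binomial times $(qt)^{m+n-1}$; and for $n=1$ the polyomino is forced. For the recursive step, take $P \in \PP(m, n \backslash s)^{\circ k}$, let $r$ be the number of $1$'s in its bounce word, $h$ the number of decorated red peaks among those lying in columns labelled $1$ in the bounce word, and $v$ the number of $2$'s in the bounce word. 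Deleting from the bounce word all the $1$'s and $\bar 1$'s and lowering every remaining barred/unbarred letter other than $\bar 0$ by one step in $\overline{\mathbb N}$ produces (the bounce word of) a polyomino in $\PP(m-r, n-s \backslash v)^{\circ k-h}$. I would then verify that the coefficients account for the statistics exactly: $t^{m+n-k-1}$ for the area units lost by each non-decorated letter other than $\bar 0$; $q^{r+s} \qbinom{r+s-1}{r}_q$ for the inversions lost between $\bar 0$ and the $1$'s and between the $1$'s and the $\bar 1$'s; $q^{\binom{h}{2}} \qbinom{r}{h}_q$ for the inversions between the decorated $\bar 1$'s and the $2$'s forming decorated rises; and $\qbinom{r+v-h-1}{r-1}_q$ for the inversions between the $r-1$ non-leftmost $\bar 1$'s and the $v-h$ undecorated $2$'s — the inversions involving the leftmost $\bar 1$ being recovered as inversions with $\bar 0$ after the value is lowered.

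The main obstacle is bookkeeping rather than conceptual: one must be careful that the statistics on parallelogram polyominoes are defined via $(\area,\bounce)$ here (not $(\dinv,\area)$), so the "lost statistic" argument is phrased in terms of the bounce word and the geometric picture of the bounce path, and one has to double-check that deleting the $1$'s and $\bar 1$'s from the bounce word really does correspond to a legitimate smaller polyomino with the claimed bounce and that the decorated-rise/decorated-peak correspondence under the deletion behaves as in the dinv case. The cleanest route is to invoke $\zeta$ and Theorem~\ref{th:dinvrecursion} directly, reducing the whole proof to a change of variables; I would state that as the proof and remark that a direct argument analogous to the one for Theorem~\ref{th:dinvrecursion} is also available. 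In either case the verification that the two recursions match after relabelling $m \leftrightarrow n$, $r \leftrightarrow s$ (and $u \leftrightarrow v$) is routine, using only that $\zeta$ preserves the relevant refinements.
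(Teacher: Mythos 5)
Your preferred route --- derive Theorem~\ref{th:bouncerecursion} from Theorem~\ref{th:dinvrecursion} plus the refined bijection $\zeta$ of Theorem~\ref{th:zetamap} via the substitution $m\leftrightarrow n$, $r\leftrightarrow s$, $u\leftrightarrow v$ --- is correct and is genuinely different from the paper's. The paper proves the bounce recursion by an independent, direct geometric argument: one cuts $P$ along the rectangle with opposite corners $(r,s)$ and $(m,n)$, observes that the remaining piece lies in $\PP(m-r, n-s\,\backslash\, v)^{\circ k-h}$ with the bounce path inherited (shifted down one step in $\overline{\mathbb N}$ and truncated), and then partitions the lost area into four geometric regions (the lime rectangle under the $\bar 0/1$ stretch of the bounce path, the blue region under the $1/\bar 1$ stretch, the pink strip of rows through decorated peaks, and the complementary strip), each accounting for one $q$-factor or $q$-binomial. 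After proving both the dinv and bounce recursions independently, the paper notes they coincide under $m\leftrightarrow n$, which is what gives the Corollary following them. Your approach inverts this logical flow: you promote the content of that Corollary --- already available from Theorem~\ref{th:zetamap}, which is proved earlier --- to a premise and thereby collapse the proof of the second recursion to a change of variables. Both are valid; yours is shorter, while the paper's is self-contained on each side and yields an \emph{a posteriori} consistency check.

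Your sketch of the direct, non-$\zeta$ argument, by contrast, is not accurate as written. Describing the $q$-binomial coefficients as counting ``inversions lost'' between letters of the bounce word imports dinv terminology into a setting where the relevant statistic is $\area$; in the bounce picture those coefficients $q$-count squares in the four geometric regions of the figure, not inversion pairs. More substantively, you cannot take ``delete the $1$'s and $\bar 1$'s from the bounce word and lower the rest'' as the primitive operation: unlike the area word, the bounce word of a parallelogram polyomino is far from a complete invariant, so one must define the reduction on the polyomino itself (the rectangle intersection) and then observe how the bounce path and bounce word transform. You flag this concern yourself near the end, and your decision to fall back on the $\zeta$-reduction is the right one.
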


\begin{proof}
	The initial conditions are immediate to check. 
	
	Let us look at the recursive step: let $P$ be a polyomino with $k$ decorated red peaks, $s$ be the number of $1$'s in its bounce word (i.e. the length of the first vertical stretch of the bounce path), $r$ be the number of $\bar{1}$'s in its bounce word (i.e. the length of the first horizontal stretch of the bounce path, ignoring the first step), $h$ be the number of decorations in the first $r+1$ columns, and $v$ be the number of $2$'s in the bounce path (i.e. the length of the second vertical stretch of the bounce path). We cut the intersection of the original polyomino with the rectangle going from $(r,s)$ to $(m,n)$ (the orange rectangle in Figure~\ref{fig:recursionareabounce}): this gives us a polyomino in $\PP_{q,t}(m-r, n-s \, \backslash \, v)^{\circ \, k-h}$.
	
	We have to check that our coefficients take care of the area and the bounce that we lose in the cutting process. Observe that the bounce path of the new polyomino is the same as the bounce path of the old one, except that it starts with the last $\bar{1}$ and its labels are all one unit smaller. The factor $t^{m+n-k-1}$ takes care of the fact that every non-decorated letter in the bounce word but $\bar{0}$ is now contributing for one unit less, since it was either lowered by $1$ or deleted.
	
	The factor $q^s$ takes care of the area of the rectangle delimited by the steps of the bounce path labelled by $\bar{0}$ or $1$, and the first $s+1$ steps of the red path (highlighted in lime in Figure~\ref{fig:recursionareabounce}).
	
	The factor $q^r \qbinom{r+s-1}{r}_q$ takes care of the area of the region delimited by the steps of the bounce path labelled by $1$ or $\bar{1}$, and the first $s+r+1$ steps of the green path (highlighted in blue in Figure~\ref{fig:recursionareabounce}).
	
	The factor $q^{\binom{h}{2}} \qbinom{r}{h}_q$ takes care of the area of the rows corresponding to red peaks in the region delimited by the steps of the bounce path labelled by $\bar{1}$ or $2$, and the steps of the red path from the $s+2$-th to the $r+s+v-1$-th (highlighted in pink in Figure~\ref{fig:recursionareabounce}), where $q^{\binom{h}{2}}$ is needed to take into account the fact that if there is a peak of the red path, then there must be a vertical step followed by a horizontal step.
	
	Next, the factor $\qbinom{r+v-h-1}{r-1}_q$ takes care of the area of the remaining rows in the same region.
	
	Finally, the contribution to area and bounce given by the rest of the polyomino is given by $\PP_{q,t}(m-r, n-s \, \backslash \, v)^{\circ \, k-h}$, and this concludes the proof.
\end{proof}

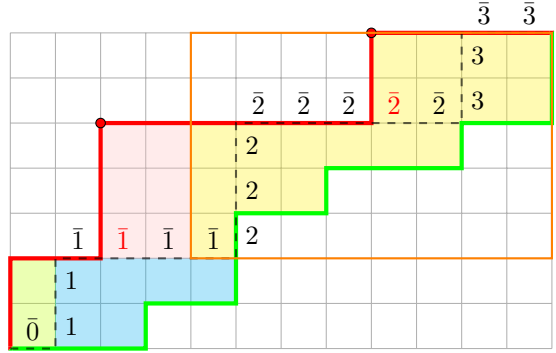
\begin{figure}[!ht]
	\begin{center}
		\begin{tikzpicture}[scale=0.6]
		\draw[gray!60, thin] (0,0) grid (12,7);
		\filldraw[yellow, opacity=0.3] (4,2) -- (5,2) -- (5,3) -- (7,3) -- (7,4) -- (10,4) -- (10,5) -- (12,5) -- (12,7) -- (8,7) -- (8,5) -- (4,5) -- cycle;
		
		\filldraw[lime, opacity=0.3] (0,0) -- (1,0) -- (1,2) -- (0,2) -- cycle;
		\filldraw[cyan, opacity=0.3] (1,0) -- (3,0) -- (3,1) -- (5,1) -- (5,2) -- (1,2) -- cycle;
		\filldraw[pink, opacity=0.3] (2,2) -- (4,2) -- (4,5) -- (2,5) -- cycle;
		
		\draw[green, sharp <-sharp >, sharp angle = 45, line width=1.6pt] (0,0) -- (3,0) -- (3,1) -- (5,1) -- (5,3) -- (7,3) -- (7,4) -- (10,4) -- (10,5) -- (12,5) -- (12,7);
		\draw[red, sharp <-sharp >, sharp angle = -45, line width=1.6pt] (0,0) -- (0,2) -- (2,2) -- (2,5) -- (5,5) -- (8,5) -- (8,7) -- (12,7);
		
		\filldraw[fill=red]
		(2,5) circle (3pt)
		(8,7) circle (3pt);
		
		\draw[dashed, opacity=0.6, thick] (0,0) -- (1,0) -- (1,2) -- (5,2) -- (5,5) -- (10,5) -- (10,7) -- (12,7);
		
		\node[above] at (0.5,0) {$\bar{0}$};
		\node[right] at (1,0.5) {$1$};
		\node[right] at (1,1.5) {$1$};
		\node[above] at (1.5,2) {$\bar{1}$};
		\node[red, above] at (2.5,2) {$\bar{1}$};
		\node[above] at (3.5,2) {$\bar{1}$};
		\node[above] at (4.5,2) {$\bar{1}$};
		\node[right] at (5,2.5) {$2$};
		\node[right] at (5,3.5) {$2$};
		\node[right] at (5,4.5) {$2$};
		\node[above] at (5.5,5) {$\bar{2}$};
		\node[above] at (6.5,5) {$\bar{2}$};
		\node[above] at (7.5,5) {$\bar{2}$};
		\node[red, above] at (8.5,5) {$\bar{2}$};
		\node[above] at (9.5,5) {$\bar{2}$};
		\node[right] at (10,5.5) {$3$};
		\node[right] at (10,6.5) {$3$};
		\node[above] at (10.5,7) {$\bar{3}$};
		\node[above] at (11.5,7) {$\bar{3}$};
		
		\draw[orange, thick] (4,2) rectangle (12,7);		
		\end{tikzpicture}
	\end{center}
	
	\caption{The first step of the recursion for $(\area, \bounce)$.}
	\label{fig:recursionareabounce}
\end{figure}

The recursion is the same one given for $(\dinv, \area)$ switching $m$ and $n$, so we have the following result, which can be also derived from Theorem~\ref{th:zetamap}.

\begin{corollary}
	$\PP_{q,t}(m \backslash r, n)^{\ast k} = \PP_{q,t}(n, m \backslash r)^{\circ k}$.
\end{corollary}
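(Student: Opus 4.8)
The statement to prove is the corollary
\[
\PP_{q,t}(m \backslash r, n)^{\ast k} = \PP_{q,t}(n, m \backslash r)^{\circ k},
\]
which asserts the equality of two refined $q,t$-enumerators of (decorated) parallelogram polyominoes, one counted by $(\dinv,\area)$ and the other by $(\area,\bounce)$. The plan is to give the short argument that this follows immediately from the two preceding recursion theorems, Theorem~\ref{th:dinvrecursion} and Theorem~\ref{th:bouncerecursion}, together with a check that they have matching initial conditions. This is exactly the route signalled by the sentence preceding the corollary (``The recursion is the same one given for $(\dinv, \area)$ switching $m$ and $n$''), so the proof is a bookkeeping verification rather than a new idea.

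The key steps, in order, are as follows. First I would write out the recursion of Theorem~\ref{th:dinvrecursion} for $\PP_{q,t}(m\backslash r,n)^{\ast k}$, then write out the recursion of Theorem~\ref{th:bouncerecursion} for $\PP_{q,t}(n\backslash r,m)^{\circ k}$ — that is, apply the bounce recursion with the roles of $m$ and $n$ interchanged and with its outer summation index $s$ renamed $r$. Comparing the two displays term by term, one sees that the summand of Theorem~\ref{th:dinvrecursion} (sum over $s,h,u$) becomes, after the substitution $m\leftrightarrow n$, $s\to r$, $u\to v$ in Theorem~\ref{th:bouncerecursion}, literally the same expression: both have the prefactor $t^{m+n-k-1}$, the factor $q^{r+s}\qbinom{r+s-1}{s}_q$ (note $\qbinom{r+s-1}{s}_q=\qbinom{r+s-1}{r}_q$), the factor $q^{\binom{h}{2}}\qbinom{s}{h}_q$, the factor $\qbinom{s+u-h-1}{s-1}_q$, and the recursive term on the smaller polyomino. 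Second, I would check the base cases: Theorem~\ref{th:dinvrecursion} gives $\PP_{q,t}(m\backslash m,n)^{\ast 0}=(qt)^{m+n-1}\qbinom{m+n-2}{n-1}_q$ and $\PP_{q,t}(m\backslash r,1)^{\ast k}=\delta_{r,m}\delta_{k,0}(qt)^m$, while Theorem~\ref{th:bouncerecursion} gives $\PP_{q,t}(n\backslash n,m)^{\circ 0}=(qt)^{m+n-1}\qbinom{m+n-2}{m-1}_q$ and $\PP_{q,t}(1,n\backslash s)^{\circ k}=\delta_{s,n}\delta_{k,0}(qt)^n$; since $\qbinom{m+n-2}{n-1}_q=\qbinom{m+n-2}{m-1}_q$, these agree after the swap. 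Finally, since both families are determined by the same recursion with the same initial data, they coincide, which is the claim. (One should also note that the claimed corollary is a consequence of, and consistent with, Theorem~\ref{th:zetamap}, which already supplies a bijective proof; I would remark on this but base the formal proof on the recursions, as the paper does.)

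I do not expect a genuine obstacle here, since every ingredient is already proved earlier in the excerpt. The only mildly delicate point is purely notational hygiene: one must be careful that the outer index called $s$ in Theorem~\ref{th:dinvrecursion} plays the role of the outer index called $s$ in Theorem~\ref{th:bouncerecursion} \emph{before} the $m\leftrightarrow n$ swap, and that the inner indices $h$ and $u$ (resp.\ $h$ and $v$) correspond correctly; a sloppy identification of indices would make the two recursions look different when they are not. So the ``hard part'', such as it is, is writing down the index dictionary cleanly and invoking the standard symmetry $\qbinom{a}{b}_q=\qbinom{a}{a-b}_q$ in the two or three places where it is needed. Once that dictionary is fixed, the proof is a one-line appeal to uniqueness of the solution of a recursion with given initial conditions.
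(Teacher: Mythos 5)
Your proof is correct and takes exactly the route the paper intends: the sentence preceding the corollary in the paper says ``The recursion is the same one given for $(\dinv,\area)$ switching $m$ and $n$, so we have the following result, which can be also derived from Theorem~\ref{th:zetamap},'' and your proposal is precisely that argument carried out in detail, including the observation $\qbinom{m+n-2}{n-1}_q=\qbinom{m+n-2}{m-1}_q$ for the base case and the remark that Theorem~\ref{th:zetamap} gives an independent bijective derivation.
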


\section{Reduced polyominoes} \label{sec:reduced_polyo}

In analogy with the parallelogram polyominoes case, we have a bijection that maps $(\area, \bounce)$ to $(\dinv, \area)$. This time, it does not swap height and width. Let us fix some notation again.
\begin{align}
	&\RP(m,n) && \coloneqq \; \; \{ P \mid P \; \text{is a $m \times n$ reduced polyomino} \} \\
	&\RP(m \backslash r, n)^{\ast 0} && \coloneqq \; \; \{ P \in \RP(m,n) \mid P \; \text{has $r$ $0$'s in its area word} \} \label{mr} \\
	&\RP(m \backslash r, n)^{\ast k} && \coloneqq \; \; \{ P \in \RP(m,n)^{\ast k} \mid P \; \text{has $r$ $0$'s in its area word}  \} \\
	&\RP(m \backslash r, n)^{\circ 0} && \coloneqq \; \; \{ P \in \RP(m,n) \mid P \; \text{has $r\!-\!1$ $0$'s in its bounce word} \} \label{ns} \\
	&\RP(m \backslash r, n)^{\circ k} && \coloneqq \; \; \{ P \in \RP(m,n)^{\circ k} \mid P \; \text{has $r\!-\!1$ $0$'s in its bounce word}  \}
\end{align}

Notice that we are including the first, artificial $0$ in \eqref{mr} and that we replaced $r$ with $r-1$ in \eqref{ns}. In particular, $r$ can be equal to $m+1$.

\begin{theorem}
	\label{th:redzetamap}
	For $m \geq 0$, $n \geq 0$, $k \geq 0$, and $1 \leq r \leq m+1$, there is a bijection $\bar{\zeta} \colon \RP(m \backslash r, n)^{\circ k} \to \RP(m \backslash r, n)^{\ast k}$ mapping the bistatistic $(\area, \bounce)$ to $(\dinv, \area)$.
\end{theorem}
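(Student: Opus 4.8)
The plan is to mimic the construction of the $\zeta$ map for parallelogram polyominoes (recalled just above in Theorem~\ref{th:zetamap}), transported through the normalization map $\mathsf{r}$ of Proposition~\ref{normalizing map}, and then to check directly that it behaves well with respect to the decorations in the reduced setting. Concretely, I would first recall that $\mathsf{r}\colon\PP(m,n)\to\RP(n-1,m-1)$ is a bijection with $\area(P)=\area(\mathsf{r}(P))+m+n-1$ and $\bounce(P)=\bounce(\mathsf{r}(P))+m+n-1$, and that by the description of its bounce path and area word, it carries green peaks, rises, and the $0$'s in the area/bounce words of reduced polyominoes back and forth to the corresponding decorated structures on parallelogram polyominoes (keeping decorations on letters of the same value). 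Since the dinv statistic on reduced polyominoes is, up to the normalization constant $m+n-1$, the same as on parallelogram polyominoes (cf. the remark following Definition of the reduced dinv, to be proved in this section), the map $\bar\zeta$ can simply be \emph{defined} as $\bar\zeta\coloneqq \mathsf{r}\circ\zeta\circ\mathsf{r}^{-1}$ on the appropriate refined sets, and the statistic-tracking statement will follow by bookkeeping the normalization shifts.

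The key steps, in order, would be: (1) set up the refined sets $\RP(m\backslash r,n)^{\circ k}$ and $\RP(m\backslash r,n)^{\ast k}$ and observe how the parameter $r$ (number of $0$'s in the area word, resp. number of $0$'s in the bounce word plus one) corresponds under $\mathsf{r}$ to the parameter $r$ (number of $1$'s in the area word, resp. number of $1$'s in the bounce word) of a parallelogram polyomino — this is exactly the matching of bottom rows/leftmost columns that $\mathsf{r}$ performs; (2) note that $\zeta\colon\PP(n,m\backslash r)^{\circ k}\to\PP(m\backslash r,n)^{\ast k}$ of Theorem~\ref{th:zetamap} maps $(\area,\bounce)\mapsto(\dinv,\area)$ and preserves the $r$ parameter; (3) compose with $\mathsf{r}^{-1}$ on the left and $\mathsf{r}$ on the right to land in the reduced world, checking that the sizes match ($\PP(n,m\backslash r)^{\circ k}$ should correspond to $\RP(m\backslash r,n)^{\circ k}$ and $\PP(m\backslash r,n)^{\ast k}$ to $\RP(m\backslash r,n)^{\ast k}$, after the index shift built into $\mathsf{r}$); (4) track the additive normalization constants: $\mathsf{r}$ shifts $\area$ and $\bounce$ by $m+n-1$ each, and one needs the analogous shift for $\dinv$ (that $\dinv(P)=\dinv(\mathsf{r}(P))+m+n-1$), so that the bistatistic map $(\area,\bounce)\mapsto(\dinv,\area)$ is preserved with all three shifts cancelling consistently; (5) handle the decorations — green peaks map to rises keeping the decorated letter's value, exactly as in the parallelogram case, because $\mathsf{r}$ is just a reflection-plus-truncation and hence carries a decorated green peak of the parallelogram polyomino to a decorated green peak of the reduced one and likewise for rises.

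The main obstacle I expect is step~(4), namely establishing (or citing, once it is proved in this same section — the excerpt promises ``we will show in Section~\ref{sec:reduced_polyo} that the same holds for the dinv'') that $\dinv$ on reduced polyominoes equals $\dinv$ on the associated parallelogram polyomino up to the constant $m+n-1$. The subtlety is that the reduced dinv is defined with a \emph{different} inversion convention ($a_i(P)$ is the successor of $a_j(P)$ for $i<j$, rather than $a_j(P)$ being the successor of $a_i(P)$) — see the two differing Definitions — so one cannot just quote $\mathsf{r}$ naively; one has to check that under the reflection performed by $\mathsf{r}$ the roles of ``successor to the left'' and ``successor to the right'' get swapped precisely in the way that makes the two inversion counts agree after removing the contributions of the first and last steps. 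Once this dinv-normalization lemma is in hand, the rest is routine: $\bar\zeta$ is a bijection as a composition of bijections, and all statistic identities reduce to adding and subtracting $m+n-1$ (and, for the decorated refinement, to the already-established fact that $\mathsf{r}$ and $\zeta$ both preserve the relevant decoration-count parameter $k$ and the $0$-count parameter $r$).
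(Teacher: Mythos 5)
Your plan — to set $\bar\zeta \coloneqq \mathsf{r}\circ\zeta\circ\mathsf{r}^{-1}$ — has a concrete obstruction that you don't catch in step~(3): both $\mathsf{r}$ and $\zeta$ \emph{swap height and width}. The normalization map is $\mathsf{r}\colon\PP(m,n)\to\RP(n-1,m-1)$ (reflection across $x=y$), and Theorem~\ref{th:zetamap} is $\zeta\colon\PP(n,m\backslash r)^{\circ k}\to\PP(m\backslash r,n)^{\ast k}$. So starting from $\RP(m\backslash r,n)^{\circ k}$, applying $\mathsf{r}^{-1}$ lands you in $\PP(n+1,m+1\backslash r)^{\circ k}$, applying $\zeta$ lands you in $\PP(m+1\backslash r,n+1)^{\ast k}$, and applying $\mathsf{r}$ lands you in $\RP(n,m)^{\ast k}$ — the transposed dimensions, not $\RP(m\backslash r,n)^{\ast k}$ as required. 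Three transpositions compose to a transposition, so the ``sizes match'' assertion in your step~(3) is false. This is not cosmetic: the refined polynomials $\RP_{q,t}(m\backslash r,n)^{\ast k}$ are not obviously symmetric in $m$ and $n$, so you cannot just quotient out by the swap. The paper in fact flags this as a distinguishing feature of $\bar\zeta$: ``due to the way we label the bounce path in the reduced case, height and width do not swap under the bijection.''

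There is also a circularity in the way you propose to handle the ``main obstacle'' of step~(4). You read the remark following the definition of the reduced $\dinv$ as a promise that $\mathsf{r}$ itself will be shown to shift $\dinv$ by $m+n-1$. It isn't: what is proved in Section~\ref{sec:reduced_polyo} about $\dinv$ is the equidistribution statement Proposition~\ref{prop:dinvarea}, and that proposition is established \emph{using} $\zeta$, $\bar\zeta$, and Proposition~\ref{prop:areabounce} — so invoking it to construct $\bar\zeta$ begs the question. The paper never claims, nor needs, that the map $\mathsf{r}$ preserves $\dinv$ individually. Its actual proof of Theorem~\ref{th:redzetamap} is far more direct and avoids $\mathsf{r}$ entirely: $\bar\zeta$ is defined by literally the same projection-and-interlacing algorithm as $\zeta$, with the single change that the projected bounce-path labels are read going \emph{upwards} along the paths rather than downwards (plus prepending the artificial $0$). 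That change of reading direction is exactly what absorbs the sign flip in the inversion convention between $\PP$ and $\RP$ — the phenomenon you correctly sense is related to reflection, but which is realized here without ever passing through $\mathsf{r}$.
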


\begin{proof}
	The bijection is essentially the same as the one in Theorem~\ref{th:zetamap}: the only difference is that we have to read the interlacing going \textit{upwards} along the paths (i.e.\ bottom to top, left to right), and that we have to add the artificial $0$ at the beginning of the area word of the image. See Figure~\ref{fig:redzeta} for an example. 
	
	Notice that the interlacing of the letters gives an area word because of the modified bouncing rule, since the first step after a bouncing point on the green path must be vertical and the first step after a bouncing point on the red path must be horizontal. Observe also that the change from ``reading downwards'' in the original polyominoes to ``reading upwards'' in the reduced ones also agrees (as it should) with the difference in the definition of inversions (and hence of dinv) in the two settings. Finally, notice that, due to the way we label the bounce path in the reduced case, height and width do not swap under the bijection. The proof is otherwise identical. 
\end{proof}

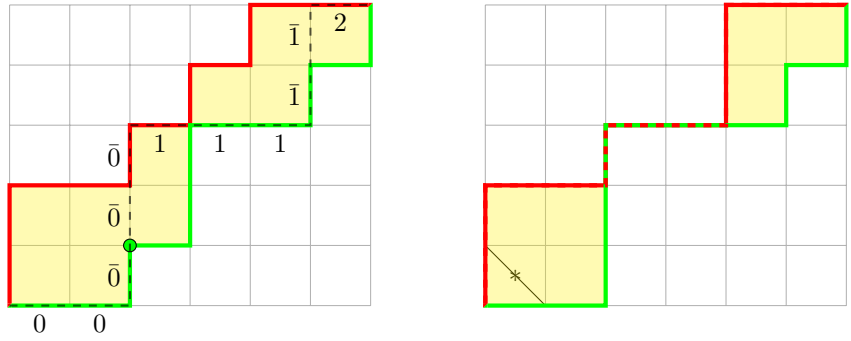
\begin{figure}[!ht]
	\begin{minipage}{0.5\textwidth}
		\begin{center}
			\begin{tikzpicture}[scale = 0.8]
			\draw[step=1.0, gray, opacity=0.6,thin] (0,0) grid (6,5);
			
			\filldraw[yellow, opacity=0.3] (0,0) -- (1,0) -- (2,0) -- (2,1) -- (3,1) -- (3,2) -- (3,3) -- (4,3) -- (5,3) -- (5,4) -- (6,4) -- (6,5) -- (5,5) -- (4,5) -- (4,4) -- (3,4) -- (3,3) -- (2,3) -- (2,2) -- (1,2) -- (0,2) -- (0,1) -- (0,0);
			
			\draw[red, sharp <-sharp >, sharp angle = -45, line width=1.6pt] (0,0) -- (0,1) -- (0,2) -- (1,2) -- (2,2) -- (2,3) -- (3,3) -- (3,4) -- (4,4) -- (4,5) -- (5,5) -- (6,5);
			
			\draw[green, sharp <-sharp >, sharp angle = 45, line width=1.6pt] (0,0) -- (1,0) -- (2,0) -- (2,1) -- (3,1) -- (3,2) -- (3,3) -- (4,3) -- (5,3) -- (5,4) -- (6,4) -- (6,5);
			
			\filldraw[fill=green] (2,1) circle (3pt);
			
			\draw[dashed, opacity=0.6, thick] (0,0) -- (2,0) -- (2,3) -- (5,3) -- (5,5) -- (6,5);
			
			\node[below] at (0.5,0) {$0$};
			\node[below] at (1.5,0) {$0$};
			\node[left] at (2,0.5) {$\bar{0}$};
			\node[left] at (2,1.5) {$\bar{0}$};				
			\node[left] at (2,2.5) {$\bar{0}$};
			\node[below] at (2.5,3) {$1$};
			\node[below] at (3.5,3) {$1$};
			\node[below] at (4.5,3) {$1$};
			\node[left] at (5,3.5) {$\bar{1}$};
			\node[left] at (5,4.5) {$\bar{1}$};
			\node[below] at (5.5,5) {$2$};
			\end{tikzpicture}
		\end{center}
	\end{minipage}%
	\begin{minipage}{0.5\textwidth}
		\begin{center}
			\begin{tikzpicture}[scale = 0.8]
			\draw[step=1.0, gray, opacity=0.6,thin] (0,0) grid (6,5);
			
			\draw (1,0) -- (0,1);
			\node at (0.5, 0.5) {$\ast$};
			
			\filldraw[yellow, opacity=0.3] (0,0) -- (1,0) -- (2,0) -- (2,1) -- (2,2) -- (2,3) -- (3,3) -- (4,3) -- (5,3) -- (5,4) -- (6,4) -- (6,5) -- (5,5) -- (4,5) -- (4,4) -- (4,3) -- (3,3) -- (2,3) -- (2,2) -- (1,2) -- (0,2) -- (0,1) -- (0,0);
			
			\draw[red, sharp <-sharp >, sharp angle = -45, line width=1.6pt] (0,0) -- (0,1) -- (0,2) -- (1,2) -- (2,2) -- (2,3) -- (3,3) -- (4,3) -- (4,4) -- (4,5) -- (5,5) -- (6,5);
			
			\draw[green, sharp <-sharp >, sharp angle = 45, line width=1.6pt] (0,0) -- (1,0) -- (2,0) -- (2,1) -- (2,2) -- (2,3) -- (3,3) -- (4,3) -- (5,3) -- (5,4) -- (6,4) -- (6,5);
			
			\draw[red, dashed, sharp <-sharp >, sharp angle = -45, line width=1.6pt] (0,0) -- (0,1) -- (0,2) -- (1,2) -- (2,2) -- (2,3) -- (3,3) -- (4,3) -- (4,4) -- (4,5) -- (5,5) -- (6,5);
			
			\node[below, white] at (0.5,0) {$0$};
			\end{tikzpicture}
		\end{center}
	\end{minipage}
	\caption{A reduced polyomino (left) and its image via the $\bar{\zeta}$ map (right). The reading word of the image is $0 \bar{0} {\color{green} 1} \bar{1} 2 \bar{0} 0 0 \bar{0} 1 \bar{1} 1$.}
	\label{fig:redzeta}
\end{figure}

Let us define the $q,t$-enumerators for those new sets analogously as we did for decorated parallelogram polyominoes. From Proposition~\ref{normalizing map} and the fact that $\PP_{q,t}(m,n)$ is symmetric in $m$ and $n$, we immediately deduce that \[ \PP_{q,t}(m,n) = (qt)^{m+n-1} \cdot \RP_{q,t}(m-1,n-1) \] where the $q,t$-enumerator for $\RP(m-1,n-1)$ is built using $(\area,\bounce)$. This implies that the latter is also symmetric in $m$ and $n$; combining this fact with Theorem~\ref{th:redzetamap}, we deduce that we could have equivalently used $(\dinv,\area)$ instead.

\begin{proposition}
	\label{prop:areabounce}
	$\PP_{q,t}(n, m \backslash r)^{\circ k} = (qt)^{m+n-1} \cdot \RP_{q,t}(m-1 \, \backslash \, r, \, n-1)^{\circ k}$.
\end{proposition}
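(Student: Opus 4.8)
\textbf{Proof plan for Proposition~\ref{prop:areabounce}.}

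The plan is to deduce this statement from results already established in the excerpt, essentially by chaining together Proposition~\ref{normalizing map} (which relates $\PP$ to $\RP$ via the map $\mathsf r$), the symmetry of $\PP_{q,t}(m,n)$ in $m$ and $n$, and the bijection $\bar\zeta$ of Theorem~\ref{th:redzetamap}. First I would record what the map $\mathsf r$ does on the refined sets. Recall from the proof of Proposition~\ref{normalizing map} that $\mathsf r \colon \PP(m,n) \to \RP(n-1,m-1)$ is given by deleting the first and last steps of both paths and reflecting across the line $x=y$, so that $\area$ and $\bounce$ each drop by exactly $m+n-1$. I would check that $\mathsf r$ respects decorations: a decorated red peak of $P$ is sent to a decorated green peak of $\mathsf r(P)$ (the reflection swaps red and green paths), and the bounce-word statistic is unaffected by the reflection except for the uniform shift, so $\mathsf r$ restricts to a bijection $\PP(n,m)^{\circ k} \to \RP(m-1,n-1)^{\circ k}$ with $\bounce$ and $\area$ both dropping by $m+n-1$. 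The one bookkeeping point to verify carefully is the $\backslash r$ refinement: an $s$-th vertical stretch of the bounce path of $P$ becomes, under the shift and reflection, a stretch of labels one unit lower, and I need that the count of $0$'s in the reduced bounce word matches the convention in \eqref{ns} (where $r$ records \emph{one more} than the number of $0$'s in the bounce word, precisely to absorb the artificial first step). This is exactly the normalization already used in Proposition~\ref{normalizing map}, so $\mathsf r$ gives $\PP(n,m\backslash r)^{\circ k} \xrightarrow{\sim} \RP(m-1\backslash r, n-1)^{\circ k}$.

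From this refined version of $\mathsf r$ I get immediately
\[
\PP_{q,t}(n, m\backslash r)^{\circ k} = (qt)^{m+n-1}\, \RP_{q,t}(m-1\backslash r, n-1)^{\circ k},
\]
where the $q,t$-enumerator on the right is the one built with the bistatistic $(\area,\bounce)$. This is the claimed identity, \emph{provided} one reads the right-hand side with $(\area,\bounce)$. To finish, I would invoke the remark made just before the proposition in the text: summing the refined identity over $r$ gives $\PP_{q,t}(m,n) = (qt)^{m+n-1}\RP_{q,t}(m-1,n-1)$, and since $\PP_{q,t}(m,n)$ is symmetric in $m$ and $n$ (proved in \cite{Aval-DAdderio-Dukes-Hicks-LeBorgne-2014}), $\RP_{q,t}(m-1,n-1)$ is symmetric too; then Theorem~\ref{th:redzetamap} shows $\bar\zeta$ carries $(\area,\bounce)$ to $(\dinv,\area)$ on $\RP(m\backslash r,n)$, so the two $q,t$-enumerators of $\RP(m-1\backslash r,n-1)^{\circ k}$ — the one with $(\area,\bounce)$ and the one with $(\dinv,\area)$ — coincide. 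Hence the identity holds regardless of which of the two bistatistics one uses on the right, and in particular the statement as written is correct.

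I do not expect a genuine obstacle here; the result is essentially a corollary of Proposition~\ref{normalizing map} refined by the $\backslash r$ parameter. The only delicate point — and the one I would spell out most carefully — is the compatibility of the refinement parameter $r$ with the slightly idiosyncratic conventions for reduced polyominoes (the artificial leading $0$ in the area word, and the ``$r-1$ zeros in the bounce word'' convention in \eqref{ns}); one must make sure the shift in bounce-path labels under $\mathsf r$ lines up with these conventions so that $\PP(n,m\backslash r)^{\circ k}$ really maps onto $\RP(m-1\backslash r,n-1)^{\circ k}$ and not onto a neighboring index. Everything else is the same reflection-and-trim argument already carried out for the unrefined statement, together with the symmetry input and Theorem~\ref{th:redzetamap} to handle the choice of bistatistic.
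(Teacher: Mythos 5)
Your core argument matches the paper's proof exactly: apply the trim-and-reflect map $\mathsf{r}$ of Proposition~\ref{normalizing map}, observe that $\area$ and $\bounce$ each drop by $m+n-1$, that decorated red peaks are sent to decorated green peaks, and that the $1$'s in the bounce word become $0$'s except the first one, which is deleted — so the ``$r$ ones'' refinement of $\PP(n,m)$ lands on the ``$r-1$ zeros'' refinement of $\RP(m-1,n-1)$, exactly the $\backslash r$ convention of \eqref{ns}. The paper states all of this in a single sentence.

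Your final paragraph is unnecessary and slightly misleading, though. The paper's convention (stated just after \eqref{areabounce2} and reused for $\RP$) is that a superscript $\circ k$ always signals the enumerator $(\area,\bounce)$ and $\ast k$ always signals $(\dinv,\area)$, so there is no ambiguity about which bistatistic $\RP_{q,t}(m-1\backslash r,n-1)^{\circ k}$ denotes — it is $(\area,\bounce)$ by fiat, and no symmetry or $\bar\zeta$ input is needed to close the proof. Moreover the claim that Theorem~\ref{th:redzetamap} makes ``the two $q,t$-enumerators of $\RP(m-1\backslash r,n-1)^{\circ k}$ — with $(\area,\bounce)$ and with $(\dinv,\area)$ — coincide'' is imprecise: $\bar\zeta$ is a bijection $\RP(m\backslash r,n)^{\circ k}\to\RP(m\backslash r,n)^{\ast k}$ sending $(\area,\bounce)$ to $(\dinv,\area)$, which identifies enumerators of two \emph{different} decorated sets, not two bistatistics on the same set (they agree only when $k=0$). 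Dropping that paragraph leaves a proof that is correct and coincides with the paper's.
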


\begin{proof}
	Applying the map $\mathsf{r}$ defined in Proposition~\ref{normalizing map} we get that both statistics decrease by $m+n-1$, and the $1$'s in the bounce word are mapped to $0$'s, except the first one that is deleted. The thesis follows.
\end{proof}

\begin{proposition}
	\label{prop:dinvarea}
	For $m \geq 1$, $n \geq 1$, $k \geq 0$, and $0 \leq r \leq m$, we have \[ \PP_{q,t}(m \backslash r, n)^{\ast k} = (qt)^{m+n-1} \cdot \RP_{q,t}(m-1 \, \backslash \, r, \, n-1)^{\ast k}. \]
\end{proposition}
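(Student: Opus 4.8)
The statement to prove is Proposition~\ref{prop:dinvarea}, namely that for $m \geq 1$, $n \geq 1$, $k \geq 0$, and $0 \leq r \leq m$,
\[ \PP_{q,t}(m \backslash r, n)^{\ast k} = (qt)^{m+n-1} \cdot \RP_{q,t}(m-1 \, \backslash \, r, \, n-1)^{\ast k}. \]

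\textbf{Plan of proof.} The natural approach is to reuse the normalizing map $\mathsf{r} \colon \PP(m,n) \rightarrow \RP(n-1,m-1)$ from Proposition~\ref{normalizing map}, but now I must track the behaviour of the $\dinv$ statistic and the decorated rises, rather than $\area$ and $\bounce$. First I would recall that $\mathsf{r}$ is defined by deleting the first and last step of each of the two paths of $P$ and then reflecting across the line $x=y$ (which swaps red and green). Since $\dinv$ is defined via the area word, the cleanest route is to describe the effect of $\mathsf{r}$ on area words: a parallelogram polyomino $P \in \PP(m,n)$ has an area word of length $m+n$ in $\overline{\mathbb{N}}\setminus\{0\}$ beginning with $\bar{0}$, and $\mathsf{r}(P) \in \RP(n-1,m-1)$ has an area word of length $m+n-1$ in $\overline{\mathbb{N}}$ beginning with $0$. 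Concretely, going through the construction of $\mathsf{r}$ via the Dyck-path encoding (the second algorithm in Section~\ref{section:parallelogram-polyominoes} and the one for reduced polyominoes in Section~\ref{sec:reduced_polyo}), one sees that the area word of $\mathsf{r}(P)$ is obtained from that of $P$ by deleting the leading $\bar{0}$ and shifting every letter down by one step in the alphabet $\overline{\mathbb{N}}$ (so $1 \mapsto \bar{0}$, $\bar{1} \mapsto 1$, $2 \mapsto \bar{1}$, etc.), except that one extra artificial $0$ is prepended at the front. I should make this explicit — it is essentially the content already implicit in Proposition~\ref{normalizing map} but I need the area-word statement, not just the geometric one.

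Once the area-word description is in hand, the three things to verify are straightforward bookkeeping. (i) \emph{Parameter matching}: the number of unbarred letters strictly greater than $\bar{0}$ in the area word of $P$ that equal $1$ becomes, after the shift and after prepending the artificial $0$, exactly the number of $0$'s in the area word of $\mathsf{r}(P)$; combined with the artificial $0$ this is how ``$r$ $1$'s in $P$'' corresponds to ``$r$ $0$'s in $\mathsf{r}(P)$'', so $\mathsf{r}$ restricts to a bijection $\PP(m \backslash r, n) \to \RP(m-1 \backslash r, n-1)$, and likewise with $k$ decorated rises on each side since rises are determined by consecutive letters of the area word and the monotone shift plus deletion of the leading $\bar{0}$ preserves the rise structure among the surviving letters. (ii) \emph{$\area$ drops by $m+n-1$}: this is already Proposition~\ref{normalizing map}, but I should note the decorated version follows because decorated rises are ignored in $\area$ on both sides and the non-decorated letters each drop by exactly one step in absolute value (the leading $\bar{0}$ contributes $0$ and the artificial $0$ also contributes $0$). (iii) \emph{$\dinv$ drops by $m+n-1$}: by definition of inversions in $\PP$, a pair $(i,j)$ with $i<j$ is an inversion iff $a_j(P)$ is the successor of $a_i(P)$ in $\overline{\mathbb{N}}$; since each letter but the leading $\bar{0}$ must be the successor of some letter to its left, there are exactly $m+n-1$ ``forced'' inversions each letter makes with the \emph{nearest} such predecessor. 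Under $\mathsf{r}$, deleting the leading $\bar{0}$ removes precisely the inversions that had $a_i(P)=\bar{0}$; but prepending the artificial $0$ and shifting everything down recreates exactly those inversions with the new leading $0$ playing the role the $\bar{0}$ used to play for the (former) $1$'s — and the successor relation among all the shifted letters is preserved by the monotone shift. The careful accounting shows that $\dinv(\mathsf{r}(P)) = \dinv(P) - (m+n-1)$. This mirrors the bounce argument in the proof of Proposition~\ref{prop:areabounce}.

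Putting the three points together: $\mathsf{r}$ gives a bijection from $\PP(m \backslash r, n)^{\ast k}$ onto $\RP(m-1 \backslash r, n-1)^{\ast k}$ under which $(\dinv, \area)$ is shifted by $(m+n-1, m+n-1)$, whence summing $q^{\dinv}t^{\area}$ over each side yields $\PP_{q,t}(m \backslash r, n)^{\ast k} = (qt)^{m+n-1} \RP_{q,t}(m-1 \backslash r, n-1)^{\ast k}$, as desired.

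\textbf{Main obstacle.} The only genuinely delicate point is step (iii): making the claim ``$\dinv$ drops by exactly $m+n-1$'' rigorous, since $\dinv$ on $\PP$ and on $\RP$ are defined with \emph{opposite} conventions ($a_j$ successor of $a_i$ versus $a_i$ successor of $a_j$), and one has to be sure that deleting the leading $\bar{0}$, shifting the alphabet, and prepending an artificial $0$ conspire to remove exactly the right inversions and no others. The cleanest way around this is probably to invoke the Dyck-path bijection between reduced polyominoes of semiperimeter $m+n$ and Dyck paths of size $m+n+1$ (stated in Section~\ref{sec:reduced_polyo}), under which both $\PP$-dinv and $\RP$-dinv become the standard Dyck-path dinv up to the normalization, reducing everything to a single clean statement about area words of Dyck paths; alternatively one can simply cite that the decorated version of the argument in Proposition~\ref{prop:areabounce} and Theorem~\ref{th:redzetamap} already establishes that $\mathsf{r}$ preserves all the relevant structure and that decorations are carried along unchanged. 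Either way, no new idea beyond careful bookkeeping is needed.
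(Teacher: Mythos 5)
Your proposed direct route through $\mathsf{r}$ has a genuine gap that you quote but then silently override. Proposition~\ref{normalizing map} gives $\mathsf{r} \colon \PP(m,n) \to \RP(n-1,m-1)$ — the map \emph{transposes} the two parameters, because removing the boundary steps and then reflecting across $y=x$ swaps width and height. But the target in Proposition~\ref{prop:dinvarea} is $\RP(m-1,n-1)^{\ast k}$, not $\RP(n-1,m-1)^{\ast k}$, and these are different sets (different number of barred and unbarred letters in the area word) unless $m=n$, nor is there an a priori reason why their refined $q,t$-enumerators (with the $r$-refinement) should coincide. So the claim in your step (i), that ``$\mathsf{r}$ restricts to a bijection $\PP(m \backslash r, n) \to \RP(m-1 \backslash r, n-1)$,'' is simply false. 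Relatedly, your area-word description is internally inconsistent: the area word of $P$ has $m+n$ letters, and deleting one, shifting, and then prepending an artificial $0$ produces a word of length $m+n$ again, whereas $\mathsf{r}(P) \in \RP(n-1,m-1)$ has area word of length $m+n-1$.

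The paper's actual proof avoids all of this by not analysing $\mathsf{r}$ on the $(\dinv,\area)$ side at all. It applies $\zeta$ on the left (Theorem~\ref{th:zetamap}, $\zeta \colon \PP(n, m\backslash r)^{\circ k} \to \PP(m\backslash r, n)^{\ast k}$, which transposes $m$ and $n$ and converts $(\area,\bounce)$ to $(\dinv,\area)$) and $\bar\zeta$ on the right (Theorem~\ref{th:redzetamap}), reducing the statement to Proposition~\ref{prop:areabounce}, where the geometric map $\mathsf{r}$ applies with the arguments in the right order. Note that the $\zeta$ step is exactly what cancels the transposition performed by $\mathsf{r}$. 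Your fallback remark — ``simply cite Proposition~\ref{prop:areabounce} and Theorem~\ref{th:redzetamap}'' — is in fact the paper's proof, but you present it as an optional shortcut for tidying up the dinv bookkeeping, not as a structurally necessary detour. Without the $\zeta$ step, the dimensions do not match and no amount of careful bookkeeping on $\mathsf{r}$ alone will save the argument.
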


\begin{proof}
	Apply $\zeta$ to the left hand side and $\bar{\zeta}$ to the right hand side, and then use Proposition~\ref{prop:areabounce}.
\end{proof}

We also have recursions for these sets.

\begin{theorem}
	\label{th:reddinvrecursion}
	
	For $m \geq 0$, $n \geq 0$, $k \geq 0$, and $1 \leq r \leq m+1$, the polynomials $\RP_{q,t}(m \backslash r, n)^{\ast k}$ satisfy the recursion
	\begin{align*}
		\RP_{q,t}(m \backslash r, n)^{\ast k} = & \sum_{s=1}^{n} t^{m+n+1-r-s-k} \qbinom{r+s-1}{s}_q \sum_{h=0}^{k} q^{\binom{h}{2}} \qbinom{s}{h}_q \\ \times & \sum_{u=1}^{m-r+1} \qbinom{s+u-h-1}{s-1}_q \RP_{q,t}(m-r \, \backslash \, u, \, n-s)^{\ast \, k-h}
	\end{align*}
	
	with initial conditions \[ \RP_{q,t}(m \backslash m+1, n)^{\ast 0} = \qbinom{m+n}{m}_q \] and $\RP_{q,t}(m \backslash r, 0)^{\ast k} = \delta_{k,0} \delta_{r,m+1}$.
\end{theorem}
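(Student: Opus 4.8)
The plan is to prove the recursion by a direct combinatorial decomposition of $\RP(m \backslash r, n)^{\ast k}$, mirroring the proof of Theorem~\ref{th:dinvrecursion} for parallelogram polyominoes but adapting it to the reduced setting (where the first letter of the area word is the artificial $0$, the bounce rule is different, and, crucially, no normalizing factor $(qt)^{m+n-1}$ appears). The statistics involved are $(\dinv, \area)$, so I first recall that the area word of $P \in \RP(m \backslash r, n)^{\ast k}$ starts with $0$, has exactly $r$ unbarred $0$'s, $m+1-r$ letters among $\{1, \bar 1, 2, \bar 2, \dots\}$ that are unbarred, and $n$ barred letters; the $r$ $0$'s record the length of the first horizontal stretch at the bottom.

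\textbf{Decomposition.} Given $P$, let $s$ be the number of $\bar 0$'s in the area word, $h$ the number of decorated $\bar 0$'s, and $u$ the number of $1$'s. Delete from the area word all the $0$'s except the first one, all the $\bar 0$'s, and then lower every remaining letter by one step in $\overline{\mathbb N}$ (so $1 \mapsto \bar 0$, $\bar 1 \mapsto 1$, $2 \mapsto \bar 1$, and so on); keep the decorations. The result is the area word of a reduced polyomino $P' \in \RP(m-r \, \backslash \, u, \, n-s)^{\ast \, k-h}$: it still begins with $0$ (the surviving first letter, relabelled, or rather the artificial $0$ is re-inserted), it has $u$ copies of $\bar 0$ (formerly the $1$'s) hence $u$ new ``$0$-stretch'' letters, and $n-s$ barred letters. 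I will need to check that each choice of $(s,h,u)$ together with a $P'$ and the combinatorial data of the insertion is reversible, i.e. gives a bijection.

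\textbf{Bookkeeping of the statistics.} The area drops because every surviving letter that is not the artificial $0$ and not decorated contributed one unit more before the lowering, and there are $(m+n+1)-1-r-s-h$ such letters plus the $s-h$ undecorated $\bar 0$'s that got deleted contributed $s-h$; a careful count gives the prefactor $t^{\,m+n+1-r-s-k}$ after also accounting for the $h$ decorated letters among the deleted $\bar 0$'s (which contribute nothing). For the $\dinv$: the factor $\qbinom{r+s-1}{s}_q$ is the lost inversions between the $r-1$ non-initial $0$'s together with... more precisely between the $0$'s and the $\bar 0$'s (successors of $0$ are exactly the $\bar 0$'s); $q^{\binom h2}\qbinom{s}{h}_q$ counts the inversions between the $\bar 0$'s and the $1$'s that sit at decorated rises (each such $1$ is immediately preceded by a $\bar 0$, so the contributions are distinct, and $\binom h2$ records the forced staircase), with the inversion against the leftmost $\bar 0$ replaced, after lowering, by the inversion of the new $\bar 0$ against the artificial $0$; and $\qbinom{s+u-h-1}{s-1}_q$ counts the inversions between the $s-1$ non-leftmost $\bar 0$'s and the $u-h$ undecorated $1$'s. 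The initial conditions are immediate: $\RP(m \backslash m+1, n)^{\ast 0}$ consists of reduced polyominoes whose area word has $m+1$ $0$'s and $n$ $\bar 0$'s and no other letters, i.e.\ the red path goes straight up then the green path goes straight along, so $\dinv$ is forced and $\area = 0$, giving $h_n[[m+1]_q] = \qbinom{m+n}{m}_q$ by \eqref{eq:h_q_binomial}; and $\RP_{q,t}(m \backslash r, 0)^{\ast k}$ is nonempty only for $r = m+1$, $k=0$ (no barred letters, no rises to decorate), contributing $1$.

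\textbf{Main obstacle.} The delicate point, exactly as in Theorem~\ref{th:dinvrecursion}, is tracking the inversions involving the leftmost $\bar 0$ and verifying that after lowering all entries these are faithfully reproduced as inversions with the artificial $0$ that we re-insert at the front; this is what makes the bijection preserve $\dinv$ on the nose rather than merely up to a shift, and it is the reason the summation ranges ($s$ from $1$ to $n$, $u$ from $1$ to $m-r+1$, $h$ from $0$ to $k$) are what they are. I expect the area computation to be routine once the bijection is set up, but the dinv bookkeeping — in particular disentangling the contributions of decorated versus undecorated $1$'s and the role of $q^{\binom h2}$ — will require the same care as in the parallelogram case, and I would present it by isolating the four groups of pairs (inside the block of $0$'s and $\bar 0$'s; $\bar 0$'s versus decorated $1$'s; $\bar 0$'s versus undecorated $1$'s; everything internal to $P'$) and checking each accounts for exactly one factor.
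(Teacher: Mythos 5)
Your strategy — a direct area-word surgery parallel to the proof of Theorem~\ref{th:dinvrecursion} — is precisely one of the two routes the paper itself points to immediately after the statement; the other, shorter route is to feed Proposition~\ref{prop:dinvarea} into Theorem~\ref{th:dinvrecursion} and let the global $(qt)^{m+n+1}$ normalization absorb the extra $q^{r+s}$. So the plan is sound, but the area-word map you actually write down is not the one you need. ``Lower every remaining letter by one step in $\overline{\mathbb{N}}$ (so $1\mapsto\bar 0$, $\bar 1\mapsto 1$, $2\mapsto\bar 1$)'' swaps the bar of every letter: the $m{+}1{-}r$ surviving unbarred letters become barred and the $n{-}s$ surviving barred letters become unbarred, so the output has the wrong shape and is not the area word of any polyomino in $\RP(m{-}r,n{-}s)$. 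What you need is to decrease each surviving letter by one unit of \emph{value}, preserving its bar ($1\mapsto 0$, $\bar 1\mapsto\bar 0$, $2\mapsto 1$, \dots, i.e.\ two steps of $\overline{\mathbb{N}}$), exactly like the $2\mapsto 1$, $\bar 2\mapsto\bar 1$ relabelling in the parallelogram recursion. Relatedly, you should delete \emph{all} $r$ $0$'s (not keep the artificial one) together with all $s$ $\bar 0$'s; after the corrected shift the first surviving letter, which the area-word condition forces to be a $1$, becomes the new leading $0$ by itself, and retaining the old $0$ would leave $m{+}2{-}r$ unbarred letters, one too many.

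A smaller but real confusion: for reduced polyominoes the rise index is the \emph{larger} letter of the pair ($|a_i|>|a_{i-1}|$), so decorations sit on the $1$'s, never on $\bar 0$'s. Your $h$ should count the $1$'s at decorated rises — each is necessarily preceded by a $\bar 0$, and after the surgery each becomes a $0$, which cannot be a rise, so those $h$ decorations drop and you land in $\RP(m{-}r\backslash u,n{-}s)^{\ast k-h}$. The same confusion infects your area count: $\bar 0$'s have value $0$, so deleting them costs nothing, and $t^{m+n+1-r-s-k}$ comes purely from the $(m{+}n{+}1){-}r{-}s{-}k$ undecorated surviving letters each dropping one unit of value. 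With these two fixes the dinv bookkeeping goes through exactly as in Theorem~\ref{th:dinvrecursion}, one level lower; the absence of the parallelogram prefactor $q^{r+s}$ is because the leading artificial $0$ of a reduced polyomino never participates in an inversion and forces no ordering among the $\bar 0$'s, whereas the leading $\bar 0$ of a parallelogram polyomino forces at least $r+s$ inversions with the $1$'s and $\bar 1$'s.
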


\begin{theorem}
	\label{th:redbouncerecursion}
	For $m \geq 0$, $n \geq 0$, $k \geq 0$, and $1 \leq r \leq m+1$, the polynomials $\RP_{q,t}(m\backslash r,n )^{\circ k}$ satisfy the recursion
	\begin{align*}
		\RP_{q,t}(m \backslash r, n)^{\circ k} = & \sum_{s=1}^{n} t^{m+n+1-r-s-k} \qbinom{r+s-1}{s}_q \sum_{h=0}^{k} q^{\binom{h}{2}} \qbinom{s}{h}_q \\ \times & \sum_{u=1}^{m-r+1} \qbinom{s+u-h-1}{s-1}_q \RP_{q,t}(m-r \, \backslash \, u, \, n-s)^{\circ \, k-h}
	\end{align*}
	
	with initial conditions \[ \RP_{q,t}(m \backslash m+1, n)^{\circ 0} = \qbinom{m+n}{m}_q \] and $\RP_{q,t}(m \backslash r, 0)^{\circ k} = \delta_{k,0} \delta_{r,m+1}$.
\end{theorem}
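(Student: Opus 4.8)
The plan is to mirror the strategy used for parallelogram polyominoes, adapting the combinatorial cutting argument to the reduced setting. Concretely, I would prove the recursion directly from the bijective/geometric description of the statistics on $\RP(m,n)^{\circ k}$, and the equality of the two recursions (the one here for $(\area,\bounce)$ and the one in Theorem~\ref{th:reddinvrecursion} for $(\dinv,\area)$) will be an immediate formal consequence, since their right-hand sides are literally the same expression; together with the matching initial conditions this also re-derives Theorem~\ref{th:redzetamap} at the level of generating functions.

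First I would set up the decomposition. Take $P\in\RP(m\backslash r,n)^{\circ k}$, so its bounce word has exactly $r-1$ zeros, meaning the first horizontal stretch of the bounce path (after the artificial first step, following the modified bouncing rule for reduced polyominoes described in Subsection~\ref{sec:reduced_polyo}) has a prescribed length; let $s$ be the length of the ensuing vertical stretch, i.e.\ the number of $1$'s in the bounce word. Let $h$ be the number of decorated green peaks among those contributing to that initial region, and let $u$ be the number of $2$'s in the bounce word. I would then cut $P$ along the analogue of the orange rectangle of Figure~\ref{fig:recursionareabounce}, obtaining an element of $\RP(m-r\,\backslash\,u,\,n-s)^{\circ\,k-h}$. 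The bookkeeping is the same as in the proof of Theorem~\ref{th:bouncerecursion}: the power of $t$ records the uniform drop of each surviving non-$\bar 0$ letter of the bounce word (note the exponent $m+n+1-r-s-k$ reflects the reduced normalization, with semi-perimeter $m+n+1$ playing the role of $m+n$ and one fewer forced step than in the parallelogram case); the factor $\qbinom{r+s-1}{s}_q$ together with $\qbinom{s+u-h-1}{s-1}_q$ count the area of the two wedge-shaped regions delimited by the first portions of the green and red paths against the bounce path; and $q^{\binom{h}{2}}\qbinom{s}{h}_q$ counts the rows corresponding to decorated green peaks, the $q^{\binom{h}{2}}$ accounting for the forced vertical-then-horizontal shape of a peak. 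Summing over $s$, $h$, $u$ gives the stated recursion.

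For the initial conditions: $\RP_{q,t}(m\backslash m+1,n)^{\circ 0}$ corresponds to reduced polyominoes whose bounce word has $m$ zeros, i.e.\ the red and green paths coincide after the first step, so these are in bijection with monotone lattice paths in an $m\times n$ box, giving $\qbinom{m+n}{m}_q$ via \eqref{eq:h_q_binomial} (equivalently \eqref{eq:e_q_binomial}); and $\RP_{q,t}(m\backslash r,0)^{\circ k}=\delta_{k,0}\delta_{r,m+1}$ is immediate since an $m\times 0$ reduced polyomino is a single horizontal segment, forcing $r=m+1$ and $k=0$. Finally, since the right-hand side of this recursion coincides term-by-term with that of Theorem~\ref{th:reddinvrecursion} and the initial conditions agree, a trivial induction on $n$ gives $\RP_{q,t}(m\backslash r,n)^{\circ k}=\RP_{q,t}(m\backslash r,n)^{\ast k}$, which is another route to the generating-function content of Theorem~\ref{th:redzetamap}.

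I expect the main obstacle to be the precise area bookkeeping in the reduced setting: unlike the parallelogram case, the modified bouncing rule (bouncing on the \emph{beginning} of steps rather than the end, and no artificial northward turn after the first step) shifts which squares are attributed to which region, so I will need to check carefully that the exponent $m+n+1-r-s-k$ and the binomial $\qbinom{r+s-1}{s}_q$ (rather than $\qbinom{r+s-1}{r}_q$, as one might naively transcribe from Theorem~\ref{th:bouncerecursion}) are the correct ones, and that the ``extra'' forced $\bar 0$ at the beginning of the reduced area word does not double-count. Once the correspondence between the cut regions and the monomials is pinned down on a worked example in the spirit of Figure~\ref{fig:recursionareabounce}, the argument should go through essentially verbatim from the parallelogram case.
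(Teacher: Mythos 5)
Your proposal is correct and is explicitly one of the two routes the paper sanctions (it says, after Theorems~\ref{th:reddinvrecursion} and~\ref{th:redbouncerecursion}, ``Both of these can be proved either directly, with the same argument used in Theorem~\ref{th:dinvrecursion} and Theorem~\ref{th:bouncerecursion}, or from the statements of these theorems using Proposition~\ref{prop:dinvarea} and Proposition~\ref{prop:areabounce}''). You chose the direct cutting argument; the alternative---and the lighter-weight one---is to deduce the recursion from Theorem~\ref{th:bouncerecursion} by applying the normalizing map $\mathsf{r}$ of Proposition~\ref{normalizing map}, via Proposition~\ref{prop:areabounce}: since $\PP_{q,t}(n+1, m+1\backslash r)^{\circ k} = (qt)^{m+n+1}\RP_{q,t}(m\backslash r, n)^{\circ k}$, one simply substitutes into the already-established parallelogram recursion and divides by $(qt)^{m+n+1}$, with the dimension swap $(m,n)\mapsto(n+1,m+1)$ explaining exactly the $r\leftrightarrow s$ asymmetries and the exponent $m+n+1-r-s-k$ that you flagged as the tricky points of the direct approach. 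What the transport route buys you is that you never have to re-do the area/bounce bookkeeping for the modified reduced bouncing rule (bouncing on beginnings rather than ends, no artificial turn), which is precisely where you anticipated ``the main obstacle''; what the direct route buys is a standalone combinatorial picture. Two small remarks: the vertical stretch immediately after the initial $r-1$ east steps is labeled $\bar{0}$, not $1$ (a harmless slip in your running description), and your observation that matching the recursion and initial conditions with Theorem~\ref{th:reddinvrecursion} re-derives the generating-function content of Theorem~\ref{th:redzetamap} is correct but is of course weaker than the bijection $\bar\zeta$ itself.
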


Both of these can be proved either directly, with the same argument used in Theorem~\ref{th:dinvrecursion} and Theorem~\ref{th:bouncerecursion}, or from the statements of these theorems using Proposition~\ref{prop:dinvarea} and Proposition~\ref{prop:areabounce}.

\section{Two car parking functions}\label{sec:2cars_PF}

In order to understand how polyominoes are linked with the Delta conjecture, we need to pass through two car parking functions. Let us fix some notation.
\begin{align}
	&\PF^2(m,n) \! \! \! && \coloneqq \; \{ D \in \LD(m\!+\!n) \mid D \; \text{has $n$ $1$'s and $m$ $2$'s labels} \} \\
	&\PF^2(m \backslash r, n) \! \! \! && \coloneqq \; \{ D \in \PF^2(m,n) \mid D \; \text{has $r\!-\!1$ $2$'s on the main diag.} \} \\
	&\PF^2(m,n)^{\ast k} \! \! \! && \coloneqq \; \{ D \in \LDD(m\!+\!n)^{\circ 0, \ast k} \mid D \; \text{has $n$ $1$'s and $m$ $2$'s labels} \} \\
	&\PF^2(m \backslash r, n)^{\ast k} \! \! \! && \coloneqq \; \{ D \in \PF^2(m,n)^{\ast k} \mid D \; \text{has $r\!-\!1$ $2$'s on the main diag.} \}
\end{align}

And as usual we add a subscript $q,t$ to denote the relevant $q,t$-enumerators. We have the following result.

\begin{theorem}
	\label{th:polyominoesto2cpf}
	For $m \geq 0$, $n \geq 0$, $k \geq 0$, and $1 \leq r \leq m+1$, there exists a bijection $\phi \colon \RP(m \backslash r, n)^{\ast k} \rightarrow \PF^2(m \backslash r, n)^{\ast k}$ such that $(\dinv(P),\area(P)) = (\dinv(\phi(P)), \area(\phi(P)))$ for all $P\in \RP(m \backslash r, n)^{\ast k}$.
\end{theorem}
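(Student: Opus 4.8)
The plan is to construct the bijection $\phi$ explicitly at the level of area words, and then verify that it transports the statistics as claimed. Recall that a reduced polyomino $P \in \RP(m,n)$ is encoded by an area word of length $m+n+1$ in the alphabet $\overline{\mathbb{N}}$ starting with $0$, with $m+1$ unbarred letters and $n$ barred letters, while a two car parking function $D \in \PF^2(m,n)$ of size $m+n$ is encoded by its area word (an area word in $\mathbb{N}$ of length $m+n$) together with a labelling by $1$'s and $2$'s that is increasing in columns. The key observation is that the inversion statistic on reduced polyominoes is designed precisely to mimic the dinv on labelled Dyck paths once we read the area word of $P$ in the right way: the barred letters of $P$ should play the role of the rows labelled $2$ and the unbarred letters the role of the rows labelled $1$ (this is the same philosophy behind the remark in Section~2 on two-shuffle parking functions). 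So first I would describe $\phi$ as follows: given $P \in \RP(m \backslash r, n)^{\ast k}$ with area word $a_1 \cdots a_{m+n+1}$ (where $a_1 = 0$ is the artificial letter), delete the artificial first letter, build the Dyck path of size $m+n$ whose area word is obtained by replacing $0 \mapsto 0, \bar{0} \mapsto 0, 1 \mapsto 1, \bar{1} \mapsto 1, 2 \mapsto 2, \dots$ (i.e.\ forgetting the bars but keeping the numeric value), and label the $i$-th vertical step of this Dyck path with $1$ if $a_{i+1}$ is unbarred and with $2$ if $a_{i+1}$ is barred; decorated rises of $P$ become decorated rises of $\phi(P)$.

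Next I would check the three things that make this well-defined and bijective. First, that the resulting word is genuinely the area word of a Dyck path: this is automatic because by the description of the second algorithm for building area words of reduced polyominoes in Subsection~1.7, the sequence obtained by adding an artificial north step at the start and an artificial east step at the end is exactly a Dyck path area word, and deleting the artificial $0$ corresponds to deleting that initial north step, which still leaves a valid Dyck path area word of size $m+n$. Second, that the labelling is strictly increasing in columns: a column of the Dyck path corresponds to a maximal run of equal numeric values $c, c, \dots, c$ (for the vertical steps in that column reading bottom to top); by the structure of the area word of a reduced polyomino these come in the order unbarred-then-barred within such a run whenever both occur and never two unbarred or two barred with the same value consecutively in a column, so the labels in each column are either all $1$'s, all $2$'s, or $1$'s below $2$'s, hence strictly increasing; I would need to argue this carefully from the combinatorial picture (reading the $-1$-slope diagonals), and this is the one genuinely fiddly point. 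Third, that the number of $1$'s is $n$... wait, one must be careful about the convention: in $\PF^2(m,n)$ there are $n$ ones and $m$ twos, while $P \in \RP(m,n)$ has $m+1$ unbarred and $n$ barred letters, of which one unbarred is the artificial $0$ we delete; so $\phi(P)$ has $m$ ones and $n$ twos, which means the correct target is $\PF^2(n,m)$ rather than $\PF^2(m,n)$ — I would reconcile this with the statement by noting (as the paper's notational conventions do) that $\PF^2(m,n)$ was defined to have $n$ ones and $m$ twos, so in fact the twos of $\phi(P)$ are the $n$ barred letters and there are $m$ ones from the unbarred non-artificial letters, which matches $\PF^2(m,n)$ as defined. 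The diagonal refinement $\backslash r$ matches because a $2$ on the main diagonal of $\phi(P)$ corresponds to a barred letter... no: a label sits on the main diagonal iff its area-word entry is $0$ or $\bar 0$; a $2$-label on the main diagonal corresponds to a $\bar 0$ in $P$'s area word, and $P \in \RP(m\backslash r,n)^{\ast k}$ was defined (cf.\ \eqref{ns}) to have $r-1$ zeros in its \emph{bounce} word — here instead I should track $\bar 0$'s in the \emph{area} word, so I would instead define the refinement on the polyomino side to count $\bar 0$'s in the area word and note the straightforward bijective identification, or simply observe that $\phi$ restricts correctly once one fixes the number of $\bar 0$'s, which equals $r-1$ under the correspondence. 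Inverting $\phi$ is immediate: from a two car parking function read off the numeric area word, reinsert bars exactly on the rows labelled $2$, prepend an artificial $0$, and keep decorations.

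Finally I would verify $(\dinv, \area)$ is preserved. For the area this is trivial: $\area(P)$ is the sum of numeric values of the non-artificial area-word letters minus the contributions of decorated rises, and $\area(\phi(P))$ is exactly the same sum with the same decorated-rise exclusions, since the numeric values and the set of decorated rises are unchanged. For dinv I would match inversions directly. A pair $(i,j)$ with $i<j$ is an inversion of $P$ iff $a_i(P)$ is the successor of $a_j(P)$ in $\overline{\mathbb{N}}$, i.e.\ either $a_i$ is unbarred of value $c$ and $a_j$ is barred of value $c$ (``primary-type'', same numeric value $c$, with the earlier one unbarred = label $1$... but in $\phi(P)$ label $1$ comes first), or $a_i$ is barred of value $c$ and $a_j$ is unbarred of value $c+1$ (``secondary-type''). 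Translating via $\phi$: letters of the same numeric value $c$ with the unbarred one earlier become a primary diagonal inversion of $\phi(P)$ (equal area entry, label $1$ before label $2$, and $1 < 2$); letters with the barred one of value $c$ earlier and an unbarred one of value $c+1$ later become a secondary diagonal inversion (area entries differ by one, earlier label $2 >$ later label $1$). Conversely every diagonal inversion of $\phi(P)$ arises this way: a primary inversion needs equal area entries and increasing labels, forcing label $1$ then label $2$, i.e.\ unbarred then barred of the same value; a secondary inversion needs area entries differing by one and decreasing labels, forcing label $2$ then label $1$, i.e.\ barred of value $c$ then unbarred of value $c+1$. Subtracting the artificial leading $0$ contributes no inversions on the $P$ side (nothing is the successor of $0$ to its left, and $0$ is the successor only of nothing since it is minimal, but $\bar 0$ is the successor of $0$ — however the artificial $0$ is at position $1$, so a pair $(1,j)$ with $a_j = \bar 0$ would be an inversion; this needs care). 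Here I expect the \textbf{main obstacle}: correctly bookkeeping the artificial $0$ so that it creates no spurious inversion discrepancy, and the twin issue on the Dyck-path side of whether the bottom vertical step (the one the artificial north step would sit below) interacts with dinv. I would handle this by noting the artificial $0$ sits strictly above everything in the column sense and, being the unique entry equal to $0$, is the successor-predecessor only of the $\bar 0$'s; tracing through, each pair $(1,j)$ with $a_j(P) = \bar 0$ corresponds exactly to the pair in $\phi(P)$ between the bottom row (label from $a_2(P)$) and the row of that $\bar 0$, and one checks the $d_i$ counts align after the index shift $i \mapsto i-1$. Once this bookkeeping is pinned down the theorem follows; everything else is routine verification of the kind the paper elsewhere defers, so I would present the area-word description of $\phi$, the well-definedness argument for column-strictness, and the inversion-matching, and relegate the artificial-letter bookkeeping to a short explicit check.
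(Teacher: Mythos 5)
Your overall strategy --- discard the artificial leading $0$, keep the numeric area word, record the bars as a two-car labelling, carry decorations along --- is exactly the paper's $\phi$, but you have the labelling convention backwards, and that breaks the argument at every subsequent step. The target $\PF^2(m,n)$ was defined to have $n$ ones and $m$ twos, while a reduced polyomino in $\RP(m,n)$ contributes $n$ barred and $m$ non-artificial unbarred letters, so the forced rule is barred $\mapsto 1$, unbarred $\mapsto 2$ --- the opposite of what you wrote. You notice the count mismatch mid-proof, conclude that your $\phi(P)$ has $m$ ones and $n$ twos, and then assert this ``matches $\PF^2(m,n)$ as defined''; it does not. The error is not cosmetic: with your rule the image is not even a labelled Dyck path. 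When $\lvert a_i(P)\rvert<\lvert a_{i+1}(P)\rvert$, the area-word property in $\overline{\mathbb{N}}$ forces $a_i=\bar{c}$ and $a_{i+1}=c+1$ (the only one-step increase in value is $\bar{c}\to c+1$); these two rows land in the same column of $\phi(P)$, and unbarred $\mapsto 1$, barred $\mapsto 2$ gives labels $2$ then $1$ in that column, violating strict increase. Your claim that a column corresponds to a run of equal area-word entries is also wrong: within a column the area-word entries of a Dyck path strictly increase by $1$.

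There is a second sign error compounding the first: you characterize an inversion of $P\in\RP$ as $(i,j)$, $i<j$, with $a_j$ the successor of $a_i$, but the reduced-polyomino definition in Subsection~1.7 demands $a_i$ be the successor of $a_j$; your version is the ``mysterious'' $\PP$ convention, not the ``legitimate'' $\RP$ one. Correctly, the two inversion types are $a_i=\bar{c}$ before $a_j=c$ (primary) and $a_i=c+1$ before $a_j=\bar{c}$ (secondary). Combining your reversed convention with your reversed labelling happens to cancel in the primary case but fails in the secondary one: the pair you call ``secondary-type'' has the smaller numeric value in position $i$, whereas a secondary diagonal inversion needs $a_i(\phi(P))=a_j(\phi(P))+1$, so those pairs simply are not secondary inversions of $\phi(P)$. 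Finally, the fear about the artificial $0$ is a phantom created by the same misreading: $0$ is the minimum of $\overline{\mathbb{N}}$, hence never anyone's successor, so $(1,j)$ is never an inversion of $P$ and deleting the artificial $0$ costs nothing. With the labelling flipped and the $\RP$ inversion definition read as written, the numeric area word is literally unchanged (so $\area$ matches) and the successor relation translates verbatim into primary and secondary diagonal inversions (so $\dinv$ matches) --- which is why the paper dispatches the verification in a single sentence.
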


\begin{proof}
	Given the area word of a reduced polyomino $P$, remove the initial artificial $0$. Construct the area word of $\phi(P)$ as follows: its $i$-th letter is the value of the $i$-th letter of the area word of $P$. Next, label $\phi(P)$ in the following way: if the $i$-th letter of the area word of $P$ is barred then the $i$-th label of $\phi(D)$ is $1$; if it is not barred then the label is $2$. Keep the decorations as they are. Both the statistics are trivially preserved (the area word is the same, and the inversions are also the same). See Figure~\ref{fig:hbounce} for an example.
\end{proof}

%We can use the exact same algorithm to define a bijection between standard parallelogram polyominoes and two car parking functions that only touch the main diagonal in the first and the last point of the path. Let us denote it with $\phi$ again. Of course it preserves $\uarea$, and of course it does not preserve $\dinv$, unless we ``reverse'' the usual definition on either of the two objects.

We can immediately derive the following corollary.

\begin{corollary}
	\label{cor:redpolto2cpf}
	For $m \geq 0$, $n \geq 0$, $k \geq 0$, and $1 \leq r \leq m+1$, we have \[ \RP_{q,t}(m \backslash r, n)^{\ast k} = \PF^2_{q,t}(m \backslash r, n)^{\ast k}. \]
\end{corollary}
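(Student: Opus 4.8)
The statement to prove is Corollary~\ref{cor:redpolto2cpf}: for $m \geq 0$, $n \geq 0$, $k \geq 0$, and $1 \leq r \leq m+1$, we have $\RP_{q,t}(m \backslash r, n)^{\ast k} = \PF^2_{q,t}(m \backslash r, n)^{\ast k}$.

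\begin{proof}
The plan is to deduce this immediately from Theorem~\ref{th:polyominoesto2cpf}. That theorem provides a bijection $\phi \colon \RP(m \backslash r, n)^{\ast k} \rightarrow \PF^2(m \backslash r, n)^{\ast k}$ with the property that $(\dinv(P),\area(P)) = (\dinv(\phi(P)), \area(\phi(P)))$ for every $P\in \RP(m \backslash r, n)^{\ast k}$. Since $\RP_{q,t}(m \backslash r, n)^{\ast k}$ is by definition $\sum_{P \in \RP(m \backslash r, n)^{\ast k}} q^{\dinv(P)} t^{\area(P)}$ and $\PF^2_{q,t}(m \backslash r, n)^{\ast k}$ is $\sum_{D \in \PF^2(m \backslash r, n)^{\ast k}} q^{\dinv(D)} t^{\area(D)}$, applying $\phi$ to reindex the first sum gives
\[
\RP_{q,t}(m \backslash r, n)^{\ast k} = \sum_{P \in \RP(m \backslash r, n)^{\ast k}} q^{\dinv(P)} t^{\area(P)} = \sum_{P \in \RP(m \backslash r, n)^{\ast k}} q^{\dinv(\phi(P))} t^{\area(\phi(P))} = \PF^2_{q,t}(m \backslash r, n)^{\ast k},
\]
where the last equality uses that $\phi$ is a bijection onto $\PF^2(m \backslash r, n)^{\ast k}$.
\end{proof}

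There is essentially no obstacle here: this corollary is a purely formal consequence of the preceding statistic-preserving bijection, exactly the kind of ``add a subscript $q,t$'' translation mentioned at the start of Section~\ref{sec:original_polyo}. The only care needed is to make sure the decorations and the refinement parameter $r$ are tracked consistently, but this is handled in the proof of Theorem~\ref{th:polyominoesto2cpf} (the area word is literally the same up to forgetting the artificial initial $0$, barred letters become label $1$ and unbarred letters become label $2$, and decorations are kept). Hence the $q,t$-enumerators coincide. If one wanted a recursion for $\PF^2_{q,t}(m \backslash r, n)^{\ast k}$ as well, one could combine this corollary with Theorem~\ref{th:reddinvrecursion}, but that is not needed for the statement at hand.
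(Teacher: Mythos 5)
Your proof is correct and matches the paper's approach exactly: the paper states this corollary as an immediate consequence of Theorem~\ref{th:polyominoesto2cpf}, which is precisely the reindexing-via-the-bijection-$\phi$ argument you spell out. Nothing more is needed.
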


As we have seen, this map preserves the bistatistic $(\dinv, \area)$ in a trivial way. The non-trivial result is the following.

\begin{theorem}
	\label{th:pmaj}
	Let $P \in \RP(m,n)$. Then $\bounce(P) = \pmaj(\phi(P))$.
\end{theorem}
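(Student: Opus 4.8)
The plan is to compare the bounce path of a reduced polyomino $P$ with the parking word of the two car parking function $\phi(P)$. Recall that $\phi$ acts trivially on the area word: the $i$-th letter of the area word of $\phi(P)$ equals $|a_i(P)|$, and the $i$-th label of $\phi(P)$ is $1$ if $a_i(P)$ is barred, $2$ if it is unbarred (with the artificial initial $0$ of $P$ discarded). So the underlying Dyck path of $\phi(P)$ is the one obtained from the area word $|a_1(P)|\cdots|a_{m+n}(P)|$, and its labels record the bars. The statement $\bounce(P)=\pmaj(\phi(P))$ is then purely a statement comparing two combinatorial recipes built on the same Dyck path decorated with $\{1,2\}$-labels.

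First I would make precise how the (reduced) bounce path of $P$ reads off from the area word: since the reduced bounce path goes east until it hits the beginning of a vertical green step and then bounces at the beginnings of steps, each ``bounce segment'' corresponds to a maximal run of letters of a fixed value in the way dictated by the two paths, and the label of the $i$-th vertical step of the bounce path equals the number of direction changes below it. In terms of the area word, if a reduced polyomino with $r$ $0$'s and first horizontal stretch of length... — concretely, I would express $\bounce(P)$ recursively using the decomposition already present in the paper: peel off the first bounce block (the $0$'s and $\bar 0$'s, then the $1$'s, etc.), exactly mirroring the recursion of Theorem~\ref{th:redbouncerecursion}. Symmetrically, I would unwind the definition of $\pmaj$ for labelled Dyck paths applied to $\phi(P)$: the parking word $p(\phi(P))$ is built column by column, and I would track when a descent occurs in its reverse. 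The key observation to establish is that a descent in $p_{m+n}(\phi(P))\cdots p_1(\phi(P))$ occurs precisely at the positions where the bounce path of $P$ changes direction from horizontal to vertical (i.e.\ where the bounce label increases), because a label $2$ sitting where the previous parked car was a $1$ — equivalently an unbarred letter following a run that has exhausted the available smaller labels — forces the parking word to jump up, creating a descent in the reversed word, and these are exactly the bounce points.

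The cleanest way to finish is induction: show that $\bounce(P)$ and $\pmaj(\phi(P))$ satisfy the same recursion with the same initial conditions. Since $\phi$ restricts to a bijection $\RP(m\backslash r,n)^{\ast k}\to\PF^2(m\backslash r,n)^{\ast k}$ preserving $(\dinv,\area)$ (Theorem~\ref{th:polyominoesto2cpf}), and since the bounce recursion Theorem~\ref{th:redbouncerecursion} peels off the first bounce block of $P$, I would check that under $\phi$ this corresponds to peeling off exactly the initial segment of $\phi(P)$ whose parking sub-word contributes the first batch of $\pmaj$, with the remainder mapping to $\phi$ of the peeled reduced polyomino. The bookkeeping amounts to: the length of the first horizontal stretch and first vertical stretch of the bounce path of $P$ translate into the number of $2$-labels on the main diagonal and the size of the first ``descent block'' of the parking word of $\phi(P)$; the $t$-exponent $m+n+1-r-s-k$ appearing in Theorem~\ref{th:redbouncerecursion} matches the increment of $\pmaj$ contributed by lowering/deleting the corresponding letters (with decorated rises not contributing, exactly as decorated green peaks do not contribute to bounce).

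\textbf{Main obstacle.} The delicate point is the precise correspondence between the \emph{bouncing rule} for reduced polyominoes (bounce at the \emph{beginning} of steps, starting horizontally from $(0,0)$ without the artificial turn) and the combinatorial rule generating the parking word of a labelled Dyck path (the ``park the highest admissible car'' rule). Getting the off-by-one/normalization right — where the deleted artificial $0$ goes, and why the last east step / last peak never contributes — is where most of the care is needed; I expect the argument to hinge on a careful lemma identifying, for each bounce block of $P$, the set of columns of $\phi(P)$ on which the parking word is weakly decreasing, and showing the unique descent at the block boundary carries the correct weight. Once that lemma is in place, the induction closes immediately against Theorem~\ref{th:redbouncerecursion} and the known initial conditions, using $\pmaj$ of the empty/one-column objects.
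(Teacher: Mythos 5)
Your middle paragraph contains the exact key observation the paper uses: it opens by reducing the theorem to the statement that the bounce path of $P$ changes direction precisely where two consecutive letters of $p(\phi(P))$ differ (bouncing on the green path at a $2$ followed by a $1$, and on the red path at a $1$ followed by a $2$), and then iterates a short counting argument — matching diagonals of slope $-1$ in $P$ with columns of $\phi(P)$ — to verify this. So the core insight is right. However you have a direction slip: with the reduced bounce path convention, horizontal stretches carry unbarred labels and vertical stretches barred ones, so the \emph{value} of the bounce label increments at vertical-to-horizontal changes, i.e.\ at $1\to 2$ transitions of the parking word; those are what produce descents in $p_{m+n}\cdots p_1$. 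Your parenthetical ``where the bounce label increases'' should therefore refer to the vertical-to-horizontal bounce, not ``from horizontal to vertical'' — the $2\to 1$ transitions (horizontal-to-vertical) give no descent in the reversed word.

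The proposed way of finishing is where the argument does not actually close. Theorem~\ref{th:redbouncerecursion} is a $q,t$-enumerator recursion for $\RP_{q,t}(m\backslash r,n)^{\circ k}$; matching it against a hypothetical recursion for $\sum q^{\area}t^{\pmaj}$ over two-car parking functions would prove only equidistribution of the bistatistics, not the pointwise identity $\bounce(P)=\pmaj(\phi(P))$. To get a per-object identity out of a recursion you must exhibit an explicit decomposition of each $P$ and of $\phi(P)$, show it commutes with $\phi$, and check both statistics drop by the same amount — and verifying that is precisely the coordinate matching of bounce points with parking-word transitions that the paper's proof does directly. So, as written, your plan either only yields a weaker statement or silently re-derives the direct argument inside the "bookkeeping" step. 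You correctly flag the normalization issues (the removed artificial $0$, the first bounce segment starting horizontally) as delicate; they are, and the paper handles them by setting up an induction on bounce blocks in which one always counts the number of vertical red steps strictly below the next bounce point.
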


\begin{proof}
	It is enough to show that the bouncing points coincide with the points in which two consecutive letters of the parking word are different: more precisely, the bounce path bounces on the green path (i.e. stops going East and starts going North) whenever we have a $2$ followed by a $1$, and it bounces on the red path (i.e. stops going North and starts going East) whenever we have a $1$ followed by a $2$. If this holds, in fact, the value of the $i$-th letter in the bounce word (disregarding bars) coincides with the number of descents that follow $p_i(\phi(P))$ in the reverse parking word. The sum of the letters of the bounce word gives the bounce, and the sum of the number of descents to the right of each letter of the reverse parking word is exactly its major index, which is $\pmaj(\phi(P))$ by definition.	
	
	It is easy to see that the diagonals of slope $-1$ in $P$ correspond to the columns in $\phi(P)$, in the sense that we have a $1$ (resp. a $2$) in column $i$ if and only if we have a vertical red step (resp. horizontal green step) between the diagonals $x+y = i-1$ and $x+y = i$. This is a consequence of the way the area word of a reduced parallelogram polyomino is computed (see Figure~\ref{fig:decorated-reduced-polyominoes}).
	
	Hence, while writing down the parking word $p(P)$ (we will omit the dependence on $P$ from now on) according to the $\pmaj$ algorithm, we write down $2$'s until we reach the first column with no $2$'s (that could be the actual first one, in which case we write no $2$'s). This means that, on the polyomino, we hit the first diagonal with no horizontal green steps (hence with a vertical green step), so the bounce path is changing direction for the first time. Suppose that we wrote down $r-1$ $2$'s.
	
	Now we want to prove that, if we are writing down a $1$ in $p$, then the bounce path is going upwards. This is trivially true at step $r$ because the bounce path just hit the beginning of a vertical green step. Then, at step $r+i$, if $p_r = p_{r+1} = \dots = p_{r+i-1} = 1$, then $p_{r+i} = 1$ if and only if there are at least $i$ $1$'s in the first $r+i$ columns. This means that there are at least $i$ vertical red steps between the diagonals $x+y=0$ and $x+y=r+i$, which implies that the $r+i$-th red step is at least at height $i$, hence the bounce path could not have hit any horizontal red step before its $r+i$-th step, therefore the $r+i$-th step of the bounce path is vertical.
	
	Then, suppose that we wrote down $r-1$ $2$'s, and the next $2$ in $p$ is at position $r+s$ (which will conventionally be $m+n+1$ if there are no more $2$'s). This means that there is no $1$ in column $r+s$, and that there are exactly $s$ $1$'s in the first $r+s-1$ columns. In the polyomino, it means that there are exactly $s$ vertical red steps between the diagonals $x+y = 0$ and $x+y = r+s-1$, that there is a horizontal red step between the diagonals $x+y = r+s-1$ and $x+y = r+s$. Since it must be exactly at height $s$, it must be the step from $(r-1,s)$ to $(r,s)$, and the bounce path hits it after $r+s-1$ steps (see Figure~\ref{fig:hbounce}).
	
	We can now iterate the argument swapping the roles of $1$'s and $2$'s. Suppose that the next $1$ in $p$ is at position $r+s+u$ (which will conventionally be $m+n+1$ if there are no more $1$'s). This means that there is no $2$ in column $r+s+u$, and that there are exactly $r+u-1$ $2$'s in the first $r+s+u-1$ columns. In the polyomino, it means that there are exactly $r+u-1$ horizontal green steps between the diagonals $x+y = 0$ and $x+y = r+s+u-1$, that there is a vertical green step between the diagonals $x+y = r+s+u-1$ and $x+y = r+s+u$, and since there are $r+u-1$ horizontal green steps before it, it must be the step from $(r+u-1,s)$ to $(r+u-1,s+1)$. Since the bounce path was travelling East at height $s$, it hits it after $r+s+u-1$ steps (see Figure~\ref{fig:hbounce} again).
	
	Iterating the whole argument, we show that the bouncing points coincide with the points in which two consecutive letters of the parking word are different, in the way we wanted. The thesis follows.

\end{proof}

%Notice that $\pmaj(\phi(P))$ only depends on the positions of the descents in the reverse parking word (i.e. the $1$'s followed by a $2$ in the parking word), so the equality $\bounce(P) = \pmaj(\phi(P))$ holds if and only if these positions coincide with the positions where the bounce path changes direction from North to East. 

\begin{figure}[!ht]
	\begin{minipage}{0.5\textwidth}
		\begin{center}
			\begin{tikzpicture}[scale = 0.5]
			\draw[gray!60, thin] (0,0) grid (11,11);
			\draw[gray!60, thin] (0,0) -- (11,11);
			
			\draw[blue!60, line width=1.6pt] (0,0) -- (0,1) -- (0,2) -- (1,2) -- (1,3) -- (1,4) -- (2,4) -- (3,4) -- (3,5) -- (4,5) -- (5,5) -- (5,6) -- (5,7) -- (6,7) -- (6,8) -- (6,9) -- (7,9) -- (7,10) -- (8,10) -- (8,11) -- (9,11) -- (10,11) -- (11,11);
			
			\draw
			(0.5,0.5) circle(0.4 cm) node {$1$}
			(0.5,1.5) circle(0.4 cm) node {$2$}
			(1.5,2.5) circle(0.4 cm) node {$1$}
			(1.5,3.5) circle(0.4 cm) node {$2$}
			(3.5,4.5) circle(0.4 cm) node {$2$}
			(5.5,5.5) circle(0.4 cm) node {$1$}
			(5.5,6.5) circle(0.4 cm) node {$2$}
			(6.5,7.5) circle(0.4 cm) node {$1$}
			(6.5,8.5) circle(0.4 cm) node {$2$}
			(7.5,9.5) circle(0.4 cm) node {$2$}
			(8.5,10.5) circle(0.4 cm) node {$1$};
			\end{tikzpicture}
		\end{center}
	\end{minipage}%
	\begin{minipage}{0.5\textwidth}
		\begin{center}
			\begin{tikzpicture}[scale = 0.8]
			\draw[step=1.0, gray, opacity=0.6,thin] (0,0) grid (6,5);
			
			\filldraw[yellow, opacity=0.3] (0,0) -- (1,0) -- (2,0) -- (2,1) -- (3,1) -- (3,2) -- (4,2) -- (5,2) -- (6,2) -- (6,3) -- (6,4) -- (6,5) -- (5,5) -- (4,5) -- (4,4) -- (3,4) -- (3,3) -- (3,2) -- (2,2) -- (1,2) -- (0,2) -- (0,1) -- (0,0);
			
			\draw[red, sharp <-sharp >, sharp angle = -45, line width=1.6pt] (0,0) -- (0,1) -- (0,2) -- (1,2) -- (2,2) -- (3,2) -- (3,3) -- (3,4) -- (4,4) -- (4,5) -- (5,5) -- (6,5);
			
			\draw[green, sharp <-sharp >, sharp angle = 45, line width=1.6pt] (0,0) -- (1,0) -- (2,0) -- (2,1) -- (3,1) -- (3,2) -- (4,2) -- (5,2) -- (6,2) -- (6,3) -- (6,4) -- (6,5);
			
			\draw[dashed, opacity=0.6, thick] (0,0) -- (2,0) -- (2,2) -- (6,2) -- (6,5);
			
			\node[below] at (0.5,0) {$0$};
			\node[below] at (1.5,0) {$0$};
			\node[left] at (2,0.5) {$\bar{0}$};
			\node[left] at (2,1.5) {$\bar{0}$};
			\node[below] at (2.5,2) {$1$};
			\node[below] at (3.5,2) {$1$};
			\node[below] at (4.5,2) {$1$};
			\node[below] at (5.5,2) {$1$};
			\node[left] at (6,2.5) {$\bar{1}$};
			\node[left] at (6,3.5) {$\bar{1}$};
			\node[left] at (6,4.5) {$\bar{1}$};
			\end{tikzpicture}
		\end{center}
	\end{minipage}
	\caption{A two car parking function (left) and its corresponding polyomino (right). Notice that the bounce word of the polyomino $0 0 \bar{0} \bar{0} 1 1 1 1 \bar{1} \bar{1} \bar{1}$ follows the same pattern as the parking word $22112222111$ of the two car parking function.}
	\label{fig:hbounce}
\end{figure}
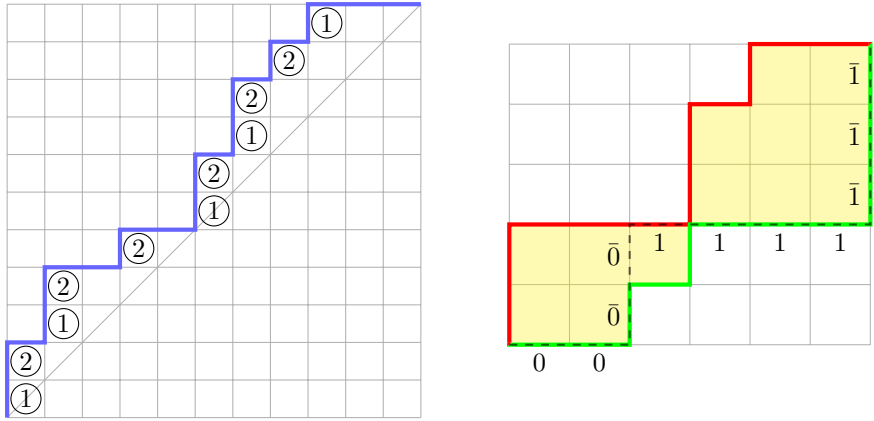

In \cite{Wilson-PhD-2015}, the author gives a recursion for rise-decorated two car parking functions. Of course we have the following.

\begin{theorem}[\cite{Wilson-PhD-2015}*{Proposition~5.3.3.1}]
	\label{th:recursion2cpf}
	For $m \geq 0$, $n \geq 0$, $k \geq 0$, and $1 \leq r \leq m+1$, the polynomials $\PF^2_{q,t}(m \backslash r, n)^{\ast k}$ satisfy the recursion
	\begin{align*}
		\PF^2_{q,t}(m \backslash r, n)^{\ast k} = & \sum_{s=1}^{n} t^{m+n+1-r-s-k} \qbinom{r+s-1}{s}_q \sum_{h=0}^{k} q^{\binom{h}{2}} \qbinom{s}{h}_q \\ \times & \sum_{u=1}^{m-r+1} \qbinom{s+u-h-1}{s-1}_q \PF^2_{q,t}(m-r \, \backslash \, u, \, n-s)^{\ast \, k-h}
	\end{align*}
	
	with initial conditions \[ \PF^2_{q,t}(m \backslash m+1, n)^{\ast 0} = \qbinom{m+n}{m}_q \] and $\PF^2_{q,t}(m \backslash r, 0)^{\ast k} = \delta_{k,0} \delta_{r,m+1}$.
\end{theorem}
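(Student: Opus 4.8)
The plan is to obtain this recursion essentially for free from the one for reduced polyominoes (Theorem~\ref{th:reddinvrecursion}) by transporting it through the bijection $\phi$. Theorem~\ref{th:polyominoesto2cpf} already supplies a bijection $\phi \colon \RP(m \backslash r, n)^{\ast k} \to \PF^2(m \backslash r, n)^{\ast k}$ which preserves the bistatistic $(\dinv, \area)$, so that $\PF^2_{q,t}(m \backslash r, n)^{\ast k} = \RP_{q,t}(m \backslash r, n)^{\ast k}$ (this is Corollary~\ref{cor:redpolto2cpf}). The only thing left to check is that $\phi$ is compatible with the refinement by $r$ appearing in the recursion: one verifies from the construction of $\phi$ that a non-artificial unbarred $0$ in the area word of a reduced polyomino $P$ (which, once the artificial initial $0$ is discarded, is what the parameter $r$ counts) is sent to a vertical step of $\phi(P)$ lying on the main diagonal and carrying the label $2$ --- precisely what $r$ counts on the parking-function side. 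This is immediate because $\phi$ keeps the underlying value of each letter and assigns label $2$ to unbarred letters. Hence $\phi$ restricts to a bijection on each $r$-block, and since the right-hand side of the claimed recursion is term-by-term identical to that of Theorem~\ref{th:reddinvrecursion} after the substitution $\RP_{q,t} \rightsquigarrow \PF^2_{q,t}$, the recursion follows.

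The initial conditions are transported the same way: $\PF^2_{q,t}(m \backslash m+1, n)^{\ast 0} = \RP_{q,t}(m \backslash m+1, n)^{\ast 0} = \qbinom{m+n}{m}_q$ and $\PF^2_{q,t}(m \backslash r, 0)^{\ast k} = \RP_{q,t}(m \backslash r, 0)^{\ast k} = \delta_{k,0}\delta_{r,m+1}$. If one prefers, they can also be checked by hand: when $r = m+1$ and $k = 0$ every $2$-label sits on the main diagonal, which forces the underlying Dyck path to be the staircase and the area word to be identically $0$, so $\area \equiv 0$ and $\dinv$ equals the number of inversions of the $\{1,2\}$-word read off the labels, whose distribution over all such words is the Gaussian binomial $\qbinom{m+n}{m}_q$; and for $n = 0$ there are no rises to decorate and the only configuration is the all-$2$ staircase, contributing $1$.

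Alternatively --- and this is the route taken in \cite{Wilson-PhD-2015} --- one can prove the recursion directly on two car parking functions, mimicking the argument of Theorem~\ref{th:dinvrecursion}: for $D \in \PF^2(m \backslash r, n)^{\ast k}$ let $s$ be the number of $1$-labels on the main diagonal, $h$ the number of decorated rises among the corresponding rows, and $u$ the number of $2$-labels on the diagonal $y = x+1$; deleting those rows and contracting yields an element of $\PF^2(m-r \backslash u, n-s)^{\ast k-h}$, while the four $q$-factors account for the dinv lost between consecutive diagonals together with the area lost in the contraction. The one delicate point, exactly as in the polyomino proof, is the bookkeeping of these dinv contributions --- in particular isolating the inversions formed with the bottom diagonal, which survive the contraction, from the rest. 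This is precisely the bookkeeping that the bijective argument above outsources to Theorem~\ref{th:reddinvrecursion}, which is why I would present the proof in that form.
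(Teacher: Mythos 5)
Your proposal is correct and takes essentially the same route as the paper, which also derives the recursion by transporting Theorem~\ref{th:reddinvrecursion} through the bijection of Theorem~\ref{th:polyominoesto2cpf} (i.e.\ via Corollary~\ref{cor:redpolto2cpf}). The extra paragraph checking that $\phi$ respects the $r$-refinement is redundant: Theorem~\ref{th:polyominoesto2cpf} is already stated as a bijection between the $r$-refined sets $\RP(m \backslash r, n)^{\ast k}$ and $\PF^2(m \backslash r, n)^{\ast k}$, so there is nothing further to verify.
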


This is exactly the same recursion held in Theorem~\ref{th:reddinvrecursion}, and it can be easily derived from that using Theorem~\ref{th:polyominoesto2cpf}. 

Let us recall the notation used in \cite{Wilson-PhD-2015}*{Section~5.2.2} for two car parking functions. We have

\begin{align*}
	RT_{n,m,k}^{(r-1)} & \coloneqq \PF^2(m \backslash r, n)^{\ast k} \\
	RT_{n,m,k}^{(s,r-1,i)} & \coloneqq \{ P \in RT_{n,m,k}^{(r-1)} \mid P \text{ has $s$ $2$'s and } \\ & \qquad \text{ $i$ decorated rises on the main diagonal } \} \\
	\overline{RT}_{n,m,k}^{(s,r-1,i)} & \coloneqq \{ P \in RT_{n,m,k}^{(s,r-1,i)} \mid \text{ the first rise in $P$ is not decorated } \}
\end{align*}

where the author appends $(q,t)$ (e.g. $RT_{n,m,k}^{(r-1)}(q,t)$) to denote the $q,t$-enumerator with respect to the bistatistic $(\dinv, \area)$ (as we do with the $_{q,t}$ subscript).

The following theorem is due to Wilson.

\begin{theorem}[\cite{Wilson-PhD-2015}*{Proposition~5.3.3.1}]
	\begin{align*}
		\overline{RT}_{a,b,k}^{(r,s,i)}(q,t) & = q^{\binom{i+1}{2}} t^{a-r+b-s-k} \qbinom{r+s}{r}_q \\
		& \times \sum_{h=1}^{b-s} \qbinom{h-1}{i}_q \qbinom{h+r-i-1}{h}_q RT_{a-r,b-s-1,k-i}^{(h-1)}(q,t).
	\end{align*}
\end{theorem}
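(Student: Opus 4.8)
The statement is a refined recursion for the $q,t$-enumerator $\overline{RT}_{a,b,k}^{(r,s,i)}(q,t)$ of two car parking functions in which the first rise is never decorated, broken up according to the number $r$ of $2$'s and $i$ of decorated rises on the main diagonal, and the number $s$ of $2$'s off the main diagonal in the first ``bounce block''. The plan is to prove it combinatorially by a direct bijective/decomposition argument, parallel to the proof of Theorem~\ref{thm:reco_R_first_bounce} (the recursion for $\DDb_{q,t}(n\backslash k\backslash 0)^{\circ a,\ast b}$) transported through the statistics-preserving map $\phi$ of Theorem~\ref{th:polyominoesto2cpf}. Concretely, I would first translate the assertion entirely into the language of reduced polyominoes via $\phi$: since $\phi$ preserves $(\dinv,\area)$ and matches the number of $2$'s (resp.\ decorated rises) on the main diagonal with the number of $0$'s (resp.\ decorated rises on the initial block) in the area word of the polyomino, the quantity $\overline{RT}_{a,b,k}^{(r,s,i)}(q,t)$ corresponds to an enumerator of reduced polyominoes $\RP(m\backslash r,n)^{\ast k}$-type objects with the extra first-rise and initial-block constraints. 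The recursion then becomes an instance of the ``peeling off the first bounce block'' argument already used in Theorem~\ref{th:reddinvrecursion}, but bookkeeping the decorations on the main diagonal separately.

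The main steps, in order, are: (1) set up the $\phi$-translation and record precisely which polyomino statistics/parameters correspond to $a,b,k,r,s,i$; (2) given such a polyomino, remove the first ``block'' of the area word — the initial run of $0$'s together with the first run of $1$'s (the part below the first change of direction of the bounce path) — and lower all remaining letters, producing a polyomino of the type counted by $RT_{a-r,b-s-1,k-i}^{(h-1)}(q,t)$ for some $h$ (the length of the next $0$-run, equivalently the number of vertical steps in the next block); (3) verify that the lost $\dinv$ is accounted for by $\qbinom{r+s}{r}_q$ (inversions between the $0$'s and the $1$'s of the block) times $\qbinom{h-1}{i}_q$ (inversions among the $i$ decorated rises interlaced with the $h-1$ remaining vertical steps of the next block, with the first step forbidden from being a rise, giving $q^{\binom{i+1}{2}}$ nothing extra here since rises contribute $-1$ to $\dinv^-$ — this is where one must be careful with the sign conventions of $\dinv$ on decorated objects) times $\qbinom{h+r-i-1}{h}_q$ (the interlacing of the new block's vertical steps with the $r-i$ undecorated horizontal steps carried over); (4) verify that the lost $\area$ is exactly $a-r+b-s-k$ units per non-decorated letter that was deleted or lowered, giving the factor $t^{a-r+b-s-k}$; and (5) sum over the admissible values of $h$ from $1$ to $b-s$. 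Throughout, the factor $q^{\binom{i+1}{2}}$ should emerge from the requirement that the $i$ decorations sit on rises, i.e.\ on vertical steps immediately preceded by another vertical step, which pins down their relative positions and contributes $\binom{i+1}{2}$ to the count exactly as in step (1) of the proof of Theorem~\ref{thm:reco_old_bounce}.

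The hard part will be step (3): getting the exact split of the $\dinv$ into the three $q$-binomials with the correct powers of $q$ out front, because the interplay between the inversion statistic on reduced polyominoes (successor relation in the alphabet $\overline{\mathbb N}$, read in the reversed order relative to parallelogram polyominoes) and the $-k$ correction coming from decorated rises (the $\dinv^-$ of Definition on $\LDD^{\vee k}$ / the decorated-rise conventions) is delicate. In particular one must check that deleting the first block does not destroy any inversions that are ``counted twice'' and that the decorated rises in the new block interlace with the surviving vertical steps in precisely the way that produces $\qbinom{h-1}{i}_q$ and not $\qbinom{h-1}{i}_q$ shifted by a power of $q$; the forbidden-first-rise condition is exactly what makes this clean. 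An alternative, safer route — which I would fall back on if the bijective bookkeeping becomes unwieldy — is purely algebraic: $\overline{RT}$ and $RT$ are, via $\phi$ and Theorem~\ref{th:redzetamap}, enumerators of decorated reduced polyominoes, which in turn (Corollary~\ref{cor:redpolto2cpf} and Proposition~\ref{prop:dinvarea}) are normalizations of the decorated parallelogram polyomino enumerators $\PP_{q,t}(m\backslash r,n)^{\ast k}$; one can then deduce the stated identity from the recursion in Theorem~\ref{th:redbouncerecursion} together with the elementary $q$-binomial Lemma~\ref{lem:elementary4}, exactly as the second equality in Theorem~\ref{thm:reco_R_first_bounce} is obtained from the first. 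Either way, the substantive content is the block-peeling decomposition; the rest is routine $q$-binomial algebra and I would not write it out in detail.
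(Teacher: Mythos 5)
The paper does not prove this theorem; it is stated as a citation from Wilson's thesis (\cite{Wilson-PhD-2015}*{Proposition~5.3.3.1}) and then used as a black box to derive Theorem~\ref{th:recursion2cpf} by the $q$-binomial computation immediately following it. So there is no paper proof to compare yours against, and supplying one is not what the text requires — though it may certainly be of independent interest.

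Your primary route — translate to reduced polyominoes via $\phi$ and peel off the first block, parallel to the proofs of Theorem~\ref{th:dinvrecursion} and Theorem~\ref{th:reddinvrecursion} — is the right shape and is almost certainly the spirit of Wilson's original argument. But two of the specific worries you flag are misplaced. First, the $\dinv^{-}$ convention with the $-k$ correction applies only to valley-decorated paths $\LDD^{\vee k}$; the two car parking functions in $\PF^2(m,n)^{\ast k}$ are rise-decorated, and on those the $\dinv$ is just the ordinary $\dinv$ of labelled Dyck paths (decorations on rises affect $\area$, not $\dinv$). So there is no sign bookkeeping to be nervous about. Second, your attribution of the factor $q^{\binom{i+1}{2}}$ to ``the requirement that the $i$ decorations sit on rises'' needs to be made precise: the right model is step (1) of the proof of Theorem~\ref{thm:reco_first_bounce_R_S}, where the analogous factor $q^{\binom{i+1}{2}}\qbinom{h-1}{i}_q$ tracks the area created by interlacing peaks and vertical steps with the top row forbidden — here the forbidden-first-rise condition in $\overline{RT}$ plays the same role, but one must actually check that the resulting inversion count comes out right, not argue by visual analogy.

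The ``safer'' fallback route has a genuine logical gap. Theorem~\ref{th:redbouncerecursion} (equivalently Theorem~\ref{th:reddinvrecursion}) as \emph{stated} already sums over $s$ and $h$; it is a coarser aggregate of the identity you are trying to prove. You cannot extract the refined Wilson recursion from the coarse one plus Lemma~\ref{lem:elementary4}: that lemma is used in the paper to combine $\overline{RT}_{a,b,k}^{(r,s,i)}$ and $t^{-1}\overline{RT}_{a,b,k-1}^{(r,s,i-1)}$ into a single $RT_{a,b,k}^{(r,s,i)}$ term, i.e.\ to move \emph{from} Wilson's theorem \emph{to} Theorem~\ref{th:recursion2cpf}, not the reverse. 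The analogy to ``how the second equality in Theorem~\ref{thm:reco_R_first_bounce} is obtained from the first'' also does not carry: there the two displays are two forms of the same recursion at the same level of refinement, whereas here you would be trying to refine a coarser statement, which is information you no longer have once the sum is taken. If you want to avoid re-citing Wilson, you have to carry out the block-peeling bijection keeping $r$, $s$, $i$ fixed throughout; there is no shortcut through the already-summed version.
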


We now want to explicitly check that the recursion in \cite{Wilson-PhD-2015}*{Proposition~5.3.3.1} for $RT_{n,m,k}^{(r-1)}(q,t)$ and the one in Theorem~\ref{th:recursion2cpf} for $\PF^2_{q,t}(m \backslash r, n)^{\ast k}$ are equivalent, since the statements are apparently different.

\begin{proposition}
	$\PF^2_{q,t}(m \backslash r, n)^{\ast k}$ and $RT_{n,m,k}^{(r-1)}(q,t)$ satisfy the same recursion.
\end{proposition}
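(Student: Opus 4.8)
The plan is to show that both $\PF^2_{q,t}(m \backslash r, n)^{\ast k}$ and $RT_{n,m,k}^{(r-1)}(q,t)$ obey the recursion stated in Theorem~\ref{th:recursion2cpf}. For $\PF^2_{q,t}(m \backslash r, n)^{\ast k}$ this is precisely the content of Theorem~\ref{th:recursion2cpf}, so nothing more needs to be done on that side. The real work is to massage Wilson's recursion, as recorded in Theorem~\ref{th:recursion2cpf}'s companion statement for $\overline{RT}_{a,b,k}^{(r,s,i)}(q,t)$, into exactly the same shape.

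First I would recall the combinatorial decompositions underlying the two notations. In our notation, a path in $\PF^2(m \backslash r, n)^{\ast k}$ has $r-1$ $2$'s on the main diagonal; refining further by the number $s$ of $1$'s on the main diagonal and the number $i$ of decorated rises among the diagonal steps gives a partition of $\PF^2(m \backslash r, n)^{\ast k}$, and isolating whether the first (lowest) rise is decorated corresponds exactly to Wilson's $\overline{RT}$ sets. Concretely one has the dictionary $RT_{n,m,k}^{(r-1)}(q,t) = \PF^2_{q,t}(m \backslash r, n)^{\ast k}$, $RT_{n,m,k}^{(s,r-1,i)}(q,t)$ being the refinement by $s$ and $i$, and $\overline{RT}_{a,b,k}^{(r,s,i)}(q,t)$ being the analogue with an undecorated lowest rise — here matching Wilson's indexing $(a,b) = (n,m)$ and noting that his $(r,s)$ play the role of our $(s, r-1)$. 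I would state this dictionary explicitly as a lemma (it is immediate from comparing definitions) so that the remaining computation is purely algebraic.

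Next I would expand $RT_{n,m,k}^{(r-1)}(q,t)$ by first summing over the number $i$ of decorated diagonal rises, splitting each $RT_{n,m,k}^{(s,r-1,i)}(q,t)$ into the term where the lowest rise is undecorated (which is $\overline{RT}$) and the term where it is decorated (which, after removing that decoration, is $\overline{RT}$ with $i$ decreased by one and one fewer total decoration). This produces a double sum whose summand, by Wilson's formula for $\overline{RT}$, is a product of $q$-powers, $q$-binomials, and a smaller $\PF^2_{q,t}$. Then I would sum over $s$ as well, collect the $q$-binomial factors, and compare with the right-hand side of Theorem~\ref{th:recursion2cpf}. The combinatorial identity needed to merge the ``undecorated lowest rise'' and ``decorated lowest rise'' contributions into the single factor $q^{\binom{h}{2}} \qbinom{s}{h}_q$ appearing in Theorem~\ref{th:recursion2cpf} is of the same flavor as Lemma~\ref{lem:elementary4}, and I expect it can either be cited directly from that lemma (with a suitable change of variables) or proven by a one-line $q$-Pascal argument.

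The main obstacle will be bookkeeping: reconciling Wilson's index conventions (the pair $(r,s)$ in $\overline{RT}_{a,b,k}^{(r,s,i)}$, where $r$ counts diagonal rise-contributions differently than our $r$ counts diagonal $2$'s, and the $+1$ shifts hidden in our convention $\RP(m \backslash r, \cdot)$ allowing $r = m+1$) with ours, and making sure the exponent of $t$, namely $m+n+1-r-s-k$ in our recursion versus $a-r+b-s-k$ in Wilson's $\overline{RT}$ formula, matches after translation. Once the dictionary is pinned down carefully, the verification is a routine rearrangement of sums together with the elementary $q$-identity above, so I would present the dictionary lemma and then say ``the thesis follows by a direct computation using Lemma~\ref{lem:elementary4}'', deferring the fully spelled-out manipulation.
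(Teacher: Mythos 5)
Your proposal follows essentially the same route as the paper's proof: split $RT_{n,m,k}^{(r-1)}$ into its refinements by $s$ and $i$, decompose each $RT^{(r,s,i)}$ into the undecorated-lowest-rise piece $\overline{RT}$ and the decorated one (which contributes a $t^{-1}\overline{RT}$ term you should make explicit, since removing the decoration on the first rise raises the area by one), merge the two contributions via the $q$-Pascal-type identity in Lemma~\ref{lem:elementary4}, and then reindex. The bookkeeping concerns you flag are real but resolve exactly as you anticipate, and the identity you need is precisely Lemma~\ref{lem:elementary4} as stated.
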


\begin{proof}	
	First of all, by definition we have \[ RT_{a,b,k}^{(s)}(q,t) = \sum_{r=1}^{a} \sum_{i=0}^{k} RT_{a,b,k}^{(r,s,i)}(q,t) \] and, since any parking function with $k$ decorated rises has either $k$ or $k-1$ decorated rises which are not the first one, it holds that \[ RT_{a,b,k}^{(r,s,i)}(q,t) = \overline{RT}_{a,b,k}^{(r,s,i)}(q,t) + t^{-1} \overline{RT}_{a,b,k-1}^{(r,s,i-1)}(q,t) \]
	
	where $t^{-1}$ takes into account the fact that we are adding a decoration to the first rise. We can thus rewrite the recursion as
	\begin{align*}
		& \hspace{-0.5cm} \overline{RT}_{a,b,k}^{(r,s,i)}(q,t) + t^{-1} \overline{RT}_{a,b,k-1}^{(r,s,i-1)}(q,t)= \\
		& = q^{\binom{i+1}{2}} t^{a-r+b-s-k} \qbinom{r+s}{r}_q \sum_{h=1}^{b-s} \qbinom{h-1}{i}_q \qbinom{h+r-i-1}{h}_q RT_{a-r,b-s-1,k-i}^{(h-1)}(q,t) \\
		& + q^{\binom{i}{2}} t^{a-r+b-s-k} \qbinom{r+s}{r}_q \sum_{h=1}^{b-s} \qbinom{h-1}{i-1}_q \qbinom{h+r-i}{h}_q RT_{a-r,b-s-1,k-i}^{(h-1)}(q,t)
	\end{align*}
	
	and hence
	\begin{align*}
		& RT_{a,b,k}^{(r,s,i)}(q,t) = q^{\binom{i}{2}} t^{a-r+b-s-k} \qbinom{r+s}{r}_q \\
		& \times \sum_{h=1}^{b-s} \left( q^{i} \qbinom{h-1}{i}_q \qbinom{h+r-i-1}{h}_q + \qbinom{h-1}{i-1}_q \qbinom{h+r-i}{h}_q \right) RT_{a-r,b-s-1,k-i}^{(h-1)}(q,t).
	\end{align*}
	
	Using Lemma~\ref{lem:elementary4}, i.e. the identity \[ q^i \qbinom{h-1}{i}_q \qbinom{h+r-i-1}{h}_q + \qbinom{h-1}{i-1}_q \qbinom{h+r-i}{h}_q = \qbinom{r}{i}_q \qbinom{h+r-i-1}{h-i}_q, \] we have
	\begin{align*}
		RT_{a,b,k}^{(r,s,i)}(q,t) & = q^{\binom{i}{2}} t^{a-r+b-s-k+i} \qbinom{r+s}{r}_q \\ 
		& \times \sum_{h=1}^{b-s} \qbinom{r}{i}_q \qbinom{h+r-i-1}{h-i}_q RT_{a-r,b-s-1,k-i}^{(h-1)}(q,t)
	\end{align*}
	
	and now summing over $r$ and $i$ we get
	\begin{align*}
		RT_{a,b,k}^{(s)}(q,t) & = \sum_{r=1}^{a} \sum_{i=0}^{k} q^{\binom{i}{2}} t^{a-r+b-s-k+i} \qbinom{r+s}{r}_q \\
		& \times \sum_{h=1}^{b-s} \qbinom{r}{i}_q \qbinom{h+r-i-1}{h-i}_q RT_{a-r,b-s-1,k-i}^{(h-1)}(q,t).
	\end{align*}
	
	Making the substitutions
	
	\begin{multicols}{3}
		\begin{itemize}
			\item $a \mapsto n$
			\item $b \mapsto m$
			\item $s \mapsto r-1$
			\item $r \mapsto s$
			\item $i \mapsto h$
			\item $h \mapsto u$
		\end{itemize}		
	\end{multicols}
	
	we get
	\begin{align*}
		RT_{n,m,k}^{(r-1)}(q,t) & = \sum_{s=1}^{n} \sum_{h=0}^{k} \sum_{u=1}^{m-r+1} q^{\binom{h}{2}} t^{m+n-r-s-k+1} \\ & \times \qbinom{r+s-1}{s}_q \qbinom{s}{h}_q \qbinom{u+s-h-1}{s-1}_q RT_{n-s,m-r,k-h}^{(u-1)}(q,t)
	\end{align*}
	
	and now replacing $RT_{n,m,k}^{(r-1)}(q,t)$ with $\PF^2_{q,t}(m \backslash r, n)^{\ast k}$ we get the recursion in Theorem~\ref{th:recursion2cpf}, as desired.
\end{proof}

\section{Partially labelled Dyck paths}

There is an interesting connection between parallelogram polyominoes and a special subset of partially labelled Dyck paths.

\begin{figure}[!ht]
	\begin{center}
		\begin{tikzpicture}[scale=0.6]
		\draw[step=1.0, gray!60, thin] (0,0) grid (12,12);
		
		\draw[gray!60, thin] (0,0) -- (12,12);
		
		\draw[blue!60, line width=1.6pt] (0,0) -- (0,1) -- (0,2) -- (0,3) -- (1,3) -- (1,4) -- (2,4) -- (2,5) -- (3,5) -- (4,5) -- (4,6) -- (4,7) -- (4,8) -- (5,8) -- (6,8) -- (6,9) -- (6,10) -- (7,10) -- (7,11) -- (8,11) -- (8,12) -- (9,12) -- (10,12) -- (11,12) -- (12,12);
		
		\draw
		(0.5,0.5) circle (0.4cm) node {$1$}
		(0.5,1.5) circle (0.4cm) node {$2$}
		(0.5,2.5) circle (0.4cm) node {$5$}
		(1.5,3.5) circle (0.4cm) node {$0$}
		(2.5,4.5) circle (0.4cm) node {$0$}
		(4.5,5.5) circle (0.4cm) node {$0$}
		(4.5,6.5) circle (0.4cm) node {$4$}
		(4.5,7.5) circle (0.4cm) node {$6$}
		(6.5,8.5) circle (0.4cm) node {$0$}
		(6.5,9.5) circle (0.4cm) node {$3$}
		(7.5,10.5) circle (0.4cm) node {$0$}
		(8.5,11.5) circle (0.4cm) node {$0$};
		\end{tikzpicture}
	\end{center}
	
	\caption{A partially labelled Dyck path with $6$ nonzero labels and $6$ zero labels.}
	\label{fig:pldp}
\end{figure}
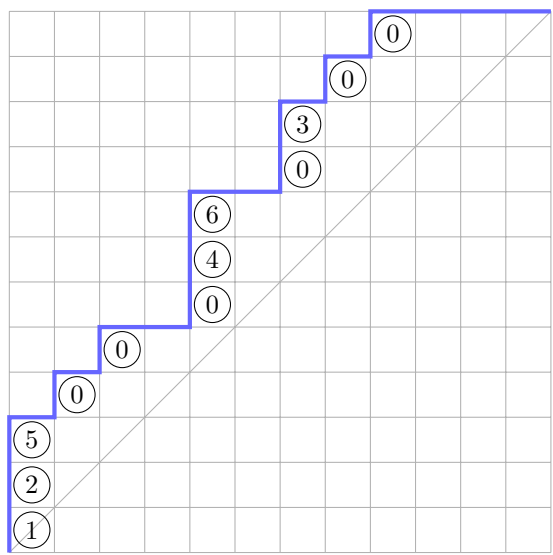

\begin{figure}[!ht]
	\begin{center}
		\begin{tikzpicture}[scale=0.6]
		\draw[step=1.0, gray!60, thin] (0,0) grid (7,6);
		
		\filldraw[yellow, opacity=0.3] (0,0) -- (1,0) -- (2,0) -- (3,0) -- (3,1) -- (4,1) -- (4,2) -- (5,2) -- (6,2) -- (7,2) -- (7,3) -- (7,4) -- (7,5) -- (7,6) -- (6,6) -- (5,6) -- (4,6) -- (4,5) -- (3,5) -- (3,4) -- (3,3) -- (2,3) -- (1,3) -- (0,3) -- (0,2) -- (0,1) -- (0,0);
		
		\draw[red, sharp <-sharp >, sharp angle = -45, line width=1.6pt] (0,0) -- (0,1) -- (0,2) -- (0,3) -- (1,3) -- (2,3) -- (3,3) -- (3,4) -- (3,5) -- (4,5) -- (4,6) -- (5,6) -- (6,6) -- (7,6);
		
		\draw[green, sharp <-sharp >, sharp angle = 45, line width=1.6pt] (0,0) -- (1,0) -- (2,0) -- (3,0) -- (3,1) -- (4,1) -- (4,2) -- (5,2) -- (6,2) -- (7,2) -- (7,3) -- (7,4) -- (7,5) -- (7,6);
		
		\node[red] at (0.5,0.5) {$0$};
		\node[red] at (0.5,1.5) {$1$};
		\node[red] at (0.5,2.5) {$2$};
		\node[green] at (1.5,2.5) {$2$};
		\node[green] at (2.5,2.5) {$2$};
		\node[green] at (3.5,2.5) {$1$};
		\node[red] at (3.5,3.5) {$2$};
		\node[red] at (3.5,4.5) {$3$};
		\node[green] at (4.5,4.5) {$2$};
		\node[red] at (4.5,5.5) {$3$};
		\node[green] at (5.5,5.5) {$3$};
		\node[green] at (6.5,5.5) {$3$};
		\end{tikzpicture}
		\begin{tikzpicture}[scale=0.6]
		\draw[step=1.0, gray!60, thin] (0,0) grid (7,6);
		
		\filldraw[yellow, opacity=0.3] (0,0) -- (1,0) -- (2,0) -- (3,0) -- (3,1) -- (4,1) -- (4,2) -- (5,2) -- (6,2) -- (7,2) -- (7,3) -- (7,4) -- (7,5) -- (7,6) -- (6,6) -- (5,6) -- (4,6) -- (4,5) -- (3,5) -- (3,4) -- (3,3) -- (2,3) -- (1,3) -- (0,3) -- (0,2) -- (0,1) -- (0,0);
		
		\draw[red, sharp <-sharp >, sharp angle = -45, line width=1.6pt] (0,0) -- (0,1) -- (0,2) -- (0,3) -- (1,3) -- (2,3) -- (3,3) -- (3,4) -- (3,5) -- (4,5) -- (4,6) -- (5,6) -- (6,6) -- (7,6);
		
		\draw[green, sharp <-sharp >, sharp angle = 45, line width=1.6pt] (0,0) -- (1,0) -- (2,0) -- (3,0) -- (3,1) -- (4,1) -- (4,2) -- (5,2) -- (6,2) -- (7,2) -- (7,3) -- (7,4) -- (7,5) -- (7,6);
		
		\node[white] at (-3,0.5) {};
		
		\draw[red]
		(0.5,0.5) circle(0.4 cm) node {$1$}
		(0.5,1.5) circle(0.4 cm) node {$2$}
		(0.5,2.5) circle(0.4 cm) node {$5$}
		(3.5,3.5) circle(0.4 cm) node {$4$}
		(3.5,4.5) circle(0.4 cm) node {$6$}
		(4.5,5.5) circle(0.4 cm) node {$3$};
		\end{tikzpicture}
	\end{center}
	
	\caption{The corresponding parallelogram polyomino with its new area word (left) or the regular labelling (right). In the picture on the left, each green number corresponds to the horizontal red step in the column to its immediate left.}
	\label{fig:bijection}
\end{figure}
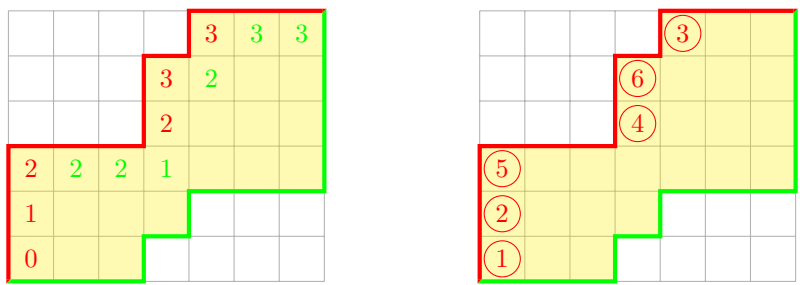

\begin{theorem}
	\label{th:bijection}
	For $m,n \geq 1$ there is a bijection \[ \eta \colon \PLD(m-1,n)^{\ast n-1}\to \LPP(m,n) \] such that $\area(D)= \area(\eta(D)) - (m+n-1)$ and $\pmaj(D)=\pmaj(\eta(D))$ for all $P\in \PLD(m-1,n)^{\ast n-1}$.
\end{theorem}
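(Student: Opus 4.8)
The plan is to construct $\eta$ explicitly by exploiting the second construction of the area word of a parallelogram polyomino (the one passing through a Dyck path of size $m+n$) described in Section~\ref{section:parallelogram-polyominoes}. Recall that a labelled parallelogram polyomino $P \in \LPP(m,n)$ already carries, via that construction, an underlying Dyck path $\widetilde{D}$ of size $m+n$ whose north steps come in two flavours: those arising from red north steps (the ones carrying the labels of $P$) and those arising from green east steps. My first step would be to record which north steps of $\widetilde{D}$ are ``green'' ones, label them $0$, and keep the existing (positive) labels on the ``red'' north steps; then decorate the rises of $\widetilde{D}$ appropriately. The key numerical facts to check are: (i) $\widetilde{D}$ has exactly $n$ positive labels and $m-1$ zero labels once we discard the very first step (which is always a red north step forced by the polyomino definition), matching $\PLD(m-1,n)$; (ii) the decorated rises number exactly $n-1$; and (iii) the ``$0$ does not appear in the first column'' condition of partially labelled Dyck paths holds, again because the polyomino's red path starts with a north step.

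Second, I would pin down the correspondence between rises/falls and the green--east versus red--north dichotomy. By Remark~\ref{rmk:rises-falls-correspondence} we may equivalently decorate falls. The natural guess is that the falls of $\widetilde{D}$ correspond bijectively to the green east steps that immediately follow another green east step, or dually that the $n-1$ decorated rises correspond to the $n-1$ vertical green steps beyond the first; I would make this precise by tracking the diagonals of slope $-1$ exactly as in the proof of Theorem~\ref{th:pmaj}, where it is shown that diagonals $x+y=i$ in $P$ correspond to columns $i$ in the associated labelled Dyck path. Once the combinatorial dictionary is set up, invertibility of $\eta$ is immediate: from a path in $\PLD(m-1,n)^{\ast n-1}$ one reconstructs $\widetilde{D}$ by prepending the forced first north step, reads off which north steps are zeros to recover the green/red split, and runs the polyomino-from-Dyck-path construction backwards.

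Third comes the statistics. For area: the area of $\widetilde{D}$ as a (decorated) Dyck path counts the same squares as $\area(P)$ up to the fixed ``snake'' of $m+n-1$ squares forced between the two paths (this is exactly the shift appearing in Proposition~\ref{normalizing map}), and discarding the decorated-rise rows on both sides matches the two $\area$ definitions; so $\area(D) = \area(\eta(D)) - (m+n-1)$ should fall out by bookkeeping, essentially the computation already done for the undecorated case in \cite{Aval-DAdderio-Dukes-Hicks-LeBorgne-2014}. For $\pmaj$: here I would lean heavily on the parking-word algorithm and on the argument of Theorem~\ref{th:pmaj}. The $\pmaj$ of a labelled parallelogram polyomino is defined via a parking word of length $m+n-1$ plus the additive constant $m+n-1$, while the $\pmaj$ of a partially labelled Dyck path of size $m+n-1$ is defined via a parking word of length $m+n-1$ with $x_0=1$; the claim is that the two parking-word processes produce reverse words with the same descent set. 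The delicate point is matching the treatment of zero labels in $\PLD$ (where a $0$ simply contributes nothing and never sits in the first column) with the ``replace $p_{i-1}$ with a $0$ when the green step is horizontal'' rule in the polyomino parking word — the zeros injected there should correspond precisely to the explicit zero labels in the partially labelled path.

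The main obstacle I anticipate is exactly this last matching of the two parking-word algorithms: the polyomino $\pmaj$ algorithm alternates between ``green step vertical'' (delete $p_{i-1}$) and ``green step horizontal'' (replace $p_{i-1}$ by $0$ and add new column labels), whereas the $\PLD$ algorithm processes all labels uniformly column by column. Proving these yield the same sequence of maxima (hence the same descents of the reversed word) will require a careful induction on the columns, keeping track of the active multiset $C_i$ on both sides and using the structural fact that the green east steps of $P$ become the zero-labelled north steps of $\eta(D)$. I expect this to be the heart of the proof, with everything else being a translation exercise; accordingly I would isolate it as a lemma comparing the two parking words step by step, and only then assemble the bijection and the two statistic identities.
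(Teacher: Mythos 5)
Your starting move runs into an arithmetic obstruction before you even get to the parking words. The size-$m+n$ Dyck path $\widetilde{D}$ obtained by interleaving red and green steps has $n$ positive-labelled north steps (the red north steps) and $m$ zero-labelled north steps (the green east steps), together with $m+n$ east steps. Discarding the very first step, which you correctly identify as a red \emph{north} step with a \emph{positive} label, leaves $n-1$ positive labels, $m$ zero labels, and $m+n$ east steps against only $m+n-1$ north steps — so it is not a $\PLD(m-1,n)$ object, nor even a Dyck path. (Your claim in point (i) that the discard leaves ``$n$ positive labels and $m-1$ zero labels'' has the wrong entry decremented.) A concrete counterexample: for $P\in\PP(2,1)$, $\widetilde D=\mathsf{NNENEE}$ with north-step labels $(\ell,0,0)$, and removing the first step leaves labels $(0,0)$, whereas $\PLD(1,1)$ requires exactly one zero and one nonzero. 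Your point~(ii) is also not automatic: in this small example $\widetilde D$ already has one rise while $n-1=0$, so the decoration count would fail with any trimming that keeps the green-east north steps intact. There is no canonical single-step deletion that fixes both the label counts and the path size.

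The paper's $\eta$ is built on a genuinely different encoding, and I think you underestimate how different it is. Rather than interleaving both polyomino paths into a size-$(m+n)$ Dyck path, the paper constructs a size-$(m+n-1)$ Dyck path whose area-word \emph{colouring} records only the red-path shape (red letter $\leftrightarrow$ vertical red step, green letter $\leftrightarrow$ horizontal red step, minus the last one), with the vertical red steps carrying over their labels; the green-path data is then stored invisibly in the \emph{numerical values} of the green area-word letters, one for each column of $P$ except the last. In this encoding the $\PLD(m-1,n)^{\ast n-1}$ constraints (exactly $m-1$ zeros, every rise decorated, every valley blank, no zeros in column one) drop out structurally, and — crucially — the column-by-column parking-word algorithms for $\pmaj$ on the two sides then literally coincide step by step, because the zero labels of $D$ sit precisely where the polyomino algorithm's ``replace $p_{i-1}$ by $0$'' rule inserts zeros. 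So the part you flag as the heart of the proof (the parking-word induction) becomes near-trivial in the right encoding; the real content is in choosing an encoding that sets that triviality up, and that is exactly where your proposal goes off course.
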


\begin{proof}
	Let $D \in \PLD(m-1,n)^{\ast n-1}$. Write its area word (the usual one for Dyck paths), colouring green the numbers corresponding to rows containing a label $0$, and red the other ones (for example, the Dyck path in Figure~\ref{fig:pldp} has area word ${\color{red} 0} {\color{red} 1} {\color{red} 2} {\color{green} 2} {\color{green} 2} {\color{green} 1} {\color{red} 2} {\color{red} 3} {\color{green} 2} {\color{red} 3} {\color{green} 3} {\color{green} 3}$).
	
	Now, draw the red path of the polyomino as follows: running over the letters of the area word, draw a vertical step if the letter is red, and a horizontal step if it is green. Whenever you draw a vertical step, label it with the corresponding nonzero label of the Dyck path. End with an extra horizontal step.
	
	Next, draw the green path as follows. Start with drawing a horizontal step. Then, for each horizontal red step $h$, draw a horizontal green step $x+1$ rows below $h$ but in the column to its immediate right, where $x$ is the value of the green letter corresponding to $h$. Then, connect them with vertical steps to get a lattice path from $(0,0)$ to $(m,n)$. See Figure~\ref{fig:bijection} for an example.
	
	It is not hard to see that this is a bijection (its inverse is quite straightforward) and it maps area to the sum of the green letters in the area word, which is exactly the area of the polyomino minus $m+n-1$. In fact, drawing the picture as in Figure~\ref{fig:bijection}, it is clear that each green letter denotes the number of squares in the polyomino that are strictly below the one containing it, and the red letters do not count since a letter is red if and only if it is either the first one (which is ${\color{red} 0}$) or it is a rise, and all the rises are decorated and thus do not contribute to the area. It also preserves pmaj by construction, since the algorithms to compute it coincide step by step.
	%	
	%	Let $P \in \PLDP(m,n)^{\ast n}$. Scan its steps starting from the second one, and do the following. If the scanned step is preceded by a vertical step, draw a red step in the polyomino in the same direction of the scanned one (ie. horizontal if it is horizontal, vertical if it is vertical); if it is preceded by a horizontal step, draw a green step in the opposite direction (ie. horizontal if it is vertical and viceversa). Stop on the second last step (this way, $2m+2n$ steps have been scanned).
	%	
	%	Put the label of the first vertical step below the first column of the polyomino, and whenever you draw a vertical red step, copy the label attached to the step you are scanning in that moment. This map is bijective.
\end{proof}

\section{A new $\dinv$ statistic on parallelogram polyominoes}

The bijection in Theorem~\ref{th:bijection} gives a new coding for parallelogram polyominoes (using the area word of the corresponding partially labelled Dyck path), together with a new $\dinv$ statistic, which can be seen directly on labelled parallelogram polyominoes as follows.

\begin{definition}
	We define the statistic $\mathsf{newdinv}$ on a labelled parallelogram polyomino with new area word $a_1 a_2 \cdots a_{m+n-1}$ as the number of \textit{inversions}, i.e. the pairs $(i,j)$ with $a_i$ red, such that one of the following holds:
	\begin{itemize}
		\item $i > j$, $a_i = a_j$, and either $a_j$ is green or the label attached to the vertical red step corresponding to $a_i$ is greater than the one attached to the step corresponding to $a_j$;
		\item $i < j$, $a_i = a_j + 1$, and either $a_j$ is green or the label attached to the vertical red step corresponding to $a_i$ is greater than the one attached to the step corresponding to $a_j$.
	\end{itemize}
\end{definition}

This definition also applies to unlabelled parallelogram polyominoes assuming that the third condition (the one regarding the label corresponding to $a_j$) always holds, or equivalently that the labels are attached in a suitable canonical ordering (i.e. attaching the labels from $1$ to $n$ starting from the red $0$, then to the $1$'s, next to the $2$'s, and so on, going left to right on each level). In other words, it is the number of pairs $(i,j)$ with $a_i$ red such that either $i > j$ and $a_i = a_j$, or $i < j$ and $a_i = a_j + 1$. For example, the $\mathsf{newdinv}$ of the parallelogram polyomino in Figure~\ref{fig:bijection} is $4$ (or $6$ if we disregard the labels), the inversions being $(3,6), (7,4), (7,5), (8,9)$ (and $(7,3)$ and $(10,8)$ if we disregard the labels).

This $\mathsf{newdinv}$ statistic on labelled polyominoes answers the question in \cite{Aval-Bergeron-Garsia-2015}*{Equation (8.14)}.

\section{A $\bounce$ statistic on partially labelled Dyck paths}

The bijection in Theorem~\ref{th:bijection} gives also an implicit definition of a $\bounce$ statistic on partially labelled Dyck paths in $\PLD(m-1,n)^{\ast n-1}$. Call \emph{blank valley} a vertical step of a partially labelled Dyck path whose label is $0$. Indeed, such a step must always be preceded by a horizontal step, i.e.\ it must be a \emph{valley}, since the labels must be strictly increasing in columns. Observe that for every element of $\PLD(m-1,n)^{\ast n-1}$ every valley is necessarily a blank valley, and every rise is necessarily decorated.

We can draw a bounce path as usual for Dyck paths, with the following difference. We can replace blank valleys (the vertical step together with the preceding horizontal step) with diagonal steps, and then when the bounce path should go horizontally, it goes parallel to the step of the path in the same column. See Figure~\ref{fig:plbounce} on the left for an example, noticing that in the picture we do not show the decorations (every rise is decorated) nor the labels: in order to construct the bounce path it is enough to remember that every valley is a blank valley.

\begin{figure}[!ht]
	\begin{center}
		\begin{tikzpicture}[scale=0.4]
		\draw[step=1.0, gray!60, thin] (0,0) grid (12,12);
		
		\draw[gray!60, thin] (0,0) -- (12,12);
		
		\draw[blue!60, transform canvas={xshift=0.15mm}, transform canvas={yshift=-0.15mm}, line width=1.6pt] (0,0) -- (0,1) -- (0,2) -- (0,3) -- (1,3) -- (1,4) -- (2,4) -- (2,5) -- (3,5) -- (4,5) -- (4,6) -- (4,7) -- (4,8) -- (5,8) -- (6,8) -- (6,9) -- (6,10) -- (7,10) -- (7,11) -- (8,11) -- (8,12) -- (9,12) -- (10,12) -- (11,12) -- (12,12);
		
		\draw[blue!20, transform canvas={xshift=-0.15mm}, transform canvas={yshift=0.15mm}, line width=1.6pt] (0,0) -- (0,1) -- (0,2) -- (0,3) -- (1,4) -- (2,5) -- (3,5) -- (4,6) -- (4,7) -- (4,8) -- (5,8) -- (6,9) -- (6,10) -- (7,11) -- (8,12) -- (9,12) -- (10,12) -- (11,12) -- (12,12);
		
		\draw[dashed, opacity=0.6, thick] (0,0) -- (0,3) -- (1,4) -- (2,5) -- (3,5) -- (4,6) -- (5,6) -- (6,7) -- (7,8) -- (8,9) -- (9,9) -- (9,12) -- (10,12) -- (11,12) -- (12,12);
		
		\node at (0.5,0.5) {$0$};
		\node at (0.5,1.5) {$0$};
		\node at (0.5,2.5) {$0$};
		\node at (1.5,3.5) {$0$};
		\node at (2.5,4.5) {$0$};
		\node at (4.5,5.5) {$0$};
		\node at (6.5,6.5) {$0$};
		\node at (7.5,7.5) {$0$};
		\node at (8.5,8.5) {$0$};
		\node at (9.5,9.5) {$1$};
		\node at (9.5,10.5) {$1$};
		\node at (9.5,11.5) {$1$};
		\end{tikzpicture}
		\begin{tikzpicture}[scale=0.6]
		\draw[step=1.0, gray!60, thin] (0,0) grid (7,6);
		
		\filldraw[yellow, opacity=0.3] (0,0) -- (1,0) -- (2,0) -- (3,0) -- (3,1) -- (4,1) -- (4,2) -- (5,2) -- (6,2) -- (7,2) -- (7,3) -- (7,4) -- (7,5) -- (7,6) -- (6,6) -- (5,6) -- (4,6) -- (4,5) -- (3,5) -- (3,4) -- (3,3) -- (2,3) -- (1,3) -- (0,3) -- (0,2) -- (0,1) -- (0,0);
		
		\draw[red, sharp <-sharp >, sharp angle = -45, line width=1.6pt] (0,0) -- (0,1) -- (0,2) -- (0,3) -- (1,3) -- (2,3) -- (3,3) -- (3,4) -- (3,5) -- (4,5) -- (4,6) -- (5,6) -- (6,6) -- (7,6);
		
		\draw[green, sharp <-sharp >, sharp angle = 45, line width=1.6pt] (0,0) -- (1,0) -- (2,0) -- (3,0) -- (3,1) -- (4,1) -- (4,2) -- (5,2) -- (6,2) -- (7,2) -- (7,3) -- (7,4) -- (7,5) -- (7,6);
		
		\draw[dashed, opacity=0.6, thick] (0,0) -- (1,0) -- (1,1) -- (1,2) -- (1,3) -- (2,3) -- (3,3) -- (4,3) -- (5,3) -- (6,3) -- (7,3) -- (7,4) -- (7,5) -- (7,6);
		
		\node[white] at (-2,-0.8) {};
		\node[above] at (0.5,0.0) {$\bar{0}$};
		\node[right] at (1.0,0.5) {$1$};
		\node[right] at (1.0,1.5) {$1$};
		\node[right] at (1.0,2.5) {$1$};
		\node[above] at (1.5,3.0) {$\bar{1}$};
		\node[above] at (2.5,3.0) {$\bar{1}$};
		\node[above] at (3.5,3.0) {$\bar{1}$};
		\node[above] at (4.5,3.0) {$\bar{1}$};
		\node[above] at (5.5,3.0) {$\bar{1}$};
		\node[above] at (6.5,3.0) {$\bar{1}$};
		\node[right] at (7.0,3.5) {$2$};
		\node[right] at (7.0,4.5) {$2$};
		\node[right] at (7.0,5.5) {$2$};
		\end{tikzpicture}
	\end{center}
	
	\caption{A partially labelled Dyck path in $\PLD(7-1,6)^{\ast 6-1}$ with its bounce path shown (left), and the corresponding polyomino (right). The darker path (left) is the original one, in the lighter one valleys have been replaced by diagonal steps. Vertical steps of the bounce path on the left correspond to vertical ones on the right, while diagonal steps on the left correspond to horizontal ones on the right.}
	\label{fig:plbounce}
\end{figure}
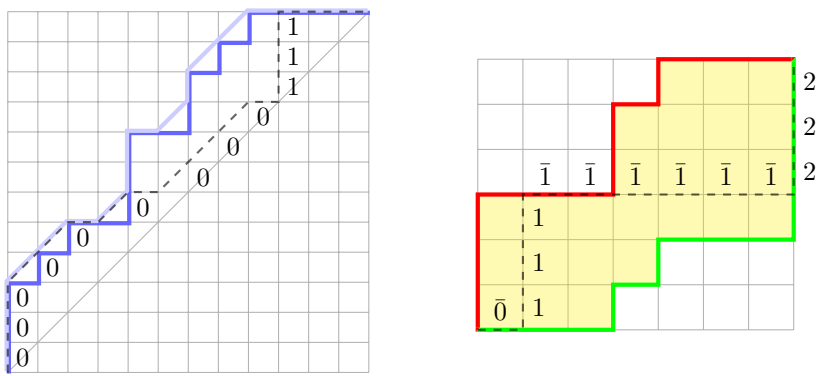

By construction, this $\bounce$ statistic agrees with the $\pmaj$ if the red labels are assigned from $1$ to $n$ going bottom to top. In fact, in each run of the pmaj word $w$, we have a number of letters different from $0$ equal to the number of vertical steps in the corresponding part of the bounce path, and a number of $0$'s equal to the number of diagonal steps in the same part: that is because $0$'s correspond to blank valleys, and blank valleys correspond to diagonal steps; any other letter corresponds to a vertical step of the path which is not a blank valley, and these correspond to vertical steps of the bounce path.

\chapter{Putting the pieces together}

%\section{The Schröder Delta conjecture}
In this chapter we combine the results from the previous three chapters to finally obtain the combinatorial interpretations of the plethystic formulae that we encountered. 

\section{Combinatorial interpretations of plethystic formulae}	

In this section we combine our previous results to provide combinatorial interpretations of several plethystic formulae. We are going to use the notations introduced in Section~\ref{sec:combinatorial_recursions_ddyck} .

\medskip
In the following theorem, the first equalities of \eqref{eq:Ftildenk_qt} and \eqref{eq:Fnk_qt} are due to Zabrocki, as they are immediate consequences of \cite{Zabrocki-4Catalan-2016}*{Theorem~10}. 
\begin{theorem} \label{thm:qt_enumerators_formulae}
	Let $d,\ell,n,k,i\in \mathbb N$ with $n\geq 1$, $n\geq k\geq 1$, $\ell \geq i$. Then
	\begin{align} 
		\label{eq:Ftildenk_qt}	\widetilde{F}_{n,k}^{(d,\ell)} & =\DDd_{q,t}(n\backslash k )^{\circ d, \ast \ell} =\DDb_{q,t}(n\backslash k )^{\circ \ell, \ast d},\\
		\label{eq:Fnk_qt}	F_{n,k}^{(d,\ell)} & =
		\DDd_{q,t}(n\backslash k  )^{\circ d, \ast \ell }+ \DDd_{q,t}(n\backslash k )^{\circ d-1, \ast \ell} 
		\\ \nonumber  &=\DDb_{q,t}(n\backslash k  )^{\circ \ell, \ast d }+ \DDb_{q,t}(n\backslash k )^{\circ \ell, \ast d-1},\\
		\label{eq:Gtildenk_qt}	\widetilde{G}_{n-k,k}^{(d,\ell)} & =
		\DDd_{q,t}(n\backslash k \backslash 0 )^{\circ d, \ast \ell}
		=\DDb_{q,t}(n\backslash k \backslash 0 )^{\circ \ell, \ast d},\\
		\label{eq:Gnk_qt}	G_{n-k,k}^{(d,\ell)} & =
		\DDd_{q,t}(n\backslash k \backslash 0 )^{\circ d, \ast \ell }+ \DDd_{q,t}(n\backslash k \backslash 0 )^{\circ d-1, \ast \ell}
		\\ \nonumber &= \DDb_{q,t}(n\backslash k \backslash 0 )^{\circ \ell, \ast d }+ \DDb_{q,t}(n\backslash k\backslash 0 )^{\circ \ell, \ast d-1},
		\\
		\label{eq:Hbarnk_qt}	\overline{H}_{n,k,i}^{(d,\ell)} & =
		\DDp_{q,t}(n\backslash k \backslash i )^{\circ d, \ast \ell},\\
		\label{eq:Hnk_qt}	H_{n,k,i}^{(d,\ell)} & =\DDp_{q,t}(n\backslash k \backslash i )^{\circ d, \ast \ell}+\DDp_{q,t}(n\backslash k \backslash i )^{\circ d-1, \ast \ell}.
	\end{align}
\end{theorem}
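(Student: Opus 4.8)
The plan is to prove all six identities simultaneously by showing that the combinatorial $q,t$-enumerators satisfy the same recursions and initial conditions as the symmetric function polynomials $\widetilde{F}_{n,k}^{(d,\ell)}$, $F_{n,k}^{(d,\ell)}$, $\widetilde{G}_{n-k,k}^{(d,\ell)}$, $G_{n-k,k}^{(d,\ell)}$, $\overline{H}_{n,k,i}^{(d,\ell)}$, and $H_{n,k,i}^{(d,\ell)}$. On the symmetric function side, the recursions are already available: Theorem~\ref{thm:reco1_F_and_Ftilde} and Theorem~\ref{thm:dvw} give recursions for $F_{n,k}^{(d,\ell)}$ and $\widetilde{F}_{n,k}^{(d,\ell)}$; Theorem~\ref{thm:recoG_and_Gtilde} gives recursions for $G_{n,k}^{(d,\ell)}$ and $\widetilde{G}_{n,k}^{(d,\ell)}$; and Theorem~\ref{thm:reco_H_and_Htilde} (with the rewriting in Remark~\ref{rem:rewriting_reco_H}) gives recursions for $H_{n,k,i}^{(d,\ell)}$ and $\overline{H}_{n,k,i}^{(d,\ell)}$. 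On the combinatorial side, the recursions are Theorem~\ref{thm:reco1_first_bounce} (and its Wilson corollary) for $\DDb_{q,t}(n\backslash k)^{\circ a,\ast b}$ and $\DDd_{q,t}(n\backslash k)^{\circ a,\ast b}$; Theorem~\ref{thm:reco_R_first_bounce} (and its corollary) for the ``$\backslash 0$'' variants; and Theorem~\ref{thm:reco_old_bounce} for $\DDp_{q,t}(n\backslash k\backslash i)^{\circ a,\ast b}$. The first step is therefore a careful bookkeeping comparison: match indices between, say, Theorem~\ref{thm:reco1_F_and_Ftilde}'s recursion \eqref{eq:reco1_Ftilde_Ftilde} and Theorem~\ref{thm:reco1_first_bounce}'s recursion, verifying that under the substitution $a\mapsto d$, $b\mapsto \ell$ (for $\DDd$) the two coincide term by term, and likewise for all other pairs.

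Second, I would set up the induction. The natural induction parameter is $n$ (or equivalently $n-k$, since the recursions reduce $n$ to $n-k$ strictly when $k\geq 1$, and the case $k=n$ is the base of each recursion). For the base cases, I would check that $\widetilde{F}_{n,n}^{(d,\ell)} = \delta_{\ell,0} q^{\binom{n-d}{2}}\qbinom{n-1}{d}_q$ (from Theorem~\ref{thm:reco1_F_and_Ftilde}) equals $\DDd_{q,t}(n\backslash n)^{\circ d,\ast 0}$, which via the $\zeta$-map (Corollary~\ref{cor:zeta_map}) equals $\DDb_{q,t}(n\backslash n)^{\circ 0,\ast d} = \delta_{0,0}q^{\binom{n-d}{2}}\qbinom{n-1}{d}_q$ by Theorem~\ref{thm:reco1_first_bounce}'s initial condition—and symmetrically for the other five families. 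The $G$ and $H$ base cases similarly match: $\widetilde{G}_{0,k}^{(d,\ell)}=0$ corresponds to the empty enumerator, while the $k=n$ cases follow from Theorem~\ref{thm:reco_R_first_bounce} and Theorem~\ref{thm:reco_old_bounce}. Once bases and the term-by-term recursion matching are in place, the inductive step is purely formal: both sides satisfy the same recursion in terms of strictly smaller instances, which by the inductive hypothesis are already known to be equal.

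Third, I would exploit the already-established relations among the families to reduce the number of independent verifications. The pairs $(F,\widetilde F)$, $(G,\widetilde G)$, $(H,\overline H)$ are linked by the $\Delta_{e_k}=\Delta_{e_k}'+\Delta_{e_{k-1}}'$ identities \eqref{eq:rel_F_Ftilde}, \eqref{eq:rel_G_Gtilde}, \eqref{eq:rel_H_F}, while on the combinatorial side Remark~\ref{rem: combinatorial pieri rule } gives the corresponding splitting $\DDd_{q,t}(n\backslash k)^{\circ a,\ast b}$ versus the sum with $\circ(a-1)$. So it suffices to prove the $\widetilde{\phantom F}$-identities \eqref{eq:Ftildenk_qt}, \eqref{eq:Gtildenk_qt}, \eqref{eq:Hbarnk_qt}, and the non-tilde versions then follow automatically. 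Moreover, within each $\widetilde{\phantom F}$-identity, the second equality (the ``$\DDb$'' form) follows from the first (the ``$\DDd$'' form) by the $\zeta$-map via Corollary~\ref{cor:zeta_map} and Proposition~\ref{prop:zeta_k}. This leaves three core identities to establish by the induction above. Regarding \eqref{eq:Ftildenk_qt}, I note the first equality is already credited to Zabrocki (\cite{Zabrocki-4Catalan-2016}*{Theorem~10}); similarly the first equality of \eqref{eq:Fnk_qt}. So in fact the genuinely new content is \eqref{eq:Gtildenk_qt}, \eqref{eq:Hbarnk_qt}, and deriving everything else from those together with the links.

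\textbf{Main obstacle.}
The hard part will be the precise term-by-term matching of the recursion coefficients, especially for the $\overline{H}$ and $\DDp$ recursions, where several $q$-binomial identities (Lemma~\ref{lem:elementary4} and companions) are needed to bring the two sides into literally the same form; the recursion in Theorem~\ref{thm:reco_old_bounce} is written with the ``rewritten'' indexing of Remark~\ref{rem:rewriting_reco_H} in mind, and one must check that the change of variables $p\mapsto$ (number of decorated rises in the first block), $j,f\mapsto$ (zeros and decorated rises of the recursive sub-path) exactly reproduces the symmetric-function recursion coefficients $q^{\binom{p-i}{2}}\qbinom{k-i}{p-i}_q\qbinom{k-1}{i}_q\qbinom{p-1+f-j}{f-j}_q$. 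A secondary subtlety is keeping the boundary conditions aligned: the symmetric function side has conventions like $e_r=h_r=0$ for $r<0$ and various Kronecker deltas, and one must confirm these are exactly the degenerate cases the combinatorial recursions produce (e.g. empty path sets, paths forced to be staircases). Once this bookkeeping is done carefully, no deeper idea is required—the theorem is a consolidation of the machinery built in Chapters 4 and 5.
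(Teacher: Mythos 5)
Your proposal reproduces exactly the argument in the paper: each identity is established by comparing the symmetric-function recursion (Theorems~\ref{thm:reco1_F_and_Ftilde}, \ref{thm:recoG_and_Gtilde}, \ref{thm:reco_H_and_Htilde} plus Remark~\ref{rem:rewriting_reco_H}) against the matching combinatorial recursion (Theorems~\ref{thm:reco1_first_bounce}, \ref{thm:reco_R_first_bounce}, \ref{thm:reco_old_bounce}) by induction on $n$, passing through Corollary~\ref{cor:zeta_map} for the $\DDb$/$\DDd$ interchange, and deducing the non-tilde statements from the tilde ones via the definitions. The correct emphasis on the index bookkeeping and on starting from the $\widetilde{\phantom{F}}$-versions is also precisely how the paper organizes its (very terse) proof.
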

\begin{proof}
	Identity \eqref{eq:Ftildenk_qt} follows by comparing Theorem~\ref{thm:reco1_first_bounce} with Theorem~\ref{thm:reco1_F_and_Ftilde}, and Corollary~\ref{cor:zeta_map}.
	\begin{itemize}
		\item Identity \eqref{eq:Fnk_qt} follows from \eqref{eq:Ftildenk_qt}, and the definitions of the polynomials involved.
		
		\item Identity \eqref{eq:Gtildenk_qt} follows by comparing Theorem~\ref{thm:reco_R_first_bounce} with Theorem~\ref{thm:recoG_and_Gtilde}, and Corollary~\ref{cor:zeta_map}.
		
		\item Identity \eqref{eq:Gnk_qt} follows from \eqref{eq:Gtildenk_qt}, and the definitions of the polynomials involved.
		
		\item Identity \eqref{eq:Hbarnk_qt} follows by comparing Theorem~\ref{thm:reco_old_bounce} with Theorem~\ref{thm:reco_H_and_Htilde} and Remark~\ref{rem:rewriting_reco_H}.
		
		\item Identity \eqref{eq:Hnk_qt} follows from \eqref{eq:Hbarnk_qt}, and the definitions of the polynomials involved.
	\end{itemize}
\end{proof}

\section{Proof of the decorated $q,t$-Schr\"{o}der} \label{sec:deco_qt_Schroeder}

We are finally able to prove the decorated analogue of Haglund's $q,t$-Schr\"{o}der theorem \cite{Haglund-Schroeder-2004} for $\Delta_{e_k}e_n$.
\begin{theorem} \label{thm:decoqtSchroeder}
	For $a,b,k\in \mathbb{N}$, $a\geq 1$, $a+b\geq k+1$, we have
	\begin{align} \label{eq:qtSchroder1}
		\langle \Delta_{e_{a+b-k-1}}'e_{a+b},s_{b+1,1^{a-1}}\rangle &= \DDd_{q,t}(a+b)^{\circ b, \ast k} \\
		\nonumber &= \DDp_{q,t}(a+b)^{\circ b, \ast k} \\
		\nonumber &= \DDb_{q,t}(a+b)^{\circ k, \ast b}.
	\end{align}
	so that, for $a+b\geq k+1$, we have
	\begin{align} \label{eq:qtSchroder2}
		\langle \Delta_{e_{a+b-k-1}}'e_{a+b},e_a h_{b}\rangle &= \sum_{D\in \DD(a+b)^{\circ b, \ast k} } q^{\dinv(D)}t^{\area(D)} 
		\\ \nonumber &=\sum_{D\in \DD(a+b)^{\circ b, \ast k} } q^{\area(D)}t^{\bounce(D)}
		\\ \nonumber &=\sum_{D\in \DD(a+b)^{\circ b, \ast k} } q^{\area(D)}t^{\pbounce(D)}.
	\end{align}
\end{theorem}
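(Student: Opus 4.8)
The strategy is to assemble the final statement from the combinatorial $q,t$-enumerator formulae in Theorem~\ref{thm:qt_enumerators_formulae} together with the symmetric function recursions for $\widetilde F_{n,k}^{(d,\ell)}$ already established, and then to pass from the Schur-indexed scalar product to the $e_ah_b$ scalar product via the combinatorial ``Pieri'' splitting of Remark~\ref{rem: combinatorial pieri rule }. First I would prove \eqref{eq:qtSchroder1}. Start from \eqref{eq:Ftildenk_qt}, which gives $\widetilde F_{n,k}^{(d,\ell)}=\DDd_{q,t}(n\backslash k)^{\circ d,\ast\ell}=\DDb_{q,t}(n\backslash k)^{\circ\ell,\ast d}$ for each $k$. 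Summing over $k$ from $1$ to $n$ and using $\DDd(n)^{\circ a,\ast b}=\bigsqcup_{k=1}^n\DDd(n\backslash k)^{\circ a,\ast b}$ (and the analogous decomposition for $\DDb$), together with Proposition~\ref{prop:SF_sums_F_H}, which identifies $\sum_k\widetilde F_{n,k}^{(d,\ell)}$ with $\langle\Delta'_{e_{n-\ell-1}}e_n,s_{d+1,1^{n-d-1}}\rangle$, I obtain
\[ \langle\Delta'_{e_{n-\ell-1}}e_n,s_{d+1,1^{n-d-1}}\rangle=\DDd_{q,t}(n)^{\circ d,\ast\ell}=\DDb_{q,t}(n)^{\circ\ell,\ast d}. \]
Relabelling $n\mapsto a+b$, $d\mapsto b$, $\ell\mapsto k$ gives the first and third equalities in \eqref{eq:qtSchroder1}. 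For the middle equality $\DDd_{q,t}(n)^{\circ b,\ast k}=\DDp_{q,t}(n)^{\circ b,\ast k}$ I would sum \eqref{eq:Hbarnk_qt} over $k$ and $i$: by Proposition~\ref{prop:SF_sums_F_H} the double sum $\sum_{k}\sum_{i}\overline H_{n,k,i}^{(d,\ell)}$ equals $\sum_k\widetilde F_{n,k}^{(d,\ell)}$, hence equals the same scalar product, while the left side is $\sum_k\sum_i\DDp_{q,t}(n\backslash k\backslash i)^{\circ d,\ast\ell}=\DDp_{q,t}(n)^{\circ d,\ast\ell}$. This closes \eqref{eq:qtSchroder1}.

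\textbf{From Schur to \texorpdfstring{$e_ah_b$}{eh}.} To deduce \eqref{eq:qtSchroder2} I would use the classical Pieri expansion $e_ah_b=s_{b+1,1^{a-1}}+s_{b,1^a}$ on $\Lambda^{(a+b)}$, so that
\[ \langle\Delta'_{e_{a+b-k-1}}e_{a+b},e_ah_b\rangle=\langle\Delta'_{e_{a+b-k-1}}e_{a+b},s_{b+1,1^{a-1}}\rangle+\langle\Delta'_{e_{a+b-k-1}}e_{a+b},s_{b,1^a}\rangle. \]
By \eqref{eq:qtSchroder1} (applied with the parameters $(a,b)$ for the first term and $(a+1,b-1)$ for the second, both of which are admissible when $a+b\ge k+1$) the right side equals $\DDd_{q,t}(a+b)^{\circ b,\ast k}+\DDd_{q,t}(a+b)^{\circ b-1,\ast k}$. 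But this is exactly the sum appearing in Remark~\ref{rem: combinatorial pieri rule }, which equals $\sum_{D\in\DD(a+b)^{\circ b,\ast k}}q^{\dinv(D)}t^{\area(D)}$, because a decoration on the rightmost highest peak alters neither $\dinv$ nor $\area$; this gives the first line of \eqref{eq:qtSchroder2}. The analogous splitting holds for $(\area,\bounce)$ using $\DDb$ and the corresponding Pieri remark (decorating the first peak leaves $\bounce$ and $\area$ unchanged), and for $(\area,\pbounce)$ using $\DDp$ (decorating the last peak leaves $\pbounce$ and $\area$ unchanged), which yields the remaining two lines once one knows all three $q,t$-enumerators over $\DD(a+b)^{\circ b,\ast k}$ agree — and they do, since the three lines of \eqref{eq:qtSchroder1} give $\DDd_{q,t}(a+b)^{\circ b,\ast k}=\DDp_{q,t}(a+b)^{\circ b,\ast k}=\DDb_{q,t}(a+b)^{\circ k,\ast b}$, and then $\psi$ (Theorem~\ref{thm:psi_map}) identifies $\DDb_{q,t}(a+b)^{\circ k,\ast b}$ with the full $\bounce$-enumerator over $\DD(a+b)^{\circ b,\ast k}$, together with the observation that decorating the respective ``free'' peak is accounted for by the $a-1\to a$ Pieri term.

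\textbf{Main obstacle.} The genuinely hard part is not this final assembly, which is essentially bookkeeping, but the inputs it relies on: the chain of symmetric-function identities feeding Theorem~\ref{thm:qt_enumerators_formulae} (the master summation formula \eqref{eq:mastereq} and the recursions for $\widetilde F,\widetilde G,\overline H$), the combinatorial recursions in Chapter~\ref{chapter:dec_Dyck} (Theorems~\ref{thm:reco1_first_bounce}, \ref{thm:reco_R_first_bounce}, \ref{thm:reco_old_bounce}), and the bijections $\zeta$ and $\psi$ that move decorations between peaks and rises. Assuming all of those — which are proved earlier in the excerpt — the argument above is the whole proof. The one point requiring care is matching initial conditions and the ranges of the indices $a,b,k$ (in particular the edge case $a+b=k+1$, where one of the Schur terms in the Pieri split vanishes identically because the corresponding $\Delta'$ operator is $0$ on $\Lambda^{(a+b)}$), and checking that Proposition~\ref{prop:SF_sums_F_H}'s hypotheses $n\ge k+\ell$, $n\ge d$ are met after the relabelling; I would verify these explicitly at the start.
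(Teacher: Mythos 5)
Your argument is correct and follows the paper's own route: \eqref{eq:qtSchroder1} is obtained by combining Proposition~\ref{prop:SF_sums_F_H} with Theorem~\ref{thm:qt_enumerators_formulae}, and \eqref{eq:qtSchroder2} follows from the Pieri split $e_ah_b=s_{b+1,1^{a-1}}+s_{b,1^a}$ together with Remark~\ref{rem: combinatorial pieri rule } and (for the $\bounce$ line) the symmetry provided by $\psi$ via Corollary~\ref{cor:comb_symmetry}. The only loose spot is your final sentence on $\bounce$: $\psi$ acts on $\DDp$, not on $\DDb$, so the chain you compress there should read $\DDb_{q,t}(a+b)^{\circ k,\ast b}=\DDp_{q,t}(a+b)^{\circ b,\ast k}=\DDp_{q,t}(a+b)^{\circ k,\ast b}=\DDb_{q,t}(a+b)^{\circ b,\ast k}$, where the two outer equalities are \eqref{eq:qtSchroder1} (applied first with parameters $(a,b,k)$ and then with $k$ and $b$ swapped, using $a+b\ge k+1$ to keep the parameters admissible) and the middle one is $\psi$; after that the $\DDb$ analogue of Remark~\ref{rem: combinatorial pieri rule } closes the argument as in the paper.
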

\begin{proof}
	To prove \eqref{eq:qtSchroder1}, just combine Proposition~\ref{prop:SF_sums_F_H} and Theorem~\ref{thm:qt_enumerators_formulae}.
	
	To prove the first two equalities in \eqref{eq:qtSchroder2}, just add \eqref{eq:qtSchroder1} with itself where we replace $b$ by $b-1$ and $a$ by $a+1$. For the last equality in \eqref{eq:qtSchroder2}, doing the same operation we get
	\begin{equation}
		\langle \Delta_{e_{a+b-k-1}}'e_{a+b},e_a h_{b}\rangle= \DDb_{q,t}(a+b)^{\circ k, \ast b}+ \DDb_{q,t}(a+b)^{\circ k, \ast b-1}.
	\end{equation}
	But using again \eqref{eq:qtSchroder1} and Corollary~\ref{cor:comb_symmetry}, we have
	\begin{align}
		\DDb_{q,t}(a+b)^{\circ k, \ast b}&+ \DDb_{q,t}(a+b)^{\circ k, \ast b-1}  \\ &= \DDp_{q,t}(a+b)^{\circ b, \ast k}+ \DDp_{q,t}(a+b)^{\circ b-1, \ast k}\\
		& = \DDp_{q,t}(a+b)^{\circ k, \ast b}+ \DDp_{q,t}(a+b)^{\circ k, \ast b-1}\\
		& = \DDb_{q,t}(a+b)^{\circ b, \ast k}+ \DDb_{q,t}(a+b)^{\circ b-1, \ast k}\\
		& =  \sum_{D\in \DD(a+b)^{\circ b, \ast k} }q^{\area(D)}t^{\bounce(D)},
	\end{align}
	as claimed.
\end{proof}
%\begin{remark}
%We already noticed in Remark~\ref{rmk:bounce_pmaj} that the $\pbounce$ statistic corresponds to a shuffle of the $\mathsf{pmaj}$ statistic introduced in \cite{Dadderio-Iraci-polyominoes-2017}, so that the corresponding result in the above theorem proves a special case of the general conjecture in \cite{Dadderio-Iraci-polyominoes-2017} for $\Delta_{h_{\ell}}\Delta_{e_{n-k-1}}'e_n$ involving this $\mathsf{pmaj}$.
%\end{remark}

We are now able to give a combinatorial proof of Theorem~\ref{thm:symmetry}.
\begin{proof}[Combinatorial proof of Theorem~\ref{thm:symmetry}]
	Using \eqref{eq:Schroeder_identity}, with $n=a+b$ and $\ell=b-1$,  \eqref{eq:Mac_hook_coeff_ss}, and Theorem~\ref{thm:decoqtSchroeder}, we have
	\begin{align}
		\langle \Delta_{h_k}\Delta_{e_{n-k-\ell-1}}' e_{n-k},e_{n-k}\rangle & =\langle \Delta_{e_{n-\ell-1}}'e_{n},s_{k+1,1^{n-k-1}}\rangle = \DDp_{q,t}(n)^{\circ k, \ast \ell}.
	\end{align}
	Now the symmetry in $k$ and $\ell$ is explained combinatorially by the bijection $\psi$ of Theorem~\ref{thm:psi_map} (cf. Corollary~\ref{cor:comb_symmetry}).
\end{proof}

%\section{The two-shuffle Delta conjecture} 
\section{Proof of the decorated $q,t$-Narayana}\label{sec:2shuffle_delta}

%Our polynomials $\RP_{q,t}(m \backslash r, n)^{\ast k}$ give a combinatorial interpretation for certain symmetric functions. 
The following theorem is the final step towards the proof of a decorated $q,t$-Narayana theorem, and hence of the two-shuffle case of the Delta conjecture.

\begin{theorem} \label{thm:main_qtenumerator}
	For $m \geq 0$, $n \geq 0$, $k \geq 0$, and $1 \leq r \leq m+1$, we have \[ \RP_{q,t}(m \backslash r, n)^{\ast k} = t^{m-k-r+1} \< \Delta_{h_{m-k-r+1}} \Delta_{e_k} e_n \left[ X \dfrac{1 - q^r}{1 - q} \right], e_n \> . \]
\end{theorem}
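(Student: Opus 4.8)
The goal is to show
\[
\RP_{q,t}(m \backslash r, n)^{\ast k} = t^{m-k-r+1} \< \Delta_{h_{m-k-r+1}} \Delta_{e_k} e_n \left[ X \tfrac{1 - q^r}{1 - q} \right], e_n \>.
\]
The natural strategy is to show that both sides satisfy the same recursion with the same initial conditions, since we have already assembled a recursion for the combinatorial side in Theorem~\ref{th:reddinvrecursion}. So the plan is: \emph{first}, rename the symmetric function quantity, say $\mathcal{A}_{r}(m,n,k) \coloneqq t^{m-k-r+1} \< \Delta_{h_{m-k-r+1}} \Delta_{e_k} e_n[X\tfrac{1-q^r}{1-q}], e_n\>$, and \emph{second}, derive for $\mathcal{A}_r(m,n,k)$ exactly the recursion
\[
\mathcal{A}_r(m,n,k) = \sum_{s=1}^{n} t^{m+n+1-r-s-k} \qbinom{r+s-1}{s}_q \sum_{h=0}^{k} q^{\binom{h}{2}} \qbinom{s}{h}_q \sum_{u=1}^{m-r+1} \qbinom{s+u-h-1}{s-1}_q \mathcal{A}_u(m-r, n-s, k-h)
\]
together with $\mathcal{A}_{m+1}(m,n,0) = \qbinom{m+n}{m}_q$ and $\mathcal{A}_r(m,0,k) = \delta_{k,0}\delta_{r,m+1}$. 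Once this is done, the theorem follows by induction on $n$ (or on $m+n$), invoking Theorem~\ref{th:reddinvrecursion}.

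For the initial conditions: when $n=0$ we have $e_0[\,\cdot\,] = 1$, so $\mathcal{A}_r(m,0,k) = t^{m-k-r+1}\<\Delta_{h_{m-k-r+1}}\Delta_{e_k} 1, 1\>$, which is $0$ unless $k=0$ and $m-r+1=0$, i.e.\ $r = m+1$, in which case it equals $t^0 \cdot 1 = 1$ — matching $\delta_{k,0}\delta_{r,m+1}$. When $r = m+1$ and $k=0$, we get $\mathcal{A}_{m+1}(m,n,0) = t^0 \<\Delta_{h_0}\Delta_{e_0} e_n[X\tfrac{1-q^{m+1}}{1-q}], e_n\> = \<e_n[X[m+1]_q], e_n\>$, and using the expansion of $e_n[X[m+1]_q]$ via \eqref{eq:en_q_sum_Enk} or directly via the Cauchy identity one recovers $\qbinom{m+n}{m}_q$ (this is the classical $q$-analogue of counting, and is essentially the $k=0$ normalization already present in \cite{Aval-DAdderio-Dukes-Hicks-LeBorgne-2014}).

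The main work — and the main obstacle — is the recursive step. The most promising route is to recognize $\mathcal{A}_r(m,n,k)$ as (up to the explicit power of $t$) one of the families of plethystic polynomials already introduced in the paper. Indeed, comparing with the definition $\widetilde{F}_{n,k}^{(d,\ell)} = \<\Delta_{h_\ell}\Delta_{e_{n-\ell-d-1}}' E_{n-\ell,k}, e_{n-\ell}\>$ and the identity $F_{n,k}^{(d,\ell)} = t^{n-\ell-k}\<\Delta_{h_{n-\ell-k}}\Delta_{e_\ell} e_{n-d}[X\tfrac{1-q^k}{1-q}], e_\ell h_{n-d-\ell}\>$ established just before Subsection~``Recursive relations among the families'', together with Theorem~\ref{thm:SF_sum} (which already relates exactly a sum of $t^{m-k-r+1}\<\Delta_{h_{m-k-r+1}}\Delta_{e_k} e_n[X\tfrac{1-q^r}{1-q}], e_n\>$ over $r$ to $\<\Delta_{h_m} e_{n+1}, s_{k+1,1^{n-k}}\>$), one should be able to identify $\mathcal{A}_r(m,n,k)$ with an appropriately indexed $F^{(d,\ell)}_{N,K}$ (roughly $N = m+n$-ish, with $r$ controlling the $E_{N,K}$ index). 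Then the desired recursion for $\mathcal{A}_r$ becomes a specialization of Theorem~\ref{thm:dvw} (or Theorem~\ref{thm:reco1_F_and_Ftilde}), after a change of summation variables to match the $\qbinom{r+s-1}{s}_q$, $q^{\binom{h}{2}}\qbinom{s}{h}_q$, $\qbinom{s+u-h-1}{s-1}_q$ pattern. The technical obstacle will be pinning down the precise dictionary between the $(m,n,k,r)$ indexing here and the $(n,k,d,\ell)$ indexing of the $F$/$G$/$H$ families — in particular matching the powers of $t$ and the ranges of the inner sums — and verifying that Lemma~\ref{lem:elementary3} and Lemma~\ref{lem:elementary4} are exactly what convert one $q$-binomial presentation into the other. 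Alternatively, if the direct identification proves awkward, one can expand $\Delta_{h_{m-k-r+1}}\Delta_{e_k} e_n[X\tfrac{1-q^r}{1-q}]$ in the Macdonald basis using \eqref{eq:qn_q_Macexp}, \eqref{eq:e_h_expansion}, peel off the first column of $0$'s and the first vertical stretch by a Pieri-style argument (mirroring the combinatorial recursion step by step, as is done in the proof of Theorem~\ref{th:dinvrecursion}), and arrive at the recursion by brute force; this is longer but avoids any indexing subtlety.
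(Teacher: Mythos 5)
Your proposal takes essentially the same route as the paper: match the recursion of Theorem~\ref{th:reddinvrecursion} against a symmetric-function recursion extracted from Theorem~\ref{thm:dvw} by a change of variables, and check the initial conditions (which you verify correctly). One correctable imprecision in your sketch: $\mathcal{A}_r(m,n,k)$ is \emph{not} identifiable with a single $F^{(d,\ell)}_{N,K}$. Rather, after expanding $e_n\!\left[X\frac{1-q^r}{1-q}\right]=\sum_{s\geq 1}\qbinom{r+s-1}{s}_q E_{n,s}$ via \eqref{eq:en_q_sum_Enk}, each inner piece $\< \Delta_{h_{m-k-r+1}}\Delta_{e_k}E_{n,s},e_n\>$ is a single $F$, namely $F_{n+m-k-r+1,\,s}^{(n-k,\,m-k-r+1)}$ (obtained by setting $L=m-k-r+1$, $N-L=n$, $N-L-D=k$, $K=s$ in the definition $F_{N,K}^{(D,L)}=\<\Delta_{h_L}\Delta_{e_{N-L-D}}E_{N-L,K},e_{N-L}\>$), so $\mathcal{A}_r(m,n,k)=t^{m-k-r+1}\sum_s \qbinom{r+s-1}{s}_q F_{n+m-k-r+1,s}^{(n-k,m-k-r+1)}$ is a $q$-binomial weighted \emph{sum} of $F$'s. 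The paper's proof writes down the Theorem~\ref{thm:dvw} recursion for each such $F$ (after allowing $h=0$ in the inner sum, which only affects the $\delta_{n,k+\ell}$ boundary term), multiplies by $\qbinom{r+s-1}{s}_q$, sums over $s$, multiplies by $t^{m-k-r+1}$, and finishes with the shift $u\mapsto u-h$; neither Lemma~\ref{lem:elementary3} nor Lemma~\ref{lem:elementary4} is invoked directly in this final step, only the index bookkeeping you anticipate. So your approach is sound once the single-$F$ claim is replaced by the weighted-sum-of-$F$'s identification.
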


\begin{proof}
	It's enough to show that $\RP_{q,t}(m \backslash r, n)^{\ast k}$ and \[ t^{m-k-r+1} \< \Delta_{h_{m-k-r+1}} \Delta_{e_k} e_n \left[ X \dfrac{1 - q^r}{1 - q} \right], e_n \> \] satisfy the same recursion. We already gave the one for $\RP_{q,t}(m \backslash r, n)^{\ast k}$.
	
	First of all, we need to look at Theorem~\ref{thm:dvw}. Then we slightly change the statement. If we allow $h$ to be $0$ in the third sum, then since $F_{n,0}^{(d,\ell)} = 0$ unless $n = \ell = d = 0$, the only extra term in the sum is the one with $\ell = j$, $n = k + \ell$, $k = d + s$, and in that case its value agrees with the term \[ \delta_{n, k + \ell} \; q^{\binom{k-d}{2}} \qbinom{n-1}{\ell}_q \qbinom{k}{d}_q \] so it follows that we can allow that sum to start from $0$ if we delete that initial term. Then we make the following substitutions:
	
	\begin{multicols}{2}
		\begin{itemize}
			\item $n \mapsto n + m - k - r + 1$
			\item $\ell \mapsto m - k - r + 1$
			\item $d \mapsto n - k$
			\item $k \mapsto s$
		\end{itemize}		
	\end{multicols}
	
	and we get the following recursion (with $s \mapsto h$, $h \mapsto j$, and $j \mapsto u$ as indices).
	\begin{align*}
		F_{n+m-k-r+1,s}^{(n-k,m-k-r+1)} & = \sum_{u=0}^{n+m-k-r+1-s} \sum_{h=0}^k \sum_{j=0}^{n+m-k-r+1-s-u} t^{n+m-k-r+1-s-u} \\ & \times q^{\binom{h}{2}} \qbinom{s}{h}_q \qbinom{s+u-1}{u}_q \qbinom{h+u+j-1}{u}_q F_{n+m-k-r+1-s-u,j}^{(n-k-s+h,m-k-r+1-u)}
	\end{align*}
	
	Now, recall from \eqref{eq:en_q_sum_Enk} that \[ e_n \left[ X \dfrac{1 - q^r}{1 - q} \right] = \sum_{k=1}^{n} \qbinom{k+r-1}{k}_q E_{n,k} \] and the definition \[ F_{n,k}^{(d,\ell)} = \< \Delta_{h_\ell} \Delta_{e_{n-\ell-d}} E_{n-\ell,k}, e_{n-\ell} \> ,\] so, replacing this in the recursion, we get
	\begin{align*}
		& \< \Delta_{h_{m-k-r+1}} \Delta_{e_k} E_{n,s}, e_n \> = \sum_{u=0}^{n+m-k-r+1-s} \sum_{h=0}^k \sum_{j=0}^{n+m-k-r+1-s-u} t^{n+m-k-r+1-s-u} \\ & \times q^{\binom{h}{2}} \qbinom{s}{h}_q \qbinom{s+u-1}{u}_q \qbinom{h+u+j-1}{j}_q \< \Delta_{h_{m-k-r+1-u}} \Delta_{e_{k-h}} E_{n-s,j}, e_{n-s} \>
	\end{align*}
	
	that, since $E_{n,j} = 0$ if $j > n$, and $m-k-r+1 > 0$, we can rewrite as
	\begin{align*}
		\< \Delta_{h_{m-k-r+1}} & \Delta_{e_k} E_{n,s}, e_n \> = \sum_{u=0}^{n+m-k-r+1-s} \sum_{h=0}^k t^{n+m-k-r+1-s-u} \\ & \times q^{\binom{h}{2}} \qbinom{s}{h}_q \qbinom{s+u-1}{u}_q \< \Delta_{h_{m-k-r+1-u}} \Delta_{e_{k-h}} e_{n-s} \left[ X \dfrac{1 - q^{u+h}}{1 - q} \right], e_{n-s} \>
	\end{align*}
	
	Multiplying by $\qbinom{r+s-1}{s}_q$ and summing over $s$ from $1$ to $n$, we get
	\begin{align*}
		& \< \Delta_{h_{m-k-r+1}} \Delta_{e_k} e_n \left[ X \dfrac{1 - q^r}{1 - q} \right], e_n \> = \sum_{s=1}^{n} \sum_{u=0}^{n+m-k-r+1-s} \sum_{h=0}^k t^{n+m-k-r+1-s-u} \\ & \times q^{\binom{h}{2}} \qbinom{r+s-1}{s}_q \qbinom{s}{h}_q \qbinom{s+u-1}{j}_q \< \Delta_{h_{m-k-r+1-u}} \Delta_{e_{k-h}} e_{n-s} \left[ X \dfrac{1 - q^{u+h}}{1 - q} \right], e_{n-s} \>.
	\end{align*}
	
	and now multiplying by $t^{m-k-r+1}$ and noticing that $e_i = 0$ if $i < 0$, we get
	\begin{align*}
		t^{m-k-r+1} & \< \Delta_{h_{m-k-r+1}} \Delta_{e_k} e_n \left[ X \dfrac{1 - q^r}{1 - q} \right], e_n \> \\ & = \sum_{s=1}^{n} \sum_{u=0}^{m-k-r+1} \sum_{h=0}^k t^{m+n-r-s-k+1} q^{\binom{h}{2}} \qbinom{r+s-1}{s}_q \qbinom{s}{h}_q \qbinom{s+u-1}{u}_q \\ & \times t^{m-k-r+1-u} \< \Delta_{h_{m-k-r+1-u}} \Delta_{e_{k-h}} e_{n-s} \left[ X \dfrac{1 - q^{u+h}}{1 - q} \right], e_{n-s} \>.
	\end{align*}
	
	Finally, with the substitution $u \mapsto u - h$, and recalling that $h$ ranges from $0$ to $k$ and one of the $q$-binomials drops to $0$ if $u < h$, we get
	\begin{align*}
		t^{m-k-r+1} & \< \Delta_{h_{m-k-r+1}} \Delta_{e_k} e_n \left[ X \dfrac{1 - q^r}{1 - q} \right], e_n \> \\ & = \sum_{s=1}^{n} \sum_{u=1}^{m-r+1} \sum_{h=0}^k t^{m+n-r-s-k+1} \qbinom{r+s-1}{s}_q q^{\binom{h}{2}} \qbinom{s}{h}_q \qbinom{s+u-h-1}{u-h}_q \\ & \times t^{(m-r)-(k-h)-u+1} \< \Delta_{h_{(m-r)-(k-h)-u+1}} \Delta_{e_{k-h}} e_{n-s} \left[ X \dfrac{1 - q^{u}}{1 - q} \right], e_{n-s} \>.
	\end{align*}
	
	which is exactly the recursion for $\RP_{q,t}(m \backslash r, n)^{\ast k}$. The initial conditions are easy to check.	
\end{proof}

\begin{corollary} \label{cor:PF2_qtenum}
	For $m \geq 0$, $n \geq 0$, $k \geq 0$, and $1 \leq r \leq m+1$, we have 
	\[ \PF^2_{q,t}(m \backslash r, n)^{\ast k} = t^{m-k-r+1} \< \Delta_{h_{m-k-r+1}} \Delta_{e_k} e_n \left[ X \dfrac{1 - q^r}{1 - q} \right], e_n \> \]
\end{corollary}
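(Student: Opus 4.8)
\textbf{Proof proposal for Corollary~\ref{cor:PF2_qtenum}.}

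The plan is to chain together two results already established in the excerpt. By Theorem~\ref{th:polyominoesto2cpf}, for $m\geq 0$, $n\geq 0$, $k\geq 0$ and $1\leq r\leq m+1$ there is a bijection $\phi\colon \RP(m\backslash r,n)^{\ast k}\to \PF^2(m\backslash r,n)^{\ast k}$ which preserves the bistatistic $(\dinv,\area)$. Taking $q,t$-enumerators on both sides with respect to $(\dinv,\area)$, this bijection yields the identity of generating functions (recorded as Corollary~\ref{cor:redpolto2cpf})
\[
\RP_{q,t}(m\backslash r,n)^{\ast k}=\PF^2_{q,t}(m\backslash r,n)^{\ast k}.
\]
On the other hand, Theorem~\ref{thm:main_qtenumerator} gives
\[
\RP_{q,t}(m\backslash r,n)^{\ast k}=t^{m-k-r+1}\left\< \Delta_{h_{m-k-r+1}}\Delta_{e_k}e_n\!\left[X\frac{1-q^r}{1-q}\right],e_n\right\>.
\]
Combining these two equalities immediately produces the claimed formula
\[
\PF^2_{q,t}(m\backslash r,n)^{\ast k}=t^{m-k-r+1}\left\< \Delta_{h_{m-k-r+1}}\Delta_{e_k}e_n\!\left[X\frac{1-q^r}{1-q}\right],e_n\right\>,
\]
and nothing further is required.

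Since both inputs are already proved earlier in the paper, there is essentially no obstacle here: the corollary is a one-line consequence. If one wanted a self-contained argument avoiding the explicit reference to Corollary~\ref{cor:redpolto2cpf}, one would simply note that $\phi$ is a statistic-preserving bijection and hence induces the equality of $(\dinv,\area)$-enumerators directly, then substitute into Theorem~\ref{thm:main_qtenumerator}. The only point to double-check is that the indexing conventions for $\RP(m\backslash r,n)^{\ast k}$ and $\PF^2(m\backslash r,n)^{\ast k}$ match (in particular the off-by-one in the number of $0$'s versus $2$'s on the main diagonal), but this is exactly how the sets were defined in Section~\ref{sec:2cars_PF}, so the ranges of $r$ agree verbatim. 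Thus the proof is just: invoke Theorem~\ref{th:polyominoesto2cpf} (equivalently Corollary~\ref{cor:redpolto2cpf}) to replace $\RP_{q,t}$ by $\PF^2_{q,t}$, then invoke Theorem~\ref{thm:main_qtenumerator}.
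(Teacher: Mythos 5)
Your proposal is correct and is exactly the argument the paper intends (the paper states this corollary without proof precisely because it follows immediately from Corollary~\ref{cor:redpolto2cpf} together with Theorem~\ref{thm:main_qtenumerator}). Nothing further is needed.
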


The following corollary is an immediate consequence of Theorem~\ref{thm:main_qtenumerator}, Theorem~\ref{thm:magic_equality} and Theorem~\ref{th:redzetamap}. It extends the main results in \cite{Aval-DAdderio-Dukes-Hicks-LeBorgne-2014}, by providing a decorated version of the $q,t$-Narayana theorem.
\begin{corollary}
	For $m \geq 0$, $n \geq 0$, $k \geq 0$, we have
	\begin{align} 
		\RP_{q,t}(m,n)^{\ast k} = \RP_{q,t}(m,n)^{\circ k} = \< \Delta_{h_m} e_{n+1}, s_{k+1,1^{n-k}} \>
	\end{align}
\end{corollary}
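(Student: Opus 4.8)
The plan is to deduce the corollary by stringing together four results that are already available in the excerpt, so that essentially no new work is required beyond bookkeeping on $q,t$-enumerators. The target identity has two equalities:
\[
\RP_{q,t}(m,n)^{\ast k} = \RP_{q,t}(m,n)^{\circ k} = \< \Delta_{h_m} e_{n+1}, s_{k+1,1^{n-k}} \>,
\]
and I will treat each separately.

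First I would handle the equality $\RP_{q,t}(m,n)^{\ast k} = \RP_{q,t}(m,n)^{\circ k}$. This is immediate from Theorem~\ref{th:redzetamap}, which produces a bijection $\bar\zeta\colon \RP(m\backslash r,n)^{\circ k}\to \RP(m\backslash r,n)^{\ast k}$ sending the bistatistic $(\area,\bounce)$ to $(\dinv,\area)$ for each $1\le r\le m+1$; summing the corresponding $q,t$-enumerators over $r$ (recalling $\RP(m,n)^{\ast k}=\bigsqcup_r \RP(m\backslash r,n)^{\ast k}$ and the analogous decomposition for the $\circ$-version) gives the claim at the level of the refined polynomials, hence in the aggregate. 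Since the decorated $\RP$ enumerators are defined with $(\dinv,\area)$ in the $\ast$ case and $(\area,\bounce)$ in the $\circ$ case, this is exactly what $\bar\zeta$ exchanges.

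Next I would prove $\RP_{q,t}(m,n)^{\ast k} = \< \Delta_{h_m} e_{n+1}, s_{k+1,1^{n-k}} \>$. Start from Theorem~\ref{thm:main_qtenumerator}, which gives, for each $1\le r\le m+1$,
\[
\RP_{q,t}(m \backslash r, n)^{\ast k} = t^{m-k-r+1} \< \Delta_{h_{m-k-r+1}} \Delta_{e_k} e_n \left[ X \tfrac{1 - q^r}{1 - q} \right], e_n \>.
\]
Summing over $r$ from $1$ to $m-k+1$ (the terms with $r>m-k+1$ vanish, since $\Delta_{h_{m-k-r+1}}=0$ when $m-k-r+1<0$, matching the vanishing of the combinatorial side there), the right-hand side becomes
\[
\sum_{r=1}^{m-k+1} t^{m-k-r+1} \< \Delta_{h_{m-k-r+1}} \Delta_{e_k} e_n \left[ X \tfrac{1 - q^r}{1 - q} \right], e_n \>,
\]
which is precisely the left-hand side of Theorem~\ref{thm:SF_sum}; that theorem identifies it with $\< \Delta_{h_m} e_{n+1}, s_{k+1,1^{n-k}} \>$. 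Combined with $\RP_{q,t}(m,n)^{\ast k}=\sum_{r=1}^{m+1}\RP_{q,t}(m\backslash r,n)^{\ast k}$, this completes the chain. (One should also note that the assumption $m\ge k\ge 0$ needed for Theorem~\ref{thm:SF_sum} is automatic: when $k>m$ both sides are zero, so the statement holds trivially in that range and one may assume $m\ge k$.)

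I do not expect any genuine obstacle here; the corollary is purely a synthesis step. The one point requiring a little care is matching index ranges and the boundary cases $r=m+1$ (where $\RP_{q,t}(m\backslash m+1,n)^{\ast 0}=\qbinom{m+n}{m}_q$ contributes only for $k=0$) against the summation range in Theorem~\ref{thm:SF_sum}, and checking that the $t$-powers and the plethystic arguments $e_n[X(1-q^r)/(1-q)]$ line up exactly between Theorem~\ref{thm:main_qtenumerator} and Theorem~\ref{thm:SF_sum}; but these have effectively been verified already in the proof of Theorem~\ref{thm:main_qtenumerator}, so this is bookkeeping rather than mathematics. Finally, invoking Theorem~\ref{thm:magic_equality} if one also wants the symmetric form $\< \Delta_{e_{m+n-k-1}}' e_{m+n}, h_m h_n \>$ is optional and immediate.
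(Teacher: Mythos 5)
Your proof is correct and follows essentially the same route as the paper: Theorem~\ref{th:redzetamap} gives $\RP_{q,t}(m,n)^{\ast k} = \RP_{q,t}(m,n)^{\circ k}$, and Theorem~\ref{thm:main_qtenumerator} summed over $r$ combined with the summation formula Theorem~\ref{thm:SF_sum} identifies these with $\< \Delta_{h_m} e_{n+1}, s_{k+1,1^{n-k}} \>$. Your explicit appeal to Theorem~\ref{thm:SF_sum} is, if anything, a cleaner citation than the paper's own (which names Theorem~\ref{thm:magic_equality}, an ingredient that actually appears only inside the proof of Theorem~\ref{thm:SF_sum}).
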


The following corollary is an immediate consequence of Corollary~\ref{cor:PF2_qtenum} and Corollary~\ref{cor:SF_sum}. It settles the two-shuffle case of the Delta conjecture in \cite{Haglund-Remmel-Wilson-2015}.
\begin{corollary}
	For $m \geq 0$, $n \geq 0$, $k \geq 0$, we have
	\begin{align} 
		\PF^2_{q,t}(m,n)^{\ast k}= \< \Delta_{e_{m+n-k-1}}'e_{m+n},h_{m}h_{n} \> 
	\end{align}
\end{corollary}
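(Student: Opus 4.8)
The final statement to prove is the corollary
\[
\PF^2_{q,t}(m,n)^{\ast k}= \< \Delta_{e_{m+n-k-1}}'e_{m+n},h_{m}h_{n} \>,
\]
and the excerpt itself already tells us that this is ``an immediate consequence of Corollary~\ref{cor:PF2_qtenum} and Corollary~\ref{cor:SF_sum}.'' So the plan is simply to assemble those two ingredients, together with the definition of the refined generating functions and a summation over the refinement parameter $r$.

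First I would unwind the definitions: by definition $\PF^2(m,n)^{\ast k}=\bigsqcup_{r=1}^{m+1}\PF^2(m\backslash r,n)^{\ast k}$ (the parameter $r-1$ records the number of $2$'s on the main diagonal, so $r$ ranges over $1,\dots,m+1$), hence
\[
\PF^2_{q,t}(m,n)^{\ast k}=\sum_{r=1}^{m+1}\PF^2_{q,t}(m\backslash r,n)^{\ast k}.
\]
Next I would invoke Corollary~\ref{cor:PF2_qtenum}, which identifies each refined piece:
\[
\PF^2_{q,t}(m\backslash r,n)^{\ast k}=t^{m-k-r+1}\,\Big\< \Delta_{h_{m-k-r+1}}\Delta_{e_k}e_n\Big[X\tfrac{1-q^r}{1-q}\Big],e_n\Big\>.
\]
Note that the summand vanishes unless $m-k-r+1\geq 0$, i.e. $r\leq m-k+1$, because $\Delta_{h_j}=0$ for $j<0$; so after dropping the identically-zero terms the sum over $r$ from $1$ to $m+1$ becomes a sum from $1$ to $m-k+1$, matching exactly the range appearing in Corollary~\ref{cor:SF_sum}.

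Then I would apply Corollary~\ref{cor:SF_sum}, which states precisely that
\[
\sum_{r=1}^{m-k+1} t^{m-k-r+1} \Big\< \Delta_{h_{m-k-r+1}} \Delta_{e_k} e_n \Big[ X \tfrac{1-q^r}{1-q} \Big], e_n \Big\> = \< \Delta_{e_{m+n-k-1}}' e_{m+n}, h_m h_n \>,
\]
valid for $m\geq k\geq 0$. Combining the three displayed equalities yields the claim in the range $m\geq k$. For the remaining case $k>m$ both sides are zero: the left side because there are no partially labelled Dyck paths of the relevant type with more decorated rises than available ascents forced by the structure (equivalently $\PF^2(m\backslash r,n)^{\ast k}=\emptyset$ once $k$ exceeds $m$, since Corollary~\ref{cor:PF2_qtenum} then gives a vanishing $\Delta$-operator for every $r$), and the right side by a direct symmetric-function computation (or by appealing to Theorem~\ref{thm:magic_equality} together with the fact that $s_{k+1,1^{n-k}}$ makes sense only for $k\leq n$ and the hook expansion forces the pairing to vanish). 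This last edge case is the only place requiring any thought; the main body of the argument is a pure bookkeeping assembly of results already established. I do not anticipate any genuine obstacle — the hard work lives entirely inside Corollary~\ref{cor:SF_sum} (via Theorem~\ref{thm:SF_sum} and Theorem~\ref{thm:magic_equality}) and Corollary~\ref{cor:PF2_qtenum} (via Theorem~\ref{thm:main_qtenumerator}), both of which we may assume.
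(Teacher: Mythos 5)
Your proof is correct and follows exactly the route the paper takes: the paper's own proof is the one-line remark that the corollary is an immediate consequence of Corollary~\ref{cor:PF2_qtenum} and Corollary~\ref{cor:SF_sum}, i.e.\ sum the refined identity over $r$, observe that terms with $r>m-k+1$ vanish because $\Delta_{h_j}=0$ for $j<0$, and apply Corollary~\ref{cor:SF_sum}. Your extra care about the edge case $k>m$ is sensible but not in the paper, which implicitly operates in the range $0\leq k\leq m$ (see Theorem~\ref{thm: two shuffle}).
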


\chapter{Square paths}

In this chapter we prove a new $q,t$-square theorem, similar to (but different from) the one proved in \cite{Can-Loehr-2006}, which turns out to be the case $\<\cdot , e_{n-d}h_d\>$ of our Conjecture~\ref{conj:new_square}. We also show an intriguing connection between our square conjecture and the one in \cite{Loehr-Warrington-square-2007} (proved in \cite{Leven-2016} after the breakthrough in \cite{Carlsson-Mellit-ShuffleConj-2015}).

\section{A new $q,t$-square} \label{sec:new_qt_square}

We refer to Section~\ref{sec:sq_path_defs} for the definitions about square paths.

\begin{proposition} \label{prop:qtsquare_bijections}
	There exist bijections
	\begin{align}
		\gamma_E'  : \mathsf{SQ^E}(n)\rightarrow 
		&\DD(n)^{\circ 0, \ast 0} \sqcup \DD(n)^{\circ 0, \ast 1}\\& \nonumber =\DDd(n)^{\circ 0, \ast 0} \sqcup \DDd(n)^{\circ 0, \ast 1} 
		\\
		\gamma_N'  : \mathsf{SQ^N}(n)\rightarrow &\DD(n)^{\circ 1, \ast 0} \\\nonumber &=
		\DDd(n)^{\circ 0, \ast 0} \sqcup \DDd(n)^{\circ 1, \ast 0}\\ \nonumber & = \DDp(n)^{\circ 0, \ast 0} \sqcup \DDp(n)^{\circ 1, \ast 0}
	\end{align}
	such that for all $P\in \mathsf{SQ^E}(n)$ and $P'\in \mathsf{SQ^N}(n)$ we have
	\begin{align}
		\area(\gamma_E'(P)) & =\area(P)  &   \area(\gamma_N'(P')) & =\area(P')  \\
		\dinv(\gamma_E'(P)) & =\dinv(P) &    \dinv(\gamma_N'(P')) & =\dinv(P')  \\
		\bounce(\gamma_E'(P)) & =\bounce(P) &   \pbounce(\gamma_N'(P')) & =\bounce(P').
	\end{align}
\end{proposition}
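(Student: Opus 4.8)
The plan is to build each of the two bijections directly on the level of area words and decorations, and to verify the three statistic equalities step by step; the square-path side is essentially a ``wrapping'' of the Dyck-path side, so the main tool is a cyclic rotation of the area word combined with the relocation of a single decoration.

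First I would construct $\gamma_E'$. Recall (Definition~\ref{def: area sqe}) that for $P\in\SQE(n)$ we set $c=-\min_i w_i(P)$ and $j=\min\{i\mid w_i(P)=-c\}-1$. The idea is to cut the area word $w_1(P)\cdots w_n(P)$ right before the first occurrence of its minimum $-c$, i.e.\ at position $j+1$, and cyclically rotate so that the word starts there: set $u=w_{j+1}(P)\cdots w_n(P)w_1(P)\cdots w_j(P)$ and then add $c$ to every letter, obtaining a word $v$ whose letters are in $\mathbb{N}$, whose first letter is $0$. When $j=0$ (the minimum is attained at the very first step) this word $v$ is already the area word of a genuine Dyck path, and $\gamma_E'(P)$ is that path with no decorations; this accounts for the $\DD(n)^{\circ 0,\ast 0}$ summand. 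When $j\geq 1$, the rotation breaks the ``successor'' rule of an area word exactly at the seam between $w_n(P)+c$ and $w_1(P)+c=0$: there we have a letter dropping to $0$ without the letter before it being $-1$, which is precisely the signature of a \emph{fall} that we turn into a decorated rise (equivalently a decorated fall, via Remark~\ref{rmk:rises-falls-correspondence}). Concretely, $\gamma_E'(P)$ is the Dyck path with the area word obtained from $v$ by the standard ``repair'' (this is the usual correspondence between square paths ending east and once-decorated Dyck paths used to prove Conjecture~\ref{conj:new_square} in Section~\ref{sec:square_paths_map}; in fact $\gamma_E'$ is $\gamma_E$ followed by forgetting the labels). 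The verification that $\area$, $\dinv$ and $\bounce$ are preserved is then a matter of tracking: (i) $\area(P)=\sum_i w_i(P)+nc-j-c=\sum(\text{letters of }v)-(j+c)$, and the $j$ counts exactly the squares lost to the decorated rise while the extra $c$ is absorbed by the single new horizontal step, so it matches the $\area$ on $\DD(n)^{\circ 0,\ast 1}$; (ii) $\dinv$ depends only on equalities and successor-relations among the $w_i$, which are unchanged by adding the constant $c$ and by cyclic rotation; (iii) $\bounce$ matches because the bounce path of a square path ending east was defined (Section~\ref{sec:sq_path_defs}) precisely so that it agrees with the bounce of the rotated Dyck path. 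Invertibility is clear: from a Dyck path with at most one decorated rise one reads off the area word, un-repairs the decorated rise to recover $v$, subtracts the constant (the amount needed to make the minimum equal to $-c$ with the right position of the first minimum), and rotates back.

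Next I would construct $\gamma_N'$ by the analogous procedure for $P'\in\SQN(n)$, the only difference being that the last step is north rather than east. Here one again rotates the area word to start at the first occurrence of the minimum and shifts by $c$; the seam now produces a decorated \emph{peak} rather than a decorated fall (because the wrapped-around portion ends with a north step, and the step that ``should'' follow is the horizontal step of a peak), which is why the target is $\DD(n)^{\circ 1,\ast 0}$, naturally split as $\DD(n)^{\circ 0,\ast 0}\sqcup\DD(n)^{\circ 1,\ast 0}$ according to whether $j=0$ or $j\geq 1$, and also identifiable with $\DDp(n)^{\circ 0,\ast 0}\sqcup\DDp(n)^{\circ 1,\ast 0}$ since the peak in the top row is never the one we decorate (the decorated peak always comes from an interior seam). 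The three equalities are checked as before: $\area(P')=\sum_i w_i(P')+nc-j$ with no ``$-c$'' correction now matching the area convention for $\SQN$, $\dinv$ is invariant under the constant shift and rotation (and the special clause in the $\dinv$ definition — that the last step of a path ending north creates no secondary dinv — is exactly what makes it agree with the $\DDp$ side, where the decorated peak contributes no primary dinv), and $\bounce(P')$ matches $\pbounce$ of the image because $\pbounce$ is computed by deleting the horizontal step after each decorated peak, which un-does precisely the wrapping.

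The main obstacle I expect is the bookkeeping around the two statistics that are genuinely sensitive to the decoration and to the wrapping point, namely $\bounce$ on $\SQE$ versus $\bounce$ on $\DD^{\circ 0,\ast\{0,1\}}$, and $\bounce$ on $\SQN$ versus $\pbounce$ on $\DDp^{\circ\{0,1\},\ast 0}$. The subtlety is that the bounce path for square paths was defined with a two-phase rule (it crosses $y=n$, stops at $(n+m,n)$, restarts at $(m,0)$, with a case distinction between paths ending east and ending north), and one must check that this two-phase bounce path is carried, step for step, onto the ordinary bounce (resp.\ pbounce) path of the rotated and repaired Dyck path — in particular that the labels match after the rotation and that the single extra step introduced by the decoration lands in the right place. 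I would handle this by giving an explicit description of the bounce path of $P$ in terms of the area word $u=w_{j+1}(P)\cdots$ (not the original order), after which the agreement becomes the same identity that already underlies Remark~\ref{rem:pbounce_origin}. Once these correspondences of bounce paths are pinned down, the remaining equalities ($\area$, $\dinv$) are routine, and Proposition~\ref{prop:qtsquare_bijections} follows by assembling $\gamma_E'$ and $\gamma_N'$ together with the disjoint-union identities among the various decorated-path sets, which are immediate from the definitions of $\DDd$, $\DDp$ and Remark~\ref{rem: combinatorial pieri rule }.
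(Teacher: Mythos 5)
Your overall idea of wrapping the square path at its lowest point, inserting one extra east step at the seam, and decorating the resulting fall (for $\SQE$) or peak (for $\SQN$) is exactly the construction the paper uses: $\gamma_E'$ is $\gamma_E$ with labels forgotten, and $\gamma_N'$ is its north-ending variant. However, your bookkeeping of \emph{which} summand the image falls into is wrong in two places, and these are not cosmetic.

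First, for $\gamma_E'$: you claim that the image is undecorated (lands in $\DD(n)^{\circ 0,\ast 0}$) precisely when $j=0$. That is false. The dichotomy is whether $P$ is already a Dyck path, i.e.\ whether $c=0$. If $c\geq 1$ the construction always inserts an east step and decorates the fall it creates, regardless of $j$. In particular, $j=0$ with $c\geq 1$ (e.g.\ a square path whose first vertical step is already at the lowest diagonal) produces a path in $\DD(n)^{\circ 0,\ast 1}$, not $\DD(n)^{\circ 0,\ast 0}$. Your computation of the rotated word $v$ also has an off-by-one: after the cut at $(j+c,j)$ and insertion of the extra east step, the last $j$ letters of the area word of $\gamma_E'(P)$ are $w_1(P)+c-1,\dots,w_j(P)+c-1$, not $w_i(P)+c$, so your phrase ``the seam between $w_n(P)+c$ and $w_1(P)+c=0$'' misstates what happens (and $w_1(P)+c=0$ forces $j=0$, contradicting the case $j\geq 1$ you are in).

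Second, for $\gamma_N'$: every element of $\SQN(n)$ has $c\geq 1$ (a Dyck path cannot end with a north step), so $\gamma_N'$ \emph{always} produces exactly one decorated peak; the target is $\DD(n)^{\circ 1,\ast 0}$ as stated, and the decompositions in the proposition are $\DDd(n)^{\circ 0,\ast 0}\sqcup\DDd(n)^{\circ 1,\ast 0}$ and $\DDp(n)^{\circ 0,\ast 0}\sqcup\DDp(n)^{\circ 1,\ast 0}$ (Remark~\ref{rem: combinatorial pieri rule }), coming from forgetting the decoration when it sits on the rightmost highest peak, resp.\ on the last peak. You instead wrote $\DD(n)^{\circ 0,\ast 0}\sqcup\DD(n)^{\circ 1,\ast 0}$, which is a literally different set, and you use $j=0$ to decide between ``decorated'' and ``undecorated'' images, which again is not what happens: the role of $j=0$ is only to determine that the decorated peak is the \emph{last} peak (this is exactly the remark in the paper's proof), which governs which of the two $\DDp$ summands the image belongs to. Fixing these two points also resolves the statistic verifications: the $\area$ accounting for $\SQE$ uses $-j-c$ precisely because one inserted east step costs $c$ squares and the misplaced block costs $j$, and the $\pbounce$ identity for $\gamma_N'$ relies on the fact that the decorated peak is sometimes the last one (when $j=0$), which is why one needs the $\DDp$ decomposition rather than a naive ``decorated or not'' split.
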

\begin{proof}
	We define $\gamma_E'$ in exactly the same way as the $\gamma_E$ map (\ref{gammamap}) defined in Section~\ref{sec:square_paths_map} for labelled objects. 
	
	We define a new map  $\gamma_N'$ similarly to $\gamma_E'$: the image of a path is constructed in the same way except that the portion of the path right before the added horizontal step is a north step, so we created a peak instead of a fall, which we decorate. See Figure~\ref{fig: gammaN} for an example. Note that the paths that get sent into paths with a decoration on the last peak are exactly the paths whose $j=0$.
	
	Now the statements of the proposition are straightforward to check.
\end{proof}

\begin{figure}
	\centering
	\begin{minipage}{.6 \textwidth}
		\centering
		\begin{tikzpicture}[scale=.7]	
		\draw[gray!60, thin] (0,0)grid (5,5);
		\draw[gray!60, thin]  (2,0)--(7,5);
		\draw[blue!60, line width = 1.6pt] (0,0)|-(2,1)|-(4,2)|-(5,3)--(5,5);	
		%\draw[red,thick, opacity=.5](0,0)|-(3,1)|-(4,2)|-(5,3)--(5,5);
		\filldraw (4,2) circle (3pt); 
		\draw (5.6,1) node {$j$} (5.6,4); 
		\draw [decorate,decoration={brace, mirror,amplitude=8pt},xshift=0pt,yshift=0pt] (5,0)--(5,2); 
		%\draw (-.5,.5) node {1} (-.5,1.5) node {2} (-.5,2.5) node {0} (-.5,3.5) node {1} (-.5,4.5) node {1}; 
		\draw[ultra thick, black] (3,2)--(4,2);
		%\fill[pattern=north west lines, pattern color=gray] (1,0) rectangle (2,1) rectangle (3,2);
		\end{tikzpicture}
	\end{minipage}%
	\begin{minipage}{.4 \textwidth}
		\centering
		\begin{tikzpicture}[scale=.7]
		\draw[gray!60, thin]  (0,0)grid (5,5);
		\draw[gray!60, thin]  (0,0)--(5,5);
		\draw[blue!60, line width = 1.6pt] (0,0)|-(1,1)|-(1,3);
		\filldraw (0,0) circle (3pt);
		\draw[ultra thick, black](1,3)--(2,3);
		\draw[blue!60, line width = 1.6pt](2,3)|-(4,4)|-(5,5);
		\filldraw[fill=blue!60] (1,3) circle (3pt);
		\draw [decorate,decoration={brace, mirror,amplitude=8pt},xshift=0pt,yshift=0pt] (5,3)--(5,5); 
		\draw (5.6,4) node{$j$};
		%\fill[pattern=north west lines, pattern color=gray] (1,3) rectangle (2,4) (3,4) rectangle (4,5); 
		%\draw[red, thick,opacity=.5](0,0)|-(1,1)|-(4,4)|-(5,5);
		%\draw (-.5,.5) node {0} (-.5,1.5) node {1} (-.5,2.5) node {1} (-.5,3.5) node {1} (-.5,4.5) node {2}; 
		\end{tikzpicture}
	\end{minipage}
	\caption{A path in $\SQN(5)$ (left) and its image by $\gamma'_N$ in $\DD(5)^{\circ 1, \ast 0}$ (right).} \label{fig: gammaN}
\end{figure}
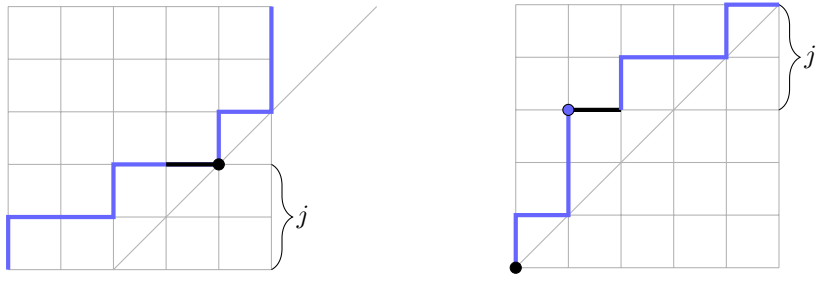

The following theorem with its proof provides the analogue of several results about the $q,t$-square in \cite{Loehr-Warrington-square-2007} and \cite{Can-Loehr-2006}. For this reason we call it a \emph{new $q,t$-square} theorem.
\begin{theorem} We have
	\begin{align}
		\langle \Delta_{e_{n-1}}e_n,e_n \rangle& =\sum_{P\in \mathsf{SQ^E}(n)}q^{\area(P)}t^{\bounce(P)} = \sum_{P\in \mathsf{SQ^E}(n)}q^{\dinv(P)} t^{\area(P)} \\ & =\sum_{P\in \mathsf{SQ^N}(n)}q^{\area(P)}t^{\bounce(P)} =\sum_{P\in \mathsf{SQ^N}(n)}q^{\dinv(P)} t^{\area(P)}. 
	\end{align}
\end{theorem}
\begin{proof}
	We observe that Theorem~\ref{thm:decoqtSchroeder} gives
	\begin{align}
		\langle \Delta_{e_{n-1}}e_n,e_n \rangle & =\langle \Delta_{e_{n-1}}'e_n,e_n \rangle+\langle \Delta_{e_{n-2}}'e_n,e_n \rangle\\ 
		& = \sum_{D\in \DD(n)^{\circ 0, \ast 0}}q^{\dinv(D)}t^{\area(D)}+\sum_{D\in \DD(n)^{\circ 0, \ast 1}}q^{\dinv(D)}t^{\area(D)}\\
		& = \DDd(n)^{\circ 0, \ast 0}   + \DDd(n)^{\circ 0, \ast 1}.
	\end{align}
	
	Using $\gamma'_E$ and $\gamma'_N$ of Proposition~\ref{prop:qtsquare_bijections}, Theorem~\ref{thm:decoqtSchroeder}, the $\zeta$ map of Theorem~\ref{thm:zeta_map} and the $\psi$ map of Theorem~\ref{thm:psi_map}, we prove the equalities in the diagram below.

	\begin{center}
		%\kern-1em\kern-1em\kern-1em 
		\begin{tikzpicture}
		\node[scale=0.72] (0,0) { \( %, rotate=90] (0,0) {$
			\begin{tikzcd}
			&&\sum_{P\in \SQE_n}q^{\dinv(P)}t^{\area(P)} \dar["\gamma'_E"] &\\ 
			
			&&\DDd_{q,t}(n)^{\circ 0, \ast 0}+\DDd_{q,t}(n)^{\circ 0, \ast 1} \dar["\zeta"]  &\\ 
			
			&&\DDb_{q,t}(n)^{\circ 0, \ast 0}+\DDb_{q,t}(n)^{\circ 1, \ast 0}\drar[equal] &\\  
			
			&\sum\limits_{D\in \DD(n)^{\circ 1, \ast 0} } q^{\dinv(D)}t^{\area(D)} \rar[equal] &\sum\limits_{D\in \DD(n)^{\circ 1, \ast 0} } q^{\area(D)}t^{\pbounce(D)} \rar[equal] &\sum\limits_{D\in \DD(n)^{\circ 1, \ast 0} } q^{\area(D)}t^{\bounce(D)}\\
			
			&\sum\limits_{P\in \SQN_n}q^{\dinv(P)}t^{\area(P)} \uar["\gamma'_N"]&
			\DDp_{q,t}(n)^{\circ 0, \ast 0}+\DDp_{q,t}(n)^{\circ 1, \ast 0} \uar[equal]\dar["\psi"]
			& \sum\limits_{P\in \SQN_n}q^{\area(P)}t^{\bounce(P)} \uar["\gamma'_N"]  \\
			
			&& \DDp_{q,t}(n)^{\circ 0, \ast 0}+\DDp_{q,t}(n)^{\circ 0, \ast 1} & \\
			
			&&\DDb_{q,t}(n)^{\circ 0, \ast 0}+\DDb_{q,t}(n)^{\circ 0, \ast 1}\uar[equal]& \\
			&&\sum_{P\in \SQE_n}q^{\area(P)}t^{\bounce(P)} \uar["\gamma'_E"]&
			\end{tikzcd}
			\) };
		\end{tikzpicture}
	\end{center}

	All this proves the result.
\end{proof}

\section{Observations when $t=1/q$} \label{sec:observ_one_over_q}

The most remarkable connection between the old $q,t$-square and the new one is the fact that they coincide at $t=1/q$. In fact, comparing \cite{Haglund-Book-2008}*{Corollary~4.8.1} (with $d=1$) with \cite[Theorem~10]{Loehr-Warrington-square-2007}, we deduce immediately the following proposition.
\begin{proposition} \label{prop:coinc}
	We have
	\begin{equation} \label{eq:t_1_q_id}
		\langle \Delta_{e_{n-1}}e_n,e_n \rangle\big{|}_{t=1/q} = \langle \Delta_{e_{n}}\omega (p_n),e_n \rangle\big{|}_{t=1/q}=q^{-\binom{n}{2}} \frac{1}{1+q^n}\begin{bmatrix}
			2n\\
			n\\
		\end{bmatrix}_q.
	\end{equation}
\end{proposition}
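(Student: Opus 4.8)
\textbf{Proof plan for Proposition~\ref{prop:coinc}.}

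The plan is to prove the two equalities in \eqref{eq:t_1_q_id} separately, pulling each one from results already available in the literature. For the middle equality (the fact that $\langle \Delta_{e_{n-1}}e_n,e_n\rangle$ and $\langle \Delta_{e_n}\omega(p_n),e_n\rangle$ agree at $t=1/q$), I would simply invoke Theorem~\ref{thm:square_cnjs_at_1_qopenp}, which states precisely that ${\Delta_{e_n}\omega(p_n)}_{|_{t=1/q}}={\Delta_{e_{n-1}}e_n}_{|_{t=1/q}}$ as symmetric functions; taking the Hall scalar product with $e_n$ on both sides gives the claimed identity of polynomials in $q$. Alternatively, as the excerpt suggests, one can get it by comparing \cite{Haglund-Book-2008}*{Corollary~4.8.1} (taken with $d=1$) against \cite{Loehr-Warrington-square-2007}*{Theorem~10}: both of these provide evaluations of the respective $q,t$-squares at $t=1/q$, and the two formulas are visibly the same. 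Either route is essentially immediate once the cited statements are in hand.

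For the right-hand equality — the explicit evaluation $\langle \Delta_{e_n}\omega(p_n),e_n\rangle\big{|}_{t=1/q}=q^{-\binom{n}{2}}\frac{1}{1+q^n}\qbinom{2n}{n}_q$ — I would use Theorem~\ref{thm:omegapn}, which gives the closed plethystic form ${\Delta_f\omega(p_n)}_{|_{t=1/q}}=\frac{[n]_q f[[n]_q]}{[k]_q q^{k(n-1)}}e_n[X[k]_q]$ for $f\in\Lambda^{(k)}$. Here the relevant Delta operator is $\Delta_{e_n}$, so I take $f=e_n$, hence $k=n$. Plugging in, $e_n[[n]_q]=q^{\binom{n}{2}}\qbinom{n}{n}_q=q^{\binom{n}{2}}$ by \eqref{eq:e_q_binomial}, so the scalar coefficient becomes $\frac{[n]_q\, q^{\binom{n}{2}}}{[n]_q q^{n(n-1)}}=q^{\binom{n}{2}-n(n-1)}=q^{-\binom{n}{2}}$. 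It then remains to compute $\langle e_n[X[n]_q],e_n\rangle$. Since $\langle e_n[XY],e_n[X]\rangle_X=h_n[Y]$ (a standard consequence of the Cauchy identity \eqref{eq:Cauchy_identities}, using $\langle e_\lambda,e_\mu\rangle$ duality with $m_\mu$, or equivalently $\omega$ applied to $\langle h_n[XY],h_n[X]\rangle=h_n[Y]$), we get $\langle e_n[X[n]_q],e_n\rangle=h_n[[n]_q]=\qbinom{2n-1}{n}_q$ by \eqref{eq:h_q_binomial}. Finally one checks the elementary $q$-binomial identity $\qbinom{2n-1}{n}_q=\frac{1}{1+q^n}\qbinom{2n}{n}_q$, which follows from \eqref{eq:qbin_recursion}: writing $\qbinom{2n}{n}_q=\qbinom{2n-1}{n}_q+q^n\qbinom{2n-1}{n-1}_q$ and noting $\qbinom{2n-1}{n-1}_q=\qbinom{2n-1}{n}_q$ by symmetry gives $\qbinom{2n}{n}_q=(1+q^n)\qbinom{2n-1}{n}_q$. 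Combining, $\langle\Delta_{e_n}\omega(p_n),e_n\rangle\big{|}_{t=1/q}=q^{-\binom{n}{2}}\cdot\frac{1}{1+q^n}\qbinom{2n}{n}_q$, as desired.

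I do not anticipate a genuine obstacle here, since all the hard work is done in Theorem~\ref{thm:omegapn} and Theorem~\ref{thm:square_cnjs_at_1_qopenp}. The only points requiring care are (i) the scalar-product evaluation $\langle e_n[XY],e_n[X]\rangle=h_n[Y]$, which I would justify by applying $\omega$ (in the $X$ variables) to the analogous complete-homogeneous identity and using $\omega e_n=h_n$ together with the fact that $\omega$ is a Hall-isometry, and (ii) the bookkeeping of the power of $q$ in the prefactor, which is a one-line computation as above. If one prefers a self-contained argument for the middle equality rather than citing Theorem~\ref{thm:square_cnjs_at_1_qopenp}, the slightly longer route is to note that the same Theorem~\ref{thm:omegapn} specialization with $f=e_{n-1}$ (so $k=n-1$) combined with the known $q,t$-Schröder evaluation at $t=1/q$ from \cite{Haglund-Book-2008}*{Corollary~4.8.1} yields the same closed form for $\langle\Delta_{e_{n-1}}e_n,e_n\rangle\big{|}_{t=1/q}$; but invoking Theorem~\ref{thm:square_cnjs_at_1_qopenp} directly is cleaner.
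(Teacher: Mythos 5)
Your proof is correct, but it takes a different and more self-contained route than the paper. The paper's own proof of Proposition~\ref{prop:coinc} is a one-line citation: it observes that \cite{Haglund-Book-2008}*{Corollary~4.8.1} (with $d=1$) and \cite{Loehr-Warrington-square-2007}*{Theorem~10} each give a closed formula for the respective $t=1/q$ specialization, and the two formulas visibly coincide with $q^{-\binom{n}{2}}\frac{1}{1+q^n}\qbinom{2n}{n}_q$. You instead derive everything from the paper's own machinery: Theorem~\ref{thm:omegapn} (specialized to $f=e_n$, $k=n$) to produce the closed form for $\langle\Delta_{e_n}\omega(p_n),e_n\rangle\big{|}_{t=1/q}$, and Theorem~\ref{thm:square_cnjs_at_1_qopenp} (whose proof in the paper relies only on Theorem~\ref{thm:omegapn} and \eqref{eq:HRW_lemma}, so there is no circularity even though it appears after Proposition~\ref{prop:coinc} in the text) for the middle equality. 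Your scalar-product step $\langle e_n[XY],e_n[X]\rangle=h_n[Y]$ is correct via the Cauchy identity, and the bookkeeping $\binom{n}{2}-n(n-1)=-\binom{n}{2}$ and $\qbinom{2n-1}{n}_q=\frac{1}{1+q^n}\qbinom{2n}{n}_q$ are both right. The upshot: the paper's approach is shorter but outsources the computation to two external references, while yours is longer but derives the formula and the coincidence entirely from internal results; both are valid and rest on the same underlying $t=1/q$ plethystic evaluations.
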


In fact, using \eqref{eq:lem_e_h_Delta}, we can rewrite \eqref{eq:t_1_q_id} as
\begin{equation}
	\langle \Delta_{e_{n}}e_n,e_{n-1}e_1 \rangle\big{|}_{t=1/q} = \langle \Delta_{e_{n}}\omega (p_n),e_n \rangle\big{|}_{t=1/q}.
\end{equation}
We now show that this is not an isolated phenomenon.

We proved in Theorem~\ref{thm:omegapn} that for any $f\in \Lambda^{(k)}$ we have
\begin{equation}
	{\Delta_f \omega(p_n)}_{\big{|}_{t=1/q}}=\frac{[n]_qf[[n]_q]}{[k]_qq^{k(n-1)}}e_n[X[k]_q].
\end{equation}

The following theorem, due to Haglund, Remmel and Wilson, is proved in a similar way.
\begin{theorem}[\cite{Haglund-Remmel-Wilson-2015} Theorem~5.1]
	For any $f\in \Lambda^{(k)}$ we have
	\begin{equation} \label{eq:HRW_lemma}
		{\Delta_f e_n}_{\big{|}_{t=1/q}}=\frac{f[[n]_q]}{[k+1]_qq^{k(n-1)}}e_n[X[k+1]_q].
	\end{equation}
\end{theorem}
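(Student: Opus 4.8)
The plan is to run the argument in parallel with the proof of Theorem~\ref{thm:omegapn}, replacing the Murnaghan--Nakayama expansion of $\omega(p_n)$ by a hook expansion of $e_n$ in the family $\{s_{(a,1^{n-a})}[X/(1-q)]\}_{a=1}^n$. First I would write $e_n[X]=e_n\!\big[\tfrac{X}{1-q}(1-q)\big]$, apply the Cauchy identity \eqref{eq:Cauchy_identities}, and use the evaluation \eqref{eq:s_plethystic_eval} of Schur functions at $1-u$: only hook shapes survive, and since $s_{(n-a+1,1^{a-1})}[1-q]=(-q)^{a-1}(1-q)$ one obtains
\[
e_n[X]=\sum_{a=1}^{n}(-q)^{a-1}(1-q)\,s_{(a,1^{n-a})}\!\left[\frac{X}{1-q}\right].
\]

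Next I would invoke the specialization $\widetilde{H}_\mu(X;q,1/q)=C_\mu\,s_\mu[X/(1-q)]$ from \cite{Garsia-Haiman-qLagrange-1996} (already used in the proof of Theorem~\ref{thm:omegapn}). Since by \eqref{eq:deltaop_def} the operator $\Delta_f$ acts on $\widetilde{H}_\mu$ as multiplication by $f[B_\mu(q,t)]$, it acts on each $s_\mu[X/(1-q)]$ at $t=1/q$ as multiplication by $f[B_\mu(q,1/q)]$, the constant $C_\mu$ cancelling. For a hook $\mu=(a,1^{n-a})$ one has, exactly as in the proof of Theorem~\ref{thm:omegapn}, $B_{(a,1^{n-a})}(q,1/q)=q^{-(n-a)}[n]_q$; using that $f$ is homogeneous of degree $k$ this gives $f[q^{-(n-a)}[n]_q]=q^{-k(n-a)}f[[n]_q]$. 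Substituting into the expansion above and collecting powers via $q^{-k(n-a)}(-q)^{a-1}=q^{-k(n-1)}(-q^{k+1})^{a-1}$ yields
\[
\left.\Delta_f e_n\right|_{t=1/q}=f[[n]_q]\,q^{-k(n-1)}\sum_{a=1}^{n}(-q^{k+1})^{a-1}(1-q)\,s_{(a,1^{n-a})}\!\left[\frac{X}{1-q}\right].
\]

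Finally I would identify the remaining sum by running the first step in reverse with $1-q$ replaced by $1-q^{k+1}$: writing $e_n[X[k+1]_q]=e_n\!\big[\tfrac{X}{1-q}(1-q^{k+1})\big]$ and applying \eqref{eq:Cauchy_identities} and \eqref{eq:s_plethystic_eval} (now with $u=q^{k+1}$) gives $e_n[X[k+1]_q]=\sum_{a=1}^n(-q^{k+1})^{a-1}(1-q^{k+1})\,s_{(a,1^{n-a})}[X/(1-q)]$, so the displayed sum equals $\tfrac{1-q}{1-q^{k+1}}\,e_n[X[k+1]_q]=\tfrac{1}{[k+1]_q}\,e_n[X[k+1]_q]$, and \eqref{eq:HRW_lemma} follows.

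All the manipulations are routine; the only point requiring care is the justification that $\Delta_f$ is diagonalized by the $s_\mu[X/(1-q)]$ at $t=1/q$ with eigenvalue $f[B_\mu(q,1/q)]$ (this rests entirely on the Garsia--Haiman specialization together with \eqref{eq:deltaop_def}) and keeping the signs and powers of $q$ straight when collapsing $q^{-k(n-a)}(-q)^{a-1}$. No genuine obstacle is expected, since the argument is structurally identical to the already-established Theorem~\ref{thm:omegapn}.
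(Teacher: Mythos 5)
Your proof is correct and fills in exactly the details the paper leaves implicit: the paper does not spell out a proof of this statement but merely remarks, after proving Theorem~\ref{thm:omegapn}, that it is ``proved in a similar way,'' and your argument is precisely that template with the hook expansion of $e_n$ (via the Cauchy identity \eqref{eq:Cauchy_identities} and the evaluation \eqref{eq:s_plethystic_eval}) in place of the Murnaghan--Nakayama expansion of $\omega(p_n)/(1-q^n)$. The eigenvalue step, the computation $B_{(a,1^{n-a})}(q,1/q)=q^{-(n-a)}[n]_q$, the homogeneity simplification $f[q^{-(n-a)}[n]_q]=q^{-k(n-a)}f[[n]_q]$, the exponent bookkeeping $q^{-k(n-a)}(-q)^{a-1}=q^{-k(n-1)}(-q^{k+1})^{a-1}$, and the reassembly into $\tfrac{1}{[k+1]_q}e_n[X[k+1]_q]$ are all right.
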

We have an easy corollary, which is at the origin of our investigations. This corollary shows that Proposition~\ref{prop:coinc} is not an isolated coincidence.
\begin{corollary} \label{cor:omegapn}
	For any $f\in \Lambda^{(k)}$ we have
	\begin{equation}
		{\langle\Delta_f e_n,e_{n-1}e_1\rangle}_{\big{|}_{t=1/q}}={\langle\Delta_f \omega(p_n),e_{n}\rangle}_{\big{|}_{t=1/q}}.
	\end{equation}
\end{corollary}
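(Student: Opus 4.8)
The statement to prove is Corollary~\ref{cor:omegapn}: for any $f\in\Lambda^{(k)}$,
\[
{\langle\Delta_f e_n,e_{n-1}e_1\rangle}_{\big{|}_{t=1/q}}={\langle\Delta_f \omega(p_n),e_{n}\rangle}_{\big{|}_{t=1/q}}.
\]

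The plan is to evaluate both sides in closed form using the two explicit specialisations already available in the excerpt, namely Theorem~\ref{thm:omegapn} (the formula for ${\Delta_f\omega(p_n)}_{|_{t=1/q}}$) and the Haglund--Remmel--Wilson formula \eqref{eq:HRW_lemma} for ${\Delta_f e_n}_{|_{t=1/q}}$, and then check that the two resulting scalar products agree. First I would handle the left-hand side: by \eqref{eq:HRW_lemma},
\[
{\langle\Delta_f e_n,e_{n-1}e_1\rangle}_{\big{|}_{t=1/q}}=\frac{f[[n]_q]}{[k+1]_qq^{k(n-1)}}\,\langle e_n[X[k+1]_q],e_{n-1}e_1\rangle .
\]
So the whole content on that side reduces to computing $\langle e_n[X[k+1]_q],e_{n-1}e_1\rangle$. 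Since $e_n[XY]=\sum_{\lambda\vdash n}s_\lambda[X]s_{\lambda'}[Y]$ by the Cauchy identity \eqref{eq:Cauchy_identities}, and $\langle s_\lambda,e_{n-1}e_1\rangle=\langle s_\lambda,s_{2,1^{n-2}}+s_{1^n}\rangle$ is nonzero only for $\lambda=(2,1^{n-2})$ and $\lambda=(1^n)$, this pairing collapses to $s_{(n-1,1)}[[k+1]_q]+s_{(n)}[[k+1]_q]$ (using $(2,1^{n-2})'=(n-1,1)$ and $(1^n)'=(n)$); equivalently one can write it as $e_{n-1}[[k+1]_q]e_1[[k+1]_q]$ minus a correction, or just directly evaluate the two Schur functions at the principal specialisation $[k+1]_q=1+q+\dots+q^k$ using the hook-content / $q$-analogue formula. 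For the right-hand side, Theorem~\ref{thm:omegapn} gives
\[
{\langle\Delta_f\omega(p_n),e_n\rangle}_{\big{|}_{t=1/q}}=\frac{[n]_q f[[n]_q]}{[k]_q q^{k(n-1)}}\,\langle e_n[X[k]_q],e_n\rangle=\frac{[n]_q f[[n]_q]}{[k]_q q^{k(n-1)}}\,e_n[[k]_q],
\]
and by \eqref{eq:e_q_binomial} we have $e_n[[k]_q]=q^{\binom n2}\qbinom kn_q$.

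The main step is then a purely $q$-combinatorial identity: after cancelling the common factor $f[[n]_q]q^{-k(n-1)}$, one must verify
\[
\frac{1}{[k+1]_q}\Big(s_{(n-1,1)}[[k+1]_q]+s_{(n)}[[k+1]_q]\Big)=\frac{[n]_q}{[k]_q}\,q^{\binom n2}\qbinom kn_q .
\]
Here $s_{(n)}[[k+1]_q]=h_n[[k+1]_q]=\qbinom{n+k}{n}_q$ by \eqref{eq:h_q_binomial}, and $s_{(n-1,1)}[[k+1]_q]$ can be obtained from the Jacobi--Trudi determinant $s_{(n-1,1)}=h_{n-1}h_1-h_n$ evaluated at the principal specialisation, giving $\qbinom{n+k-2}{n-1}_q\qbinom{k+1}{1}_q-\qbinom{n+k-1}{n}_q$ (or equivalently from the known product formula $s_\lambda[1+q+\cdots+q^k]=q^{n(\lambda)}\prod_{c\in\lambda}\frac{1-q^{k+1+\mathrm{ct}(c)}}{1-q^{h(c)}}$). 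Plugging these in and simplifying with the $q$-binomial recursion \eqref{eq:qbin_recursion} reduces everything to a one-line manipulation. I expect this $q$-binomial bookkeeping to be the only real obstacle, and it is routine; the structural part — invoking \eqref{eq:HRW_lemma}, Theorem~\ref{thm:omegapn}, and the Cauchy identity to reduce both sides to principal specialisations of Schur functions — is immediate. A cleaner alternative worth trying is to avoid $s_{(n-1,1)}$ entirely: note $e_{n-1}e_1=p_1 e_{n-1}$ has no $s_{(n-1,1)}$ only issue is the $e_n$ piece, so instead write $\langle e_n[X[k+1]_q],e_{n-1}e_1\rangle=\langle e_1^\perp e_n[X[k+1]_q],e_{n-1}\rangle$ and use that $e_1^\perp$ acts as $\partial_{p_1}$, which on $e_n[XY]$ with $Y=[k+1]_q$ gives $[k+1]_q\,e_{n-1}[X[k+1]_q]$ plus lower-order terms — but the plethystic derivative still produces the same correction, so in the end the determinant computation seems unavoidable. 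Either way the proof is short once the two specialisation formulas are in hand.
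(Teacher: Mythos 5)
There is a concrete error in your evaluation of the right-hand side that makes your proposed $q$-binomial identity false. You wrote $\langle e_n[X[k]_q],e_n\rangle=e_n[[k]_q]$, but by the Cauchy identity \eqref{eq:Cauchy_identities}, $e_n[XY]=\sum_{\lambda\vdash n}s_{\lambda}[X]s_{\lambda'}[Y]$, so $\langle e_n[XY],s_\mu[X]\rangle=s_{\mu'}[Y]$, and taking $\mu=(1^n)$ gives $\langle e_n[X[k]_q],e_n\rangle=s_{(n)}[[k]_q]=h_n[[k]_q]$, \emph{not} $e_n[[k]_q]$. The identity you then propose to verify,
\[
\frac{1}{[k+1]_q}\bigl(s_{(n-1,1)}[[k+1]_q]+s_{(n)}[[k+1]_q]\bigr)=\frac{[n]_q}{[k]_q}\,q^{\binom n2}\qbinom kn_q,
\]
is in fact false: for $k<n$ the right-hand side vanishes because $\qbinom kn_q=0$, while the left-hand side equals $\qbinom{n+k-1}{k}_q\neq 0$ (e.g.\ $n=2$, $k=1$ gives $1+q$ versus $0$). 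With the correction $h_n[[k]_q]=\qbinom{n+k-1}{k-1}_q$ in place, the identity you need collapses immediately via $\frac{[n]_q}{[k]_q}\qbinom{n+k-1}{k-1}_q=\qbinom{n+k-1}{k}_q$, so the ``unavoidable determinant computation'' you worry about disappears entirely.

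Beyond fixing the error, the paper sidesteps the Schur decomposition of $e_{n-1}e_1$ altogether by observing that $\langle e_n[XY],g[X]\rangle=(\omega g)[Y]$ for $g\in\Lambda^{(n)}$ (a direct consequence of the same Cauchy pairing), giving $\langle e_n[X[k+1]_q],e_{n-1}e_1\rangle=(h_{n-1}h_1)[[k+1]_q]=\qbinom{n+k-1}{k}_q[k+1]_q$ in one step. You reach the same value by summing $s_{(n)}[[k+1]_q]+s_{(n-1,1)}[[k+1]_q]$ (which is fine, since $s_{(n)}+s_{(n-1,1)}=h_{n-1}h_1$ by Pieri), so your left-hand side is correct; it is only the $e$-versus-$h$ mix-up on the other side that breaks the argument.
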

\begin{proof}
	Using \eqref{eq:h_q_binomial}, we have
	\begin{equation}
		h_n[[k]_q]=\begin{bmatrix}
			n+k-1\\
			k-1
		\end{bmatrix}_q.
	\end{equation}
	Using the Cauchy identity \eqref{eq:Cauchy_identities}, we get
	\begin{equation}
		e[X[k]_q]=\sum_{\mu\vdash n}s_{\mu'}[X]s_{\mu}[[k]_q].
	\end{equation}
	Therefore
	\begin{equation}
		\langle e[X[k]_q],e_n\rangle = h_n[[k]_q]=\begin{bmatrix}
			n+k-1\\
			k-1
		\end{bmatrix}_q,
	\end{equation}
	and similarly
	\begin{equation}
		\langle e[X[k+1]_q],e_{n-1}e_1\rangle  = (h_{n-1}h_1)[[k+1]_q]=\begin{bmatrix}
			n+k-1\\
			k
		\end{bmatrix}_q [k+1]_q.
	\end{equation}
	So
	\begin{align*}
		{\langle\Delta_f \omega(p_n),e_{n}\rangle}_{\big{|}_{t=1/q}} & = \frac{[n]_qf[[n]_q]}{[k]_qq^{k(n-1)}}\langle e_n[X[k]_q],e_{n} \rangle\\
		& = \frac{f[[n]_q]}{q^{k(n-1)}}\frac{[n]_q}{[k]_q}\begin{bmatrix}
			n+k-1\\
			k-1
		\end{bmatrix}_q\\
		& = \frac{f[[n]_q]}{q^{k(n-1)}}\begin{bmatrix}
			n+k-1\\
			k
		\end{bmatrix}_q\\
		& = \frac{f[[n]_q]}{[k+1]_qq^{k(n-1)}}\begin{bmatrix}
			n+k-1\\
			k
		\end{bmatrix}_q[k+1]_q\\
		& = \frac{f[[n]_q]}{[k+1]_qq^{k(n-1)}}\langle e[X[k+1]_q],e_{n-1}e_1\rangle\\
		& = {\langle \Delta_f e_n,e_{n-1}e_1\rangle}_{\big{|}_{t=1/q}}.
	\end{align*}
	
\end{proof}

The following result shows a surprisingly tight relation between the square conjecture in \cite{Loehr-Warrington-square-2007} and our Conjecture~\ref{conj:new_square}.
\begin{theorem} \label{thm:square_cnjs_at_1_q}
	We have
	\begin{equation}
		{\Delta_{e_n} \omega(p_n)}_{\big{|}_{t=1/q}}={\Delta_{e_{n-1}}e_n}_{\big{|}_{t=1/q}}.
	\end{equation}
\end{theorem}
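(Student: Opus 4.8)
The statement follows directly from the two evaluation formulas at $t=1/q$ that precede it, namely Theorem~\ref{thm:omegapn} and the identity \eqref{eq:HRW_lemma} of Haglund, Remmel and Wilson. The plan is to apply each with the appropriate symmetric function and then check that the resulting closed forms coincide.

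First I would apply Theorem~\ref{thm:omegapn} with $f=e_n\in\Lambda^{(n)}$, so $k=n$, obtaining
\[
{\Delta_{e_n}\omega(p_n)}_{\big{|}_{t=1/q}}=\frac{[n]_q\,e_n[[n]_q]}{[n]_q\,q^{n(n-1)}}e_n[X[n]_q]=\frac{e_n[[n]_q]}{q^{n(n-1)}}e_n[X[n]_q].
\]
Then, by \eqref{eq:e_q_binomial}, $e_n[[n]_q]=q^{\binom{n}{2}}\qbinom{n}{n}_q=q^{\binom{n}{2}}$, and since $\binom{n}{2}-n(n-1)=-\binom{n}{2}$, the left hand side equals $q^{-\binom{n}{2}}e_n[X[n]_q]$.

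Next I would apply \eqref{eq:HRW_lemma} with $f=e_{n-1}\in\Lambda^{(n-1)}$, so $k=n-1$ and $[k+1]_q=[n]_q$, giving
\[
{\Delta_{e_{n-1}}e_n}_{\big{|}_{t=1/q}}=\frac{e_{n-1}[[n]_q]}{[n]_q\,q^{(n-1)^2}}e_n[X[n]_q].
\]
Using \eqref{eq:e_q_binomial} again, $e_{n-1}[[n]_q]=q^{\binom{n-1}{2}}\qbinom{n}{n-1}_q=q^{\binom{n-1}{2}}[n]_q$, so the $[n]_q$ cancels, and since $\binom{n-1}{2}-(n-1)^2=-\binom{n}{2}$, the right hand side also equals $q^{-\binom{n}{2}}e_n[X[n]_q]$. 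Comparing the two expressions proves the claim.

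There is essentially no obstacle here: the whole argument is a substitution into the two known evaluation lemmas followed by elementary $q$-binomial simplifications (the only thing to be careful about is the exponent bookkeeping $\binom{n}{2}-n(n-1)=-\binom{n}{2}$ and $\binom{n-1}{2}-(n-1)^2=-\binom{n}{2}$). One could equivalently invoke Corollary~\ref{cor:omegapn} with $f=e_n$ together with Lemma~\ref{lem:Mac_hook_coeff} (in the form \eqref{eq:lem_e_h_Delta}) to rewrite $\langle\Delta_{e_n}e_n,e_{n-1}e_1\rangle=\langle\Delta_{e_{n-1}}e_n,h_n\rangle=\langle\Delta_{e_{n-1}}e_n,e_{n-1}h_1\rangle$ and then identify it with $\langle\Delta_{e_n}\omega(p_n),e_n\rangle$ at $t=1/q$; but the direct computation above is the cleanest route, so that is the one I would present.
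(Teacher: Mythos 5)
Your proof is correct and follows essentially the same route as the paper: both apply Theorem~\ref{thm:omegapn} with $f=e_n$ and \eqref{eq:HRW_lemma} with $f=e_{n-1}$, then use \eqref{eq:e_q_binomial} to reduce each side to $q^{-\binom{n}{2}}e_n[X[n]_q]$ (the paper just manipulates the left-hand-side exponents directly until the right-hand-side form appears, rather than simplifying both sides to a common expression).
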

\begin{proof}
	Using \eqref{eq:e_q_binomial}, we have
	\begin{equation} \label{eq:aux_e_qbin}
		e_n[[n]_{q}]=q^{\binom{n}{2}}\quad \text{ and }\quad
		e_{n-1}[[n]_{q}]=q^{\binom{n-1}{2}}[n]_q.
	\end{equation}
	
	Using Theorem~\ref{thm:omegapn} we have
	\begin{align*}
		{\Delta_{e_n} \omega(p_n)}_{\big{|}_{t=1/q}} & =
		\frac{[n]_qe_{n}[[n]_q]}{[n]_qq^{n(n-1)}}e_n[X[n]_q]\\
		\text{(using \eqref{eq:aux_e_qbin})}& = \frac{[n]_qq^{\binom{n}{2}}}{[n]_qq^{n(n-1)}}e_n[X[n]_q]\\
		& = \frac{[n]_qq^{\binom{n-1}{2}+(n-1)}}{[n]_qq^{n(n-1)}}e_n[X[n]_q]
	\end{align*}
	\begin{align*}
		& = \frac{[n]_qq^{\binom{n-1}{2}}}{[n]_qq^{(n-1)^2}}e_n[X[n]_q]\\
		\text{(using \eqref{eq:aux_e_qbin})}& = \frac{e_{n-1} [[n]_q]}{[n]_qq^{(n-1)^2}}e_n[X[n]_q]\\
		\text{(using \eqref{eq:HRW_lemma})} & = {\Delta_{e_{n-1}} e_n}_{\big{|}_{t=1/q}}.
	\end{align*}
\end{proof}

\appendix

\chapter{Proof of the elementary lemmas}

In this chapter we prove some elementary lemma used in the text.

\subsection*{Proof of Lemma~\ref{lem:elementary4}}

Using \eqref{eq:qbin_recursion}, we have
\begin{equation*}
	q^{s}\begin{bmatrix}
		h-1\\
		s\\
	\end{bmatrix}_q \begin{bmatrix}
		h+k-s-1\\
		h\\
	\end{bmatrix}_q + \begin{bmatrix}
		h-1\\
		s-1\\
	\end{bmatrix}_q \begin{bmatrix}
		h+k-s\\
		h\\
	\end{bmatrix}_q=\qquad \qquad \qquad 
\end{equation*}
\begin{align*}
	& = q^{s}\begin{bmatrix}
		h-1\\
		s\\
	\end{bmatrix}_q \begin{bmatrix}
		h+k-s-1\\
		h\\
	\end{bmatrix}_q \\
	& \quad +  \begin{bmatrix}
		h-1\\
		s-1\\
	\end{bmatrix}_q\left( \begin{bmatrix}
		h+k-s-1\\
		k-s-1\\
	\end{bmatrix}_q+q^{k-s} \begin{bmatrix}
		h+k-s-1\\
		k-s\\
	\end{bmatrix}_q\right)\\
	\qquad \quad & = \begin{bmatrix}
		h\\
		s\\
	\end{bmatrix}_q \begin{bmatrix}
		h+k-s-1\\
		k-s-1\\
	\end{bmatrix}_q+ q^{k-s}\begin{bmatrix}
		h-1\\
		s-1\\
	\end{bmatrix}_q \begin{bmatrix}
		h+k-s-1\\
		k-s\\
	\end{bmatrix}_q \\
	& = \frac{[h]_q!}{[s]_q![h-s]_q!}\frac{[h+k-s-1]_q!}{[h]_q![k-s-1]_q!} +q^{k-s}\frac{[h-1]_q!}{[s-1]_q![h-s]_q!}\frac{[h+k-s-1]_q!}{[h-1]_q![k-s]_q!}\\
	& = \frac{[k]_q!}{[s]_q![k-s]_q!}\frac{[h+k-s-1]_q!}{[k-1]_q![h-s]_q!} \frac{1}{[k]_q} \left( [k-s]_q+q^{k-s}[s]_q\right)\\
	& =\begin{bmatrix}
		k\\
		s\\
	\end{bmatrix}_q \begin{bmatrix}
		h+k-s-1\\
		h-s\\
	\end{bmatrix}_q.
\end{align*}
This completes the proof of the lemma.

\subsection*{Proof of Lemma~\ref{lem:elementary1}}

Let us call $f(s,i,a)$ the left hand side of \eqref{eq:first_qlemma}. 
First we rewrite
\begin{align*}
	f(s,i,a) & = \sum_{r=1}^{i}\begin{bmatrix}
		i-1\\
		r-1
	\end{bmatrix}_q \begin{bmatrix}
		r+s+a-1\\
		s-1
	\end{bmatrix}_q q^{\binom{r}{2}+r-ir}(-1)^{i-r}\\
	& = \sum_{j=0}^{i-1} \begin{bmatrix}
		i-1\\
		j
	\end{bmatrix}_q  \begin{bmatrix}
		s+a+j\\
		s-1
	\end{bmatrix}_q q^{\binom{j+2}{2}-ij-i}(-1)^{i-j-1}.
\end{align*}
Then, using \eqref{eq:qbin_recursion}, for $i\geq 2$ and $a\geq -i+1$ we have
\begin{align*}
	f(s,i,a) & = \sum_{j=0}^{i-1} \begin{bmatrix}
		i-1\\
		j
	\end{bmatrix}_q  \begin{bmatrix}
		s+a+j\\
		s-1
	\end{bmatrix}_q q^{\binom{j+2}{2}-ij-i}(-1)^{i-j-1}\\
	& = \sum_{j=0}^{i-1}\left(q^j\begin{bmatrix}
		i-2\\
		j
	\end{bmatrix}_q+\begin{bmatrix}
		i-2\\
		j-1
	\end{bmatrix}_q\right) \begin{bmatrix}
		s+a+j\\
		s-1
	\end{bmatrix}_q q^{\binom{j+2}{2}-ij-i}(-1)^{i-j-1}
\end{align*}
\begin{align*}
	& = -q^{-1}\sum_{j=0}^{i-1}\begin{bmatrix}
		i-2\\
		j
	\end{bmatrix}_q \begin{bmatrix}
		s+a+j\\
		s-1
	\end{bmatrix}_q q^{\binom{j+2}{2}-(i-1)j-(i-1)}(-1)^{(i-1)-j-1}	\\
	& \quad + \sum_{j=0}^{i-1} \begin{bmatrix}
		i-2\\
		j-1
	\end{bmatrix}_q  \begin{bmatrix}
		s+a+j\\
		s-1
	\end{bmatrix}_q q^{\binom{j+2}{2}-ij-i}(-1)^{i-j-1}\\
	& =-q^{-1}f(s,i-1,a)\\
	& \quad + \sum_{h=0}^{i-2} \begin{bmatrix}
		i-2\\
		h
	\end{bmatrix}_q  \begin{bmatrix}
		s+a+1+h\\
		s-1
	\end{bmatrix}_q q^{\binom{h+2}{2}+h+2-ih-i-i}(-1)^{i-h-2}\\
	&=-q^{-1}f(s,i-1,a)+q^{1-i}f(s,i-1,a+1).
\end{align*}
So by induction,
\begin{align*}
	f(s,i,a)&=-q^{-1}f(s,i-1,a)+q^{1-i}f(s,i-1,a+1)\\
	& =-q^{-1}q^{\binom{i-1}{2}+(i-2)a} \begin{bmatrix}
		s+a\\
		i-1+a
	\end{bmatrix}_q+q^{1-i}q^{\binom{i-1}{2}+(i-2)(a+1)} \begin{bmatrix}
		s+a+1\\
		i+a
	\end{bmatrix}_q\\
	& =q^{\binom{i-1}{2}+(i-2)a-1}\left(\begin{bmatrix}
		s+a+1\\
		i+a
	\end{bmatrix}_q- \begin{bmatrix}
		s+a\\
		i-1+a
	\end{bmatrix}_q \right)\\
	& =q^{\binom{i-1}{2}+(i-2)a-1}q^{i+a}\begin{bmatrix}
		s+a\\
		i+a
	\end{bmatrix}_q \\
	& =q^{\binom{i}{2}+(i-1)a} \begin{bmatrix}
		s+a\\
		i+a
	\end{bmatrix}_q,
\end{align*}
as we wanted. 

For the base cases: clearly $f(s,i,a)=0$ if $a<-i$. Moreover
\begin{align*}
	f(s,i,-i) &  = \sum_{r=1}^{i} \qbinom{i-1}{r-1}_q \qbinom{r+s-i-1}{s-1}_q  q^{\binom{r}{2}+r-ir}(-1)^{i-r}\\
	&  =  q^{\binom{i}{2}+i-i^2}\\
	&  =  q^{-\binom{i}{2}}\\
	&  =  q^{1-i}q^{-\binom{i-1}{2}}\\
	&  =  q^{1-i}f(s,i-1,-i+1),
\end{align*}
so that our recursive step actually works for $a\geq -i$.

Finally, for $i=1$ we have
\begin{align*}
	f(s,1,a) &  = \sum_{r=1}^{1} \qbinom{1-1}{r-1}_q \qbinom{r+s+a-1}{s-1}_q  q^{\binom{r}{2}+r-r}(-1)^{1-r}\\
	&  = \qbinom{s+a}{s-1}_q\\
	&  = q^{\binom{1}{2}+(1-1)a} \qbinom{s+a}{1+a}_q,
\end{align*}
as we wanted. This completes the proof of the lemma.

\subsection*{Proof of Lemma~\ref{lem:elementary2}}

Let us call $f(\ell,b,d,k)$ the right hand side of \eqref{eq:qlemma3}. 

We proceed by induction on $b$, $d$ and $k$. 

For $d\geq b\geq 1$ and $k\geq 2$, using \eqref{eq:qbin_recursion}, we have

\begin{align*}
	& \hspace{-0.5cm} f(\ell,b,d,k):=\\
	& = \sum_{i=0}^{k} 
	\sum_{s =0}^{i+\ell-d+b} 
	h_{k-i}\left[\frac{1}{1-q}\right]  q^{\binom{i}{2}} \begin{bmatrix}
		s\\
		i
	\end{bmatrix}_q e_{s-b}\left[-\frac{1}{1-q}\right] 
	e_{i+\ell-s-d+b}\left[\frac{1}{1-q}\right]\\
	& = \sum_{i=0}^{k} 
	\sum_{s =0}^{i+\ell-d+b} 
	h_{k-i}\left[\frac{1}{1-q}\right]  q^{\binom{i}{2}} \qbinom{s-1}{i}_q   e_{s-b}\left[-\frac{1}{1-q}\right] 
	e_{i+\ell-s-d+b}\left[\frac{1}{1-q}\right] \\
	& + \sum_{i=0}^{k} 
	\sum_{s =0}^{i+\ell-d+b} 
	h_{k-i}\left[\frac{1}{1-q}\right]  q^{\binom{i}{2}}q^{s-i} \qbinom{s-1}{i-1}_q   e_{s-b}\left[-\frac{1}{1-q}\right] 
	e_{i+\ell-s-d+b}\left[\frac{1}{1-q}\right] \\
	& = \sum_{i=0}^{k} 
	\sum_{s =0}^{i-1+\ell-d+b} 
	h_{k-i}\left[\frac{1}{1-q}\right]  q^{\binom{i}{2}} \qbinom{s}{i}_q   e_{s+1-b}\left[-\frac{1}{1-q}\right] 
	e_{i+\ell-s-1-d+b}\left[\frac{1}{1-q}\right] \\
	& + \sum_{i=0}^{k-1} 
	\sum_{s =0}^{i+\ell-d+b} 
	h_{k-i-1}\left[\frac{1}{1-q}\right]  q^{\binom{i+1}{2}}q^{s-i}  \qbinom{s}{i}_q   e_{s+1-b}\left[-\frac{1}{1-q}\right] 
	e_{i+\ell-s-d+b}\left[\frac{1}{1-q}\right] \\
	& = f(\ell,b-1,d,k) \\
	& + q^{b-1}\sum_{i=0}^{k-1} 
	\sum_{s =0}^{i+\ell-d+b} 
	h_{k-i-1}\left[\frac{1}{1-q}\right]  q^{\binom{i}{2}}q^{s+1-b}  \qbinom{s}{i}_q   e_{s+1-b}\left[-\frac{1}{1-q}\right] 
	e_{i+\ell-s-d+b}\left[\frac{1}{1-q}\right] \\
	& = f(\ell,b-1,d,k) \\
	& + q^{b-1}\sum_{i=0}^{k-1} 
	\sum_{s =0}^{i+\ell-d+b} 
	h_{k-i-1}\left[\frac{1}{1-q}\right]  q^{\binom{i}{2}}  \qbinom{s}{i}_q   e_{s+1-b}\left[-\frac{1}{1-q}\right] 
	e_{i+\ell-s-d+b}\left[\frac{1}{1-q}\right]\\
	& + q^{b-1}\sum_{i=0}^{k-1} 
	\sum_{s =0}^{i+\ell-d+b} 
	h_{k-i-1}\left[\frac{1}{1-q}\right]  q^{\binom{i}{2}}(q^{s+1-b}-1)  \qbinom{s}{i}_q   e_{s+1-b}\left[-\frac{1}{1-q}\right] 
	e_{i+\ell-s-d+b}\left[\frac{1}{1-q}\right]\\
	& = f(\ell,b-1,d,k)+q^{b-1}f(\ell,b-1,d-1,k-1) \\
	& + q^{b-1}\sum_{i=0}^{k-1} 
	\sum_{s =0}^{i+\ell-d+b} 
	h_{k-i-1}\left[\frac{1}{1-q}\right]  q^{\binom{i}{2}} \qbinom{s}{i}_q  (q^{s+1-b}-1) (-1)^{s+1-b}h_{s+1-b}\left[\frac{1}{1-q}\right] 
	e_{i+\ell-s-d+b}\left[\frac{1}{1-q}\right]\\
	& = f(\ell,b-1,d,k)+q^{b-1}f(\ell,b-1,d-1,k-1) \\
	& + q^{b-1}\sum_{i=0}^{k-1} 
	\sum_{s =0}^{i+\ell-d+b} 
	h_{k-i-1}\left[\frac{1}{1-q}\right]  q^{\binom{i}{2}} \qbinom{s}{i}_q   e_{s-b}\left[-\frac{1}{1-q}\right] 
	e_{i+\ell-s-d+b}\left[\frac{1}{1-q}\right]\\
	& = f(\ell,b-1,d,k)+q^{b-1}f(\ell,b-1,d-1,k-1)+q^{b-1}f(\ell,b,d,k-1) ,
\end{align*}
where we used also \eqref{eq:h_q_prspec} and \eqref{eq:minusepsilon}.

By induction, using again \eqref{eq:qbin_recursion}, we get (notice that $d\geq b\geq 1$ and $k\geq 2$)
\begin{align*}
	f(\ell,b,d,k)	& = f(\ell,b-1,d,k)+q^{b-1}f(\ell,b-1,d-1,k-1)+q^{b-1}f(\ell,b,d,k-1)\\
	& = q^{\binom{d-\ell}{2}}\qbinom{b-1}{d-\ell}_q \qbinom{k-d+\ell+b-2}{k-d+\ell}_q\\
	& + q^{b-1}q^{\binom{d-1-\ell}{2}}\qbinom{b-1}{d-1-\ell}_q \qbinom{k-d+\ell+b-2}{k-d+\ell}_q\\
	& + q^{b-1}q^{\binom{d-\ell}{2}}\qbinom{b}{d-\ell}_q \qbinom{k-1-d+\ell+b-1}{k-1-d+\ell}_q\\
	& = q^{\binom{d-\ell}{2}}\left(\qbinom{b-1}{d-\ell}_q + q^{b-d+\ell} \qbinom{b-1}{d-1-\ell}_q \right)\qbinom{k-d+\ell+b-2}{k-d+\ell}_q\\
	& + q^{b-1}q^{\binom{d-\ell}{2}}\qbinom{b}{d-\ell}_q \qbinom{k-1-d+\ell+b-1}{k-1-d+\ell}_q\\
	& = q^{\binom{d-\ell}{2}}\qbinom{b}{d-\ell}_q \qbinom{k-d+\ell+b-2}{k-d+\ell}_q\\
	& + q^{b-1}q^{\binom{d-\ell}{2}}\qbinom{b}{d-\ell}_q \qbinom{k-d+\ell+b-2}{k-d+\ell-1}_q\\
	& = q^{\binom{d-\ell}{2}}\qbinom{b}{d-\ell}_q \qbinom{k-d+\ell+b-1}{k-d+\ell}_q,
\end{align*}
as we wanted.

For the base cases, using as usual \eqref{eq:e_h_sum_alphabets}, \eqref{eq:h_q_prspec}, and \eqref{eq:minusepsilon}, for $d\geq b\geq 0$ we have
\begin{align*}
	& \hspace{-0.5cm} f(\ell,b,d,1):=\\
	& = \sum_{i=0}^{1} 
	\sum_{s =0}^{i+\ell-d+b} 
	h_{1-i}\left[\frac{1}{1-q}\right]  q^{\binom{i}{2}} \qbinom{s}{i}_q  e_{s-b}\left[-\frac{1}{1-q}\right] 
	e_{i+\ell-s-d+b}\left[\frac{1}{1-q}\right]\\
	& = \sum_{s =0}^{\ell-d+b} h_{1}\left[\frac{1}{1-q}\right]   e_{s-b}\left[-\frac{1}{1-q}\right] 
	e_{\ell-s-d+b}\left[\frac{1}{1-q}\right] \\
	& +\sum_{s =0}^{1+\ell-d+b}   [s]_qe_{s-b}\left[-\frac{1}{1-q}\right] 
	e_{1+\ell-s-d+b}\left[\frac{1}{1-q}\right]\\
	& = \frac{1}{1-q} 
	e_{\ell-d}\left[\frac{1}{1-q}-\frac{1}{1-q}\right] \\
	& +\frac{1}{1-q} \sum_{s =0}^{1+\ell-d+b}   (1-q^s)e_{s-b}\left[-\frac{1}{1-q}\right] 
	e_{1+\ell-s-d+b}\left[\frac{1}{1-q}\right]\\
	& = \frac{1}{1-q} \delta_{\ell-d,0}  +\frac{1}{1-q} \sum_{s =0}^{1+\ell-d+b}   (1-q^{s-b})e_{s-b}\left[-\frac{1}{1-q}\right] 
	e_{1+\ell-s-d+b}\left[\frac{1}{1-q}\right]\\
	& +\frac{1}{1-q} \sum_{s =0}^{1+\ell-d+b}   q^{s-b}(1-q^b)e_{s-b}\left[-\frac{1}{1-q}\right] 
	e_{1+\ell-s-d+b}\left[\frac{1}{1-q}\right]
\end{align*}
\begin{align*}
	& = \frac{1}{1-q} \delta_{\ell-d,0}  -\frac{1}{1-q} \sum_{s =0}^{1+\ell-d+b}   e_{s-b-1}\left[-\frac{1}{1-q}\right] 
	e_{1+\ell-s-d+b}\left[\frac{1}{1-q}\right]\\
	& +[b]_q \sum_{s =0}^{1+\ell-d+b} e_{s-b}\left[-\frac{q}{1-q}\right] 
	e_{1+\ell-s-d+b}\left[\frac{1}{1-q}\right]\\
	& =  \frac{1}{1-q} \delta_{\ell-d,0}- \frac{1}{1-q} \delta_{\ell-d,0}+ [b]_q e_{1+\ell-d}\left[\frac{1}{1-q}-\frac{q}{1-q}\right]\\
	& = [b]_qe_{1+\ell-d}[1]\\
	& = [b]_q(\delta_{\ell,d}+\delta_{\ell,d-1}) \\
	& = q^{\binom{d-\ell}{2}}\qbinom{b}{d-\ell}_q \qbinom{1-d+\ell+b-1}{1-d+\ell}_q,
\end{align*}
%
%For $b,d\geq 0$ we have
%\begin{align*}
%	& \hspace{-0.5cm} f(\ell,b,d,0):=\\
%	& = \sum_{i=0}^{0} 
%	\sum_{s =0}^{i+\ell-d+b} 
%	h_{0-i}\left[\frac{1}{1-q}\right]  q^{\binom{i}{2}} \qbinom{s}{i}_q  e_{s-b}\left[-\frac{1}{1-q}\right] 
%	e_{i+\ell-s-d+b}\left[\frac{1}{1-q}\right]\\
%	& = \sum_{s =0}^{\ell-d+b}   e_{s-b}\left[-\frac{1}{1-q}\right] 
%	e_{\ell-s-d+b}\left[\frac{1}{1-q}\right]\\
%	& = e_{\ell-d}\left[\frac{1}{1-q}-\frac{1}{1-q}\right]\\
%	& = \delta_{\ell,d}\\
%	& = \delta_{b,0}+q^{\binom{d-\ell}{2}}\qbinom{b}{d-\ell}_q \qbinom{-d+\ell+b-1}{-d+\ell}_q,
%\end{align*}
while for $d\geq 0$ and $k\geq 1$ we have
\begin{align*}
	& \hspace{-0.5cm} f(\ell,0,d,k):=\\
	& = \sum_{i=0}^{k} 
	\sum_{s =0}^{i+\ell-d} 
	h_{k-i}\left[\frac{1}{1-q}\right]  q^{\binom{i}{2}} \qbinom{s}{i}_q  e_{s}\left[-\frac{1}{1-q}\right] 
	e_{i+\ell-s-d}\left[\frac{1}{1-q}\right]\\
	& = \sum_{i=0}^{k} 
	\sum_{s =0}^{i+\ell-d} 
	h_{k-i}\left[\frac{1}{1-q}\right]  q^{\binom{i}{2}} \qbinom{s}{i}_q  (-1)^sh_{s}\left[\frac{1}{1-q}\right] 
	e_{i+\ell-s-d}\left[\frac{1}{1-q}\right]\\
	& = \sum_{i=0}^{k} 
	\sum_{s =0}^{i+\ell-d} 
	h_{k-i}\left[\frac{1}{1-q}\right]  q^{\binom{i}{2}}  (-1)^sh_{s-i}\left[\frac{1}{1-q}\right] h_{i}\left[\frac{1}{1-q}\right]  
	e_{i+\ell-s-d}\left[\frac{1}{1-q}\right]\\
	& = \sum_{i=0}^{k} 
	h_{k-i}\left[\frac{1}{1-q}\right]  q^{\binom{i}{2}} (-1)^i h_{i}\left[\frac{1}{1-q}\right]  	\sum_{s =0}^{i+\ell-d} (-1)^{s-i}h_{s-i}\left[\frac{1}{1-q}\right] 
	e_{i+\ell-s-d}\left[\frac{1}{1-q}\right] \\
	& = \sum_{i=0}^{k} 
	h_{k-i}\left[\frac{1}{1-q}\right]  q^{\binom{i}{2}} (-1)^i h_{i}\left[\frac{1}{1-q}\right]  	\sum_{s =0}^{i+\ell-d} e_{s-i}\left[-\frac{1}{1-q}\right] 
	e_{i+\ell-s-d}\left[\frac{1}{1-q}\right]\\
	& = \sum_{i=0}^{k} 
	h_{k-i}\left[\frac{1}{1-q}\right]  q^{\binom{i}{2}} (-1)^i h_{i}\left[\frac{1}{1-q}\right]
	e_{\ell-d}\left[\frac{1}{1-q}-\frac{1}{1-q}\right]\\
	& = \delta_{\ell,d}\sum_{i=0}^{k} (-1)^i q^{\binom{i}{2}}  \qbinom{k}{i}_q 
	h_{k}\left[\frac{1}{1-q}\right]  \\
	& = \delta_{\ell,d} h_{k}\left[\frac{1}{1-q}\right] (1;q)_k\\
	& = 0 \\
	& = q^{\binom{d-\ell}{2}}\qbinom{0}{d-\ell}_q \qbinom{k-d+\ell-1}{k-d+\ell}_q,
\end{align*}
where in the third to last equality we used the well-known $q$-binomial theorem \cite[Theorem~3.3]{Andrews-Book_Partitions}.

This completes the proof of the lemma.

\subsection*{Proof of Lemma~\ref{lem:elementary3}}

We have
\begin{align*}
	\begin{bmatrix}
		k\\
		s
	\end{bmatrix}_q \begin{bmatrix}
		k+j-1\\
		j
	\end{bmatrix}_q (1-q^{s+j}) & = (1-q)\begin{bmatrix}
		k\\
		s
	\end{bmatrix}_q \begin{bmatrix}
		k+j-1\\
		j
	\end{bmatrix}_q [s+j]_q \frac{[s+j-1]_q!}{[s+j-1]_q!}\\
	& = (1-q)\frac{[k]_q!}{[k-s]_q![s]_q} \frac{[k+j-1]_q!}{[k-1]_q![j]_q}\frac{[s+j]_q!}{[s+j-1]_q!}\\
	& = (1-q)[k]_q\begin{bmatrix}
		s+j\\
		s
	\end{bmatrix}_q \begin{bmatrix}
		k+j-1\\
		k-s
	\end{bmatrix}_q \\
	& = (1-q^k) \begin{bmatrix}
		s+j\\
		s
	\end{bmatrix}_q \begin{bmatrix}
		k+j-1\\
		k-s
	\end{bmatrix}_q .
\end{align*}
This proves the lemma.

\backmatter

%    Bibliography styles amsplain or author-year (using natbib) are
%    also acceptable.

\bibliographystyle{amsalpha}
\bibliography{Biblebib}

%    See note above about multiple indexes.
%\printindex

\end{document}